\documentclass[12pt]{book}

\usepackage{amsmath,amssymb,amsthm,amsfonts}
\usepackage{graphicx,pstricks}
\usepackage{graphics}
\usepackage{epsfig}
\usepackage{geometry}
\usepackage{setspace}

\geometry{left=1.25in, right=1.25in, top=1in, bottom=1in}

\theoremstyle{plain}
\newtheorem{thm}{Theorem}[chapter]
\newtheorem*{thmn}{Theorem}
\newtheorem{prop}[thm]{Proposition}
\newtheorem{lemma}[thm]{Lemma}
\newtheorem{cor}[thm]{Corollary}
\newtheorem{claim}[thm]{Claim}

\theoremstyle{definition}
\newtheorem*{defnn}{Definition}
\newtheorem{remark}[thm]{Remark}

\newcommand{\supp}{\operatorname{supp}}
\newcommand{\dist}{\operatorname{dist}}
\newcommand{\distt}{\operatorname{dist_{\mathbb{T}}}}
\newcommand{\distr}{\operatorname{dist_{\mathbb{R}}}}
\newcommand{\re}{\operatorname{Re}}
\newcommand{\im}{\operatorname{Im}}
\newcommand{\llogl}{\text{$L\log{\it L}$}}
\newcommand{\llogln}{\text{$L(\log{\it L})^n$}}
\newcommand{\esssup}{\operatorname{ess \, sup}}

\bibliographystyle{plain}

\title{End-point Estimates and Multi-parameter Paraproducts on Higher Dimensional Tori}
\author{John Tyler Workman}

\raggedbottom

\onehalfspacing

\begin{document}

\pagestyle{plain}

\maketitle

\null\vfill
\begin{center}
{\it To Shelby and Isaac.}
\end{center}
\vfill\null
\clearpage

\begin{center}
{\fontsize{18}{18} \bf Abstract}
\end{center}

\hskip0.5in

Analogues of multi-parameter multiplier operators on $\mathbb{R}^d$ are
defined on the torus $\mathbb{T}^d$.  It is shown that these operators
satisfy the classical Coifman-Meyer theorem.  In addition, $\llogl$ and
$\llogln$ end-point estimates are proved.
\clearpage

\begin{center}
{\fontsize{18}{18} \bf Biographical Sketch}
\end{center}

\hskip0.5in

John Tyler Workman was born September 9, 1981, to parents Jim and Marilyn
Workman in Chattanooga, Tennessee.  He attended Cleveland High School in
Cleveland, Tennessee, graduating as valedictorian in 1999.

He attended the University of Tennessee from 1999 to 2004 and the
Royal Melbourne Institute of Technology in Melbourne, Australia, during
a semester abroad in 2001.  As an undergraduate, John was honored with a
Barry M. Goldwater Research Scholarship, along with National Science
Foundation and Department of Defense Fellowships for future graduate
work.  He earned a Bachelor of Science degree in Mathematics in 2004.

In the same year, he began attending Cornell University as a graduate
student in pure mathematics.  He received the degree of Master of Science
in 2007 and Doctor of Philosophy in 2008.  To date, he has three
publications~\cite{me1, me2, me3}.

John was married in August of 2004 to Shelby Grant-Workman, then Shelby
Grant, with whom he has a son, Isaac Workman.
\clearpage

\begin{center}
{\fontsize{18}{18} \bf Acknowledgements}
\end{center}

\hskip0.5in

I would like to extend my sincerest thanks to my advisor Camil Muscalu.
First, for being a co-author of several papers on and introducing me to
the principal subject of this text.  But, more notably, for spending
hour upon hour in his office answering my questions on all things mathematical.

I would also like to thank the National Science Foundation and the Department
of Defense.  Being a beneficiary of the NSF Graduate Fellowship and NDSEG Fellowship,
this work, and all work I have done in graduate school, has been funded
by these organizations.
\clearpage

\begin{singlespacing}
\tableofcontents
\end{singlespacing}

\chapter*{Preface}
\addcontentsline{toc}{chapter}{Preface}

Consider the classical Marcinkiewicz multiplier operator $\Lambda_m$ on
$\mathbb{R}^d$ defined $\Lambda_m f(x) = \int_{\mathbb{R}^d} m(t)
\widehat{f}(t) e^{2\pi itx} \, dt$, where $m$ satisfies a standard
Marcinkiewicz-Mihlin-H\"{o}rmander type condition~\cite{marcin}.
This arises, in part, as a natural extension of the Hilbert
transform and Riesz transforms. In 1991, Coifman and
Meyer~\cite{meyer} considered a multilinear extension

\begin{equation*}
\Lambda_m(f_1, \ldots, f_n)(x) = \int_{\mathbb{R}^{dn}} m(t)
\widehat{f}_1(t_1) \cdots \widehat{f}_n(t_n) e^{2\pi ix(t_1 + \ldots
+ t_n)} \, dt,
\end{equation*}

\noindent where $m$, now acting on $\mathbb{R}^{dn}$, satisfies the
same kind of condition.  This operator is known to map $L^{p_1}
\times \ldots \times L^{p_n} \rightarrow L^p$ for $1/p_1 + \ldots +
1/p_n = 1/p$ and $1 < p_j < \infty$.  The case when $p \geq 1$ was
originally shown by Coifman and Meyer.  The general case $p > 1/n$
was settled later in~\cite{grafakos, kenig}.

An important application of this result occurs in non-linear partial
differential equations.  If $\widehat{D^\alpha f}(t) = |t|^{\alpha}
\widehat{f}(t)$, $\alpha > 0$, is the homogenous derivative, then

\begin{equation*}
\|D^\alpha (fg)\|_r \lesssim \|D^\alpha f\|_p \|g\|_q + \|f\|_p
\|D^\alpha g\|_q
\end{equation*}

\noindent for Schwartz functions $f, g$,
where $1 < p, q < \infty$ and $1/r = 1/p + 1/q$.  This inequality
was originally proved by Kato and Ponce~\cite{kato}, and can also be
established via the Coifman-Meyer theorem (see~\cite{camil1}).

In a more general setting, one can consider an operator $(D^\alpha_1
D^\beta_2 f) \, \widehat{} \,\, (t_1, t_2) = |t_1|^\alpha
|t_2|^\beta \widehat{f}(t_1, t_2)$ for $\alpha, \beta > 0$.  It is
natural to ask, then, is there an analogue to the inequality of Kato
and Ponce for this operator.  Heuristically, we should have
something like $\|D^\alpha_1 D^\beta_2 (fg)\|_r \lesssim
\|D^\alpha_1 D^\beta_2f\|_p \|g\|_q + \|D^\alpha_1 f\|_p \|D^\beta_2
g\|_q + \|D^\beta_2 f\|_p \|D^\alpha_1 g\|_q + \|f\|_p \|D^\alpha_1
D^\beta_2 g\|_q$.  Attempts to prove this kind of inequality by a
Coifman-Meyer type argument lead to a wider class of multipliers
$m$, which behave like the product of two standard multipliers.

Special cases of these multiplier operators had been previously
considered by Christ and Journ\'{e}~\cite{christ, journe}.  Muscalu
et.~al.~\cite{camil1} showed in 2004 that this so-called
bi-parameter multiplier operator satisfies the same $L^{p_1} \times
\ldots \times L^{p_n} \rightarrow L^p$ estimates. The original proof
for the Coifman-Meyer operator~\cite{meyer, grafakos, kenig}
involved the $T1$ theorem, BMO theory, and Carleson measures.  Many
of these methods, most notably the Calder\'{o}n-Zygmund
decomposition, do not extend to this bi-parameter setting.
In~\cite{camil1}, an entirely new method based on a strong geometric
structure and stopping time arguments is used.  This method was
further extended in~\cite{camil2} to show that arbitrary
multi-parameter multiplier operators satisfy the same bounds.

Another important side-effect of this method is its application to
the original Coifman-Meyer operator, giving a much simpler proof. In
particular, it establishes the ``end-point" estimates of the case
when any of the $p_j$ are equal to 1.  Here, we have $L^{p_1} \times
\ldots \times L^{p_n} \rightarrow L^{p,\infty}$. However, in the
multi-parameter setting of~\cite{camil1, camil2}, no such end-point
estimates are known.

A natural candidate for such an estimate would involve $\llogl$
spaces, because of how they arise in interpolation results. Naively,
it is often believed an operator which maps $L^1 \rightarrow
L^{1,\infty}$, and also satisfies some $L^p$ result, should take
$\llogl$ into $L^1$. However, it is rarely this straightforward.
In~\cite{jessen}, Jessen, Marcinkiewicz, and Zygmund showed that if
$f$ is in $\llogl$ then $Mf$ (the standard maximal function) is in
$L^1$.  But this was only for $f, Mf$ on $[0,1]$.
Wiener~\cite{wiener} improved this by noting that if $f$, defined on
all of $\mathbb{R}^n$, is in $\llogl$, then $Mf$ is locally
integrable. Stein~\cite{steinllogl} showed the converse is true.
Indeed, $Mf$ is locally integrable if and only if $f$ is locally in
$\llogl$.

Similarly, C. Fefferman~\cite{fefferman} examined the role of
$\llogl$ as an end-point estimate for the double Hilbert transform
and maximal double Hilbert transform.  Heuristically, a $\llogl$ to
weak-$L^1$ estimate should be expected.  Indeed, this is what is
shown, but truncated on the unit square.  That is, the maximal
double Hilbert transform maps $\llogl([0,1]^2)$ to
$L^{1,\infty}([0,1]^2)$.

This problem, that $\llogl$ estimates can only be gained in the
compact setting, is rather common.  Therefore, the desired end-point
estimate for the bi-parameter multiplier operator

\begin{equation*}
\Lambda_m : \llogl \times \ldots \times \llogl \rightarrow L^{1/n, \infty}
\end{equation*}

\noindent is likely to hold only in the compactified sense. However,
this leads to an interesting idea: analogues of multiplier operators
$\Lambda_m$, from the single-parameter case of Coifman and Meyer to the
multi-parameter situation of Muscalu~et.~al., instead defined on the
torus $\mathbb{T}^d$.  In this setting, that of a probability space,
$\llogl$ estimates are often cleaner and conceptually simpler. The
establishment and study of the correct operators on tori, and
in particular the desire for appropriate end-point estimates, is the
focus of this text.

The organization is as follows.  Chapter 1 is composed of three
parts. The first is a survey of some of the standard analytical
tools on the torus.  The second part is a series of somewhat
technical results concerning special smooth functions.  These
results are used sporadically throughout the text, but their proofs
are similar, so they are presented together.  The third part is an
interpolation theorem.  Chapter 2 covers several different maximal
operators, and Chapter 3 deals with a particular square function of
Littlewood-Paley type. In Chapter 4, characterizations of $\llogl$
and $\llogln$ are developed for any probability space, and several
important results therein are proved. Chapter 5 introduces and
studies single-parameter multipliers, and in particular, analogues
of the Marcinkiewicz and Coifman-Meyer multipliers.  In Chapter 6,
bi-parameter multiplier operators are handled.  Chapter 7 is a
non-rigorous survey of the proof for multi-parameter multipliers.

\chapter{The Circle and Smooth Functions}

\section{Preliminaries}\label{sec:prelim}

Consider the space $\mathbb{T} = \mathbb{R}/\mathbb{Z}$.  It has a
natural correspondence with a circle of diameter 1 or the interval
$[0,1) \subset \mathbb{R}$, where 0 and 1 are identified. In this
way, we can consider Lesbegue measure $m$ on sets $E \subseteq
\mathbb{T}$, by considering the corresponding set in $[0,1)$.
Then, $(\mathbb{T}, m)$ is a probability space.

Addition is also naturally defined on $\mathbb{T}$ by the group
structure of $\mathbb{R}/\mathbb{Z}$. That is, $x, y \in \mathbb{T}$
can be thought of as elements in $[0,1)$, and $x + y$ in $\mathbb{T}$
is $(x+y)$ mod-1 in $\mathbb{R}$.

Let $\distr(\cdot, \cdot)$ be the Euclidean metric on $\mathbb{R}$,
and $\distt(\cdot, \cdot)$ the standard metric on $\mathbb{T}$
induced by the geometry of the circle.  In particular, if $x, y \in
\mathbb{T}$ are thought of as elements in $[0,1)$, then,
$\distt(x,y) = \min\{\distr(x,y), 1 - \distr(x,y)\}$.  For sets $A,
B \subseteq \mathbb{T}$, let $\distt(x, A) = \min\{ \dist(x,y) : y
\in A\}$ and $\distt(A, B) = \min\{\distt(x, y) : x \in A, y \in
B\}$, as usual.

Functions $f$ acting on $\mathbb{T}$ can simultaneously be thought
of as 1-periodic functions acting on $\mathbb{R}$.  In this way, we
define integration on $(\mathbb{T}, m)$ by

\begin{equation*}
\int_\mathbb{T} f \, dm = \int_0^1 f(x) \, dx,
\end{equation*}

\noindent where the function on the right is defined on $\mathbb{R}$
and integrated over $[0,1)$. Further, we inherit from $\mathbb{R}$
notions of continuity, differentiability, smoothness, etc..  We will
most often consider complex-valued functions, which we write as $f :
\mathbb{T} \rightarrow \mathbb{C}$.  This notation is somewhat
misleading, as we allow functions to take infinite values.

For complex scalars $\alpha$, we will use $|\alpha|$ to denote the
modulus or absolute value, and we will denote Lebesgue measure of a
set $A$ by $|A|$.  This double use should not
cause any confusion.  We say two sets $A, B$ are disjoint if $|A
\cap B| = 0$.

There is a natural notion of intervals in $\mathbb{T}$ as well, that
is, connected subsets.  We will always use the terminology interval
to mean a non-empty, closed interval in $\mathbb{T}$.  For
simplicity, we allow $\mathbb{T}$ to be considered an interval.  We
say an interval $I$ is dyadic if $I = [2^{-k} j, 2^{-k} (j+1)]$ for
some $j \in \mathbb{Z}$, $k \in \mathbb{N}$.  Note that $\mathbb{T}$
itself is not considered a dyadic interval.  One can easily show
that there is a kind of trichotomy: for any two dyadic intervals
either they are equal, one is strictly contained in the other, or
they are disjoint.

For any interval $I$ and $0 \leq \alpha \leq 1/|I|$, let $\alpha I$
denote the interval concentric with $I$ which satisfies $|\alpha I|
= \alpha |I|$. That is, if $I = \{x : \distt(x,x_I) \leq |I|/2\}$,
then $\alpha I = \{x : \distt(x,x_I) \leq \alpha|I|/2\}$. For
integers $n$, let $I^n = I + n|I|$, the interval gained by shifting
$n$ steps of length $|I|$.

Finally, we will use the somewhat standard notation $A \lesssim B$
to mean that there is some ``universal" constant $C$ such that $A
\leq C \cdot B$.  We will write $A \sim B$ if $A \lesssim B$ and $B
\lesssim A$.  It will be our attempt throughout to make as clear as
possible precisely what these unspoken constants depend on.

\section{Analysis on $\mathbb{T}$}\label{sec:analysisont}

Many of the fundamental analytical tools which we use on
$\mathbb{R}^n$ can be easily extended to $\mathbb{T}$.
Katznelson~\cite{katznelson} gives a comprehensive
introduction to this topic.

Considering the probability space $(\mathbb{T}, m)$, we can define
$\|f\|_p = (\int_\mathbb{T} |f|^p \, dm)^{1/p}$ for $0 < p < \infty$
and $\|f\|_\infty = \esssup_\mathbb{T} |f|$ as normal.  Then, the
spaces $L^p(\mathbb{T})$ of functions for which $\|f\|_p < \infty$
are Banach spaces for $1 \leq p \leq \infty$.  Similarly, we define
weak-$L^p(\mathbb{T})$ or $L^{p,\infty}(\mathbb{T})$ as the
functions for which

\begin{equation*}
\|f\|_{p,\infty} := \sup_{\lambda > 0} \lambda \big| \big\{ x \in
\mathbb{T} : |f(x)| > \lambda \big\} \big|^{1/p} < \infty.
\end{equation*}

\noindent Note, $\|\cdot\|_{p,\infty}$ is only a quasi-norm, in that
it does not always satisfy the triangle inequality.  However, it is
true that $|f| \leq |g|$ a.e.~implies $\|f\|_{p,\infty} \leq
\|g\|_{p,\infty}$ and $f_n \uparrow |f|$ a.e.~implies
$\|f_n\|_{p,\infty} \uparrow \|f\|_{p,\infty}$.

Denote the $L^2$ inner product by $\langle \cdot, \cdot \rangle$,
i.e., $\langle f, g \rangle = \int_\mathbb{R} f(x) \overline{g(x)}
\, dx$, where $\overline{g}$ is the complex conjugate.  It will be
our practice, when studying an operator $T$, to write $T : L^p
\rightarrow L^p$ or that $T$ maps $L^p$ to $L^p$, when it is
actually meant maps boundedly.  In particular, that there is some
constant $C$ so that $\|Tf\|_p \leq C \|f\|_p$ for all $f$.

For $f \in L^1(\mathbb{T})$ we define the Fourier coefficients of
$f$ by

\begin{equation*}
\widehat{f}(n) = \int_{\mathbb{T}} f(x) e^{-2\pi inx} \, dx
\end{equation*}

\noindent for all $n \in \mathbb{Z}$.  It is easily
shown~\cite{katznelson} that the usual properties hold.  In
particular, this operation is linear, and if we define convolution
as

\begin{equation*}
(f * g)(x) = \int_\mathbb{T} f(y)g(x-y) \, dy,
\end{equation*}

\noindent then $\widehat{(f*g)}(n) = \widehat{f}(n) \widehat{g}(n)$
for all $n$. Further, we have a version of Plancherel's theorem: for
$f, g \in L^2(\mathbb{T})$

\begin{equation*}
\langle f, g \rangle = \sum_{n \in \mathbb{Z}} \widehat{f}(n) \,
\overline{\widehat{g}(n)}
\end{equation*}

\noindent or equivalently,

\begin{equation*}
\int_\mathbb{T} f(x) g(x) \, dx = \sum_{n \in \mathbb{Z}}
\widehat{f}(n) \widehat{g}(-n).
\end{equation*}

\noindent It is also well-known that if a function $f$ is smooth

\begin{equation*}
f(x) = \sum_{n \in \mathbb{Z}} \widehat{f}(n) e^{2\pi inx}.
\end{equation*}

Recall that a function $f : \mathbb{R} \rightarrow \mathbb{C}$ is a
Schwartz function~\cite{stein3, stein2} if it is infinitely
differentiable and $\sup_\mathbb{R} |x|^k |f^{(l)}(x)| < \infty$ for
all integers $k, l \geq 0$.  For a Schwartz function $f$, define its
periodization by

\begin{equation*}
F(x) = \sum_{j \in \mathbb{Z}} f(x + j).
\end{equation*}

\noindent This function is clearly 1-periodic, so we may think of
$F$ as a function on $\mathbb{T}$.  This sum converges absolutely
for all $x$, which follows because $|f(x+j)| \leq C |x+j|^{-2}$ for
some $C$ is guaranteed by the Schwartz condition, . For $h \not= 0$,
we have by the mean value theorem that $\frac{1}{h} [F(x+h) - F(x)]
= \sum_j f'(x_{j,h})$, where $x_{j,h}$ is some number between $x+j$
and $x+j+h$.  Using the Schwartz property again, we can apply the
dominated convergence theorem to let $h \rightarrow 0$ and see $F$
is differentiable, with $F'(x) = \sum_j f'(x+j)$.  Iterating this,
we find that $F$ is smooth (infinitely differentiable) and $F^{(l)}$
is simply the periodization of $f^{(l)}$.

Furthermore,

\begin{equation*}
\begin{split}
\widehat{F}(n) &= \int_\mathbb{T} F(x) e^{-2\pi inx} \, dx =
\int_0^1 F(x) e^{-2\pi inx} \, dx \\
&= \sum_j \int_0^1 f(x + j) e^{-2\pi inx} \, dx = \int_{\mathbb{R}}
f(x) e^{-2\pi inx} \, dx = \widehat{f}(n).
\end{split}
\end{equation*}

\noindent That is, the Fourier coefficients of $F$ coincide with the
Fourier transform of $f$ on the integers.

Finally, we can define $\mathbb{T}^d = \mathbb{R}^d/\mathbb{Z}^d$,
the $d$-fold product of $\mathbb{T}$. Lebesgue measure can be
attained from $\mathbb{R}^d$ just as before, or as the appropriate
product measure, so that $(\mathbb{T}^d, m)$ is a probability space.
Functions $f : \mathbb{T}^d \rightarrow \mathbb{C}$ can be thought
of as functions on $\mathbb{R}^d$ which are 1-periodic in each
coordinate, and integration is defined as before.  The Fourier
coefficients $\widehat{f}(n_1, \ldots, n_d) = \int_{\mathbb{T}^d}
f(\vec{x}) e^{-2\pi i \vec{n} \cdot \vec{x}} \, d\vec{x}$ are
defined in a natural way, and all the normal results hold.

\section{Bump Functions}

Our first goal will be to generate a sequence of smooth functions
whose Fourier coefficients are a kind of ``partition of unity."  It
turns out the easiest way to do this is to first create the
functions on $\mathbb{R}$ and then periodize.

\begin{thm}\label{thm:rbumps} There are Schwartz functions
$\theta_k^1, \theta_k^2 : \mathbb{R} \rightarrow \mathbb{C}$, $k \in
\mathbb{Z}$, and constants $C_m > 0$, $m \in \mathbb{N}$, so that

\begin{gather*}
\sum_{k \in \mathbb{Z}} \widehat{\theta^1_k}(t)
\widehat{\theta^2_k}(-t) = \chi_{\mathbb{R} - 0}(t) , \\
\supp(\widehat{\theta^1_k}) \subseteq
[-2^{k-2}, -2^{k-4}] \cup [2^{k-4}, 2^{k-2}], \,\, \widehat{\theta_k^2}(0) = 0, \\
|\theta_k^1(x)|, |\theta_k^2(x)| \leq 2^k C_m \Big(1 + 2^k \distr(x,
[0, 2^{-k}]) \Big)^{-m} \,\, \text{ for all } x \in \mathbb{R}, \, m
\in \mathbb{N}, \\
|\theta^{1'}_k(x)|, |\theta^{2'}_k(x)| \leq 4^k C_m \Big(1 + 2^k
\distr(x, [0, 2^{-k}]) \Big)^{-m} \,\, \text{ for all } x \in
\mathbb{R}, \, m \in \mathbb{N}.
\end{gather*}
\end{thm}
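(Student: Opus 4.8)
The plan is to construct a standard Littlewood–Paley partition of unity on the Fourier side and then realize the pieces as Fourier transforms of Schwartz functions with the required support and decay. Fix a smooth even bump $\varphi$ on $\mathbb{R}$ with $\varphi \equiv 1$ on $[2^{-4}, 2^{-2}]$, $\supp \varphi \subseteq [2^{-5}, 2^{-1}]$, and set $\psi(t) = \varphi(t) + \varphi(-t)$, so $\psi$ is even, smooth, supported in $[-2^{-1},-2^{-5}]\cup[2^{-5},2^{-1}]$, and by a telescoping/scaling argument one can arrange $\sum_{k\in\mathbb{Z}}\psi(2^{-k}t)^2 \equiv 1$ for $t\neq 0$ after normalizing $\varphi$ appropriately (replace $\varphi$ by $\varphi/(\sum_k \varphi(2^{-k}\cdot)^2 + \varphi(-2^{-k}\cdot)^2)^{1/2}$ on its support, which is legitimate because the denominator is smooth and bounded below there). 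Then define $\widehat{\theta^1_k}(t) = \psi(2^{-k}t)$ and $\widehat{\theta^2_k}(t) = \psi(2^{-k}t)$; since $\psi$ is even, $\widehat{\theta^1_k}(t)\widehat{\theta^2_k}(-t) = \psi(2^{-k}t)^2$, and summing over $k$ gives $\chi_{\mathbb{R}\setminus 0}$. The support condition on $\widehat{\theta^1_k}$ follows by rescaling $[2^{-5},2^{-1}]$ by $2^k$, which is contained in $[2^{k-4},2^{k-2}]$ after shrinking the original support slightly; and $\widehat{\theta^2_k}(0) = \psi(0) = 0$ since $0$ is outside the support of $\psi$.

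Next I would extract the pointwise bounds on $\theta^i_k$ and their derivatives. Let $\eta = \check{\psi}$ be the inverse Fourier transform of $\psi$; it is Schwartz because $\psi$ is smooth and compactly supported. By the scaling law for the Fourier transform, $\theta^i_k(x) = 2^k \eta(2^k x)$ up to a fixed harmless phase—but to get the decay centered on the interval $[0,2^{-k}]$ rather than on the point $0$, I would instead build the phase $e^{-2\pi i \cdot 2^{-k-1} t}$ into $\widehat{\theta^i_k}$ (multiplying $\psi(2^{-k}t)$ by $e^{\pi i 2^{-k}t}$ if desired, which preserves smoothness and support but not evenness—so more carefully, I keep $\theta^1_k$ with the phase and let $\theta^2_k$ carry the conjugate phase so that the product $\widehat{\theta^1_k}(t)\widehat{\theta^2_k}(-t)$ is unchanged). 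With the phase, $\theta^i_k(x) = 2^k \eta\big(2^k(x - 2^{-k-1})\big)$, so $|\theta^i_k(x)| \leq 2^k \|\, |y|^m \eta(y)\|_\infty \,(1 + 2^k|x - 2^{-k-1}|)^{-m}$, and one checks $|x - 2^{-k-1}| \sim 2^{-k} + \distr(x,[0,2^{-k}])$, which absorbs into the stated form with $C_m = C\sup_m \||y|^m\eta(y)\|_\infty$ (finite by Schwartz decay). The derivative bound is identical after noting $(\theta^i_k)'(x) = 4^k \eta'\big(2^k(x - 2^{-k-1})\big)$ and that $\eta'$ is again Schwartz; the extra factor $2^k$ from differentiation gives $4^k$.

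The main obstacle is the bookkeeping to make the partition of unity $\sum_k \psi(2^{-k}t)^2 = 1$ exact while keeping $\supp\psi$ inside $[2^{-5},2^{-1}]$ (so that the $k$-th and $(k+1)$-st pieces overlap but the $k$-th and $(k+2)$-nd do not, and the normalizing denominator is genuinely smooth and positive on the overlap); one must verify that the overlap region is nonempty—$[2^{-5},2^{-1}]$ and $[2^{-4},2^{0}]$ intersect in $[2^{-4},2^{-1}]$, which is fine—and that the normalized $\psi$ still vanishes to infinite order at the endpoints of its support so it remains Schwartz after inverse transform. The remaining point requiring a little care is reconciling the phase choice with the evenness needed for $\widehat{\theta^2_k}(0)=0$; since $0\notin\supp\psi$ this holds regardless of the phase, so the construction goes through. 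Everything else—linearity of supports under scaling, the dyadic trichotomy of the supports guaranteeing local finiteness of the sum, and Schwartz decay of $\eta,\eta'$—is routine.
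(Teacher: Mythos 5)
Your construction is a legitimate alternative to the paper's, and it would work once the numerology is cleaned up, but the two routes are genuinely different and it is worth comparing them. The paper builds a \emph{telescoping} partition: it takes $\widehat\alpha$ equal to $1$ near the origin and supported in $[-1/4,1/4]$, sets $\widehat{\theta^1}(t)=\widehat\alpha(t)-\widehat\alpha(2t)$, and lets $\widehat{\theta^1_k}(t)=\widehat{\theta^1}(2^{-k}t)$, so that $\sum_k\widehat{\theta^1_k}(t)=1$ for $t\neq 0$ by pure cancellation, with no normalization needed; then $\theta^2_k$ is chosen as a ``flattening'' bump with $\widehat{\theta^2_k}\equiv 1$ on $\supp\widehat{\theta^1_k}$, so that the product $\widehat{\theta^1_k}(t)\widehat{\theta^2_k}(-t)$ collapses back to $\widehat{\theta^1_k}(t)$. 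You instead take the $L^2$-normalized Littlewood--Paley partition $\sum_k\psi(2^{-k}t)^2=1$ with $\theta^1_k=\theta^2_k$ (up to a phase); this works, but you must actually verify that the normalizing denominator $\big(\sum_j\varphi(2^{-j}t)^2\big)^{1/2}$ is smooth and bounded away from zero for $t\neq 0$, whereas the paper avoids this issue entirely. Your route also forces $\widehat{\theta^2_k}$ to be compactly supported in the small annulus, which is a stronger conclusion than the theorem asks for ($\theta^2_k$ only needs $\widehat{\theta^2_k}(0)=0$), so the extra effort is not repaid.

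Two more specific points. First, your support numerology is off: $\supp\psi\subseteq[-2^{-1},-2^{-5}]\cup[2^{-5},2^{-1}]$ rescales to $[-2^{k-1},-2^{k-5}]\cup[2^{k-5},2^{k-1}]$, which is strictly larger than the required $[-2^{k-2},-2^{k-4}]\cup[2^{k-4},2^{k-2}]$. You wave at ``shrinking the original support slightly,'' but you cannot simultaneously keep $\varphi\equiv 1$ on $[2^{-4},2^{-2}]$ and $\supp\varphi\subseteq[2^{-4},2^{-2}]$ while keeping $\varphi$ smooth. The fix is simply to drop the ``$\equiv 1$'' requirement entirely (the normalization makes it unnecessary): take $\varphi$ supported strictly inside $[2^{-4},2^{-2}]$ and positive on a large enough compact subinterval that the dilates of its support still overlap and cover $(0,\infty)$. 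Second, the phase $e^{-2\pi i2^{-k-1}t}$ you insert to recenter $\theta^i_k$ on $[0,2^{-k}]$ is unnecessary: for $\theta^i_k(x)=2^k\eta(2^kx)$ with $\eta$ Schwartz, one has $|\eta(y)|\leq C_m(1+|y|)^{-m}\leq C_m(1+\distr(y,[0,1]))^{-m}$ since $\distr(y,[0,1])\leq|y|$, and rescaling gives exactly the stated bound with no translation at all (this is what the paper does). Relatedly, your claimed equivalence $|x-2^{-k-1}|\sim 2^{-k}+\distr(x,[0,2^{-k}])$ is false (the left side vanishes at $x=2^{-k-1}$ while the right side does not); what you actually need and what is true is the one-sided bound $|x-2^{-k-1}|\geq\distr(x,[0,2^{-k}])$. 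None of these issues is fatal, but the paper's telescoping construction is cleaner precisely because it sidesteps all three.
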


\begin{proof} Choose a Schwartz function $\alpha : \mathbb{R}
\rightarrow \mathbb{C}$ so that $\widehat{\alpha} = 1$ on $[-1/8,
1/8]$ and $\supp(\widehat{\alpha}) \subseteq [-1/4, 1/4]$. Define
$\widehat{\theta^1}(t) = \widehat{\alpha}(t) -
\widehat{\alpha}(2t)$.  Let $\widehat{\theta^1_k}(t) =
\widehat{\theta^1}(2^{-k}t)$ for all $k \in \mathbb{Z}$.

Fix $t \not= 0$.  Choose any $N \in \mathbb{N}$ so that $|t| \leq
2^{N - 3}$ and $|t| > 2^{-N - 3}$.  Then,

\begin{equation*}
\begin{split}
\sum_{k=-N}^N \widehat{\theta^1_k}(t) &= \Big(
\widehat{\alpha}(2^{N}t) - \widehat{\alpha}(2^{N+1}t) \Big) + \Big(
\widehat{\alpha}(2^{N-1}t) - \widehat{\alpha}(2^{N}t)
\Big) + \ldots + \\
&\qquad \Big( \widehat{\alpha}(2^{-N}t) -
\widehat{\alpha}(2^{-N+1}t) \Big) \\
&= \widehat{\alpha}(2^{-N}t) - \widehat{\alpha}(2^{N+1}t) = 1 - 0 =
1.
\end{split}
\end{equation*}

\noindent As this holds for all $N$ big enough, and $t$ is
arbitrary, it follows that $\sum_k \widehat{\theta^1_k}(t) = 1$ for
all $t \not= 0$.  On the other hand, as $\widehat{\theta^1_k}(0) =
\widehat{\theta^1}(0) = 0$ for all $k$, it is clear the sum is 0 at
$t = 0$.

Fix $k \in \mathbb{Z}$.  Let $|t| \leq 2^{k - 4}$.  Then $|2^{-k}t|,
|2^{-k + 1}t| \leq 1/8$ implying $\widehat{\theta^1_k}(t) =
\widehat{\theta^1}(2^{-k}t) = \widehat{\alpha}(2^{-k}t) -
\widehat{\alpha}(2^{-k + 1}t) = 1 - 1 = 0$. Similarly, $|t| >
2^{k-2}$ implies $|2^{-k + 1}t|, |2^{-k}t| > 1/4$, and
$\widehat{\theta^1_k}(t) = \widehat{\alpha}(2^{-k}t) -
\widehat{\alpha}(2^{-k + 1}t) = 0$.  That is,
$\supp(\widehat{\theta^1_k}) \subseteq [-2^{k-2}, -2^{k-4}] \cup
[2^{k-4}, 2^{k-2}]$.

Choose a Schwartz function $\theta^2$ so that $\widehat{\theta^2} =
1$ on $[-1/8, -1/16] \cup [1/16, 1/8]$ and is supported away from 0.
Define $\widehat{\theta^2_k}(t) = \widehat{\theta^2}(2^{-k}t)$.
Then, $\widehat{\theta^2_k}(0) = 0$ and $\widehat{\theta^2_k} = 1$
on $[-2^{k-2}, -2^{k-4}] \cup [2^{k-4}, 2^{k-2}] \supseteq
\supp(\widehat{\theta^1_k})$, so that

\begin{equation*}
\sum_{k \in \mathbb{Z}} \widehat{\theta^1_k}(t)
\widehat{\theta^2_k}(-t) = \sum_{k \in \mathbb{Z}}
\widehat{\theta^1_k}(t) = \chi_{\mathbb{R} - 0}(t).
\end{equation*}

Finally, note that $\theta^i_k(x) = 2^k \theta^i(2^k x)$ for $i =
1,2$. As $\theta^i$ and $\theta^{i'}$ are Schwartz functions and $(1
+ \distr(x, [0, 1]))^m$ has polynomial growth, we can choose $C_m$
so that $|\theta^i(x)|, |\theta^{i'}(x)| \leq C_m(1 + \distr(x,
[0,1]))^{-m}$ for all $x$ and $m$ and $i = 1,2$.  Then,

\begin{equation*}
\begin{split}
|\theta_k^i(x)| &= 2^k |\theta^i(2^kx)| \leq 2^k C_m (1 + \distr(2^k
x, [0,1]))^{-m} \\
&= 2^k C_m (1 + 2^k \distr(x, [0, 2^{-k}]))^{-m}.
\end{split}
\end{equation*}

\noindent By the same argument, $|\theta^{i'}_k(x)| = 4^k |\theta'(2^k)| \leq 4^k
C_m (1 + 2^k \distr(x, [0, 2^{-k}]))^{-m}$.
\end{proof}

\begin{claim}\label{claim:stupidf} Fix $j, k \in \mathbb{N}$ and
define $f(t) = j(1 + 2^k \min(t, 1-t)) - 2^k(t+j-1)$.  For any $t
\in [0,1]$, $f(t) \leq 1$.
\end{claim}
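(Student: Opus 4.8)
The plan is to exploit the fact that $\min(t,1-t)$ is affine on each of the two subintervals $[0,1/2]$ and $[1/2,1]$, with its only corner at $t=1/2$; consequently $f$ is affine on each piece, and the maximum of an affine function on an interval occurs at an endpoint, so it suffices to compare the values of $f$ at $0$, $1/2$, and $1$ (in fact only $t=1/2$ will matter).

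First I would restrict to $[0,1/2]$, where $\min(t,1-t)=t$, and simplify: $f(t) = j(1+2^k t) - 2^k(t+j-1) = j + 2^k(j-1)(t-1)$. Since $j\in\mathbb{N}$, the slope $2^k(j-1)$ is nonnegative, so $f$ is nondecreasing on $[0,1/2]$ and its maximum over that interval is $f(1/2)$. Next I would restrict to $[1/2,1]$, where $\min(t,1-t)=1-t$, giving $f(t) = j(1+2^k(1-t)) - 2^k(t+j-1) = j + 2^k\bigl(1-(j+1)t\bigr)$; here the slope $-2^k(j+1)$ is strictly negative, so $f$ is decreasing on $[1/2,1]$ and its maximum over that interval is again $f(1/2)$. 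Combining the two pieces, $\max_{t\in[0,1]}f(t) = f(1/2) = j - 2^{k-1}(j-1)$.

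It then remains to verify $j - 2^{k-1}(j-1)\le 1$, equivalently $(2^{k-1}-1)(j-1)\ge 0$; this holds because $k\ge 1$ gives $2^{k-1}\ge 1$ and $j\ge 1$ gives $j-1\ge 0$. I expect the only point requiring genuine care to be precisely this last step, where the hypothesis $j,k\in\mathbb{N}$ is essential: the claimed inequality is false if $k=0$ is permitted (take $j=2$, $t=1/2$, so $f=3/2$) or if $j=0$ is permitted, so one must read $\mathbb{N}$ here as the positive integers. Everything else is routine arithmetic with linear functions.
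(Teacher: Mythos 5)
Your proof is correct and follows the same route as the paper: split at $t=1/2$, note $f$ is increasing then decreasing, and bound $f(1/2)$. The only difference is cosmetic: where the paper closes by invoking $a+b-ab\le 1$ for positive integers (attributed to induction), you factor the same quantity directly as $(2^{k-1}-1)(j-1)\ge 0$, which is a slightly tidier way to land the identical inequality.
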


\begin{proof} For $t \in [0,1/2]$, $f(t) = j(1 + 2^kt) -
2^k(t+j-1)$, which is an increasing linear function in $t$.  Indeed,
$f'(t) = j2^k - 2^k \geq 0$.  For $t \in [1/2,1]$, $f(t) = j(1 +
2^k(1-t)) - 2^k(t+j-1)$, which is a decreasing linear function in
$t$, as $f'(t) = -j2^k - 2^k < 0$.  Thus, $\max_{x \in [0,1]} f(t) =
f(1/2) = j(1 + 2^{k-1}) - 2^k(j - 1/2) = j + 2^{k-1} - j2^{k-1} \leq
1$.  This last inequality follows as $a + b - ab \leq 1$ for any
positive integers $a, b$, which is easily shown through induction.
\end{proof}

\begin{lemma}\label{lemma:adapted} Let $\theta_k : \mathbb{R}
\rightarrow \mathbb{C}$ be a Schwartz function and $\psi_k :
\mathbb{T} \rightarrow \mathbb{C}$ its periodization.  If

\begin{equation*}
\begin{split}
|\theta_k(x)| &\leq C_m 2^k \big(1 + 2^k \distr(x,
[0,2^{-k}])\big)^{-m} \quad \text{ and } \\
|\theta'_k(x)| &\leq C_m 4^k \big(1 + 2^k \distr(x, [0,
2^{-k}])\big)^{-m},
\end{split}
\end{equation*}

\noindent then there exist constants $C_m'$ so that

\begin{equation*}
\begin{split}
|\psi_k(x)| &\leq C_m' 2^k \big(1 + 2^k \distt(x,
[0,2^{-k}])\big)^{-m} \quad \text{ and } \\
|\psi'_k(x)| &\leq C_m' 4^k \big(1 + 2^k \distt(x, [0,
2^{-k}])\big)^{-m}.
\end{split}
\end{equation*}
\end{lemma}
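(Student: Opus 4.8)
The plan is to estimate the periodization
\[
\psi_k(x) = \sum_{n \in \mathbb{Z}} \theta_k(x+n)
\]
directly. For a representative $x \in [0,1)$ of a point of $\mathbb{T}$, write $\delta = \distt(x, [0,2^{-k}])$. Bounding $|\psi_k(x)|$ termwise by the hypothesized bound on $|\theta_k|$, and $|\psi_k'(x)|$ termwise by that on $|\theta_k'|$ (the termwise differentiation being justified exactly as in Section~\ref{sec:analysisont}, since $\theta_k$ is Schwartz), reduces both inequalities to one estimate: setting
\[
S_m(x) = \sum_{n \in \mathbb{Z}} \big(1 + 2^k\distr(x+n, [0,2^{-k}])\big)^{-m},
\]
it suffices to prove $S_m(x) \lesssim (1 + 2^k\delta)^{-m}$, with implied constant depending only on $m$, for $m \ge 2$. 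Granting this, for any $m$ one applies the hypotheses with exponent $m+2$ to get $|\psi_k(x)| \le C_{m+2}\, 2^k\, S_{m+2}(x) \lesssim C_{m+2}\, 2^k\, (1 + 2^k\delta)^{-m}$ and $|\psi_k'(x)| \le C_{m+2}\, 4^k\, S_{m+2}(x) \lesssim C_{m+2}\, 4^k\, (1 + 2^k\delta)^{-m}$ (using $1 + 2^k\delta \ge 1$ to pass from exponent $m+2$ to $m$), so $C_m'$ may be taken to depend only on $m$ and the $C_j$. Throughout we assume $k \ge 1$, so that $[0,2^{-k}]$ is a proper closed arc of $\mathbb{T}$ (this is the only range relevant in the sequel, and the one for which Claim~\ref{claim:stupidf} is stated).

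To bound $S_m(x)$, split the sum into $n = 0$, $n \ge 1$, and $n \le -1$. Since $0 \in [0,2^{-k}]$ one has $\distr(x,[0,2^{-k}]) \ge \distt(x,[0,2^{-k}]) = \delta$, so the $n = 0$ term is at most $(1 + 2^k\delta)^{-m}$. For $n \ge 1$ one has $x + n \ge 1 > 2^{-k}$, hence $\distr(x+n,[0,2^{-k}]) = x+n-2^{-k}$ and $1 + 2^k\distr(x+n,[0,2^{-k}]) = 2^k(x+n)$; Claim~\ref{claim:stupidf} with $t = x$ and $j = n$ gives $n\big(1 + 2^k\min(x,1-x)\big) \le 1 + 2^k(x+n-1) \le 2^k(x+n)$, and since $\min(x,1-x) = \distt(x,\{0\}) \ge \delta$ this yields $1 + 2^k\distr(x+n,[0,2^{-k}]) \ge n\,(1 + 2^k\delta)$. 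For $n \le -1$ one has $x + n < 0$, so $\distr(x+n,[0,2^{-k}]) = |n| - x$; Claim~\ref{claim:stupidf} with $t = 1-x$ and $j = |n|$ gives $|n|\big(1 + 2^k\min(x,1-x)\big) \le 1 + 2^k(|n| - x) = 1 + 2^k\distr(x+n,[0,2^{-k}])$, hence again $1 + 2^k\distr(x+n,[0,2^{-k}]) \ge |n|\,(1 + 2^k\delta)$. Combining the three cases,
\[
S_m(x) \le \Big(1 + 2\sum_{n=1}^\infty n^{-m}\Big)(1 + 2^k\delta)^{-m},
\]
and the series converges to a constant depending only on $m$ as soon as $m \ge 2$, which is exactly the range in which it is used.

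Assembling the pieces yields both claimed bounds with $C_m'$ depending only on $m$ and the $C_j$. The substance of the argument is the reduction, in the cases $n \ge 1$ and $n \le -1$, of the Euclidean distances $\distr(x+n,[0,2^{-k}])$ of the integer translates to the single torus distance $\delta$, which is precisely the content of Claim~\ref{claim:stupidf}. I expect the only real (and minor) obstacle to be the exponent bookkeeping: one must run the estimate at exponent $m+2$ rather than $m$ so that the tail series $\sum_n |n|^{-m}$ converges, and one must keep straight that $\distr(x+n,[0,2^{-k}])$ equals $x+n-2^{-k}$ for $n \ge 1$ and $|n|-x$ for $n \le -1$, with the $n = 0$ term handled separately. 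Everything else is a routine splitting argument.
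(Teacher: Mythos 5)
Your proof is correct and follows essentially the same strategy as the paper: periodize, bound termwise, split the sum over $n \in \mathbb{Z}$ into $n = 0$ and $|n| \geq 1$, invoke Claim~\ref{claim:stupidf} to dominate each Euclidean distance $\distr(x+n,[0,2^{-k}])$ by a multiple of the torus distance, sum the resulting $\sum |n|^{-m}$ tail, and run the estimate at a bumped exponent to cover small $m$. The only cosmetic difference is your choice of $t = x$ (resp. $1-x$) in the Claim rather than the paper's $t = \distr(x,[0,2^{-k}])$ (resp. $\distr(x-1,[0,2^{-k}])$), which lets you avoid peeling off the $n=-1$ term separately.
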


\begin{proof} Fix $x \in [0,1)$.  Clearly, $\distr(x, [0,2^{-k}])
\geq \distt(x, [0,2^{-k}])$.  Hence, $|\theta_k(x)| \leq C_m 2^k (1 +
\distr(x, [0,2^{-k}]))^{-m} \leq C_m 2^k (1 + \distt(x,
[0,2^{-k}]))^{-m}$.  For any $j \in \mathbb{N}$, note $\distr(x+j,
[0, 2^{-k}]) \geq \distr(x, [0, 2^{-k}]) + j - 1$. Set $t =
\distr(x, [0,2^{-k}])$, and observe that $t \in [0, 1]$ and
$\distt(x, [0,2^{-k}]) \leq \min(t, 1-t)$. Thus, by
Claim~\ref{claim:stupidf},

\begin{equation*}
\begin{split}
j\big(1 + 2^k \distt(x, [0,2^{-k}])\big) &\leq j\big(1 + 2^k \min(t,
1-t)\big) = f(t) + 2^k(t + j - 1) \\
&\leq 1 + 2^k(t + j - 1) \leq 1 + 2^k \distr(x+j, [0, 2^{-k}]).
\end{split}
\end{equation*}

\noindent Therefore, we see that for any integer $m > 1$,

\begin{equation*}
\begin{split}
\sum_{j=1}^\infty |\theta_k(x+j)| &\leq \sum_{j=1}^\infty C_m 2^k
\Big(1 + \distr(x+j, [0,2^{-k}])\Big)^{-m} \\
&\leq C_m 2^k \sum_{j=1}^\infty j^{-m} \Big(1 + 2^k \distt(x, [0,
2^{-k}]) \Big)^{-m} \\
&\leq 2 C_m 2^k \Big(1 + 2^k \distt(x, [0, 2^{-k}]) \Big)^{-m}
\end{split}
\end{equation*}

Similarly, $\distr(x-1, [0, 2^{-k}]) = \distr(x-1,0) = \distr(x,1)
\geq \distt(x,1) \geq \distt(x, [0,2^{-k}])$. Therefore,
$|\theta_k(x-1)| \leq C_m 2^k (1 + \distr(x-1,[0,2^{-k}]))^{-m} \leq
C_m 2^k (1 + 2^k \distt(x, [0,2^{-k}]))^{-m}$, and for $j \in
\mathbb{N}$, we have $\distr(x-j,[0,2^{-k}]) =
\distr(x-1,[0,2^{-k}])+j-1$. Set $t = \distr(x-1, [0,2^{-k}])$, and
again observe that $t \in [0,1]$ and $\distt(x, [0,2^{-k}]) \leq
\min(t, 1-t)$.  Using the claim as before, it follows that $j(1 +
2^k \distt(x, [0,2^{-k}])) \leq 1 + 2^k \distr(x-j,[0,2^{-k}])$.
Thus, for $m > 1$,

\begin{equation*}
\begin{split}
\sum_{j=2}^\infty |\theta_k(x-j)| &\leq \sum_{j=2}^\infty C_m 2^k
\Big(1 + \distr(x-j, [0,2^{-k}])\Big)^{-m} \\
&\leq C_m 2^k \sum_{j=2}^\infty j^{-m} \Big(1 + 2^k \distt(x, [0,
2^{-k}]) \Big)^{-m} \\
&\leq  2 C_m 2^k \Big(1 + 2^k \distt(x, [0, 2^{-k}]) \Big)^{-m}
\end{split}
\end{equation*}

\noindent Hence,

\begin{equation*}
\begin{split}
|\psi_k(x)| &\leq \sum_{j \in \mathbb{Z}} |\theta_k(x+j)| \\
&= |\theta_k(x)| + |\theta_k(x-1)| + \sum_{j=1}^\infty
|\theta_k(x+j)| + \sum_{j=2}^\infty |\theta_k(x-j)| \\
&\leq (C_m + C_m + 2C_m + 2C_m) 2^k \Big(1 + 2^k \distt(x,
[0,2^{-k}])\Big)^{-m}.
\end{split}
\end{equation*}

Now, this holds for all $m > 1$.  But, of course, the $m = 1$ case
follows as $|\psi_k(x)| \leq 6C_2 2^k (1 + 2^k \distt(x, [0,
2^{-k}]))^{-2} \leq 6C_2 2^k (1 + 2^k \distt(x, [0, 2^{-k}]))^{-1}$.
The condition on $\psi'_k$ is proven in exactly the same manner.
Thus, the statement holds with $C_m' = 6C_m$ for $m > 1$ and $C_1' =
6C_2$.
\end{proof}

\begin{thm}\label{thm:tbumps} There are smooth functions $\psi_k^1, \psi_k^2 :
\mathbb{T} \rightarrow \mathbb{C}$, $k \in \mathbb{N}$, and
constants $C_m > 0$, $m \in \mathbb{N}$, so that

\begin{gather*}
\sum_{k = 1}^\infty \widehat{\psi^1_k}(n) \widehat{\psi^2_k}(-n) =
\chi_{\mathbb{Z} - 0}(n) , \\
\supp(\widehat{\psi^1_k}) \subseteq
[-2^{k-2}, -2^{k-4}] \cup [2^{k-4}, 2^{k-2}], \,\, \widehat{\psi^2_k}(0) = 0, \\
|\psi^1_k(x)|, |\psi^2_k(x)| \leq 2^k C_m \Big(1 + 2^k \distt(x, [0,
2^{-k}]) \Big)^{-m} \,\, \text{ for all } x \in \mathbb{T}, \, m \in \mathbb{N}, \\
|\psi^{1'}_k(x)|, |\psi^{2'}_k(x)| \leq 4^k C_m \Big(1 + 2^k
\distt(x, [0, 2^{-k}]) \Big)^{-m} \,\, \text{ for all } x \in
\mathbb{T}, \, m \in \mathbb{N}.
\end{gather*}
\end{thm}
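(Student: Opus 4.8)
The plan is to obtain $\psi_k^1, \psi_k^2$ as the periodizations of the Schwartz functions $\theta_k^1, \theta_k^2$ furnished by Theorem~\ref{thm:rbumps}. Recall from Section~\ref{sec:analysisont} that the periodization of a Schwartz function is smooth and that its Fourier coefficients agree with the Fourier transform of the original function restricted to $\mathbb{Z}$. So, for $k \in \mathbb{N}$, I would set $\psi_k^i$ equal to the periodization of $\theta_k^i$, so that $\widehat{\psi_k^i}(n) = \widehat{\theta_k^i}(n)$ for every $n \in \mathbb{Z}$ and $i = 1,2$. Smoothness of each $\psi_k^i$ is then the periodization fact just recalled; the support condition $\supp(\widehat{\psi_k^1}) \subseteq [-2^{k-2}, -2^{k-4}] \cup [2^{k-4}, 2^{k-2}]$ and the vanishing $\widehat{\psi_k^2}(0) = 0$ follow immediately from the corresponding statements in Theorem~\ref{thm:rbumps}, since passing to Fourier coefficients just restricts the (real-variable) support to the integers.

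For the pointwise bounds, I observe that Theorem~\ref{thm:rbumps} provides exactly the hypotheses of Lemma~\ref{lemma:adapted} for each $\theta_k^i$, with the constants $C_m$ appearing there. Applying the lemma produces constants $C_m'$ with $|\psi_k^i(x)| \leq C_m' 2^k (1 + 2^k \distt(x, [0, 2^{-k}]))^{-m}$ and $|\psi_k^{i\prime}(x)| \leq C_m' 4^k (1 + 2^k \distt(x, [0, 2^{-k}]))^{-m}$ for all $x \in \mathbb{T}$ and all $m \in \mathbb{N}$, which are precisely the last two displayed lines of the statement after renaming $C_m'$ to $C_m$.

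The one point needing care is the partition-of-unity identity, because Theorem~\ref{thm:rbumps} sums over $k \in \mathbb{Z}$ while here the sum runs only over $k \geq 1$. I would argue that the omitted terms contribute nothing on the integers: for $k \leq 1$ the interval $[2^{k-4}, 2^{k-2}]$ is contained in $(0, 1/2]$, hence contains no nonzero integer, and by symmetry $[-2^{k-2}, -2^{k-4}] \subseteq [-1/2, 0)$ likewise; since $0 \notin \supp(\widehat{\theta_k^1})$, we conclude $\widehat{\theta_k^1}(n) = 0$ for every $n \in \mathbb{Z}$ whenever $k \leq 1$. Consequently, for each $n \in \mathbb{Z}$,
\begin{equation*}
\sum_{k=1}^\infty \widehat{\psi_k^1}(n)\,\widehat{\psi_k^2}(-n) = \sum_{k \in \mathbb{Z}} \widehat{\theta_k^1}(n)\,\widehat{\theta_k^2}(-n) = \chi_{\mathbb{R}-0}(n) = \chi_{\mathbb{Z}-0}(n),
\end{equation*}
the middle equality being the first identity of Theorem~\ref{thm:rbumps}. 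This settles all four displayed conditions. The only genuinely new ingredient beyond Theorem~\ref{thm:rbumps} and Lemma~\ref{lemma:adapted} is this elementary index-truncation observation, and it is the step I would expect a reader to want spelled out; everything else is bookkeeping.
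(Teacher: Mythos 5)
Your proof is correct and follows the same route as the paper: periodize the $\theta_k^i$ from Theorem~\ref{thm:rbumps}, use the fact that the Fourier coefficients of the periodization agree with $\widehat{\theta_k^i}$ on $\mathbb{Z}$, invoke Lemma~\ref{lemma:adapted} for the pointwise bounds, and note that the terms with small $k$ vanish at every integer so the sum can be restricted to $k \geq 1$. The only cosmetic difference is that you show the $k \leq 1$ terms all vanish while the paper only bothers to observe this for $k \leq 0$, which is all that is needed; both are true and the discrepancy is harmless.
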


\begin{proof} Let $\theta^1_k, \theta_k^2 : \mathbb{R} \rightarrow
\mathbb{C}$, $k \in \mathbb{Z}$, be the functions guaranteed by
Theorem~\ref{thm:rbumps}, and $\psi^1_k$, $\psi^2_k$ their
respective periodizations.  As $\widehat{\theta_k^i}(n) =
\widehat{\psi^i_k}(n)$, it follows $\widehat{\psi^2_k}(0) = 0$ and
$\supp(\widehat{\psi^1_k}) \subseteq [-2^{k-2}, -2^{k-4}] \cup
[2^{k-4}, 2^{k-2}]$. From this, we see for any integer $n$ that
$\widehat{\psi_k^1}(n) \widehat{\psi^2_k}(-n) = 0$ for $k \leq 0$.
Thus,

\begin{equation*}
\sum_{k=1}^\infty \widehat{\psi^1_k}(n) \widehat{\psi^2_k}(-n) =
\sum_{k \in \mathbb{Z}} \widehat{\theta^1_k}(n)
\widehat{\theta^2_k}(-n) = \chi_{\mathbb{Z}-0}(n).
\end{equation*}

\noindent Finally, the inequalities on $\psi_k^i$ and $\psi_k^{i'}$
follow from Theorem~\ref{thm:rbumps} and Lemma~\ref{lemma:adapted}.
\end{proof}

\begin{thm}\label{thm:doublerbumps} There are Schwartz functions
$\theta^{a,i}_k : \mathbb{R} \rightarrow \mathbb{C}$, $1 \leq a, i
\leq 3$, $k \in \mathbb{Z}$, and constants $C_m > 0$, $m \in
\mathbb{N}$, so that

\begin{gather*}
\sum_{a=1}^3 \sum_{k \in \mathbb{Z}} \widehat{\theta^{a,1}_k}(t_1)
\widehat{\theta^{a,2}_k}(t_2) \widehat{\theta^{a,3}_k}(-t_1-t_2) =
\chi_{\mathbb{R}^2 - (0,0)}(t_1,t_2) \\
\supp(\widehat{\theta^{a,i}_k}) \subseteq [-2^{k-2}, -2^{k-10}] \cup
[2^{k-10}, 2^{k-2}] \quad \text{ for } a \not= i, \\
\supp(\widehat{\theta^{a,i}_k}) \subseteq [-2^{k-2}, 2^{k-2}] \quad \text{ for } a = i, \\
|\theta^{a,i}_k(x)| \leq 2^k C_m \Big(1 + 2^k \distr(x, [0, 2^{-k}])
\Big)^{-m} \,\, \text{ for all } x \in \mathbb{R}, \, m \in \mathbb{N}, \\
|\theta^{a,i'}_k(x)| \leq 4^k C_m \Big(1 + 2^k \distr(x, [0,
2^{-k}]) \Big)^{-m} \,\, \text{ for all } x \in \mathbb{R}, \, m \in
\mathbb{N}.
\end{gather*}
\end{thm}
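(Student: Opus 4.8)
The plan is to carry out, on the frequency side, the classical Littlewood--Paley/Coifman--Meyer paraproduct decomposition of $\mathbb{R}^2$, arranged so that the frequency supports of the resulting pieces sit inside the prescribed intervals, and then to pass to the physical side by inverse Fourier transform exactly as in the proof of Theorem~\ref{thm:rbumps}. First I would set up three families of dilated bumps. Fix a smooth even $\widehat{\Phi}:\mathbb{R}\to[0,1]$ with $\widehat{\Phi}\equiv 1$ on $[-2^{-8},2^{-8}]$ and $\supp\widehat{\Phi}\subseteq[-2^{-4},2^{-4}]$, put $\widehat{\Phi}_k(t)=\widehat{\Phi}(2^{-k}t)$, and let $\widehat{\psi}_k:=\widehat{\Phi}_k-\widehat{\Phi}_{k-1}$; then (as in the proof of Theorem~\ref{thm:rbumps}) the $\widehat{\psi}_k$ are supported in $\{2^{k-9}\le|t|\le 2^{k-4}\}$, vanish at $0$, and $\sum_{k\in\mathbb{Z}}\widehat{\psi}_k=\chi_{\mathbb{R}-0}$ by telescoping ($\widehat{\Phi}_k\to 1$ pointwise as $k\to+\infty$ and $\widehat{\Phi}_k\to 0$ on $\mathbb{R}\setminus 0$ as $k\to-\infty$). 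Next fix a smooth even $\widehat{\Lambda}$ with $\supp\widehat{\Lambda}\subseteq[-2^{-2},-2^{-10}]\cup[2^{-10},2^{-2}]$ and $\widehat{\Lambda}\equiv 1$ on a neighbourhood of $\{2^{-9}\le|t|\le 2^{-4}\}$, and put $\widehat{\Lambda}_k(t)=\widehat{\Lambda}(2^{-k}t)$; then $\widehat{\Lambda}_k\equiv 1$ on $\supp\widehat{\psi}_k$ and $\supp\widehat{\Lambda}_k\subseteq[-2^{k-2},-2^{k-10}]\cup[2^{k-10},2^{k-2}]$. The dilates $\widehat{\Phi}_{k+c}$ (for a bounded integer $c\ge 0$, so that $\supp\widehat{\Phi}_{k+c}\subseteq[-2^{k-2},2^{k-2}]$) will play the role of the ``low-frequency'' factors.

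The core of the proof is the partition-of-unity identity: for $(t_1,t_2)\neq(0,0)$, with $t_3:=-t_1-t_2$,
\[
\chi_{\mathbb{R}^2-(0,0)}(t_1,t_2)=\sum_{a=1}^{3}\sum_{k\in\mathbb{Z}}\widehat{\theta^{a,1}_k}(t_1)\,\widehat{\theta^{a,2}_k}(t_2)\,\widehat{\theta^{a,3}_k}(-t_1-t_2),
\]
where in each type-$a$ block the diagonal factor $\widehat{\theta^{a,a}_k}$ is one of the $\widehat{\Phi}_{k+c}$ and the two off-diagonal factors $\widehat{\theta^{a,i}_k}$ ($i\neq a$) are $\widehat{\psi}_k$ and $\widehat{\Lambda}_k$, assigned cyclically in $a$. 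This is the standard paraproduct identity, obtained by the usual telescoping of Littlewood--Paley projections together with a regrouping of the resulting products: the elementary geometric fact that three real numbers summing to zero always include two of comparable absolute value guarantees that whenever two off-diagonal annulus factors at scale $k$ are active the third frequency is $\lesssim 2^{k}$ and may be absorbed into a low-frequency factor, while terms carrying the ``wrong'' number of low-frequency factors are either supported on the empty set or can be re-expanded into admissible type-$a$ pieces using $\widehat{\Phi}_{k-1}+\widehat{\psi}_k=\widehat{\Phi}_k$ and $\widehat{\Lambda}_k\equiv 1$ on $\supp\widehat{\psi}_k$. The freedom to fatten the companion factor to $\widehat{\Lambda}_k$, the cyclic labelling, and the shift $c$ are precisely what is needed to make the regrouping exact while keeping every support inside $[-2^{k-2},2^{k-2}]$ or $[-2^{k-2},-2^{k-10}]\cup[2^{k-10},2^{k-2}]$; in particular the width-$2^{8}$ slack in the fat annulus produces enough overlap between neighbouring scales to cover the locus where all of $|t_1|,|t_2|,|t_3|$ are comparable. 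One should separately check the locus where one $t_i$ vanishes: there exactly one type-$a$ block survives (the other two carry a vanishing annulus factor), and it telescopes to $1$ because $\widehat{\Phi}(0)=1$ and $\widehat{\Lambda}_k$ is even with $\widehat{\Lambda}_k\equiv 1$ on $\supp\widehat{\psi}_k$.

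Once the identity is in place the remaining conditions are routine. The support statements hold by construction. For the pointwise bounds, each $\widehat{\theta^{a,i}_k}$ is, up to a difference of two such, a dilate $t\mapsto\widehat{g}(2^{-k+O(1)}t)$ of a single fixed Schwartz function $g\in\{\Phi,\Lambda\}$, so $\theta^{a,i}_k(x)=2^{k+O(1)}g(2^{k+O(1)}x)$ (a sum of two such for $\widehat{\psi}_k$); the Schwartz decay of $g$ then transfers exactly as in the proof of Theorem~\ref{thm:rbumps}, using $1+2^{k}\distr(x,[0,2^{-k}])\le 1+2^{k}|x|$ to pass to the required weight, and the extra factor $2^{k}$ in the derivative estimate comes from differentiating a $2^{k}$-dilate.

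The step I expect to be the genuine obstacle is constructing and verifying the partition-of-unity identity with all supports simultaneously confined to the prescribed intervals --- that is, choosing the fattenings, the cyclic assignment, and the shift $c$ so that the Littlewood--Paley regrouping is an exact identity of the stated form. The telescoping mechanism, the support bookkeeping, and the decay estimates are all straightforward given Theorem~\ref{thm:rbumps} and the computations in its proof.
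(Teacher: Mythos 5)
Your approach is essentially the paper's: start from $1=\bigl(\sum_{k_1}\widehat{\psi}_{k_1}(t_1)\bigr)\bigl(\sum_{k_2}\widehat{\psi}_{k_2}(t_2)\bigr)$ (the paper's $\widehat{\beta^1_k}$ playing the role of your $\widehat{\psi}_k$), split the double sum into $k_1\ll k_2$, $k_1\gg k_2$, $|k_1-k_2|\le 2$, telescope the off-diagonal sums into low-pass cutoffs, and then insert a third-slot companion that is identically $1$ on the support of the product of the other two so that the third variable $-t_1-t_2$ is covered without disturbing the identity. The $t_i=0$ edge cases and the physical-side decay estimates are handled exactly as you say, carrying over Theorem~\ref{thm:rbumps}.

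One imprecision in the frequency-side bookkeeping is worth flagging because it is precisely the ``genuine obstacle'' you point to. You describe every block as carrying, in its two off-diagonal slots, one honest thin-annulus factor $\widehat{\psi}_k$ and one companion bump $\widehat{\Lambda}_k$. That is right for the blocks $a=1,2$ (where the regrouping yields $\widehat{\psi}_k(t_j)\,\widehat{\Phi}_{k+c}(t_a)$ and the companion bump $\widehat{\Lambda}_k$ is inserted in slot $3$). But in the near-diagonal block $a=3$, both off-diagonal slots carry \emph{honest} factors: slot $1$ carries $\widehat{\psi}_{k_1}$, and slot $2$ must carry the telescoped fat annulus $\sum_{|j-k_1|\le 2}\widehat{\psi}_j=\widehat{\Phi}_{k_1+2}-\widehat{\Phi}_{k_1-3}$ (the paper's $\widehat{\beta^3_k}$), which is forced by the regrouping and cannot be replaced by an arbitrary $\widehat{\Lambda}_k$ without breaking the identity. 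The companion in block $a=3$ is not in an off-diagonal slot at all --- it is the \emph{diagonal} low-pass factor itself, a cutoff supported in $[-2^{k-2},2^{k-2}]$ chosen to be identically $1$ on the range of $-t_1-t_2$ that the honest product allows (the paper's $\widehat{\gamma^2_k}$). Once this assignment is corrected, your telescoping plus a straightforward support check gives exactly the paper's identity, and the rest of your sketch goes through unchanged.
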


\begin{proof} Similar to the proof of Theorem~\ref{thm:rbumps}, start
with a Schwartz bump $\widehat{\alpha}$ which is identically 1 on
$[-1/64, 1/64]$ and supported in $[-1/32,1/32]$.  Set
$\widehat{\beta}(t) = \widehat{\alpha}(t) - \widehat{\alpha}(2t)$.
Define $\widehat{\beta^1_k}(t) = \widehat{\beta}(2^{-k}t)$ and
$\widehat{\beta^2_k}(t) = \widehat{\alpha}(2^{-k+3}t)$.  Set
$\widehat{\beta_k^3}(t) = \sum_{j = k-2}^{k+2}
\widehat{\beta^1_j}(t)$.

By construction of $\alpha$, we can see $\supp(\widehat{\beta_k^2})
\subseteq [-2^{k-8}, 2^{k-8}]$.  By an argument similar to that in
Theorem~\ref{thm:rbumps}, $\supp(\widehat{\beta_k^1}) \subseteq
[-2^{k-5}, -2^{k-7}] \cup [2^{k-7}, 2^{k-5}]$.  Thus,
$\supp(\widehat{\beta_k^3}) \subseteq [-2^{k-3}, -2^{k-9}] \cup
[2^{k-9}, 2^{k-3}]$.

Fix $t \in \mathbb{R}$, $t \not= 0$, and choose $N \in \mathbb{N}$
so that $|t| > 2^{-N-6}$.  Then, $|2^{N+1}t| > 1/32$ and by the same
telescoping argument as before

\begin{equation*}
\begin{split}
\sum_{j=-N}^{k-3} \widehat{\beta^1_j}(t) &= \Big(
\widehat{\alpha}(2^{N}t) - \widehat{\alpha}(2^{N+1}t) \Big) + \ldots
+ \Big( \widehat{\alpha}(2^{-k+3}t) - \widehat{\alpha}(2^{-k+3}t)
\Big) \\
&= \widehat{\alpha}(2^{-k+3}t) - \widehat{\alpha}(2^{N+1}t) =
\widehat{\alpha}(2^{-k+3}t) = \widehat{\beta^2_k}(t).
\end{split}
\end{equation*}

\noindent As $N$ and $t$ are arbitrary, we have that $\sum_{j < k -
2} \widehat{\beta^1_j}(t) = \widehat{\beta^2_k}(t)$ for $t \not= 0$.
By the same argument used in Theorem~\ref{thm:rbumps}, $\sum
\widehat{\beta^1_k}(t) = 1$ for all $t \not= 0$.

Fix $t_1, t_2 \in \mathbb{R}$, both non-zero.  Then,

\begin{equation*}
\begin{split}
1 &= \bigg( \sum_{k_1 \in \mathbb{Z}} \widehat{\beta^1_{k_1}}(t_1)
\bigg)
\bigg( \sum_{k_2 \in \mathbb{Z}} \widehat{\beta^1_{k_2}}(t_2) \bigg) \\
&= \sum_{k_1 \in \mathbb{Z}} \sum_{k_2 > k_1 + 2}
\widehat{\beta^1_{k_1}}(t_1) \widehat{\beta^1_{k_2}}(t_2) +
\sum_{k_1 \in \mathbb{Z}} \sum_{k_2 < k_1 - 2}
\widehat{\beta^1_{k_1}}(t_1) \widehat{\beta^1_{k_2}}(t_2) \\
&\qquad + \sum_{k_1 \in \mathbb{Z}} \sum_{k_2 = k_1 - 2}^{k_1 + 2}
\widehat{\beta^1_{k_1}}(t_1) \widehat{\beta^1_{k_2}}(t_2) \\
&= \sum_{k \in \mathbb{Z}} \widehat{\beta^2_{k}}(t_1)
\widehat{\beta_{k}^1}(t_2) + \sum_{k \in \mathbb{Z}}
\widehat{\beta^1_{k}}(t_1) \widehat{\beta_{k}^2}(t_2) + \sum_{k \in
\mathbb{Z}} \widehat{\beta^1_{k}}(t_1) \widehat{\beta_{k}^3}(t_2).
\end{split}
\end{equation*}

\noindent On the other hand, $\widehat{\beta^2_k}(0) =
\widehat{\alpha}(0) = 1$.  Hence, in the $t_1 = 0$ case, we see that
for any $t_2 \not= 0$

\begin{gather*}
\sum_{k \in \mathbb{Z}} \widehat{\beta^2_{k}}(0)
\widehat{\beta_{k}^1}(t_2) + \sum_{k \in \mathbb{Z}}
\widehat{\beta^1_{k}}(0) \widehat{\beta_{k}^2}(t_2) + \sum_{k \in
\mathbb{Z}} \widehat{\beta^1_{k}}(0) \widehat{\beta_{k}^3}(t_2) =
\sum_{k \in \mathbb{Z}} \widehat{\beta_{k}^1}(t_2) = 1.
\end{gather*}

\noindent The $t_2 = 0$ case is symmetrical.  We note that when $t_1
= t_2 = 0$, the triple sum is equal to 0.  Hence,

\begin{equation*}
\sum_{k \in \mathbb{Z}} \widehat{\beta^2_{k}}(t_1)
\widehat{\beta_{k}^1}(t_2) + \sum_{k \in \mathbb{Z}}
\widehat{\beta^1_{k}}(t_1) \widehat{\beta_{k}^2}(t_2) + \sum_{k \in
\mathbb{Z}} \widehat{\beta^1_{k}}(t_1) \widehat{\beta_{k}^3}(t_2) =
\chi_{\mathbb{R}^2 - (0,0)}(t_1, t_2).
\end{equation*}

\noindent Define $\beta^1_k = \theta^{1,2}_k = \theta^{2,1}_k =
\theta^{3,1}_k$, $\beta^2_k = \theta^{1,1}_k = \theta^{2,2}_k$, and
$\beta^3_k = \theta^{3,2}_k$ and observe

\begin{equation*}
\sum_{k \in \mathbb{Z}} \widehat{\theta^{1,1}_{k}}(t_1)
\widehat{\theta^{1,2}_{k}}(t_2) + \sum_{k \in \mathbb{Z}}
\widehat{\theta^{2,1}_{k}}(t_1) \widehat{\theta^{2,2}_{k}}(t_2) +
\sum_{k \in \mathbb{Z}} \widehat{\theta^{3,1}_{k}}(t_1)
\widehat{\theta_{k}^{3,2}}(t_2) = \chi_{\mathbb{R}^2 - (0,0)}(t_1,
t_2).
\end{equation*}

Choose a Schwartz function $\gamma^1$ supported in $[-2^{-3},
-2^{-9}] \cup [2^{-9}, 2^{-3}]$ and identically 1 on $[-2^{-4},
-2^{-8}] \cup [2^{-8}, 2^{-4}]$.  Let $\widehat{\gamma^1_k}(t) =
\widehat{\gamma^1}(2^{-k}t)$. Then, $\supp(\widehat{\gamma^1_k})
\subseteq [-2^{k-3}, -2^{k-9}] \cup [2^{k-9}, 2^{k-3}]$ and
$\widehat{\gamma^1_k} = 1$ on $[-2^{k-4}, -2^{k-8}] \cup [2^{k-8},
2^{k-4}]$.  Now, if $2^{k-7} \leq |t_1| \leq 2^{k-5}$ and $|t_2|
\leq 2^{k-8}$, then $2^{k-8} \leq |t_1 + t_2| \leq 2^{k-4}$. Hence,
$\widehat{\gamma_k^1}(-t_1 - t_2) = 1$ for such $t_1, t_2$. In
particular,

\begin{equation*}
\begin{split}
\widehat{\theta_k^{2,1}}(t_1) \widehat{\theta_k^{2,2}}(t_2)
\widehat{\gamma^1_k}(-t_1 - t_2) &= \widehat{\beta_k^1}(t_1)
\widehat{\beta_k^2}(t_2) \widehat{\gamma_k^1}(-t_1-t_2) \\
&= \widehat{\beta_k^1}(t_1) \widehat{\beta_k^2}(t_2) =
\widehat{\theta_k^{2,1}}(t_1) \widehat{\theta_k^{2,2}}(t_2).
\end{split}
\end{equation*}

\noindent By symmetry,

\begin{equation*}
\widehat{\theta_k^{1,1}}(t_1) \widehat{\theta_k^{1,2}}(t_2)
\widehat{\gamma^1_k}(-t_1-t_2) = \widehat{\theta_k^{1,1}}(t_1)
\widehat{\theta_k^{1,2}}(t_2).
\end{equation*}

\noindent Set $\theta_k^{1,3} = \theta_k^{2,3} = \gamma^1$.

Similarly, if we choose a Schwartz function $\gamma^2$ so that
$\widehat{\gamma^2}$ is supported in $[-1/4, 1/4]$ and identically 1
on $[-1/8-1/32, 1/8+1/32]$, and let $\widehat{\gamma^2_k}(t) =
\widehat{\gamma^2}(2^{-k}t)$, then $\widehat{\gamma_k^2}$ is
supported in $[-2^{k-2}, 2^{k-2}]$ and identically 1 on
$[-2^{k-3}-2^{k-5}, 2^{k-5}+2^{k-3}]$.  Thus,

\begin{equation*}
\widehat{\theta^{3,1}_k}(t_1) \widehat{\theta^{3,2}_k}(t_2)
\widehat{\gamma_k^2}(-t_1-t_2) = \widehat{\theta^{3,1}_k}(t_1)
\widehat{\theta^{3,2}_k}(t_2).
\end{equation*}

\noindent Set $\theta^{3,3}_k = \gamma^2_k$. It is now clear that
the appropriate sum condition holds.

As $\beta_0^1, \beta_0^2, \beta_0^3, \gamma_0^1, \gamma_0^2$ are all
Schwartz bumps, we can choose constants $C_m$ so that $|\beta_0^i|,
|\beta_0^{i'}|, |\gamma_0^j|, |\gamma_0^{j'}| \leq C_m(1 + \distr(x,
[0,1]))^{-m}$ for $i = 1, 2, 3$ and $j = 1, 2$.  Then, as in the
proof of Theorem~\ref{thm:rbumps}, $|\theta^{a,i}_k(x)| \leq C_m 2^k
(1 + 2^k \distr(x, [0,2^{-k}]))^{-m}$ and $|\theta^{a,i'}_k(x)| \leq
4^k C_m (1 + 2^k \distr(x, [0, 2^{-k}]))^{-m}$.
\end{proof}

\begin{thm}\label{thm:doubletbumps} There are smooth functions
$\psi^{a,i}_k : \mathbb{T} \rightarrow \mathbb{C}$, $1 \leq a, i
\leq 3$, $k \in \mathbb{N}$, and constants $C_m > 0$, $m \in
\mathbb{N}$, so that

\begin{gather*}
\sum_{a=1}^3 \sum_{k = 1}^\infty \widehat{\psi^{a,1}_k}(n_1)
\widehat{\psi^{a,2}_k}(n_2) \widehat{\psi^{a,3}_k}(-n_1-n_2) =
\chi_{\mathbb{Z}^2 - (0,0)}(n_1,n_2) \\
\supp(\widehat{\psi^{a,i}_k}) \subseteq [-2^{k-2}, -2^{k-10}] \cup
[2^{k-10}, 2^{k-2}] \quad \text{ for } a \not= i, \\
\supp(\widehat{\psi^{a,i}_k}) \subseteq [-2^{k-2}, 2^{k-2}] \quad \text{ for } a = i, \\
|\psi^{a,i}_k(x)| \leq 2^k C_m \Big(1 + 2^k \distt(x, [0, 2^{-k}])
\Big)^{-m} \,\, \text{ for all } x \in \mathbb{T}, \, m \in \mathbb{N}, \\
|\psi^{a,i'}_k(x)| \leq 4^k C_m \Big(1 + 2^k \distt(x, [0, 2^{-k}])
\Big)^{-m} \,\, \text{ for all } x \in \mathbb{T}, \, m \in
\mathbb{N}.
\end{gather*}
\end{thm}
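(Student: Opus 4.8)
The plan is to mirror the passage from Theorem~\ref{thm:rbumps} to Theorem~\ref{thm:tbumps}: let $\theta^{a,i}_k : \mathbb{R} \to \mathbb{C}$, $1 \le a,i \le 3$, $k \in \mathbb{Z}$, be the Schwartz functions supplied by Theorem~\ref{thm:doublerbumps}, and let $\psi^{a,i}_k$ denote their periodizations, now indexed by $k \in \mathbb{N}$. As recorded in Section~\ref{sec:analysisont}, periodization satisfies $\widehat{\psi^{a,i}_k}(n) = \widehat{\theta^{a,i}_k}(n)$ for every $n \in \mathbb{Z}$, so the support conditions transfer verbatim: $\supp(\widehat{\psi^{a,i}_k}) \subseteq [-2^{k-2}, -2^{k-10}] \cup [2^{k-10}, 2^{k-2}]$ for $a \neq i$ and $\supp(\widehat{\psi^{a,i}_k}) \subseteq [-2^{k-2}, 2^{k-2}]$ for $a = i$, since these hold for the $\theta^{a,i}_k$ and restriction to $\mathbb{Z}$ only shrinks the support.

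For the partition-of-unity identity, I would restrict the conclusion of Theorem~\ref{thm:doublerbumps} to $(t_1,t_2) = (n_1,n_2) \in \mathbb{Z}^2$, which, using the Fourier-coefficient identity above, yields
\begin{equation*}
\sum_{a=1}^3 \sum_{k \in \mathbb{Z}} \widehat{\psi^{a,1}_k}(n_1)\,\widehat{\psi^{a,2}_k}(n_2)\,\widehat{\psi^{a,3}_k}(-n_1-n_2) = \chi_{\mathbb{Z}^2-(0,0)}(n_1,n_2).
\end{equation*}
It then remains to see that each summand with $k \le 0$ vanishes identically in $(n_1,n_2)$, so that the sum over $\mathbb{Z}$ collapses to the sum over $k \ge 1$ required by the statement. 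This is the only step that is not purely formal, though it is short: fix $a$ and $k \le 0$; among the indices $i \in \{1,2,3\}$ at least two satisfy $i \neq a$, and for any such $i$ the set $[-2^{k-2}, -2^{k-10}] \cup [2^{k-10}, 2^{k-2}]$ contains no integer whatsoever, since $2^{k-2} \le \tfrac14 < 1$ and the set is bounded away from $0$. Hence $\widehat{\psi^{a,i}_k}$ annihilates every integer, and the product $\widehat{\psi^{a,1}_k}(n_1)\widehat{\psi^{a,2}_k}(n_2)\widehat{\psi^{a,3}_k}(-n_1-n_2)$ is $0$ for all $n_1,n_2$ regardless of the behavior of the remaining factor.

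Finally, the pointwise bounds on $\psi^{a,i}_k$ and $\psi^{a,i'}_k$ follow by applying Lemma~\ref{lemma:adapted} with $\theta_k = \theta^{a,i}_k$ and $\psi_k = \psi^{a,i}_k$: Theorem~\ref{thm:doublerbumps} guarantees precisely the hypotheses $|\theta^{a,i}_k(x)| \le C_m 2^k (1 + 2^k \distr(x,[0,2^{-k}]))^{-m}$ and $|\theta^{a,i'}_k(x)| \le C_m 4^k (1 + 2^k \distr(x,[0,2^{-k}]))^{-m}$, so the lemma produces constants with $|\psi^{a,i}_k(x)| \le C_m' 2^k (1 + 2^k \distt(x,[0,2^{-k}]))^{-m}$ and the corresponding bound for the derivative. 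Taking $C_m$ to be the maximum of the finitely many constants $C_m'$ arising from the nine pairs $(a,i)$ yields the stated uniform constants, completing the proof. The main ``obstacle'' is thus merely the bookkeeping of the vanishing of the low-frequency terms; all analytic content is already contained in Theorem~\ref{thm:doublerbumps} and Lemma~\ref{lemma:adapted}.
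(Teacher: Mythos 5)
Your proof is correct and takes essentially the same approach as the paper's (periodize the $\theta^{a,i}_k$ from Theorem~\ref{thm:doublerbumps}, use $\widehat{\psi^{a,i}_k}(n) = \widehat{\theta^{a,i}_k}(n)$ to transfer the support and partition-of-unity identities, and invoke Lemma~\ref{lemma:adapted} for the pointwise bounds). Your observation that for $a\neq i$ and $k\le 0$ the Fourier coefficients of $\psi^{a,i}_k$ vanish at \emph{every} integer (including $0$), so the entire $k\le 0$ summand vanishes identically, is a slightly sharper and more complete accounting than the paper's one-line remark, which only records vanishing at nonzero integers; both suffice, but yours closes the $(n_1,n_2)=(0,0)$ case more explicitly.
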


\begin{proof} Let $\theta^{a,i}_k$ be the functions
guaranteed by Theorem~\ref{thm:doublerbumps}, and let $\psi^{a,i}_k$
be their respective periodizations. Noting that
$\widehat{\psi^{a,i}_k}(n) = 0$ for all integers $n \not= 0$ when $k
\leq 0$, everything follows immediately from
Theorem~\ref{thm:doublerbumps}.
\end{proof}

\section{Adapted Families}

\begin{defnn} We say a smooth function $\varphi : \mathbb{T}
\rightarrow \mathbb{C}$ is adapted to an interval $I$ with constants
$C_m > 0$, $m \in \mathbb{N}$, if

\begin{gather*}
|\varphi(x)| \leq C_m \bigg(1 + \frac{\distt(x,I)}{|I|} \bigg)^{-m}
\text { for all } x \in \mathbb{T}, m \in \mathbb{N}, \\
|\varphi'(x)| \leq C_m \frac{1}{|I|} \bigg(1 +
\frac{\distt(x,I)}{|I|} \bigg)^{-m} \text { for all } x \in
\mathbb{T}, m \in \mathbb{N}. \\
\end{gather*}

A family of smooth functions $\varphi_I : \mathbb{T} \rightarrow
\mathbb{C}$, indexed by the dyadic intervals, is called an adapted
family if each $\varphi_I$ is adapted to $I$ with the same universal
constants.  We say $\{\varphi_I\}_I$ is a 0-mean adapted family if
it is an adapted family, with the additional property that
$\int_\mathbb{T} \varphi_I \, dm = 0$ for all $I$. \end{defnn}

The first question we should address is whether such a family
exists. Take either $\psi^1_k$ or $\psi^2_k$ from
Theorem~\ref{thm:tbumps}. We will write $\psi_k$ for simplicity. For
each dyadic interval $I = [2^{-k} j, 2^{-k}(j+1)]$, define
$\varphi_I(x) = 2^{-k} \psi_k(x - 2^{-k}j)$.  Then,

\begin{equation*}
\begin{split}
|\varphi_I(x)| &= |2^{-k} \psi_k(x - 2^{-k}j)| \\
&\leq C_m \Big( 1 + 2^k \distt(x - 2^{-k}j, [0,2^{-k}]) \Big)^{-m}
\\
&= C_m \bigg( 1 + \frac{\distt(x, I)}{|I|} \bigg)^{-m}.
\end{split}
\end{equation*}

\noindent Similarly, $|\varphi_I'(x)| = |2^{-k} \psi_k'(x -
2^{-k}j)| \leq C_m \frac{1}{|I|} (1 + \frac{\distt(x,
I)}{|I|})^{-m}$.  Therefore, we have established a way to generate
adapted families.  In fact, this is a 0-mean adapted family, as
$\widehat{\psi_k^1}(0) = \widehat{\psi_k^2}(0) = 0$. However, there
are adapted families with even more specific properties.

\begin{thm}\label{thm:specialadapted} There exists a 0-mean adapted
family $\{\varphi_I\}_I$ and a constant $a > 0$ so that $|\varphi_I|
\geq a \chi_I$ for all $I$.
\end{thm}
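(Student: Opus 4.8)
The plan is to build the family by a direct construction that mimics the periodization recipe used just above to produce adapted families, but starting from a single Schwartz bump on $\mathbb{R}$ that is strictly positive on a fixed interval. The key realization is that the requirement $|\varphi_I| \geq a\chi_I$ is a lower bound \emph{only on $I$ itself}, so I do not need any Fourier-support or partition-of-unity structure — I only need one well-chosen mother function $\psi : \mathbb{T} \to \mathbb{C}$, adapted to $[0,2^{-k_0}]$ at some fixed scale, with $|\psi| \geq c$ on the corresponding base interval, and then define $\varphi_I(x) = 2^{-k}\psi_k(x - 2^{-k}j)$ as before, where $\psi_k$ is the dilate/periodization at the scale of $I$.

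First I would construct a Schwartz function $\eta : \mathbb{R} \to \mathbb{C}$ with $\widehat{\eta}(0) = 0$ (to guarantee the 0-mean condition) but with $\eta$ itself bounded below in modulus on an interval of length $1$; the cleanest choice is something like $\eta = \phi - (\int \phi)\,\chi$-type corrections, or more concretely take $\eta(x) = \phi(x) e^{2\pi i x}$ where $\phi$ is a real nonnegative Schwartz bump that is $\geq 1$ on $[0,1]$ and has small enough tails that the correction needed to kill the zeroth Fourier coefficient — wait, on $\mathbb{R}$ the relevant condition is $\widehat{\eta}(0) = \int_{\mathbb{R}} \eta = 0$, which after periodizing gives $\int_{\mathbb{T}} \Psi = 0$. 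So I want $\int_{\mathbb{R}} \eta = 0$ while $|\eta| \geq a$ on $[0,1]$: modulating a positive bump by $e^{2\pi i x}$ does exactly this, since $\int \phi(x) e^{2\pi i x}\,dx = \widehat{\phi}(-1)$, and by choosing $\phi$ with $\widehat{\phi}$ supported away from $\pm 1$ (i.e. $\widehat{\phi}$ supported in $[-1/2,1/2]$) we get $\int_{\mathbb{R}} \eta = 0$ for free, while $|\eta(x)| = \phi(x) \geq 1$ on $[0,1]$. Then set $\eta_k(x) = 2^k \eta(2^k x)$; these satisfy the decay and derivative bounds of Lemma~\ref{lemma:adapted}'s hypotheses (exactly as in Theorem~\ref{thm:rbumps}), so their periodizations $\Psi_k$ satisfy the $\mathbb{T}$-side bounds $|\Psi_k(x)| \leq C_m 2^k(1 + 2^k\distt(x,[0,2^{-k}]))^{-m}$ and likewise for $\Psi_k'$.

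Next I would define, for each dyadic $I = [2^{-k}j, 2^{-k}(j+1)]$, the function $\varphi_I(x) = 2^{-k}\Psi_k(x - 2^{-k}j)$. The verification that $\{\varphi_I\}_I$ is a 0-mean adapted family is verbatim the computation displayed after the definition of adapted family: the decay bounds transfer because $\distt(x - 2^{-k}j, [0,2^{-k}]) = \distt(x, I)$ and $|I| = 2^{-k}$, and $\int_{\mathbb{T}} \varphi_I = 2^{-k}\widehat{\Psi_k}(0) = 2^{-k}\widehat{\eta_k}(0) = 2^{-k}\widehat{\eta}(0) = 0$. It remains to prove the lower bound. For $x \in I$ we have $x - 2^{-k}j \in [0, 2^{-k}]$, so I must show $|\Psi_k(y)| \geq a' 2^k$ for $y \in [0,2^{-k}]$ and some absolute $a' > 0$; then $|\varphi_I(x)| = 2^{-k}|\Psi_k(x - 2^{-k}j)| \geq a'$, giving the claim with $a = a'$. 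Now $\Psi_k(y) = \sum_{m \in \mathbb{Z}} \eta_k(y + m) = \eta_k(y) + \sum_{m \neq 0} \eta_k(y+m)$, and $\eta_k(y) = 2^k\eta(2^k y)$ with $2^k y \in [0,1]$, so $|\eta_k(y)| = 2^k\phi(2^k y) \geq 2^k$. The tail $\sum_{m\neq 0}|\eta_k(y+m)|$ is bounded, by the decay estimate, by $C 2^k \sum_{m \geq 1} m^{-2} \cdot (1 + 2^k \cdot \text{(something} \geq m-1))^{-2}$ — here I should be slightly careful: the naive tail bound only gives $\lesssim 2^k(1 + 2^k\distt(y,[0,2^{-k}]))^{-1}$, which is $\lesssim 2^k$ but not necessarily small. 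This is the one genuine obstacle. To fix it I choose $\phi$ to have \emph{very} rapidly decaying tails relative to its size on $[0,1]$ — concretely, fix $m = 2$ in the Schwartz estimate and take $\phi \geq 1$ on $[0,1]$ but with $C_2$ (the constant in $|\eta(x)|, |\eta'(x)| \leq C_2(1 + \distr(x,[0,1]))^{-2}$) small, say $C_2 < 1/100$; such a $\phi$ exists by scaling the spatial variable, i.e. replacing $\phi(x)$ by $\phi(\epsilon x + \text{shift})$ suitably — actually the right move is: there is \emph{no} constraint forcing $C_2$ large, since I only need $\phi \geq 1$ on $[0,1]$, and I can make $\phi$ as concentrated near $[0,1]$ as I like, making all tail contributions at distance $\geq 1$ as small as desired. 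With $C_2$ small enough that $\sum_{m \geq 1}(C_2/m^2) < 1/2$ (and similarly for the $m \leq -1$ side), the tail sum is $\leq 2^{k-1}$, hence $|\Psi_k(y)| \geq 2^k - 2^{k-1} = 2^{k-1}$ for all $y \in [0,2^{-k}]$, so $a = 1/2$ works. The only subtlety to get right in the write-up is the precise tail estimate $\sum_{m\neq 0}|\eta_k(y+m)| \leq 2^{k-1}$ uniformly in $y \in [0,2^{-k}]$ and $k \in \mathbb{N}$, which follows from $\distr(y+m, [0,1]) \geq |m| - 1$ together with the smallness of $C_2$; once that is in hand, everything else is the routine bookkeeping already carried out in Theorem~\ref{thm:rbumps} and Lemma~\ref{lemma:adapted}.
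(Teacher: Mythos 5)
Your overall strategy --- build a mean-zero Schwartz function bounded below in modulus on $[0,1]$, dilate, periodize, and translate --- is the same as the paper's, and your modulation trick $\eta = \phi\,e^{2\pi i x}$ with $\supp\widehat{\phi}\subseteq[-1/2,1/2]$ is a slick alternative to the paper's dyadic difference $\widehat{\theta}=\widehat{\beta}(\cdot)-\widehat{\beta}(2\cdot)$ for producing the vanishing mean. However, the step you flag as ``the one genuine obstacle'' is in fact fatal as written, and the repair you propose does not work.

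You claim you can choose $\phi$ so that $|\phi|\geq 1$ on $[0,1]$ while the constant $C_2$ in $|\eta(x)|\leq C_2\bigl(1+\distr(x,[0,1])\bigr)^{-2}$ is smaller than, say, $1/100$. This is impossible: evaluating that very inequality at a point $x\in[0,1]$, where $\distr(x,[0,1])=0$, gives $|\eta(x)|\leq C_2$, so $C_2 \geq \sup_{[0,1]}|\eta| = \sup_{[0,1]}|\phi| \geq 1$. The lower bound you want on $[0,1]$ and the decay constant you want to shrink are the \emph{same} quantity on that interval; you cannot separate them. (The additional constraint $\supp\widehat{\phi}\subseteq[-1/2,1/2]$ only makes matters worse, since it prevents $\phi$ from being sharply concentrated near $[0,1]$ by the uncertainty principle; ``replacing $\phi(x)$ by $\phi(\epsilon x)$'' widens $\phi$, and the opposite rescaling blows up $\supp\widehat{\phi}$.)

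The consequence is that your tail estimate
$\sum_{m\neq 0}|\eta_k(y+m)| \lesssim C_2\, 2^{-k}$
cannot be made smaller than $2^{k-1}$ for all $k\geq 1$ simultaneously; it works only for $k$ large enough, precisely because the factor $2^{-k}$ eventually beats the fixed ratio $C_2/c$. What is missing from your proposal is a second, separate step: once a threshold $K$ is identified so that the periodization bound holds for all $k\geq K$, there remain \emph{finitely many} dyadic intervals with $|I|>2^{-K}$, and for those one must construct the members $\varphi_I$ by hand (any smooth function with $|\varphi_I|\geq a\chi_I$, corrected to have mean zero by subtracting a multiple of a bump supported away from $I$, and then absorbing the finitely many resulting constants into $C_m$). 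This is exactly how the paper completes the argument, and without it your proof does not cover all dyadic intervals. With that step added, your construction would go through.
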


\begin{proof} Choose a Schwartz function $\alpha : \mathbb{R}
\rightarrow \mathbb{C}$ so that $\widehat{\alpha} = 1$ on
$[-1/2, 1/2]$, $\supp(\widehat{\alpha}) \subseteq
[-1, 1]$, and $s = |\alpha(0)| > 0$.  By continuity, choose an
integer $k_0 \geq 0$ so that $|x| \leq 2^{-k_0}$ implies $|\alpha(x)
- \alpha(0)| < s/4$.  Then, for $x \in [0, 2^{-k_0}]$, we have
$|\alpha(x)| - s \leq |\alpha(x) - \alpha(0)| < s/4$ or $|\alpha(x)|
< \frac{5s}{4}$. Similarly, $s - |\alpha(x)| \leq |\alpha(x) -
\alpha(0)| < s/4$ or $|\alpha(x)| > \frac{3s}{4}$. Set $\beta(x) =
\alpha(2^{-k_0}x)$, giving $\frac{3s}{4} < |\beta(x)| <
\frac{5s}{4}$ for all $x \in [0, 1]$.  Define $\widehat{\theta}(x) =
\widehat{\beta}(x) - \widehat{\beta}(2x)$ and $\widehat{\theta}_k(x)
= \widehat{\theta}(2^{-k}x)$ for all $k \in \mathbb{N}$.

Now, $\theta_k(x) = 2^k \theta(2^k x)$ and $\theta(x) = \beta(x) -
\frac{1}{2} \beta(\frac{1}{2} x)$.  For any $x \in [0, 1]$, we see
$|\theta(x)| \geq |\beta(x)| - \frac{1}{2} |\beta(\frac{1}{2}x)|
\geq \frac{3s}{4} - \frac{5s}{8} = \frac{s}{8} =: c$.  Thus, for any
$x \in [0, 2^{-k}]$, we have $|\theta_k(x)| = 2^k|\theta(2^k x)|
\geq c2^k$.  Namely, $|\theta_k| \geq c 2^k \chi_{[0, 2^{-k}]}$.  It
is easily seen that $\widehat{\theta_k}(0) = \widehat{\theta}(0) =
\widehat{\beta}(0) - \widehat{\beta}(0) = 0$.

Note $\theta$ and $\theta'$ are Schwartz functions and $(1 +
\distr(x, [0, 1]))^m$ has polynomial growth.  Choose $C_m$ so that
$|\theta(x)|, |\theta'(x)| \leq C_m(1 + \distr(x, [0,1]))^{-m}$ for
all $x$ and $m$.  By the same manipulations as before, this implies
$|\theta_k(x)| = 2^k |\theta(2^kx)| \leq 2^k C_m (1 + \distr(2^k x,
[0,1]))^{-m} = 2^k C_m (1 + 2^k \distr(x, [0, 2^{-k}]))^{-m}$, and
$|\theta'_k(x)| = 4^k |\theta'(2^k)| \leq 4^k C_m (1 + 2^k \distr(x,
[0, 2^{-k}]))^{-m}$.

Let $\psi_k$ be the periodization of $\theta_k$.  As
$\widehat{\psi}_k(0) = \widehat{\theta}_k(0) = 0$, each $\psi_k$ has
integral 0.  Let $k \in \mathbb{N}$ and $x \in [0, 2^{-k}]$. Note,
for $j \geq 1$, we have $\distr(x+j, [0,2^{-k}]) = \distr(x+j,
2^{-k}) = x+j-2^{-k} \geq j - 2^{-k}$. For $j \leq -1$, we see
$\distr(x+j, [0,2^{-k}]) = \distr(x+j, 0) = |j| - x \geq |j| -
2^{-k}$.  So,

\begin{equation*}
\begin{split}
|\psi_k(x)| &\geq |\theta_k(x)| - \bigg|\sum_{j\not= 0}
\theta_k(x+j)\bigg| \, \geq \, c2^k - \sum_{j\not= 0} |\theta_k(x+j)| \\
&\geq c2^k - \sum_{j\not= 0} C_2 2^k \Big(1 + 2^k \distr(x+j, [0,
2^{-k}])\Big)^{-2} \\
&\geq c 2^k - C_2 2^k \sum_{j\not= 0} \Big(1 + 2^k(|j| -
2^{-k})\Big)^{-2} \\
&= c2^k - C_2 2^k \sum_{j\not= 0} (2^k |j|)^{-2} \, \geq \, 2^k
\big[ c - C_2 4^{1-k} \big]
\end{split}
\end{equation*}

\noindent In particular, $|\psi_k| \geq \frac{c}{2} 2^k
\chi_{[0,2^{-k}]}$ for all $k \geq K$, where $K$ is the smallest
integer with $K \geq \log(2C_2/c) (\log 4)^{-1} + 1$.

For each dyadic interval $I = [2^{-k}j, 2^{-k}(j+1)]$ with $k \geq
K$, set $\varphi_I(x) = 2^{-k} \psi_k(x - 2^{-k}j)$. Each
$\varphi_I$ has 0 mean and is adapted to $I$ with constants $C_m'$ by
Lemma~\ref{lemma:adapted}.  Further, $|\varphi_I(x)| = 2^{-k}
|\psi_k(x - 2^{-k}j)| \geq
\frac{c}{2} \chi_{[0, 2^{-k}]}(x - 2^{-k}j) = a\chi_I(x)$, if $a =
c/2$.

Let $I$ be a dyadic interval with $|I| > 2^{-K}$, of which there are
only finitely many.  Choose a smooth function $f_I$ so that $|f_I|
\geq a \chi_I$.  Let $g_I$ be a smooth function, supported away from
$I$, with $\int_\mathbb{T} g_I = 1$, and set $\varphi_I = f_I -
(\int_{\mathbb{T}} f_I) g_I$. Then, $|\varphi_I| \geq a \chi_I$ and
$\varphi_I$ has mean 0. Do this for each remaining $I$, and choose $C''$
so that $\|\varphi_I\|_\infty, \|\varphi_I'\|_\infty \leq C''$ for
all such $I$.  Again, this is possible as there only finitely many.
Set $C_m'' = (1 + 2^K)^m C''$. Then, for any $x \in \mathbb{T}$,

\begin{equation*}
|\varphi_I(x)| \leq C_m'' (1 + 2^K)^{-m} \leq C_m'' \Big(1 +
\frac{1}{2|I|}\Big)^{-m} \leq C_m'' \Big(1 + \frac{\distt(x,
I)}{|I|}\Big)^{-m},
\end{equation*}

\noindent and

\begin{equation*}
|\varphi_I'(x)| \leq C_m'' (1 + 2^K)^{-m} \leq C_m'' \frac{1}{|I|}
\Big(1 + \frac{1}{2|I|}\Big)^{-m} \leq C_m'' \frac{1}{|I|} \Big(1 +
\frac{\distt(x, I)}{|I|}\Big)^{-m}.
\end{equation*}

Hence, $\{\varphi_I\}_I$ is a 0-mean adapted family, with constants
$\max(C_m', C_m'')$, and $|\varphi_I| \geq a \chi_I$.
\end{proof}

The following is an important consequence of the definition, and the
proof is the first of many which make use of a ``geometric" argument
and the adapted property.

\begin{prop}\label{prop:1} For any adapted family $\varphi_I$,
we have $\|\varphi_I\|_1 \lesssim |I|$, where the underlying
constant does not depend on $I$.
\end{prop}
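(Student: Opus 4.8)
The plan is to exploit the fact that an adapted function decays faster than any polynomial away from $I$, organized through a dyadic annular decomposition of $\mathbb{T}$ around $I$ followed by a geometric‑series estimate. For $r > 0$ write $B_r = \{x \in \mathbb{T} : \distt(x,I) < r\}$, the $r$‑neighborhood of $I$, which is an interval and so has $|B_r| \leq |I| + 2r$ (capped trivially at $1$). I would then set $A_0 = B_{|I|}$ and, for $j \geq 1$, $A_j = B_{2^j|I|} \setminus B_{2^{j-1}|I|}$. These sets are pairwise disjoint, their union is all of $\mathbb{T}$ (indeed $B_{2^j|I|} = \mathbb{T}$ as soon as $2^{j-1}|I| \geq 1/2$, so only finitely many $A_j$ are non‑empty), and $|A_j| \leq |B_{2^j|I|}| \leq |I| + 2^{j+1}|I| \leq 2^{j+2}|I|$ for every $j \geq 0$.

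Next I would estimate $\int_{A_j}|\varphi_I|\,dm$ for each $j$. On $A_0$ I use only the crudest consequence of the adapted property, $|\varphi_I| \leq C_1$ (the $m=1$ case), which gives $\int_{A_0}|\varphi_I|\,dm \leq C_1|A_0| \leq 3C_1|I|$. On $A_j$ with $j \geq 1$, every point satisfies $\distt(x,I) \geq 2^{j-1}|I|$, so the $m=2$ case of the adapted inequality yields $|\varphi_I(x)| \leq C_2\big(1 + 2^{j-1}\big)^{-2} \leq 4C_2\,4^{-j}$, and therefore $\int_{A_j}|\varphi_I|\,dm \leq 4C_2\,4^{-j}\cdot 2^{j+2}|I| = 16C_2\,2^{-j}|I|$. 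Summing over the partition,
\[
\|\varphi_I\|_1 = \sum_{j \geq 0}\int_{A_j}|\varphi_I|\,dm \leq 3C_1|I| + 16C_2|I|\sum_{j\geq 1}2^{-j} = (3C_1 + 16C_2)|I|,
\]
and since $C_1,C_2$ are the universal constants attached to the adapted family, the implied constant does not depend on $I$.

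I do not anticipate a genuine obstacle here; this is the standard ``peel off dyadic annuli and sum a geometric series'' device. The only two points that need a moment's care are (i) the finite diameter of $\mathbb{T}$ — which is actually a convenience, since it means the decomposition terminates and there is no issue at spatial infinity — and (ii) choosing the decay exponent large enough to beat the growth $|A_j| \sim 2^j|I|$ of the annuli; taking $m = 2$ in the adapted estimate already does this, leaving the summable tail $\sum_j 2^{-j}$.
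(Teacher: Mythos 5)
Your proof is correct and uses essentially the same idea as the paper's: partition $\mathbb{T}$ into pieces indexed by distance from $I$, apply the $m=2$ adapted bound on each, and sum a convergent series. The only cosmetic difference is that you use dyadic annuli (summing $\sum_j 2^{-j}$) where the paper uses unit-length translates $I^m$ (summing $\sum_m m^{-2}$); both are standard and equally valid.
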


\begin{proof} Fix $I$.  If $|I| = 2^{-k}$, let $N = 2^{k-1}$ so that
$\mathbb{T} = \bigcup\{ I^m : -N+1 \leq m \leq N\}$ and this union
is disjoint.  Then,

\begin{equation*}
\begin{split}
\|\varphi_I\|_1 &= \int_\mathbb{T} |\varphi_I(x)| \, dx = \sum_{m =
-N+1}^N \int_{I^m} |\varphi_I(x)| \, dx \\
&\leq C_2 \sum_{m = -N+1}^N \int_{I^m} \Big(1 + \frac{\distt(x,
I)}{|I|} \Big)^{-2} \, dx \\
&\leq C_2 \sum_{m = -N+1}^N \int_{I^m} \Big(1 + \frac{\distt(I^m,
I)}{|I|} \Big)^{-2} \, dx.
\end{split}
\end{equation*}

\noindent Observe that $\distt(I^m, I) = |I|(|m| - 1)$ for $-N+1
\leq m \leq N$, $m \not= 0$.  Thus,

\begin{equation*}
\begin{split}
\|\varphi_I\|_1 &\lesssim \sum_{m = -N+1}^N \int_{I^m} \Big(1 +
\frac{\distt(I^m, I)}{|I|} \Big)^{-2} \, dx \\
&= |I| + \sum_{-N+1 \leq m \leq N, m \not= 0} |I^m| |m|^{-2} \\
&\leq |I| \Big[ 1 + 2\sum_{m=1}^N \frac{1}{m^2} \Big] \leq |I| \Big[
1 + 2\sum_{m=1}^\infty \frac{1}{m^2} \Big]\lesssim |I|.
\end{split}
\end{equation*}

\end{proof}

Conceptually, we often think of functions which are adapted to an
interval $I$ as being ``almost supported" in $I$.  The following
theorems give some rigid meaning to this.

\begin{thm}\label{thm:decomposition1} Let $\varphi_I : \mathbb{T}
\rightarrow \mathbb{C}$ be adapted to an interval $I$, with $|I| =
2^{-N}$. Then, we can write

\begin{equation*}
\varphi_I = \sum_{k=1}^\infty 2^{-10k} \varphi_I^k,
\end{equation*}

\noindent where $\varphi_I^k$ are adapted to $I$ uniformly in $k$.
In addition, $\supp(\varphi_I^k) \subseteq 2^kI$ for $1 \leq k \leq
N$, and $\varphi_I^k = 0$ otherwise.
\end{thm}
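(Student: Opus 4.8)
The plan is to decompose the "tail" of $\varphi_I$ into annular pieces according to distance from $I$, and then show each piece, suitably normalized by $2^{-10k}$, remains adapted to $I$. First I would partition $\mathbb{T}$ into the concentric dilates $2^k I$: more precisely, for $1 \leq k < N$ set $A_k = 2^{k+1} I \setminus 2^k I$, with $A_N = \mathbb{T} \setminus 2^N I$ (which may be empty or all that is left, since $|2^N I| = 2^{N} \cdot 2^{-N} = 1$), and $A_0 = 2I$. Choose a smooth partition of unity $\{\eta_k\}$ subordinate to a slight fattening of this cover — these can be built by the same periodization machinery as in Theorem~\ref{thm:tbumps}, or simply as fixed dilates/translates of a single bump — so that $\sum_k \eta_k \equiv 1$, $\supp(\eta_k) \subseteq 2^{k+1}I$ for $k < N$, $\supp(\eta_0) \subseteq 2I$, and $|\eta_k|, |I||\eta_k'| \lesssim 1$ uniformly. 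Then write $\varphi_I = \sum_{k} \eta_k \varphi_I = \sum_k 2^{-10k}\varphi_I^k$ where $\varphi_I^k := 2^{10k}\eta_k\varphi_I$.

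The crux is the estimate on $\varphi_I^k$. On $\supp(\eta_k) \subseteq 2^{k+1}I$ we have, for a point $x$ either in $A_k$ or inside $2^k I$, that $\distt(x,I)/|I| \lesssim 2^k$, so the adaptedness of $\varphi_I$ gives $|\varphi_I(x)| \leq C_m (1 + \distt(x,I)/|I|)^{-m}$ for every $m$; taking $m$ large (say $m$ replaced by $m + 11$ in the bound for $\varphi_I$) yields $|\varphi_I(x)| \lesssim 2^{-11k}(1+\distt(x,I)/|I|)^{-m}$ on that annulus, and hence $|\varphi_I^k(x)| = 2^{10k}|\eta_k(x)||\varphi_I(x)| \lesssim 2^{-k}(1+\distt(x,I)/|I|)^{-m} \leq C_m(1+\distt(x,I)/|I|)^{-m}$. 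Since $\varphi_I^k$ vanishes off $2^{k+1}I \subseteq 2^{k+1} I$, the bound is trivially true there as well, so it holds on all of $\mathbb{T}$ with a constant independent of $k$. The derivative bound is the same computation using $(\varphi_I^k)' = 2^{10k}(\eta_k'\varphi_I + \eta_k\varphi_I')$, the hypotheses $|\varphi_I'| \leq C_m|I|^{-1}(1+\distt(x,I)/|I|)^{-m}$ and $|\eta_k'|\lesssim |I|^{-1}$ (one should be slightly careful: $\eta_k$ is adapted to $2^k I$, so $|\eta_k'| \lesssim 2^{-k}|I|^{-1} \leq |I|^{-1}$, which only helps), together with the same gain of $2^{-11k}$ from the $(1+\distt/|I|)^{-m-11}$ decay of $\varphi_I$ on the support.

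The support claim is then built in: $\varphi_I^k = \eta_k \varphi_I \cdot 2^{10k}$ is supported in $2^{k+1}I$ for $k < N$ — if the statement wants $\supp(\varphi_I^k)\subseteq 2^k I$ exactly, I would simply re-index the partition of unity by one so that $\supp(\eta_k)\subseteq 2^k I$, costing only a harmless change of constants — and for $k \geq N$ we must check that the decomposition terminates correctly: once $2^k I = \mathbb{T}$, i.e.\ $k \geq N$, there is nothing left to cut off, so we can absorb the single remaining piece into the term at level $k=N$ and declare $\varphi_I^k = 0$ for $k > N$. The main obstacle, and the only place real care is needed, is bookkeeping at the boundary scale $k \approx N$: the annuli $2^k I$ stop growing once they fill $\mathbb{T}$, so the partition of unity has only finitely many nonzero members and one must verify that lumping the leftover mass at $k = N$ still yields a function adapted to $I$ uniformly — this is fine because on $\mathbb{T}\setminus 2^{N-1}I$ one has $\distt(x,I)/|I| \gtrsim 2^{N} \gtrsim 2^k$ for $k \leq N$, so the same gain argument applies. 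Finally, summing, $\sum_{k\geq 1} 2^{-10k}\|\varphi_I^k\|_\infty < \infty$ and the series converges absolutely and uniformly to $\varphi_I$, so the identity $\varphi_I = \sum_k 2^{-10k}\varphi_I^k$ holds pointwise, completing the proof.
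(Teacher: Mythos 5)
Your construction is the same in essence as the paper's: the paper builds its partition of unity by telescoping cutoffs $\psi_{2^kI} - \psi_{2^{k-1}I}$, which produces precisely the annular bumps you describe, and in both cases the $2^{10k}$ normalization is cancelled by the decay of $\varphi_I$ (used with exponent $m+10$ in the paper, $m+11$ in yours) on the $k$-th annulus. The re-indexing and the termination at $k=N$ are handled the same way.

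There is, however, a real misstatement in the key estimate. You write that on $\supp(\eta_k) \subseteq 2^{k+1}I$ one has $\distt(x,I)/|I| \lesssim 2^k$, and then conclude $|\varphi_I(x)| \lesssim 2^{-11k}\bigl(1+\distt(x,I)/|I|\bigr)^{-m}$. But the implication
\begin{equation*}
\bigl(1+\distt(x,I)/|I|\bigr)^{-m-11} \lesssim 2^{-11k}\bigl(1+\distt(x,I)/|I|\bigr)^{-m}
\end{equation*}
requires $\bigl(1+\distt(x,I)/|I|\bigr)^{11} \gtrsim 2^{11k}$, i.e.\ the \emph{lower} bound $\distt(x,I)/|I| \gtrsim 2^k$; the upper bound you stated is not what is used and is of no help. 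The lower bound does hold if $\eta_k$ is genuinely subordinate to a fattening of the annulus $A_k$ — in particular, if $\eta_k$ vanishes on, say, $2^{k-1}I$ — but the conditions you explicitly list on the partition ($\sum \eta_k \equiv 1$, $\supp(\eta_k) \subseteq 2^{k+1}I$) do not by themselves force $\eta_k$ to vanish near $I$. You should add that requirement explicitly and then deduce the lower bound on $\distt(x,I)/|I|$ from it; with that correction the argument closes exactly as in the paper.
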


\begin{proof} Assume $\varphi_I$ is adapted to $I$ with constants
$C_m$.  Let $\psi : \mathbb{R} \rightarrow \mathbb{C}$ be smooth,
supported in $[-1/2, 1/2]$, identically 1 on $[-1/4, 1/4]$, with $0
\leq \psi \leq 1$ and $|\psi'| \leq 4$.  For any interval $J$ with
center $x_J$, define $\psi_J(x) = \psi(\frac{x - x_J}{|J|})$.  For
each $0 \leq k < N$, periodize the appropriate functions to create
smooth functions $\psi_{2^kI}$ on $\mathbb{T}$ such that $0 \leq
\psi_{2^kI} \leq 1$, $|\psi_{2^kI}'| \leq 4/|I|$,
$\supp(\psi_{2^kI}) \subseteq 2^kI$, and $\psi_{2^kI} = 1$ on
$2^{k-1}I$.

We start by noting that

\begin{equation*}
1 = \psi_I + (\psi_{2^2I} - \psi_I) + \ldots + (\psi_{2^{N-1}I} -
\psi_{2^{N-2}I}) + (1 - \psi_{2^{N-1}I}).
\end{equation*}

\noindent Therefore, if we define $\varphi_I^1 = 2^{10} \varphi_I
\psi_I$, $\varphi_I^k = 2^{10k} \varphi_I (\psi_{2^kI} -
\psi_{2^{k-1}I})$ for $1 < k < N$, $\varphi_I^N = 2^{10N} \varphi_I
(1 - \psi_{2^{N-1}I})$, and $\varphi_I^k = 0$ for $k > N$, then

\begin{equation*}
\varphi_I = \sum_{k=1}^\infty 2^{-10k} \varphi_I^k.
\end{equation*}

\noindent Further, $\supp(\varphi_I^k) \subseteq 2^kI$ by
construction (for $k = N$, this is an empty statement).

Clearly, $|\varphi_I^1(x)| \leq 2^{10} |\varphi_I(x)| |\psi_I(x)|
\leq 2^{10} |\varphi_I(x)| \leq 2^{10} C_m (1 +
\frac{\distt(x,I)}{|I|})^{-m}$. Also, $|\varphi_I^{1'}(x)| \leq
2^{10} |\varphi_I'(x)| |\psi_I(x)| + 2^{10} |\varphi_I(x)|
|\psi_I'(x)| \leq 2^{10} |\varphi_I'(x)| + 2^{10} \frac{4}{|I|}
|\varphi_I(x)| \leq 2^{10} \cdot 5 C_m \frac{1}{|I|} (1 +
\frac{\distt(x,I)}{|I|})^{-m}$.

Now, for each $1 < k < N$, $\psi_{2^kI} - \psi_{2^{k-1}I}$ is
supported in $2^kI - 2^{k-2}I$.  Also, $1 - \psi_{2^{N-1}I}$ is
supported in $\mathbb{T} - 2^{N-2}I = 2^NI - 2^{N-2}I$.  So, fix $1
< k \leq N$ and let $x \in 2^kI - 2^{k-2}I$.  Then, $2^{k-3} <
\frac{\distt(x, x_I)}{|I|} \leq 2^{k-1}$, where $x_I$ is the center
of $I$. However, $\distt(x, x_I) = \distt(x, I) + |I|/2$, which
gives $2^{k-3} - 1/2 < \frac{\distt(x, I)}{|I|} \leq 2^{k-1} - 1/2$.
Hence,

\begin{equation*}
\begin{split}
|\varphi_I(x)| &\leq C_{m+10} \Big(1 + \frac{\distt(x,I)}{|I|}
\Big)^{-m-10} \leq C_{m+10} (2^{k-3} + 1/2)^{-m-10} \\
&= C_{m+10} (2^{k-3} + 1/2)^{-10} (2^{k-3} + 1/2)^{-m} \\
&\leq C_{m+10} \big(2^{30} \cdot 2^{-10k} \big) \big(4^m (2^{k-1} + 1/2)^{-m} \big) \\
& \leq 4^{m+15} C_{m+10} 2^{-10k} \Big(1 + \frac{\distt(x,I)}{|I|}
\Big)^{-m}.
\end{split}
\end{equation*}

\noindent By precisely the same argument, $|\varphi_I'(x)| \leq
4^{m+15} C_{m+10} 2^{-10k} \frac{1}{|I|} (1 +
\frac{\distt(x,I)}{|I|} )^{-m}$.  Thus, for all $x \in \mathbb{T}$,

\begin{gather*}
|\varphi_I^k(x)| \leq 2^{10k} |\varphi_I(x)| \leq 4^{m+15} C_{m+10}
\Big(1 + \frac{\distt(x,I)}{|I|} \Big)^{-m}, \\
|\varphi_I^{k'}(x)| \leq 2^{10k} \Big[ |\varphi_I'(x)| +
|\varphi_I(x)| \frac{8}{|I|} \Big] \leq 9 \cdot 4^{m+15} C_{m+10}
\frac{1}{|I|} \Big(1 + \frac{\distt(x,I)}{|I|} \Big)^{-m}.
\end{gather*}

\noindent In particular, $\varphi_I^k$ is adapted to $I$ with
constants $9 \cdot 4^{m+15} C_{m+10}$ for all $k$.
\end{proof}

\begin{thm}\label{thm:decomposition2} Let $\varphi_I : \mathbb{T}
\rightarrow \mathbb{C}$ be adapted to an interval $I$, $|I| =
2^{-N}$, with $\int_{\mathbb{T}} \varphi_I \, dm = 0$. Then, we can
write

\begin{equation*}
\varphi_I = \sum_{k=1}^\infty 2^{-10k} \varphi_I^k,
\end{equation*}

\noindent where $\varphi_I^k$ are adapted to $I$ uniformly in $k$
and $\int \varphi_I^k \, dm = 0$. In addition, $\supp(\varphi_I^k)
\subseteq 2^kI$ for $1 \leq k \leq N$, and $\varphi_I^k = 0$
otherwise.  \end{thm}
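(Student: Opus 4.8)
The plan is to bootstrap off Theorem~\ref{thm:decomposition1} and then repair the mean of each piece with a single auxiliary bump, exploiting the hypothesis that the total mean already vanishes.

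First I would apply Theorem~\ref{thm:decomposition1} to $\varphi_I$ to write $\varphi_I = \sum_{k=1}^{\infty} 2^{-10k}\,\widetilde{\varphi}_I^k$, where the $\widetilde{\varphi}_I^k$ are adapted to $I$ with constants uniform in $k$, are supported in $2^kI$ for $1 \le k \le N$, and vanish identically for $k > N$. Set $m_k = \int_{\mathbb{T}} \widetilde{\varphi}_I^k\,dm$. Since all the $\widetilde{\varphi}_I^k$ share one set of adaptedness constants, Proposition~\ref{prop:1} gives $|m_k| \le \|\widetilde{\varphi}_I^k\|_1 \lesssim |I|$ with a bound independent of $k$; also $m_k = 0$ for $k > N$. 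As $\sum_k 2^{-10k}\|\widetilde{\varphi}_I^k\|_1 \lesssim |I| < \infty$, the series for $\varphi_I$ converges absolutely in $L^1(\mathbb{T})$, so integrating term by term and using the hypothesis $\int_{\mathbb{T}}\varphi_I\,dm = 0$ yields $\sum_{k=1}^{\infty} 2^{-10k} m_k = 0$.

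Next I would fix, once and for all, an adapted family $\{\eta_I\}_I$ of the simplest type --- nonnegative bumps with $\supp(\eta_I) \subseteq I$ and $\int_{\mathbb{T}}\eta_I\,dm = |I|$, obtained by periodizing a single translated, dilated smooth bump on $\mathbb{R}$, exactly as adapted families were produced after the definition. Then define
\begin{equation*}
\varphi_I^k \;=\; \widetilde{\varphi}_I^k \;-\; \frac{m_k}{|I|}\,\eta_I, \qquad k \ge 1 .
\end{equation*}
Each $\varphi_I^k$ has mean $m_k - (m_k/|I|)|I| = 0$; it is supported in $2^kI \cup I = 2^kI$ for $1 \le k \le N$ and is identically zero for $k > N$ (there $\widetilde{\varphi}_I^k = 0$ and $m_k = 0$). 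Since $|m_k|/|I| \lesssim 1$ uniformly in $k$, $\varphi_I^k$ is a bounded linear combination of two functions adapted to $I$ with controlled constants, hence is itself adapted to $I$ uniformly in $k$, with the corresponding bound also for the derivative. Finally, summing and invoking $\sum_k 2^{-10k}m_k = 0$,
\begin{equation*}
\sum_{k=1}^{\infty} 2^{-10k}\varphi_I^k \;=\; \sum_{k=1}^{\infty} 2^{-10k}\widetilde{\varphi}_I^k \;-\; \Big( \sum_{k=1}^{\infty} 2^{-10k}\frac{m_k}{|I|} \Big)\eta_I \;=\; \varphi_I - 0 \;=\; \varphi_I ,
\end{equation*}
which is the desired decomposition.

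There is essentially no serious obstacle: the analytic content is already packaged in Theorem~\ref{thm:decomposition1} and Proposition~\ref{prop:1}. The one point that needs a moment's thought is the legitimacy of correcting every piece by the \emph{same} bump $\eta_I$ scaled by the scalars $m_k/|I|$ --- this works precisely because those scalars are $O(1)$ uniformly (so adaptedness, an $L^{\infty}$-type estimate, survives the subtraction) and because $\sum_k 2^{-10k}m_k$ vanishes (so the correctors cancel in the sum and leave the identity $\sum 2^{-10k}\varphi_I^k = \varphi_I$ intact). The only quantitative input is the uniform estimate $|m_k| \lesssim |I|$, which is just Proposition~\ref{prop:1} applied to the family $\{\widetilde{\varphi}_I^k\}_k$ furnished by Theorem~\ref{thm:decomposition1}; if one wants the construction to be uniform over $I$ as well, starting from an adapted family rather than a single $\varphi_I$, the same remark applies verbatim.
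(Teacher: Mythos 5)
Your proof is correct and follows essentially the same route as the paper's: both apply Theorem~\ref{thm:decomposition1}, then repair the mean of each $\widetilde{\varphi}_I^k$ by subtracting a scalar multiple of a fixed bump supported in $I$, using Proposition~\ref{prop:1} to bound $|m_k|/|I|$ so that adaptedness survives, and noting that the correctors cancel in the sum because $\int_{\mathbb{T}}\varphi_I\,dm = 0$. The paper's $\psi$ (with $\int\psi\,dm = 1$, $0 \le \psi \le 2/|I|$) is just your $\eta_I/|I|$ under a different normalization.
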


\begin{proof} Using Theorem~\ref{thm:decomposition1}, write
$\varphi_I = \sum 2^{-10k} \varphi_I^k$, where $\supp(\varphi_I^k)
\subseteq 2^kI$ for $1 \leq k \leq N$ and $\varphi_I^k = 0$
otherwise. Further, $\varphi_I^k$ are adapted to $I$ with uniform
constants.

Choose a smooth function $\psi : \mathbb{T} \rightarrow \mathbb{C}$
so that $0 \leq \psi \leq 2/|I|$, $|\psi'| \leq 8/|I|^2$, $\int \psi
\, dm = 1$, and $\supp(\psi) \subseteq I$. Set $\varphi_{0,I}^k =
\varphi_I^k - (\int \varphi_I^k \, dm) \psi$. Then, each
$\varphi_{0,I}^k$ has integral 0, and is still supported in $2^kI$.
Further,

\begin{equation*}
\sum_{k=1}^\infty 2^{-10k} \varphi_{0,I}^k = \sum_{k=1}^\infty
2^{-10k} \varphi_I^k - \psi \Big( \int_\mathbb{T} \sum_{k=1}^\infty
2^{-10k} \varphi_I^k \, dm \Big) = \varphi_I - \psi \Big(
\int_\mathbb{T} \varphi_I \, dm \Big) = \varphi_I.
\end{equation*}

As $\varphi_I^k$ are uniformly adapted to $I$, we see by
Proposition~\ref{prop:1} that $\|\varphi_I^k\|_1 \lesssim |I|$.  So,
for $x \in I$,

\begin{gather*}
\Big| \Big( \int \varphi_I^k \, dm \Big) \psi(x) \Big| \lesssim 1
= \Big(1 + \frac{\dist(x, I)}{|I|} \Big)^{-m}, \\
\Big| \Big( \int \varphi_I^k \, dm \Big) \psi'(x) \Big| \lesssim
\frac{1}{|I|} = \frac{1}{|I|} \Big(1 + \frac{\dist(x, I)}{|I|}
\Big)^{-m}.
\end{gather*}

\noindent Of course, for $x \notin I$, these quantities are 0.
It follows that $\varphi_{0,I}^k$ are uniformly adapted to
$I$.
\end{proof}

\section{Interpolation Theorems}

Let $(X,\rho)$ be a measure space and $(B, \|\cdot\|_B)$ a (complex)
Banach space and its associated norm.  Consider functions $f :
(X,\rho) \rightarrow B$ which take values in this Banach space. Let
$\mathcal{M}(X,B)$ be the set of such functions such that the map $x
\mapsto \|f(x)\|_B$ is measurable.

For $0 < p < \infty$ and $f \in \mathcal{M}(X,B)$, define

\begin{equation*}
\|f\|_{p,B} = \Big( \int_X \|f(x)\|_B^p \, \rho(dx) \Big)^{1/p},
\end{equation*}

\noindent and $\|f\|_{\infty,B} = \esssup_X \|f(x)\|_B$.  Let
$L^p_B(X)$ be the set of functions for which these quantities are
finite.  It is easily established that $L^p_B(X)$ are Banach spaces,
as usual, for $1 \leq p \leq \infty$.  Let

\begin{equation*}
\|f\|_{p,\infty,B} = \sup_{\lambda > 0} \lambda \, \rho\{x \in X :
\|f(x)\|_B > \lambda\}^{1/p},
\end{equation*}

\noindent and define $L^{p,\infty}_B(X)$ accordingly.

The principal reason for considering such spaces is to attain
interpolation results for operators $T$ which take
$\mathcal{M}(X,B)$ to $\mathcal{M}(X,B)$.  We say an operator is
sublinear if $\|T(f+g)(x)\|_B \leq \|Tf(x)\|_B + \|Tg(x)\|_B$
and $\|T(\alpha f)(x)\|_B = |\alpha| \|Tf(x)\|_B$ for all scalars
$\alpha \in \mathbb{C}$ and almost every $x \in X$.  Consider the
following~\cite{spanish}.

\begin{thm}\label{thm:biginterpolation} Let $T$ be a sublinear
operator on $\mathcal{M}(X,B)$.  Suppose that for some $0 < p_0 <
p_1 \leq \infty$, $T : L^{p_j}_B(X) \rightarrow L^{p_j,\infty}_B(X)$
for $j = 0, 1$ (where $L^{\infty,\infty}_B = L^\infty_B$). Then, for
every $p_0 < p < p_1$, $T : L^p_B(X) \rightarrow L^p_B(X)$.
\end{thm}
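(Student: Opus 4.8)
The plan is to run the classical Marcinkiewicz real-interpolation argument, adapted to the Banach-valued setting; the scalar function $x \mapsto \|Tf(x)\|_B$ plays the role of $|Tf|$ throughout, so all distribution-function computations are genuinely statements about scalar functions on $(X,\rho)$. Fix $p$ with $p_0 < p < p_1$ and $f \in L^p_B(X)$. For each $\lambda > 0$ and a cutoff constant $c > 0$ to be chosen, decompose $f = f^\lambda + f_\lambda$ pointwise, where $f^\lambda(x) = f(x)$ when $\|f(x)\|_B > c\lambda$ and $f^\lambda(x) = 0$ otherwise, and $f_\lambda = f - f^\lambda$. Since $\{x : \|f(x)\|_B > c\lambda\}$ is measurable, both pieces lie in $\mathcal{M}(X,B)$. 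Moreover $f^\lambda \in L^{p_0}_B(X)$ because $\|f^\lambda(x)\|_B^{p_0} \le (c\lambda)^{p_0 - p}\|f(x)\|_B^{p}$ wherever $f^\lambda$ is nonzero, and --- when $p_1 < \infty$ --- $f_\lambda \in L^{p_1}_B(X)$ because $\|f_\lambda(x)\|_B^{p_1} \le (c\lambda)^{p_1 - p}\|f(x)\|_B^{p}$; hence $Tf^\lambda$ and $Tf_\lambda$ are defined and in $\mathcal{M}(X,B)$.

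Next I would use sublinearity to get $\|Tf(x)\|_B \le \|Tf^\lambda(x)\|_B + \|Tf_\lambda(x)\|_B$ for a.e.\ $x$, so that
\begin{equation*}
\rho\{\|Tf\|_B > \lambda\} \le \rho\{\|Tf^\lambda\|_B > \lambda/2\} + \rho\{\|Tf_\lambda\|_B > \lambda/2\}.
\end{equation*}
Writing $A_j$ for the norm of $T$ on $L^{p_j}_B$, the weak-type hypotheses bound the right-hand side by
\begin{equation*}
\frac{(2A_0)^{p_0}}{\lambda^{p_0}} \int_{\{\|f\|_B > c\lambda\}} \|f\|_B^{p_0}\, d\rho \;+\; \frac{(2A_1)^{p_1}}{\lambda^{p_1}} \int_{\{\|f\|_B \le c\lambda\}} \|f\|_B^{p_1}\, d\rho.
\end{equation*}
I then plug this into $\|Tf\|_{p,B}^p = p\int_0^\infty \lambda^{p-1} \rho\{\|Tf\|_B > \lambda\}\, d\lambda$, interchange the order of integration (Tonelli, everything nonnegative), and carry out the inner $\lambda$-integrals: the first term becomes $(2A_0)^{p_0} p\int_X \|f\|_B^{p_0}\big(\int_0^{\|f\|_B/c} \lambda^{p-1-p_0}\, d\lambda\big)\, d\rho$, finite since $p > p_0$ and equal to a constant multiple of $c^{p_0-p}\|f\|_{p,B}^p$; the second term becomes $(2A_1)^{p_1} p\int_X \|f\|_B^{p_1}\big(\int_{\|f\|_B/c}^\infty \lambda^{p-1-p_1}\, d\lambda\big)\, d\rho$, finite since $p < p_1$ and equal to a constant multiple of $c^{p_1-p}\|f\|_{p,B}^p$. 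This yields $\|Tf\|_{p,B} \lesssim \|f\|_{p,B}$ with a constant depending only on $p_0, p_1, p, A_0, A_1$; here one may simply take $c = 1$.

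When $p_1 = \infty$ the small part is handled separately: then $T : L^\infty_B \to L^\infty_B$ with norm $A_1$, so choosing $c = 1/(2A_1)$ forces $\|Tf_\lambda(x)\|_B \le A_1\|f_\lambda\|_{\infty,B} \le A_1 c\lambda = \lambda/2$ for a.e.\ $x$; thus $\rho\{\|Tf_\lambda\|_B > \lambda/2\} = 0$ and only the $p_0$ term survives, after which the same Tonelli computation (now just the first integral) finishes the proof.

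The argument is essentially routine; the only points requiring care are the measurability of the truncations $f^\lambda, f_\lambda$ (immediate from measurability of $\|f(\cdot)\|_B$) and the verification that these truncations land in $L^{p_0}_B$ and $L^{p_1}_B$ respectively so that $T$ may legitimately be applied to them --- which is precisely where the hypothesis $p_0 < p < p_1$ enters --- together with the bookkeeping in the Tonelli step. I do not expect any genuine obstacle beyond checking that the Banach-valued framework does not disturb these scalar-level manipulations.
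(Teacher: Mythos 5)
Your proposal is correct and follows essentially the same route as the paper: the same truncation of $f$ at height $c\lambda$ (with $c=1$ for $p_1<\infty$ and $c=1/(2A_1)$ for $p_1=\infty$), sublinearity, the two weak-type bounds, and the layer-cake/Tonelli computation. The only difference is that you spell out explicitly why $f^\lambda\in L^{p_0}_B$ and $f_\lambda\in L^{p_1}_B$, which the paper leaves implicit.
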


\begin{proof}  Fix $p$ and $f$.  First, suppose $p_1 < \infty$.
For each $t > 0$, let $f^t = f$ when $\|f\|_B > t$ and 0 otherwise.
Similarly, let $f_t = f$ when $\|f\|_B \leq t$ and 0 otherwise, so
that $f = f_t + f^t$.

Note, $\|Tf(x)\|_B \leq \|Tf_t(x)\|_B + \|Tf^t(x)\|_B$, and by
hypothesis,

\begin{equation*}
\begin{split}
\rho\{x : \|Tf(x)\|_B > t\} &\leq \rho\{\|Tf^t\|_B > t/2\} +
\rho\{\|Tf_t\|_B > t/2\} \\
&\lesssim (t/2)^{-p_0} \|f^t\|_{p_0,B}^{p_0} + (t/2)^{-p_1}
\|f_t\|_{p_1,B}^{p_1},
\end{split}
\end{equation*}

\noindent where the underlying constants are the operator norms of
$T$.  So,

\begin{equation*}
\begin{split}
\|Tf\|_{p,B}^p &= \int_X \|Tf\|_B^p \, d\rho = p \int_0^\infty
t^{p-1} \rho \{\|Tf\|_B > t \} \, dt \\
&\lesssim \int_0^\infty t^{p-p_0-1} \|f^t\|_{p_0,B}^{p_0} +
t^{p-p_1-1} \|f_t\|_{p_1,B}^{p_1} \, dt \\
&= \int_0^\infty t^{p-p_0-1} \int_{\{\|f\|_B > t\}} \|f\|_B^{p_0} \,
d\rho \, dt + \int_0^\infty t^{p-p_1-1} \int_{\{\|f\|_B \leq t\}}
\|f\|_B^{p_1} \, d\rho \, dt \\
&= \int_X \|f\|_B^{p_0} \int_0^{\|f\|_B} t^{p-p_0-1} \, dt \, d\rho
+ \int_X \|f\|_B^{p_1} \int_{\|f\|_B}^\infty t^{p-p_1-1} \, dt \,
d\rho \\
&= \frac{1}{p-p_0} \int_X \|f\|_B^p \, d\rho + \frac{1}{p_1-p}
\int_X \|f\|_B^p \, d\rho \lesssim \|f\|_{p,B}^p.
\end{split}
\end{equation*}

Now, suppose $p_1 = \infty$.  Let $C$ be the operator norm of $T :
L^\infty_B \rightarrow L^\infty_B$.  For each $t > 0$, set $f_t = f$
for $\|f\|_B \leq t/(2C)$ and 0 otherwise.  Define $f^t$ accordingly
so that $f = f_t + f^t$.  Note, $\|Tf_t(x)\|_B \leq
\|Tf_t\|_{\infty, B} \leq C \|f_t\|_{\infty, B} \leq t/2$ for almost
every $x \in X$.  Thus, $\rho\{x : \|Tf_t(x)\|_B > t/2\} = 0$.
Hence,

\begin{equation*}
\begin{split}
\|Tf\|_{p,B}^p &= \int_X \|Tf\|_B^p \, d\rho = p \int_0^\infty
t^{p-1} \rho \{\|Tf\|_B > t \} \, dt \\
&\lesssim \int_0^\infty t^{p-p_0-1} \|f^t\|_{p_0,B}^{p_0} \, dt \\
&= \int_0^\infty t^{p-p_0-1} \int_{\{\|f\|_B > t/(2C) \}}
\|f\|_B^{p_0} \, d\rho \, dt \\
&= \int_X \|f\|_B^{p_0} \int_0^{2C\|f\|_B} t^{p-p_0-1} \, dt \,
d\rho \\
&= \frac{(2C)^{p-p_0}}{p-p_0} \int_X \|f\|_B^p \, d\rho \lesssim
\|f\|_{p,B}^p.
\end{split}
\end{equation*}
\end{proof}

The preceding theorem is a generalization of the classical
Marcinkiewicz interpolation theorem~\cite{marcin2, steininter}.
Indeed, the proof is nearly identical.  To recover the classical
version, we need only take the Banach space $B$ to be $\mathbb{C}$
with norm $| \cdot |$.  Like the Marcinkiewicz interpolation
theorem, we can actually prove a version where $T : L^{p_j}_B
\rightarrow L^{q_j, \infty}_B$, for $j = 0,1$, implies $T : L^p_B
\rightarrow L^q_B$, with the standard relationships between $p, p_0,
p_1$ and $q, q_0, q_1$.  However, the proof presented here is
slightly neater, and is all we will need.

\chapter{Maximal Operators}\label{chap:max}

Given an adapted family $\varphi_I$ and a function $f : \mathbb{T}
\rightarrow \mathbb{C}$, we will be interested in ``averages" of $f$
with respect to the family.  In particular, given the sequence

\begin{equation*}
\Big\{ \frac{1}{|I|} |\langle \varphi_I, f \rangle| \chi_I(x)
\Big\}_I,
\end{equation*}

\noindent the associated $\ell^2$ and $\ell^\infty$-norms will be
useful quantities.  The $\ell^\infty$-norm is examined in this
chapter. The $\ell^2$-norm is the principal subject of
Chapter~\ref{chap:square}.  Let

\begin{equation*}
M'f(x) = \sup_I \frac{1}{|I|} |\langle \varphi_I, f \rangle|
\chi_I(x).
\end{equation*}

\noindent Instead of studying this operator directly, it will be
more useful to study a different, but related operator; one which is
independent of any adapted family.

\section{Hardy-Littlewood Maximal Function}

\begin{defnn} For $f : \mathbb{T} \rightarrow \mathbb{C}$, define
the Hardy-Littlewood maximal function~\cite{hardylittlewood1} by

\begin{equation*}
Mf(x) = \sup_{x \in I} \frac{1}{|I|} \int_I |f(y)| \, dy,
\end{equation*}

\noindent where the supremum is taken over all intervals in
$\mathbb{T}$ containing $x$.  Similarly, define the dyadic maximal
function $M_D f(x)$ where the supremum is instead taken over all
dyadic intervals containing $x$.
\end{defnn}

We will not be interested in proving results for $M_D$, per se, but
it will prove a useful tool in this and other chapters.

\begin{prop} For any complex-valued functions $f_k, f, g$ on
$\mathbb{T}$ and any scalars $\alpha$,

\begin{enumerate}
\item $M(f+g) \leq Mf + Mg$ and $M(\alpha f) = |\alpha| Mf$,

\item $|f| \leq |g|$ a.e.~implies $Mf \leq M g$ pointwise,

\item $|f_k| \uparrow |f|$ a.e.~implies $M f_k \uparrow M f$
pointwise.
\end{enumerate}

\noindent The same is true for $M_D$.
\end{prop}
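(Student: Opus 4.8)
The plan is to verify each of the three properties directly from the definition of $Mf$ as a supremum of averages over intervals, reducing everything to elementary facts about suprema and monotone limits; the same arguments will then apply verbatim to $M_D$ since the only difference is the collection of intervals over which the supremum is taken. For property (1), I would fix $x$ and an arbitrary interval $I \ni x$, and observe that by the triangle inequality under the integral,
\begin{equation*}
\frac{1}{|I|} \int_I |f(y) + g(y)| \, dy \leq \frac{1}{|I|} \int_I |f(y)| \, dy + \frac{1}{|I|} \int_I |g(y)| \, dy \leq Mf(x) + Mg(x),
\end{equation*}
and then take the supremum over $I \ni x$ on the left. For the scalar identity, $\frac{1}{|I|}\int_I |\alpha f| = |\alpha| \cdot \frac{1}{|I|}\int_I |f|$ for each $I$, and taking suprema commutes with multiplication by the nonnegative constant $|\alpha|$ (the case $\alpha = 0$ being trivial).

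For property (2), if $|f| \leq |g|$ a.e., then for every interval $I$ we have $\int_I |f(y)|\,dy \leq \int_I |g(y)|\,dy$, hence $\frac{1}{|I|}\int_I |f| \leq \frac{1}{|I|}\int_I |g| \leq Mg(x)$ whenever $x \in I$; taking the supremum over $I \ni x$ gives $Mf(x) \leq Mg(x)$. For property (3), the inequality $Mf_k \leq Mf_{k+1} \leq Mf$ pointwise is immediate from property (2) applied to the a.e.\ inequalities $|f_k| \leq |f_{k+1}| \leq |f|$, so the sequence $\{Mf_k(x)\}_k$ is nondecreasing and bounded above by $Mf(x)$; it remains to show its limit is not strictly smaller. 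Fix $x$ and any interval $I \ni x$: by the monotone convergence theorem, $\frac{1}{|I|}\int_I |f_k(y)|\,dy \uparrow \frac{1}{|I|}\int_I |f(y)|\,dy$, so $\lim_k Mf_k(x) \geq \lim_k \frac{1}{|I|}\int_I |f_k| = \frac{1}{|I|}\int_I |f|$; taking the supremum over $I \ni x$ yields $\lim_k Mf_k(x) \geq Mf(x)$, which combined with the reverse inequality gives $Mf_k(x) \uparrow Mf(x)$.

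None of these steps presents a genuine obstacle; the only point requiring a modicum of care is the interchange of supremum and limit in property (3), where one must resist the temptation to swap them directly and instead pass to the limit first over a \emph{fixed} interval (via monotone convergence) before taking the supremum — doing it in the other order is exactly what one is trying to prove. Finally, I would remark that in each argument the collection of competing intervals was never used beyond the fact that it is a fixed family independent of $f$, so replacing ``intervals containing $x$'' by ``dyadic intervals containing $x$'' changes nothing, establishing the claim for $M_D$ as well.
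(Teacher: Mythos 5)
Your proposal is correct and follows essentially the same route as the paper: direct verification from the supremum definition, with the only nontrivial step being part (3), where both you and the paper fix an interval, apply monotone convergence there, and then take the supremum over intervals. Your phrasing of (3) via a direct limit rather than the paper's $\epsilon$-approximation is marginally cleaner (it avoids any implicit assumption that $Mf(x)$ is finite), but the underlying idea is the same.
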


\begin{proof} The proofs for $M$ and $M_D$ are essentially identical,
and we handle only the $M$ case.

(1) Fix $x \in \mathbb{T}$ and an interval $I$ containing $x$. Then,
it is clear that $|I|^{-1} \int_I |f(y) + g(y)| \, dy \leq |I|^{-1}
\int_I |f(y)| \, dy + |I|^{-1} \int_I |g(y)| \, dy \leq Mf(x) +
Mg(x)$.  As $I$ is arbitrary, $M(f+g)(x) \leq Mf(x) + Mg(x)$.  Also,
$M(\alpha f)(x) = \sup |I|^{-1} \int_I |\alpha f(y)| \, dy =
|\alpha| \sup |I|^{-1} \int_I |f(y)| \, dy = |\alpha| Mf(x)$.

(2) Fix $x \in \mathbb{T}$ and an interval $I$ containing $x$.  Then
we have immediately that $|I|^{-1} \int_I |f(y)| \, dy \leq |I|^{-1} \int_I |g(y)| \, dy \leq Mg(x)$,
which implies $Mf(x) \leq Mg(x)$.

(3) From statement 2 above, $Mf_1 \leq Mf_2 \leq \ldots \leq Mf$.
Fix $x \in \mathbb{T}$ and $\epsilon > 0$.  There exists an interval $I$ containing
$x$ so that $Mf(x) \leq |I|^{-1} \int_I |f(y)| \, dy + \epsilon/2$.
By the monotone convergence theorem, $\int_I |f_k(y)| \, dy \uparrow
\int_I |f(y)| \, dy$.  So, choose $N$ such that $k \geq N$ implies
$|I|^{-1} \int_I |f_k(y)| \, dy > |I|^{-1} \int_I |f(y)| \, dy -
\epsilon/2$.  Then, for all $k \geq N$, it follows $Mf(x) \leq
|I|^{-1} \int_I |f(y)| \, dy + \epsilon/2 \leq |I|^{-1} \int_I
|f_k(y)| \, dy + \epsilon \leq Mf_k(x) + \epsilon$. As $\epsilon >
0$ is arbitrary, $Mf_k(x) \uparrow Mf(x)$.
\end{proof}

\begin{prop}\label{prop:1'} For any $f : \mathbb{T} \rightarrow
\mathbb{C}$, we have $M'f \lesssim Mf$ pointwise, where the
underlying constant is independent of $f$. \end{prop}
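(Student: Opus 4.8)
The plan is to fix $x\in\mathbb{T}$ and show the pointwise bound $\frac{1}{|I|}|\langle\varphi_I,f\rangle|\lesssim Mf(x)$ for every dyadic interval $I$ with $x\in I$; since intervals not containing $x$ contribute nothing to the supremum defining $M'f(x)$, this immediately gives $M'f(x)\lesssim Mf(x)$ with a constant depending only on the adapted family (not on $f$). Throughout I would bound $|\langle\varphi_I,f\rangle|\le\int_{\mathbb{T}}|\varphi_I(y)||f(y)|\,dy$ and exploit the adapted decay of $\varphi_I$ against the interval-averages controlled by $Mf$.

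The first step is to split $\mathbb{T}$ into dyadic annuli around $I$. Writing $|I|=2^{-k}$ with $k\ge1$ (so $|I|\le 1/2$ and, crucially, $2^kI=\mathbb{T}$), I set $A_0=I$ and $A_j=2^jI\setminus 2^{j-1}I$ for $1\le j\le k$, so that $\mathbb{T}=\bigcup_{j=0}^{k}A_j$ is a finite disjoint decomposition with no leftover piece. For $j\ge1$ and $y\in A_j$ one has $y\notin 2^{j-1}I$, hence $\distt(y,x_I)>2^{j-1}|I|/2$ and therefore $\distt(y,I)=\distt(y,x_I)-|I|/2>|I|(2^{j-2}-\tfrac12)$; feeding this into the adapted bound $|\varphi_I(y)|\le C_m(1+\distt(y,I)/|I|)^{-m}$ gives $|\varphi_I(y)|\le C_m 4^m 2^{-jm}$, a bound valid uniformly for all $j\ge0$. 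On the other hand $2^jI$ is an interval containing $x$, so $\int_{A_j}|f|\le\int_{2^jI}|f|\le 2^j|I|\,Mf(x)$. Multiplying these, $\int_{A_j}|\varphi_I||f|\le C_m 4^m 2^{j(1-m)}|I|\,Mf(x)$, and summing over $0\le j\le k$ with $m=2$ produces a convergent geometric series; thus $\int_{\mathbb{T}}|\varphi_I||f|\lesssim|I|\,Mf(x)$ with an absolute constant times $C_2$. Dividing by $|I|$ and taking the supremum over all $I\ni x$ completes the argument.

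I expect no real obstacle here: this is the prototype of the ``geometric plus adapted'' estimates advertised just before the statement. The only points that require care are the elementary circle geometry — that $\distt(y,I)=\distt(y,x_I)-|I|/2$ for $y\notin I$ and that the annular decomposition terminates because dyadic intervals satisfy $|I|\le 1/2$ — and the observation that the factor $(1+\distt(y,I)/|I|)^{-m}$ is dominated by $2^{-jm}$ on $A_j$ up to a fixed constant like $4^m$, which is what makes the geometric sum over $j$ converge (any exponent $m\ge 2$ suffices).
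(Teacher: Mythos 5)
Your proof is correct, and it takes a genuinely different route from the paper's. The paper first invokes Theorem~\ref{thm:decomposition1} to decompose $\varphi_I = \sum_{k\ge 1} 2^{-10k}\varphi_I^k$ into uniformly adapted pieces with $\supp(\varphi_I^k)\subseteq 2^kI$, and then trades the factor $2^{-10k}$ against $\|\varphi_I^k\|_\infty\lesssim 1$ and an average over $2^kI$. You instead work directly with the adapted decay condition: you decompose the \emph{domain} $\mathbb{T}$ into the annuli $A_j = 2^jI\setminus 2^{j-1}I$, bound $|\varphi_I|\lesssim 2^{-jm}$ on $A_j$ using $\distt(y,I) = \distt(y,x_I)-|I|/2$, and bound $\int_{A_j}|f|\le 2^j|I|\,Mf(x)$, so that the $j$-sum converges geometrically for any $m\ge 2$. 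Both arguments deliver the same pointwise bound with a constant depending only on the adapted constants. Your version is more elementary and self-contained; it is also closer in spirit to the paper's own proof of Proposition~\ref{prop:1}, which likewise uses a direct geometric decomposition of $\mathbb{T}$. The paper's detour through Theorem~\ref{thm:decomposition1} has the stylistic advantage of reusing a lemma that recurs in heavier contexts later (e.g.\ Theorem~\ref{thm:Tweak}), but nothing is lost in your more direct argument here.
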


\begin{proof} Let $\varphi_I$ be an adapted family and $f :
\mathbb{T} \rightarrow \mathbb{C}$.  By
Theorem~\ref{thm:decomposition1}, write

\begin{equation*}
\varphi_I = \sum_{k=1}^\infty 2^{-10k} \varphi_I^k,
\end{equation*}

\noindent for each $I$, where $\varphi_I^k$ are uniformly adapted to
$I$.  In particular, $\|\varphi_I^k\|_\infty \lesssim 1$
uniformly in $I$ and $k$.  Further, $\supp(\varphi_I^k) \subseteq
2^kI$ when $k$ is small enough and identically 0 otherwise.

Fix $I$, and suppose $|I| = 2^{-n}$.  Let $x \in I$.  Then,

\begin{equation*}
\begin{split}
\frac{1}{|I|} |\langle \varphi_I, f \rangle| \chi_I(x) &\leq
\frac{1}{|I|} \sum_{k=1}^n 2^{-10k} \int_\mathbb{T} |f(x)|
|\varphi_I^k(x)| \, dx = \frac{1}{|I|} \sum_{k=1}^n \int_{2^k I}
|f(x)| |\varphi_I^k(x)|
\, dx \\
&\lesssim \frac{1}{|I|} \sum_{k=1}^n 2^{-10k} \int_{2^k I} |f(x)| \,
dx = \sum_{k=1}^n 2^{-9k} \frac{1}{|2^k I|} \int_{2^k I} |f(x)| \,
dx \\
&\leq \sum_{k=1}^n 2^{-9k} Mf(x) \leq \sum_{k=1}^\infty 2^{-9k}
Mf(x) \lesssim Mf(x).
\end{split}
\end{equation*}

\noindent Of course, if $x \notin I$, this holds trivially.  As $I$
is arbitrary, take the supremum to see $M'f(x) \lesssim Mf(x)$.
\end{proof}

In light of this, any boundedness property of $M$ will hold
automatically for $M'$.  Therefore, we restrict our attention to $M$
for the remainder of the chapter.

For any interval $I \subseteq \mathbb{T}$, denote by $I^* = 3I$, if
$|I| \leq 1/3$, and $I^* = \mathbb{T}$ if $|I| > 1/3$. Thus, $I
\subseteq I^*$ and $|I^*| \leq 3|I|$.

\begin{claim}\label{claim:1} Let $A, B$ be intervals in $\mathbb{T}$.
If $A \cap B$ is non-empty and $|B| \leq |A|$, then $B \subseteq
A^*$.
\end{claim}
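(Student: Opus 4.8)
The plan is to argue directly from the definitions on the circle $\mathbb{T}$, using the triangle inequality for $\distt$ and the fact that an interval in $\mathbb{T}$ is just a connected arc. Write $A$ and $B$ as arcs, and pick a point $x_0 \in A \cap B$, which exists by hypothesis. Since $|B| \le |A|$, the natural candidate inclusion $B \subseteq A^*$ should follow because every point of $B$ is within $|B| \le |A|$ of $x_0$, hence within $|A|$ of a point of $A$, and $A^* = 3A$ is precisely the arc obtained by thickening $A$ by $|A|/2$ on each side (when $|A| \le 1/3$, so that $3A$ makes sense as an arc; otherwise $A^* = \mathbb{T}$ and there is nothing to prove).

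The key steps, in order, would be: (1) dispose of the case $|A| > 1/3$ immediately, since then $A^* = \mathbb{T} \supseteq B$ trivially. (2) Assume $|A| \le 1/3$, so $A^* = 3A$ is an honest arc of length $3|A| \le 1$, concentric with $A$. (3) Fix $y \in B$ and $x_0 \in A \cap B$. Since $B$ is an arc of length $|B|$ containing both $y$ and $x_0$, we have $\distt(y, x_0) \le |B| \le |A|$. (4) Let $x_A$ be the center of $A$. Since $x_0 \in A$, $\distt(x_0, x_A) \le |A|/2$. By the triangle inequality for $\distt$, $\distt(y, x_A) \le \distt(y,x_0) + \distt(x_0, x_A) \le |A| + |A|/2 = 3|A|/2$. (5) By definition $3A = \{x : \distt(x, x_A) \le 3|A|/2\}$, so $y \in 3A = A^*$. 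Since $y \in B$ was arbitrary, $B \subseteq A^*$.

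The one point requiring a little care — and the main obstacle, such as it is — is the step that says "the arc $B$ containing $y$ and $x_0$ forces $\distt(y,x_0) \le |B|$." On the circle, the metric $\distt$ is the shorter of the two arc-lengths between two points, so if $y, x_0$ both lie in an arc of length $|B| \le 1/3 < 1/2$, then the sub-arc of $B$ joining them has length at most $|B|$, and this sub-arc is the shorter of the two circular arcs (the other has length at least $1 - |B| > 1/2 > |B|$), whence $\distt(y, x_0) \le |B|$. One should also note that when $|A| \le 1/3$ the thickened arc $3A$ genuinely has the description $\{x : \distt(x, x_A) \le 3|A|/2\}$ used in step (4), which is exactly the paper's definition of $\alpha I$ applied with $\alpha = 3$ (valid since $3 \le 1/|A|$ when $|A| \le 1/3$). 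Everything else is a routine application of the triangle inequality, so no genuine difficulty is expected.
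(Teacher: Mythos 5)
Your proof is correct and follows essentially the same triangle-inequality argument as the paper: pick a common point of $A$ and $B$, bound the distance to the center $x_A$, and conclude membership in $A^*$. The only cosmetic difference is that the paper bounds $\distt(y,x_0)$ by routing through the center $x_B$ of $B$ (so $\distt(y,x_B)+\distt(x_B,x_0)\le |B|/2+|B|/2$), which sidesteps the short-arc-versus-long-arc discussion you invoke to justify $\distt(y,x_0)\le |B|$ directly.
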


\begin{proof} Suppose $A, B$ have centers $x_A, x_B$. Pick
$z \in A \cap B$.  Then, for $x \in B$,

\begin{equation*}
\begin{split}
\dist(x, x_A) &\leq \dist(x,x_B) + \dist(x_B,z) + \dist(z,x_A)
\\
&\leq |B|/2 + |B|/2 + |A|/2 \leq 3|A|/2.
\end{split}
\end{equation*}

\noindent Namely, $x \in A^*$ and $B \subseteq A^*$.
\end{proof}

Of course, $\dist(\cdot, \cdot)$ refers to $\distt(\cdot, \cdot)$.
As we will be exclusively on $\mathbb{T}$ from now on, we no longer
make this distinction.

\begin{claim}\label{claim:2} Let $A, B$ be intervals in $\mathbb{T}$.
If $A \cap B$ and $A - B^*$ are both nonempty, then $B \subseteq
A^*$.
\end{claim}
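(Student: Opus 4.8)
The plan is to reduce everything to Claim~\ref{claim:1}. That claim already delivers $B \subseteq A^*$ whenever $A \cap B$ is nonempty and $|B| \leq |A|$, so it suffices to show that the extra hypothesis ``$A - B^*$ nonempty'' forces the size inequality $|B| \leq |A|$ (in fact $|B| < |A|$). In other words, the whole content of the claim is that a point of $A$ lying outside $B^*$ certifies that $B$ cannot be the larger of the two intervals.

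First I would dispose of the degenerate size case: if $|B| > 1/3$ then by definition $B^* = \mathbb{T}$, hence $A \subseteq \mathbb{T} = B^*$ and $A - B^*$ is empty, contradicting the hypothesis. So necessarily $|B| \leq 1/3$ and $B^* = 3B$. Next, suppose toward a contradiction that $|A| \leq |B|$. Then $A \cap B$ is nonempty (given) and the size condition $|A| \leq |B|$ holds, so Claim~\ref{claim:1} applied with the roles of $A$ and $B$ interchanged yields $A \subseteq B^*$; once more this makes $A - B^*$ empty, a contradiction. Therefore $|B| < |A|$, in particular $|B| \leq |A|$, and now Claim~\ref{claim:1} applies directly to give $B \subseteq A^*$, as desired.

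There is no genuine obstacle here; the argument is a short two-line deduction from the previous claim. The only points needing a little care are keeping track of the two-sided definition of $I^*$ — which is exactly what the $|B| > 1/3$ reduction handles — and verifying that the hypotheses of Claim~\ref{claim:1} are legitimately met in the swapped-roles application: that $A \cap B \neq \emptyset$, which is part of the hypothesis, and that $|A| \leq |B|$, which is precisely the assumption being driven to a contradiction.
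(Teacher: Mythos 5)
Your proof is correct, and it takes a genuinely different route from the paper's. The paper establishes $|B| < |A|$ directly: picking $u \in A \cap B$ and $v \in A - B^*$ and running the triangle inequality through the centers $x_A, x_B$ to get $3|B|/2 < |A| + |B|/2$, hence $|B| < |A|$, and then invokes Claim~\ref{claim:1}. You instead prove $|B| < |A|$ by contradiction, noticing that $|A| \leq |B|$ would let you apply Claim~\ref{claim:1} with the roles of $A$ and $B$ swapped to conclude $A \subseteq B^*$, which kills the hypothesis $A - B^* \neq \emptyset$. Both paths then finish by applying Claim~\ref{claim:1} directly. Your version is slightly more economical in that it reuses the already-proved Claim~\ref{claim:1} rather than redoing a triangle-inequality computation; the paper's version has the minor advantage of being self-contained in the sense of exhibiting the numerical inequality explicitly. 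One small remark: your preliminary reduction to $|B| \leq 1/3$ is actually unnecessary — the swapped application of Claim~\ref{claim:1} yields $A \subseteq B^*$ regardless of whether $B^* = 3B$ or $B^* = \mathbb{T}$, so the contradiction goes through without it. It does no harm, but the argument is cleaner without that paragraph. (The paper's proof, by contrast, does tacitly rely on $B^* = 3B$ when it writes $\dist(v, x_B) > 3|B|/2$, so there your observation would be a genuine patch.)
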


\begin{proof} Suppose $A, B$ have centers $x_A, x_B$.  Let $u \in A
\cap B$ and $v \in A - B^*$. That is, $\dist(u,x_A) \leq |A|/2$ and
$\dist(u,x_B) \leq |B|/2$. Also, $\dist(v, x_A) \leq |A|/2$, but
$\dist(v, x_B) > 3|B|/2$. Then,

\begin{equation*}
\begin{split}
3|B|/2 &< \dist(v,x_B) \leq \dist(v, x_A) + \dist(x_A, u) + \dist(u,
x_B) \\
&\leq |A|/2 + |A|/2 + |B|/2,
\end{split}
\end{equation*}

\noindent which implies $|B| < |A|$.  It now follows by
Claim~\ref{claim:1} that $B \subseteq A^*$.
\end{proof}

The following is a decomposition lemma similar to that of
Calder\'{o}n and Zygmund~\cite{cz}, of which we will ultimately
prove several different versions.

\begin{lemma}\label{lemma:cute} Let $f : \mathbb{T} \rightarrow
\mathbb{C}$ and $\alpha > 0$ so that $\{Mf > \alpha\}$ is non-empty.
Then, there exists a sequence of disjoint intervals $I_j$ such that
$\{Mf > \alpha\} \subseteq \bigcup_j I_j^*$ and

\begin{gather*}
\frac{\alpha}{4} \leq \frac{1}{|I_j|} \int_{I_j} |f(x)| \, dx
\text{ for all } I_j. \\
\end{gather*}
\end{lemma}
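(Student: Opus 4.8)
The statement is a Calderón–Zygmund-type covering lemma for the (uncentered) Hardy–Littlewood maximal function on $\mathbb{T}$, so the natural approach is a Vitali-type selection of intervals realizing the condition $Mf > \alpha$. First I would note that for every $x$ with $Mf(x) > \alpha$ there is, by definition of $M$, an interval $I_x \ni x$ with $\frac{1}{|I_x|}\int_{I_x}|f| > \alpha$. This gives a cover $\{I_x\}$ of $\{Mf > \alpha\}$ whose members all have substantial average, but they overlap wildly, so the bulk of the work is extracting a disjoint subfamily whose starred dilates still cover.

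\textbf{Selection step.} I would run a greedy selection by size: among the candidate intervals pick one, call it $I_1$, whose length is within a factor $2$ of the supremum of all candidate lengths (we cannot pick the exact supremum since $\mathbb{T}$ is bounded and the sup may not be attained, hence the factor $\alpha/4$ rather than $\alpha/2$ in the conclusion — actually the factor $4$ will come from combining the ``near-maximal length'' slack with the definition). Then discard every candidate interval meeting $I_1$, and repeat, always choosing $I_{j+1}$ to have length comparable (within a factor $2$) to the sup of lengths of the surviving candidates. This produces a countable (possibly finite) disjoint family $\{I_j\}$.

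\textbf{Covering step.} Given any $x$ with $Mf(x) > \alpha$, pick a candidate interval $I$ containing $x$ with large average. Either $I$ was never discarded — impossible, since then it would have remained a candidate forever and its fixed positive length would eventually exceed twice the sup of surviving lengths, contradicting the selection; so $I$ must have been discarded at some stage $j$, meaning $I \cap I_j \neq \emptyset$ and, crucially, $|I| \le 2|I_j|$ (or, depending on how I set up the size comparison, $|I|$ is at most a bounded multiple of $|I_j|$). Here is where I would invoke Claim~\ref{claim:1} (or a mild variant of it): an interval $I$ meeting $I_j$ with $|I|$ bounded by $|I_j|$ lies in $I_j^*$. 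If I arrange the selection so that survivors always have length at least half of $|I|$ at the moment of discard, I get $|I| \le 2|I_j|$, and then $B = I$, $A = I_j$ applied to Claim~\ref{claim:1} after a harmless dilation (replace $A$ by $2A$) gives $I \subseteq (2I_j)^* \subseteq$ a fixed dilate of $I_j$. To keep the conclusion stated exactly as $I_j^* = 3I_j$, I would instead be slightly more careful: take $I_{j+1}$ with $|I_{j+1}| \ge \frac12 \sup\{|I| : I \text{ surviving}\}$, so a discarded $I$ has $|I| \le 2|I_j|$; then $\dist(x, x_{I_j}) \le |I|/2 + |I|/2 + \dots$ as in Claim~\ref{claim:1}'s computation shows $x$ lies within $\frac32 \cdot (2|I_j|)$ of the center, which is not quite $I_j^* = 3I_j$. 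The cleanest fix, and the one I would actually adopt, is to define the selection with the factor chosen so the arithmetic lands exactly at $3I_j$; this forces the lower bound on the average to degrade from $\alpha$ to $\alpha/4$, which is precisely the constant in the statement. Thus $x \in I_j^*$, giving $\{Mf > \alpha\} \subseteq \bigcup_j I_j^*$, and each $I_j$ is itself a candidate so $\frac{1}{|I_j|}\int_{I_j}|f| > \alpha \ge \alpha/4$ — in fact we only need $\alpha/4$ to absorb the slack, so I would track the constant honestly and see it works with room to spare.

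\textbf{Main obstacle.} The only real subtlety is that on the compact group $\mathbb{T}$ the candidate intervals have bounded length, so there is no ``largest'' interval and a naive Vitali argument choosing exact maxima fails; one must choose near-maximal lengths and verify the greedy process terminates in the right way (i.e., that every candidate is eventually discarded because surviving lengths shrink below any fixed threshold). A second, minor point is bookkeeping the dilation constants so the conclusion reads with $I_j^* = 3I_j$ and lower bound $\alpha/4$ rather than some other pair; this is pure arithmetic once the geometric input (Claim~\ref{claim:1}) is in hand. I expect the termination/exhaustion argument for the greedy selection to be the part requiring the most care.
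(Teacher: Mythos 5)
Your proposal takes a genuinely different route from the paper. You propose a Vitali-type greedy selection among arbitrary candidate intervals (those $I$ with $\frac{1}{|I|}\int_I |f| > \alpha$), whereas the paper works entirely inside the dyadic grid: it forms $\mathcal{D} = \{I \text{ dyadic} : \frac{1}{|I|}\int_I |f| > \alpha/4\}$ and takes the \emph{maximal} members of $\mathcal{D}$. The dyadic trichotomy then gives disjointness for free — no Vitali selection is needed at all — and the factor $4$ arises from approximating an arbitrary interval $J$ with average $> \alpha$ by a dyadic interval: $J$ fits inside the union of two dyadic intervals of length in $[|J|, 2|J|]$, one of which must carry at least half of $\int_J|f|$; since the dyadic interval $I$ has $|I| \leq 2|J|$, its average exceeds $\alpha/4$. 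Then $J \subseteq I^* \subseteq I_j^*$ by Claim~\ref{claim:1}, which applies exactly because $|J| \leq |I|$.

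The gap in your proposal is in the covering step, and your proposed fix does not close it. If you select $I_j$ with $|I_j| \geq \tfrac{1}{2}\sup\{|I| : I \text{ surviving}\}$, a candidate $I$ discarded at stage $j$ satisfies $|I| \leq 2|I_j|$, and the triangle inequality only gives, for $y \in I$ and $z \in I \cap I_j$,
\begin{equation*}
\dist(y, x_{I_j}) \leq \dist(y,z) + \dist(z, x_{I_j}) \leq |I| + \tfrac{1}{2}|I_j| \leq \tfrac{5}{2}|I_j|,
\end{equation*}
so $I \subseteq 5I_j$, not $I_j^* = 3I_j$. To invoke Claim~\ref{claim:1} directly you would need $|I| \leq |I_j|$, i.e., exact-maximal selection, but the supremum of lengths over the surviving candidates is generally not attained. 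Your suggested repair — trading the average lower bound down from $\alpha$ to $\alpha/4$ to ``absorb the slack'' — is a non sequitur: in a Vitali selection the average over each $I_j$ is automatically $> \alpha$ because $I_j$ is itself a candidate, and weakening that bound does nothing to shrink the geometric dilate, which is controlled solely by the length comparison in the selection rule. Tightening that rule to within a factor $1+\delta$ only gets you $(3+O(\delta))I_j$, never $3I_j$. So as written your argument proves a valid variant of the lemma with $5I_j$ in place of $I_j^*$ — enough for the weak $(1,1)$ bound — but it does not prove the stated lemma, and the repair you sketch does not address the actual obstruction. Your worry about termination, by contrast, is not an issue: the selected $I_j$ are disjoint in $\mathbb{T}$ so $\sum|I_j| \leq 1$, hence $|I_j| \to 0$, and any surviving candidate of fixed length would eventually violate $|I_j| \geq \tfrac12|I|$.
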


\begin{proof} Let $\Omega = \{M_D f > \alpha/4\}$.  Assume $\Omega$
is non-empty.  This will be justified shortly.  Let $\mathcal{D}$ be
the countable collection of all dyadic intervals $I$ such that
$\frac{1}{|I|} \int_I |f(y)| \, dy > \alpha/4$.  By construction,
$\Omega = \bigcup_\mathcal{D} I$. We say a dyadic interval $I \in
\mathcal{D}$ is maximal if for every $I' \in \mathcal{D}$, we have
either $I' \subseteq I$ or $I, I'$ are disjoint. Clearly, every $I
\in \mathcal{D}$ is contained in a maximal interval.  Let $I_1, I_2,
\ldots$ be the maximal intervals of $\mathcal{D}$, which are
necessarily disjoint.  Further, it is clear that

\begin{equation*}
\Omega = \bigcup_\mathcal{D} I = \bigcup_\mathbb{N} I_j.
\end{equation*}

Let $x \in \{Mf > \alpha\}$.  By definition, there is an interval
$J$ containing $x$ so that $\frac{1}{|J|} \int_J |f| \, dm
> \alpha$. Write $J = [a, a + |J|]$.  Choose $k \in \mathbb{N}$ so
that $2^{-k-1} \leq |J| < 2^{-k}$, and pick an integer $j$ so that
$(j-1)2^{-k} \leq a < j2^{-k}$.  Then, $a + |J| < (j+1)2^{-k}$.  Let
$I = [2^{-k}(j-1), 2^{-k}j]$ and $I' = [2^{-k}j, 2^{-k}(j+1)]$,
which are both dyadic and $J \subseteq I \cup I'$.  It follows that
either

\begin{equation*}
\int_{I \cap J} |f(x)| \,dx > \alpha |J|/2 \quad \text{or} \quad
\int_{I' \cap J} |f(x)| \, dx > \alpha |J|/2.
\end{equation*}

\noindent Without loss of generality, assume it is the first.  But,
$|J| \geq 2^{-k-1} = |I|/2$.  Thus, $\int_I |f(x)| \, dx >
\alpha|I|/4$, or $I \in \mathcal{D}$ (it is now clear that $\Omega$
is non-empty). So, $I \subseteq I_j$ for some $j$. As $I \cap J$ is
non-empty and $|J| \leq |I|$, we have by Claim~\ref{claim:1} that $x
\in J \subseteq I^* \subseteq I_j^*$. As $x$ is arbitrary, $\{Mf >
\alpha\} \subseteq \bigcup_j I_j^*$.
\end{proof}

\begin{thm}\label{thm:weakL1} $M : L^1 \rightarrow L^{1,\infty}$.
\end{thm}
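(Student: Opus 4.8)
The plan is to prove the weak $(1,1)$ bound for $M$ using the Calderón–Zygmund-type covering lemma just established (Lemma~\ref{lemma:cute}). Fix $f \in L^1(\mathbb{T})$ and $\alpha > 0$; we may assume $\{Mf > \alpha\}$ is non-empty, since otherwise the measure is zero and there is nothing to prove. Applying Lemma~\ref{lemma:cute} produces a sequence of disjoint intervals $I_j$ with $\{Mf > \alpha\} \subseteq \bigcup_j I_j^*$ and $\frac{\alpha}{4} \leq \frac{1}{|I_j|} \int_{I_j} |f(x)| \, dx$ for every $j$.

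From here the estimate is immediate. First I would bound the measure of the level set using the covering and subadditivity of Lebesgue measure, together with $|I_j^*| \leq 3|I_j|$:
\begin{equation*}
\big| \{Mf > \alpha\} \big| \leq \sum_j |I_j^*| \leq 3 \sum_j |I_j|.
\end{equation*}
Next, the key inequality $\frac{\alpha}{4}|I_j| \leq \int_{I_j} |f(x)| \, dx$ gives $|I_j| \leq \frac{4}{\alpha} \int_{I_j} |f(x)| \, dx$, so
\begin{equation*}
\big| \{Mf > \alpha\} \big| \leq 3 \sum_j |I_j| \leq \frac{12}{\alpha} \sum_j \int_{I_j} |f(x)| \, dx = \frac{12}{\alpha} \int_{\bigcup_j I_j} |f(x)| \, dx \leq \frac{12}{\alpha} \|f\|_1,
\end{equation*}
where the equality uses that the $I_j$ are pairwise disjoint (so the integrals add) and the last inequality just drops the restriction to $\bigcup_j I_j \subseteq \mathbb{T}$. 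Taking the supremum over $\alpha > 0$ of $\alpha \big|\{Mf > \alpha\}\big|$ yields $\|Mf\|_{1,\infty} \leq 12 \|f\|_1$, which is the claimed bound.

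There is really no main obstacle here: all the work has been front-loaded into Lemma~\ref{lemma:cute}, and the proof of the theorem is just the standard bookkeeping argument — cover, pass to the enlarged intervals controlled by $|I_j^*| \lesssim |I_j|$, use the lower bound on the average over each $I_j$ to convert measure into an $L^1$ integral, and exploit disjointness to sum. The only points requiring a word of care are (i) handling the trivial case where $\{Mf>\alpha\}$ is empty or $f \equiv 0$, and (ii) noting that disjointness of the $I_j$ (which holds only up to measure zero in the paper's convention, but that is harmless for the integral estimate) is what licenses $\sum_j \int_{I_j} |f| = \int_{\bigcup_j I_j} |f|$.
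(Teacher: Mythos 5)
Your proof is correct and takes exactly the same route as the paper: invoke Lemma~\ref{lemma:cute}, bound $|\{Mf>\alpha\}|$ by $\sum_j|I_j^*| \le 3\sum_j|I_j|$, use the average lower bound $\alpha|I_j|/4 \le \int_{I_j}|f|$, and sum over the disjoint $I_j$. Your explicit constant $12$ and the remark about disjointness up to measure zero are fine but not departures from the paper's argument.
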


\begin{proof} Fix $\alpha > 0$ and set $E = \{Mf > \alpha\}$.
If $E$ is empty, the $|E| \leq \|f\|_1/\alpha$ trivially.  Assume it
is not empty, and apply Lemma~\ref{lemma:cute} to find disjoint
intervals $I_j$.  Then,

\begin{equation*}
|E| \leq \sum_j |I_j^*| \leq 3 \sum_j |I_j| \lesssim
\frac{1}{\alpha} \sum_j \int_{I_j} |f(x)| \, dx = \frac{1}{\alpha}
\int_{\bigcup_j I_j} |f(x)| \, dx \leq \frac{1}{\alpha} \|f\|_1.
\end{equation*}

\noindent As $\alpha > 0$ is arbitrary, this completes the proof.
\end{proof}

\begin{cor}\label{cor:Lp} $M : L^p \rightarrow L^p$ for all $1 < p
\leq \infty$.
\end{cor}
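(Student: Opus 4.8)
The plan is to deduce Corollary~\ref{cor:Lp} from the two endpoints: the weak-$(1,1)$ bound just established in Theorem~\ref{thm:weakL1}, and a trivial strong $L^\infty$ bound, and then invoke the Marcinkiewicz-type interpolation theorem (Theorem~\ref{thm:biginterpolation}) with the Banach space $B = \mathbb{C}$. First I would verify the $L^\infty$ endpoint: for any interval $I$ containing $x$, $\frac{1}{|I|}\int_I |f(y)|\,dy \le \frac{1}{|I|}\int_I \|f\|_\infty \, dy = \|f\|_\infty$, so taking the supremum over $I \ni x$ gives $Mf(x) \le \|f\|_\infty$ for every $x$, i.e.\ $\|Mf\|_\infty \le \|f\|_\infty$. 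Thus $M : L^\infty \to L^\infty$, which in the notation of Theorem~\ref{thm:biginterpolation} is the case $L^{\infty,\infty}_B = L^\infty_B$.

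Next I would check that $M$ is sublinear in the sense required by Theorem~\ref{thm:biginterpolation}: both $M(f+g) \le Mf + Mg$ and $M(\alpha f) = |\alpha| Mf$ hold pointwise, which is precisely part (1) of the Proposition proved just after the definition of $M$. So $M$ is a sublinear operator on $\mathcal{M}(\mathbb{T}, \mathbb{C})$. With the endpoints $p_0 = 1$ (weak type, Theorem~\ref{thm:weakL1}) and $p_1 = \infty$ (strong type, just verified), Theorem~\ref{thm:biginterpolation} applied with $(X,\rho) = (\mathbb{T}, m)$ and $B = \mathbb{C}$ yields $M : L^p_{\mathbb{C}}(\mathbb{T}) \to L^p_{\mathbb{C}}(\mathbb{T})$, i.e.\ $M : L^p \to L^p$, for every $1 < p < \infty$.

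Finally I would dispose of the remaining endpoint $p = \infty$ separately, since the interpolation theorem only gives the open range $p_0 < p < p_1$; but this is exactly the trivial bound $\|Mf\|_\infty \le \|f\|_\infty$ already established. Combining the two gives $M : L^p \to L^p$ for all $1 < p \le \infty$, as claimed.

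There is essentially no obstacle here: the only mild point to be careful about is the identification of the scalar-valued special case of Theorem~\ref{thm:biginterpolation} (taking $B = \mathbb{C}$ with $\|\cdot\|_B = |\cdot|$, so that $L^p_B = L^p$ and $L^{p,\infty}_B = L^{p,\infty}$), which the paper already flags in the remark following that theorem. Every ingredient---sublinearity, the weak $(1,1)$ estimate, and the trivial $L^\infty$ estimate---is available, so the proof is a short assembly.
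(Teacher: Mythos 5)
Your proof is correct and follows essentially the same route as the paper: establish the trivial $L^\infty$ bound, note sublinearity, and interpolate against the weak-$(1,1)$ estimate from Theorem~\ref{thm:weakL1}. The paper simply cites ``the Marcinkiewicz interpolation theorem'' without spelling out that it is the scalar case of Theorem~\ref{thm:biginterpolation}; your explicit identification of $B=\mathbb{C}$ and your separate treatment of the endpoint $p=\infty$ are careful but not substantive departures.
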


\begin{proof} As $M$ is sublinear, it suffices by the Marcinkiewicz
interpolation theorem to show $M : L^\infty \rightarrow L^\infty$.
But, for any $x \in \mathbb{T}$ and any interval $I$ containing $x$,

\begin{equation*}
\frac{1}{|I|} \int_{I} |f(y)| \, dy \leq \|f\|_\infty,
\end{equation*}

\noindent which implies $\|M f\|_\infty \leq \|f\|_\infty$.
\end{proof}

\begin{cor}\label{cor:pointwise} For $f \in L^1(\mathbb{T})$,
$|f| \leq M_D f \leq Mf$ a.e.. \end{cor}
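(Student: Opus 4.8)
The second inequality is immediate and needs no ``a.e.'': the supremum defining $M_D f(x)$ ranges over the dyadic intervals containing $x$, a subfamily of \emph{all} intervals containing $x$, so every average contributing to $M_D f(x)$ also contributes to $Mf(x)$, giving $M_D f \le Mf$ pointwise. The content is the first inequality $|f| \le M_D f$ a.e., which is a dyadic Lebesgue differentiation statement. The plan is to deduce it from the weak-type estimate of Theorem~\ref{thm:weakL1} together with the density of continuous functions in $L^1(\mathbb{T})$.

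First I would treat continuous functions. If $g$ is continuous on $\mathbb{T}$, then for every $x$ and every $k \in \mathbb{N}$ there is a dyadic interval $I_k \ni x$ with $|I_k| = 2^{-k}$ (using closed intervals, so this works even at dyadic rationals), and continuity of $|g|$ forces $|I_k|^{-1} \int_{I_k} |g(y)|\,dy \to |g(x)|$ as $k \to \infty$. Hence $M_D g(x) \ge |g(x)|$ for every $x \in \mathbb{T}$, with no exceptional set.

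Now fix $f \in L^1(\mathbb{T})$; after discarding a null set, assume $f$ is finite everywhere. Fix $\lambda > 0$ and put $E_\lambda = \{x : |f(x)| - M_D f(x) > \lambda\}$; it suffices to show $|E_\lambda| = 0$ for each $\lambda$, since then $|\{M_D f < |f|\}| = \lim_n |E_{1/n}| = 0$. Given $\epsilon > 0$, pick a continuous $g$ with $\|f - g\|_1 < \epsilon$ and set $h = f - g$. Since $M_D$ is sublinear (as recorded above), $M_D g \le M_D f + M_D h$, so $M_D f \ge M_D g - M_D h \ge |g| - M_D h$; combined with $|f| \le |g| + |h|$ this yields $|f| - M_D f \le |h| + M_D h$ pointwise, whence $E_\lambda \subseteq \{|h| > \lambda/2\} \cup \{M_D h > \lambda/2\}$. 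Chebyshev's inequality bounds the measure of the first set by $2\epsilon/\lambda$, and since $M_D h \le Mh$, Theorem~\ref{thm:weakL1} bounds the measure of the second by a constant times $\epsilon/\lambda$. Hence $|E_\lambda| \lesssim \epsilon/\lambda$ for every $\epsilon > 0$, so $|E_\lambda| = 0$, which gives the claim.

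I do not anticipate a genuine obstacle. The only points needing care are that the dyadic intervals containing a fixed point really do shrink to it on $\mathbb{T}$ (true once one insists on closed intervals, including at dyadic rationals), that continuous functions are dense in $L^1(\mathbb{T})$ (standard; see~\cite{katznelson}), and that one invokes the $M_D$ versions of sublinearity and of the weak-$(1,1)$ bound — the latter through $M_D \le M$ and Theorem~\ref{thm:weakL1} — rather than conflating $M_D$ with $M$.
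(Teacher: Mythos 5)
Your proof is correct and follows essentially the same route as the paper's: approximate $f$ in $L^1$ by a continuous function, handle the continuous part by letting dyadic intervals shrink, and control the error term via the weak-$(1,1)$ bound through $M_D \le M$ and Theorem~\ref{thm:weakL1}. The paper merely packages the same estimate via the oscillation functional $V_f(x) = \limsup_k |I_k(x)|^{-1}\int_{I_k(x)}|f(y)-f(x)|\,dy$, showing $V_f = 0$ a.e.\ (which is the full dyadic Lebesgue differentiation statement), whereas you bound the exceptional set $E_\lambda$ directly; this is a cosmetic rather than substantive difference.
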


\begin{proof} The fact that $M_D f \leq Mf$ pointwise is clear,
as the supremum in $M$ is taken over a larger class of sets.

For each $x \in \mathbb{T}$, let $I_k(x)$ be the dyadic interval
containing $x$ with $|I| = 2^{-k}$. Define

\begin{equation*}
V_f(x) = \limsup_{k \rightarrow \infty} \frac{1}{|I_k(x)|}
\int_{I_k(x)} |f(y) - f(x)| \, dy.
\end{equation*}

\noindent Let $\epsilon > 0$.  As the continuous functions are dense
in $L^1(\mathbb{T})$, choose $g$ continuous so that $\|h\|_1 <
\epsilon$ where $f = g + h$. Define $V_g$ and $V_h$ accordingly, and
note $V_f \leq V_g + V_h$.

Fix $x \in \mathbb{T}$ and let $\delta > 0$.  As $g$ is continuous
at $x$, there is some $r$ so that $|x - y| < r$ implies $|g(x) -
g(y)| < \delta$.  Then, for all $k > - \log_2 r$, we see

\begin{equation*}
\frac{1}{|I_k(x)|} \int_{I_k(x)} |g(y) - g(x)| \, dy < \delta.
\end{equation*}

\noindent That is, $V_g(x) \leq \delta$.  As $\delta$ and $x$ are
arbitrary, $V_g = 0$.  So, $V_f \leq V_h$.

On the other hand, we clearly have

\begin{equation*}
\begin{split}
V_h(x) &= \limsup_k \frac{1}{|I_k(x)|} \int_{I_k(x)} |h(y) - h(x)| \, dy
\\
&\leq \limsup_k \frac{1}{|I_k(x)|} \int_{I_k(x)} |h(y)| \, dy + |h(x)|
\leq M_D h(x) + |h(x)|.
\end{split}
\end{equation*}

\noindent Thus, for all $t > 0$,

\begin{equation*}
\begin{split}
|\{x \in \mathbb{T} : V_f(x) > t\} &\leq |\{V_h > t\}| \\
&\leq |\{M_D h > t/2\}| + |\{|h| > t/2\}| \\
&\leq \frac{2}{t} \|M\|_{L^1 \rightarrow L^{1,\infty}} \|h\|_1 +
\frac{2}{t}\|h\|_1 \\
&\leq \frac{2\epsilon}{t} (1 + \|M\|_{L^1 \rightarrow
L^{1,\infty}}).
\end{split}
\end{equation*}

\noindent  As $\epsilon > 0$ is arbitrary, $|\{V_f > t\}| = 0$.  As
$t$ is arbitrary, $V_f = 0$ a.e.. Namely, $f(x) = \lim_k
|I_k(x)|^{-1} \int_{I_k(x)} |f(y)| \, dy \leq M_D f(x)$ a.e..
\end{proof}

\section{Fefferman-Stein Inequalities}

Our goal in this section will be to prove the classical
Fefferman-Stein inequalities~\cite{feffermanstein} below.

\begin{thmn} For any sequence $f_1, f_2, \ldots$ of complex-valued
functions on $\mathbb{T}$ and any $1 < p, r < \infty$

\begin{gather*}
\bigg\| \Big( \sum_{k=1}^\infty |M f_k|^r \Big)^{1/r} \bigg\|_p
\lesssim \bigg\| \Big( \sum_{k=1}^\infty |f_k|^r \Big)^{1/r}
\bigg\|_p, \\
\bigg\| \Big( \sum_{k=1}^\infty |M f_k|^r \Big)^{1/r}
\bigg\|_{1,\infty} \lesssim \bigg\| \Big( \sum_{k=1}^\infty |f_k|^r
\Big)^{1/r} \bigg\|_1,
\end{gather*}

\noindent where the underlying constants depend only on $p$ and $r$.
\end{thmn}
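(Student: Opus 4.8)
The plan is to realize both inequalities as mapping properties of the single vector-valued operator $T$ given by $Tf = (Mf_1, Mf_2, \ldots)$ on sequences $f = (f_k)_{k \geq 1}$, taking the Banach space to be $B = \ell^r$. Each $Mf_k$ is lower semicontinuous, so $x \mapsto \|Tf(x)\|_{\ell^r} = \bigl(\sum_k |Mf_k(x)|^r\bigr)^{1/r}$ is measurable and $T$ maps $\mathcal{M}(\mathbb{T}, \ell^r)$ into itself; and $T$ is sublinear, since $M(f_k + g_k) \leq Mf_k + Mg_k$ and $M(\alpha f_k) = |\alpha|\, Mf_k$ together with Minkowski's inequality in $\ell^r$. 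We may assume that $F := \bigl(\sum_k |f_k|^r\bigr)^{1/r}$ lies in the relevant Lebesgue space (otherwise there is nothing to prove) and, after replacing each $f_k$ by $|f_k|$, that $f_k \geq 0$. With this normalization the two claims say exactly that $T : L^p_{\ell^r}(\mathbb{T}) \to L^p_{\ell^r}(\mathbb{T})$ for $1 < p < \infty$ and that $T : L^1_{\ell^r}(\mathbb{T}) \to L^{1,\infty}_{\ell^r}(\mathbb{T})$.

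The case $p = r$ is immediate: by Tonelli and Corollary~\ref{cor:Lp}, $\|Tf\|_{L^r_{\ell^r}}^r = \sum_k \|Mf_k\|_r^r \lesssim \sum_k \|f_k\|_r^r = \|f\|_{L^r_{\ell^r}}^r$, which in particular gives $T : L^r_{\ell^r} \to L^{r,\infty}_{\ell^r}$. Granting the weak-type bound $T : L^1_{\ell^r} \to L^{1,\infty}_{\ell^r}$ (the crux, treated below), the vector-valued interpolation Theorem~\ref{thm:biginterpolation} applied with $p_0 = 1$, $p_1 = r$ and $B = \ell^r$ yields $T : L^p_{\ell^r} \to L^p_{\ell^r}$ for all $1 < p < r$, while $p = r$ was just handled. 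For $r < p < \infty$ I would argue by duality: writing $q = p/r > 1$, one has $\|Tf\|_{L^p_{\ell^r}}^r = \|\sum_k (Mf_k)^r\|_{L^q}$, and pairing against $0 \leq \varphi$ with $\|\varphi\|_{L^{q'}} = 1$ reduces the matter to $\sum_k \int (Mf_k)^r \varphi \lesssim \int F^r\, M\varphi \leq \|F^r\|_{L^q}\, \|M\varphi\|_{L^{q'}} \lesssim \|F\|_{L^p}^r$, where I have used $M : L^{q'} \to L^{q'}$ (Corollary~\ref{cor:Lp}) and the scalar weighted maximal inequality $\int (Mg)^r \varphi \lesssim \int |g|^r M\varphi$ (with constant depending on $r$). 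This last inequality holds for $r > 1$ and follows from the Calder\'on--Zygmund decomposition of Lemma~\ref{lemma:cute} by a standard distribution-function argument, the essential geometric input being that $\int_{I^*} \varphi \lesssim |I|\, \inf_I M\varphi$ for every interval $I$.

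The heart of the matter, and what I expect to be the main obstacle, is the weak-type $(1,1)$ bound for $T$. A naive Calder\'on--Zygmund argument is delicate here, because the maximal function of a mean-zero bump supported in an interval $I$ decays only like $|I|/\dist(\cdot, I)$, so the usual step of integrating $|Tb|$ over the complement of an enlargement of $I$ does not converge. I would proceed instead as follows. Fix $\alpha > 0$; one may assume $\alpha$ is large compared with $\|F\|_1$, since otherwise the level set has measure at most $1 \leq \|F\|_1/\alpha$ on the probability space. Apply Lemma~\ref{lemma:cute} to $F$ at height $\alpha$, retaining also the properties visible in its proof: disjoint intervals $I_j$ with $\alpha/4 \leq \frac{1}{|I_j|}\int_{I_j} F \leq \alpha$, with $F \leq \alpha$ a.e.\ off $\Omega := \bigcup_j I_j$, and with $\{MF > \alpha\} \subseteq \bigcup_j I_j^*$. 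Writing $\langle g \rangle_I := \frac{1}{|I|}\int_I g$, set $G_k := \sum_j \langle f_k \rangle_{I_j}\, \chi_{I_j}$ (the sequence of averages) and split $f_k = g_k + b_k$ with $g_k := f_k \chi_{\Omega^c} + G_k$ and $b_k := f_k \chi_\Omega - G_k$. The key point is that the averaged sequence $(G_k)_k$ is the genuinely good object: by Minkowski's inequality $\|(G_k(x))_k\|_{\ell^r} \leq \langle F \rangle_{I_j} \leq \alpha$ for $x \in I_j$ and it vanishes off $\Omega$, while $\|(G_k)\|_{L^1_{\ell^r}} \leq \sum_j \int_{I_j} F \leq \|F\|_1$; hence $\|(G_k)\|_{L^r_{\ell^r}}^r \lesssim \alpha^{r-1}\|F\|_1$, and the $p = r$ bound together with Chebyshev's inequality give $|\{\|TG\|_{\ell^r} > c\alpha\}| \lesssim \|F\|_1/\alpha$ for any fixed $c > 0$. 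The same reasoning applies to $(g_k)_k$, since $\|(g_k(x))_k\|_{\ell^r} \leq \alpha$ pointwise (it equals $F \leq \alpha$ on $\Omega^c$ and equals $\langle F \rangle_{I_j} \leq \alpha$ on $I_j$) and $\|(g_k)\|_{L^1_{\ell^r}} \leq 2\|F\|_1$, so $|\{\|Tg\|_{\ell^r} > \alpha/2\}| \lesssim \|F\|_1/\alpha$.

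For the bad part I would exploit that, off $\bigcup_j I_j^*$, $Mb_k$ is dominated by $MG_k$. Indeed $|b_k| \leq f_k \chi_\Omega + G_k$, and if $x \notin \bigcup_j I_j^*$ and an interval $J \ni x$ meets some $I_j$, then, since $x \in J$ and $x \notin I_j^*$, Claim~\ref{claim:2} forces $I_j \subseteq J^*$; summing over the relevant $j$ and using the disjointness of the $I_j$ gives $\frac{1}{|J|}\int_J f_k \chi_\Omega \leq \frac{1}{|J|}\int_{J^*} G_k \leq 3\, \langle G_k \rangle_{J^*} \leq 3\, MG_k(x)$, whence $Mb_k(x) \leq 4\, MG_k(x)$ there. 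Taking $\ell^r$-norms, $\|(Mb_k(x))_k\|_{\ell^r} \leq 4\, \|(MG_k(x))_k\|_{\ell^r}$ off $\bigcup_j I_j^*$, so $|\{\|Tb\|_{\ell^r} > \alpha/2\}| \leq \bigl|\bigcup_j I_j^*\bigr| + |\{\|TG\|_{\ell^r} > \alpha/8\}| \lesssim \frac{1}{\alpha}\sum_j \int_{I_j} F + \frac{1}{\alpha}\|F\|_1 \lesssim \|F\|_1/\alpha$, using that $\bigl|\bigcup_j I_j^*\bigr| \leq 3\sum_j |I_j| \lesssim \frac{1}{\alpha}\sum_j \int_{I_j} F$ by the lower bound on the averages. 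Combining this with the good-part estimate, $|\{\|Tf\|_{\ell^r} > \alpha\}| \leq |\{\|Tg\|_{\ell^r} > \alpha/2\}| + |\{\|Tb\|_{\ell^r} > \alpha/2\}| \lesssim \|F\|_1/\alpha$, which is the desired weak-type inequality. The steps most in need of care will be the pointwise domination $Mb_k \lesssim MG_k$ off the expanded intervals (where the geometry of Claims~\ref{claim:1} and~\ref{claim:2} is used) and the verification that each ``$\lesssim \alpha$'' bound on the averages holds with a constant independent of $j$ and $\alpha$; the rest is the two routine endpoint computations and bookkeeping.
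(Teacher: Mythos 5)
Your proposal follows the paper's own route essentially step for step: $p=r$ by Tonelli and the scalar $L^r$ bound; the weak $(1,1)$ bound via a Calder\'on--Zygmund decomposition of $F=(\sum_k|f_k|^r)^{1/r}$ together with the averaged sequence $G_k=\sum_j \langle f_k\rangle_{I_j}\chi_{I_j}$ and the geometric domination $M(f_k\chi_\Omega)\lesssim MG_k$ off $\bigcup_j I_j^*$ via Claim~\ref{claim:2}; vector-valued Marcinkiewicz interpolation (Theorem~\ref{thm:biginterpolation} with $B=\ell^r$) for $1<p<r$; and duality paired with the weighted estimate $\int (Mg)^r\varphi\lesssim\int|g|^r M\varphi$ (the paper's Lemma~\ref{lemma:weight}, itself obtained by interpolating the endpoint from Lemma~\ref{lemma:cute}) for $r<p<\infty$.

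The only differences are cosmetic. You package the decomposition in the classical form $f_k=g_k+b_k$ with $g_k=f_k\chi_{\Omega^c}+G_k$ so that $b_k$ has mean zero on each $I_j$; the paper writes $f_k=f_k'+f_k''$ with $f_k'=f_k\chi_{\Omega^c}$, $f_k''=f_k\chi_\Omega$, and keeps $g_k$ as a separate auxiliary object used to dominate $Mf_k''$. Since neither you nor the paper actually exploits the mean-zero property of the bad part (both rely on the same geometric bound $Mf_k''\lesssim Mg_k$, resp.\ $Mb_k\lesssim MG_k$, off the expanded intervals), the two packagings are equivalent in content. One small note: you cite Lemma~\ref{lemma:cute} and then read off the upper bound on the averages and $F\le\alpha$ off $\Omega$ ``from its proof''; those are precisely the conclusions of Lemma~\ref{lemma:cz}, which the paper invokes directly and which would be the cleaner reference here.
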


Note the similarities between these results and what we know about
$M$.  In both cases, we have $L^1 \rightarrow L^{1,\infty}$ and $L^p
\rightarrow L^p$ ($1 < p < \infty$) results.  But, here, there can
be no $L^\infty$ estimate.  Indeed, fix $1 < r < \infty$ and set
$f_k = \chi_{[2^{-k-1}, 2^{-k})}$.  Then, $(\sum |f_k|^r)^{1/r} =
\chi_{(0,1/2)}$, which has  $L^\infty$-norm 1.  But, if $x \in [0,
2^{-N}]$, then $x \in [0, 2^{-k}]$ for any $k \leq N$.  So, $Mf_k(x)
\geq |[0, 2^{-k}]|^{-1} \int_{[0, 2^{-k}]} f_k(y) \, dy = 1/2$.
Namely, $(\sum |Mf_k(x)|^r)^{1/r} \geq N^{1/r}/2$ for every $x \in
[0,2^{-N}]$.  As $N$ is arbitrary, $\|(\sum |Mf_k|^r)^{1/r}\|_\infty
= \infty$.

One can also see that no $r = 1$ estimate could exist.  Fix a
positive integer $N$ and set $f_k = \chi_{[(k-1)/N, k/N)}$ for $k
\leq N$ and $f_k = 0$ for $k > N$.  Then, $\sum |f_k| = 1$, which
has $L^p$-norm 1 for every $1 \leq p \leq \infty$.  On the other
hand, fix $x \in \mathbb{T}$.  Choose $1 \leq j \leq N$ so that $x
\in [\frac{j-1}{N}, \frac{j}{N})$.  Fix $1 \leq k \leq N$ and denote
$r = \frac{|k-j|}{N} + \frac{1}{N}$.  Then, there is an interval
$I$, with $|I| = r$, containing $[\frac{k-1}{N}, \frac{k}{N}]$ and
$[\frac{j-1}{N}, \frac{j}{N}]$, thus $x$. So,

\begin{equation*}
Mf_k(x) \geq \frac{1}{|I|} \int_{I} f_k(y) \, dy = \frac{1}{rN} =
\frac{1}{|k-j|+1}.
\end{equation*}

\noindent Hence, $\sum |Mf_k(x)| \geq \sum_{k=1}^N \frac{1}{|k-j|+1}
\geq \sum_{k=1}^N \frac{1}{k+1} \geq \log N - 1$. This holds for all
$x$, so $\|\sum |Mf_k|\|_p \geq \log N - 1$. As $N$ is arbitrary, no
$r=1$ estimate could exist. These two counterexamples are taken from
Stein~\cite{stein}.

Finally, we note that the $r = \infty$ case also holds (even for $p
= \infty$), almost trivially.  One only needs to note that $\sup_k
Mf_k \leq M(\sup_k |f_k|)$ pointwise, and apply the $L^p$ theory of
$M$.

\begin{lemma}\label{lemma:cz} Let $f \in L^1(\mathbb{T})$ and
$\alpha > \|f\|_1$ a constant.  Then, there exists a sequence of
disjoint dyadic intervals $I_j$ such that, if $\Omega = \bigcup_j
I_j$, then $|f| \leq \alpha$ a.e.~on $\Omega^c$ and

\begin{gather*}
|\Omega| = \sum_{j=1}^\infty |I_j| \leq \frac{1}{\alpha} \|f\|_1, \\
\frac{1}{|I_j|} \int_{I_j} |f(x)| \, dx \leq 2 \alpha \text{ for all } I_j. \\
\end{gather*}
\end{lemma}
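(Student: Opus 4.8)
The plan is to establish the classical Calder\'on--Zygmund decomposition on $\mathbb{T}$ using a dyadic stopping-time argument, just as in the real-variable case, but taking advantage of the fact that $\mathbb{T}$ has finite measure so that one need not start from arbitrarily large dyadic cubes. First I would consider the collection $\mathcal{D}$ of all dyadic intervals $I$ for which $\frac{1}{|I|}\int_I |f| \, dx > \alpha$. The hypothesis $\alpha > \|f\|_1$ is exactly what guarantees that $\mathbb{T}$ itself is \emph{not} in this collection (since $\frac{1}{|\mathbb{T}|}\int_{\mathbb{T}} |f| = \|f\|_1 < \alpha$), so every $I \in \mathcal{D}$ has a strictly larger dyadic interval containing it, and hence is contained in a maximal element of $\mathcal{D}$ with respect to inclusion. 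Let $\{I_j\}$ enumerate these maximal intervals; by the dyadic trichotomy they are pairwise disjoint, and $\Omega = \bigcup_j I_j = \bigcup_{\mathcal{D}} I = \{M_D f > \alpha\}$.

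Next I would verify the two quantitative estimates. The measure bound follows immediately from disjointness and maximality:
\begin{equation*}
|\Omega| = \sum_j |I_j| \leq \sum_j \frac{1}{\alpha} \int_{I_j} |f(x)| \, dx = \frac{1}{\alpha} \int_\Omega |f(x)| \, dx \leq \frac{1}{\alpha}\|f\|_1.
\end{equation*}
For the upper bound $\frac{1}{|I_j|}\int_{I_j} |f| \leq 2\alpha$, I would use the maximality of $I_j$: its dyadic parent $\widehat{I_j}$ (which exists and is a genuine dyadic interval, since $I_j \neq \mathbb{T}$) is \emph{not} in $\mathcal{D}$, so $\frac{1}{|\widehat{I_j}|}\int_{\widehat{I_j}} |f| \leq \alpha$. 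Since $|\widehat{I_j}| = 2|I_j|$ and $I_j \subseteq \widehat{I_j}$, we get $\int_{I_j} |f| \leq \int_{\widehat{I_j}} |f| \leq \alpha |\widehat{I_j}| = 2\alpha |I_j|$, which is the claimed bound.

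Finally, for the statement that $|f| \leq \alpha$ a.e.\ on $\Omega^c$: by construction $\Omega^c = \{M_D f \leq \alpha\}$, so for $x \in \Omega^c$ and every dyadic $I$ containing $x$ we have $\frac{1}{|I|}\int_I |f| \leq \alpha$; letting $|I| \to 0$ and invoking Corollary~\ref{cor:pointwise} (which gives $|f| \leq M_D f$ a.e.), we conclude $|f(x)| \leq M_D f(x) \leq \alpha$ for a.e.\ $x \in \Omega^c$. The main subtlety to be careful about is the role of the hypothesis $\alpha > \|f\|_1$ --- without it, $\mathbb{T}$ itself could satisfy the averaging condition and there would be no maximal interval strictly below it, so the parent-interval argument for the $2\alpha$ bound would fail; this is the one place where the finite-measure setting genuinely differs from $\mathbb{R}$ and must be handled explicitly. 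Everything else is a routine transcription of the standard argument, already modeled closely on the proof of Lemma~\ref{lemma:cute} above.
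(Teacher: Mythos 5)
Your approach is the same as the paper's (a dyadic stopping-time / maximal-interval decomposition), but there is a genuine, if small, gap in the parent-interval argument that the paper handles and you do not. Under the paper's conventions $\mathbb{T}$ is explicitly \emph{not} a dyadic interval, so the largest dyadic intervals have length $1/2$. Your claim that the parent $\widehat{I_j}$ ``exists and is a genuine dyadic interval, since $I_j \neq \mathbb{T}$'' is therefore false precisely when $|I_j| = 1/2$: the parent would be $\mathbb{T}$, which is excluded, and no dyadic interval strictly contains $I_j$. The condition $I_j \neq \mathbb{T}$ is vacuously true for every dyadic $I_j$ and does nothing to produce a parent. Ironically, this is exactly the boundary situation you flag as ``the main subtlety'' requiring $\alpha > \|f\|_1$, but you use that hypothesis only to exclude $\mathbb{T}$ from $\mathcal{D}$ (moot, since $\mathbb{T}$ was never a candidate) and not where it is actually needed.

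The fix is what the paper does: treat $|I_j| = 1/2$ separately, using $\alpha > \|f\|_1$ directly, namely $\int_{I_j} |f| \leq \|f\|_1 < \alpha = 2\alpha \cdot \tfrac{1}{2} = 2\alpha |I_j|$. (Alternatively, one could redo the whole argument in a convention where $\mathbb{T}$ \emph{is} admitted as the top dyadic interval; then $\mathbb{T} \notin \mathcal{D}$ is exactly the needed statement and every $I_j$ has a dyadic parent. But that is not this paper's convention, so you would have to say so.) You also silently assume $\mathcal{D}$ is nonempty; if $\mathcal{D} = \emptyset$ there are no maximal intervals, and one must explicitly take the empty sequence (or, as the paper does, any small dyadic interval) — a trivial but real omission since the lemma asserts existence of the $I_j$ unconditionally. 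Aside from these two boundary points, the argument is correct and tracks the paper's proof step for step.
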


\begin{proof} Define $\Omega = \{M_D f > \alpha\}$.  As $|f| \leq
M_D f$ a.e., we see immediately that $|f| \leq M_D f \leq \alpha$
a.e.~on $\Omega^c$.  If $\Omega$ is empty, then $|f| \leq \alpha$
everywhere.  Thus, $|I|^{-1} \int_I |f(y)| \, dy \leq \alpha$ for
any interval $I$.  Simply choose a dyadic interval $I_1$ so that
$|I_1| \leq \|f\|_1/\alpha$, and let $I_j$ be empty for $j > 1$.
Then, $|\Omega| \leq \|f\|_1/\alpha$, and all conditions are
satisfied.

Now, assume $\Omega$ is not empty.  Let $\mathcal{D}$ be the
countable collection of all dyadic intervals $I$ such that
$\frac{1}{|I|} \int_I |f(y)| \, dy > \alpha$.  By construction,
$\Omega = \bigcup_\mathcal{D} I$.  We say a dyadic interval $I \in
\mathcal{D}$ is maximal if for every $I' \in \mathcal{D}$, we have
either $I' \subseteq I$ or $I, I'$ are disjoint. Clearly, every $I
\in \mathcal{D}$ is contained in a maximal interval.  Let $I_1, I_2,
\ldots$ be the maximal intervals of $\mathcal{D}$, which are
necessarily disjoint.  Further, it is clear that

\begin{equation*}
\Omega = \bigcup_\mathcal{D} I = \bigcup_\mathbb{N} I_k.
\end{equation*}

As each $I_k \in \mathcal{D}$, we have $\alpha|I_k| < \int_{I_k}
|f(y)| \, dy$.  As the $I_k$ are disjoint, simply sum over $k$ to
see $\alpha |\Omega| \leq \int_\Omega |f(y)| \, dy \leq \|f\|_1$. On
the other hand, if $|I_k| < 1/2$, then there is some dyadic interval
$I_k'$ which contains $I_k$ and satisfies $|I_k'| = 2 |I_k|$.  But,
$I_k' \notin \mathcal{D}$, because otherwise $I_k$ could not be
maximal. Thus, $\alpha|I_k'| \geq \int_{I_k'} |f(y)| \, dy$, which
implies $\int_{I_k} |f(y)| \, dy \leq \int_{I_k'} |f(y)| \, dy \leq
\alpha |I_k'| = 2 \alpha |I_k|$.  Similarly, if $|I_k| = 1/2$, then
$\int_{I_k} |f(y)| \, dy \leq \|f\|_1 < \alpha = 2 \alpha |I_k|$.
\end{proof}

\begin{lemma}\label{lemma:fs1} For any sequence $f_1, f_2, \ldots$
on $\mathbb{T}$ and $1 < r <\infty$

\begin{equation*}
\bigg\| \Big( \sum_{k=1}^\infty |M f_k|^r \Big)^{1/r} \bigg\|_r
\lesssim \bigg\| \Big( \sum_{k=1}^\infty |f_k|^r \Big)^{1/r}
\bigg\|_r,
\end{equation*}

\noindent where the underlying constants depend only on $r$.
\end{lemma}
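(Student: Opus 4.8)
The plan is to use the fact that here the outer $L^p$ exponent equals the inner $\ell^r$ exponent, so the mixed norm on the left side is nothing but $\big(\sum_k \|Mf_k\|_r^r\big)^{1/r}$ once the sum and the integral are interchanged. Since each $|Mf_k|^r$ is nonnegative and measurable, Tonelli's theorem gives
\begin{equation*}
\bigg\| \Big( \sum_{k=1}^\infty |Mf_k|^r \Big)^{1/r} \bigg\|_r^r = \int_{\mathbb{T}} \sum_{k=1}^\infty |Mf_k(x)|^r \, dx = \sum_{k=1}^\infty \int_{\mathbb{T}} |Mf_k(x)|^r \, dx = \sum_{k=1}^\infty \|Mf_k\|_r^r .
\end{equation*}

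Next I would invoke Corollary~\ref{cor:Lp}: since $1 < r < \infty$, there is a constant $C_r$ depending only on $r$ with $\|Mf_k\|_r \leq C_r \|f_k\|_r$ for every $k$. Summing the $r$-th powers of these inequalities and applying Tonelli once more in the reverse direction yields
\begin{equation*}
\sum_{k=1}^\infty \|Mf_k\|_r^r \leq C_r^r \sum_{k=1}^\infty \int_{\mathbb{T}} |f_k(x)|^r \, dx = C_r^r \int_{\mathbb{T}} \sum_{k=1}^\infty |f_k(x)|^r \, dx = C_r^r \bigg\| \Big( \sum_{k=1}^\infty |f_k|^r \Big)^{1/r} \bigg\|_r^r .
\end{equation*}
Taking $r$-th roots finishes the argument, and the constant produced, $C_r$, depends only on $r$, as required.

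Honestly, there is no real obstacle in this lemma: the diagonal case $p = r$ is essentially immediate from the scalar $L^r$ theory of $M$ already established, and isolating it here is presumably meant as the starting point for an interpolation or duality argument leading to the general, off-diagonal Fefferman--Stein inequalities stated above. The only point worth keeping in mind is that if the right-hand side is infinite the inequality is vacuous; if it is finite, then each $f_k$ lies in $L^r(\mathbb{T})$ and all of the manipulations above are legitimate.
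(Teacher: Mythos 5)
Your proof is correct and uses exactly the same approach as the paper: raise the left-hand side to the $r$-th power, swap sum and integral by Tonelli, apply the scalar $L^r$ bound on $M$ termwise, and swap back. The only cosmetic difference is that you spell out the Tonelli steps and the finiteness caveat explicitly, while the paper compresses them into a single displayed chain.
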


\begin{proof} Simply note that

\begin{equation*}
\begin{split}
\bigg\| \Big( \sum_{k=1}^\infty |M f_k|^r \Big)^{1/r} \bigg\|_r^r &=
\int_{\mathbb{T}} \Big( \sum_{k=1}^\infty |M f_k(x)|^r \Big) \, dx
= \sum_{k=1}^\infty \int_{\mathbb{T}} |M f_k(x)|^r \, dx \\
&\leq \|M\|_{L^r \rightarrow L^r}^r \sum_{k=1}^\infty
\int_{\mathbb{T}} |f_k(x)|^r \, dx \\
&= \|M\|_{L^r \rightarrow L^r}^r \bigg\| \Big( \sum_{k=1}^\infty
|f_k|^r \Big)^{1/r} \bigg\|_r^r.
\end{split}
\end{equation*}
\end{proof}

\begin{thm}\label{thm:fs2} For any sequence $f_1, f_2, \ldots$
on $\mathbb{T}$ and $1 < r < \infty$

\begin{equation*}
\bigg\| \Big( \sum_{k=1}^\infty |M f_k|^r \Big)^{1/r}
\bigg\|_{1,\infty} \lesssim \bigg\| \Big( \sum_{k=1}^\infty |f_k|^r
\Big)^{1/r} \bigg\|_1,
\end{equation*}

\noindent where the underlying constants depend only on $r$.
\end{thm}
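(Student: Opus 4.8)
The plan is to prove the vector-valued weak-type $(1,1)$ inequality for $M$ by adapting the classical Calder\'on--Zygmund argument to the torus, using Lemma~\ref{lemma:cz} as the decomposition tool. Write $F = \left(\sum_k |f_k|^r\right)^{1/r}$ and fix $\alpha > 0$; I want to bound $|\{x : (\sum_k |Mf_k(x)|^r)^{1/r} > \alpha\}|$ by a constant times $\|F\|_1/\alpha$. We may assume $\alpha > \|F\|_1$ (the remaining range is trivially controlled since the whole space has measure $1$, so $|\{\ldots > \alpha\}| \leq 1 \leq \|F\|_1/\alpha$). Apply Lemma~\ref{lemma:cz} to $F$ at height $\alpha$ to obtain disjoint dyadic intervals $I_j$ with $\Omega = \bigcup_j I_j$, $|\Omega| \leq \alpha^{-1}\|F\|_1$, $F \leq \alpha$ a.e.\ on $\Omega^c$, and $|I_j|^{-1}\int_{I_j} F \leq 2\alpha$.

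Next I split each $f_k = g_k + b_k$ in the standard way: $g_k = f_k$ on $\Omega^c$ and $g_k = |I_j|^{-1}\int_{I_j} f_k$ on each $I_j$, with $b_k = f_k - g_k$ supported on $\Omega$ and having mean zero on each $I_j$. The ``good'' part is controlled in $L^r$: on $\Omega^c$ we have $(\sum_k |g_k|^r)^{1/r} = F \leq \alpha$, while on each $I_j$, by Minkowski's inequality for the $\ell^r$-valued average, $(\sum_k |g_k|^r)^{1/r} \leq |I_j|^{-1}\int_{I_j}(\sum_k|f_k|^r)^{1/r} = |I_j|^{-1}\int_{I_j} F \leq 2\alpha$ pointwise on $I_j$. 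Hence $\|(\sum_k|g_k|^r)^{1/r}\|_\infty \leq 2\alpha$, and combined with $\|(\sum_k |g_k|^r)^{1/r}\|_1 \lesssim \|F\|_1$ (the averages don't increase the $L^1$ norm) we get $\|(\sum_k|g_k|^r)^{1/r}\|_r^r \lesssim \alpha^{r-1}\|F\|_1$. Using Lemma~\ref{lemma:fs1} for the operator $M$ on $\ell^r$-valued functions, $\|(\sum_k |Mg_k|^r)^{1/r}\|_r^r \lesssim \alpha^{r-1}\|F\|_1$, so by Chebyshev $|\{(\sum_k|Mg_k|^r)^{1/r} > \alpha/2\}| \lesssim \alpha^{-1}\|F\|_1$.

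For the ``bad'' part I use the enlarged set $\Omega^* = \bigcup_j I_j^*$, which still satisfies $|\Omega^*| \leq 3|\Omega| \lesssim \alpha^{-1}\|F\|_1$, so it suffices to bound $|\{x \notin \Omega^* : (\sum_k |Mb_k(x)|^r)^{1/r} > \alpha/2\}|$. The key geometric observation is that for $x \notin I_j^*$, any interval $I$ containing $x$ that meets $I_j$ must have $|I| > |I_j|$, hence $I \supseteq I_j$ by a trichotomy-type argument (or more carefully, $|I|^{-1}\int_I |b_k| \lesssim |I|^{-1}\int_{I_j}|b_k|$ using that $b_k$ has mean zero on $I_j$ — but since $I_j$ is dyadic and $I$ arbitrary I should instead compare to the smallest dyadic interval containing both and exploit the mean-zero cancellation, which for $M$ rather than a smooth operator is the delicate point). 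The cleanest route: for $x\notin\Omega^*$, bound $Mb_k(x) \le \sum_j M(b_k\chi_{I_j})(x)$, and for $x\notin I_j^*$ estimate $M(b_k\chi_{I_j})(x) \lesssim \frac{|I_j|}{\operatorname{dist}(x,I_j)+|I_j|} \cdot \frac{1}{|I_j|}\int_{I_j}|b_k|$ — no, this last step fails without cancellation. \textbf{The main obstacle} is exactly this: the Hardy--Littlewood maximal function does not commute with cancellation, so one cannot simply exploit $\int b_k = 0$ pointwise against $M$; the correct fix is to absorb the bad part's $\ell^r$-norm into the good part's bound by noting $(\sum_k |b_k|^r)^{1/r} \le (\sum_k|f_k|^r)^{1/r} + (\sum_k|g_k|^r)^{1/r} \lesssim F$ on $\Omega$ and zero off $\Omega$, then using that $\sum_j |I_j^*| \lesssim \alpha^{-1}\|F\|_1$ handles $\Omega^*$ while \emph{off} $\Omega^*$ one writes, for $x \notin I_j^*$ and $I \ni x$ with $I \cap I_j \ne \emptyset$, $\frac{1}{|I|}\int_{I\cap I_j}|b_k| \le \frac{|I_j|}{|I|}\cdot\frac{1}{|I_j|}\int_{I_j}|b_k| \le \frac{\operatorname{dist}(x,I_j)+|I_j|}{|I|}\cdot(\text{bounded})$, and since $|I|\gtrsim\operatorname{dist}(x,I_j)$ one gets a summable-in-$j$ tail after applying $\ell^r$ Minkowski and then integrating in $x$ over $(\Omega^*)^c$; combining all three contributions gives $|\{(\sum_k|Mf_k|^r)^{1/r} > \alpha\}| \lesssim \alpha^{-1}\|F\|_1$, completing the proof.
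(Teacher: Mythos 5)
Your proposal correctly sets up the Calder\'on--Zygmund decomposition via Lemma~\ref{lemma:cz} and handles the good part exactly as the paper does. You also correctly identify the central difficulty: $M$ does not see cancellation, so the mean-zero property of the bad part $b_k = f_k - g_k$ cannot be exploited in the usual way. However, the ``fix'' you sketch does not close the gap. Without cancellation, your estimate for $x \notin I_j^*$ is
\begin{equation*}
M(b_k\chi_{I_j})(x) \lesssim \frac{|I_j|}{\dist(x,I_j)} \cdot \frac{1}{|I_j|}\int_{I_j}|b_k|,
\end{equation*}
and the factor $|I_j|/\dist(x,I_j)$ is \emph{not} integrable to $O(|I_j|)$ over $(\Omega^*)^c$: one gets
\begin{equation*}
\int_{\dist(x,I_j) > |I_j|} \frac{|I_j|}{\dist(x,I_j)}\, dx \sim |I_j|\log\frac{1}{|I_j|},
\end{equation*}
a logarithmic loss that destroys the weak-type estimate when summed over $j$. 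This is precisely the reason $\|\cdot\|_1$-against-$M$ estimates force the $\llogl$ class in Chapter 4, so the failure is structural, not a matter of tightening constants. The ``summable-in-$j$ tail'' you invoke would require a bound like $|I_j|^2/\dist(x,I_j)^2$, which is exactly what the cancellation (that you've already conceded is unavailable) would have given you.

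The paper avoids this entirely by making a different choice than the CZ good/bad split. Instead of subtracting the averages, it sets $f_k' = f_k\chi_{\Omega^c}$ and $f_k'' = f_k\chi_\Omega$ (pure cutoff, no mean-zero property), and separately introduces the auxiliary function $g_k$ equal on each $I_j$ to $|I_j|^{-1}\int_{I_j}|f_k|$ --- essentially the averaged function you wrote down, but used as a \emph{comparison} object rather than as the good part. The key geometric step is Claim~\ref{claim:2}: for $x \notin \Omega^*$ and $I \ni x$, every $I_j$ that meets $I$ satisfies $I_j \subseteq I^*$. Since $\int_{I_j}|f_k''| = \int_{I_j}|g_k|$, it follows that
\begin{equation*}
\frac{1}{|I|}\int_I |f_k''| \leq \frac{1}{|I|}\sum_{j:\,I_j\cap I\neq\emptyset}\int_{I_j}|g_k| \leq \frac{3}{|I^*|}\int_{I^*}|g_k| \leq 3\,Mg_k(x),
\end{equation*}
so $Mf_k'' \leq 3Mg_k$ pointwise on $(\Omega^*)^c$. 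This comparison uses only that $g_k$ has the same $L^1$ mass as $|f_k|$ on each cube --- no cancellation --- and it transfers the bad-part estimate to the $L^r$-bounded function $g_k$, where Lemma~\ref{lemma:fs1} and Chebyshev finish the job. Try replacing your final paragraph with this comparison argument and the proof closes cleanly.
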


\begin{proof} Denote $F(x) = (\sum_{k=1}^\infty
|f_k(x)|^r)^{1/r} \geq 0$.  If $F$ is not in $L^1$, then there is
nothing to prove.  So, assume $F \in L^1(\mathbb{T})$.  Let $\alpha
> \|F\|_1$.  Then, by applying Lemma~\ref{lemma:cz} to $F$ and $\alpha$, find
disjoint intervals $I_j$, $\Omega = \bigcup I_j$, satisfying

\begin{equation*}
\begin{split}
(a) \quad &|\Omega| = \sum_{j=1}^\infty |I_j| \leq \frac{1}{\alpha} \|F\|_1, \\
(b) \quad &F \leq \alpha \text{ on } \Omega^c, \\
(c) \quad &\frac{1}{|I_j|} \int_{I_j} F(y) \, dy \leq 2 \alpha
\text{ for each } I_j.
\end{split}
\end{equation*}

\noindent Decompose each $f_k$ into $f_k = f_k' + f_k''$ where $f_k'
= f_k \chi_{\Omega^c}$ and $f_k''= f_k\chi_\Omega$.  Denote $F' =
(\sum |f_k'|^r)^{1/r}$.

As $f_k' \leq f_k$ pointwise, it is clear that $F' \leq F$
pointwise. On the other hand, $F'$ is 0 on $\Omega$.  So, by (b)
above, we see $F' \leq \alpha$.  Thus,

\begin{equation*}
\|F'\|_r^r = \int_{\mathbb{T}} |F'(x)|^r \, dx \leq \alpha^{r-1}
\|F'\|_1 \leq \alpha^{r-1} \|F\|_1.
\end{equation*}

\noindent Applying Lemma~\ref{lemma:fs1}, we have

\begin{equation*}
\bigg\| \Big( \sum_{k=1}^\infty |M f_k'|^r \Big)^{1/r} \bigg\|_r^r
\lesssim \bigg\| \Big( \sum_{k=1}^\infty |f_k'|^r \Big)^{1/r}
\bigg\|_r^r = \|F'\|_r^r \leq \alpha^{r-1} \|F\|_1.
\end{equation*}

\noindent An application of Chebyshev's inequality yields

\begin{equation*}
\bigg| \bigg\{ \Big( \sum_{k=1}^\infty |M f_k'|^r \Big)^{1/r} >
\alpha/2 \bigg\} \bigg| \lesssim \frac{1}{\alpha^r} \bigg\| \Big(
\sum_{k=1}^\infty |M f_k'|^r \Big)^{1/r} \bigg\|_r^r \lesssim
\frac{1}{\alpha} \|F\|_1.
\end{equation*}

On the other hand, define functions $g_k$ by

\begin{equation*}
g_k(x) = \begin{cases} \frac{1}{|I_j|} \int_{I_j} |f_k(y)| \, dy,&
\quad \text{ if } x \in I_j, \\ 0,& \quad \text{ if } x \notin
\Omega.
\end{cases}
\end{equation*}

\noindent As the $I_j$ are disjoint, this is well-defined a.e.. Let
$G(x) = (\sum |g_k|^r)^{1/r}$, which is supported on $\Omega$.

Fix $x \in \Omega$.  Then, $x$ is in some $I_j$.  By the generalized
Minkowski inequality (see Lieb and Loss~\cite{lieb} or
Rudin~\cite{rudin}) and (c) above, we have

\begin{equation*}
\begin{split}
G(x) &= \bigg( \sum_{k=1}^\infty \Big[ \frac{1}{|I_j|} \int_{I_j}
|f_k(y)| \, dy \Big]^r \bigg)^{1/r} \leq \frac{1}{|I_j|} \int_{I_j}
\Big( \sum_{k=1}^\infty |f_k(y)|^r \Big)^{1/r} \, dy \\
&= \frac{1}{|I_j|} \int_{I_j} F(y) \, dy \leq 2 \alpha.
\end{split}
\end{equation*}

\noindent Hence, as $G$ is supported in $\Omega$ and bounded by $2
\alpha$, we see $\|G\|_r^r \lesssim \alpha^r |\Omega| \leq
\alpha^{r-1} \|F\|_1$.  Precisely as was done above, apply
Lemma~\ref{lemma:fs1} and Chebyshev to see

\begin{equation*}
\bigg| \bigg\{ \Big( \sum_{k=1}^\infty |M g_k|^r \Big)^{1/r}
> \alpha/6 \bigg\} \bigg| \lesssim \frac{1}{\alpha^r}
\bigg\| \Big( \sum_{k=1}^\infty |M g_k|^r \Big)^{1/r} \bigg\|_r^r
\lesssim \frac{1}{\alpha} \|F\|_1.
\end{equation*}

Now, we would now like to establish some relationship between $Mg_k$
and $M f_k''$.  First, note that for any $I_j$,

\begin{equation*}
\int_{I_j} |g_k(x)| \, dx = \int_{I_j} \bigg( \frac{1}{|I_j|}
\int_{I_j} |f_k(y)| \, dy \bigg) \, dx = \int_{I_j} |f_k(y)| \, dy =
\int_{I_j} |f_k''(y)| \, dy.
\end{equation*}

\noindent Set $\Omega^* = \bigcup I_j^*$. By (a),

\begin{equation*}
|\Omega^*| \leq \sum_j |I_j^*| \leq 3 \sum_j |I_j| \lesssim
\frac{1}{\alpha} \|F\|_1.
\end{equation*}

\noindent Fix $x \notin \Omega^*$ and $I$ an interval containing
$x$. As each $f_k''$ is supported on $\Omega = \bigcup I_j$, we see

\begin{equation*}
\frac{1}{|I|} \int_{I} |f_k''(y)| \, dy = \frac{1}{|I|} \sum_{j \in
\mathbb{N}} \int_{I \cap I_j} |f_k''(y)| \, dy = \frac{1}{|I|}
\sum_{j \in J} \int_{I \cap I_j} |f_k''(y)| \, dy,
\end{equation*}

\noindent where $J = \{j : I_j \cap I \not= \emptyset\}$.  But, for
$j \in J$, we have $I_j \cap I \not= \emptyset$ and $x \in I -
\Omega^* \subseteq I - I_j^*$.  By Claim~\ref{claim:2}, this implies
$I_j \subseteq I^*$.  So,

\begin{equation*}
\begin{split}
\frac{1}{|I|} \int_{I} |f_k''(y)| \, dy &= \frac{1}{|I|} \sum_J
\int_{I \cap I_j} |f_k''(y)| \, dy  \leq \frac{1}{|I|}
\sum_J \int_{I_j} |f_k''(y)| \, dy \\
&= \frac{1}{|I|} \sum_J \int_{I_j} |g_k(y)| \, dy \leq \frac{1}{|I|}
\int_{I^*} |g_k(y)| \, dy \\
&\leq \frac{3}{|I^*|} \int_{I^*} |g_k(y)| \, dy \leq 3 M g_k(x).
\end{split}
\end{equation*}

\noindent As $I$ is arbitrary, $M f_k''(x) \leq 3 M g_k(x)$. As $x
\notin \Omega^*$ is arbitrary, this holds on $\mathbb{T} -
\Omega^*$.  Hence, $(\sum |M f_k''|^r)^{1/r} \leq 3 (\sum
|Mg_k|^r)^{1/r}$ on $\mathbb{T} - \Omega^*$, and

\begin{equation*}
\begin{split}
\bigg| \bigg\{x \in \mathbb{T} &- \Omega^* : \Big(\sum_{k=1}^\infty
|M f_k''(x)|^r \Big)^{1/r} > \alpha/2 \bigg\} \bigg| \\
&\leq \bigg| \bigg\{x \in \mathbb{T} - \Omega^* :
\Big(\sum_{k=1}^\infty |M g_k(x)|^r \Big)^{1/r} > \alpha/6 \bigg\} \bigg| \\
&\leq \bigg| \bigg\{\Big(\sum_{k=1}^\infty |M g_k|^r \Big)^{1/r}
> \alpha/6 \bigg\} \bigg| \lesssim \frac{1}{\alpha} \|F\|_1.
\end{split}
\end{equation*}

\noindent Therefore,

\begin{equation*}
\begin{split}
\bigg| \bigg\{ \Big( \sum_{k=1}^\infty |M f_k''|^r \Big)^{1/r} >
\alpha/2 \bigg\} \bigg| &= \bigg| \bigg\{x \in \mathbb{T} - \Omega^*
: \Big( \sum_{k=1}^\infty |M f_k''(x)|^r \Big)^{1/r}
> \alpha/2 \bigg\} \bigg| \\
&\qquad + \bigg| \bigg\{x \in \Omega^* : \Big( \sum_{k=1}^\infty |M
f_k''(x)|^r \Big)^{1/r} > \alpha/2 \bigg\}
\bigg| \\
&\lesssim \frac{1}{\alpha} \|F\|_1 + |\Omega^*| \lesssim
\frac{1}{\alpha} \|F\|_1.
\end{split}
\end{equation*}

Recall $f_k = f_k' + f_k''$, so that $M f_k \leq M f_k' + M f_k''$.
By Minkowski,

\begin{equation*}
\Big( \sum_{k=1}^\infty |M f_k(x)|^r \Big)^{1/r} \leq \Big(
\sum_{k=1}^\infty |M f_k'(x)|^r \Big)^{1/r} + \Big(
\sum_{k=1}^\infty |M f_k''(x)|^r \Big)^{1/r}.
\end{equation*}

\noindent Finally, we see

\begin{equation*}
\begin{split}
\bigg| \bigg\{ \Big(&\sum_{k=1}^\infty |M f_k|^r \Big)^{1/r} >
\alpha \bigg\} \bigg| \\
&\leq \bigg| \bigg\{ \Big(\sum_{k=1}^\infty |M f_k'|^r \Big)^{1/r}
> \alpha/2 \bigg\} \bigg| + \bigg| \bigg\{ \Big(\sum_{k=1}^\infty |M f_k''|^r \Big)^{1/r} >
\alpha/2 \bigg\} \bigg| \\
&\lesssim \frac{1}{\alpha} \|F\|_1.
\end{split}
\end{equation*}

\noindent This holds for all $\alpha > \|F\|_1$.  But, if $\alpha
\leq \|F\|_1$, then $|\{ (\sum |M f_k|^r)^{1/r} > \alpha \}| \leq 1
\leq \|F\|_1/\alpha$ trivially. This completes the proof.
\end{proof}

\begin{thm}\label{thm:fs3} For any sequence $f_1, f_2, \ldots$
on $\mathbb{T}$ and $1 < p \leq r < \infty$

\begin{equation*}
\bigg\| \Big( \sum_{k=1}^\infty |M f_k|^r \Big)^{1/r} \bigg\|_p
\lesssim \bigg\| \Big( \sum_{k=1}^\infty |f_k|^r \Big)^{1/r}
\bigg\|_p,
\end{equation*}

\noindent where the underlying constants depend only on $p$ and $r$.
\end{thm}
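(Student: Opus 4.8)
The plan is to recognize this as an instance of the vector-valued Marcinkiewicz interpolation theorem, Theorem~\ref{thm:biginterpolation}. Fix $r$ and take the complex Banach space $B = \ell^r$ of $r$-power-summable sequences with norm $\|(a_k)_k\|_{\ell^r} = (\sum_k |a_k|^r)^{1/r}$. On $\mathcal{M}(\mathbb{T},\ell^r)$ define the operator $T$ by $T\vec{f}(x) = (Mf_1(x), Mf_2(x), \ldots)$ for $\vec{f} = (f_1, f_2, \ldots)$. With this identification one has $\|\vec{f}\|_{p,\ell^r} = \|(\sum_k |f_k|^r)^{1/r}\|_p$ and $\|T\vec{f}\|_{p,\ell^r} = \|(\sum_k |Mf_k|^r)^{1/r}\|_p$, so the asserted inequality is exactly the boundedness $T : L^p_{\ell^r}(\mathbb{T}) \to L^p_{\ell^r}(\mathbb{T})$.

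First I would verify that $T$ is a well-defined sublinear operator on $\mathcal{M}(\mathbb{T},\ell^r)$ in the sense required by Theorem~\ref{thm:biginterpolation}. Each $Mf_k$ is measurable, since the supremum defining it is unchanged if restricted to the countable family of intervals whose endpoints lie in a fixed countable dense subset of $\mathbb{T}$; hence $x \mapsto \|T\vec{f}(x)\|_{\ell^r} = (\sum_k |Mf_k(x)|^r)^{1/r}$ is measurable whenever $x \mapsto \|\vec{f}(x)\|_{\ell^r}$ is (values in $[0,\infty]$ being permitted). Sublinearity follows by combining the elementary pointwise properties of $M$ established earlier with Minkowski's inequality in $\ell^r$: because $M(f_k + g_k) \le Mf_k + Mg_k$ pointwise and $M$ is monotone, $\|T(\vec{f}+\vec{g})(x)\|_{\ell^r} \le \|(Mf_k(x)+Mg_k(x))_k\|_{\ell^r} \le \|T\vec{f}(x)\|_{\ell^r} + \|T\vec{g}(x)\|_{\ell^r}$, and $M(\alpha f_k) = |\alpha|\,Mf_k$ gives $\|T(\alpha\vec{f})(x)\|_{\ell^r} = |\alpha|\,\|T\vec{f}(x)\|_{\ell^r}$.

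Next I would feed the two endpoint estimates into the interpolation theorem. Lemma~\ref{lemma:fs1} is precisely the bound $T : L^r_{\ell^r}(\mathbb{T}) \to L^r_{\ell^r}(\mathbb{T})$, which in particular gives $T : L^r_{\ell^r}(\mathbb{T}) \to L^{r,\infty}_{\ell^r}(\mathbb{T})$ by Chebyshev's inequality; and Theorem~\ref{thm:fs2} is precisely $T : L^1_{\ell^r}(\mathbb{T}) \to L^{1,\infty}_{\ell^r}(\mathbb{T})$. Applying Theorem~\ref{thm:biginterpolation} with $p_0 = 1$ and $p_1 = r$ (note $1 < r < \infty$) yields $T : L^p_{\ell^r}(\mathbb{T}) \to L^p_{\ell^r}(\mathbb{T})$ for every $1 < p < r$, with a constant depending only on $p$ and $r$ (through the interpolation exponents and the norms appearing in Lemma~\ref{lemma:fs1} and Theorem~\ref{thm:fs2}). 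The endpoint $p = r$ is Lemma~\ref{lemma:fs1} itself, so together these cover the whole range $1 < p \le r$.

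There is no serious obstacle here: the genuine analytic content lies entirely in Lemma~\ref{lemma:fs1} and Theorem~\ref{thm:fs2}, and what remains is bookkeeping --- checking that $T$ fits the hypotheses of Theorem~\ref{thm:biginterpolation} (measurability of $x \mapsto \|T\vec f(x)\|_{\ell^r}$ and the $\ell^r$-valued sublinearity), and observing that a strong $(r,r)$ bound trivially supplies the weak $(r,r)$ input needed to run the interpolation with $p_1 = r$.
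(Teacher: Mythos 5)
Your proposal is correct and takes essentially the same route as the paper: define $\overline{M}$ on $\ell^r$-valued functions, feed the $L^1 \to L^{1,\infty}$ bound from Theorem~\ref{thm:fs2} and the $L^r \to L^r$ bound from Lemma~\ref{lemma:fs1} into Theorem~\ref{thm:biginterpolation}, and handle $p = r$ separately. The only difference is that you spell out the measurability and sublinearity verifications a bit more explicitly than the paper does.
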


\begin{proof} The case $p = r$ has already been shown in
Lemma~\ref{lemma:fs1}.  Let $B = \ell^r$, a Banach space. Then,
$\mathcal{M}(\mathbb{T}, B)$ is the set of sequences of functions $f
= (f_1, f_2, \ldots)$ where each $f_k : \mathbb{T} \rightarrow
\mathbb{C}$ is measurable.  Further, $\|f(x)\|_B = (\sum_k
|f_k(x)|^r)^{1/r}$.

Define $\overline{M}$ on $\mathcal{M}(\mathbb{T}, B)$ by
$\overline{M}(f_1, f_2, \ldots) = (Mf_1, Mf_2, \ldots)$. Then,
$\overline{M}$ is sublinear by Minkowski. Theorem~\ref{thm:fs2} says
$\overline{M} : L^1_B \rightarrow L^{1,\infty}_B$, and
Lemma~\ref{lemma:fs1} says $\overline{M} : L^r_B \rightarrow L^r_B$.
It follows then from Theorem~\ref{thm:biginterpolation} that
$\overline{M} : L^p_B \rightarrow L^p_B$ for all $1 < p < r$, which
is exactly what we wanted to prove.
\end{proof}

\begin{lemma}\label{lemma:weight} For any $1 < r < \infty$ and
any $f , \phi : \mathbb{T} \rightarrow
\mathbb{C}$, we have

\begin{equation*}
\int_{\mathbb{T}} |Mf|^r |\phi| \,\, dx \lesssim \int_{\mathbb{T}}
|f|^r M \phi \, dx,
\end{equation*}

\noindent where the underlying constants depend only on $r$.
\end{lemma}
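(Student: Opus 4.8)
The plan is to prove the weighted inequality $\int |Mf|^r |\phi| \, dx \lesssim \int |f|^r M\phi \, dx$ by exploiting the Calder\'on--Zygmund-type decomposition machinery already developed, together with a standard duality/level-set argument. The key observation is that this is essentially equivalent to the statement that $M$ is bounded from $L^r(M\phi\,dx)$ to $L^r(|\phi|\,dx)$, a weighted norm inequality of the ``Fefferman--Stein'' flavor. Since $M\phi \geq |\phi|$ a.e.~by Corollary~\ref{cor:pointwise}, the left side is dominated by $\int |Mf|^r M\phi\,dx$ anyway, so the content is really controlling $Mf$ against $f$ with the weight $M\phi$ on both sides.

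First I would set up a distributional argument analogous to the proof of Theorem~\ref{thm:fs2}. Fix $\alpha > 0$ and apply Lemma~\ref{lemma:cute} to $f$ at level $\alpha$ (assuming $\{Mf > \alpha\}$ nonempty) to obtain disjoint intervals $I_j$ with $\{Mf > \alpha\} \subseteq \bigcup_j I_j^*$ and $\alpha/4 \leq |I_j|^{-1}\int_{I_j}|f|$. Then I would estimate
\begin{equation*}
\int_{\{Mf > \alpha\}} |\phi| \, dx \leq \sum_j \int_{I_j^*} |\phi| \, dx \lesssim \sum_j |I_j^*| \cdot \inf_{x \in I_j} M\phi(x) \lesssim \sum_j |I_j| \inf_{I_j} M\phi,
\end{equation*}
using that for any $x \in I_j$, $|I_j^*|^{-1}\int_{I_j^*}|\phi| \leq M\phi(x)$ since $I_j^* \ni x$ (when $|I_j| \le 1/3$; the large-interval case is trivial since $M\phi \gtrsim \|\phi\|_1$). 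Combining with $\alpha/4 \le |I_j|^{-1}\int_{I_j}|f|$ gives $\int_{\{Mf>\alpha\}}|\phi| \lesssim \alpha^{-1}\sum_j (\int_{I_j}|f|)(\inf_{I_j} M\phi) \le \alpha^{-1}\sum_j \int_{I_j}|f|\, M\phi \le \alpha^{-1}\int_{\mathbb{T}}|f|\,M\phi$. This is a weak-type $(1,1)$ bound for $M$ with respect to the pair of measures $(|\phi|\,dx,\, M\phi\,dx)$.

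Next I would upgrade this to the $L^r$ statement by an interpolation-with-change-of-measure argument, as in the passage from Theorem~\ref{thm:weakL1} to Corollary~\ref{cor:Lp}. The $L^\infty$ endpoint is immediate: $\|Mf\|_\infty \le \|f\|_\infty$, and since $M\phi \geq |\phi|$ we trivially have $\int |Mf|^\infty$-type control, i.e.~$\esssup |Mf| \le \esssup|f|$ regardless of weights. So $M$ is of weak type $(1,1)$ from $|\phi|\,dx$ to $M\phi\,dx$ and of strong type $(\infty,\infty)$; Marcinkiewicz interpolation (applied in the two-measure setting, which works verbatim since the proof of Theorem~\ref{thm:biginterpolation}/the classical Marcinkiewicz theorem only uses the layer-cake formula and the distributional bounds) then yields $\int |Mf|^r |\phi|\,dx \lesssim \int |f|^r M\phi\,dx$ for all $1 < r < \infty$, with constant depending only on $r$.

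The main obstacle I anticipate is being careful about the two-measure interpolation: the version of Marcinkiewicz stated in the text (Theorem~\ref{thm:biginterpolation}) is for a single measure $\rho$, so I would either redo its short proof with the target measure $M\phi\,dx$ in place of $\rho$ on the range side and $|\phi|\,dx$ on the domain side (the layer-cake computation goes through unchanged), or argue directly: split $f = f_t + f^t$ at height $t$, use the $L^\infty$ bound on $f_t$ to kill its contribution and the weak-$(1,1)$ bound above on $f^t$, then integrate $\int_0^\infty r t^{r-1} (M\phi\{|Mf|>t\})\,dt$. Either way the only genuine input is the weak-type estimate proved via Lemma~\ref{lemma:cute}, which is where the geometry (Claim~\ref{claim:1}, $I_j^* \ni x$) does its work; the rest is bookkeeping.
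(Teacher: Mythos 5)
Your proposal is correct and matches the paper's argument in substance: both treat $M$ as an operator from $L^r(M\phi\,dx)$ to $L^r(|\phi|\,dx)$, establish the weak $(1,1)$ endpoint via Lemma~\ref{lemma:cute} combined with the observation that $M\phi(x)\ge |I_j^*|^{-1}\int_{I_j^*}|\phi|$ for $x\in I_j$, and then interpolate against the trivial $L^\infty$ endpoint with Marcinkiewicz in the two-measure setting. Your chain of inequalities merely runs in the reverse order from the paper's, and your explicit note about the two-measure interpolation is a reasonable bit of caution that the paper glosses over.
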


\begin{proof} Fix $\phi : \mathbb{T} \rightarrow \mathbb{C}$.  If
$\phi$ is identically 0, there is nothing to prove. So, assume
otherwise. We consider the operator $M$ from $(\mathbb{T}, M\phi
\,\, dx)$ to $(\mathbb{T}, |\phi| \,\, dx)$.

As $\phi$ is not identically 0, $M \phi > 0$ everywhere.  Hence,
$\|\cdot\|_\infty = \|\cdot\|_{L^\infty(M \phi \, dx)}$. On the
other hand, it is clear that $\|\cdot\|_{L^\infty(|\phi| \, dx)}
\leq \|\cdot\|_\infty$.  Thus, $\|M f\|_{L^\infty(|\phi| \, dx)}
\leq \|M f\|_\infty \leq \|f\|_\infty = \|f\|_{L^\infty(M \phi \,
dx)}$.  Namely, $M : L^\infty(\mathbb{T}, M \phi \,\, dx)
\rightarrow L^\infty(\mathbb{T}, |\phi| \,\, dx)$.

Fix $\alpha > 0$ and $f : \mathbb{T} \rightarrow \mathbb{C}$.
Consider $\{Mf > \alpha\}$.  Assume for the moment that this set is
non-empty. By Lemma~\ref{lemma:cute}, choose disjoint intervals
$I_j$ so that $|I_j|^{-1} \int_{I_j} |f| \, dm \geq \alpha/4$ and
$\{Mf > \alpha\} \subseteq \bigcup_j I_j^*$.  Then,

\begin{equation*}
\begin{split}
\int_{I_j} f(x) M \phi(x) \, dx &\geq \int_{I_j} f(x)
\bigg( \frac{1}{|I_j^*|} \int_{I_j^*} |\phi(y)| \, dy \bigg) \, dx \\
&\geq \frac{1}{3} \bigg( \int_{I_j^*} |\phi(y)| \, dy \bigg) \cdot
\bigg( \frac{1}{|I_j|} \int_{I_j} f(x) \, dx \bigg) \\
&\geq \frac{\alpha}{12} \int_{I_j^*} |\phi(y)| \, dy.
\end{split}
\end{equation*}

\noindent Summing over $j$, we have

\begin{equation*}
\begin{split}
\alpha \int_{\{M f > \alpha\}} |\phi(x)| \, dx \leq 12 \sum_j
\int_{I_j} f(x) M\phi(x) \, dx \lesssim \int_{\mathbb{T}} f(x) M
\phi(x) \, dx.
\end{split}
\end{equation*}

\noindent This holds so long as $\{Mf > \alpha\}$ is non-empty.
However, if this set is empty, the above holds trivially.  This says
$M : L^1(\mathbb{T}, M \phi \,\, dx) \rightarrow
L^{1,\infty}(\mathbb{T}, |\phi| \,\, dx)$.

Therefore, we see $M : L^r(\mathbb{T}, M \phi \,\, dx) \rightarrow
L^r(\mathbb{T}, |\phi| \,\, dx)$ for all $1 < r < \infty$ by the
Marcinkiewicz interpolation theorem.  This is precisely the
statement we wanted to prove.
\end{proof}

\begin{thm}\label{thm:fs4} For any sequence $f_1, f_2, \ldots$
on $\mathbb{T}$ and $1 < p, r < \infty$

\begin{equation*}
\bigg\| \Big( \sum_{k=1}^\infty |M f_k|^r \Big)^{1/r} \bigg\|_p
\lesssim \bigg\| \Big( \sum_{k=1}^\infty |f_k|^r \Big)^{1/r}
\bigg\|_p,
\end{equation*}

\noindent where the underlying constants depend only on $p$ and $r$.
\end{thm}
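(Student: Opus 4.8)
The plan is to split the problem at the line $p=r$. For $1<p\le r<\infty$ (which includes $p=r$) the inequality is precisely Theorem~\ref{thm:fs3}, so there is nothing left to do there. Assume therefore $r<p<\infty$, and set $q=p/r$, so that $1<q<\infty$ and hence the conjugate exponent $q'$ satisfies $1<q'<\infty$. We may also assume the right-hand side is finite, since otherwise there is nothing to prove.

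The approach for this remaining range is a linearization by duality. Write $F(x)=(\sum_{k=1}^\infty|f_k(x)|^r)^{1/r}$ and $G(x)=(\sum_{k=1}^\infty|Mf_k(x)|^r)^{1/r}$; the goal is $\|G\|_p\lesssim\|F\|_p$. The key point is that $\|G\|_p^r=\|G^r\|_q=\big\|\sum_k|Mf_k|^r\big\|_q$, and by $L^q$--$L^{q'}$ duality for nonnegative measurable functions (valid with values in $[0,\infty]$),
\begin{equation*}
\|G\|_p^r=\sup_{\phi\ge 0,\ \|\phi\|_{q'}\le 1}\ \int_{\mathbb{T}}\sum_{k=1}^\infty|Mf_k(x)|^r\,\phi(x)\,dx.
\end{equation*}

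Next I would fix such a $\phi$ and estimate the integral. Interchanging sum and integral by Tonelli, then applying Lemma~\ref{lemma:weight} to each $f_k$ with the same exponent $r$, gives
\begin{equation*}
\int_{\mathbb{T}}\sum_{k=1}^\infty|Mf_k|^r\,\phi\,dx=\sum_{k=1}^\infty\int_{\mathbb{T}}|Mf_k|^r\,\phi\,dx\lesssim\sum_{k=1}^\infty\int_{\mathbb{T}}|f_k|^r\,M\phi\,dx=\int_{\mathbb{T}}F^r\,M\phi\,dx.
\end{equation*}
Then H\"older's inequality with exponents $q$ and $q'$ bounds this by $\|F^r\|_q\,\|M\phi\|_{q'}=\|F\|_p^r\,\|M\phi\|_{q'}$, where $\|F^r\|_q=\|F\|_p^r<\infty$ by assumption; and since $1<q'<\infty$, Corollary~\ref{cor:Lp} gives $\|M\phi\|_{q'}\lesssim\|\phi\|_{q'}\le 1$. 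Taking the supremum over admissible $\phi$ yields $\|G\|_p^r\lesssim\|F\|_p^r$, and taking $r$-th roots finishes the proof.

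I do not expect a genuine obstacle here: the one piece of real content beyond bookkeeping is that the "good-weight" estimate of Lemma~\ref{lemma:weight} is exactly what replaces the weight $|\phi|$ by $M\phi$, which is then harmless because $M$ is bounded on $L^{q'}$. The only point to watch is that the splitting at $p=r$ is essential, since one needs $q=p/r>1$ strictly for $q'$ to be finite and hence for the $L^{q'}$-boundedness of $M$ to apply; that is precisely where the hypothesis $p>r$ is used.
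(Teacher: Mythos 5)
Your argument is the same as the paper's: deferring $1<p\le r$ to Theorem~\ref{thm:fs3}, then for $r<p<\infty$ setting $q=p/r$, dualizing $\big\|\sum_k|Mf_k|^r\big\|_q$ against $\phi$ in the unit ball of $L^{q'}$, applying Lemma~\ref{lemma:weight} termwise, and finishing with H\"older and the $L^{q'}$-boundedness of $M$. No gaps; the extra remarks about Tonelli, nonnegativity of the dualizing class, and why $p>r$ is needed for $q'<\infty$ are all correct and merely make explicit points the paper leaves implicit.
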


\begin{proof}  The case $1 < p \leq r < \infty$ has already been
shown in Theorem~\ref{thm:fs3}.

Fix $1 < r < p < \infty$.  Let $q = p/r > 1$ and $\|\phi\|_{q'} \leq
1$ (where $1/q + 1/q' = 1)$. Then, by Lemma~\ref{lemma:weight}

\begin{equation*}
\begin{split}
\int_{\mathbb{T}} \sum_{k=1}^\infty |M f_k|^r |\phi| \, dx &\lesssim
\int_{\mathbb{T}} \sum_{k=1}^\infty  |f_k|^r M \phi \, dx \leq
\Big\| \sum_{k=1}^\infty |f_k|^r \Big\|_q \|M
\phi\|_{q'} \\
&\lesssim \|\phi\|_{q'} \Big\| \sum_{k=1}^\infty |f_k|^r \Big\|_q
\leq \Big\| \sum_{k=1}^\infty |f_k|^r \Big\|_q.
\end{split}
\end{equation*}

\noindent As $\phi$ in the unit ball of $L^{q'}$ is arbitrary, we
have

\begin{equation*}
\begin{split}
\bigg\| \Big( \sum_{k=1}^\infty |M f_k|^r \Big)^{1/r} \bigg\|_p^r &=
\Big\| \sum_{k=1}^\infty |M f_k|^r \Big\|_q = \sup \bigg\{
\int_{\mathbb{T}} \sum_{k=1}^\infty |M f_k|^r |\phi| \, dx :
\|\phi\|_{q'} \leq 1 \bigg\} \\
&\lesssim \Big\| \sum_{k=1}^\infty |f_k|^r \Big\|_q = \bigg\| \Big(
\sum_{k=1}^\infty |f_k|^r \Big)^{1/r} \bigg\|_p^r
\end{split}
\end{equation*}
\end{proof}

\section{Strong Maximal Operator}\label{sec:strong}

There are multiple ways to define maximal operators for functions $f
: \mathbb{T}^d \rightarrow \mathbb{C}$.  If the maximal function is
defined to be the supremum over one-parameter ``cubes" in
$\mathbb{T}^d$, then it would satisfy all the preceding results by
essentially the same arguments.  However, we will be most interested
in a multi-parameter maximal function.  This will require the
following definition.

\begin{defnn} We say a set $R \subseteq \mathbb{T}^d$ is a rectangle
if $R = I_1 \times I_2 \times \ldots \times I_d$, where each $I_j$
is an interval.  \end{defnn}

\begin{defnn} For $f : \mathbb{T}^d \rightarrow \mathbb{C}$,
define the strong maximal function by

\begin{equation*}
M_S f(\vec{x}) = \sup_{\vec{x} \in R} \frac{1}{|R|} \int_R
|f(\vec{y})| \, d\vec{y},
\end{equation*}

\noindent where the supremum is taken over all rectangles in
$\mathbb{T}^d$ containing $\vec{x}$. \end{defnn}

It is immediately clear that $\|M_S f\|_\infty \leq \|f\|_\infty$,
as before.  In addition, $M_S$ satisfies the same $L^p \rightarrow
L^p$ estimates.  To prove this, we take a slight detour.

Denote $\mathcal{M}(\mathbb{T}^d, \mathbb{C})$ the set of measurable
functions $f : \mathbb{T}^d \rightarrow \mathbb{C}$.  For an
operator $L : \mathcal{M}(\mathbb{T}, \mathbb{C}) \rightarrow
\mathcal{M}(\mathbb{T}, \mathbb{C})$, and $1 \leq j \leq d$, define
$L_j : \mathcal{M}(\mathbb{T}^d, \mathbb{C}) \rightarrow
\mathcal{M}(\mathbb{T}^d, \mathbb{C})$ as the operator which applies
$L$ to functions with all but the $j^{th}$ variable fixed.
Explicitly,

\begin{equation*}
L_jf(x_1, \ldots, x_d) = L\big(f(x_1, \ldots, x_{j-1}, \cdot,
x_{j+1}, \ldots, x_d) \big)(x_j).
\end{equation*}

\begin{thm}\label{thm:Lj} If $L : L^p(\mathbb{T}) \rightarrow
L^p(\mathbb{T})$ for some $0 < p \leq \infty$, then it follows $L_j
: L^p(\mathbb{T}^d) \rightarrow L^p(\mathbb{T}^d)$ for all $1 \leq j
\leq d$. Similarly, if $L : L^p(\mathbb{T}) \rightarrow
L^{p,\infty}(\mathbb{T})$ for some $0 < p < \infty$, then $L_j :
L^p(\mathbb{T}^d) \rightarrow L^{p,\infty}(\mathbb{T}^d)$. Finally,
if $L$ satisfies any Fefferman-Stein inequalities on $\mathbb{T}$
for any $r$ and/or $p$, then $L_j$ satisfies the same inequalities on
$\mathbb{T}^d$. \end{thm}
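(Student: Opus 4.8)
The plan is to reduce the whole statement to one elementary observation: slicing a function on $\mathbb{T}^d$ in its $j$-th coordinate converts $L_j$ into $L$ acting on the slice, after which all three assertions drop out of Fubini's theorem. Fix $1 \le j \le d$; after relabelling coordinates I may assume $j = d$, so I write $\vec{x} = (x', t)$ with $x' \in \mathbb{T}^{d-1}$ and $t \in \mathbb{T}$, and for $f : \mathbb{T}^d \to \mathbb{C}$ I set $g_{x'}(t) = f(x', t)$. The defining formula for $L_j$ then reads $L_d f(x', t) = (L g_{x'})(t)$. Since $f$ is measurable, $g_{x'}$ is a measurable function on $\mathbb{T}$ for a.e.\ $x'$, and if $f \in L^p(\mathbb{T}^d)$ then Fubini's theorem gives $g_{x'} \in L^p(\mathbb{T})$ for a.e.\ $x'$ with $\int_{\mathbb{T}^{d-1}} \|g_{x'}\|_{L^p(\mathbb{T})}^p \, dx' = \|f\|_{L^p(\mathbb{T}^d)}^p$ (the analogous identity holds for the quasinorm when $0 < p < 1$, and for the sequences used below).

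For the strong $L^p$ claim with $p < \infty$ I would write the $L^p(\mathbb{T}^d)$ norm as an iterated integral, innermost in $t$:
\begin{equation*}
\|L_d f\|_{L^p(\mathbb{T}^d)}^p = \int_{\mathbb{T}^{d-1}} \|L g_{x'}\|_{L^p(\mathbb{T})}^p \, dx' \le C^p \int_{\mathbb{T}^{d-1}} \|g_{x'}\|_{L^p(\mathbb{T})}^p \, dx' = C^p \|f\|_{L^p(\mathbb{T}^d)}^p,
\end{equation*}
where $C = \|L\|_{L^p(\mathbb{T}) \to L^p(\mathbb{T})}$. The case $p = \infty$ is the one-line bound $|L_d f(x', t)| = |(L g_{x'})(t)| \le \|L g_{x'}\|_\infty \le C \|g_{x'}\|_\infty \le C \|f\|_\infty$, valid for a.e.\ $\vec{x}$, so $\|L_d f\|_\infty \le C\|f\|_\infty$.

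For the weak-type claim I would fix $\lambda > 0$, put $E = \{\vec{x} \in \mathbb{T}^d : |L_d f(\vec{x})| > \lambda\}$, and slice with Fubini, using $|\{|h|>\lambda\}| \le \lambda^{-p}\|h\|_{p,\infty}^p$:
\begin{equation*}
|E| = \int_{\mathbb{T}^{d-1}} \big|\{ t \in \mathbb{T} : |(L g_{x'})(t)| > \lambda \}\big| \, dx' \le \lambda^{-p} \int_{\mathbb{T}^{d-1}} \|L g_{x'}\|_{L^{p,\infty}(\mathbb{T})}^p \, dx' \le \lambda^{-p} C^p \int_{\mathbb{T}^{d-1}} \|g_{x'}\|_{L^p(\mathbb{T})}^p \, dx' = \lambda^{-p} C^p \|f\|_{L^p(\mathbb{T}^d)}^p,
\end{equation*}
now with $C = \|L\|_{L^p(\mathbb{T}) \to L^{p,\infty}(\mathbb{T})}$; taking the supremum over $\lambda$ yields $\|L_d f\|_{L^{p,\infty}(\mathbb{T}^d)} \le C \|f\|_{L^p(\mathbb{T}^d)}$. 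It is worth noting that the hypothesis feeds in a \emph{strong} $L^p$ norm and only demands a \emph{weak} $L^{p,\infty}$ norm out, so no issue of ``integrating a weak-type bound'' ever arises.

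The Fefferman--Stein inequalities go through by the identical mechanism applied to a sequence $(f_k)$: writing $(g_k)_{x'}(t) = f_k(x',t)$ one has $\big(\sum_k |L_d f_k(x',t)|^r\big)^{1/r} = \big(\sum_k |(L(g_k)_{x'})(t)|^r\big)^{1/r}$, so for the strong inequality with exponents $p, r$ the $t$-integral of the $p$-th power equals $\big\|(\sum_k |L(g_k)_{x'}|^r)^{1/r}\big\|_{L^p(\mathbb{T})}^p$, which the one-dimensional inequality bounds by a constant times $\big\|(\sum_k |(g_k)_{x'}|^r)^{1/r}\big\|_{L^p(\mathbb{T})}^p$; integrating in $x'$ and applying Fubini recovers $\big\|(\sum_k |f_k|^r)^{1/r}\big\|_{L^p(\mathbb{T}^d)}^p$. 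The weak-type Fefferman--Stein inequality is handled exactly as the weak-type claim above, with $F(\vec{x}) = (\sum_k |L_d f_k(\vec{x})|^r)^{1/r}$ in place of $|L_d f|$ and its slicewise analogue in place of $|L g_{x'}|$. I do not expect any genuine analytic obstacle; the single point requiring care — and the reason the statement is phrased over $\mathcal{M}(\cdot,\mathbb{C})$ — is measurability bookkeeping: that slices of a measurable function are measurable for a.e.\ slice, that $x' \mapsto \|L g_{x'}\|$ and the distribution function of $L g_{x'}$ are measurable in $x'$, and that the a.e.-defined object $L_j f$ is unambiguous. Each of these is a routine consequence of Fubini's theorem together with the standing assumption that $L$ sends measurable functions to measurable functions, and I would invoke them without belabouring the details.
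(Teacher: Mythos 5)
Your proof is correct and follows essentially the same route as the paper's: slice $f$ in the $j$-th coordinate, apply the one-dimensional bound to each slice, and integrate out the remaining variables via Fubini (the paper fixes $d=2$, $j=1$ for notational convenience rather than relabelling to $j=d$, but the argument is identical). Your closing remark on measurability bookkeeping is a helpful addition that the paper leaves implicit.
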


\begin{proof} For simplicity, we assume $d = 2$ and $j = 1$.  Suppose
$L : L^p(\mathbb{T}) \rightarrow L^p(\mathbb{T})$ with $p$
finite. Let $f : \mathbb{T}^2 \rightarrow \mathbb{C}$, and fix $x_2
\in \mathbb{T}$. Write $f_{x_2}(x_1) = f(x_1, x_2)$. Then,

\begin{equation*}
\begin{split}
\int_\mathbb{T} |L_1 f(x_1, x_2)|^p \, dx_1 &=
\int_\mathbb{T} |L(f_{x_2})(x_1)|^p \, dx_1 \\
&\lesssim \int_\mathbb{T} |f_{x_2}(x_1)|^p \, dx_1 = \int_\mathbb{T}
|f(x_1, x_2)|^p \, dx_1.
\end{split}
\end{equation*}

\noindent Integrating in the $x_2$-variable, we see

\begin{equation*}
\|L_1 f\|_{L^p(\mathbb{T}^2)}^p = \int_{\mathbb{T}^2} |L_1 f(x_1,
x_2)|^p \, dx_1 \, dx_2 \lesssim \int_{\mathbb{T}^2} |f(x_1, x_2)|^p
\, dx_1 \, dx_2 = \|f\|_{L^p(\mathbb{T}^2)}^p.
\end{equation*}

\noindent On the other hand, if $p = \infty$, then $|L_1 f(x_1,
x_2)| \lesssim \|f(\cdot, x_2)\|_{L^\infty(\mathbb{T})}$ for
a.e.~$x_1$. But, $\|f(\cdot, x_2)\|_{L^\infty(\mathbb{T})} \leq
\|f\|_{L^\infty(\mathbb{T}^2)}$ for a.e.~$x_2$.  Thus,
$\|L_1f\|_{L^\infty(\mathbb{T}^2)} \lesssim
\|f\|_{L^\infty(\mathbb{T}^2)}$.

Now suppose $L : L^p(\mathbb{T}) \rightarrow
L^{p,\infty}(\mathbb{T})$.  Then, for any $\lambda > 0$ and any $x_2
\in \mathbb{T}$, we have

\begin{equation*}
\lambda^p \big| \big\{x_1 \in \mathbb{T} : |L_1 f(x_1, x_2)| >
\lambda \big\} \big| \lesssim \int_\mathbb{T} |f(x_1, x_2)|^p \,
dx_1.
\end{equation*}

\noindent Integrating

\begin{equation*}
\begin{split}
\lambda^p \big| \big\{(x_1, x_2) \in \mathbb{T}^2 : |L_1 f(x_1,
x_2)| > \lambda \big\} \big| &= \lambda^p \int_\mathbb{T} \big|
\big\{x_1 \in \mathbb{T} : |L_1 f(x_1, x_2)| > \lambda \big\} \big| \, dx_2 \\
&\lesssim \int_{\mathbb{T}^2} |f(x_1, x_2)|^p \, dx_1 \, dx_2.
\end{split}
\end{equation*}

\noindent As $\lambda$ is arbitrary, we have $\|L_1
f\|_{L^{p,\infty}(\mathbb{T}^2)}^p \lesssim
\|f\|_{L^p(\mathbb{T}^2)}^p$.  Any Fefferman-Stein type inequalities
are extended in the same way.
\end{proof}

Applying the definition above to $M$, consider $M_j$.  Explicitly,

\begin{equation*}
M_j f(\vec{x}) = \sup_{x_j \in I} \frac{1}{|I|} \int_I |f(x_1,
\ldots, x_{j-1}, y_j, x_{j+1}, \ldots, x_d)| \, dy_j.
\end{equation*}

\noindent By the theorem, $M_j : L^p(\mathbb{T}^d) \rightarrow
L^p(\mathbb{T}^d)$ for all $1 < p \leq \infty$.

On the other hand, fix $\vec{x} \in \mathbb{T}^d$.  Let $\epsilon >
0$ and choose a rectangle $\vec{x} \in R$ so that

\begin{equation*}
M_S f(\vec{x}) \leq \frac{1}{|R|} \int_R |f(\vec{y})| \, d\vec{y} \,
+ \epsilon.
\end{equation*}

\noindent Write $R = I_1 \times \ldots \times I_d$, so that $x_j \in
I_j$ for each $j$. Then,

\begin{equation*}
\begin{split}
M_S f(\vec{x}) - \epsilon &\leq \frac{1}{|I_1| \cdots |I_d|}
\int_{I_1 \times \ldots \times I_d} |f(\vec{y})| \, d\vec{y} \\
&= \frac{1}{|I_1|} \int_{I_1} \cdots \frac{1}{|I_d|} \int_{I_d}
|f(y_1, \ldots, y_d)| \, dy_d \cdots dy_1 \\
&\leq \frac{1}{|I_1|} \int_{I_1} \cdots \frac{1}{|I_{d-1}|}
\int_{I_{d-1}} M_d f(y_1, \ldots, y_{d-1}, x_d) \, dy_{d-1} \cdots dy_1 \\
&\leq M_1 \circ M_2 \circ \cdots \circ M_d f(\vec{x}).
\end{split}
\end{equation*}

\noindent As $\epsilon$ is arbitrary, $M_S f \leq M_1 \circ \cdots
\circ M_d f$.  From this, it is easily observed that

\begin{equation*}
\|M_S f\|_p \leq \|M\|_{L^p \rightarrow L^p}^d \|f\|_p
\end{equation*}

\noindent for all $1 < p \leq \infty$.  However, $M_S$ does not
satisfy an $L^1 \rightarrow L^{1,\infty}$ estimate.  Precisely which
set of functions is mapped to weak-$L^1$ by $M_S$ is the subject of
later chapters.  For now, we postpone this topic.

\chapter{Littlewood-Paley Square Function}\label{chap:square}

In this chapter, we focus on a particular square function of
Littlewood-Paley theory~\cite{littlewoodpaley1, littlewoodpaley2,
littlewoodpaley3, littlewoodpaley4}.

For an adapted family $\varphi_I$, define $\phi_I = |I|^{-1/2}
\varphi_I$, and note $\|\phi_I\|_2 \lesssim 1$ for all $I$.  Often,
$\phi_I$ is called an $L^2$-normalized family.  Unless otherwise
noted, $\varphi_I$ will always represent an adapted family, and
$\phi_I$ will always represent the $L^2$-normalization.

For the rest of this chapter, we focus on 0-mean adapted families.
For a 0-mean adapted family $\varphi_I$ and its normalization
$\phi_I$, define the Littlewood-Paley (discrete) square function by

\begin{equation*}
Sf(x) = \bigg( \sum_I \frac{|\langle \phi_I, f \rangle|^2}{|I|}
\chi_I(x) \bigg)^{1/2},
\end{equation*}

\noindent where the sum is over all dyadic intervals.  Note that $S$
is sublinear. We are interested in proving $L^p \rightarrow L^p$
estimates for this operator. All the underlying norm constants will
depend on the original choice of $\varphi_I$, and, in particular,
the constants $C_m$. However, for the sake of neatness, we suppress
that dependence.

\section{The $L^2$ Estimate}

Recall the notation $I^n = I + n|I|$.  The ``canonical"
representation is $I^n$ where $|n| \leq 1/2|I|$. That is, the
smallest $|n|$ giving this set.

\begin{lemma}\label{lemma:L2tech1} For any 0-mean adapted family and
any integer $|n| \leq 1/2|I|$,

\begin{equation*}
|\langle \phi_I, \phi_{I^n} \rangle|
\lesssim \frac{1}{(|n|+1)^2}.
\end{equation*}
\end{lemma}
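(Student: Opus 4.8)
The plan is to exploit two features of $\phi_I$ and $\phi_{I^n}$: they are both $L^2$-normalized (so $\|\phi_I\|_2,\|\phi_{I^n}\|_2\lesssim 1$), both adapted to intervals of the same length $|I|$ whose centers are separated by distance $|n|\,|I|$, and $\phi_{I^n}$ has mean zero. When $|n|\le 2$ the bound $|\langle\phi_I,\phi_{I^n}\rangle|\lesssim 1$ is immediate from Cauchy--Schwarz, so the content is the case $|n|\ge 3$, where the two intervals are well-separated and we must produce decay $(|n|+1)^{-2}$. First I would split $\mathbb{T}$ into the region near $I$, say the union of the translates $I^m$ with $|m|\le |n|/2$, and its complement. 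On the far region, $\phi_I$ itself is tiny: by the adaptedness of $\varphi_I$ (Proposition-style pointwise bound with $m=3$, say) combined with $\phi_I=|I|^{-1/2}\varphi_I$, one gets $|\phi_I(x)|\lesssim |I|^{-1/2}(|n|)^{-3}$ there, and integrating against $\|\phi_{I^n}\|_1\lesssim |I|^{1/2}$ (Proposition~\ref{prop:1}) already beats $(|n|+1)^{-2}$ with room to spare.

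The main work is the region near $I$ — say $x$ within distance $\sim |n|\,|I|/2$ of the center of $I$, hence at distance $\gtrsim |n|\,|I|/2$ from the center of $I^n$. Here I would use the zero-mean property of $\phi_{I^n}$ to subtract a constant: writing $x_I$ for the center of $I$,
\begin{equation*}
\langle \phi_I,\phi_{I^n}\rangle = \int_{\mathbb{T}} \big(\phi_I(x)-\phi_I(x_I)\big)\,\overline{\phi_{I^n}(x)}\,dx + \text{(far-region contribution)},
\end{equation*}
and estimate $|\phi_I(x)-\phi_I(x_I)|\le \distt(x,x_I)\,\|\phi_I'\|_{L^\infty(\text{segment})}$ by the mean value theorem, together with the derivative bound $|\varphi_I'(x)|\le C_m|I|^{-1}(1+\distt(x,I)/|I|)^{-m}$ from the adapted definition, rescaled by $|I|^{-1/2}$. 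Then $|\phi_{I^n}(x)|\lesssim |I|^{-1/2}(1+\distt(x,I^n)/|I|)^{-m}$, and with $\distt(x,I^n)\sim |n|\,|I|$ on this region we get an integrand of size roughly $|I|^{-1}\cdot \distt(x,x_I)/|I| \cdot |I|^{-1}\,(|n|)^{-m}\cdot|I|$; integrating the factor $\distt(x,x_I)$ over the near region, which has measure $\sim |n|\,|I|$ and radius $\sim |n|\,|I|$, contributes a further $(|n|\,|I|)^{2}/|I|$, and choosing $m$ large (e.g. $m=4$) the powers of $|I|$ cancel and the powers of $|n|$ leave $(|n|+1)^{-2}$ (indeed better, but $-2$ suffices).

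The step I expect to be the real obstacle is bookkeeping the pointwise bounds carefully enough on the near region: one must be sure that the segment between $x$ and $x_I$ over which the mean value theorem is applied stays in a region where $|\varphi_I'|$ is controlled by the claimed decay (it does, since that whole segment lies within distance $\lesssim |n|\,|I|$ of $I$, and the adapted bound is monotone in $\distt(x,I)$), and that summing the geometric/polynomial tails in $|n|$ — both from the far region and from the dyadic-annulus decomposition of the near region — genuinely closes at exponent $2$. None of these is deep, but the choice of how many derivatives/decay powers $m$ to spend is the one place where a careless constant would break the argument; picking $m=3$ or $4$ uniformly throughout makes every sum converge and every power of $|I|$ cancel. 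Concretely I would: (i) dispatch $|n|\le 2$ by Cauchy--Schwarz; (ii) for $|n|\ge 3$, split $\mathbb{T}$ into $\{\distt(x,x_I)\le |n|\,|I|/2\}$ and its complement; (iii) on the complement bound $|\phi_I|$ by brute force and integrate against $\|\phi_{I^n}\|_1$; (iv) on the near set use $\int \phi_{I^n}=0$ to write $\phi_I(x)=\phi_I(x)-\phi_I(x_I)$, apply the mean value theorem plus the adapted derivative bound, and integrate; (v) collect the powers of $|n|$ to get $(|n|+1)^{-2}$.
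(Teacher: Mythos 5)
Your near/far split by $\distt(x,x_I)$ is a reasonable alternative to the paper's organization (the paper tiles $\mathbb{T}$ by the translates $I^m$, uses the adapted bounds with exponent $3$ to get $|\langle\phi_I,\phi_{I^n}\rangle|\lesssim\sum_m\bigl(1+\tfrac{\dist(I^m,I)}{|I|}\bigr)^{-3}\bigl(1+\tfrac{\dist(I^m,I^n)}{|I|}\bigr)^{-3}$, and on $|m|\lesssim n/2$ drops the first factor while on $|m|\gtrsim n/2$ drops the second, each half summing to $\lesssim n^{-2}$). But your write-up has a genuine gap in step (iv). The identity $\int\phi_{I^n}=0$ lets you replace $\phi_I(x)$ by $\phi_I(x)-\phi_I(x_I)$ only in the integral over all of $\mathbb{T}$; you cannot perform that subtraction ``on the near set'' for free. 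After subtracting globally and splitting, the far-region integrand is $(\phi_I(x)-\phi_I(x_I))\overline{\phi_{I^n}(x)}$, and the constant $\phi_I(x_I)\sim|I|^{-1/2}$ does \emph{not} decay while $\int_{\text{far}}|\phi_{I^n}|\sim|I|^{1/2}$ (the far region contains $I^n$), so taking absolute values there destroys the estimate. Your step (iii) bounds only the $|\phi_I(x)|$ piece and never mentions the leftover term $\phi_I(x_I)\int_{\text{far}}\overline{\phi_{I^n}}=-\phi_I(x_I)\int_{\text{near}}\overline{\phi_{I^n}}$. That term is in fact controllable — $\bigl|\int_{\text{near}}\phi_{I^n}\bigr|\lesssim |I|^{1/2}(|n|+1)^{1-m}$ since $|\phi_{I^n}|\lesssim |I|^{-1/2}(|n|+1)^{-m}$ on the near region — but as written the argument does not close.

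More to the point, the zero-mean/MVT machinery you introduce is unnecessary for this lemma and is the source of the complication; the paper does not use cancellation here at all. On the near region you already have the flat bound $\|\phi_I\|_\infty\lesssim|I|^{-1/2}$, and $|\phi_{I^n}(x)|\lesssim|I|^{-1/2}(|n|+1)^{-m}$ there; multiplying by the measure $\sim|n|\,|I|$ of the near region gives $\int_{\text{near}}|\phi_I||\phi_{I^n}|\lesssim(|n|+1)^{1-m}$, which with $m=3$ is already $(|n|+1)^{-2}$. Your MVT subtraction actually requires one more derivative of decay ($m=4$) and buys nothing. Zero mean is genuinely needed in the companion Lemma~\ref{lemma:L2tech2}, where $I$ and $I(j,n)$ have \emph{different} lengths and the cancellation against the larger bump's oscillation is what produces the extra $2^{-j}$; but here, with $|I|=|I^n|$, the polynomial decay alone carries the argument. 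Drop steps (iv)--(v), keep the near/far split, and use the $L^\infty$ bound on the near region — then your proof is both correct and shorter than what you proposed.
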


\begin{proof} First, if $|n| \leq 1$, then $|\langle \phi_I,
\phi_{I^n} \rangle| \leq \|\phi_I\|_2 \|\phi_{I^n}\|_2 \lesssim 1
\leq 4 \frac{1}{(|n| + 1)^2}$. So, assume $|n| > 1$.

Suppose, for simplicity, that $n > 0$.  The other case follows in
the same manner.  If $|I| = 2^{-k}$, set $N = 2^{k-1}$, so that
$\mathbb{T} = \bigcup\{I^m : -N+1 \leq m \leq N\}$, and this union
is disjoint.  Set $\alpha(n) = \frac{n-1}{2}$ if $n$ is odd and
$\frac{n}{2}$ if $n$ is even, so that $\alpha(n)$ is a positive
integer, which is strictly less that $n$. Observe,

\begin{equation*}
\begin{split}
|\langle \phi_I, \phi_{I^n} \rangle| &= \frac{1}{|I|} \bigg|
\int_{\mathbb{T}} \varphi_I(x) \overline{\varphi_{I^n}(x}) \, dx
\bigg| = \frac{1}{|I|} \bigg| \sum_{m = -N+1}^N \int_{I^m}
\varphi_I(x) \overline{\varphi_{I^n}(x)} \, dx \bigg| \\
&\leq \frac{1}{|I|} \sum_{m = -N+1}^N \int_{I^m} |\varphi_I(x)|
|\varphi_{I^n}(x)| \, dx \\
&\lesssim \frac{1}{|I|} \sum_{m = -N+1}^N \int_{I^m} \bigg(1 +
\frac{\dist(x, I)}{|I|} \bigg)^{-3} \bigg(1 + \frac{\dist(x,
I^n)}{|I|} \bigg)^{-3} \, dx \\
&\leq \sum_{m = -N+1}^N \bigg(1 + \frac{\dist(I^m, I)}{|I|}
\bigg)^{-3} \bigg(1 + \frac{\dist(I^m, I^n)}{|I|} \bigg)^{-3}.
\end{split}
\end{equation*}

It is clear that

\begin{equation*}
\frac{\dist(I, I^m)}{|I|} = |m| - 1 \,\, (m \not= 0), \quad
\frac{\dist(I^n, I^m)}{|I|} = \min \big\{|n-m|, |n+m| \big\} - 1
\,\, (m \not= n).
\end{equation*}

\noindent Therefore,

\begin{equation*}
\begin{split}
|\langle \phi_I, \phi_{I^n} \rangle| &\lesssim \sum_{m = -N+1}^N
\bigg(1 + \frac{\dist(I^m, I)}{|I|} \bigg)^{-3} \bigg(1 +
\frac{\dist(I^m, I^n)}{|I|} \bigg)^{-3} \\
&\leq \sum_{|m| \leq \alpha(n)} \bigg(1 + \frac{\dist(I^m,
I^n)}{|I|} \bigg)^{-3} + \sum_{\alpha(n) < |m| \leq N} \bigg(1
+ \frac{\dist(I^m, I)}{|I|} \bigg)^{-3} \\
&= \sum_{|m| \leq \alpha(n)} \frac{1}{\min(|n+m|, |n-m|)^3} +
\sum_{\alpha(n) < |m| \leq N} \frac{1}{|m|^3} \\
&\leq 2 \sum_{m=0}^{\alpha(n)} \frac{1}{|n - m|^3} + 2 \sum_{m =
\alpha(n)}^N \frac{1}{m^3} \leq 2 \sum_{m = \alpha(n)}^n
\frac{1}{m^3} + 2 \sum_{m = \alpha(n)}^N \frac{1}{m^3} \\
&\leq 4 \sum_{m = \alpha(n)}^\infty \frac{1}{m^3} \lesssim
\frac{1}{\alpha(n)^2} \lesssim \frac{1}{(n + 1)^2}.
\end{split}
\end{equation*}
\end{proof}

Let $I$ be a dyadic interval with $|I| = 2^{-k}$.  Then, for $1 \leq
j \leq k-1$, let $J$ be the unique dyadic interval containing $I$
with $|J| = 2^j |I|$.  For $|n| \leq 1/(2|J|)$, denote $I(j,n) =
J^n$.  That is, for an interval $I$, $I(j,n)$ is the interval
obtained by enlarging to the dyadic interval of length $2^j|I|$ and
shifting $n$ units of the new length.

\begin{lemma}\label{lemma:L2tech2} For any 0-mean adapted family
with $j$ and $n$ as above,

\begin{equation*}
|\langle \phi_I, \phi_{I(j,n)} \rangle|
\lesssim 2^{-j} \frac{1}{(|n|+1)^2}.
\end{equation*}
\end{lemma}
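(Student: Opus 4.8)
The plan is to exploit the mean-zero property of the family. Write $x_I$ for the center of $I$, note that $I(j,n)=J^{n}$ is again a dyadic interval with $|J^{n}|=|J|=2^{j}|I|$, so $\phi_{I(j,n)}=|J|^{-1/2}\varphi_{J^{n}}$, and when pairing against $\varphi_I$ replace $\varphi_{J^{n}}$ by $\varphi_{J^{n}}-\varphi_{J^{n}}(x_I)$. To localize the argument cleanly I would first apply Theorem~\ref{thm:decomposition2} to the $0$-mean adapted $\varphi_I$, writing $\varphi_I=\sum_{l=1}^{k}2^{-10l}\varphi_I^{l}$ with each $\varphi_I^{l}$ $0$-mean, uniformly adapted to $I$, and supported in $2^{l}I$ (here $|I|=2^{-k}$). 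Since $\int\varphi_I^{l}=0$,
\[
  \langle\varphi_I^{l},\varphi_{J^{n}}\rangle=\int_{2^{l}I}\varphi_I^{l}(x)\,\overline{\big(\varphi_{J^{n}}(x)-\varphi_{J^{n}}(x_I)\big)}\,dx ,
\]
so $|\langle\varphi_I^{l},\varphi_{J^{n}}\rangle|\le\|\varphi_I^{l}\|_1\,\sup_{x\in 2^{l}I}|\varphi_{J^{n}}(x)-\varphi_{J^{n}}(x_I)|$, and $\|\varphi_I^{l}\|_1\lesssim|I|$ by Proposition~\ref{prop:1}.

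Next I would estimate the supremum by joining $x_I$ to $x\in 2^{l}I$ along the short arc (which stays inside $2^{l}I$) and applying the mean value inequality together with the adapted derivative bound for $\varphi_{J^{n}}$: for $z$ on that arc one has $\dist(z,J^{n})\ge\dist(2^{l}I,J^{n})$, hence $|\varphi_{J^{n}}'(z)|\lesssim|J|^{-1}\big(1+\dist(2^{l}I,J^{n})/|J|\big)^{-m}$, while the arc has length at most $2^{l-1}|I|$. This gives
\[
  |\langle\varphi_I^{l},\varphi_{J^{n}}\rangle|\lesssim\frac{2^{l}|I|^{2}}{|J|}\Big(1+\frac{\dist(2^{l}I,J^{n})}{|J|}\Big)^{-m},
\]
so $|\langle\varphi_I,\varphi_{J^{n}}\rangle|\lesssim\frac{|I|^{2}}{|J|}\sum_{l=1}^{k}2^{-9l}\big(1+\dist(2^{l}I,J^{n})/|J|\big)^{-m}$. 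Dividing by $(|I||J|)^{1/2}$ turns the prefactor $|I|^{2}/|J|$ into $(|I|/|J|)^{3/2}=2^{-3j/2}\le 2^{-j}$, so it remains only to show the sum is $\lesssim(|n|+1)^{-2}$.

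That summation is the crux. Writing $\delta=\dist(x_I,J^{n})$ and $t=\delta/|J|$, the containment $I\subseteq J$ and the hypothesis $|n|\le 1/(2|J|)$ give $(|n|-1)_{+}\le t\le|n|$ (torus and real distances agreeing at this scale), and the reverse triangle inequality gives $\dist(2^{l}I,J^{n})\ge\delta-2^{l-1}|I|=|J|\big(t-2^{l-1-j}\big)$. I would split the sum at the index where $2^{l-1-j}\approx t$: for the smaller $l$ one has $\dist(2^{l}I,J^{n})\gtrsim\delta$, producing $(1+t)^{-m}$ times a convergent geometric series; for the larger $l$ — where $2^{l}I$ has grown past $|J|$ and need no longer be separated from $J^{n}$ — I bound the bracket by $1$ and use $\sum_{2^{l}\gtrsim 2^{j}t}2^{-9l}\lesssim(2^{j}t)^{-9}\le t^{-9}\lesssim(1+t)^{-m}$, valid for $m\le 9$ and $t\gtrsim 1$ (the estimate being trivial when $t<1$). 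Since $1+t\ge\max(1,|n|)\ge(|n|+1)/2$, the sum is $\lesssim(|n|+1)^{-m}$, and taking $m=2$ completes the proof. The one delicate point is precisely this split: once $2^{l}I$ exceeds $|J|$ it can swallow $J^{n}$ entirely and no spatial decay survives, so the decay of those terms has to come solely from the $2^{-9l}$ weight furnished by the decomposition.
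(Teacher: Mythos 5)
Your proof is correct, but it takes a genuinely different route from the paper's. The paper proves this lemma by tiling $\mathbb{T}$ into translates $I^m$ of $I$, applying the mean value theorem to $\varphi_{J^n}$ separately on each tile (exploiting $\int\varphi_I=0$), and then controlling the resulting double sum $\sum_m\bigl(1+\dist(I^m,I)/|I|\bigr)^{-4}\bigl(1+\dist(I^m,J^n)/|J|\bigr)^{-10}$ by splitting at $|m|\approx 2^{j}\alpha(n)$ and invoking H\"{o}lder's inequality. You instead invoke Theorem~\ref{thm:decomposition2} to decompose $\varphi_I$ into compactly supported $0$-mean pieces $\varphi_I^{l}$ living on $2^{l}I$, apply the mean value theorem once per piece against the fixed base point $x_I$, and are left with a single geometric sum $\sum_{l}2^{-9l}\bigl(1+\dist(2^{l}I,J^n)/|J|\bigr)^{-m}$ to estimate, which you split at $2^{l}\approx 2^{j}|n|$. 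Both arguments extract the crucial $|I|/|J|$ gain from the $0$-mean property and the derivative bound on $\varphi_{J^n}$; the difference is where the geometric decay comes from. In the paper it is squeezed out of the pointwise decay of $\varphi_I$ across the tiling, whereas in your version the $2^{-10l}$ weight built into Theorem~\ref{thm:decomposition2} supplies it directly, which lets you replace a delicate two-sided sum and a H\"{o}lder step with a straightforward dyadic split. Your route is arguably cleaner, and it showcases exactly the kind of reduction Theorem~\ref{thm:decomposition2} was designed for; note, though, that it requires that theorem as a prerequisite, while the paper's proof is self-contained modulo the definition of an adapted family. A small stylistic point: your remark that the estimate is \emph{trivial} when $t<1$ is worth making explicit, since the lower cutoff $(|n|-1)_+$ vanishes there and the split index degenerates -- the whole sum is then simply bounded by $\sum_{l\ge 1}2^{-9l}\lesssim 1\lesssim(|n|+1)^{-2}$.
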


\begin{proof} Suppose $|I| = 2^{-k}$.  Let $J$ be the dyadic interval
containing $I$ with $|J| = 2^j |I|$.  Then, $J^n = I(j,n)$.  Set $N
= 2^{k-j-1}$ so that $\mathbb{T} = \bigcup\{J^m : -N+1 \leq m \leq
N\}$ and $\mathbb{T} = \bigcup\{I^m : -2^jN+1 \leq m \leq 2^j N\}$,
and these unions are disjoint.

For a moment, let us think of $\varphi_I$ as a periodic function on
the real line.  Let $I'$ be an interval in $\mathbb{T}$, which can
be thought of as an interval on the real line contained in $[0,1]$.
Then, for $x, z \in I'$, we have by the mean value theorem that
$|\varphi_I(x) - \varphi_I(z)| = |\varphi_I'(z_x)| |x - z| \leq
|\varphi_I'(z_x)| |I'|$, for some $z_x$ in $I'$.  Thus, if we fix a
$z^m$ in each $I^m$, as $\varphi_I$ has integral 0,

\begin{equation*}
\begin{split}
|\langle \phi_I, \phi_{I(j,n)} \rangle| &\leq \frac{1}{|I|^{1/2}}
\frac{1}{|J|^{1/2}} \sum_{m = -2^jN +1}^{2^jN} \bigg| \int_{I^m}
\varphi_I(x) \overline{\varphi_{J^n}(x)} \, dx \bigg| \\
&= 2^{-j/2} \frac{1}{|I|} \sum_{m = -2^jN +1}^{2^jN} \bigg|
\int_{I^m} \varphi_I(x) \Big[\overline{\varphi_{J^n}(x)} -
\overline{\varphi_{J^n}(z^m)} \Big] \, dx \bigg| \\
&\leq 2^{-j/2} \sum_{m = -2^jN +1}^{2^jN} \int_{I^m}
|\varphi_I(x)| |\varphi'_{J^n}(z_x^m)| \, dx \\
&\lesssim 2^{-j/2} \sum_{m = -2^jN +1}^{2^jN} \frac{|I^m|}{|J^n|}
\bigg(1 + \frac{\dist(I^m, I)}{|I|} \bigg)^{-4}
\bigg(1 + \frac{\dist(I^m, J^n)}{|J|} \bigg)^{-10} \\
&= 2^{-3j/2} \sum_{m = -2^jN +1}^{2^jN} \bigg(1 + \frac{\dist(I^m,
I)}{|I|} \bigg)^{-4} \bigg(1 +
\frac{\dist(I^m, J^n)}{|J|} \bigg)^{-10} \\
\end{split}
\end{equation*}

Hence, if $|n| \leq 1$, then

\begin{equation*}
\begin{split}
|\langle \phi_I, \phi_{I(j,n)} \rangle| &\lesssim 2^{-3j/2} \sum_{m
= -2^jN +1}^{2^jN} \bigg(1 + \frac{\dist(I^m, I)}{|I|} \bigg)^{-4}
\\
&\leq 2^{-3j/2} \bigg[ 1 + 2 \sum_{m = 1}^{2^jN} \frac{1}{m^4}
\bigg] \leq 2^{-3j/2} \bigg[ 1 + 2 \sum_{m = 1}^\infty \frac{1}{m^4}
\bigg] \\
&\lesssim 2^{-3j/2} \leq 2^{-j} \leq 4 \cdot 2^{-j}
\frac{1}{(|n|+1)^2}.
\end{split}
\end{equation*}

\noindent Therefore, assume $|n| > 1$.  As before, consider only the
$n > 0$ case, as the other is done in the same way.  Let $\alpha(n)$
be as previously defined.  First, we see

\begin{gather*}
\sum_{2^j\alpha(n) < |m| \leq 2^j N} \bigg(1 + \frac{\dist(I^m,
I)}{|I|} \bigg)^{-4} \bigg(1 + \frac{\dist(I^m,
J^n)}{|J|} \bigg)^{-10} \leq \\
\sum_{2^j\alpha(n) < |m| \leq 2^j N} \bigg(1 + \frac{\dist(I^m,
I)}{|I|} \bigg)^{-4} \leq 2 \sum_{m = 2^j
\alpha(n)}^{2^j N} \frac{1}{m^4} \leq \\
2 \sum_{m = \alpha(n)}^\infty \frac{1}{m^4} \lesssim
\frac{1}{\alpha(n)^2} \lesssim \frac{1}{(|n|+1)^2}.
\end{gather*}

On the other hand, by H\"{o}lder, we have

\begin{gather*}
\sum_{|m| \leq 2^j\alpha(n)} \bigg(1 + \frac{\dist(I^m, I)}{|I|}
\bigg)^{-4} \bigg(1 + \frac{\dist(I^m,
J^n)}{|J|} \bigg)^{-10} \leq \\
\Bigg( \sum_{|m| \leq 2^j\alpha(n)} \Bigg(1 + \frac{\dist(I^m,
I)}{|I|} \bigg)^{-2} \Bigg)^{1/2} \cdot \bigg( \sum_{|m| \leq
2^j\alpha(n)} \Bigg(1 + \frac{\dist(I^m, J^n)}{|J|} \bigg)^{-5}
\Bigg)^{1/2} \leq \\
\bigg(1 + 2\sum_{m=1}^\infty \frac{1}{m^2} \bigg)^{1/2} \cdot \Bigg(
\sum_{|m| \leq 2^j\alpha(n)} \bigg(1 + \frac{\dist(I^m, J^n)}{|J|}
\bigg)^{-5} \Bigg)^{1/2} \lesssim \\
\Bigg( \sum_{|m| \leq 2^j\alpha(n)} \bigg(1 + \frac{\dist(I^m,
J^n)}{|J|} \bigg)^{-5} \Bigg)^{1/2}.
\end{gather*}

\noindent For each $|m| \leq 2^j \alpha(n)$, there is an $m'$ so
that $I^m \subset J^{m'}$ and $|m'| \leq \alpha(n)$.  Further, there
are exactly $2^j$ of these $I^m$ contained in each $J^{m'}$.  Thus,

\begin{gather*}
\Bigg( \sum_{|m| \leq 2^j\alpha(n)} \bigg(1 + \frac{\dist(I^m,
J^n)}{|J|} \bigg)^{-5} \Bigg)^{1/2} \leq  \Bigg(2^j \sum_{|m| \leq
\alpha(n)} \bigg(1 + \frac{\dist(J^m, J^n)}{|J|} \bigg)^{-5}
\Bigg)^{1/2} = \\
\bigg( 2^j \sum_{|m| \leq \alpha(n)} \frac{1}{\min(|n+m|, |n-m|)^5}
\bigg)^{1/2} \lesssim \bigg(2^j \sum_{m=0}^{\alpha(n)}
\frac{1}{|n-m|^5} \bigg)^{1/2} \leq \\
\bigg(2^j \sum_{m = \alpha(n)}^\infty \frac{1}{m^5} \bigg)^{1/2}
\lesssim 2^{j/2} \frac{1}{\alpha(n)^2} \lesssim 2^{j/2}
\frac{1}{(|n|+1)^2}.
\end{gather*}

Finally, combining all of this, we have

\begin{equation*}
\begin{split}
|\langle \phi_I, \phi_{I(j,n)} \rangle| &\lesssim  2^{-3j/2} \sum_{m
= -2^jN +1}^{2^jN} \bigg(1 + \frac{\dist(I^m, I)}{|I|} \bigg)^{-4}
\bigg(1 + \frac{\dist(I^m, J^n)}{|J|} \bigg)^{-10} \\
&\lesssim 2^{-3j/2} \Big[ \frac{1}{(|n|+1)^2} + 2^{j/2}
\frac{1}{(|n|+1)^2} \Big] \lesssim 2^{-j} \frac{1}{(|n|^2+1)}.
\end{split}
\end{equation*}
\end{proof}

For any $N \in \mathbb{N}$, define the linear operator $L_N$ by

\begin{equation*}
L_N f(x) = \sum_{|I| \geq 2^{-N}} \langle \phi_I, f \rangle
\overline{\phi_I(x)}.
\end{equation*}

\noindent The following is the crucial estimate in our desired $L^2$
result.

\begin{lemma}\label{lemma:LN} For any 0-mean adapted family and any
function $f : \mathbb{T} \rightarrow \mathbb{C}$,

\begin{equation*}
\|L_N f\|_2^2 \lesssim \sum_{|I| \geq 2^{-N}}
|\langle \phi_I, f \rangle|^2,
\end{equation*}

\noindent where the underlying constant is
independent of $N$ and $f$.
\end{lemma}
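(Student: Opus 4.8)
\section*{Proof plan}

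The plan is to expand $\|L_N f\|_2^2$ as a double sum over pairs of dyadic intervals, estimate the resulting matrix coefficients using Lemmas~\ref{lemma:L2tech1} and~\ref{lemma:L2tech2}, and then run a weighted Schur-type argument. Write $a_I=\langle\phi_I,f\rangle$. Since $L_Nf=\sum_{|I|\ge 2^{-N}}a_I\overline{\phi_I}$, expanding the square and using $\bigl|\int\overline{\phi_I}\phi_{I'}\bigr|=|\langle\phi_I,\phi_{I'}\rangle|$ gives
\[
\|L_Nf\|_2^2\ \le\ \sum_{|I|,|I'|\ge 2^{-N}}|a_I|\,|a_{I'}|\,|\langle\phi_I,\phi_{I'}\rangle|.
\]
Introduce weights $w_I=|I|^{1/2}$ and use $|a_I||a_{I'}|\le\tfrac12\bigl(\tfrac{w_{I'}}{w_I}|a_I|^2+\tfrac{w_I}{w_{I'}}|a_{I'}|^2\bigr)$; since $|\langle\phi_I,\phi_{I'}\rangle|=|\langle\phi_{I'},\phi_I\rangle|$, relabelling $I\leftrightarrow I'$ in the second piece collapses the two sums and yields
\[
\|L_Nf\|_2^2\ \le\ \sum_{|I|\ge 2^{-N}}|a_I|^2\sum_{|I'|\ge 2^{-N}}\Bigl(\tfrac{|I'|}{|I|}\Bigr)^{1/2}|\langle\phi_I,\phi_{I'}\rangle|.
\]
So it suffices to prove that the inner sum is $\lesssim 1$ uniformly in $I$ and in $N$; the conclusion then follows because $\sum_{|I|\ge 2^{-N}}|a_I|^2=\sum_{|I|\ge 2^{-N}}|\langle\phi_I,f\rangle|^2$.

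To bound $\sum_{I'}(|I'|/|I|)^{1/2}|\langle\phi_I,\phi_{I'}\rangle|$ for fixed $I$, I split the dyadic $I'$ according to their length relative to $|I|$, using the trichotomy of dyadic intervals. If $|I'|=|I|$ then $I'=I^n$ for some $|n|\le 1/(2|I|)$, and Lemma~\ref{lemma:L2tech1} gives $\sum_n|\langle\phi_I,\phi_{I^n}\rangle|\lesssim\sum_{n\in\mathbb{Z}}(|n|+1)^{-2}\lesssim 1$. If $|I'|=2^j|I|$ with $j\ge 1$, let $J$ be the dyadic interval of length $2^j|I|$ containing $I$; the dyadic intervals of that length are exactly the $J^n=I(j,n)$, so by Lemma~\ref{lemma:L2tech2} the contribution of scale $j$ is $\lesssim 2^{j/2}\sum_n 2^{-j}(|n|+1)^{-2}\lesssim 2^{-j/2}$, and $\sum_{j\ge 1}2^{-j/2}\lesssim 1$. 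Finally, if $|I'|=2^{-j}|I|$ with $j\ge 1$ (only finitely many $j$ occur, since $|I'|\ge 2^{-N}$), group the relevant $I'$ by the dyadic interval $\widetilde I=I^p$ of length $|I|$ containing them: each such $\widetilde I$ contains exactly $2^j$ of them, and for every $I'\subseteq\widetilde I$ one has $I=\widetilde I^{-p}=I'(j,-p)$, so Lemma~\ref{lemma:L2tech2} applied with the roles of the intervals reversed gives $|\langle\phi_I,\phi_{I'}\rangle|=|\langle\phi_{I'},\phi_{I'(j,-p)}\rangle|\lesssim 2^{-j}(|p|+1)^{-2}$. Summing over the $2^j$ choices of $I'\subseteq\widetilde I$, then over $p$, then over $j$ produces $\sum_{j\ge 1}2^{-j/2}\sum_{p\in\mathbb{Z}}(|p|+1)^{-2}\lesssim 1$. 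Adding the three contributions proves the claim, with a constant independent of $N$.

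The one place that needs care — and the only place the weight does essential work — is the last case. Lemma~\ref{lemma:L2tech2} bounds $|\langle\phi_I,\phi_{I'}\rangle|$ by $2^{-j}(|p|+1)^{-2}$ uniformly over all $2^j$ intervals $I'$ sitting inside a fixed length-$|I|$ ancestor, so the \emph{unweighted} sum over those $I'$ is only $O(1)$ per scale gap $j$; a naive split $|a_I||a_{I'}|\le\tfrac12(|a_I|^2+|a_{I'}|^2)$ would then give a bound growing with the number of dyadic scales above $2^{-N}$. Inserting the factor $(|I'|/|I|)^{1/2}=2^{-j/2}$ supplies exactly the geometric decay in $j$ that makes the estimate uniform in $N$. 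Everything else is summation of convergent series, using only the dyadic trichotomy and the two technical lemmas already in hand.
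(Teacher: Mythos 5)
Your proof is correct, and it takes a genuinely different route from the paper's. The paper expands the double sum, groups terms by the shift parameter $n$ and scale gap $j$, and then uses Cauchy--Schwarz in the $I$-variable followed by a counting argument (each $J$ arises as $I(j,n)$ for exactly $2^j$ intervals $I$, producing a factor $(2^j)^{1/2}$) to obtain the decisive $2^{-j/2}(|n|+1)^{-2}$ decay. You instead run a weighted Schur test: the AM--GM split $|a_I||a_{I'}|\le\tfrac12(\tfrac{w_{I'}}{w_I}|a_I|^2+\tfrac{w_I}{w_{I'}}|a_{I'}|^2)$ with weight $w_I=|I|^{1/2}$, followed by the symmetry of $|\langle\phi_I,\phi_{I'}\rangle|$, reduces everything to the uniform bound $\sum_{I'}(|I'|/|I|)^{1/2}|\langle\phi_I,\phi_{I'}\rangle|\lesssim 1$, after which Lemmas~\ref{lemma:L2tech1} and~\ref{lemma:L2tech2} feed in directly. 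The content is the same decay $2^{-j/2}(|n|+1)^{-2}$ per scale gap and shift, but the mechanism differs: in the paper the $2^{j/2}$ loss comes from the Cauchy--Schwarz counting step, while in your version it is pre-inserted as the Schur weight and pays off automatically in both the coarser and finer scale cases. Your approach has the advantage of reducing the lemma to a single explicit ``row-sum'' estimate uniform in $I$ and $N$, which makes the role of the $|I|^{1/2}$ normalization transparent; the paper's version avoids introducing the weight but at the cost of the counting argument. Both handle the case $|I'|<|I|$ correctly --- the paper by appealing to symmetry with the $|I|<|I'|$ case, you by directly applying Lemma~\ref{lemma:L2tech2} with $I'$ playing the role of the small interval and $I=I'(j,-p)$ --- and your bookkeeping of the $2^j$ descendants inside each length-$|I|$ ancestor is exactly right.
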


\begin{proof} We note that

\begin{equation*}
\begin{split}
\|L_N f\|_2^2 &= \int_{\mathbb{T}} L_N f(x) \overline{L_N f(x)} \,
dx \\
&= \int_\mathbb{T} \bigg[ \sum_{|I| \geq 2^{-N}} \langle \phi_I, f
\rangle \overline{\phi_I(x)} \bigg] \bigg[ \sum_{|J| \geq 2^{-N}}
\overline{\langle \phi_J, f \rangle} \phi_J(x) \bigg] \, dx \\
&= \sum_{|I|, |J| \geq 2^{-N}} \langle \phi_I, f \rangle \overline{
\langle \phi_J, f \rangle} \langle \phi_J, \phi_I \rangle \\
&\leq \sum_{|I|, |J| \geq 2^{-N}} |\langle \phi_I, f \rangle|
|\langle \phi_J, f \rangle| |\langle \phi_I, \phi_J \rangle|.
\end{split}
\end{equation*}

We break this sum into three pieces: the terms where $|I| = |J|$,
where $|I| < |J|$, and where $|J| < |I|$.  The last two pieces are
symmetric, and we only prove one of them.  For the first piece,

\begin{equation*}
\begin{split}
\sum_{|I| = |J| \geq 2^{-N}} |\langle \phi_I, f \rangle| &|\langle
\phi_J, f \rangle| |\langle \phi_J, \phi_I \rangle| \\
&= \sum_{|I| \geq 2^{-N}} \sum_{n = -1/(2|I|) + 1}^{1/(2|I|)}
|\langle \phi_I, f \rangle| |\langle \phi_{I^n}, f \rangle| |\langle
\phi_I, \phi_{I^n} \rangle|.
\end{split}
\end{equation*}

\noindent For the purposes of this proof only, we adopt a notational
convention.  For an interval $I$ and integer $|n| \leq 1/(2|I|)$,
let $I^n$ be as normal, and $\phi_{I^n}$ the adapted family member
for this interval.  But, for $n$ not satisfying this property, let
$\phi_{I^n}$ be identically 0.  Then, by Lemma~\ref{lemma:L2tech1},
$|\langle \phi_I, \phi_{I^n} \rangle| \lesssim (|n|+1)^{-2}$ for all
$n$.  Further, we can write

\begin{equation*}
\begin{split}
\sum_{|I| = |J| \geq 2^{-N}} |\langle \phi_I, f \rangle| &|\langle
\phi_J, f \rangle| |\langle \phi_I, \phi_J \rangle| = \sum_{n \in
\mathbb{Z}} \sum_{|I| \geq 2^{-N}} |\langle \phi_I, f \rangle|
|\langle \phi_{I^n}, f \rangle| |\langle \phi_I, \phi_{I^n}
\rangle| \\
&\lesssim \sum_{n \in \mathbb{Z}} \frac{1}{(|n|+1)^2} \sum_{|I| \geq
2^{-N}} |\langle \phi_I, f \rangle| |\langle \phi_{I^n}, f \rangle|
\\
&\leq \sum_{n \in \mathbb{Z}} \frac{1}{(|n|+1)^2} \bigg( \sum_{|I|
\geq 2^{-N}} |\langle \phi_I, f \rangle|^2 \bigg)^{1/2} \bigg(
\sum_{|I| \geq 2^{-N}} |\langle \phi_{I^n}, f \rangle|^2
\bigg)^{1/2} \\
&= \bigg( \sum_{|I| \geq 2^{-N}} |\langle \phi_I, f \rangle|^2
\bigg) \sum_{n \in \mathbb{Z}} \frac{1}{(|n|+1)^2} \\
&\lesssim \bigg( \sum_{|I| \geq 2^{-N}} |\langle \phi_I, f
\rangle|^2 \bigg).
\end{split}
\end{equation*}

\noindent The transition from the fourth to fifth line follows
because for a fixed $n$, summing over all $I^n$ is equivalent to
summing over all $I$.  The shift is irrelevant in this regard.

Now let us focus on the case $|I| < |J|$.  Again, we adopt here some
unusual notation.  For appropriate $j$ and $n$, let $I(j,n)$ be as
defined before and $\phi_{I(j,n)}$ as normal.  If either $j$ or $n$
is not small enough with respect to $I$, then set $\phi_{I(j,n)}$ to
be 0.  Then, by Lemma~\ref{lemma:L2tech2}, $|\langle \phi_I,
\phi_{I(j,n)} \rangle| \lesssim 2^{-j} (|n|+1)^{-2}$ for all $j$ and
$n$.  Further,

\begin{equation*}
\begin{split}
\sum_{|J| > |I| \geq 2^{-N}} &|\langle \phi_I, f \rangle| |\langle
\phi_J, f \rangle| |\langle \phi_I, \phi_J \rangle| \\
&= \sum_{k=1}^N \sum_{|I| = 2^{-k}} \sum_{j=1}^{k-1} \sum_{n =
-2^{k-j-1} + 1}^{2^{k-j-1}} |\langle \phi_I, f \rangle| |\langle
\phi_{I(j,n)}, f \rangle| |\langle \phi_I, \phi_{I(j,n)} \rangle| \\
&= \sum_{j \in \mathbb{N}} \sum_{n \in \mathbb{N}} \sum_{k=1}^N
 \sum_{|I| = 2^{-k}} |\langle \phi_I, f \rangle| |\langle
\phi_{I(j,n)}, f \rangle| |\langle \phi_I, \phi_{I(j,n)} \rangle| \\
&\lesssim \sum_{j \in \mathbb{N}} 2^{-j} \sum_{n \in \mathbb{Z}}
\frac{1}{(|n| + 1)^2} \sum_{|I| \geq 2^{-N}} |\langle \phi_I, f
\rangle| |\langle \phi_{I(j,n)}, f \rangle| \\
&\leq \sum_{j \in \mathbb{N}} 2^{-j} \sum_{n \in \mathbb{Z}}
\frac{1}{(|n| + 1)^2} \bigg( \sum_{|I| \geq 2^{-N}} |\langle \phi_I,
f \rangle|^2 \bigg)^{1/2} \bigg( \sum_{|I| \geq 2^{-N}} |\langle
\phi_{I(j,n)}, f \rangle|^2 \bigg)^{1/2}.
\end{split}
\end{equation*}

\noindent Fix $j$ and $n$, and consider a dyadic interval $|J| \geq
2^{-N}$. One of two things is true.  Either there are no $|I| \geq
2^{-N}$ such that $J = I(j,n)$, due to the incompatibility of $j$,
$n$, and/or $N$.  Or, there are exactly $2^j$ such $I$.  Indeed, if
there is an $I$ such that $I \subset J_0$, where $|J_0| = |J|$ and
$J_0^n = J$, then $J = I(j,n)$ for all $I$ contained in this $J_0$.
Hence,

\begin{equation*}
\begin{split}
\sum_{|J| > |I| \geq 2^{-N}} &|\langle \phi_I, f \rangle| |\langle
\phi_J, f \rangle| |\langle \phi_I, \phi_J \rangle| \\
&\lesssim \sum_{j \in \mathbb{N}} 2^{-j} \sum_{n \in \mathbb{Z}}
\frac{1}{(|n| + 1)^2} \bigg( \sum_{|I| \geq 2^{-N}} |\langle \phi_I,
f \rangle|^2 \bigg)^{1/2} \bigg( \sum_{|I| \geq 2^{-N}} |\langle
\phi_{I(j,n)}, f \rangle|^2 \bigg)^{1/2} \\
&\leq \sum_{j \in \mathbb{N}} 2^{-j} \sum_{n \in \mathbb{Z}}
\frac{1}{(|n| + 1)^2} \bigg( \sum_{|I| \geq 2^{-N}} |\langle \phi_I,
f \rangle|^2 \bigg)^{1/2} \bigg(2^j \sum_{|J| \geq 2^{-N}} |\langle
\phi_J, f \rangle|^2 \bigg)^{1/2} \\
&= \bigg( \sum_{|I| \geq 2^{-N}} |\langle \phi_I, f \rangle|^2
\bigg) \sum_{j \in \mathbb{N}} 2^{-j/2} \sum_{n \in \mathbb{Z}}
\frac{1}{(|n| + 1)^2} \\
&\lesssim \bigg( \sum_{|I| \geq 2^{-N}} |\langle \phi_I, f
\rangle|^2 \bigg).
\end{split}
\end{equation*}
\end{proof}

\begin{thm}\label{thm:SL2} For any 0-mean adapted family, $S : L^2
\rightarrow L^2$.  \end{thm}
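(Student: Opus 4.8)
The plan is to reduce the statement to the ``Bessel-type'' inequality $\sum_I |\langle \phi_I, f \rangle|^2 \lesssim \|f\|_2^2$ and then extract that from Lemma~\ref{lemma:LN} by a duality argument. First I would observe that, since every term is nonnegative, Tonelli's theorem gives
\begin{equation*}
\|Sf\|_2^2 = \int_{\mathbb{T}} \sum_I \frac{|\langle \phi_I, f \rangle|^2}{|I|} \chi_I(x) \, dx = \sum_I \frac{|\langle \phi_I, f \rangle|^2}{|I|} \, |I| = \sum_I |\langle \phi_I, f \rangle|^2 ,
\end{equation*}
so the theorem is equivalent to the claim that $\sum_I |\langle \phi_I, f \rangle|^2 \lesssim \|f\|_2^2$ for every $f \in L^2(\mathbb{T})$.

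Next I would fix $N \in \mathbb{N}$ and pair the operator $L_N$ against $f$. Since $L_N f$ is a finite linear combination of the $\overline{\phi_I}$, it lies in $L^2$, and
\begin{equation*}
\int_{\mathbb{T}} L_N f(x)\, f(x) \, dx = \sum_{|I| \geq 2^{-N}} \langle \phi_I, f \rangle \int_{\mathbb{T}} \overline{\phi_I(x)}\, f(x)\, dx = \sum_{|I| \geq 2^{-N}} |\langle \phi_I, f \rangle|^2 .
\end{equation*}
Applying Cauchy--Schwarz on the left and then Lemma~\ref{lemma:LN},
\begin{equation*}
\sum_{|I| \geq 2^{-N}} |\langle \phi_I, f \rangle|^2 \leq \|L_N f\|_2 \, \|f\|_2 \lesssim \Big( \sum_{|I| \geq 2^{-N}} |\langle \phi_I, f \rangle|^2 \Big)^{1/2} \|f\|_2 .
\end{equation*}

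Finally, for a fixed $N$ there are only finitely many dyadic intervals with $|I| \geq 2^{-N}$, and each satisfies $|\langle \phi_I, f \rangle| \leq \|\phi_I\|_2 \|f\|_2 < \infty$; hence the truncated sum is finite and may be divided out of the last inequality, yielding $\sum_{|I| \geq 2^{-N}} |\langle \phi_I, f \rangle|^2 \lesssim \|f\|_2^2$ with a constant independent of $N$. Letting $N \to \infty$ and using monotone convergence gives $\sum_I |\langle \phi_I, f \rangle|^2 \lesssim \|f\|_2^2$, which together with the first display proves $S : L^2 \to L^2$. The substantive work --- the almost-orthogonality estimates of Lemmas~\ref{lemma:L2tech1} and~\ref{lemma:L2tech2}, assembled into Lemma~\ref{lemma:LN} --- is already done; within this proof the only point requiring care is justifying the division by the truncated sum, which is precisely why the truncation parameter $N$ is kept throughout and removed only at the very end.
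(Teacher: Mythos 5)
Your proposal is correct and follows essentially the same route as the paper: reduce $\|Sf\|_2^2$ to $\sum_I |\langle\phi_I,f\rangle|^2$, identify the truncated sum with $\int_{\mathbb{T}} L_N f\cdot f\,dx = \langle L_N f,\overline{f}\rangle$, apply Cauchy--Schwarz together with Lemma~\ref{lemma:LN}, and divide by the (finite) truncated sum before letting $N\to\infty$. The only cosmetic difference is how finiteness of the truncated sum is justified (you note there are finitely many intervals of scale $\geq 2^{-N}$; the paper counts them explicitly), but the argument is the same.
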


\begin{proof} Let $f \in L^2$ and fix $N \in \mathbb{N}$.  First, we
note

\begin{equation*}
\begin{split}
\sum_{|I| \geq 2^{-N}} |\langle \phi_I, f \rangle|^2 &\leq \sum_{|I|
\geq 2^{-N}} \|f\|_2^2 \|\phi_I\|_2^2 \lesssim \|f\|_2^2 \sum_{|I|
\geq 2^{-N}} 1 \\
&= \|f\|_2^2 \Big(2 + 2^2 + \ldots + 2^N \Big) \leq 2^{N+1}
\|f\|_2^2 < \infty.
\end{split}
\end{equation*}

\noindent Thus,

\begin{equation*}
\begin{split}
\sum_{|I| \geq 2^{-N}} |\langle \phi_I, f \rangle|^2 &= \sum_{|I|
\geq 2^{-N}} \langle \phi_I, f \rangle \overline{\langle \phi_I, f
\rangle} = \Big\langle \sum_{|I| \geq 2^{-N}} \langle \phi_I, f
\rangle \overline{\phi_I}, \overline{f} \Big\rangle \\
&= \langle L_N f, \overline{f} \rangle \leq \|L_N f\|_2 \|f\|_2
\lesssim \|f\|_2 \bigg( \sum_{|I| \geq 2^{-N}} |\langle \phi_I, f
\rangle|^2 \bigg)^{1/2}
\end{split}
\end{equation*}

\noindent implies

\begin{equation*}
\bigg( \sum_{|I| \geq 2^{-N}} |\langle \phi_I, f \rangle|^2
\bigg)^{1/2} \lesssim \|f\|_2.
\end{equation*}

\noindent As $N$ is arbitrary, and the bounds do not depend on $N$,
let $N$ tend to infinity.  Then,

\begin{equation*}
\|Sf\|_2^2 = \int_\mathbb{T} \sum_I \frac{|\langle \phi_I, f
\rangle|^2}{|I|} \chi_I(x) \, dx = \sum_I |\langle \phi_I, f
\rangle|^2 \lesssim \|f\|_2^2.
\end{equation*}
\end{proof}

\section{The Weak-$L^1$ Estimate}

\begin{lemma}\label{lemma:cz2} Let $f \in
L^1(\mathbb{T})$ and $\alpha > \|f\|_1$ a constant.  Then, there
exists a sequence of disjoint dyadic intervals $I_1, I_2, \ldots$,
with $\Omega = \bigcup_k I_k$, and a decomposition $f = g + b$, $b =
\sum_k b_k$, such that

\begin{gather*}
\|g\|_2^2 \lesssim \alpha \|f\|_1, \\
\supp(b_k) \subseteq I_k, \,\,\, \|b_k\|_1 \lesssim \alpha |I_k|,
\,\,\, \int_{\mathbb{T}} b_k(x) \, dx = 0, \\
|\Omega| = \sum_{k=1}^\infty |I_k| \leq \frac{\|f\|_1}{\alpha}.
\end{gather*}
\end{lemma}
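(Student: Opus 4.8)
The plan is to apply the Calder\'{o}n--Zygmund decomposition already established in Lemma~\ref{lemma:cz} and then build $g$ and the pieces $b_k$ by the familiar recipe of subtracting, on each ``bad'' interval, the average of $f$ over that interval.

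First I would invoke Lemma~\ref{lemma:cz} with the given $f$ and $\alpha$ (the hypothesis $\alpha > \|f\|_1$ is exactly what that lemma requires), obtaining disjoint dyadic intervals $I_k$, with $\Omega = \bigcup_k I_k$, such that $|f| \le \alpha$ a.e.\ on $\Omega^c$, $\sum_k |I_k| = |\Omega| \le \|f\|_1/\alpha$, and $\frac{1}{|I_k|}\int_{I_k}|f(x)|\,dx \le 2\alpha$ for every $k$. The third displayed estimate of the statement is then immediate. Next I would set
\[
b_k = \Big(f - \frac{1}{|I_k|}\int_{I_k} f(y)\,dy\Big)\chi_{I_k}, \qquad g = f\chi_{\Omega^c} + \sum_k \Big(\frac{1}{|I_k|}\int_{I_k} f(y)\,dy\Big)\chi_{I_k}.
\]
Since the $I_k$ are disjoint, every point of $\mathbb{T}$ lies either in $\Omega^c$ or in exactly one $I_k$, so $f = g + \sum_k b_k$ holds pointwise; and since $\sum_k\|b_k\|_1 \le 4\alpha\sum_k|I_k| \le 4\|f\|_1 < \infty$, the series also converges absolutely in $L^1$. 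The three conditions on $b_k$ are read off directly: $\supp(b_k)\subseteq I_k$ is built in; $\int_{\mathbb{T}} b_k(x)\,dx = \int_{I_k} f - \int_{I_k} f = 0$; and $\|b_k\|_1 \le \int_{I_k}|f| + |I_k|\cdot\frac{1}{|I_k|}\big|\int_{I_k} f\big| \le 2\int_{I_k}|f| \le 4\alpha|I_k|$, which is $\lesssim \alpha|I_k|$.

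It then remains only to bound $\|g\|_2^2$. On $\Omega^c$ we have $|g| = |f| \le \alpha$ a.e., so $\int_{\Omega^c}|g|^2\,dx \le \alpha\int_{\Omega^c}|f|\,dx \le \alpha\|f\|_1$. On each $I_k$ the function $g$ equals the constant $\frac{1}{|I_k|}\int_{I_k} f$, whose modulus is at most $2\alpha$ by the last property from Lemma~\ref{lemma:cz}; hence $\int_{I_k}|g|^2\,dx \le 2\alpha\int_{I_k}|g|\,dx \le 2\alpha\int_{I_k}|f|\,dx$, and summing over $k$ gives $\sum_k\int_{I_k}|g|^2\,dx \le 2\alpha\int_\Omega|f|\,dx \le 2\alpha\|f\|_1$. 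Adding the two contributions yields $\|g\|_2^2 \le 3\alpha\|f\|_1$, the remaining estimate.

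There is no genuine obstacle here: all of the real content sits inside Lemma~\ref{lemma:cz}, and the rest is bookkeeping. The one point worth isolating is that the passage from $L^1$-control of $f$ to $L^2$-control of $g$ relies precisely on the two ``pointwise'' facts supplied by that lemma --- $|g| \le 2\alpha$ on $\Omega$ (via the average bound) and $|f| \le \alpha$ on $\Omega^c$ --- while the identity $\sum_k b_k = f - g$ in $L^1$ is guaranteed by disjointness of the $I_k$ together with $\sum_k|I_k| < \infty$.
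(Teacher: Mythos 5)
Your proof is correct and follows essentially the same route as the paper: both define $g$ as $f$ on $\Omega^c$ plus the interval averages on the $I_k$, set $b_k = (f - \text{avg}_{I_k} f)\chi_{I_k}$, and verify the same bounds. The only difference is cosmetic --- the paper re-derives the maximal dyadic interval construction inline rather than invoking Lemma~\ref{lemma:cz} as you do (which is cleaner), and it computes $\|g\|_2^2$ by squaring the explicit formula for $g$ and using $(\text{avg}_{I_k} f)^2 \le 4\alpha^2$ directly, giving the constant $5$ instead of your $3$; both arguments are sound.
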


\begin{proof} Define $\Omega = \{M_D f > \alpha\}$.  As $|f| \leq
M_D f$ a.e., we see immediately that $|f| \leq M_D f \leq \alpha$
a.e.~on $\Omega^c$.  If $\Omega$ is empty, then $|f| \leq \alpha$
a.e. on $\mathbb{T}$. Simply set $g = f$, $b_k = 0$, and $I_k$ empty
for each $k$. Then, the conditions are trivially satisfied.

Now, assume $\Omega$ is not empty.  Let $\mathcal{D}$ be the
countable collection of all dyadic intervals $I$ such that
$\frac{1}{|I|} \int_I |f(y)| \, dy > \alpha$.  By construction,
$\Omega = \bigcup_\mathcal{D} I$.  We say a dyadic interval $I \in
\mathcal{D}$ is maximal if for every $I' \in \mathcal{D}$, we have
either $I' \subseteq I$ or $I, I'$ are disjoint. Clearly, every $I
\in \mathcal{D}$ is contained in a maximal interval.  Let $I_1, I_2,
\ldots$ be the maximal intervals of $\mathcal{D}$, which are
necessarily disjoint.  Further, it is clear that

\begin{equation*}
\Omega = \bigcup_\mathcal{D} I = \bigcup_\mathbb{N} I_k.
\end{equation*}

As each $I_k \in \mathcal{D}$, we have $\alpha|I_k| < \int_{I_k}
|f(y)| \, dy$.  As the $I_k$ are disjoint, simply sum over $k$ to
see $\alpha |\Omega| \leq \int_\Omega |f(y)| \, dy \leq \|f\|_1$. On
the other hand, if $|I_k| < 1/2$, then there is some dyadic interval
$I_k'$ which contains $I_k$ and satisfies $|I_k'| = 2 |I_k|$.  But,
$I_k' \notin \mathcal{D}$, because otherwise $I_k$ could not be
maximal. Thus, $\alpha|I_k'| \geq \int_{I_k'} |f(y)| \, dy$, which
implies $\int_{I_k} |f(y)| \, dy \leq \int_{I_k'} |f(y)| \, dy \leq
\alpha |I_k'| = 2 \alpha |I_k|$.  Similarly, if $|I_k| = 1/2$, then
$\int_{I_k} |f(y)| \, dy \leq \|f\|_1 < \alpha = 2 \alpha |I_k|$.

Define the function $g$ by

\begin{equation*}
g(x) = f(x) \chi_{\Omega^c}(x) + \sum_k \Big( \frac{1}{|I_k|}
\int_{I_k} f(y) \, dy \Big) \chi_{I_k}(x).
\end{equation*}

\noindent It is easily seen that $g(x)^2 = f(x)^2 \chi_{\Omega^c}(x)
+ \sum_k (\frac{1}{|I_k|} \int_{I_k} f(y) \, dy)^2 \chi_{I_k}(x)$.
Thus,

\begin{equation*}
\begin{split}
\|g\|_2^2 &= \int_{\Omega^c} |f(x)|^2 \, dx + \sum_k \frac{1}{|I_k|}
\bigg( \int_{I_k} f(y) \, dy \bigg)^2 \\
&\leq \int_{\Omega^c} \alpha |f(x)| \, dx + \sum_k 4 \alpha^2 |I_k| \\
&\leq \alpha \|f\|_1 + 4 \alpha^2 |\Omega| \leq 5 \alpha \|f\|_1.
\end{split}
\end{equation*}

Set $b = f - g$ and $b_k = (f - \frac{1}{|I_k|} \int_{I_k} f(y) \,
dy) \chi_{I_k}$.  Then, we immediately have $\int b_k(x) \, dx = 0$.
Further, each $b_k$ is supported on $I_k$ and $b = \sum_k b_k$.
Finally,

\begin{equation*}
\|b_k\|_1 = \int_{I_k} \bigg|f(x) - \frac{1}{|I_k|} \int_{I_k} f(y)
\, dy \bigg| \, dx \leq 2 \int_{I_k} |f(x)| \, dx \leq 4 \alpha
|I_k|.
\end{equation*}
\end{proof}

\begin{lemma}\label{lemma:atom} If $a : \mathbb{T}
\rightarrow \mathbb{C}$ is in $L^1$, supported in some dyadic
interval $I$, and satisfies $\int_\mathbb{R} a(x) \, dx = 0$, then
$\|Sa\|_{L^1(\mathbb{T} - 2I)} \lesssim \|a\|_1$.  \end{lemma}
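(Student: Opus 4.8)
The plan is to reduce the statement to a pointwise decay estimate for $Sa$ away from $I$, and then integrate. Fix $x \in \mathbb{T} - 2I$ and set $d = \dist(x,I)$; since $x \notin 2I$ we have $d > |I|/2$. Only dyadic intervals containing $x$ contribute to $Sa(x)$, and for each $k$ there is a unique dyadic $J = J_k(x)$ with $x \in J$ and $|J| = 2^{-k}$, so $Sa(x)^2 = \sum_k 2^k |\langle \phi_{J_k}, a\rangle|^2$ (I write $J_k$ for $J_k(x)$). The whole game is to show $Sa(x) \lesssim |I|\,\|a\|_1 / \dist(x,I)^2$; the naive estimate using only the size of $\phi_{J_k}$ gives just $Sa(x)\lesssim\|a\|_1/\dist(x,I)$, which is not integrable over $\mathbb{T}-2I$ without losing a factor $\log(1/|I|)$, so the cancellation $\int_{\mathbb T} a = 0$ must be exploited.

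First I would record two bounds on a single coefficient, valid for every $m$. Directly from $\supp a \subseteq I$, the size bound for $\phi_{J_k}$, and $\dist(y,J_k)\ge\dist(I,J_k)$ for $y\in I$,
\begin{equation*}
|\langle \phi_{J_k}, a\rangle| \lesssim |J_k|^{-1/2}\Big(1+\tfrac{\dist(I,J_k)}{|J_k|}\Big)^{-m}\|a\|_1 .
\end{equation*}
Writing $x_I$ for the center of $I$ and using $\int_{\mathbb T} a = 0$ to replace $\phi_{J_k}$ by $\phi_{J_k}-\phi_{J_k}(x_I)$, then applying the mean value theorem exactly as in the proof of Lemma~\ref{lemma:L2tech2} together with the adapted bound for $\varphi_{J_k}'$,
\begin{equation*}
|\langle \phi_{J_k}, a\rangle| \lesssim |I|\,|J_k|^{-3/2}\Big(1+\tfrac{\dist(I,J_k)}{|J_k|}\Big)^{-m}\|a\|_1 .
\end{equation*}
The geometric fact that activates the decay is: since $x \in J_k$, every point of $J_k$ lies within $|J_k|$ of $x$, hence $\dist(I,J_k) \ge d - |J_k|$, i.e. $|J_k| + \dist(I,J_k) \ge d$, so $\bigl(1+\dist(I,J_k)/|J_k|\bigr)^{-1} \le |J_k|/d$.

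Next I would fix $m$ large (say $m=3$) and split the sum over scales into three ranges, which partition all scales because $|J_k|$ and $|I|$ are powers of $2$ and $d>|I|/2$. For $|J_k|\ge d$, use the second coefficient bound with the decay factor bounded by $1$: the total is $\lesssim |I|^2\|a\|_1^2\sum_{|J_k|\ge d}|J_k|^{-4}$, a geometric series dominated by $\lesssim |I|^2 d^{-4}\|a\|_1^2$. For $|I|\le |J_k| < d$, combine the second bound with the geometric fact to get a total $\lesssim |I|^2 d^{-2m}\|a\|_1^2\sum |J_k|^{2m-4}$, again geometric (here $2m-4>0$) and $\lesssim |I|^2 d^{-4}\|a\|_1^2$. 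For $|J_k|<|I|$, use the first bound and the geometric fact: the total is $\lesssim d^{-2m}\|a\|_1^2\sum_{|J_k|<|I|}|J_k|^{2m-2}\lesssim d^{-2m}|I|^{2m-2}\|a\|_1^2$, which is $\lesssim |I|^2 d^{-4}\|a\|_1^2$ because $|I|<2d$. Adding the three pieces,
\begin{equation*}
Sa(x) \lesssim \frac{|I|}{\dist(x,I)^{2}}\,\|a\|_1 \qquad (x \in \mathbb{T} - 2I).
\end{equation*}

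Finally I would integrate. Partitioning $\mathbb{T} - 2I = \{\dist(\cdot,I)>|I|/2\}$ into the dyadic shells $\{x : 2^{j-1}|I| < \dist(x,I) \le 2^j|I|\}$, $j\ge 0$, each of measure $\lesssim 2^j|I|$, gives $\int_{\mathbb{T}-2I}\dist(x,I)^{-2}\,dx \lesssim \sum_{j\ge 0}\frac{2^j|I|}{(2^{j-1}|I|)^2}\lesssim |I|^{-1}$, so $\|Sa\|_{L^1(\mathbb{T}-2I)} \lesssim |I|\,\|a\|_1\cdot|I|^{-1} = \|a\|_1$. I expect the only real obstacle to be the pointwise estimate, specifically extracting the extra factor $|I|/\dist(x,I)$ beyond the trivial $1/\dist(x,I)$: this forces one to use the mean-zero cancellation at the scales $|J_k|\gtrsim|I|$, to keep the polynomial decay of the adapted bumps, and to organize every scale-sum as a convergent geometric series, which is what dictates choosing the decay exponent $m$ large enough (any $m\ge 3$).
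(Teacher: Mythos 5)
Your proof is correct, and it takes a recognizably different route from the paper's while sharing the same core inputs. Both arguments split the contributing dyadic intervals $J$ at the threshold $|J| \sim |I|$, use the plain size bound on $\phi_J$ for $|J| < |I|$, and invoke the mean-zero cancellation of $a$ together with the mean value theorem and the adapted bound on $\varphi_J'$ for $|J| \geq |I|$. The organization differs: the paper first dominates the $\ell^2$ sum by the $\ell^1$ sum, $Sa \le \sum_J |J|^{-1/2} |\langle a,\phi_J\rangle| \chi_J$, and then integrates each summand over $\mathbb{T}-2I$ separately, grouping the $J$'s at a fixed scale by their distance from $I$; you instead keep the $\ell^2$ structure, freeze $x$, exploit that only one $J$ per scale contains $x$, and derive the pointwise kernel bound $Sa(x) \lesssim |I|\,\|a\|_1 \dist(x,I)^{-2}$ for $x \notin 2I$, which you then integrate over dyadic shells. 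Your route gives a quantitatively sharper intermediate result (the pointwise decay), at the mild cost of tracking the extra comparison between $|J|$ and $d = \dist(x,I)$ to split into three scale ranges rather than two; it also avoids the $\ell^2 \le \ell^1$ step, which is slightly lossy. One small remark: it is worth saying explicitly that the three ranges $|J_k|\ge d$, $|I|\le|J_k|<d$, $|J_k|<|I|$ really do partition the dyadic scales under the standing hypothesis $d>|I|/2$ (since $|I|$ and $|J_k|$ are powers of two, $|J_k|<|I|$ forces $|J_k|\le|I|/2<d$, so the first and third ranges cannot overlap), which you implicitly rely on.
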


\begin{proof} If $|I| = 1/2$, then $2I = \mathbb{T}$, and the
result is trivially satisfied.  So, assume $|I| < 1/2$.  Pick a
dyadic interval $J$ such that $|J| < |I|$.  Note, either $J \subset
2I$ or $J$ and $2I$ are disjoint.  Assume it is the later, i.e.~$J
\nsubseteq 2I$. Then,

\begin{equation*}
\begin{split}
\frac{|\langle \phi_J, a \rangle|}{|J|^{1/2}} &\leq
\frac{1}{|J|^{1/2}} \int_{I} |a(x)| |\phi_J(x)| \, dx =
\frac{1}{|J|} \int_{I} |a(x)| |\varphi_J(x)| \, dx \\
&\lesssim \frac{1}{|J|} \int_{I} |a(x)| \bigg(1 + \frac{\dist(x,
J)}{|J|}\bigg)^{-2} \, dx \\
&\leq \frac{1}{|J|} \|a\|_1 \bigg(1 + \frac{\dist(I,
J)}{|J|}\bigg)^{-2}.
\end{split}
\end{equation*}

\noindent Therefore,

\begin{equation*}
\begin{split}
\bigg\| \sum_{|J| < |I|} \frac{| \langle \phi_J, a
\rangle|}{|J|^{1/2}} \chi_J \bigg\|_{L^1(\mathbb{T} - 2I)} &\lesssim
\|a\|_1 \sum_{|J| < |I|, J \nsubseteq 2I}
\bigg(1 + \frac{\dist(J,I)}{|J|}\bigg)^{-2} \\
&= \|a\|_1 \sum_{j=1}^\infty \bigg[ \sum_{J \nsubseteq 2I, |J| =
2^{-j} |I|} \bigg(1 + \frac{\dist(J,I)}{|J|}\bigg)^{-2} \bigg].
\end{split}
\end{equation*}

Consider the dyadic intervals $J$ so that $|J| = 2^{-j}
|I|$ and $J \nsubseteq 2I$.  The minimum value of $\dist(J, I)$ for
all such $J$ is $|I|/2$, due to the definition of $2I$. But, $|I|/2
= 2^{j-1} |J|$. There are two such $J$, on either side of $2I$.
Taking one step further from $2I$, there are two $J$ with $\dist(J,
I) = (2^{j-1}+1) |J|$. Taking another step, there are two $J$ with
$\dist(J, I) = (2^{j-1}+2) |J|$, and so on, until we have exhausted
$\mathbb{T}$.  Thus,

\begin{equation*}
\sum_{J \nsubseteq 2I, |J| = 2^{-j} |I|} \bigg(1 +
\frac{\dist(J,I)}{|J|}\bigg)^{-2} \leq 2 \sum_{i=2^{j-1}}^\infty
(1+i)^{-2} \leq 2^{2-j},
\end{equation*}

\noindent and

\begin{equation*}
\begin{split}
\bigg\| \sum_{|J| < |I|} \frac{| \langle \phi_J, a
\rangle|}{|J|^{1/2}} \chi_J \bigg\|_{L^1(\mathbb{T} - 2I)} &\lesssim
\|a\|_1 \sum_{j=1}^\infty 2^{-j} = \|a\|_1.
\end{split}
\end{equation*}

Now, let $J$ be a dyadic interval with $|J| \geq |I|$.  Fix $z \in
I$. As in the proof of Lemma~\ref{lemma:L2tech2}, by the mean value
theorem, for all $x \in I$ there exists a $z_x \in I$ such that
$|\varphi_J(x) - \varphi_J(z)| \leq |\varphi_J'(z_x)| |I|$.
Recalling that the integral of $a$ is 0, we have

\begin{equation*}
\begin{split}
\frac{|\langle \phi_J, a \rangle|}{|J|^{1/2}} &= \frac{1}{|J|} \Big|
\int_{I} \varphi_J(x) \overline{a(x)} \, dx \Big| = \frac{1}{|J|}
\Big| \int_{I} \overline{a(x)} \big(\varphi_J(x) - \varphi_J(z)\big)
\, dx
\Big| \\
&\leq \frac{|I|}{|J|} \int_{I} |a(x)| |\varphi_J'(z_x)| \, dx
\lesssim \|a\|_1 \frac{|I|}{|J|^2} \bigg(1 + \frac{\dist(J, I)}{|J|}
\bigg)^{-2}.
\end{split}
\end{equation*}

\noindent So,

\begin{equation*}
\begin{split}
\bigg\| \sum_{|J| \geq |I|} \frac{| \langle \phi_J, a
\rangle|}{|J|^{1/2}} \chi_J \bigg\|_{L^1(\mathbb{T} - 2I)} &\leq
\bigg\| \sum_{|J| \geq |I|} \frac{| \langle \phi_J, a
\rangle|}{|J|^{1/2}} \chi_J \bigg\|_1 \\
&\lesssim \|a\|_1 \sum_{|J| \geq |I|}
\frac{|I|}{|J|} \bigg(1 + \frac{\dist(J,I)}{|J|}\bigg)^{-2} \\
&= \|a\|_1 \sum_{j=0}^{k-1} \bigg[ \sum_{|J| = 2^{j} |I|}
\frac{1}{2^j} \bigg(1 + \frac{\dist(J,I)}{|J|}\bigg)^{-2} \bigg],
\end{split}
\end{equation*}

\noindent if $|I| = 2^{-k}$.  Consider the $J$ with $|J| = 2^j |I|$.
There is one such interval $J'$ with $I \subseteq J'$. Now, for
every other such $J$, we have $\dist(J, I) \geq \dist(J, J')$. There
are three such $J$ (including $J'$) with $\dist(J,J') = 0$. Moving
farther to the left and right, there are two with $\dist(J, J') =
|J|$, two with $\dist(J,J') = 2|J|$, and so on, until we exhaust all
such $J$.  Thus,

\begin{equation*}
\sum_{|J| = 2^{j} |I|} \bigg(1 + \frac{\dist(J,I)}{|J|}\bigg)^{-2}
\leq \sum_{|J| = 2^{j} |I|} \bigg(1 +
\frac{\dist(J,J')}{|J|}\bigg)^{-2} \leq 3 + 2 \sum_{i = 1}^\infty
(1+i)^{-2} \leq 5,
\end{equation*}

\noindent and

\begin{equation*}
\begin{split}
\bigg\| \sum_{|J| \geq |I|} \frac{| \langle \phi_J, a
\rangle|}{|J|^{1/2}} \chi_J \bigg\|_{L^1(\mathbb{T} - 2I)} &\lesssim
\|a\|_1 \sum_{j=0}^{k-1} 2^{-j} \leq \|a\|_1 \sum_{j=0}^\infty
2^{-j} = 2 \|a\|_1.
\end{split}
\end{equation*}

Recalling that the $\ell^2$-norm is always less than or equal to the
$\ell^1$-norm, it follows

\begin{equation*}
\|Sa\|_{L^1(\mathbb{T} - 2I)} \leq \bigg\| \sum_J \frac{|\langle a,
\phi_J \rangle|}{|J|^{1/2}} \chi_J \bigg\|_{L^1(\mathbb{T} - 2I)}
\lesssim \|a\|_1.
\end{equation*}
\end{proof}

\begin{thm}\label{thm:Sweak} For any 0-mean adapted family,
$S : L^1 \rightarrow L^{1,\infty}$.
\end{thm}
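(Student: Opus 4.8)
The plan is to run the standard Calderón--Zygmund argument: split $f$ via Lemma~\ref{lemma:cz2}, control the good part with the $L^2$ bound of Theorem~\ref{thm:SL2}, and control the bad part with the single-atom estimate of Lemma~\ref{lemma:atom}.

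Concretely, I would fix $f \in L^1(\mathbb{T})$ and $\alpha > 0$, aiming for $|\{Sf > \alpha\}| \lesssim \|f\|_1/\alpha$ with constant independent of $f, \alpha$. Since $|\mathbb{T}| = 1$ the bound is trivial when $\alpha \le \|f\|_1$, so I may assume $\alpha > \|f\|_1$ and apply Lemma~\ref{lemma:cz2} at height $\alpha$, producing disjoint dyadic intervals $I_k$, $\Omega = \bigcup_k I_k$, and a decomposition $f = g + b$, $b = \sum_k b_k$, with $\|g\|_2^2 \lesssim \alpha\|f\|_1$, $\supp(b_k)\subseteq I_k$, $\int b_k = 0$, $\|b_k\|_1 \lesssim \alpha|I_k|$, and $\sum_k|I_k| \le \|f\|_1/\alpha$. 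Since $S$ is sublinear, $Sf \le Sg + Sb$, so $\{Sf>\alpha\}\subseteq\{Sg>\alpha/2\}\cup\{Sb>\alpha/2\}$ and it suffices to estimate each piece.

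For the good part, Chebyshev and Theorem~\ref{thm:SL2} give
\begin{equation*}
|\{Sg>\alpha/2\}| \le \frac{4}{\alpha^2}\|Sg\|_2^2 \lesssim \frac{1}{\alpha^2}\|g\|_2^2 \lesssim \frac{\|f\|_1}{\alpha}.
\end{equation*}
For the bad part, I would set $\Omega^* = \bigcup_k 2I_k$, noting $|\Omega^*| \le 2\sum_k|I_k| \le 2\|f\|_1/\alpha$, so only $\{x\notin\Omega^* : Sb(x)>\alpha/2\}$ remains. Each $b_k$ is an ``atom'' in the sense of Lemma~\ref{lemma:atom}, and since for $x\notin\Omega^*$ one has $x\notin 2I_k$ for all $k$, that lemma together with $Sb \le \sum_k Sb_k$ gives
\begin{equation*}
\int_{\mathbb{T}-\Omega^*} Sb\,dx \le \sum_k\int_{\mathbb{T}-2I_k}Sb_k\,dx \lesssim \sum_k\|b_k\|_1 \lesssim \alpha\sum_k|I_k| \le \|f\|_1,
\end{equation*}
whence $|\{x\notin\Omega^*:Sb(x)>\alpha/2\}| \lesssim \|f\|_1/\alpha$ by Chebyshev. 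Adding $|\Omega^*|$ bounds $|\{Sb>\alpha/2\}|$, and combining with the good part completes the proof.

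The one step needing genuine care is the pointwise bound $Sb \le \sum_k Sb_k$. I would justify it by first checking that $\langle\phi_I,b\rangle = \sum_k\langle\phi_I,b_k\rangle$ converges absolutely for each fixed $I$ (since $\sum_k\|b_k\|_1 \lesssim \alpha|\Omega| \le \|f\|_1 < \infty$ and $\|\phi_I\|_\infty<\infty$), then applying the triangle inequality in $\mathbb{C}$ to the coefficients followed by Minkowski's inequality in $\ell^2$ over the family of dyadic intervals. No new estimate is required beyond this; the rest is a direct assembly of Theorem~\ref{thm:SL2}, Lemma~\ref{lemma:cz2}, and Lemma~\ref{lemma:atom}, exactly paralleling the proof that $M : L^1\to L^{1,\infty}$.
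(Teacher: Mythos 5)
Your proof is correct and follows the same route as the paper: Lemma~\ref{lemma:cz2} to split $f=g+b$, Chebyshev with Theorem~\ref{thm:SL2} on $g$, and Lemma~\ref{lemma:atom} on each $b_k$ outside $\Omega^* = \bigcup_k 2I_k$. Your explicit justification of the pointwise bound $Sb \le \sum_k Sb_k$ (absolute convergence of the coefficient sum plus Minkowski in $\ell^2$) is a welcome detail that the paper leaves implicit under the label ``sublinear.''
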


\begin{proof} Let $f \in L^1(\mathbb{T})$ and $\alpha \leq
\|f\|_1$.  Then, $|\{Sf > \alpha\}| \leq 1 \leq \|f\|_1/\alpha$. Now
take $\alpha > \|f\|_1$.  Apply Lemma~\ref{lemma:cz2} to find
disjoint dyadic intervals $I_k$ and write $f = g + b$. Then, by
Chebyshev

\begin{equation*}
|\{Sg > \alpha/2\}| \lesssim \frac{1}{\alpha^2} \|Sg\|_2^2 \lesssim
\frac{1}{\alpha^2} \|g\|_2^2 \lesssim \frac{1}{\alpha} \|f\|_1.
\end{equation*}

Applying Lemma~\ref{lemma:atom} to each $b_k$, we see
$\|Sb_k\|_{L^1(\mathbb{T} - 2I_k)} \lesssim \|b_k\|_1 \lesssim
\alpha |I_k|$.  Define $\Omega^* = \bigcup_k 2I_k$, and note
$|\Omega^*| \leq \sum_k |2I_k| = 2 \sum_k |I_k| \lesssim
\|f\|_1/\alpha$.  As $S$ is sublinear,

\begin{equation*}
\begin{split}
|\{Sb > \alpha/2\}| &\leq |\{x \in \Omega^* : Sb(x) > \alpha/2\} +
|\{x \in \mathbb{T} - \Omega^* : Sb(x) > \alpha/2\}| \\
&\leq |\Omega^*| + \frac{2}{\alpha} \|Sb\|_{L^1(\mathbb{T} -
\Omega^*)} \lesssim \frac{1}{\alpha} \|f\|_1 + \frac{2}{\alpha}
\sum_k
\|Sb_k\|_{L^1(\mathbb{T} - 2I_k)} \\
&\leq \frac{1}{\alpha} \|f\|_1 + 2 \sum_k |I_k| \lesssim
\frac{\|f\|_1}{\alpha}.
\end{split}
\end{equation*}

Hence,

\begin{equation*}
|\{Sf > \alpha\}| \leq |\{Sg > \alpha/2\}| + |\{Sb > \alpha/2\}|
\lesssim \frac{\|f\|_1}{\alpha}.
\end{equation*}

\noindent As $\alpha$ is arbitrary, this completes the proof.
\end{proof}

\section{The Linearization $T_\epsilon$}\label{sec:Te}

In order to complete the $L^p$ estimates of $S$, it is necessary to
consider a kind of linearization.  Let $\varphi_I^1$, $\varphi_I^2$
be two 0-mean adapted families.  Let $\epsilon_I$ be a sequence of
scalars, indexed by the dyadic intervals, which is uniformly
bounded.  Define the linear operator $T_\epsilon$ by

\begin{equation*}
T_\epsilon f(x) = \sum_I \epsilon_I \langle \phi^1_I, f \rangle
\phi^2_I(x),
\end{equation*}

\noindent where $\phi_I^1$, $\phi_I^2$ are, of course, the
corresponding normalized families.  By dividing out a constant, we
can assume $|\epsilon_I| \leq 1$.  Our first goal will be to prove
$T_\epsilon$ maps $L^2$ to $L^2$. This follows easily using what we
know about $S$.

\begin{thm}\label{thm:TL2} For any 0-mean adapted families, $T_\epsilon
: L^2 \rightarrow L^2$, where the underlying constant is independent
of the sequence $\epsilon$.
\end{thm}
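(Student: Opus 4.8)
The plan is to prove $\|T_\epsilon f\|_2 \lesssim \|f\|_2$ by duality, pairing $T_\epsilon f$ against an arbitrary $g \in L^2$ and estimating $|\langle T_\epsilon f, g\rangle|$. First I would expand
\[
\langle T_\epsilon f, g\rangle = \sum_I \epsilon_I \langle \phi^1_I, f\rangle \langle \phi^2_I, g\rangle,
\]
which makes sense because (as shown in the proof of Theorem~\ref{thm:SL2}) the sums $\sum_I |\langle \phi^1_I, f\rangle|^2$ and $\sum_I |\langle \phi^2_I, g\rangle|^2$ are finite; in fact one should first justify convergence by restricting to $|I| \ge 2^{-N}$, obtaining a uniform bound, and then letting $N \to \infty$, exactly as in the proof of Theorem~\ref{thm:SL2}.

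Next, using $|\epsilon_I| \le 1$ and the Cauchy--Schwarz inequality over the index set of dyadic intervals,
\[
\Big| \sum_I \epsilon_I \langle \phi^1_I, f\rangle \langle \phi^2_I, g\rangle \Big|
\le \Big( \sum_I |\langle \phi^1_I, f\rangle|^2 \Big)^{1/2} \Big( \sum_I |\langle \phi^2_I, g\rangle|^2 \Big)^{1/2}.
\]
Now observe that for the 0-mean adapted family $\varphi^1_I$, integrating the definition of the square function gives
\[
\sum_I |\langle \phi^1_I, f\rangle|^2 = \int_{\mathbb{T}} \sum_I \frac{|\langle \phi^1_I, f\rangle|^2}{|I|} \chi_I(x) \, dx = \|S^1 f\|_2^2,
\]
where $S^1$ is the Littlewood--Paley square function built from $\varphi^1_I$; similarly $\sum_I |\langle \phi^2_I, g\rangle|^2 = \|S^2 g\|_2^2$. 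By Theorem~\ref{thm:SL2} applied to each 0-mean family, $\|S^1 f\|_2 \lesssim \|f\|_2$ and $\|S^2 g\|_2 \lesssim \|g\|_2$, so $|\langle T_\epsilon f, g\rangle| \lesssim \|f\|_2 \|g\|_2$ with a constant independent of $\epsilon$. Taking the supremum over $g$ in the unit ball of $L^2$ yields $\|T_\epsilon f\|_2 \lesssim \|f\|_2$.

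I do not expect a serious obstacle here; the only point requiring care is the justification of the interchange of summation and integration and the absolute convergence of the double sum over $I$. This is handled by the truncation argument already used in Theorem~\ref{thm:SL2}: work with $T_\epsilon^N f(x) = \sum_{|I| \ge 2^{-N}} \epsilon_I \langle \phi^1_I, f\rangle \phi^2_I(x)$, establish the bound $\|T_\epsilon^N f\|_2 \lesssim \|f\|_2$ uniformly in $N$ by the above duality computation (all sums now being finite), and then pass to the limit. The essential content is simply that $T_\epsilon$ factors, up to bounded pieces, through the two square functions, so its $L^2$ boundedness is inherited from Theorem~\ref{thm:SL2}.
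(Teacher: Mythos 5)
Your proof is correct and takes essentially the same approach as the paper: dualize against $g \in L^2$, bound the pairing by the product of the two square functions, and invoke Theorem~\ref{thm:SL2}. The only cosmetic difference is the order of operations — you apply Cauchy--Schwarz directly over the index set and then identify $\sum_I |\langle \phi_I, h\rangle|^2 = \|Sh\|_2^2$, whereas the paper first rewrites the sum as an integral against $\chi_I/|I|$, applies Cauchy--Schwarz pointwise in $x$ to produce $S^1f(x)\,S^2g(x)$, and then Hölder; the two computations are algebraically equivalent.
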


\begin{proof} Fix a sequence $(\epsilon_I)$ where $|\epsilon_I| \leq
1$ for all $I$.  Let $\varphi_I^1$, $\varphi_I^2$ be two 0-mean
adapted families, and $S^1$, $S^2$ the associated square functions.
Fix $f \in L^2(\mathbb{T})$. Let $\|g\|_2 \leq 1$. Then, by two
applications of H\"{o}lder,

\begin{equation*}
\begin{split}
|\langle T_\epsilon f, g \rangle| &= \Big| \sum_I \epsilon_I \langle
\phi_I^1, f \rangle \langle \phi_I^2, g \rangle \Big| \leq \sum_I
|\langle \phi_I^1, f \rangle| |\langle \phi_I^2, g \rangle| \\
&= \int_\mathbb{T} \sum_I \frac{|\langle \phi_I^1, f
\rangle|}{|I|^{1/2}} \frac{|\langle \phi_I^2, g \rangle|}{|I|^{1/2}}
\chi_I(x) \, dx \\
&\leq \int_\mathbb{T} \Big( \sum_I \frac{|\langle \phi_I^1, f
\rangle|^2}{|I|} \chi_I(x) \Big)^{1/2} \Big( \sum_I \frac{|\langle
\phi_I^2, g \rangle|^2}{|I|} \chi_I(x) \Big)^{1/2} \, dx \\
&= \int_\mathbb{T} S^1f(x) S^2 g(x) \, dx \leq \|S^1f\|_2 \|S^2
g\|_2 \lesssim \|f\|_2 \|g\|_2 \leq \|f\|_2.
\end{split}
\end{equation*}

\noindent As $g$ in the unit ball of $L^2$ is arbitrary, we see

\begin{equation*}
\|T_\epsilon f\|_2 = \sup \Big\{ |\langle T_\epsilon f, g \rangle| :
\|g\|_2 \leq 1 \Big\} \lesssim \|f\|_2.
\end{equation*}
\end{proof}

Next, we will show $T_\epsilon$ maps $L^1$ into weak-$L^1$.  First,
we prove a useful ``dualizaton" of weak-$L^p$.

\begin{lemma}\label{lemma:dual} Fix $0 < p < \infty$ and $f :
\mathbb{T} \rightarrow \mathbb{C}$.  Suppose that for every set $|E|
> 0$ in $\mathbb{T}$, we can choose a subset $E'
\subseteq E$ with $|E'| > |E|/2$ and $|\langle f, \chi_{E'} \rangle|
\leq A|E|^{1 - 1/p}$.  Then, $\|f\|_{p,\infty} \lesssim A$.
Conversely, if $\|f\|_{p, \infty} \leq A$, then for any set $|E| >
0$ there exists $E' \subseteq E$ with $|E'| > |E|/2$ and $|\langle
f, \chi_{E'} \rangle| \lesssim A |E|^{1-1/p}$.
\end{lemma}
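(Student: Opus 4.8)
The plan is to prove the two implications separately; the converse is the easier one, so I would dispatch it first and then spend most of the effort on the forward direction.

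\emph{Converse.} Assume $\|f\|_{p,\infty}\le A$ and fix a set $E\subseteq\mathbb{T}$ with $|E|>0$. Put $\lambda = A\,(4/|E|)^{1/p}$ and let $E' = \{x\in E : |f(x)|\le\lambda\}$. The weak-$L^p$ bound gives $|\{|f|>\lambda\}|\le(A/\lambda)^p = |E|/4$, so $|E\setminus E'|\le|E|/4$ and hence $|E'|\ge\tfrac34|E|>|E|/2$. Since $|f|\le\lambda$ on $E'$, the function $f\chi_{E'}$ is bounded, hence in $L^1(\mathbb{T})$, and
\begin{equation*}
|\langle f,\chi_{E'}\rangle| \le \int_{E'}|f|\,dx \le \lambda|E'| \le \lambda|E| = 4^{1/p}A\,|E|^{1-1/p}\lesssim A\,|E|^{1-1/p},
\end{equation*}
which is exactly what is wanted.

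\emph{Forward direction.} Fix $\lambda>0$ and set $E = \{|f|>\lambda\}$; it suffices to show $\lambda|E|^{1/p}\lesssim A$, and this is trivial when $|E|=0$, so assume $|E|>0$. The hypothesis only controls $|\langle f,\chi_{E'}\rangle|$ from above, so to convert it into a distributional lower bound I would first pass to a portion of $E$ on which $f$ lies in a single quarter-plane. Split $E$ into the four (possibly overlapping) measurable pieces on which, respectively, $\re f>\lambda/2$, $\re f<-\lambda/2$, $\im f>\lambda/2$, $\im f<-\lambda/2$; since $|f(x)|>\lambda$ forces $\max(|\re f(x)|,|\im f(x)|)>\lambda/\sqrt2>\lambda/2$, these cover $E$, so one of them, call it $E_0$, has $|E_0|\ge|E|/4>0$. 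Applying the hypothesis to $E_0$ produces $E_0'\subseteq E_0$ with $|E_0'|>|E_0|/2$ and $|\langle f,\chi_{E_0'}\rangle|\le A|E_0|^{1-1/p}$. On $E_0'$ the relevant part of $f$ (real or imaginary, with the appropriate sign) exceeds $\lambda/2$, and that same part of $\langle f,\chi_{E_0'}\rangle=\int_{E_0'}f\,dx$ is at most $|\langle f,\chi_{E_0'}\rangle|$; hence
\begin{equation*}
\frac{\lambda|E_0|}{4} < \frac{\lambda}{2}|E_0'| \le |\langle f,\chi_{E_0'}\rangle| \le A|E_0|^{1-1/p}.
\end{equation*}
This gives $\lambda|E_0|^{1/p}<4A$, and since $|E|\le4|E_0|$ we conclude $\lambda|E|^{1/p}<4^{1+1/p}A$. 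As $\lambda>0$ is arbitrary, $\|f\|_{p,\infty}\le 4^{1+1/p}A\lesssim A$.

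The only real point of care is the sector decomposition in the forward direction: one must restrict to a ``sign-definite'' subset of $E$ so that the modulus of the pairing is bounded below by a fixed multiple of $\lambda|E_0'|$, and then absorb the harmless factor $|E_0|/|E_0'|<2$ arising because the hypothesis only guarantees $|E_0'|>|E_0|/2$ rather than $E_0'=E_0$. I would also note in passing that the hypothesis tacitly presumes $\langle f,\chi_{E'}\rangle$ is a well-defined finite number for the selected sets $E'$ (in the converse this is automatic, since $f\chi_{E'}$ is bounded there); beyond that, the argument uses only $\re z,\im z\le|z|$, the Chebyshev-type distribution estimate from the weak-$L^p$ norm, and the elementary measure bookkeeping above.
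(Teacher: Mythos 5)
Your proof is correct and follows essentially the same route as the paper: for the converse, remove the set where $|f|$ exceeds a Chebyshev-scaled threshold; for the forward direction, use the real/imaginary and sign decomposition to extract a ``sign-definite'' portion on which the pairing with $\chi_{E'}$ is bounded below by a multiple of $\lambda|E'|$. The only cosmetic difference is that the paper applies the hypothesis to all four sets $\{\re f > \lambda\}$, $\{\re f < -\lambda\}$, $\{\im f > \lambda\}$, $\{\im f < -\lambda\}$ and sums the resulting bounds, while you split $\{|f|>\lambda\}$ into four pieces and apply the hypothesis once to the largest; both give $\|f\|_{p,\infty}\lesssim A$ with comparable constants.
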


\begin{proof} Start with the first statement.  Fix $\lambda > 0$.
Let $E_1 = \{\re f > \lambda\}$. If $|E_1| = 0$, then clearly
$\lambda^p |E_1| \leq (2 A)^p$. Otherwise, choose $E' \subset E_1$
as per the hypothesis.  Now, $|\langle \re f, \chi_{E'} \rangle| =
|\int_{E'} \re f(x) \, dx| = \int_{E'} \re f(x) \, dx \geq \lambda
|E'|$. So, $\lambda |E_1| < 2 \lambda |E'| \leq 2|\langle \re f,
\chi_{E'} \rangle| = 2 |\re \langle f, \chi_{E'} \rangle| \leq 2
|\langle f, \chi_{E'} \rangle| \leq 2 A |E_1|^{1 - 1/p}$. It follows
$\lambda^p |E_1| \leq (2 A)^p$. Do the same for $E_2 = \{\re f <
-\lambda\}$, $E_3 = \{\im f > \lambda\}$, and $E_4 = \{\im f <
-\lambda\}$ to get $\lambda^p |E_j| \leq (2 A)^p$ for $j = 1, 2, 3,
4$. But, $F = \{|f| > \lambda\sqrt{2} \} \subseteq \bigcup_j E_j$. So,
$(\lambda\sqrt{2})^p |F| \leq 4(\sqrt{2})^p(2 A)^p$.  As $\lambda$
is arbitrary, we have $\|f\|_{p,\infty} \leq 2^{3/2+2/p} A$.

Now suppose $\|f\|_{p,\infty} \leq A$.  Let $|E| > 0$.  Note,
$|\{|f| > 3^{1/p} A |E|^{-1/p}\} \big| \leq \frac{|E|}{3 A^p}
\|f\|_{p,\infty}^p < |E|/2$. Thus, if $E' = E - \{|f| > 3^{1/p} A
|E|^{-1/p}\}$, then $E' \subseteq E$ and $|E'| > |E|/2$.  Further,
$|\langle f, \chi_{E'} \rangle| \leq \int_{E'} |f| \, dm \leq |E'|
3^{1/p} A |E|^{-1/p} \lesssim A |E|^{1-1/p}$.
\end{proof}

\begin{thm}\label{thm:Tweak} For any 0-mean adapted families, $T_\epsilon
: L^1 \rightarrow L^{1,\infty}$, where the underlying constant is
independent of the sequence $\epsilon$.
\end{thm}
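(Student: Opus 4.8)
The plan is a Calder\'{o}n--Zygmund argument: the $L^2$ bound of Theorem~\ref{thm:TL2} controls the good part, and an off-diagonal analysis of $T_\epsilon$ on atoms, in the spirit of Lemma~\ref{lemma:atom}, controls the bad part. Since $|\{|T_\epsilon f|>\alpha\}|\le1\le\|f\|_1/\alpha$ when $\alpha\le\|f\|_1$, I would fix $\alpha>\|f\|_1$ and apply Lemma~\ref{lemma:cz2} to obtain disjoint dyadic intervals $I_k$ with $\Omega=\bigcup_kI_k$, $\sum_k|I_k|\le\|f\|_1/\alpha$, and a splitting $f=g+b$, $b=\sum_kb_k$, with $\|g\|_2^2\lesssim\alpha\|f\|_1$ and each $b_k$ supported in $I_k$, of mean zero, with $\|b_k\|_1\lesssim\alpha|I_k|$. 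By Chebyshev and Theorem~\ref{thm:TL2}, $|\{|T_\epsilon g|>\alpha/2\}|\lesssim\alpha^{-2}\|T_\epsilon g\|_2^2\lesssim\alpha^{-2}\|g\|_2^2\lesssim\alpha^{-1}\|f\|_1$. Putting $\Omega^*=\bigcup_k2I_k$, so $|\Omega^*|\le2\sum_k|I_k|\lesssim\alpha^{-1}\|f\|_1$, the bad part is finished once I establish the atomic bound $\|T_\epsilon a\|_{L^1(\mathbb{T}\setminus2I_0)}\lesssim\|a\|_1$ for any $a$ of mean zero supported in a dyadic interval $I_0$: then $\|T_\epsilon b\|_{L^1(\mathbb{T}\setminus\Omega^*)}\le\sum_k\|T_\epsilon b_k\|_{L^1(\mathbb{T}\setminus2I_k)}\lesssim\sum_k\|b_k\|_1\lesssim\alpha\sum_k|I_k|\lesssim\|f\|_1$, whence $|\{|T_\epsilon b|>\alpha/2\}|\le|\Omega^*|+2\alpha^{-1}\|T_\epsilon b\|_{L^1(\mathbb{T}\setminus\Omega^*)}\lesssim\alpha^{-1}\|f\|_1$, and adding the two estimates gives $|\{|T_\epsilon f|>\alpha\}|\lesssim\alpha^{-1}\|f\|_1$.

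For the atomic bound, note $|T_\epsilon a(x)|\le\sum_J|\langle\phi_J^1,a\rangle|\,|\phi_J^2(x)|$, so $\|T_\epsilon a\|_{L^1(\mathbb{T}\setminus2I_0)}\le\sum_J|\langle\phi_J^1,a\rangle|\int_{\mathbb{T}\setminus2I_0}|\phi_J^2|$ (assume $|I_0|<1/2$, else $2I_0=\mathbb{T}$). I would split the sum over dyadic $J$ into three regimes, mirroring Lemma~\ref{lemma:atom}. For $|J|\ge|I_0|$, the mean value theorem applied to $\varphi_J^1$ together with $\int a=0$ gives $|\langle\phi_J^1,a\rangle|\lesssim\|a\|_1\,|I_0|\,|J|^{-3/2}(1+\dist(J,I_0)/|J|)^{-2}$; with $\int_{\mathbb{T}\setminus2I_0}|\phi_J^2|\le\|\phi_J^2\|_1\lesssim|J|^{1/2}$ (Proposition~\ref{prop:1}) and the scale-by-scale bound $\sum_{|J|=2^{j}|I_0|}(1+\dist(J,I_0)/|J|)^{-2}\lesssim1$ from Lemma~\ref{lemma:atom}, this regime contributes $\lesssim\sum_{j\ge0}2^{-j}\|a\|_1\lesssim\|a\|_1$. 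For $|J|<|I_0|$ with $J\cap2I_0=\emptyset$, the tail bound on $\varphi_J^1$ gives $|\langle\phi_J^1,a\rangle|\lesssim|J|^{-1/2}\|a\|_1(1+\dist(I_0,J)/|J|)^{-2}$, and with $\int_{\mathbb{T}\setminus2I_0}|\phi_J^2|\lesssim|J|^{1/2}$ and the bound $\sum_{|J|=2^{-j}|I_0|,\,J\cap2I_0=\emptyset}(1+\dist(I_0,J)/|J|)^{-2}\lesssim2^{-j}$ from Lemma~\ref{lemma:atom}, this regime contributes $\lesssim\sum_{j\ge1}2^{-j}\|a\|_1\lesssim\|a\|_1$.

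The third regime, $|J|<|I_0|$ with $J\subseteq2I_0$, has no analogue for the square function, where $\chi_J$ vanishes off $2I_0$; here $\phi_J^2$ leaks mass out of $2I_0$. The substitute is the off-diagonal estimate: for any interval $K\supseteq J$ with $|K|<1$ and any $m$, $\int_{\mathbb{T}\setminus K}|\phi_J^2|\lesssim_m|J|^{1/2}\big(1+\dist(J,\mathbb{T}\setminus K)/|J|\big)^{-m}$, which follows from the adaptedness of $\varphi_J^2$ using that $\dist(x,J)\ge\dist(x,K)+\dist(J,\mathbb{T}\setminus K)$ for $x\notin K$ and that $\{x\in\mathbb{T}\setminus K:\dist(x,K)\in[t,t+dt]\}$ has measure $\lesssim dt$. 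With $K=2I_0$: if $J\subseteq I_0$ then $\dist(J,\mathbb{T}\setminus2I_0)\ge|I_0|/2$, so $\int_{\mathbb{T}\setminus2I_0}|\phi_J^2|\lesssim|J|^{1/2}2^{-jm}$ for $|J|=2^{-j}|I_0|$, and combined with the crude bound $|\langle\phi_J^1,a\rangle|\le\|\phi_J^1\|_\infty\|a\|_1\lesssim|J|^{-1/2}\|a\|_1$ and the fact that there are $2^{j}$ such $J$ at that scale the total is $\lesssim\sum_j2^{j}2^{-jm}\|a\|_1$; if instead $J\subseteq2I_0\setminus I_0$ (so $J\cap I_0=\emptyset$), then at each scale $2^{-j}|I_0|$, $j\ge2$, every such $J$ satisfies $\dist(I_0,J)\gtrsim|I_0|$ or $\dist(J,\mathbb{T}\setminus2I_0)\gtrsim|I_0|$, so either $|\langle\phi_J^1,a\rangle|\lesssim|J|^{-1/2}\|a\|_12^{-jm}$ (tail bound on $\varphi_J^1$) or $\int_{\mathbb{T}\setminus2I_0}|\phi_J^2|\lesssim|J|^{1/2}2^{-jm}$ (off-diagonal estimate), and with $\lesssim2^{j}$ such $J$ per scale the total is again $\lesssim\sum_j2^{j}2^{-jm}\|a\|_1$, the finitely many $J$ with $|J|=|I_0|/2$ being estimated crudely. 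Taking $m$ large enough both sums converge, and adding the three regimes gives $\|T_\epsilon a\|_{L^1(\mathbb{T}\setminus2I_0)}\lesssim\|a\|_1$.

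The main obstacle is this third regime: unlike the square function, $T_\epsilon$ does not send an atom to a function essentially supported near $I_0$, so one must quantify the leakage of $\phi_J^2$ out of $2I_0$ for $J$ deep inside $2I_0$ and sum it against coefficients $\langle\phi_J^1,a\rangle$ that can only be controlled crudely there. The point that makes it go through is that the off-diagonal gain $2^{-jm}$ per dyadic scale beats the $\sim2^{j}$ intervals per scale; the key technical input is the off-diagonal estimate for $\int_{\mathbb{T}\setminus K}|\phi_J^2|$.
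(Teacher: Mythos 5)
Your proof is correct, and it follows a genuinely different route from the paper. The paper proves Theorem~\ref{thm:Tweak} by first dualizing via Lemma~\ref{lemma:dual}, decomposing $\phi_I^2 = \sum_k 2^{-10k}\phi_I^{2,k}$ via Theorem~\ref{thm:decomposition2}, constructing an exceptional set $\Omega$ from level sets of $S^1 f$ and two iterated maximal functions, and then running a double stopping-time argument on the level sets of $S^1f$ and $S^{2,k}(\chi_{E'})$, interpolating between the weak-$L^1$ and $L^2$ bounds of the square function to close the sums. You instead run a classical Calder\'{o}n--Zygmund argument: the decomposition of Lemma~\ref{lemma:cz2}, the $L^2$ bound of Theorem~\ref{thm:TL2} for the good part, and for the bad part a mean-zero atomic estimate $\|T_\epsilon a\|_{L^1(\mathbb{T}\setminus 2I_0)}\lesssim\|a\|_1$ extending Lemma~\ref{lemma:atom}. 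You correctly identify and handle the one genuinely new difficulty in the atomic bound: unlike $Sa$, the output of $T_\epsilon$ involves $\phi_J^2$ rather than $\chi_J$, which leaks outside $2I_0$ even when $J$ sits inside $2I_0$; your off-diagonal estimate $\int_{\mathbb{T}\setminus K}|\phi_J^2|\lesssim_m |J|^{1/2}\big(1+\dist(J,\mathbb{T}\setminus K)/|J|\big)^{-m}$ together with the count of $O(2^j)$ intervals per scale beaten by a $2^{-jm}$ gain is exactly right. Both proofs are valid for the single-parameter $T_\epsilon$, and yours is shorter and more familiar. What the paper's choice buys is reuse: the dualization and stopping-time machinery set up here is essentially the whole engine of Theorems~\ref{thm:1para} and~\ref{thm:2para}, and, as the preface remarks, the Calder\'{o}n--Zygmund decomposition does not survive the passage to the multi-parameter setting, so your atomic argument cannot be recycled there.
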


\begin{proof} As $T_\epsilon$ is linear, it suffices to prove the result
for $\|f\|_1 = 1$.  Let $|E| > 0$.  By Lemma~\ref{lemma:dual}, we
will be done if we can find $E' \subseteq E$, $|E'| > |E|/2$ so that
$|\langle T_\epsilon f, \chi_{E'} \rangle| \lesssim 1$. By
Theorem~\ref{thm:decomposition2}, decompose each $\phi_I^2$ into

\begin{equation*}
\phi_I^2 = \sum_{k=1}^\infty 2^{-10k} \phi_I^{2,k}
\end{equation*}

\noindent where $\phi_I^{2,k}$ is the normalization of a 0-mean
adapted family $\varphi_I^{2,k}$, which are universally adapted to
$I$. Further, $\supp(\phi_I^{2,k}) \subseteq 2^kI$ for $k$ small
enough, while $\phi_I^{2,k}$ is identically 0 otherwise.  Now write

\begin{equation*}
\langle T_\epsilon f, \chi_{E'} \rangle = \sum_{k=1}^\infty 2^{-10k}
\sum_{I} \epsilon_I \langle \phi_I^1, f \rangle \langle
\phi_I^{2,k}, \chi_{E'} \rangle.
\end{equation*}

\noindent Hence, it suffices to show $|\sum \epsilon_I \langle
\phi_I^1, f \rangle \langle \phi_I^{2,k}, \chi_{E'} \rangle|
\lesssim 2^{3k}$, so long as the underlying constants are
independent of $k$.

Denote by $S^1$, $S^{2,k}$ the square functions associated to the
appropriate 0-mean adapted families. For each $k \in \mathbb{N}$,
define

\begin{gather*}
\Omega_{-3k} = \{S^1f > C2^{3k} \}, \\
\widetilde{\Omega}_k = \{M(\chi_{\Omega_{-3k}}) > 1/100\}, \\
\widetilde{\widetilde{\Omega}}_k = \{M(\chi_{\widetilde{\Omega}_k})
> 2^{-k-1} \}.
\end{gather*}

\noindent and

\begin{gather*}
\Omega = \bigcup_{k \in \mathbb{N}}
\widetilde{\widetilde{\Omega}}_k.
\end{gather*}

\noindent Observe,

\begin{equation*}
|\Omega| \leq \sum_{k=1}^\infty 2^{-3k} 2^{k+1}  \frac{100}{C}
\|M\|^2_{L^1 \rightarrow L^{1,\infty}} \|S^1\|_{L^1 \rightarrow
L^{1,\infty}}.
\end{equation*}

\noindent Therefore, we can choose $C$ independent of $f$ so that
$|\Omega| < |E|/2$. Set $E' = E - \Omega = E \cap \Omega^c$. Then,
$E' \subseteq E$ and $|E'| > |E|/2$.

Fix $k \in \mathbb{N}$.  Set $Z_k = \{S^1f = 0\} \cup \{S^{2,k}
(\chi_{E'}) = 0\}$. Let $\mathcal{D}$ be any finite collection of
dyadic intervals. We divide this collection into three
subcollections.  First, define $\mathcal{D}_1 = \{I \in \mathcal{D}
: I \cap Z_k \not= \emptyset \}$. For the remaining intervals, let
$\mathcal{D}_2 = \{I \in \mathcal{D} - \mathcal{D}_1 : I \subseteq
\widetilde{\Omega}_k\}$ and $\mathcal{D}_3 = \{I \in \mathcal{D} -
\mathcal{D}_1 : I \cap \widetilde{\Omega}_k^c \not= \emptyset\}$.

If $I \in \mathcal{D}_1$, then there is some $x \in I \cap Z_k$.
Namely, either $S^1f(x) = 0$ or $S^{2,k}(\chi_{E'})(x) = 0$.  If it
is the first, then from the definition of $S^1f$, it must be that
$\langle \phi^1_J, f \rangle = 0$ for all dyadic $J$ containing $x$.
In particular, $\langle \phi^1_I, f \rangle = 0$.  If instead
$S^{2,k}(\chi_{E'})(x) = 0$, then $\langle \phi^{2,k}_I, \chi_{E'}
\rangle = 0$. As this holds for all $I \in \mathcal{D}_1$, we have

\begin{equation*}
\sum_{I \in \mathcal{D}_1} |\langle \phi_I^1, f \rangle| |\langle
\phi_I^{2,k}, \chi_{E'} \rangle| = 0.
\end{equation*}

Now suppose $I \in \mathcal{D}_2$, namely $I \subseteq
\widetilde{\Omega}_k$.  If $k$ is big enough so that $2^k > 1/|I|$,
then $\phi_I^{2,k}$ is identically 0 and $\langle \phi_I^{2,k},
\chi_{E'} \rangle = 0$. If $2^k \leq 1/|I|$, then $\phi_I^{2,k}$ is
supported in $2^k I$. Let $x \in 2^k I$, and observe

\begin{equation*}
M(\chi_{\widetilde{\Omega}_k})(x) \geq \frac{1}{|2^kI|} \int_{2^kI}
\chi_{\widetilde{\Omega}_k} \,\, dm \geq \frac{1}{2^k} \frac{1}{|I|}
\int_I \chi_{\widetilde{\Omega}_k} \,\, dm = 2^{-k} > 2^{-k-1}.
\end{equation*}

\noindent That is, $2^kI \subseteq \widetilde{\widetilde{\Omega}}_k
\subseteq \Omega$, a set disjoint from $E'$.  Thus, $\langle
\phi_I^{2,k}, \chi_{E'} \rangle = 0$.  As this holds for all $I
\in \mathcal{D}_2$, we have

\begin{equation*}
\sum_{I \in \mathcal{D}_2} |\langle \phi_I^1, f \rangle| |\langle
\phi_I^{2,k}, \chi_{E'} \rangle| = 0.
\end{equation*}

Finally, we concentrate on $\mathcal{D}_3$.  Define $\Omega_{-3k+1}$
and $\Pi_{-3k+1}$ by

\begin{gather*}
\Omega_{-3k+1} = \{S^1f > C2^{3k-1}\}, \\
\Pi_{-3k+1} = \{I \in \mathcal{D}_3 : |I \cap \Omega_{-3k+1}| >
|I|/100\}.
\end{gather*}

\noindent Inductively, define for all $n > -3k+1$,

\begin{gather*}
\Omega_{n} = \{S^1f > C2^{-n}\}, \\
\Pi_{n} = \{I \in \mathcal{D}_3 - \bigcup_{j=-3k+1}^{n-1} \Pi_{j} :
|I \cap \Omega_{n}|
> |I|/100\}.
\end{gather*}

\noindent As every $I \in \mathcal{D}_3$ is not in $\mathcal{D}_1$,
that is $S^1f > 0$ on $I$, it is clear that each $I \in
\mathcal{D}_3$ will be in one of these collections.

We can choose an integer $N$ big enough so that $\Omega_{-N}' =
\{S^{2,k} (\chi_{E'}) > 2^N\}$ has very small measure.  In
particular, we take $N$ big enough so that $|I \cap \Omega_{-N}'| <
|I|/100$ for all $I \in \mathcal{D}_3$, which is possible since
$\mathcal{D}_3$ is a finite collection. Define

\begin{gather*}
\Omega_{-N+1}' = \{S^{2,k}(\chi_{E'}) > 2^{N-1}\}, \\
\Pi_{-N+1}' = \{I \in \mathcal{D}_3 : |I \cap \Omega_{-N+1}'| >
|I|/100\},
\end{gather*}

\noindent and

\begin{gather*}
\Omega_{n}' = \{S^{2,k}(\chi_{E'}) > 2^{-n}\}, \\
\Pi_{n}' = \{I \in \mathcal{D}_3 - \bigcup_{j=-N+1}^{n-1} \Pi_j' :
|I \cap \Omega_{n}'|
> |I|/100\},
\end{gather*}

\noindent Again, all $I \in \mathcal{D}_3$ must be in one of these
collections.

Consider $I \in \mathcal{D}_3$, so that $I \cap
\widetilde{\Omega}_k^c \not= \emptyset$. Then, there is some $x \in
I \cap \widetilde{\Omega}_k^c$ which implies $|I \cap
\Omega_{-3k}|/|I| \leq M(\chi_{\Omega_{-3k}})(x) \leq 1/100$. Write
$\Pi_{n_1, n_2} = \Pi_{n_1} \cap \Pi'_{n_2}$. So,

\begin{equation*}
\begin{split}
\sum_{I \in \mathcal{D}_3} |\langle \phi_I^1, f \rangle| |\langle
\phi_I^{2,k}, \chi_{E'} \rangle| &= \sum_{n_1
> -3k, \, n_2 > -N} \bigg[ \sum_{I \in \Pi_{n_1, n_2}} |\langle \phi_I^1,
f \rangle| |\langle \phi_I^{2,k}, \chi_{E'}\rangle| \bigg]\\
&= \sum_{n_1 > -3k, \, n_2 > -N} \bigg[ \sum_{I \in \Pi_{n_1, n_2}}
\frac{|\langle \phi_I^1, f \rangle|}{|I|^{1/2}} \frac{|\langle
\phi_I^{2,k}, \chi_{E'} \rangle|}{|I|^{1/2}} |I| \bigg].
\end{split}
\end{equation*}

\noindent Suppose $I \in \Pi_{n_1, n_2}$.  If $n_1 > -3k + 1$,
then $I \in \Pi_{n_1}$, which in particular says $I \notin \Pi_{n_1
- 1}$. So, $|I \cap \Omega_{n_1 - 1}| \leq |I|/100$.  If $n_1 = -3k
+ 1$, then we still have $|I \cap \Omega_{-3k}| \leq |I|/100$, as $I
\in \mathcal{D}_3$. Similarly, if $n_2 > -N+1$, then $I \notin
\Pi'_{n_2 - 1}$ and $|I \cap \Omega'_{n_2 - 1}| \leq |I|/100$.  If
$n_2 = -N+1$, then $|I \cap \Omega'_{-N}| \leq |I|/100$ by the
choice of $N$.  So, $|I \cap \Omega_{n_1 - 1}^c \cap \Omega_{n_2 -
1}'^c| \geq \frac{98}{100} |I|$.  Let $\Omega_{n_1, n_2} = \bigcup\{
I : I \in \Pi_{n_1, n_2}\}$.  Then, $|I \cap \Omega_{n_1 - 1}^c \cap
\Omega'^c_{n_2-1} \cap \Omega_{n_1, n_2}| \geq \frac{98}{100} |I|$
for all $I \in \Pi_{n_1, n_2}$, and

\begin{equation*}
\begin{split}
\sum_{I \in \Pi_{n_1, n_2}} \frac{|\langle \phi_I^1, f
\rangle|}{|I|^{1/2}} &\frac{|\langle \phi_I^{2,k}, \chi_{E'}
\rangle|}{|I|^{1/2}} |I| \\
&\lesssim \sum_{I \in \Pi_{n_1, n_2}}
\frac{|\langle \phi_I^1, f \rangle|}{|I|^{1/2}} \frac{|\langle
\phi_I^{2,k}, \chi_{E'} \rangle|}{|I|^{1/2}} |I \cap \Omega_{n_1 -
1}^c
\cap \Omega_{n_2 - 1}'^c \cap \Omega_{n_1, n_2}| \\
&= \int_{\Omega_{n_1 - 1}^c \cap \Omega_{n_2 - 1}'^c \cap
\Omega_{n_1, n_2}} \,\, \sum_{I \in \Pi_{n_1, n_2}} \frac{|\langle
\phi_I^1, f \rangle|}{|I|^{1/2}} \frac{|\langle \phi_I^{2,k},
\chi_{E'} \rangle|}{|I|^{1/2}} \chi_I(x) \,
dx \\
&\leq \int_{\Omega_{n_1 - 1}^c \cap \Omega_{n_2 - 1}'^c \cap
\Omega_{n_1, n_2}} S^1f(x)
S^{2,k} (\chi_{E'})(x) \, dx \\
&\lesssim C 2^{-n_1} 2^{-n_2} |\Omega_{n_1, n_2}|.
\end{split}
\end{equation*}

Observe that $|\Omega_{n_1, n_2}| \leq |\bigcup \{I : I \in
\Pi_{n_1}\}| \leq |\{M(\chi_{\Omega_{n_1}}) > 1/100\}| \lesssim
|\Omega_{n_1}| = |\{S^1f > C2^{-n_1}\}| \lesssim 2^{n_1}/C$.  By the
same argument, $|\Omega_{n_1, n_2}| \lesssim |\Omega_{n_2}'| =
|\{S^{2,k}(\chi_{E'}) > 2^{-n_2}\}| \lesssim 2^{\alpha n_2}$ for
$\alpha = 1, 2$, as $S : L^p \rightarrow L^{p,\infty}$ for $p = 1,
2$.  Thus, $|\Omega_{n_1, n_2}| \lesssim C^{-1} 2^{\theta_1 n_1}
2^{\theta_2 \alpha n_2}$ for any $\theta_1 + \theta_2 = 1$, $0 \leq
\theta_1, \theta_2 \leq 1$.  Hence,

\begin{equation*}
\begin{split}
\sum_{I \in \mathcal{D}_3} |\langle \phi_I^1, f \rangle| |\langle
\phi_I^{2,k}, \chi_{E'} \rangle| &\lesssim \sum_{n_1 > -3k, \,\, n_2
> 0} 2^{n_1(\theta_1 - 1)} 2^{n_2(\theta_2 \alpha -1)} \quad + \\
&\,\,\, \sum_{n_1 > -3k, \,\, -N < n_2 \leq 0}
2^{n_1(\theta_1 - 1)} 2^{n_2(\theta_2 \alpha -1)} \\
&= A + B.
\end{split}
\end{equation*}

\noindent For the first term, take $\alpha = 1$, $\theta_1 =
\theta_2 = 1/2$, and for the second term, take $\alpha = 2$,
$\theta_1 = 1/4$, and $\theta_2 = 3/4$ to see

\begin{gather*}
A = \sum_{n_1 > -3k, \,\, n_2 > 0} 2^{-n_1/2} 2^{-n_2/2}
\lesssim 2^{3k/2} \leq 2^{3k}, \\
B = \sum_{n_1 > -3k, \,\, -N < n_2 \leq 0} 2^{-3n_1/4} 2^{n_2/2}
\leq \sum_{n_1 = -3k}^\infty \sum_{n_2 \leq 0} 2^{-3n_1/4} 2^{n_2/2}
\lesssim 2^{9k/4} \leq 2^{3k}.
\end{gather*}

\noindent The important thing to notice is that there is no
dependence on the number $N$, which depends on $\mathcal{D}$, or
$C$, which depends on $E$.

Combining the estimates for $\mathcal{D}_1$, $\mathcal{D}_2$, and
$\mathcal{D}_3$, we see

\begin{equation*}
\sum_{I \in \mathcal{D}} |\langle \phi_I^1, f \rangle| |\langle
\phi_I^{2,k}, \chi_{E'} \rangle| \lesssim 2^{3k},
\end{equation*}

\noindent where the constant has no dependence on the collection
$\mathcal{D}$.  Hence, as $\mathcal{D}$ is arbitrary, we have

\begin{equation*}
\Big| \sum_I \epsilon_I \langle \phi_I^1, f \rangle \langle
\phi_I^{2,k}, \chi_{E'} \rangle \Big| \leq \sum_I |\langle \phi_I^1,
f \rangle| |\langle \phi_I^{2,k}, \chi_{E'} \rangle| \lesssim
2^{3k},
\end{equation*}

\noindent which completes the proof.
\end{proof}

\begin{thm}\label{thm:TLp} For any 0-mean adapted families, $T_\epsilon
: L^p \rightarrow L^p$ for $1 < p < \infty$, where the underlying
constants are independent of the sequence $\epsilon$.
\end{thm}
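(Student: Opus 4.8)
The plan is to obtain the range $1<p<2$ from interpolation, the range $2<p<\infty$ from duality, and to note that the case $p=2$ is already Theorem~\ref{thm:TL2}.

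For $1<p<2$ I would simply interpolate. The operator $T_\epsilon$ is linear, hence sublinear, and Theorems~\ref{thm:Tweak} and~\ref{thm:TL2} provide $T_\epsilon:L^1\to L^{1,\infty}$ and $T_\epsilon:L^2\to L^2$ (the latter sitting inside $L^{2,\infty}$) with operator norms bounded by a constant that does not depend on the sequence $\epsilon$. Applying Theorem~\ref{thm:biginterpolation} with the Banach space $B$ taken to be $\mathbb{C}$, $p_0=1$ and $p_1=2$, yields $T_\epsilon:L^p\to L^p$ for every $1<p<2$, with a bound depending only on $p$ and those two operator norms, and therefore independent of $\epsilon$.

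For $2<p<\infty$ let $q$ be the conjugate exponent, so $1<q<2$. Unwinding the definition of $T_\epsilon$ exactly as in the proof of Theorem~\ref{thm:TL2}, for $f\in L^p$ and $g\in L^q$ one has $\langle T_\epsilon f,g\rangle=\sum_I\epsilon_I\langle\phi_I^1,f\rangle\langle\phi_I^2,g\rangle$, and this pairing is realized as $\langle f,T_\epsilon^{\ast}g\rangle$, where the adjoint $T_\epsilon^{\ast}$ is again an operator of the same form: it is built from the same two $0$-mean adapted families with their roles interchanged --- conjugating an adapted family, which may be forced by the inner-product convention, preserves both the adapted estimates (they are stated for $|\varphi_I|$ and $|\varphi_I'|$) and the zero-mean condition --- against a coefficient sequence still bounded by $1$. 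Hence Theorems~\ref{thm:TL2} and~\ref{thm:Tweak} apply to $T_\epsilon^{\ast}$, and the case $1<q<2$ just established (applied to $T_\epsilon^{\ast}$ in place of $T_\epsilon$) gives $T_\epsilon^{\ast}:L^q\to L^q$ with norm independent of $\epsilon$. Therefore $|\langle T_\epsilon f,g\rangle|=|\langle f,T_\epsilon^{\ast}g\rangle|\le\|f\|_p\,\|T_\epsilon^{\ast}g\|_q\lesssim\|f\|_p\,\|g\|_q$, and taking the supremum over $g$ in the unit ball of $L^q$, via $(L^p)^{\ast}=L^q$, gives $\|T_\epsilon f\|_p\lesssim\|f\|_p$ with constant independent of $\epsilon$.

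The only step requiring genuine care is the identification of $T_\epsilon^{\ast}$ as an admissible operator of the same type, i.e.\ checking that swapping the two families and conjugating the coefficients (and, if needed, the families themselves) lands back inside the class to which Theorems~\ref{thm:TL2} and~\ref{thm:Tweak} apply, together with the attendant bookkeeping of complex conjugates in the pairing $\langle\cdot,\cdot\rangle$. Everything else is a routine application of Marcinkiewicz-type interpolation and $L^p$--$L^{p'}$ duality.
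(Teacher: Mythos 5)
Your proposal is correct and follows essentially the same route as the paper: Marcinkiewicz interpolation between the $L^1\to L^{1,\infty}$ and $L^2\to L^2$ bounds for $1<p\leq 2$, then duality for $2<p<\infty$ via the operator $T_\epsilon^\ast f=\sum_I\epsilon_I\langle\phi_I^2,f\rangle\phi_I^1$, which has the same form with the two families interchanged. The only cosmetic difference is that you flag the complex-conjugation bookkeeping, where the paper simply writes $\langle T_\epsilon f,g\rangle=\langle T_\epsilon^\ast g,f\rangle$ (a ``dual'' rather than a Hilbert-space adjoint, so in fact no conjugation is needed); in any case your remark that the adapted and zero-mean conditions are conjugation-invariant correctly settles the matter.
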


\begin{proof} Fix a sequence $\epsilon_I$, and let $\varphi_I^1$,
$\varphi_I^2$ be any two 0-mean adapted families.  By
Theorems~\ref{thm:TL2} and~\ref{thm:Tweak}, $T_\epsilon : L^2
\rightarrow L^2$ and $T_\epsilon : L^1 \rightarrow L^{1,\infty}$. By
the Marcinkiewicz interpolation theorem, $T_\epsilon : L^p
\rightarrow L^p$ for all $1 < p \leq 2$. By symmetry, the operator
$T_\epsilon^* f = \sum \langle \phi_I^2, f \rangle \phi_I^1$
satisfies the same properties.

Fix $f \in L^p$ with $2 < p < \infty$.  Let $\|g\|_{p'} \leq 1$,
where $1/p + 1/p' = 1$ and $1 < p' < 2$.  Then,
\begin{equation*}
\begin{split}
|\langle T_\epsilon f, g \rangle| &= |\langle T_\epsilon^* g, f
\rangle| \leq \|T_\epsilon^* g \|_{p'} \|f\|_p \lesssim \|g\|_{p'}
\|f\|_p \leq \|f\|_p.
\end{split}
\end{equation*}

\noindent As $g$ in the unit ball of $L^{p'}$ is arbitrary, we see
$\|T_\epsilon f\|_p \lesssim \|f\|_p$.
\end{proof}

\section{The $L^p$ Estimates}

The main tool is this section is a randomization argument using
Khinchtine's inequality. Given a probability space $(\Omega, P)$, we
say a random variable $r : \Omega \rightarrow \mathbb{C}$ is a
Rademacher function if $P(r = 1) = P(r = -1) = 1/2$.  For more
background information on probability spaces and independence,
see~\cite{billingsley}.

\begin{lemma}\label{lemma:rad} Let $r_1, \ldots, r_N$ be an independent
sequence of Rademacher functions on $(\Omega, P)$.  For any $t
> 0$ and any $(a_1, \ldots, a_N) \in \mathbb{C}$ such that
$\sum_{j=1}^N |a_j|^2 \leq 1$,

\begin{equation*}
P\bigg\{ \Big| \sum_{j=1}^N a_j r_j \Big| > t \bigg\} \leq
4e^{-t^2/4}.
\end{equation*}
\end{lemma}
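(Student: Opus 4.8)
The plan is the classical exponential-moment (Chernoff) argument, after reducing to real coefficients. Write $a_j = b_j + ic_j$ with $b_j, c_j \in \mathbb{R}$; since $|a_j|^2 = b_j^2 + c_j^2$, the hypothesis gives $\sum_{j=1}^N b_j^2 \le 1$ and $\sum_{j=1}^N c_j^2 \le 1$. Because each $r_j$ is real-valued, $\re\sum_j a_j r_j = \sum_j b_j r_j$ and $\im\sum_j a_j r_j = \sum_j c_j r_j$, and from $|z|^2 = (\re z)^2 + (\im z)^2$ one sees
\[
\Big\{\,\Big|\sum_{j=1}^N a_j r_j\Big| > t\,\Big\} \subseteq \Big\{\,\Big|\sum_{j=1}^N b_j r_j\Big| > t/\sqrt2\,\Big\} \cup \Big\{\,\Big|\sum_{j=1}^N c_j r_j\Big| > t/\sqrt2\,\Big\}.
\]
So it is enough to prove: for real $b_1,\dots,b_N$ with $\sum_j b_j^2 \le 1$ and every $s>0$, $P\{|\sum_j b_j r_j| > s\} \le 2e^{-s^2/2}$; applying this with $s = t/\sqrt2$ to each of the two sets above and summing the probabilities yields exactly $4e^{-t^2/4}$.

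For the real estimate I would first bound the moment generating function of $X := \sum_j b_j r_j$, which is finite since $X$ is bounded. Using $E[e^{\lambda r_j}] = \cosh\lambda = \sum_{n\ge0} \lambda^{2n}/(2n)! \le \sum_{n\ge0} \lambda^{2n}/(2^n n!) = e^{\lambda^2/2}$, where the term-by-term comparison rests on the elementary fact $(2n)! \ge 2^n n!$, together with independence of the $r_j$, I obtain
\[
E\big[e^{\lambda X}\big] = \prod_{j=1}^N \cosh(\lambda b_j) \le \prod_{j=1}^N e^{\lambda^2 b_j^2/2} = e^{\lambda^2 (\sum_j b_j^2)/2} \le e^{\lambda^2/2}
\]
for every $\lambda \in \mathbb{R}$. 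Markov's inequality then gives, for any $\lambda > 0$, $P\{X > s\} \le e^{-\lambda s} E[e^{\lambda X}] \le e^{\lambda^2/2 - \lambda s}$; optimizing with $\lambda = s$ produces $P\{X > s\} \le e^{-s^2/2}$. Applying the same bound to $-b_1,\dots,-b_N$ (which satisfy the same hypothesis) controls $P\{X < -s\}$ identically, so $P\{|X| > s\} \le 2e^{-s^2/2}$, and the reduction above finishes the proof.

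I do not anticipate any genuine obstacle; the only points needing a little care are the elementary inequality $\cosh\lambda \le e^{\lambda^2/2}$ (equivalently $(2n)!\ge 2^n n!$, immediate by induction) and the bookkeeping of the factor $\sqrt2$ lost in splitting the complex sum into its real and imaginary parts — that loss is precisely what turns the exponent $s^2/2 = t^2/2$ into the stated $t^2/4$. Degenerate cases are harmless: vanishing $b_j$ simply drop out of the products and sums, and if all $b_j = 0$ then $X \equiv 0$ and the event $\{|X| > t/\sqrt2\}$ is empty.
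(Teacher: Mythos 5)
Your proof is correct and follows essentially the same exponential-moment (Chernoff) argument as the paper: bound $E[e^{\lambda \sum b_j r_j}]$ using $\cosh x \le e^{x^2/2}$ and independence, derive $P\{|\sum b_j r_j| > s\} \le 2e^{-s^2/2}$ for real coefficients, then split into real and imaginary parts losing a factor of $\sqrt{2}$ to get the stated $4e^{-t^2/4}$. The only cosmetic difference is that you phrase the tail estimate via Markov's inequality with a free parameter $\lambda$ and then optimize, whereas the paper plugs in the (optimal) choice $\lambda = t$ directly; the resulting bound is identical.
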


\begin{proof} First, suppose the $a_j$ are real.  Write $S_N(\omega)
= \sum_{j=1}^N a_j r_j(\omega)$.  We will use the notation
$E(\cdot)$ for expectation; that is, $E(f) = \int_\Omega f \, dP$.
Recall, if $f$ and $g$ are independent, $E(fg) = E(f) E(g)$.  So,

\begin{equation*}
E(e^{t S_N}) = E \Big( \prod_{j=1}^N e^{t a_j r_j} \Big) =
\prod_{j=1}^N E(e^{t a_j r_j}) = \prod_{j=1}^N \frac{e^{t a_j} +
e^{-t a_j}}{2} = \prod_{j=1}^N \cosh(t a_j).
\end{equation*}

\noindent Observe $\cosh(x) \leq e^{x^2/2}$ for all real $x$. So,
$E(e^{tS_n}) \leq \prod e^{t^2 a_j^2/2} \leq e^{t^2/2}$.  On the other
hand,

\begin{equation*}
E(e^{tS_N}) \geq \int_{\{S_N > t\}} e^{t S_N(\omega)} \, P(d\omega)
\geq e^{t^2} P\{S_N > t\},
\end{equation*}

\noindent which implies $P\{S_N > t\} \leq e^{-t^2} E(e^{t S_N})
\leq e^{-t^2/2}$.

Alternatively, $\{S_N < -t\} = \{-S_N > t\}$, where $-S_N = \sum a_j
(-r_j)$.  As $-r_j$ is also an independent Rademacher sequence, the
same applies to $-S_N$.  In particular, $P\{-S_N > t\} \leq
e^{-t^2/2}$, which gives $P\{|S_N| > t\} \leq P\{S_N > t\} + P\{S_N
< -t\} \leq 2e^{-t^2/2}$.

Now allow $a_j$ to be complex with $\sum |a_j|^2 \leq 1$, so that
$\sum |\re a_j|^2, \sum |\im a_j|^2 \leq 1$.  Let
$S_N$ be as before, with $S_N' = \sum \re(a_j) r_j$ and $S_N'' =
\sum \im(a_j) r_j$.  The above argument works with $S_N'$ and $S_N''$,
and therefore $P\{|S_N| > t\} \leq P\{|S_N'| > t/\sqrt{2}\} + P\{|S_N''| >
 t/\sqrt{2}\} \leq 4e^{-t^2/4}$.
\end{proof}

\begin{thm}\label{thm:Khinchine} For each $0 < p < \infty$, any
sequence of complex numbers  $\{a_j\}_{j \in \mathbb{N}}$ in
$\ell^2$, and any independent sequence of Rademacher functions
$\{r_j\}$ on $\Omega$, we have

\begin{equation*}
\bigg(\sum_{j=1}^\infty |a_j|^2 \bigg)^{1/2} \sim \bigg\|
\sum_{j=1}^\infty a_j r_j \bigg\|_{L^p(\Omega)},
\end{equation*}

\noindent where the underlying constants depend only on $p$.
\end{thm}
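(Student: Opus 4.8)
The plan is to first reduce to finite sums: for $N \in \mathbb{N}$ set $S_N = \sum_{j=1}^N a_j r_j$, and prove both halves of the equivalence for $S_N$ with constants depending only on $p$ and not on $N$. Since independence and $E(r_j r_k) = \delta_{jk}$ give $\|S_N - S_M\|_{L^2}^2 = \sum_{M < j \le N} |a_j|^2$, the partial sums are Cauchy in $L^2$; combining this with the (not-yet-proved) upper estimate applied to the tail $S_N - S_M$ shows they are also Cauchy in $L^p$, so $S_N \to \sum_j a_j r_j$ in $L^p$ and, since this limit agrees with the $L^2$-limit, the two inequalities pass to the limit together with $\bigl(\sum_{j=1}^N |a_j|^2\bigr)^{1/2} \to \bigl(\sum_{j=1}^\infty |a_j|^2\bigr)^{1/2}$. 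By homogeneity I may then also assume $\sum_{j=1}^N |a_j|^2 = 1$. The basic identity is $\|S_N\|_{L^2}^2 = \sum_{j,k} a_j \overline{a_k}\, E(r_j r_k) = \sum_{j=1}^N |a_j|^2 = 1$.

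For the upper bound $\|S_N\|_{L^p} \lesssim 1$, valid for every $0 < p < \infty$, I would use the exponential tail estimate of Lemma~\ref{lemma:rad}, namely $P\{|S_N| > t\} \le \min(1, 4e^{-t^2/4})$, together with the layer-cake formula $\|S_N\|_{L^p}^p = p \int_0^\infty t^{p-1} P\{|S_N| > t\}\,dt$. Splitting the integral at the value of $t$ where $4e^{-t^2/4} = 1$, bounding the probability by $1$ below it and by $4e^{-t^2/4}$ above it, one gets $\|S_N\|_{L^p}^p \le C_p < \infty$ for a constant depending only on $p$, the tail integral converging because of the Gaussian factor. This is one direction of the claim, and it is also the estimate used above to upgrade $L^2$-Cauchyness to $L^p$-Cauchyness.

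For the lower bound there are two regimes. When $p \ge 2$, since $(\Omega,P)$ is a probability space the $L^p$ norm dominates the $L^2$ norm, so $\|S_N\|_{L^p} \ge \|S_N\|_{L^2} = 1$. When $0 < p < 2$, monotonicity goes the wrong way, so I would interpolate instead: fix any exponent $q > 2$ (say $q = 4$) and choose $\theta \in (0,1)$ with $\tfrac12 = \tfrac{1-\theta}{p} + \tfrac{\theta}{q}$, so that log-convexity of $L^r$ norms gives $\|S_N\|_{L^2} \le \|S_N\|_{L^p}^{1-\theta}\,\|S_N\|_{L^q}^{\theta}$. Inserting $\|S_N\|_{L^2} = 1$ on the left and the already-proved upper bound $\|S_N\|_{L^q} \le C_q$ on the right and rearranging yields $\|S_N\|_{L^p} \ge C_q^{-\theta/(1-\theta)}$, a positive constant depending only on $p$. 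Together with the upper bound and the passage to the limit, this gives the equivalence.

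The step I expect to be the main obstacle is the lower bound for small $p$: the naive comparison of norms fails, and one is forced into the ``$L^2$ sandwiched between $L^p$ and a higher $L^q$'' argument, which is precisely where the full strength of the Gaussian tail bound is needed (it supplies the upper estimate at an exponent $q > 2$). A secondary technical point deserving care is the passage from finite to infinite sums — ensuring the series converges in $L^p$ and that its limit coincides with the $L^2$-limit — but this is handled uniformly by the tail Cauchy estimate noted above.
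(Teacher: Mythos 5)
Your proof is correct and rests on the same engine as the paper's: the Gaussian tail bound from Lemma~\ref{lemma:rad} gives the upper estimate $\|S_N\|_{L^p} \le K_p$ uniformly in $N$, and this is combined with $\|S_N\|_{L^2}=1$ and an interpolation step to recover the lower bound for small $p$. The implementations differ in two places, both of which are sound and arguably a little cleaner than the paper's. First, for the lower bound the paper splits at $p=1$, using H\"older duality ($1 \le \|S_N\|_p\|S_N\|_{p'}$) when $1<p<\infty$ and a Cauchy--Schwarz estimate against the varying exponent $4-p$ when $0<p\le 1$, whereas you split at $p=2$: the case $p\ge 2$ is trivial by monotonicity of $L^r$ norms on a probability space, and the whole range $0<p<2$ is handled in one stroke by log-convexity interpolated against the fixed exponent $q=4$. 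Second, for the passage from finite to infinite sums the paper uses Fatou for the upper bound and then, for the lower bound, Minkowski when $p\ge 1$ followed by a separate bootstrapping interpolation for $0<p<1$; you instead observe that the uniform upper estimate applied to $S_N-S_M$ gives Cauchyness of the partial sums in $L^p$ for every $p>0$ (in the metric $\|\cdot\|_p^{\min(p,1)}$ when $p<1$), identify the $L^p$-limit with the $L^2$-limit by uniqueness of limits in measure, and pass both inequalities to the limit in one step. This sidesteps the paper's extra case analysis at $p<1$ in the limiting argument.
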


\begin{proof} Fix $N \in \mathbb{N}$.  Write $\sigma^2 = \sum_{j=1}^N
|a_j|^2$ and define $b_j = a_j/\sigma$, so that $\sum_{j=1}^N
|b_j|^2 = 1$.  Let $S_N = \sum_{j=1}^N b_j r_j$.  Then, using the
previous lemma,

\begin{equation*}
\int_\Omega |S_N(\omega)|^p \, P(d\omega) = \int_0^\infty p t^{p-1}
P\{|S_N| > t\} \, dt \leq 4p \int_0^\infty t^{p-1} e^{-t^2/4} \, dt
=: K_p^p,
\end{equation*}

\noindent where $K_p < \infty$ for all $0 < p < \infty$.

Suppose $1 < p < \infty$.  Note, by independence, $E(r_j r_k) =
E(r_j) E(r_k) = 0$ for $j \not= k$ and $E(r_j r_j) = 1$.  So,

\begin{equation*}
\int_\Omega |S_N|^2 \, dP = \int_\Omega S_N \overline{S_N} \, dP =
\sum_{1 \leq j, k \leq N} b_j \overline{b_k} \int_\Omega r_j r_k \,
dP = \sum_{j=1}^N |b_j|^2 = 1.
\end{equation*}

\noindent But, by above, $\|S_N\|_{p'} \leq K_{p'}$. By
H\"{o}lder, $1 \leq \|S_N\|_p \|S_N\|_{p'} \leq \|S_N\|_p K_{p'}$,
or $K_{p'}^{-1} \leq \|S_N\|_p$.  Now suppose $0 < p \leq 1$.  Then,
$1 = \int_\Omega |S_N|^2 \, dP \leq \| |S_N|^{p/2}\|_2 \||S_N|^{2 -
p/2}\|_2 = \|S_N\|_p^{2/p} \|S_N\|_{4-p}^{2/(4-p)}$. Note,
$\|S_N\|_{4-p}^{2/(4-p)} \leq K_{4-p}^{2/(4-p)}$.  Therefore,
$K^{p/(p-4)}_{4-p} \leq \|S_N\|_p$.  Let $K_p' = K_{p'}^{-1}$ for $p
> 1$ and $K_p' = K_{4-p}^{p/(4-p)}$ for $p \leq 1$.  Then, we have
shown

\begin{equation*}
K_p' \bigg(\sum_{j=1}^N |a_j|^2 \bigg)^{1/2} \leq \bigg\|
\sum_{j=1}^N a_j r_j \bigg\|_{L^p(\Omega)} \leq K_p
\bigg(\sum_{j=1}^N |a_j|^2 \bigg)^{1/2}.
\end{equation*}

\noindent for all $0 < p < \infty$.  To finish, we note that by
Fatou's Lemma

\begin{equation*}
\int_\Omega \Big| \sum_{j=1}^\infty a_j r_j \, dP \Big|^p \leq
\liminf_{N \rightarrow \infty} \int_\Omega \Big| \sum_{j=1}^N a_j
r_j \, dP \Big|^p \leq K_p \bigg(\sum_{j=1}^\infty |a_j|^2
\bigg)^{p/2}.
\end{equation*}

Fix $1 \leq p < \infty$.  Then, by Minkowski,

\begin{equation*}
\bigg\| \sum_{j=1}^N a_j r_j \bigg\|_{L^p(\Omega)} - \bigg\|
\sum_{j=1}^\infty a_j r_j \bigg\|_{L^p(\Omega)} \leq \bigg\| \sum_{j
= N+1}^\infty a_j r_j \bigg\|_{L^p(\Omega)} \leq K_p \bigg(
\sum_{j=N+1}^\infty |a_j|^2 \bigg)^{1/2},
\end{equation*}

\noindent the last term tending to 0 as $N \rightarrow \infty$,
because $(a_j)$ is in $\ell^2$. Thus,

\begin{equation*}
\bigg\| \sum_{j=1}^\infty a_j r_j \bigg\|_{L^p(\Omega)} \geq
\limsup_{N \rightarrow \infty} \bigg\| \sum_{j=1}^N a_j r_j
\bigg\|_{L^p(\Omega)} \geq K_p' \bigg( \sum_{j=1}^\infty |a_j|^2
\bigg)^{1/2}.
\end{equation*}

Finally, let $0 < p < 1$.  Set $t = (2-2p)/(2-p)$ so that $0 < t <
1$ and $1 = (1-t)/p + t/2$.  Let $F = \sum_{j=1}^\infty a_j r_j$.
Then,

\begin{equation*}
\begin{split}
\|F\|_{L^1(\Omega)} &= \||F|^{1-t} |F|^t\|_{L^1(\Omega)} \leq
\||F|^{1-t}\|_{L^{p/(1-t)}(\Omega)} \||F|^t\|_{L^{2/t}(\Omega)}
= \|F\|_{L^p(\Omega)}^{1-t} \|F\|_{L^2(\Omega)}^t \\
&\leq \|F\|_{L^p(\Omega)}^{1-t} (K_2 \|a\|_{\ell^2})^t \leq
\|F\|_{L^p(\Omega)}^{1-t} (K_2 K_1'^{-1} \|F\|_{L^1(\Omega)})^t
\end{split}
\end{equation*}

\noindent which implies $K_1' \|a\|_{\ell^2} \leq
\|F\|_{L^1(\Omega)} \leq (K_2/K_1')^{t/(1-t)} \|F\|_{L^p(\Omega)}$,
completing the proof.
\end{proof}

\begin{thm}\label{thm:SLp} For any 0-mean adapted family, $S :
L^p \rightarrow L^p$ for $1 < p < \infty$.  \end{thm}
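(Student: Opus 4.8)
The plan is to derive the $L^p$ bounds for $S$ from the $L^p$ bounds for the linearized operators $T_\epsilon$ by a randomization argument based on Khinchtine's inequality and the special adapted family produced in Theorem~\ref{thm:specialadapted}. Although the range $1 < p \leq 2$ already follows at once by interpolating Theorems~\ref{thm:Sweak} and~\ref{thm:SL2} with the (sublinear) Marcinkiewicz interpolation theorem, the randomization argument will handle all $1 < p < \infty$ uniformly, so I would present it in that generality.

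Let $\{\varphi_I\}$ be the given $0$-mean adapted family with normalization $\phi_I$, and let $\{\widetilde{\varphi}_I\}$ be the $0$-mean adapted family of Theorem~\ref{thm:specialadapted}, so that $|\widetilde{\varphi}_I| \geq a\chi_I$ for a fixed $a > 0$. Writing $\widetilde{\phi}_I = |I|^{-1/2}\widetilde{\varphi}_I$ for its normalization, this gives the pointwise bound $|I|^{-1}\chi_I(x) \leq a^{-2}|\widetilde{\phi}_I(x)|^2$. To sidestep convergence issues I would first fix $N$ and work with the truncation $S_N f(x) = \big(\sum_{|I| \geq 2^{-N}} |I|^{-1}|\langle \phi_I, f\rangle|^2 \chi_I(x)\big)^{1/2}$, which satisfies
\begin{equation*}
S_N f(x)^2 \leq \frac{1}{a^2} \sum_{|I| \geq 2^{-N}} |\langle \phi_I, f\rangle|^2\,|\widetilde{\phi}_I(x)|^2 .
\end{equation*}
Next I would apply Khinchtine's inequality (Theorem~\ref{thm:Khinchine}) pointwise in $x$, with the finite coefficient sequence $a_I = \langle \phi_I, f\rangle \widetilde{\phi}_I(x)$ (only $|I| \geq 2^{-N}$ occur) and an independent Rademacher sequence $\{r_I\}$ on a probability space $(\Omega, P)$, obtaining
\begin{equation*}
\Big(\sum_{|I| \geq 2^{-N}} |\langle \phi_I, f\rangle|^2 |\widetilde{\phi}_I(x)|^2 \Big)^{p/2} \sim \int_\Omega \Big| \sum_{|I| \geq 2^{-N}} r_I(\omega)\langle \phi_I, f\rangle \widetilde{\phi}_I(x) \Big|^p \, dP(\omega) = \int_\Omega |T^{(N)}_{r(\omega)} f(x)|^p \, dP(\omega),
\end{equation*}
where $T^{(N)}_\epsilon$ denotes the operator $T_\epsilon$ of Section~\ref{sec:Te} built from the two $0$-mean adapted families $\{\varphi_I\}$ and $\{\widetilde{\varphi}_I\}$ and the bounded scalar sequence equal to $\epsilon_I$ for $|I| \geq 2^{-N}$ and $0$ otherwise.

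Finally I would integrate in $x$, interchange the $x$- and $\omega$-integrals by Tonelli (the integrand is nonnegative), and invoke Theorem~\ref{thm:TLp}, whose $L^p$ operator bound for $T_\epsilon$ is independent of the scalar sequence, to get
\begin{equation*}
\|S_N f\|_p^p \leq \frac{1}{a^p} \Big\| \Big( \sum_{|I| \geq 2^{-N}} |\langle \phi_I, f\rangle|^2 |\widetilde{\phi}_I|^2 \Big)^{1/2} \Big\|_p^p \lesssim \int_\Omega \|T^{(N)}_{r(\omega)} f\|_p^p \, dP(\omega) \lesssim \|f\|_p^p,
\end{equation*}
with constants depending only on $p$ (through the Khinchtine constants and the $T_\epsilon$ norm) and, in particular, not on $N$. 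Letting $N \to \infty$ and using the monotone convergence theorem (since $S_N f \uparrow S f$) then yields $\|Sf\|_p \lesssim \|f\|_p$. I expect no genuinely new obstacle here beyond what was already overcome for $T_\epsilon$; the main point requiring care is the bookkeeping around the truncation, namely checking that the Khinchtine constants and the $T_\epsilon$ operator norm are genuinely uniform in $N$ (they are, being independent of the finite index set and of the scalar sequence, respectively) and that the Fubini/Tonelli interchange is legitimate — both routine once the truncation is in place.
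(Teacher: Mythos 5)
Your proposal is essentially the paper's own proof: both replace $\chi_I/|I|$ by $|\widetilde{\phi}_I|^2$ using the special family of Theorem~\ref{thm:specialadapted}, apply Khinchtine pointwise with a Rademacher sequence to convert $S$ into an average of linearizations $T_\epsilon$, and then invoke the $\epsilon$-uniform bound of Theorem~\ref{thm:TLp}. The only difference is that you insert an explicit truncation $S_N$ and pass to the limit by monotone convergence, a small technical refinement; the paper applies Khinchtine directly to the infinite sum.
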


\begin{proof} Let $\varphi_I$ be a 0-mean adapted family and $S$ the
associated square operator.  By Theorem~\ref{thm:specialadapted},
let $\varphi_I^2$ be a second adapted family, with the additional
property that $\chi_I \lesssim |\varphi_I^2|$ for all $I$.  That is,
$\chi_I/|I|^{1/2} \lesssim |\phi_I^2|$.

Let $\{r_I\}$ be an independent sequence of Rademacher functions on
a probability space $(\Omega, P)$ indexed by the dyadic intervals.
For each $\omega \in \Omega$, denote the sequence $\{r_I(\omega)\}$
by $\epsilon(\omega)_I$, and note $|\epsilon(\omega)_I| \leq 1$ for
all $I$.  Let $T_{\epsilon(\omega)}$ be the linearization associated
to $\varphi_I$, $\varphi_I^2$, and the sequence $\epsilon(\omega)$.

Fix $1 < p < \infty$ and $f \in L^p$.  By Khinctine,

\begin{equation*}
\begin{split}
|Sf(x)|^p &= \bigg( \sum_I \frac{|\langle \phi_I, f \rangle|^2}{|I|}
\chi_I(x) \bigg)^{p/2} \lesssim \bigg( \sum_I |\langle \phi_I, f
\rangle|^2 |\phi_I^2(x)|^2 \bigg)^{p/2} \\
&\lesssim \int_\Omega \Big| \sum_I r_I(\omega) \langle \phi_I, f
\rangle \phi_I^2(x) \Big|^p \, P(d\omega) = \int_\Omega
|T_{\epsilon(\omega)} f(x)|^p \, P(d\omega).
\end{split}
\end{equation*}

\noindent So,

\begin{equation*}
\begin{split}
\|Sf\|_p^p &= \int_\mathbb{T} |Sf(x)|^p \, dx \lesssim \int_\mathbb{T}
\int_\Omega |T_{\epsilon(\omega)} f(x)|^p \, dx \, P(d\omega) \\
&= \int_\Omega \big\|T_{\epsilon(\omega)} f \big\|_p^p \, P(d\omega)
\lesssim \int_\Omega \|f\|_p^p \, P(d\omega) = \|f\|_p^p.
\end{split}
\end{equation*}
\end{proof}

\section{Fefferman-Stein Inequalities}

We are also able to prove a special case of
Fefferman-Stein inequalities ($r = 2$) for the square function.
First, we need the following characterization of weak-$L^p$,
sometimes called the Kolmogorov condition.

\begin{lemma}\label{lemma:kol} Let $0 < r < p < \infty$, and choose $s$
so that $1/s = 1/r - 1/p$.  Denote

\begin{equation*}
M_{p,r}(f) = \sup \bigg\{ \frac{\|f \chi_E\|_r}{\|\chi_E\|_s} : |E|
> 0 \bigg\}.
\end{equation*}

\noindent Then, $\|f\|_{p,\infty} \sim M_{p,r}(f)$ for all $f$,
where the underlying constant depends only on $p$ and $r$.
\end{lemma}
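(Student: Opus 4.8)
The plan is to establish the two inequalities of the equivalence separately, each by a direct estimate on the distribution function; neither direction uses any of the earlier machinery, only the layer-cake formula and the arithmetic identity $1/s = 1/r - 1/p$.

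For $M_{p,r}(f) \lesssim \|f\|_{p,\infty}$, I would fix a measurable $E$ with $|E| > 0$ and assume $\|f\|_{p,\infty}$ is finite and positive (the cases $0$ and $\infty$ being trivial), then write
\[
\|f\chi_E\|_r^r = r\int_0^\infty \lambda^{r-1}\big|\{x \in E : |f(x)| > \lambda\}\big| \, d\lambda.
\]
Splitting the integral at a threshold $\lambda_0$ to be chosen and bounding the inner measure by $|E|$ on $(0,\lambda_0)$ and by $\|f\|_{p,\infty}^p \lambda^{-p}$ on $(\lambda_0,\infty)$ — both resulting integrals converging since $0 < r < p$ — gives $\|f\chi_E\|_r^r \leq \lambda_0^r|E| + \frac{r}{p-r}\|f\|_{p,\infty}^p \lambda_0^{r-p}$. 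Taking $\lambda_0 = \|f\|_{p,\infty}|E|^{-1/p}$ balances the two terms and produces $\|f\chi_E\|_r^r \leq \frac{p}{p-r}\|f\|_{p,\infty}^r|E|^{1-r/p}$; since $r/s = 1 - r/p$, the right-hand side is exactly $\frac{p}{p-r}\|f\|_{p,\infty}^r\|\chi_E\|_s^r$, so taking $r$-th roots and then the supremum over $E$ yields $M_{p,r}(f) \leq (p/(p-r))^{1/r}\|f\|_{p,\infty}$.

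For the reverse inequality $\|f\|_{p,\infty} \lesssim M_{p,r}(f)$ (trivial if $M_{p,r}(f) = \infty$), I would fix $\lambda > 0$ and apply the definition of $M_{p,r}(f)$ to the level set $E = \{|f| > \lambda\}$. If $|E| = 0$ that value of $\lambda$ contributes nothing; otherwise $|f| > \lambda$ holds pointwise on $E$, so $\lambda|E|^{1/r} \leq \|f\chi_E\|_r \leq M_{p,r}(f)\|\chi_E\|_s = M_{p,r}(f)|E|^{1/s}$. Dividing through (legitimate since $0 < |E| \leq 1$ on $\mathbb{T}$) and using $1/r - 1/s = 1/p$ gives $\lambda|E|^{1/p} \leq M_{p,r}(f)$, and taking the supremum over $\lambda$ finishes the proof.

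The argument is essentially routine; the one spot needing care is the optimization in the first direction — checking that $\lambda_0 = \|f\|_{p,\infty}|E|^{-1/p}$ is admissible (positive and finite precisely when $0 < |E| \leq 1$ and $0 < \|f\|_{p,\infty} < \infty$) and that the powers of $|E|$ in the two terms $\lambda_0^r|E|$ and $\|f\|_{p,\infty}^p\lambda_0^{r-p}$ genuinely coincide after the substitution $r/s = 1 - r/p$. Once that bookkeeping is carried out, both bounds hold with constants depending only on $p$ and $r$, as claimed.
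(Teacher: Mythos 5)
Your proof is correct and takes essentially the same approach as the paper: both directions use the same layer-cake estimate with a threshold split at the scale $\|f\|_{p,\infty}|E|^{-1/p}$ for the upper bound, and both plug in the level sets $E = \{|f| > \lambda\}$ for the lower bound. The only cosmetic difference is that the paper normalizes $f$ to $g = f/\|f\|_{p,\infty}$ before optimizing, whereas you carry the factor $\|f\|_{p,\infty}$ through explicitly.
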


\begin{proof} Let $\lambda > 0$ and $E = \{|f| > \lambda\}$.  If
$|E| = 0$,  then $\lambda |E|^{1/p} \leq M_{p,r}(f)$ trivially. So,
assume $|E| > 0$.  Then,

\begin{equation*}
|E|^{1/r} = \Big( \int_E \, dx \Big)^{1/r} \leq \lambda^{-1}
\Big(\int_E |f(x)|^r \, dx \Big)^{1/r} = \lambda^{-1} \|f \chi_E\|_r
\leq \lambda^{-1} \|\chi_E\|_s M_{p,r}(f).
\end{equation*}

\noindent Hence, $M_{p,r}(f) \geq \lambda |E|^{1/r - 1/s} = \lambda
|E|^{1/p}$.  As $\lambda$ is arbitrary, $\|f\|_{p,\infty} \leq
M_{p,r}(f)$.

If $\|f\|_{p,\infty} = \infty$, the reverse inequality is trivially
satisfied. So, assume it is finite. If $\|f\|_{p,\infty} = 0$, then
$f = 0$ a.e., and again the reverse inequality holds.  Assume
$\|f\|_{p,\infty} > 0$.  Set $g = f/\|f\|_{p,\infty}$ which gives
$\|g\|_{p,\infty} = 1$.  Let $|E| > 0$. Then, $|\{|g \chi_E| >
\lambda\}| \leq \min(|E|, \lambda^{-p})$. Thus, for any $h
> 0$

\begin{equation*}
\begin{split} \|g\chi_E\|_r^r &= \int_0^\infty r\lambda^{r-1} \big| \{|g
\chi_E| > \lambda\} \big| \, d\lambda \\
&\leq r |E| \int_0^h \lambda^{r-1} \, d\lambda + r
\int_h^\infty \lambda^{r-p-1} \, d\lambda \\
&= h^r |E| + \frac{r}{p-r} h^{r-p}.
\end{split}
\end{equation*}

\noindent Setting $h = |E|^{-1/p}$ gives $\|g \chi_E\|_r^r \leq
|E|^{r/s} + \frac{r}{p-r} |E|^{r/s}$ and $\|g \chi_E\|_r \leq
(\frac{p}{p-r})^{1/r} |E|^{1/s} = (\frac{p}{p-r})^{1/r}
\|\chi_E\|_s$.  As $E$ is arbitrary, $M_{p,r}(g) \leq
(\frac{p}{p-r})^{1/r}$.  Noting that $M_{p,r}$ is quasi-linear, we
have $M_{p,r}(f) \leq (\frac{p}{p-r})^{1/r} \|f\|_{p,\infty}$.
\end{proof}

\begin{thm} For $1 < p < \infty$ and any sequence $f_1, f_2, \ldots$
of complex-valued functions on $\mathbb{T}$

\begin{gather*}
\bigg\| \Big( \sum_{k=1}^\infty |S f_k|^2 \Big)^{1/2} \bigg\|_p
\lesssim \bigg\| \Big( \sum_{k=1}^\infty |f_k|^2 \Big)^{1/2}
\bigg\|_p, \\
\bigg\| \Big( \sum_{k=1}^\infty |S f_k|^2 \Big)^{1/2}
\bigg\|_{1,\infty} \lesssim \bigg\| \Big( \sum_{k=1}^\infty |f_k|^2
\Big)^{1/2} \bigg\|_1.
\end{gather*}
\end{thm}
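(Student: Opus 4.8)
The plan is to treat both inequalities at once by regarding $\overline{S}\vec{f}(x):=\big(\sum_{k}|Sf_{k}(x)|^{2}\big)^{1/2}$ as a sublinear operator acting on $\ell^{2}$-valued functions $\vec{f}=(f_{1},f_{2},\ldots)$; sublinearity is immediate from the triangle inequality in $\ell^{2}$. The $L^{2}$-endpoint is free: $\|\overline{S}\vec{f}\|_{2}^{2}=\sum_{k}\|Sf_{k}\|_{2}^{2}\lesssim\sum_{k}\|f_{k}\|_{2}^{2}$ by Theorem~\ref{thm:SL2}. So the two statements to prove are $\overline{S}:L^{p}_{\ell^{2}}\to L^{p}$ for $1<p<\infty$ and $\overline{S}:L^{1}_{\ell^{2}}\to L^{1,\infty}$, where $\|\vec{f}\|_{L^{p}_{\ell^{2}}}=\big\|(\sum_{k}|f_{k}|^{2})^{1/2}\big\|_{p}$.

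For the strong bound, $1<p<\infty$, I would run the randomization argument of Theorem~\ref{thm:SLp} in the vector-valued category. Fix the special $0$-mean adapted family $\varphi^{2}_{I}$ from Theorem~\ref{thm:specialadapted} with $\chi_{I}/|I|^{1/2}\lesssim|\phi^{2}_{I}|$, an independent Rademacher family $\{r_{I}\}$ indexed by the dyadic intervals on $(\Omega,P)$, and the linearization $T_{\epsilon(\omega)}$ built from $\varphi_{I}$, $\varphi^{2}_{I}$ and $\epsilon(\omega)_{I}=r_{I}(\omega)$. Since $\sum_{k}|Sf_{k}(x)|^{2}\lesssim\sum_{k}\sum_{I}|\langle\phi_{I},f_{k}\rangle|^{2}|\phi^{2}_{I}(x)|^{2}$, applying Khintchine's inequality (Theorem~\ref{thm:Khinchine}) to the $I$-sum for each fixed $k,x$ — with exponent $2$ when $p\ge 2$ and exponent $p$ when $1<p<2$ — and then using Jensen's inequality (for $p\ge2$) or Minkowski's integral inequality in the form $\|\cdot\|_{\ell^{2}_{k}(L^{p}_{\omega})}\le\|\cdot\|_{L^{p}_{\omega}(\ell^{2}_{k})}$ (for $1<p<2$) yields the pointwise estimate
\[
\Big(\sum_{k}|Sf_{k}(x)|^{2}\Big)^{p/2}\lesssim\int_{\Omega}\Big(\sum_{k}|T_{\epsilon(\omega)}f_{k}(x)|^{2}\Big)^{p/2}\,dP(\omega).
\]
Integrating in $x$ reduces the matter to the uniform vector-valued bound $\big\|(\sum_{k}|T_{\epsilon}g_{k}|^{2})^{1/2}\big\|_{p}\lesssim\big\|(\sum_{k}|g_{k}|^{2})^{1/2}\big\|_{p}$; and this follows from the scalar bound $T_{\epsilon}:L^{p}\to L^{p}$ (Theorem~\ref{thm:TLp}, whose constant is independent of $\epsilon$) by a second application of Khintchine in an auxiliary Rademacher variable indexed by $k$, using that $T_{\epsilon}$ is \emph{linear}. (Alternatively, for $1<p<2$ one may simply interpolate the weak $(1,1)$ bound below against the $L^{2}$ bound via a range-scalar version of Theorem~\ref{thm:biginterpolation}.)

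For the weak $(1,1)$ bound I would mirror the proof of Theorem~\ref{thm:Sweak}. Given $\vec{f}$ with $F:=(\sum_{k}|f_{k}|^{2})^{1/2}\in L^{1}$ and $\alpha>\|F\|_{1}$, apply Lemma~\ref{lemma:cz} to $F$ to produce disjoint dyadic intervals $I_{j}$ with $\Omega=\bigcup_{j}I_{j}$, and set $\vec{g}=\vec{f}\chi_{\Omega^{c}}+\sum_{j}\big(|I_{j}|^{-1}\int_{I_{j}}\vec{f}\big)\chi_{I_{j}}$ and $\vec{b}_{j}=\big(\vec{f}-|I_{j}|^{-1}\int_{I_{j}}\vec{f}\big)\chi_{I_{j}}$, coordinate-wise. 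The generalized Minkowski inequality for $\ell^{2}$ gives $|\vec{g}|_{\ell^{2}}\lesssim\alpha$ pointwise and $\|\,|\vec{g}|_{\ell^{2}}\,\|_{1}\le\|F\|_{1}$, so $\|\overline{S}\vec{g}\|_{2}^{2}\lesssim\alpha\|F\|_{1}$ and $|\{\overline{S}\vec{g}>\alpha/2\}|\lesssim\|F\|_{1}/\alpha$ by Chebyshev; similarly each $\vec{b}_{j}$ is supported in $I_{j}$, has vanishing coordinate-wise mean, and $\|\,|\vec{b}_{j}|_{\ell^{2}}\,\|_{1}\lesssim\alpha|I_{j}|$. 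The key remaining ingredient is a vector-valued version of Lemma~\ref{lemma:atom}: if $\vec{a}$ is supported in a dyadic $I$ with each coordinate of mean zero, then $\|\overline{S}\vec{a}\|_{L^{1}(\mathbb{T}-2I)}\lesssim\|\,|\vec{a}|_{\ell^{2}}\,\|_{1}$. This is proved exactly as Lemma~\ref{lemma:atom} after writing $\overline{S}\vec{a}(x)=\big(\sum_{J}|J|^{-1}A_{J}^{2}\chi_{J}(x)\big)^{1/2}\le\sum_{J}|J|^{-1/2}A_{J}\chi_{J}(x)$ with $A_{J}:=(\sum_{k}|\langle\phi_{J},a_{k}\rangle|^{2})^{1/2}$ and observing that the two pointwise estimates on $|\langle\phi_{J},a_{k}\rangle|$ in that lemma (one for $|J|<|I|$, $J\nsubseteq 2I$, one for $|J|\ge|I|$ using mean zero and the mean value theorem) pass to $A_{J}$ once the integral Minkowski inequality $(\sum_{k}(\int_{I}|a_{k}|)^{2})^{1/2}\le\int_{I}|\vec{a}|_{\ell^{2}}$ is inserted; the geometric summation over $J$ is then verbatim. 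Consequently $\big\|\overline{S}(\sum_{j}\vec{b}_{j})\big\|_{L^{1}(\mathbb{T}-\Omega^{*})}\le\sum_{j}\|\overline{S}\vec{b}_{j}\|_{L^{1}(\mathbb{T}-2I_{j})}\lesssim\alpha\sum_{j}|I_{j}|\lesssim\|F\|_{1}$ with $\Omega^{*}=\bigcup_{j}2I_{j}$ of measure $\lesssim\|F\|_{1}/\alpha$, finishing the estimate. Lemma~\ref{lemma:kol} offers a convenient reformulation here, reducing the endpoint to the restricted-type bound $\|(\overline{S}\vec{f})\chi_{E}\|_{r}\lesssim|E|^{1/r-1}\|F\|_{1}$.

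The main obstacle is the weak $(1,1)$ estimate. Unlike the strong bounds, it does not descend from the scalar theory of $S$ (or of $T_{\epsilon}$) by a soft Khintchine/randomization argument, so one must genuinely re-run the Calder\'on--Zygmund decomposition and the almost-orthogonality estimate of Lemma~\ref{lemma:atom} in the $\ell^{2}$-valued setting and verify that every constant remains uniform in the sequence $(f_{k})$ — the checkpoints being the coordinate-wise good/bad splitting, the $\ell^{2}$-Minkowski insertions, and the fact that the $J$-summation of Lemma~\ref{lemma:atom} is unaffected. A secondary subtlety is that for $1<p<2$ the direction of Minkowski's integral inequality forces Khintchine to be applied with exponent $p$ rather than $2$ in the strong-type argument; everything else is bookkeeping over the machinery already in place.
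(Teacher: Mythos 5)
Your strong-type argument and the paper's are essentially the same Khinchine randomization; the paper avoids the case split on $p\ge 2$ versus $1<p<2$ by introducing \emph{two} independent Rademacher families, $r_I$ on $\Omega$ and $r'_k$ on $\Omega'$, applying Khinchine once to the product sequence $r_I r'_k$ to linearize the whole double $\ell^2$-sum, then unwinding first in $\omega$ (recovering $S(\sum_k r'_k(\omega')f_k)$) and then in $\omega'$. That handles all $1<p<\infty$ uniformly without any Minkowski/Jensen gymnastics, though your version is correct.

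For the weak $(1,1)$ bound you take a genuinely different route. You propose a vector-valued Calder\'on--Zygmund decomposition and an $\ell^2$-valued version of Lemma~\ref{lemma:atom}, which is the classical Fefferman--Stein strategy (and is, in fact, how the paper proves the analogous statement for $M$ in Theorem~\ref{thm:fs2}). This is correct: the good/bad splitting coordinatewise, the Minkowski insertions, and the $J$-summation all go through as you describe. But the paper instead uses Lemma~\ref{lemma:kol} (the Kolmogorov condition) to replace $\|\cdot\|_{1,\infty}$ with restricted $L^r$-norms $\|\cdot\chi_E\|_r/\|\chi_E\|_s$ for $0<r<1$, $1/s=1/r-1$, and then runs the \emph{same} double-Rademacher argument at exponent $r$. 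The point is that $\|\cdot\|_r$ is a genuine integral quantity, so the $\Omega$- and $\Omega'$-integrals commute with the $x$-integral, and the step that would otherwise fail — pulling the $\omega'$-integral through — becomes Jensen's inequality $(\int_{\Omega'} \varphi^r\,dP')^{1/r}\le\int_{\Omega'}\varphi\,dP'$, valid precisely because $\Omega'$ is a probability space and $r<1$. The scalar input is just $\|Sf\|_{1,\infty}\lesssim\|f\|_1$ from Theorem~\ref{thm:Sweak}, re-expressed via Lemma~\ref{lemma:kol}. What this buys is that one never re-opens the Calder\'on--Zygmund machinery or Lemma~\ref{lemma:atom}; the weak bound follows from the already-proved scalar bounds by the same randomization used for the strong case.

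One claim in your write-up is therefore wrong and worth flagging: you assert that the weak $(1,1)$ estimate ``does not descend from the scalar theory of $S$ \ldots by a soft Khintchine/randomization argument.'' It does, and that is exactly what the paper does. The missing ingredient in your picture is the Kolmogorov reformulation; once weak $L^1$ is traded for restricted $L^r$ with $r<1$, the obstruction you identify (that weak quasi-norms do not interact well with integration in an auxiliary probability variable) disappears, because Jensen then points in the useful direction. Your CZ route is a valid alternative and costs nothing new conceptually, but the claim of necessity is incorrect.
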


\begin{proof} Let $r_I$ be a sequence of independent Rademacher functions,
indexed by the dyadic intervals, on a probability space $(\Omega,
P)$.  Let $r_k'$ be another sequence of independent Rademacher
functions, indexed by $\mathbb{N}$, on a probability space
$(\Omega', P')$. Note, $r_{I,k}(\omega, \omega') = r_I(\omega)
r_k'(\omega')$ is an independent Rademacher sequence on $\Omega
\times \Omega'$.

Let $1 < p < \infty$.  Fix $N \in \mathbb{N}$.  Then, by Khinchtine,

\begin{equation*}
\begin{split}
\bigg\| \Big( \sum_{k=1}^N |S f_k|^2 &\Big)^{1/2} \bigg\|_p^p \\
&= \int_\mathbb{T} \Big( \sum_{k=1}^N \sum_I \frac{|\langle
\phi_I, f_k \rangle|^2}{|I|} \chi_I(x) \Big)^{p/2} \, dx \\
&\lesssim \int_\mathbb{T} \int_{\Omega \times \Omega'} \Big|
\sum_{k=1}^N \sum_I r_I(\omega) r_k'(\omega') \frac{1}{|I|^{1/2}}
\langle \phi_I, f_k \rangle \chi_I(x) \Big|^p \, P(d\omega) \,
P(d\omega') \, dx \\
&= \int_\mathbb{T} \int_{\Omega \times \Omega'} \Big| r_I(\omega)
\frac{1}{|I|^{1/2}} \Big\langle \phi_I, \sum_{k=1}^N r_{k}'(\omega')
f_k \Big\rangle \chi_I(x) \Big|^p \, P(d\omega) \, P'(d\omega') \,
dx.
\end{split}
\end{equation*}

\noindent Now use the reverse inequality of Khinctine, first in
$\Omega$, then $\Omega'$, to see

\begin{equation*}
\begin{split}
\bigg\| \Big( \sum_{k=1}^N |S f_k|^2 \Big)^{1/2} \bigg\|_p^p
&\lesssim \int_{\Omega'} \int_\mathbb{T} \Big( \sum_I \frac{1}{|I|}
\Big|\Big\langle \phi_I, \sum_{k=1}^N r_{k}'(\omega') f_k
\Big\rangle\Big|^2 \chi_I(x) \Big)^{p/2} \, dx \, P'(d\omega') \\
&= \int_{\Omega'} \int_\mathbb{T} \Big| S\Big(\sum_{k=1}^N
r_k'(\omega') f_k \Big)(x) \Big|^p \, dx \, P'(d\omega') \\
&\lesssim \int_{\Omega'} \int_\mathbb{T} \Big| \sum_{k=1}^N
r_k'(\omega') f_k(x) \Big|^p \, dx \, P'(d\omega') \\
&\lesssim \int_\mathbb{T} \Big( \sum_{k=1}^N |f_k(x)|^2 \Big)^{p/2}
\, dx = \bigg\| \Big( \sum_{k=1}^N |f_k|^2 \Big)^{1/2} \bigg\|_p^p.
\end{split}
\end{equation*}

\noindent Simply apply the monotone convergence theorem to let $N
\rightarrow \infty$ and gain the desired result.

Now let $|E| > 0$.  Fix $0 < r < 1$ and $1/s = 1/r - 1$. As
$\|Sf\|_{1,\infty} \lesssim \|f\|_1$, it follows from
Lemma~\ref{lemma:kol} that $\|S(f) \chi_E\|_r \lesssim \|\chi_E\|_s
\|f\|_1$.  Again, fix $N \in \mathbb{N}$.  So,

\begin{equation*}
\begin{split}
\bigg\| \Big( &\sum_{k=1}^N |S f_k|^2 \Big)^{1/2} \chi_E \bigg\|_r^r
\\
&= \int_\mathbb{T} \Big( \sum_{k=1}^N \sum_I \frac{|\langle
\phi_I, f_k \rangle|^2}{|I|} \chi_I(x) \chi_E(x) \Big)^{r/2} \, dx \\
&\lesssim \int_\mathbb{T} \int_{\Omega \times \Omega'} \Big|
\sum_{k=1}^N \sum_I r_I(\omega) r_k'(\omega') \frac{1}{|I|^{1/2}}
\langle \phi_I, f_k \rangle \chi_I(x) \chi_E(x) \Big|^r \,
P(d\omega) \, P(d\omega') \, dx \\
&= \int_\mathbb{T} \int_{\Omega \times \Omega'} \Big| r_I(\omega)
\frac{1}{|I|^{1/2}} \Big\langle \phi_I, \sum_{k=1}^N r_{k}'(\omega')
f_k \Big\rangle \chi_I(x) \chi_E(x) \Big|^r \, P(d\omega)
\, P'(d\omega') \, dx \\
&\lesssim \int_{\Omega'} \int_\mathbb{T} \Big| S\Big(\sum_{k=1}^N
r_k'(\omega') f_k\Big)(x) \chi_E(x) \Big|^r \, dx \, P'(d\omega') \\
&\lesssim \|\chi_E\|_s^r \int_{\Omega'} \bigg[ \int_\mathbb{T} \Big|
\sum_{k=1}^N r_k'(\omega') f_k(x) \Big| \, dx \bigg]^r \, P'(d\omega') \\
\end{split}
\end{equation*}

\noindent As $\Omega'$ is a probability space and $r < 1$, we can apply
Jensen's inequality to see

\begin{equation*}
\begin{split}
\bigg\| \Big( \sum_{k=1}^N |S f_k|^2 \Big)^{1/2} \chi_E \bigg\|_r
&\lesssim \|\chi_E\|_s \bigg( \int_{\Omega'} \Big[ \int_\mathbb{T} \Big|
\sum_{k=1}^N r_k'(\omega') f_k(x) \Big| \, dx \Big]^r \, P(d\omega')
\bigg)^{1/r} \\
&\leq \|\chi_E\|_s \int_{\Omega'} \int_\mathbb{T} \Big|
\sum_{k=1}^N r_k'(\omega') f_k(x) \Big| \, dx \, P(d\omega') \\
&\lesssim \|\chi_E\|_s^r \int_\mathbb{T} \Big( \sum_{k=1}^N
|f_k(x)|^2 \Big)^{r/2} \, dx \\
&= \|\chi_E\|_s^r \bigg\| \Big(
\sum_{k=1}^N |f_k|^2 \Big)^{1/2} \bigg\|_r^r.
\end{split}
\end{equation*}

\noindent Taking the supremum over all such $E$, and applying
Lemma~\ref{lemma:kol},

\begin{equation*}
\bigg\| \Big( \sum_{k=1}^N |S f_k|^2 \Big)^{1/2} \bigg\|_{1,\infty}
\lesssim M_{1,r}\bigg( \Big( \sum_{k=1}^N |S f_k|^2 \Big)^{1/2}
\bigg) \lesssim \bigg\| \Big( \sum_{k=1}^N |f_k|^2 \Big)^{1/2}
\bigg\|_1.
\end{equation*}

\noindent Letting $N \rightarrow \infty$ completes the proof.
\end{proof}

\chapter{Zygmund Spaces and $\llogl$}

In this chapter, we begin by focusing on a general measure space
$(X, \rho)$. Our goal is to introduce new spaces of functions and
interpolation results that will ultimately give us the ``end-point"
estimates of certain operators. Many of the preliminary proofs of
this chapter are taken from~\cite{inter}.

\section{Decreasing Rearrangements}

\begin{defnn} For $f : (X, \rho) \rightarrow \mathbb{C}$, the
distribution function of $f$ is defined

\begin{equation*}
\mu_f(\lambda) = \rho\{x \in X : |f(x)| > \lambda\}, \quad \lambda
\geq 0.
\end{equation*}

\noindent Two function $f, g$ (even if they act on different measure
spaces) are said to be equimeasurable if $\mu_f(\lambda) =
\mu_g(\lambda)$ for all $\lambda \geq 0$.
\end{defnn}

\begin{defnn} For $f : (X, \rho) \rightarrow \mathbb{C}$, the
deceasing rearrangement of $f$ is defined

\begin{equation*}
f^*(t) = \inf \{\lambda \geq 0 : \mu_f(\lambda) \leq t\}, \quad t
\geq 0,
\end{equation*}

\noindent where we use the convention that $\inf \{\emptyset\} =
\infty$.
\end{defnn}

Note, if $(X, \rho)$ is a finite measure space, then $\mu_f(\lambda)
\leq \rho(X)$ for all $\lambda \geq 0$. Hence, $f^*(t) = 0$ for all
$t > \rho(X)$. That is, $f^*$ is supported in $[0, \rho(X)]$.

\begin{prop}\label{prop:equi} For any $f, f_n, g : (X, \rho) \rightarrow
\mathbb{C}$ and $\alpha \in \mathbb{C}$,

\begin{enumerate}
\item $f^*$ is nonnegative, decreasing, and identically 0 if and
only if $f = 0$ a.e.[$\rho$],

\item $|f| \leq |g|$ a.e.[$\rho$] implies $f^* \leq g^*$ pointwise,

\item $f^*(\mu_f(\lambda)) \leq \lambda$ for $\mu_f(\lambda) < \infty$,
and $\mu_f(f^*(t)) \leq t$ for $f^*(t) < \infty$,

\item $(f+g)^*(t_1 + t_2) \leq f^*(t_1) + g^*(t_2)$,

\item $(\alpha f)^* = |\alpha| f^*$,

\item $|f_n| \uparrow |f|$ a.e.[$\rho$] implies $f_n^* \uparrow f^*$
pointwise,

\item $f$ and $f^*$ are equimeasurable.
\end{enumerate}
\end{prop}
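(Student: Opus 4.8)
The plan is to isolate two structural facts about the distribution function $\mu_f$ and then read off all seven items. First, $\mu_f$ is nonincreasing, since $\{|f|>\lambda'\}\subseteq\{|f|>\lambda\}$ whenever $\lambda\le\lambda'$; and second, $\mu_f$ is right-continuous, because $\{|f|>\lambda\}=\bigcup_n\{|f|>\lambda+1/n\}$ is an increasing union, so continuity from below of $\rho$ gives $\mu_f(\lambda)=\lim_{n}\mu_f(\lambda+1/n)$. Writing $f^*(t)=\inf A_t$ with $A_t=\{\lambda\ge0:\mu_f(\lambda)\le t\}$, I record the two elementary consequences to be used repeatedly: if $\mu_f(\lambda)\le t$ then $f^*(t)\le\lambda$; and conversely if $f^*(t)<\lambda$ then, picking an element of $A_t$ below $\lambda$ and using monotonicity, $\mu_f(\lambda)\le t$. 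Note also $A_t$ is upward closed, so $\{f^*>\lambda\}$ is automatically a downward-closed subset of $[0,\infty)$.

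With this in hand, items (1), (2), (5) are immediate: nonnegativity and monotonicity of $f^*$ are read off the definition; $f^*\equiv0$ forces $\rho\{|f|>1/n\}=0$ for all $n$, hence $f=0$ a.e., and the converse is trivial; $|f|\le|g|$ a.e.\ gives $\mu_f\le\mu_g$, hence $A^g_t\subseteq A^f_t$ and $f^*\le g^*$; and $\mu_{\alpha f}(\lambda)=\mu_f(\lambda/|\alpha|)$ for $\alpha\ne0$ yields $(\alpha f)^*=|\alpha|f^*$ after the substitution $\lambda\mapsto|\alpha|\lambda$ in the infimum (with $\alpha=0$ covered by (1)). For (3), the first inequality is immediate since $\lambda\in A_{\mu_f(\lambda)}$; the second, $\mu_f(f^*(t))\le t$ when $f^*(t)<\infty$, is exactly where right-continuity enters: choose $\lambda_n\downarrow f^*(t)$ with $\mu_f(\lambda_n)\le t$ and pass to the limit. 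Item (4) then follows from (3): assuming the right side finite, put $\lambda_1=f^*(t_1)$, $\lambda_2=g^*(t_2)$, use $\{|f+g|>\lambda_1+\lambda_2\}\subseteq\{|f|>\lambda_1\}\cup\{|g|>\lambda_2\}$ together with $\mu_f(\lambda_1)\le t_1$, $\mu_g(\lambda_2)\le t_2$ to get $\mu_{f+g}(\lambda_1+\lambda_2)\le t_1+t_2$, and conclude by the elementary fact. For (6), part (2) already gives that $f_n^*$ is nondecreasing in $n$ with $f_n^*\le f^*$; for the reverse, $\{|f_n|>\lambda\}\uparrow\{|f|>\lambda\}$ a.e.\ forces $\mu_{f_n}(\lambda)\uparrow\mu_f(\lambda)$, so if one had $\lim_n f_n^*(t)<\lambda<f^*(t)$ then $\mu_{f_n}(\lambda)\le t$ for all $n$ while $\mu_f(\lambda)>t$, a contradiction.

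Item (7) is the one requiring the most care, and the natural place for things to go wrong, since the answer hinges on getting the endpoint right. The plan is to prove $\{t\ge0:f^*(t)>\lambda\}=[0,\mu_f(\lambda))$ for every $\lambda\ge0$; then the Lebesgue measure of the left side is $\mu_f(\lambda)$, which is precisely equimeasurability of $f$ and $f^*$. This reduces to the equivalence $f^*(t)>\lambda\iff\mu_f(\lambda)>t$: the direction ``$\Rightarrow$'' holds because $f^*(t)>\lambda$ forces $\lambda\notin A_t$; the direction ``$\Leftarrow$'' is where part (3) is used, to rule out the borderline case $\lambda=f^*(t)$ with $\mu_f(f^*(t))>t$, which (3) forbids once $f^*(t)<\infty$ — and if $f^*(t)=\infty$ then $A_t=\emptyset$ and both sides hold vacuously. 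The only real subtlety throughout is the bookkeeping with strict versus non-strict inequalities in these infima, and the single genuine analytic input is the right-continuity of $\mu_f$ feeding into (3), which in turn powers (7).
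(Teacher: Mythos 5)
Your proof is correct and follows essentially the same route as the paper's: both isolate monotonicity and right-continuity of $\mu_f$ (via continuity from below of $\rho$), derive (3) from it, and then read off (4), (6), and (7) as consequences. The one place you are more careful than the paper is item (7), where the paper asserts $f^*(t)>\lambda \iff t<\mu_f(\lambda)$ ``simply from the definition'' while you correctly flag that the backward implication in the borderline case $\lambda=f^*(t)$ actually requires (3); this is a genuine improvement in exposition, though not a different argument.
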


\begin{proof} (1) The fact that $f^* \geq 0$ follows from the
definition.  Let $0 \leq t_1 < t_2$ and $\epsilon > 0$.  Choose
$\lambda \geq 0$ so that $\mu_f(\lambda) \leq t_1$ and $f^*(t_1) +
\epsilon \geq \lambda$. Then, $\mu_f(\lambda) \leq t_1 < t_2$ which
implies $f^*(t_2) \leq \lambda \leq f^*(t_1) + \epsilon$. As
$\epsilon$ is arbitrary, $f^*(t_1) \geq f^*(t_2)$.  But, since $f^*$
is decreasing, $f^*$ is identically 0 if and only if $f^*(0) = 0$.
This is true if and only if $\mu_f(0) = 0$, which means $f = 0$
a.e..

(2) Fix $t$ and $\epsilon > 0$.  As $|f| \leq |g|$ a.e., it is
immediately clear that $\mu_f \leq \mu_g$.  Choose $\lambda \geq 0$
so that $\mu_g(\lambda) \leq t$ and $g^*(t) + \epsilon \geq
\lambda$.  Then, $\mu_f(\lambda) \leq \mu_g(\lambda) \leq t$, which
implies that $f^*(t) \leq \lambda \leq g^*(t) + \epsilon$.  As
$\epsilon$ is arbitrary, $f^*(t) \leq g^*(t)$.

(3) Fix $\lambda \geq 0$ and set $t = \mu_f(\lambda)$. Then,
$\lambda \in \{\lambda' \geq 0 : \mu_f(\lambda') \leq t\}$ giving
$f^*(\mu_f(\lambda)) = f^*(t) = \inf\{ \lambda' : \mu_f(\lambda')
\leq t \} \leq \lambda$. Now fix $t \geq 0$ and assume $\lambda =
f^*(t) < \infty$. Let $\lambda_n$ be a sequence of positive numbers
so that $\lambda_n \downarrow \lambda$.  Then, $\mu_f(\lambda_n)
\leq t$ for each $n$. Therefore, as $\{|f| >
\lambda_n\} \subseteq \{|f| > \lambda\}$ for all $n$ and

\begin{equation*}
\bigcup_n \big\{|f| > \lambda_n \big\} = \big\{ |f| > \lambda
\big\},
\end{equation*}

\noindent it follows from simple properties of measures that
$\mu_f(\lambda_n) \uparrow \mu_f(\lambda)$.  That is,
$\mu_f(\lambda) = \lim_n \mu_f(\lambda_n) \leq t$.

(4) Let $t_1, t_2 \geq 0$.  Let $\lambda = f^*(t_1) + f^*(t_2)$ and
$t = \mu_{f+g}(\lambda)$.  Then,

\begin{equation*}
\begin{split}
t &= |\{|f+g| > \lambda\}| \leq |\{|f| > f^*(t_1)\}| + |\{|g| >
g^*(t_2)\}| \\
&= \mu_f(f^*(t_1)) + \mu_g(g^*(t_2)) \leq t_1 + t_2.
\end{split}
\end{equation*}

\noindent So, $(f+g)^*(t_1 + t_2) \leq (f+g)^*(t) =
(f+g)^*(\mu_{f+g}(\lambda)) \leq \lambda = f^*(t_1) + f^*(t_2)$.

(5) For $\alpha \in \mathbb{C}$, we have $\mu_{\alpha f}(\lambda) =
\rho\{|\alpha f| > \lambda\} = \rho\{|f| > \lambda/|\alpha|\} =
\mu_{f}(\lambda/|\alpha|)$.  Thus, $(\alpha f)^*(t) = \inf\{\lambda
\geq 0 : \mu_{\alpha f}(\lambda) \leq t\} = \inf\{|\alpha| \lambda
\geq 0 : \mu_f(\lambda) \leq t\} = |\alpha| f^*(t)$.

(6) It is clear from (2) that $f_1^* \leq f_2^* \leq \ldots \leq
f^*$ pointwise.  Fix $\lambda$.  By the same argument used in (3),
we see $\{|f_n| > \lambda\} \subseteq \{|f| > \lambda\}$ and
$\bigcup \{|f_n| > \lambda\} = \{|f| > \lambda\}$.  Thus,
$\mu_{f_n}(\lambda) \uparrow \mu_f(\lambda)$.  By the same token, it
is now clear that $\{\lambda : \mu_f(\lambda) \leq t\} \subseteq
\{\lambda : \mu_{f_n}(\lambda) \leq t\}$ and $\bigcap_n \{\lambda :
\mu_{f_n}(\lambda) \leq t\} = \{\lambda : \mu_f(\lambda) \leq t\}$.
Therefore, taking infimums, we see $f_n^*(t) \uparrow f^*(t)$.

(7) Simply from the definition, $f^*(t)> \lambda$ if and only if $t
< \mu_f(\lambda)$. Thus, $\mu_{f^*}(\lambda) = |\{t \geq 0 : f^*(t)
> \lambda\}| = |[0, \mu_f(\lambda))| = \mu_f(\lambda)$. \end{proof}

\begin{lemma}\label{lemma:f*map} Let $\Psi : [0,\infty) \rightarrow
[0, \infty)$ be continuous and increasing with $\Psi(0) = 0$.  Then,
$\int_X \Psi(|f|) \, d\rho = \int_0^\infty \Psi(f^*) \, dt$.
\end{lemma}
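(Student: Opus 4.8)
The plan is to reduce the identity to the classical ``layer-cake'' (distribution function) formula, exploiting the fact, established in Proposition~\ref{prop:equi}(7), that $f$ and $f^*$ are equimeasurable, i.e.\ $\mu_f = \mu_{f^*}$. The key observation is that for a continuous increasing $\Psi$ with $\Psi(0)=0$, the composition $\Psi(|f|)$ can be written as an integral over the ``height'' variable using $\Psi(s) = \int_0^s \Psi'$, but since $\Psi$ need not be differentiable it is cleaner to use the Riemann--Stieltjes form $\Psi(s) = \int_{(0,\infty)} \chi_{\{s > \lambda\}} \, d\Psi(\lambda)$, valid because $\Psi$ is continuous, increasing, and vanishes at $0$. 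Then Tonelli's theorem (everything is nonnegative) gives
\begin{equation*}
\int_X \Psi(|f|) \, d\rho = \int_X \int_{(0,\infty)} \chi_{\{|f(x)| > \lambda\}} \, d\Psi(\lambda) \, d\rho(x) = \int_{(0,\infty)} \mu_f(\lambda) \, d\Psi(\lambda).
\end{equation*}
The identical computation applied to $f^*$ on the measure space $([0,\infty), dt)$ yields $\int_0^\infty \Psi(f^*(t)) \, dt = \int_{(0,\infty)} \mu_{f^*}(\lambda) \, d\Psi(\lambda)$, and since $\mu_f = \mu_{f^*}$ by Proposition~\ref{prop:equi}(7), the two right-hand sides agree, proving the claim.

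The steps, in order, are: (1) justify the representation $\Psi(s) = \int_{(0,\infty)} \chi_{\{\lambda < s\}} \, d\Psi(\lambda)$ for $s \geq 0$, where $d\Psi$ is the Lebesgue--Stieltjes measure associated to the nondecreasing right-continuous $\Psi$ — this is just $\Psi(s) - \Psi(0) = \Psi(s)$ expressed as the $d\Psi$-measure of $(0,s)$, using continuity of $\Psi$ to avoid worrying about the endpoint; (2) apply Tonelli to swap $\int_X$ and $\int_{(0,\infty)}$, legitimate since the integrand $\chi_{\{|f(x)| > \lambda\}}$ is jointly measurable and nonnegative, obtaining $\int_{(0,\infty)} \mu_f(\lambda)\, d\Psi(\lambda)$; (3) repeat verbatim with $(X,\rho)$ replaced by $([0,\infty), dt)$ and $f$ by $f^*$, noting $f^*$ is measurable (indeed decreasing) by Proposition~\ref{prop:equi}(1); (4) invoke equimeasurability, Proposition~\ref{prop:equi}(7), to conclude $\mu_f(\lambda) = \mu_{f^*}(\lambda)$ for all $\lambda \geq 0$, hence the integrals coincide.

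If one prefers to avoid Stieltjes measures entirely (the paper has not introduced them), an alternative is to first prove the identity for $\Psi$ of the special form $\Psi = \chi_{(a,\infty)} \cdot \Psi(a)$-type building blocks — more precisely, establish it when $\Psi(s) = \min(s, c)$ or other piecewise-linear $\Psi$, then pass to general continuous increasing $\Psi$ by monotone approximation from below together with Proposition~\ref{prop:equi}(6) (if $\Psi_n \uparrow \Psi$ pointwise, then $\Psi_n(|f|) \uparrow \Psi(|f|)$ and $\Psi_n(f^*) \uparrow \Psi(f^*)$, and apply the monotone convergence theorem on both sides). The cleanest single tool, however, is simply: for nonnegative $h$ on any measure space and continuous increasing $\Psi$ with $\Psi(0)=0$, one has $\int \Psi(h)\,d\mu = \int_0^\infty \Psi'(\lambda)\, \mu\{h>\lambda\}\,d\lambda$ when $\Psi$ is $C^1$, and the general case follows by approximating $\Psi$ uniformly on compacta by smooth increasing functions.

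I expect the main obstacle to be purely expository rather than mathematical: deciding how much measure-theoretic machinery (Lebesgue--Stieltjes integration, or the smooth-approximation argument) to invoke given what the paper has set up, and making sure the endpoint conventions in the definitions of $\mu_f$ and $f^*$ (strict versus non-strict inequalities) are handled consistently — in particular that the set $\{\lambda : f^*(t) > \lambda\}$ is exactly $[0, \mu_f(t))$, which is the content of the final line of the proof of Proposition~\ref{prop:equi}(7) and is what makes the equimeasurability exact rather than merely almost-everywhere.
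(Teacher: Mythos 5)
Your proposal is correct, but it takes a genuinely different route from the paper. The paper's proof proceeds in two steps: first it verifies the identity directly for nonnegative simple functions $f = \sum a_j \chi_{E_j}$ (where $f^*$ is an explicit step function and both sides reduce to a finite sum), then it passes to general $f$ by choosing simple $f_n \uparrow |f|$, using continuity and monotonicity of $\Psi$ together with Proposition~\ref{prop:equi}(6) to get $\Psi(f_n) \uparrow \Psi(|f|)$ and $\Psi(f_n^*) \uparrow \Psi(f^*)$, and finishing with the monotone convergence theorem. Your approach instead writes $\Psi(s) = \int_{(0,\infty)} \chi_{\{\lambda < s\}}\, d\Psi(\lambda)$ via the Lebesgue--Stieltjes measure associated to $\Psi$, applies Tonelli on both $(X,\rho)$ and $([0,\infty),dt)$, and then invokes equimeasurability (Proposition~\ref{prop:equi}(7)). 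Both arguments are sound. Yours is more conceptual: it isolates $\mu_f = \mu_{f^*}$ as the single driving fact and makes no use of property~(6), and it generalizes painlessly to, e.g., lower semicontinuous or merely right-continuous $\Psi$. The paper's proof is more elementary in the toolbox it invokes --- it needs nothing beyond the explicit form of $f^*$ for simple functions and MCT, both of which the text has already set up --- and it avoids introducing Lebesgue--Stieltjes integration, which nothing else in the thesis requires. One small point of care in your version, which you already flagged: the representation of $\Psi$ needs continuity of $\Psi$ to ensure $d\Psi(\{0\}) = 0$ and that open and half-open intervals have the same $d\Psi$-measure, so the endpoint conventions in $\mu_f(\lambda) = \rho\{|f| > \lambda\}$ work out cleanly; your handling of this is correct.
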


\begin{proof} First consider the case where $f$ is positive and
simple.  That is, there are constants $a_1 > a_2 > \ldots > a_n > 0$
and disjoint sets $E_1, \ldots, E_n$ so that $f = \sum a_j
\chi_{E_j}$. It is easy to calculate that $f^*(t) = \sum a_j
\chi_{[m_{j-1}, m_j)}$, where $m_0 = 0$ and $m_j = \rho(E_1) +
\ldots + \rho(E_j)$.  Thus,

\begin{equation*}
\begin{split}
\int_X \Psi(f) \, d\rho &= \sum_{j=1}^n \int_{E_j} \Psi(a_j) \, d\rho
= \sum_{j=1}^n \Psi(a_j) \rho(E_j) = \sum_{j=1}^n \Psi(a_j) \big[ m_j
- m_{j-1} \big] \\
&= \sum_{j=1}^n \int_0^\infty \Psi(a_j) \chi_{[m_{j-1}, m_j)}(t) \,
dt = \int_0^\infty \Psi(f^*) \, dt.
\end{split}
\end{equation*}

\noindent Note that $\Psi(0) = 0$ was used here to say $\Psi(a_j
\chi_{E_j}) = \Psi(a_j) \chi_{E_j}$.

Now consider a general $f : (X, \rho) \rightarrow \mathbb{C}$.
Choose positive simple functions $f_n$ so that $f_n \uparrow |f|$.
As $\Psi$ is continuous and increasing, it follows that $\Psi(f_n)
\uparrow \Psi(|f|)$.  Also, as $f_n^* \uparrow f^*$, we have
$\Psi(f_n^*) \uparrow \Psi(f^*)$.  So, by the monotone convergence
theorem,

\begin{equation*}
\int_X \Psi(|f|) \, d\rho = \lim_{n \rightarrow \infty} \int_X
\Psi(f_n) \, d\rho = \lim_{n \rightarrow \infty} \int_0^\infty
\Psi(f_n^*) \, dt = \int_0^\infty \Psi(f^*) \, dt.
\end{equation*}
\end{proof}

\begin{cor}\label{cor:f*p} For $f : (X, \rho) \rightarrow
\mathbb{C}$ and $0 < p < \infty$, we have $\|f\|_p = \|f^*\|_p$.
Furthermore, $\|f\|_\infty = \|f^*\|_\infty = f^*(0)$.
\end{cor}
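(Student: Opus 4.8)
The plan is to reduce the $L^p$ statement directly to Lemma~\ref{lemma:f*map} and to treat the $L^\infty$ statement separately, extracting it from the definition of $f^*$ together with equimeasurability.

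For the case $0 < p < \infty$, I would apply Lemma~\ref{lemma:f*map} with the choice $\Psi(s) = s^p$. This $\Psi : [0,\infty) \to [0,\infty)$ is continuous and increasing and satisfies $\Psi(0) = 0$, so the hypotheses of the lemma are met. The lemma then yields $\int_X |f|^p \, d\rho = \int_X \Psi(|f|) \, d\rho = \int_0^\infty \Psi(f^*) \, dt = \int_0^\infty (f^*)^p \, dt$, that is, $\|f\|_p^p = \|f^*\|_p^p$. Taking $p$-th roots gives $\|f\|_p = \|f^*\|_p$, with the understanding that the identity holds in $[0,\infty]$ (both sides infinite simultaneously).

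For the endpoint $p = \infty$, I would first read off $f^*(0)$ from the definition: $f^*(0) = \inf\{\lambda \ge 0 : \mu_f(\lambda) \le 0\} = \inf\{\lambda \ge 0 : \rho\{x : |f(x)| > \lambda\} = 0\}$, which is precisely $\esssup_X |f| = \|f\|_\infty$ (the convention $\inf\emptyset = \infty$ handling the unbounded case). It then remains to see $\|f^*\|_\infty = f^*(0)$, and the quickest route is Proposition~\ref{prop:equi}(7): since $f$ and $f^*$ are equimeasurable, $\mu_f = \mu_{f^*}$, and as the essential supremum of a function depends only on its distribution function, $\|f^*\|_\infty = \inf\{\lambda : \mu_{f^*}(\lambda) = 0\} = \inf\{\lambda : \mu_f(\lambda) = 0\} = \|f\|_\infty = f^*(0)$.

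I do not anticipate a genuine obstacle; the only delicate point is the bookkeeping of the conventions $\inf\emptyset = \infty$ and the possibility of infinite norms, so that all the displayed equalities are read as identities in $[0,\infty]$. As an alternative to invoking equimeasurability for the last step, one could note that $f^*$ is decreasing (Proposition~\ref{prop:equi}(1)) and right-continuous at $0$, which forces $\esssup_{[0,\infty)} f^* = f^*(0)$; but the equimeasurability argument is shorter and self-contained within the results already proved.
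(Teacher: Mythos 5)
Your proof is correct, and for $0 < p < \infty$ it coincides exactly with the paper's: apply Lemma~\ref{lemma:f*map} with $\Psi(t) = t^p$. For $p = \infty$ the paper simply asserts that since $f^*$ is decreasing, $\|f^*\|_\infty = f^*(0)$, which tacitly uses more than monotonicity (a decreasing function could drop on a null set at $0$); your equimeasurability argument via Proposition~\ref{prop:equi}(7) is a cleaner and fully rigorous route to the same conclusion, and your alternative remark about right-continuity is precisely the observation needed to tighten the paper's terser phrasing.
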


\begin{proof} In the case $0 < p < \infty$, simply let $\Psi(t) =
t^p$ and apply the previous lemma.  Secondly, note that
$\|f\|_\infty = f^*(0)$ by definition.  As $f^*$ is decreasing,
$\|f^*\|_\infty = f^*(0)$. \end{proof}

\section{Lorentz Spaces}

\begin{defnn} Let $0 < p < \infty$ and $0 < q \leq \infty$.
For $f : (X, \rho) \rightarrow \mathbb{C}$, define $\|f\|_{p,q}$ by

\begin{equation*}
\|f\|_{p,q} = \begin{cases} \displaystyle{\bigg( \int_0^\infty \Big(
t^{1/p}
f^*(t) \Big)^q \, \frac{dt}{t} \bigg)^{1/q}},& \qquad q < \infty \\
\displaystyle{\sup_{t > 0} t^{1/p} f^*(t)},& \qquad q = \infty.
\end{cases}
\end{equation*}

\noindent Denote by $L^{p,q}(X)$ be the set of functions $f$ for
which $\|f\|_{p,q} < \infty$.  \end{defnn}

It is clear from Corollary~\ref{cor:f*p} that $\|f\|_{p,p} =
\|f\|_p$. Further, one can check that $L^{p,\infty}$ here coincides
with the definition of weak-$L^p$ given in
Section~\ref{sec:analysisont}.

\begin{lemma} Let $0 < p < \infty$ and $0 < q < r \leq \infty$.
Then, $\|f\|_{p,r} \lesssim \|f\|_{p,q}$, where the underlying
constants depend only on $p, q, r$.  \end{lemma}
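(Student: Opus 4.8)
The plan is to exploit the monotonicity of the decreasing rearrangement: since $f^*$ is a decreasing function (Proposition~\ref{prop:equi}), the quantity $t^{1/p}f^*(t)$ that appears in every Lorentz quasi-norm can be controlled \emph{pointwise} in $t$ by $\|f\|_{p,q}$, and the estimate for general $r$ then follows by a one-line interpolation against the $q$-norm. If $\|f\|_{p,q} = \infty$ there is nothing to prove, so throughout I assume it is finite.

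First I would dispose of the endpoint $r = \infty$. Fix $t > 0$. For every $0 < s \leq t$ we have $f^*(s) \geq f^*(t)$, so
$$\frac{p}{q}\bigl(t^{1/p} f^*(t)\bigr)^q = \Bigl(\int_0^t s^{q/p-1}\,ds\Bigr) f^*(t)^q = \int_0^t \bigl(s^{1/p} f^*(t)\bigr)^q \frac{ds}{s} \leq \int_0^t \bigl(s^{1/p} f^*(s)\bigr)^q \frac{ds}{s} \leq \|f\|_{p,q}^q.$$
Taking $q$-th roots and then the supremum over $t$ yields $\|f\|_{p,\infty} \leq (p/q)^{1/q}\|f\|_{p,q}$, which is the assertion when $r = \infty$.

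For $q < r < \infty$ I would write, using $r - q > 0$,
$$\|f\|_{p,r}^r = \int_0^\infty \bigl(t^{1/p} f^*(t)\bigr)^{r-q}\bigl(t^{1/p} f^*(t)\bigr)^q \frac{dt}{t} \leq \Bigl(\sup_{t>0} t^{1/p} f^*(t)\Bigr)^{r-q} \int_0^\infty \bigl(t^{1/p} f^*(t)\bigr)^q \frac{dt}{t} = \|f\|_{p,\infty}^{r-q}\,\|f\|_{p,q}^q.$$
Inserting the bound $\|f\|_{p,\infty} \leq (p/q)^{1/q}\|f\|_{p,q}$ from the previous step gives $\|f\|_{p,r}^r \leq (p/q)^{(r-q)/q}\|f\|_{p,q}^r$, and taking $r$-th roots finishes the proof with the constant $(p/q)^{(r-q)/(qr)}$, which depends only on $p$, $q$, and $r$.

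There is no real obstacle here: the argument is entirely elementary. The only points needing a word of care are the convergence of the integral $\int_0^t s^{q/p-1}\,ds$ (immediate, since $q/p > 0$) and bookkeeping of the constants, and the essential input—that $f^*$ is decreasing—has already been established.
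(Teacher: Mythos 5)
Your proof is correct and follows essentially the same route as the paper: establish the $r=\infty$ endpoint by using that $f^*$ is decreasing to bound $t^{1/p}f^*(t)$ by the $(p,q)$-quasi-norm, then interpolate $\|f\|_{p,r}^r \leq \|f\|_{p,\infty}^{r-q}\|f\|_{p,q}^q$ for finite $r$. The only differences are cosmetic (you raise to the $q$-th power before taking the infinite-time integral; the paper works directly with the $q$-th root).
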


\begin{proof} As $f^*$ is decreasing,

\begin{equation*}
\begin{split}
t^{1/p} f^*(t) &= \bigg( \frac{p}{q} \int_0^t \Big( s^{1/p} f^*(t)
\Big)^q \, \frac{ds}{s} \bigg)^{1/q} \\
&\leq \bigg( \frac{p}{q} \int_0^t \Big( s^{1/p} f^*(s) \Big)^q \,
\frac{ds}{s} \bigg)^{1/q} = \left( \frac{p}{q} \right)^{1/q}
\|f\|_{p,q}.
\end{split}
\end{equation*}

\noindent Taking the supremum over all $t$, we see $\|f\|_{p,\infty}
\leq (\frac{p}{q})^{1/q} \|f\|_{p,q}$.  This gives the $r = \infty$
case.  Now, suppose $r < \infty$.  Then,

\begin{equation*}
\begin{split}
\|f\|_{p,r} &= \bigg( \int_0^\infty \Big( t^{1/p} f^*(t)
\Big)^{r-q+q} \frac{dt}{t} \bigg)^{1/r} \\
&\leq \|f\|_{p,\infty}^{1 - q/r} \|f\|_{p,q}^{q/r} \leq \bigg(
\Big(\frac{p}{q}\Big)^{1/q} \|f\|_{p,q} \bigg)^{1 - q/r}
\|f\|_{p,q}^{q/r} = \Big(\frac{p}{q} \Big)^{1/q - 1/r} \|f\|_{p,q}.
\end{split}
\end{equation*}
\end{proof}

\begin{lemma}\label{lemma:weak} Let $T$ be a sublinear operator
which which maps $L^{p_0}(X) \rightarrow L^{q_0, \infty}(X)$ and
$L^{p_1}(X) \rightarrow L^{q_1, \infty}(X)$, where $1 \leq p_0 < p_1
< \infty$, $1 \leq q_0, q_1 < \infty$, and $q_0 \not= q_1$. Then,

\begin{equation*}
(Tf)^*(t) \lesssim \bigg[ t^{-1/q_0} \int_0^{t^m} s^{1/p_0} f^*(s)
\, \frac{ds}{s} + t^{-1/q_1} \int_{t^m}^\infty s^{1/p_1} f^*(s) \,
\frac{ds}{s} \bigg], \quad t > 0,
\end{equation*}

\noindent where $m = (\frac{1}{q_0} - \frac{1}{q_1}) (\frac{1}{p_0}
- \frac{1}{p_1})^{-1}$.
\end{lemma}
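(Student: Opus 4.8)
The natural approach is the classical Calder\'on decomposition argument underlying all such off-diagonal interpolation estimates. Assume, as is implicit, that $T$ is defined and sublinear on $L^{p_0}(X)+L^{p_1}(X)$, fix $t>0$, and set $a=t^m$; we may assume the right-hand side is finite. Split $f=b+g$ into its large and small parts relative to the level $f^*(a)$, namely $b=f\,\chi_{\{|f|>f^*(a)\}}$ and $g=f-b=f\,\chi_{\{|f|\le f^*(a)\}}$. Sublinearity gives $|Tf|\le|Tb|+|Tg|$ a.e., so by parts (2) and (4) of Proposition~\ref{prop:equi},
\[
(Tf)^*(t)\le (Tb)^*(t/2)+(Tg)^*(t/2).
\]
Since $\|h\|_{q,\infty}=\sup_{s>0}s^{1/q}h^*(s)$, the weak-type hypotheses yield $(Tb)^*(t/2)\lesssim t^{-1/q_0}\|Tb\|_{q_0,\infty}\lesssim t^{-1/q_0}\|b\|_{p_0}$ and likewise $(Tg)^*(t/2)\lesssim t^{-1/q_1}\|g\|_{p_1}$ (the norms being finite once bounded below). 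Thus everything reduces to estimating $\|b\|_{p_0}$ and $\|g\|_{p_1}$ by truncated integrals of $f^*$.

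The only analytic input needed is that, for a nonnegative decreasing $h$ and $p\ge1$,
\[
\Big(\int_0^\infty h(s)^p\,ds\Big)^{1/p}\le\frac1p\int_0^\infty s^{1/p}h(s)\,\frac{ds}{s};
\]
I would prove this by writing $h$ as a superposition $h=\int_{(0,\infty)}\chi_{(0,r)}\,d\nu(r)$ of indicators (with $\nu=-dh\ge0$; the case $h(\infty)>0$ is trivial since both sides are infinite) and applying Minkowski's integral inequality, the two sides becoming $\int r^{1/p}\,d\nu(r)$ and $\int p\,r^{1/p}\,d\nu(r)$; alternatively one invokes the Lorentz embedding $\|\cdot\|_{p,p}\lesssim\|\cdot\|_{p,1}$ proved earlier. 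A short distribution-function computation together with Proposition~\ref{prop:equi}(3) ($\mu_f(f^*(a))\le a$) shows $b^*\le f^*\chi_{(0,a)}$, so applying the displayed inequality gives $\|b\|_{p_0}=\|b^*\|_{p_0}\lesssim\int_0^a s^{1/p_0}f^*(s)\,\frac{ds}{s}$, which is exactly the first term sought. For $g$ one has $|g|\le f^*(a)$ and $|g|\le|f|$ pointwise, hence $g^*\le\min\{f^*(a),f^*\}$; applying the displayed inequality to this decreasing majorant and splitting at $s=a$ gives
\[
\|g\|_{p_1}\lesssim f^*(a)\,a^{1/p_1}+\int_a^\infty s^{1/p_1}f^*(s)\,\frac{ds}{s},
\]
the integral being the desired second term.

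The one place requiring care — and the reason $a=t^m$ is the correct choice — is the leftover term $t^{-1/q_1}f^*(a)\,a^{1/p_1}$: it is genuinely not dominated by $\int_a^\infty s^{1/p_1}f^*\,\frac{ds}{s}$ (take $f^*$ nearly constant just past $a$), so it must instead be absorbed into the $q_0$-side. The identity $m=(\tfrac1{q_0}-\tfrac1{q_1})(\tfrac1{p_0}-\tfrac1{p_1})^{-1}$ is precisely what forces $t^{-1/q_1}a^{1/p_1}=t^{-1/q_0}a^{1/p_0}$ when $a=t^m$, and since $f^*$ is decreasing, $a^{1/p_0}f^*(a)\le\tfrac1{p_0}\int_0^a s^{1/p_0}f^*(s)\,\frac{ds}{s}$; combining, $t^{-1/q_1}f^*(a)\,a^{1/p_1}\lesssim t^{-1/q_0}\int_0^a s^{1/p_0}f^*(s)\,\frac{ds}{s}$, so it is swallowed by the first term. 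Adding the three pieces yields the claimed bound. The remaining ingredients — the distribution-function identities for $b^*$ and $g^*$, the harmless constants $2^{1/q_j}$ and the operator norms of $T$ — are entirely routine.
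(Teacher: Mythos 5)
Your proof is correct and the strategy is essentially the same as the paper's: split $f$ at the level $f^*(t^m)$, apply the two weak-type hypotheses, and dominate the resulting Lebesgue norms by truncated $L^{p,1}$-type integrals of $f^*$, with the exponent $m$ chosen to make the two sides balance. The only substantive difference is in the split itself. You use the characteristic-function decomposition $b = f\chi_{\{|f|>f^*(a)\}}$, $g = f\chi_{\{|f|\le f^*(a)\}}$; the paper instead uses the ``vertical'' truncation $f_0 = \max\{|f|-f^*(t^m),0\}\,\alpha$, $f_1 = \min\{|f|,f^*(t^m)\}\,\alpha$, for which one can compute $f_0^*(s)=\max\{f^*(s)-f^*(t^m),0\}$ and $f_1^*(s)=\min\{f^*(s),f^*(t^m)\}$ exactly. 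That choice pays off because the boundary contributions $\mp t^{m/p_j - 1/q_j}f^*(t^m)$ from $\|f_0\|_{p_0,1}$ and $\|f_1\|_{p_1,1}$ cancel identically once $m$ is chosen as in the statement; your split produces the single unmatched term $t^{-1/q_1}f^*(a)a^{1/p_1}$, and you correctly absorb it by rewriting $t^{-1/q_1}a^{1/p_1}=t^{-1/q_0}a^{1/p_0}$ (the defining identity for $m$) and then using monotonicity of $f^*$ to push it into the $q_0$-side integral. Both routes are sound; the paper's is marginally tidier, while yours makes the role of $a=t^m$ as a balance point more explicit. Your auxiliary layer-cake inequality $\|h\|_p\le \frac1p\int_0^\infty s^{1/p}h(s)\,\frac{ds}{s}$ for decreasing $h$ is also correct and serves the same purpose as the paper's earlier Lorentz-embedding lemma $\|\cdot\|_{p,p}\lesssim\|\cdot\|_{p,1}$.
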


\begin{proof} Let $\alpha(x)$ be a complex-valued
function with $|\alpha(x)| = 1$ so that $|f(x)| \alpha(x) = f(x)$.
Fix $t > 0$. Define $f_0$ and $f_1$ by

\begin{gather*}
f_0(x) = \max \big\{|f(x)| - f^*(t^m), 0 \big\} \cdot \alpha(x), \\
f_1(x) = \min \big\{|f(x)|, f^*(t^m) \big\} \cdot \alpha(x).
\end{gather*}

\noindent Then, $f = f_0 + f_1$, and it is easily shown that
$f_0^*(s) = \max\{f^*(s) - f^*(t^m), 0\}$ and $f_1^*(s) =
\min\{f^*(s), f^*(t^m)\}$. Further,

\begin{gather*}
\|f_0\|_{p_0,1} = \int_0^{t^m} s^{1/p_0} f^*(s) \,
\frac{ds}{s} - p_0 t^{m/p_0} f^*(t^m), \\
\|f_1\|_{p_1,1} = p_1 t^{m/p_1} f^*(t^m) + \int_{t^m}^\infty
s^{1/p_1} f^*(s) \, \frac{ds}{s}.
\end{gather*}

\noindent As $T$ is sublinear, $(Tf)^*(t) \leq (Tf_0
+ Tf_1)^*(t) \leq (Tf_0)^*(t/2) + (Tf_1)^*(t/2)$.  By the hypotheses
on $T$,

\begin{equation*}
\Big(\frac{t}{2}\Big)^{1/q_0} (T f_0)^*(t/2) \leq \|Tf_0\|_{q_0,
\infty} \lesssim \|f_0\|_{p_0} \lesssim \|f_0\|_{p_0,1},
\end{equation*}

\noindent or

\begin{equation*}
(Tf_0)^*(t/2) \lesssim t^{-1/q_0} \|f_0\|_{p_0,1}.
\end{equation*}

\noindent Similarly,

\begin{equation*}
(Tf_1)^*(t/2) \lesssim t^{-1/q_1} \|f_1\|_{p_1,1}.
\end{equation*}

\noindent Hence,

\begin{equation*}
\begin{split}
(Tf)^*(t) &\leq (Tf_0)^*(t/2) + (Tf_1)^*(t/2) \\
&\lesssim \bigg[ \frac{1}{p_0} t^{-1/q_0} \|f_0\|_{p_0,1} +
\frac{1}{p_1} t^{-1/q_1} \|f_1\|_{p_1,1} \bigg] \\
&= \bigg[ \frac{1}{p_0} t^{-1/q_0} \int_0^{t^m} s^{1/p_0} f^*(s) \,
\frac{ds}{s} + \frac{1}{p_1} t^{-1/q_1} \int_{t^m}^\infty
s^{1/p_1} f^*(s) \, \frac{ds}{s} \\
&\qquad \qquad + t^{m/p_1 - 1/q_1}f^*(t^m) - t^{m/p_0 - 1/q_0}
f^*(t^m) \bigg]
\end{split}
\end{equation*}

\noindent Noting that $\frac{m}{p_0} - \frac{1}{q_0} = \frac{m}{p_1}
- \frac{1}{q_1}$, the $f^*(t^m)$ terms cancel.  Thus,

\begin{equation*}
(Tf)^*(t) \lesssim \bigg[ t^{-1/q_0} \int_0^{t^m} s^{1/p_0} f^*(s)
\, \frac{ds}{s} + t^{-1/q_1} \int_{t^m}^\infty s^{1/p_1} f^*(s) \,
\frac{ds}{s} \bigg].
\end{equation*}
\end{proof}

\section{The 2-Star Operator}

The next step is to define a kind of maximal operator of $f^*$,
which we call the 2-star operator.

\begin{defnn} For $f : (X, \rho) \rightarrow \mathbb{C}$, define

\begin{equation*}
f^{**}(t) = \frac{1}{t} \int_0^t f^*(s) \, ds, \quad t > 0.
\end{equation*}
\end{defnn}

\begin{prop} For any $f, f_n, g : (X, \rho) \rightarrow \mathbb{C}$ and
$\alpha \in \mathbb{C}$,

\begin{enumerate}
\item $f^{**}$ is nonnegative, decreasing, and identically 0 if and
only if $f = 0$ a.e.[$\rho$],

\item $f^* \leq f^{**}$,

\item $|f| \leq |g|$ a.e.[$\rho$] implies $f^{**} \leq g^{**}$ pointwise,

\item $(\alpha f)^{**} = |\alpha| f^{**}$,

\item $|f_n| \uparrow |f|$ a.e.[$\rho$] implies $f_n^{**} \uparrow
f^{**}$ pointwise.
\end{enumerate}
\end{prop}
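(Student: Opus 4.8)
The plan is to obtain all five properties directly from the corresponding facts about the decreasing rearrangement recorded in Proposition~\ref{prop:equi}, combined with elementary manipulations of the average $f^{**}(t) = \frac{1}{t}\int_0^t f^*(s)\,ds$.

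For (1), nonnegativity of $f^{**}$ is immediate from $f^*\geq 0$ (Proposition~\ref{prop:equi}(1)). For the decreasing property I would fix $0 < t_1 < t_2$, split $\int_0^{t_2} f^* = \int_0^{t_1} f^* + \int_{t_1}^{t_2} f^*$, and use that $f^*$ is decreasing to estimate $\int_{t_1}^{t_2} f^*(s)\,ds \leq (t_2-t_1) f^*(t_1) \leq \frac{t_2-t_1}{t_1}\int_0^{t_1} f^*(s)\,ds$, where the last inequality holds because $f^*(s)\geq f^*(t_1)$ for $s\in[0,t_1]$. Rearranging gives $\frac{1}{t_2}\int_0^{t_2} f^* \leq \frac{1}{t_1}\int_0^{t_1} f^*$, i.e.\ $f^{**}(t_2)\leq f^{**}(t_1)$. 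For the vanishing characterization, $f^{**}\equiv 0$ iff $\int_0^t f^* = 0$ for all $t>0$ iff $f^* = 0$ a.e.\ on $[0,\infty)$, which (since $f^*$ is decreasing) is equivalent to $f^*\equiv 0$, hence to $f=0$ a.e.\ by Proposition~\ref{prop:equi}(1).

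Property (2) is the observation that $f^*$ decreasing gives $f^*(s)\geq f^*(t)$ for $s\in[0,t]$, so averaging over $[0,t]$ yields $f^{**}(t)\geq f^*(t)$. Property (3) follows since $|f|\leq|g|$ a.e.\ implies $f^*\leq g^*$ pointwise by Proposition~\ref{prop:equi}(2), and integrating over $[0,t]$ preserves the inequality. Property (4) follows from $(\alpha f)^* = |\alpha| f^*$ (Proposition~\ref{prop:equi}(5)) by pulling the constant out of the integral. Property (5) follows from $f_n^*\uparrow f^*$ pointwise (Proposition~\ref{prop:equi}(6)) together with the monotone convergence theorem applied to $\int_0^t f_n^*(s)\,ds$, which then gives $f_n^{**}(t)\uparrow f^{**}(t)$.

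There is no genuine obstacle here: four of the five assertions are one-line consequences of Proposition~\ref{prop:equi}, and the only item needing a short computation rather than a direct citation is the monotonicity in (1) — which is just the standard fact that the running average of a nonincreasing function is again nonincreasing.
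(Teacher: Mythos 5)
Your proposal is correct and follows essentially the same plan as the paper: derive each property of $f^{**}$ directly from the corresponding property of $f^*$ in Proposition~\ref{prop:equi}, plus basic integral manipulations (and monotone convergence for (5)). The only difference is in the monotonicity argument of (1): the paper compares $f^*(s) \leq f^*(st_1/t_2)$ and substitutes, whereas you split $\int_0^{t_2} f^*$ at $t_1$ and bound the tail by $f^*(t_1)$; both are short, correct, and equivalent in spirit.
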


\begin{proof} The fact that $f^{**}$ is nonnegative and equal to
0 if and only if $f = 0$ a.e. follows as $f^*$ satisfies the same
properties.  Let $0 \leq t_1 < t_2$.  As $f^*$ is decreasing $f^*(s)
\leq f^*(s t_1/t_2)$ for any $s \geq 0$.  Thus,

\begin{equation*}
f^{**}(t_2) = \frac{1}{t_2} \int_0^{t_2} f^*(s) \, ds \leq
\frac{1}{t_2} \int_0^{t_2} f^*(st_1/t_2) \, ds = \frac{1}{t_1}
\int_0^{t_1} f^*(u) \, du = f^{**}(t_1).
\end{equation*}

\noindent This establishes (1).  Again, as $f^*$ is decreasing,

\begin{equation*}
f^{**}(t) = \frac{1}{t} \int_0^t f^*(s) \, ds \geq f^*(t)
\frac{1}{t} \int_0^t \, dt = f^*(t).
\end{equation*}

\noindent This establishes (2).  Properties (3), (4), and (5) follow
immediately from the fact that $f^*$ satisfies the same properties,
in addition to the monotone convergence theorem for (5).
\end{proof}

We will also want to show that the 2-star operator is sublinear.
This is more difficult than the preceding results, and needs the
following intermediary step.

\begin{lemma}\label{lemma:f**sum} For all $t > 0$, $\displaystyle{
\inf_{f = g + h} \big\{ \|g\|_1 + t \|h\|_\infty \big\} = t
f^{**}(t)}$.
\end{lemma}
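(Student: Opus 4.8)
The plan is to prove the two inequalities separately. For the direction $\inf_{f=g+h}\{\|g\|_1 + t\|h\|_\infty\} \le t f^{**}(t)$, I would exhibit an explicit near-optimal decomposition. Fix $t>0$ and let $\lambda = f^*(t)$. Write $\alpha(x)$ for a unimodular function with $f(x) = |f(x)|\alpha(x)$, and set $g(x) = \max\{|f(x)|-\lambda,0\}\,\alpha(x)$ and $h(x) = \min\{|f(x)|,\lambda\}\,\alpha(x)$, exactly as in the proof of Lemma~\ref{lemma:weak}. Then $f = g+h$, $\|h\|_\infty \le \lambda = f^*(t)$, and $g^*(s) = \max\{f^*(s)-\lambda,0\}$, which vanishes for $s \ge t$ since $f^*$ is decreasing. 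Hence by Corollary~\ref{cor:f*p},
\begin{equation*}
\|g\|_1 = \int_0^\infty g^*(s)\,ds = \int_0^t \big(f^*(s)-\lambda\big)\,ds = t f^{**}(t) - t f^*(t),
\end{equation*}
so $\|g\|_1 + t\|h\|_\infty \le \big(t f^{**}(t) - t f^*(t)\big) + t f^*(t) = t f^{**}(t)$, giving the $\le$ direction. (A small subtlety: if $f^*(t) = \infty$ this decomposition is degenerate; but then $f^{**}(t) = \infty$ too and the inequality is trivial, so assume $f^*(t) < \infty$.)

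For the reverse direction $t f^{**}(t) \le \inf_{f=g+h}\{\|g\|_1 + t\|h\|_\infty\}$, I would fix an arbitrary decomposition $f = g+h$ and show $t f^{**}(t) \le \|g\|_1 + t\|h\|_\infty$. The key tool is the subadditivity of the decreasing rearrangement under splitting the argument: by Proposition~\ref{prop:equi}(4), $f^*(s_1+s_2) \le g^*(s_1) + h^*(s_2)$ for all $s_1,s_2 \ge 0$. Taking $s_2 \to 0$ gives $f^*(s) \le g^*(s) + h^*(0) = g^*(s) + \|h\|_\infty$ for every $s>0$ (using Corollary~\ref{cor:f*p} for $h^*(0) = \|h\|_\infty$). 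Integrating this over $s \in [0,t]$ yields
\begin{equation*}
t f^{**}(t) = \int_0^t f^*(s)\,ds \le \int_0^t g^*(s)\,ds + t\|h\|_\infty \le \int_0^\infty g^*(s)\,ds + t\|h\|_\infty = \|g\|_1 + t\|h\|_\infty,
\end{equation*}
again invoking Corollary~\ref{cor:f*p}. Since the decomposition was arbitrary, taking the infimum gives the claim, and combining the two directions establishes the equality.

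I do not expect any serious obstacle here; the only points requiring care are the degenerate case $f^*(t) = \infty$ in the first direction, and verifying the formula $g^*(s) = \max\{f^*(s)-\lambda,0\}$ for the truncation $g$ — this is exactly the computation already carried out in the proof of Lemma~\ref{lemma:weak}, so it can be cited or reproduced in one line. One should also note that the infimum on the left is actually attained (by the explicit $g,h$ above), though the statement only asserts the value of the infimum, so attainment is not strictly needed.
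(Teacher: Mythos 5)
Your proof is correct and follows the same two-pronged structure as the paper's: the explicit truncation $g = \max\{|f|-f^*(t),0\}\,\alpha$, $h = \min\{|f|,f^*(t)\}\,\alpha$ for the upper bound on the infimum, and the subadditivity $f^*(s_1+s_2)\le g^*(s_1)+h^*(s_2)$ for the lower bound. Two small differences in execution are worth noting. For the upper bound you compute $\|g\|_1$ via the identity $g^*(s)=\max\{f^*(s)-f^*(t),0\}$, which Lemma~\ref{lemma:weak} asserts but does not actually prove; the paper instead writes $\|g\|_1 = \int_E |f|\,d\rho - \rho(E)f^*(t)$ with $E=\{|f|>f^*(t)\}$, bounds $\int_E|f|\,d\rho$ by $\int_0^{\rho(E)} f^*(s)\,ds$ via the pointwise estimate $(f\chi_E)^*\le f^*$, and uses that $f^*$ is constant on $(\rho(E),t]$ --- thus avoiding the unproved rearrangement identity entirely. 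For the lower bound you apply Proposition~\ref{prop:equi}(4) with the second argument equal to $0$, whereas the paper splits $s = \tfrac{n-1}{n}s + \tfrac{1}{n}s$ and lets $n\to\infty$; your shortcut is valid since the proposition is stated for $t_2\ge 0$ and $h^*(0)=\|h\|_\infty<\infty$ in the nontrivial case, but the phrase ``taking $s_2\to 0$'' should really be ``setting $s_2=0$,'' since an honest limit would need care at discontinuities of $g^*$ or $f^*$.
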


\begin{proof} Fix $t > 0$ and $f : (X, \rho) \rightarrow \mathbb{C}$.
Let $\alpha_t$ be the value of the infimum on the left-hand side of
the equality.  We first show $t f^{**}(t) \leq \alpha_t$.

We can assume that $f$ can be decomposed into $g + h$ as implied, as
otherwise $\alpha_t = \infty$ and there is nothing to prove.  So,
write $f = g + h$ where $g \in L^1(X)$ and $h \in L^\infty(X)$.  Let
$n \in \mathbb{N}$.  Then,

\begin{equation*}
\begin{split}
t f^{**}(t) &= \int_0^t f^*(s) \, ds \leq \int_0^t g^*
\Big(\frac{n-1}{n} \, s \Big) \, ds + \int_0^t h^* \Big( \frac{1}{n}
\, s \Big) \, ds \\
&= \frac{n}{n-1} \int_0^{t(n-1)/n} g^*(u) \, du + n \int_0^{t/n}
h^*(u) \, du \\
&\leq \frac{n}{n-1} \int_0^\infty g^*(u) \, du + n h^*(0)
\int_0^{t/n} \, du \\
&= \frac{n}{n-1} \|g\|_1 + t \|h\|_\infty.
\end{split}
\end{equation*}

\noindent As $n$ is arbitrary, let $n \rightarrow \infty$ to see $t
f^{**}(t) \leq \|g\|_1 + t \|h\|_\infty$.  As this decomposition is
arbitrary, $tf^{**}(t) \leq \alpha_t$.

For the reverse inequality, we can assume $f^{**}(t)$ is finite, or
there is nothing to prove; so $f^*(t) \leq f^{**}(t) < \infty$.  Let
$E = \{x \in X : |f(x)| > f^*(t)\}$ and $t_0 = \rho(E)$. By
Proposition~\ref{prop:equi}, $t_0 = \mu_f(f^*(t)) \leq t$.  As $f$
and $f^*$ are equimeasurable, and $f^*$ is decreasing, it follows
that $f^*(s) = f^*(t)$ for $t_0 < s \leq t$.

As $|f \chi_E| \leq |f|$, we see $(f \chi_E)^* \leq f^*$.  But, $f
\chi_E$ is supported on a set of measure $t_0$.  So, $(f
\chi_E)^*(s) = 0$ for $s > t_0$.  Thus,

\begin{equation*}
\int_E |f| \, d\rho = \int_0^\infty (f \chi_E)^*(s) \, ds =
\int_0^{t_0} (f \chi_E)^*(s) \, ds
\leq \int_0^{t_0} f^*(s) \, ds.
\end{equation*}

Define $g$ and $h$ by

\begin{gather*}
g(x) = \max \big\{|f(x)| - f^*(t), 0 \big\} \cdot \alpha(x), \\
h(x) = \min \big\{|f(x)|, f^*(t) \big\} \cdot \alpha(x),
\end{gather*}

\noindent where $\alpha(x) |f(x)| = f(x)$, so that $f = g + h$.
Observe,

\begin{equation*}
\|g\|_1 = \int_E |f| \, d\rho - \rho(E) f^*(t) \leq \int_0^{t_0}
f^*(s) \, ds - t_0 f^*(t).
\end{equation*}

\noindent On the other hand, $\|h\|_\infty \leq f^*(t)$ is clear
from construction.  Therefore,

\begin{equation*}
\begin{split}
\alpha_t &\leq \|g\|_1 + t
\|h\|_\infty \leq \int_0^{t_0} f^*(s) \, ds + (t - t_0) f^*(t)
= \int_0^t f^*(s) \, ds = t f^{**}(t).
\end{split}
\end{equation*}
\end{proof}

\begin{thm}\label{thm:2starlinear} The 2-star operator is sublinear,
i.e., for any $f_1, f_2 : (X, \rho) \rightarrow \mathbb{C}$ and $t
> 0$, $(f_1 + f_2)^{**}(t) \leq f_1^{**}(t) + f_2^{**}(t)$.
\end{thm}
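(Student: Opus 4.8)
The plan is to deduce sublinearity of the 2-star operator directly from Lemma~\ref{lemma:f**sum}, which characterizes $tf^{**}(t)$ as the infimum of $\|g\|_1 + t\|h\|_\infty$ over all splittings $f = g+h$. This reduces the problem to combining two such splittings additively, using only the (genuine) triangle inequalities for $\|\cdot\|_1$ and $\|\cdot\|_\infty$.

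First I would fix $t > 0$ and dispose of the trivial case: if either $f_1^{**}(t)$ or $f_2^{**}(t)$ is infinite, the asserted inequality holds vacuously, so assume both are finite. Then, by Lemma~\ref{lemma:f**sum}, for each $\varepsilon > 0$ I can choose decompositions $f_1 = g_1 + h_1$ and $f_2 = g_2 + h_2$ with $g_i \in L^1(X)$, $h_i \in L^\infty(X)$, and
\begin{equation*}
\|g_i\|_1 + t\|h_i\|_\infty \leq t f_i^{**}(t) + \varepsilon, \qquad i = 1, 2.
\end{equation*}
Now $f_1 + f_2 = (g_1 + g_2) + (h_1 + h_2)$ is an admissible splitting of $f_1 + f_2$, with $g_1 + g_2 \in L^1(X)$ and $h_1 + h_2 \in L^\infty(X)$, so Lemma~\ref{lemma:f**sum} applied to $f_1 + f_2$ gives
\begin{equation*}
t(f_1 + f_2)^{**}(t) \leq \|g_1 + g_2\|_1 + t\|h_1 + h_2\|_\infty \leq \|g_1\|_1 + \|g_2\|_1 + t\|h_1\|_\infty + t\|h_2\|_\infty \leq t f_1^{**}(t) + t f_2^{**}(t) + 2\varepsilon.
\end{equation*}
Letting $\varepsilon \to 0$ and dividing by $t$ yields $(f_1 + f_2)^{**}(t) \leq f_1^{**}(t) + f_2^{**}(t)$, as desired.

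I do not expect any real obstacle here: the only point requiring a little care is that the infimum in Lemma~\ref{lemma:f**sum} is taken independently over the two summands, which is exactly why approximate minimizers (rather than exact ones) are used, and the measurability of $g_1+g_2$ and $h_1+h_2$ is immediate. Alternatively, one could avoid the $\varepsilon$ bookkeeping entirely by noting that $\inf_{f_1=g_1+h_1}\inf_{f_2=g_2+h_2}\big(\|g_1\|_1+\|g_2\|_1+t\|h_1\|_\infty+t\|h_2\|_\infty\big)$ factors as a sum of two infima and dominates $\inf_{f_1+f_2=g+h}(\|g\|_1+t\|h\|_\infty)$; either formulation is routine.
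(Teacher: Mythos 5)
Your proof is correct and is essentially identical to the paper's: both apply Lemma~\ref{lemma:f**sum} to get $\varepsilon$-approximate decompositions $f_i = g_i + h_i$, combine them additively, and use the genuine triangle inequalities for $\|\cdot\|_1$ and $\|\cdot\|_\infty$ before letting $\varepsilon \to 0$. Your explicit handling of the case where some $f_i^{**}(t) = \infty$ is a small tidying the paper leaves implicit, but the substance is the same.
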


\begin{proof} Fix $t > 0$ and $\epsilon > 0$.  By the preceding
lemma, choose $g_1, g_2 \in L^1(X)$ and $h_1, h_2 \in L^\infty(X)$
so that $f_j = g_j + h_j$ and $\|g_j\|_1 + t \|h_j\|_\infty \leq t
f_j^{**}(t) + \epsilon$ for $j = 1, 2$. Then,

\begin{equation*}
\begin{split}
t(f_1 + f_2)^{**}(t) &\leq \|g_1 + g_2\|_1 + t\|h_1 + h_2\|_\infty
\\
&\leq \Big(\|g_1\|_1 + t \|h_1\|_\infty \Big) + \Big( \|g_2\|_1 + t
\|h_2\|_\infty \Big) \\
&\leq t f_1^{**}(t) + t f_2^{**}(t) + 2 \epsilon.
\end{split}
\end{equation*}

\noindent As $\epsilon$ is arbitrary, this completes the proof.
\end{proof}

\section{A Characterization of $\llogl$}

The space Zygmund space $\llogl$ arises naturally in a number of
ways, particularly interpolation results.  However, the exact
definition of the space differs with the given application, and most
definitions are somewhat unwieldy.  The definition we present here,
and use for the remainder of the text, is less conceptually natural,
but once certain properties are established, is much easier to use.

For this section, we restrict $(X, \rho)$ to be a probability space.
For functions $f$ on $X$, $f^*(t) = 0$ for $t > 1$.  So, for
simplicity, we can think of $f^*$ and $f^{**}$ as functions defined
only on $[0, 1]$.

\begin{defnn} For functions $f : (X, \rho) \rightarrow
\mathbb{C}$ define $\|f\|_{\llogl}$ by

\begin{equation*}
\|f\|_{\llogl} = \int_0^1 f^{**}(t) \, dt.
\end{equation*}

\noindent Define the Zygumnd space $\llogl(X)$ as the set of all
functions $f : (X,\rho) \rightarrow \mathbb{C}$ with $\|f\|_{\llogl}
< \infty$.
\end{defnn}

It is clear from what we know about the 2-star operator that
$\|\cdot\|_{L\log L}$ is a norm and $\llogl(X)$ is a Banach space.
Further, we know that if $|g| \leq |f|$ a.e.[$\rho$] then
$\|g\|_{\llogl} \leq \|f\|_{\llogl}$ and $|f_n| \uparrow |f|$
a.e.[$\rho$] implies $\|f_n\|_{\llogl} \uparrow \|f\|_{\llogl}$.
What is not clear is the reason for choosing this definition. This is
explained by the following.

\begin{thm} $f \in \llogl(X)$ if and only if

\begin{equation*}
\int_{X} |f(x)| \log^+ |f(x)| \, \rho(dx) < \infty,
\end{equation*}

\noindent where $\log^+(x) = \max(\log x, 0)$.
\end{thm}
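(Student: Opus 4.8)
The plan is to show the two norms $\|f\|_{\llogl} = \int_0^1 f^{**}(t)\,dt$ and $\int_X |f|\log^+|f|\,d\rho$ are comparable, so that finiteness of one is equivalent to finiteness of the other. By Lemma~\ref{lemma:f*map} (with $\Psi(s) = s\log^+ s$, which is continuous, increasing, and vanishes at $0$), $\int_X |f|\log^+|f|\,d\rho = \int_0^1 f^*(t)\log^+ f^*(t)\,dt$, so it suffices to compare $\int_0^1 f^{**}(t)\,dt$ with $\int_0^1 f^*(t)\log^+ f^*(t)\,dt$. Both are finite trivially when $f \in L^\infty$, so I may assume $f$ is unbounded; and since $f^*(1)=0 \le f^*(t)$ for $t \le 1$, the $\log^+$ can essentially be replaced by $\log(1/t)$ after a change of perspective, because the decreasing rearrangement spends ``most'' of its large values on small $t$.

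First I would prove $\int_0^1 f^{**}(t)\,dt \lesssim 1 + \int_0^1 f^*(t)\log^+ f^*(t)\,dt$. Write $t f^{**}(t) = \int_0^t f^*(s)\,ds$ and use Fubini: $\int_0^1 f^{**}(t)\,dt = \int_0^1 \frac{1}{t}\int_0^t f^*(s)\,ds\,dt = \int_0^1 f^*(s)\int_s^1 \frac{dt}{t}\,ds = \int_0^1 f^*(s)\log(1/s)\,ds$. So the $\llogl$ norm is exactly $\int_0^1 f^*(s)\log(1/s)\,ds$. Now I split the integral at the level where $f^*(s) = 1$: since $f^*$ is decreasing, there is $s_0 \in [0,1]$ with $f^*(s) \ge 1$ for $s < s_0$ and $f^*(s) \le 1$ for $s > s_0$ (taking $s_0 = \mu_f(1)$ essentially, via Proposition~\ref{prop:equi}). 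On $(s_0, 1)$, $f^*(s)\log(1/s) \le \log(1/s)$, which integrates to a finite constic over $[0,1]$. On $(0, s_0)$, I need $\log(1/s) \lesssim 1 + \log^+ f^*(s)$; this is the crux. Since $f^*$ is decreasing, $f^*(s) \ge f^*(\sqrt s)$, and more usefully $s f^*(s) \le \int_0^s f^*(u)\,du \le \int_0^1 f^*(u)\,du = \|f\|_1 \le \|f\|_{\llogl}$ when the latter is finite; actually a cleaner route: for $s < s_0$, compare $\log(1/s)$ against $\log^+ f^*(s)$ using that if $f^*(s)$ is small relative to $1/s$, the measure-theoretic estimate $s \le \mu_f(f^*(s)) $ combined with Chebyshev $\mu_f(\lambda) \le \lambda^{-1}\|f\|_1$ forces $f^*(s) \le \|f\|_1/s$ — but I want the reverse. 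The honest argument is: $\int_0^{s_0} f^*(s)\log(1/s)\,ds$; substitute and integrate by parts, or note $\int_0^{s_0} f^*(s)\log(1/s)\,ds = \int_1^\infty \mu_f(\lambda)\cdot(\text{something})$ — rather, use the layer-cake / distributional identity $\int_0^1 f^*(s)\log(1/s)\,ds = \int_0^\infty \int_{\{f^* > \lambda\}} \log(1/s)\,ds\,d\lambda = \int_0^\infty \int_0^{\mu_f(\lambda)} \log(1/s)\,ds\,d\lambda = \int_0^\infty \mu_f(\lambda)\big(1 + \log(1/\mu_f(\lambda))\big)\,d\lambda$ (using $\int_0^a \log(1/s)\,ds = a(1 - \log a)$ for $a \le 1$).

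Then the equivalence becomes transparent: $\int_0^1 f^*\log(1/s)\,ds = \int_0^\infty \mu_f(\lambda)\,d\lambda + \int_0^\infty \mu_f(\lambda)\log^+(1/\mu_f(\lambda))\,d\lambda = \|f\|_1 + \int_0^\infty \mu_f(\lambda)\log^+(1/\mu_f(\lambda))\,d\lambda$, while $\int_X |f|\log^+|f|\,d\rho = \int_0^\infty \mu_{|f|\log^+|f|}(\lambda)\,d\lambda$, and a direct layer-cake computation gives $\int_X |f|\log^+|f|\,d\rho = \int_1^\infty \mu_f(\lambda)\frac{\log\lambda + 1}{\lambda}\cdot\lambda\,\ldots$ — cleaner: $\int_X |f|\log^+|f|\,d\rho = \int_0^\infty |\{|f|\log^+|f| > t\}|\,dt$, or just $\int_{\{|f|>1\}} |f|\log|f|\,d\rho = \int_1^\infty \mu_f(\lambda)\frac{d}{d\lambda}(\lambda\log\lambda)\cdot\ldots$ Honestly the most efficient path is: both quantities are (up to the additive $\|f\|_1 \le 1$ term, which is harmless on a probability space since $\|f\|_1 \le \|f\|_{\llogl}$ and $\|f\|_1 \le \int_X|f|\log^+|f| + \rho(X)$) equal to $\int_0^\infty \mu_f(\lambda)\log^+\lambda\,\frac{d\lambda}{\lambda}$ type integrals; I will relate $\mu_f(\lambda)\log^+(1/\mu_f(\lambda))$ to $\int |f|\log^+|f|$ by a direct comparison of the integrands after writing everything via $\mu_f$. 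The main obstacle is precisely this: converting between the two ``log weights'' — $\log(1/\mu_f(\lambda))$ on the rearrangement side versus $\log\lambda$ on the function side — which requires the elementary but slightly fiddly observation that for the part where $\lambda$ is large, $\mu_f(\lambda)$ is correspondingly small, and Chebyshev's inequality $\mu_f(\lambda) \le \|f\|_1/\lambda$ gives $\log(1/\mu_f(\lambda)) \ge \log\lambda - \log\|f\|_1$, handling one direction, while boundedness of $\rho$ ($\mu_f(\lambda) \le 1$) plus the layer-cake identity handles the other. Once these integrand comparisons are in place, both implications follow by monotone integration, completing the proof.
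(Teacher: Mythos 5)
Your proposal starts correctly: applying Lemma~\ref{lemma:f*map} with $\Psi(s)=s\log^+ s$ and the Fubini identity $\int_0^1 f^{**}(t)\,dt = \int_0^1 f^*(s)\log(1/s)\,ds$ reduces the theorem to comparing $\int_0^1 f^*(s)\log(1/s)\,ds$ with $\int_0^1 f^*(s)\log^+ f^*(s)\,ds$, which is exactly the paper's setup. But neither direction of the comparison is actually completed.

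For the direction $\int f^*\log^+ f^* < \infty \Rightarrow \int f^*\log(1/s) < \infty$, you split at a fixed level $s_0$ where $f^*(s_0)=1$ and then need $\log(1/s) \lesssim 1 + \log^+ f^*(s)$ on $(0,s_0)$; you rightly call this ``the crux,'' but it is simply false pointwise (take $f^*$ to drop to a value near $1$ at some tiny $s$). The paper's fix is a $t$-dependent threshold: split into $E=\{t: f^*(t)>t^{-1/2}\}$ and $F=(0,1)\setminus E$. On $E$ one has $\log(1/t) < \log(f^*(t)^2)=2\log f^*(t)$ pointwise; on $F$ one has $f^*(t)\log(1/t)\le t^{-1/2}\log(1/t)$, which is integrable. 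Your subsequent pivot to the layer-cake identity $\int_0^1 f^*\log(1/s)\,ds = \|f\|_1 + \int_0^\infty \mu_f(\lambda)\log(1/\mu_f(\lambda))\,d\lambda$ is correct, but you then sketch only the Chebyshev inequality $\log(1/\mu_f(\lambda))\ge \log\lambda - \log\|f\|_1$, which bounds $\log\lambda$ \emph{by} $\log(1/\mu_f)$ and therefore handles the opposite implication from the one you were working on; the direction you actually need requires its own analogue of the $t^{-1/2}$ split (e.g., $\{\lambda : \mu_f(\lambda) > \lambda^{-2}\}$), which never appears. The sentence ``once these integrand comparisons are in place, both implications follow'' is exactly the content that is missing.

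For the converse direction, the paper avoids any split at all: normalize $g = f/\|f\|_1$, so $g^*(t)\le g^{**}(t)\le 1/t$, hence $\log^+ g^*(t)\le \log(1/t)$ pointwise; then $\int_0^1 g^*\log^+ g^* \le \int_0^1 g^*\log(1/t) = \|f\|_1^{-1}\int_0^1 f^*\log(1/t) < \infty$, and one unwinds the normalization using $\log^+(\|f\|_1\,g^*)\le \log^+\|f\|_1 + \log^+ g^*$. Your proposal contains neither this normalization idea nor a substitute for it. In short: the reductions are right, but both of the actual estimates that make the equivalence work are absent.
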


\begin{proof} As the map $x \mapsto x \log^+ x$ is continuous,
increasing, and has value $0$ at $x = 0$, we have by
Lemma~\ref{lemma:f*map} that $\int_X |f| \log^+|f| \, d\rho$ is
finite if and only if $\int_0^1 f^*(t) \log^+ f^*(t) \, dt$ is
finite.  On the other hand, changing the order of integration shows

\begin{equation*}
\int_0^1 f^{**}(t) \, dt = \int_0^1 f^*(s) \int_s^1 \frac{1}{t} \,
dt \, ds = \int_0^1 f^*(s) \log(1/s) \, ds.
\end{equation*}

Assume $\int_0^1 f^*(t) \log^+ f^*(t) \, dt$ is finite. Let $E
= \{t \in (0,1) : f^*(t)
> t^{-1/2}\}$ and $F = (0,1) - E$.  Then,

\begin{equation*}
\begin{split}
\int_0^1 f^*(t) \log(1/t) \, dt & \leq \int_E f^*(t) \log( f^*(t)^2)
\, dt + \int_F t^{-1/2} \log(1/t) \, dt \\
&\leq 2 \int_0^1 f^*(t) \log^+ f^*(t) + \int_0^1 t^{-1/2} \log(1/t)
\, dt \\
&= 2 \int_0^1 f^*(t) \log^+ f^*(t) + 4 < \infty.
\end{split}
\end{equation*}

Now suppose $\int_0^1 f^{**}(t) \, dt$ is finite.  Then, $\|f\|_1 =
\int_0^1 f^*(t) \, dt \leq \int_0^1 f^{**}(t) \, dt < \infty$.  If
$\|f\|_1 = 0$ there is nothing to prove, so assume otherwise.  Let
$g = f/\|f\|_1$ so that $\|g\|_1 = 1$. Then, $g^*(t) \leq g^{**}(t)
\leq \|g\|_1/t = 1/t$. Also,

\begin{equation*}
\begin{split}
\int_0^1 g^*(t) \log^+ g^*(t) \, dt &\leq \int_0^1 g^*(t) \log^+
(1/t) \, dt = \int_0^1 g^*(t) \log (1/t) \, dt \\
&= \frac{1}{\|f\|_1} \int_0^1 f^*(t)
\log(1/t) \, dt < \infty \\
\end{split}
\end{equation*}

\noindent But,

\begin{equation*}
\begin{split}
\int_0^1 f^*(t) \log^+ f^*(t) \, dt &= \|f\|_1 \int_0^1 g^*(t)
\log^+( \|f\|_1 g^*(t)) \, dt \\
&\leq \|f\|_1 \Big[ \int_0^1 g^*(t) \log^+ \|f\|_1 \, dt + \int_0^1
g^*(t) \log^+ g^*(t) \, dt \Big] \\
&= \|f\|_1 \Big[ \log^+ \|f\|_1 + \int_0^1 g^*(t) \log^+ g^*(t) \,
dt \Big] < \infty.
\end{split}
\end{equation*}
\end{proof}

The quantity $\int_X |f| \log^+ |f| \, d\rho$ is often taken as the
definition of $\|\cdot\|_{\llogl}$.  Indeed, this quantity naturally
arises in many arguments.  However, it is clearly not a norm, and
makes any deep analysis difficult.

Our next goal is to show how $\llogl$ is related to $L^p$.  First,
we prove a special case of Hardy's
inequality~\cite{hardyinequality}.

\begin{lemma}\label{lemma:hardy} Let $1 < p < \infty$ and $\psi$ be
a nonnegative, measurable function on $(0,1)$.  Then,

\begin{equation*}
\bigg[ \int_0^1 \Big( \frac{1}{t} \int_0^t \psi(s) \, ds \Big)^p dt
\bigg]^{1/p} \lesssim \Big( \int_0^1 \psi(s)^p \, ds \Big)^{1/p},
\end{equation*}

\noindent where the underlying constants depend only on $p$.
\end{lemma}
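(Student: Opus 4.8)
The plan is to reduce the inequality to the generalized Minkowski integral inequality (already cited from Lieb--Loss~\cite{lieb} or Rudin~\cite{rudin}) by means of a scaling change of variables. First I would dispose of the trivial case: if $\int_0^1 \psi(s)^p \, ds = \infty$ there is nothing to prove, so we may assume $\psi \in L^p(0,1)$.

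The key observation is that, for each $t \in (0,1)$, the substitution $s = tu$ gives $\frac{1}{t} \int_0^t \psi(s) \, ds = \int_0^1 \psi(tu) \, du$. Hence the left-hand side of the asserted inequality is equal to $\big( \int_0^1 ( \int_0^1 \psi(tu) \, du )^p \, dt \big)^{1/p}$. Applying the generalized Minkowski inequality, in the form
\begin{equation*}
\bigg( \int_0^1 \Big( \int_0^1 g(t,u) \, du \Big)^p dt \bigg)^{1/p} \leq \int_0^1 \bigg( \int_0^1 g(t,u)^p \, dt \bigg)^{1/p} du
\end{equation*}
valid for nonnegative $g$ and $1 \leq p < \infty$, with $g(t,u) = \psi(tu)$, we bound this by $\int_0^1 \big( \int_0^1 \psi(tu)^p \, dt \big)^{1/p} \, du$.

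Now, for fixed $u \in (0,1)$, a further change of variables $v = tu$ yields $\int_0^1 \psi(tu)^p \, dt = \frac{1}{u} \int_0^u \psi(v)^p \, dv \leq \frac{1}{u} \int_0^1 \psi(v)^p \, dv = \frac{1}{u} \|\psi\|_p^p$. Substituting this back, the upper bound becomes $\|\psi\|_p \int_0^1 u^{-1/p} \, du = \frac{p}{p-1} \|\psi\|_p$, where the integral converges precisely because $p > 1$. This establishes the inequality with underlying constant $p/(p-1)$, depending only on $p$, as claimed.

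The only genuine subtlety is the measurability of $(t,u) \mapsto \psi(tu)$ on $(0,1)^2$, which is needed to legitimately invoke Minkowski (equivalently Tonelli); this follows since $(t,u) \mapsto tu$ is continuous and $\psi$ is measurable --- one may pass to a Borel representative of $\psi$, or approximate $\psi$ from below by nonnegative simple functions and apply the monotone convergence theorem, which is compatible with the nonnegativity hypothesis. Apart from this bookkeeping, the argument is entirely routine, so I do not anticipate any real obstacle.
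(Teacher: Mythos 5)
Your proof is correct, and it takes a genuinely different route from the one in the paper. The paper establishes the inequality by a direct Hölder--Fubini argument: one splits the integrand as $\psi(s) = s^{-1/(pp')}\cdot s^{1/(pp')}\psi(s)$, applies Hölder inside the average to pull out a factor $t^{-1/p-1/(pp')}$, raises to the $p$-th power, and then switches the order of integration in the resulting double integral. Your argument instead exposes the scaling structure of the Hardy operator: the substitution $s=tu$ identifies $\frac{1}{t}\int_0^t\psi$ with the average $\int_0^1\psi(tu)\,du$, after which the generalized Minkowski integral inequality (which the paper already cites from Lieb--Loss/Rudin in another context) and a second change of variables reduce the problem to $\int_0^1 u^{-1/p}\,du < \infty$. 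Both methods yield the same constant $p/(p-1)$, which is sharp for the one-sided averaging operator. Your approach is conceptually cleaner and makes manifest why the inequality fails at $p=1$ (the final integral diverges); the paper's is slightly more elementary in that it only invokes Hölder and Fubini, keeping the lemma self-contained. Either would serve the purposes of the text equally well. Your remark on the joint measurability of $(t,u)\mapsto\psi(tu)$ is a fair point of bookkeeping, and the monotone-approximation fallback you sketch is the standard way to dispatch it.
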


\begin{proof} Fix $p$.  Let $p'$ be the conjugate exponent of $p$;
that is, $\frac{1}{p} + \frac{1}{p'} = 1$.  Write $\psi(s) =
[s^{-1/pp'}] [ s^{1/pp'} \psi(s)]$ and apply H\"{o}lder to see

\begin{equation*}
\begin{split}
\frac{1}{t} \int_0^t \psi(s) \, ds &\leq \Big( \frac{1}{t} \int_0^t
s^{-1/p} \, ds \Big)^{1/p'} \Big( \frac{1}{t} \int_0^t s^{1/p'}
\psi(s)^p \, ds \Big)^{1/p} \\
&= p'^{1/p'} t^{-1/p - 1/pp'} \Big( \int_0^t s^{1/p'} \psi(s)^p \,
ds \Big)^{1/p}.
\end{split}
\end{equation*}

\noindent Thus,

\begin{equation*}
\begin{split}
\int_0^1 \Big( \frac{1}{t} \int_0^t \psi(s) \, ds \Big)^p \, dt
&\leq p'^{p/p'} \int_0^1 t^{-1 - 1/p'} \int_0^t s^{1/p'} \psi(s)^p
\, ds \, dt \\
&= p'^{p/p'} \int_0^1 s^{1/p'} \psi(s)^p \int_s^1 t^{-1 - 1/p'} \,
dt \, ds  \\
&= p'^{p/p'} \int_0^1 s^{1/p'} \psi(s)^p \big[ p' (s^{-1/p'} - 1)
\big] \, ds \\
&\leq p'^{p/p'} \int_0^1 s^{1/p'} \psi(s)^p \big[ p' s^{-1/p'} \big]
\, ds \\
&= p'^p \int_0^1 \psi(s)^p \, ds.
\end{split}
\end{equation*}
\end{proof}

\begin{thm}\label{thm:lloglp} For any $1 < p \leq \infty$, $L^p(X)
\subseteq \llogl(X) \subseteq L^1(X)$, with $\|f\|_1 \leq
\|f\|_{\llogl} \lesssim \|f\|_p$.  \end{thm}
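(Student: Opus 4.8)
The plan is to prove the two set-inclusions separately, each witnessed by the corresponding norm inequality, since $\|g\|_{\llogl} \leq \|f\|_{\llogl}$ whenever $|g| \leq |f|$ a.e.\ guarantees that a norm bound suffices for the inclusion. The easy inclusion is $\llogl(X) \subseteq L^1(X)$ with $\|f\|_1 \leq \|f\|_{\llogl}$: since $f^* \leq f^{**}$ pointwise (a property of the 2-star operator recorded earlier) and $f^*$ is supported in $[0,1]$ on a probability space, we get
\begin{equation*}
\|f\|_1 = \|f^*\|_1 = \int_0^1 f^*(t) \, dt \leq \int_0^1 f^{**}(t) \, dt = \|f\|_{\llogl}.
\end{equation*}

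For the other inclusion, $L^p(X) \subseteq \llogl(X)$ with $\|f\|_{\llogl} \lesssim \|f\|_p$, first I would dispose of the case $p = \infty$ trivially: $f^{**}(t) \leq f^*(0) = \|f\|_\infty$, so $\|f\|_{\llogl} = \int_0^1 f^{**}(t)\,dt \leq \|f\|_\infty$. For $1 < p < \infty$, the idea is to apply the special case of Hardy's inequality just proved (Lemma~\ref{lemma:hardy}) with $\psi = f^*$, which is a nonnegative measurable function on $(0,1)$. That lemma gives
\begin{equation*}
\bigg[ \int_0^1 \Big( \tfrac{1}{t} \int_0^t f^*(s) \, ds \Big)^p dt \bigg]^{1/p} \lesssim \Big( \int_0^1 f^*(s)^p \, ds \Big)^{1/p} = \|f^*\|_p = \|f\|_p,
\end{equation*}
using Corollary~\ref{cor:f*p} for the last equality. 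The left side is exactly $\|f^{**}\|_{L^p(0,1)}$. Then, since $(0,1)$ has total measure $1$, H\"older's inequality (or Jensen) yields $\|f^{**}\|_{L^1(0,1)} \leq \|f^{**}\|_{L^p(0,1)}$, hence
\begin{equation*}
\|f\|_{\llogl} = \int_0^1 f^{**}(t) \, dt = \|f^{**}\|_{L^1(0,1)} \leq \|f^{**}\|_{L^p(0,1)} \lesssim \|f\|_p.
\end{equation*}
Chaining, $\|f\|_1 \leq \|f\|_{\llogl} \lesssim \|f\|_p$, and the inclusions follow.

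There is no serious obstacle here; the work has all been front-loaded into Lemma~\ref{lemma:hardy} and the basic properties of $f^*$ and $f^{**}$. The only point requiring a little care is making sure the probability-space hypothesis is invoked where needed — it is used both in restricting $f^*$ to $[0,1]$ (so the $L^1(0,1)$ norm of $f^{**}$ really is $\|f\|_{\llogl}$) and in passing from $L^p(0,1)$ to $L^1(0,1)$ control, which fails on an infinite measure space. I would also note explicitly that the constant in $\|f\|_{\llogl} \lesssim \|f\|_p$ depends only on $p$ (it is $p'$ from Hardy's inequality, the $L^p$-to-$L^1$ embedding on $(0,1)$ being norm-one), matching the claim that the underlying constant depends only on $p$.
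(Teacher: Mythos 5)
Your proof is correct and follows essentially the same route as the paper's: Hardy's inequality (Lemma~\ref{lemma:hardy}) applied to $\psi = f^*$, combined with the $L^p(0,1) \hookrightarrow L^1(0,1)$ embedding on the probability space $(0,1)$, and the trivial bound $\|f\|_1 = \int_0^1 f^* \leq \int_0^1 f^{**} = \|f\|_{\llogl}$. In fact your write-up is slightly more complete than the paper's, which states the result for $1 < p \leq \infty$ but only supplies an argument for $1 < p < \infty$; your explicit handling of $p = \infty$ via $f^{**}(t) \leq f^*(0) = \|f\|_\infty$ closes that small gap.
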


\begin{proof} Fix $f : \mathbb{T} \rightarrow \mathbb{C}$.  We
have trivially that $\|f\|_1 = \int_0^1 f^*(t) \, dt \leq \int_0^1
f^{**}(t) \, dt = \|f\|_{\llogl}$.

Now let $1 < p < \infty$.  First, as $(0,1)$ is a probability space,
we have by H\"{o}lder that $\|f\|_{\llogl} \leq (\int_0^1
f^{**}(t)^p \, dt)^{1/p}$.  Now apply Hardy's inequality with
$\psi(t) = f^*(t)$ to see $(\int_0^1 f^{**}(t)^p \, dt)^{1/p}
\lesssim (\int_0^1 f^*(t)^p \, dt)^{1/p} = \|f\|_p$.
\end{proof}

The principal reason for defining $\llogl$ as we have is the ease in
which we gain interpolation results.

\begin{thm} Let $T$ be a sublinear operator which maps $L^1(X)
\rightarrow L^{1,\infty}(X)$ and $L^p(X) \rightarrow
L^{q,\infty}(X)$, for some $1 < p, q < \infty$.  Then, $T :
\llogl(X) \rightarrow L^1(X)$.
\end{thm}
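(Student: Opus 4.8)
The plan is to reduce everything to the pointwise rearrangement estimate of Lemma~\ref{lemma:weak} and then integrate. Since $X$ is a probability space, Theorem~\ref{thm:lloglp} gives $\llogl(X) \subseteq L^1(X)$, so $T$ is defined on $\llogl(X)$; moreover for any $g \in L^{1,\infty}(X)$ the rearrangement $g^*$ is supported in $[0,1]$, so by Corollary~\ref{cor:f*p} we have $\|Tf\|_1 = \int_0^\infty (Tf)^*(t)\,dt = \int_0^1 (Tf)^*(t)\,dt$ and it suffices to bound this. I would apply Lemma~\ref{lemma:weak} with $p_0 = q_0 = 1$, $p_1 = p$, $q_1 = q$; its hypotheses $1 \le p_0 < p_1 < \infty$, $1 \le q_0, q_1 < \infty$, $q_0 \ne q_1$ all hold, the last because $q > 1$. (A small point: the proof of Lemma~\ref{lemma:weak} only uses the truncations of $f$, which lie in $L^1(X)$ and in $L^{p,1}(X)$ on the probability space $X$, so it applies to every $f \in L^1(X)$, in particular to $f \in \llogl(X)$.) Writing $m := (1 - q^{-1})(1 - p^{-1})^{-1} > 0$, this yields
\begin{equation*}
(Tf)^*(t) \lesssim t^{-1}\int_0^{t^m} f^*(s)\,ds + t^{-1/q}\int_{t^m}^{1} s^{1/p - 1} f^*(s)\,ds, \qquad 0 < t < 1,
\end{equation*}
where I have replaced the upper limit $\infty$ in the second integral by $1$ since $f^*$ vanishes past $1$.

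Next I would integrate in $t$ over $(0,1)$ and interchange the order of integration (everything is nonnegative, so Tonelli applies). For the first term, for fixed $s \in (0,1)$ the variable $t$ ranges over $(s^{1/m}, 1)$, giving
\begin{equation*}
\int_0^1 t^{-1}\int_0^{t^m} f^*(s)\,ds\,dt = \frac{1}{m}\int_0^1 f^*(s)\log(1/s)\,ds = \frac{1}{m}\|f\|_{\llogl},
\end{equation*}
where the last equality is the identity $\|f\|_{\llogl} = \int_0^1 f^{**}(t)\,dt = \int_0^1 f^*(s)\log(1/s)\,ds$, itself obtained by exchanging the order of integration. For the second term, for fixed $s$ the variable $t$ ranges over $(0, s^{1/m})$, and since $q > 1$,
\begin{equation*}
\int_0^{s^{1/m}} t^{-1/q}\,dt = \frac{q}{q-1}\, s^{(1 - 1/q)/m} = \frac{q}{q-1}\, s^{1 - 1/p},
\end{equation*}
the exponent cancellation $(1 - 1/q)/m = 1 - 1/p$ being immediate from the definition of $m$. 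Hence the second term collapses to $\frac{q}{q-1}\int_0^1 s^{1/p-1} s^{1-1/p} f^*(s)\,ds = \frac{q}{q-1}\int_0^1 f^*(s)\,ds = \frac{q}{q-1}\|f\|_1$.

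Combining the two pieces, $\|Tf\|_1 \lesssim \frac{1}{m}\|f\|_{\llogl} + \frac{q}{q-1}\|f\|_1 \lesssim \|f\|_{\llogl}$, the last step using $\|f\|_1 \le \|f\|_{\llogl}$ from Theorem~\ref{thm:lloglp}; the implied constants depend only on $p$, $q$, and the operator norms of $T$. There is no genuinely hard step: the interpolation content is already packaged in Lemma~\ref{lemma:weak}, and the hypothesis $1 < q < \infty$ rules out the degenerate exponent $q = 1$ (which would force $m = 0$ and break the computation). The only things requiring care are the two order-of-integration exchanges and the exponent identity $(1-1/q)/m = 1 - 1/p$, which is exactly what makes the powers of $s$ in the second term cancel and leave $\|f\|_1$.
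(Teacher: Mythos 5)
Your proposal is correct and follows essentially the same route as the paper: apply Lemma~\ref{lemma:weak} with $p_0=q_0=1$, $p_1=p$, $q_1=q$, integrate the pointwise rearrangement bound over $(0,1)$, and use the cancellation $(1-1/q)/m = 1-1/p$ to reduce the second term to $\|f\|_1 \le \|f\|_{\llogl}$. Your only deviation is cosmetic — you apply Tonelli directly to the first term (using the identity $\|f\|_{\llogl} = \int_0^1 f^*(s)\log(1/s)\,ds$) where the paper substitutes $u=t^m$ to reach $\int_0^1 f^{**}(u)\,du$, but these are the same computation.
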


\begin{proof} Set $m = (\frac{1}{q} - 1) (\frac{1}{p} - 1)^{-1}$,
which is positive and finite. By Lemma~\ref{lemma:weak},

\begin{equation*}
(Tf)^*(t) \lesssim \bigg[ \frac{1}{t} \int_0^{t^m} f^*(s) \, ds +
t^{-1/q} \int_{t^m}^1 s^{1/p} f^*(s) \, \frac{ds}{s} \bigg],
\end{equation*}

\noindent for all $0 < t < 1$.  Note, the second integral's upper
limit is now 1, instead of $\infty$, as $f^*$ is supported on
$[0,1]$.  A simple change of variables gives

\begin{equation*}
\begin{split}
\int_0^1 \frac{1}{t} \int_0^{t^m} f^*(s) \, ds \, dt &= \frac{1}{m}
\int_0^1 \frac{1}{u} \int_0^u f^*(s) \, ds \, du
\\
&= \frac{1}{m} \int_0^1 f^{**}(u) \, du = \frac{1}{m} \|f\|_{L \log
L}.
\end{split}
\end{equation*}

\noindent On the other hand, using Fubini,

\begin{equation*}
\begin{split}
\int_0^1 t^{-1/q} \int_{t^m}^1 s^{1/p-1} f^*(s) \, ds \,
dt &= \int_0^1 s^{1/p-1} f^*(s) \int_0^{s^{1/m}} t^{-1/q} \, dt \, ds \\
&= \frac{1}{1 - 1/q} \int_0^1 s^{1/p - 1} s^{1/m - 1/mq} f^*(s)
\, ds \\
&= \frac{1}{1 - 1/q} \int_0^1 f^*(s) \, ds \\
&\leq \frac{1}{1 - 1/q} \int_0^1 f^{**}(s) \, ds = \frac{1}{1 - 1/q}
\|f\|_{\llogl}.
\end{split}
\end{equation*}

\noindent Hence,

\begin{equation*}
\|Tf\|_1 = \int_0^1 (Tf)^*(t) \, dt \lesssim \left( \frac{1}{m} +
\frac{1}{1 - 1/q} \right) \|f\|_{\llogl}.
\end{equation*}
\end{proof}

\begin{cor}\label{cor:f-sllogl} Let $T$ be a sublinear operator.  If
for some $1 < p, r < \infty$

\begin{gather*}
\bigg\| \Big( \sum_{k=1}^\infty |Tf_k|^r \Big)^{1/r}
\bigg\|_{1,\infty} \lesssim \bigg\| \Big( \sum_{k=1}^\infty |f_k|^r
\Big)^{1/r} \bigg\|_1 \quad \text{and} \\
\bigg\| \Big( \sum_{k=1}^\infty |Tf_k|^r \Big)^{1/r} \bigg\|_{p}
\lesssim \bigg\| \Big( \sum_{k=1}^\infty |f_k|^r \Big)^{1/r}
\bigg\|_{p},
\end{gather*}

\noindent then

\begin{gather*}
\bigg\| \Big( \sum_{k=1}^\infty |Tf_k|^r \Big)^{1/r} \bigg\|_1
\lesssim \bigg\| \Big( \sum_{k=1}^\infty |f_k|^r \Big)^{1/r}
\bigg\|_{\llogl}.
\end{gather*}
\end{cor}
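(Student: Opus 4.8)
The plan is to recognize this as the $\ell^{r}$-valued version of the preceding $\llogl$ interpolation theorem. Put $B = \ell^{r}$ and, exactly as $\overline{M}$ was introduced in the proof of Theorem~\ref{thm:fs3}, define $\overline{T}$ on $\mathcal{M}(X,B)$ by $\overline{T}(f_{1},f_{2},\ldots) = (Tf_{1},Tf_{2},\ldots)$. Minkowski's inequality in $\ell^{r}$ together with the sublinearity of $T$ shows that $\overline{T}$ is sublinear on $\mathcal{M}(X,B)$ in the sense of Theorem~\ref{thm:biginterpolation}. Since $\|f(x)\|_{B} = \bigl(\sum_{k}|f_{k}(x)|^{r}\bigr)^{1/r}$, the two hypotheses say precisely that $\overline{T}\colon L^{1}_{B}(X)\to L^{1,\infty}_{B}(X)$ and $\overline{T}\colon L^{p}_{B}(X)\to L^{p}_{B}(X)$, and the conclusion to be proved is $\|\overline{T}f\|_{1,B}\lesssim \|g\|_{\llogl}$, where $g(x) = \|f(x)\|_{B}$.

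The proof is then the Banach-valued transcription of the preceding theorem, that is, of Lemma~\ref{lemma:weak} and the short computation following it. Interpreting the decreasing rearrangement of a $B$-valued function $h$ as that of the scalar $x\mapsto\|h(x)\|_{B}$, all the rearrangement and Lorentz-space facts proved earlier in this chapter apply unchanged. The one step in Lemma~\ref{lemma:weak} sensitive to the scalar/vector distinction is the splitting $f = f_{0}+f_{1}$ at height $g^{*}(t^{m})$; in the $B$-valued setting I would take
\begin{equation*}
f_{0}(x) = \Bigl(1 - \frac{g^{*}(t^{m})}{\|f(x)\|_{B}}\Bigr)_{+}\,f(x), \qquad f_{1} = f - f_{0},
\end{equation*}
reading the coefficient as $0$ when $f(x)=0$, so that $\|f_{0}(x)\|_{B} = \bigl(g(x)-g^{*}(t^{m})\bigr)_{+}$ and $\|f_{1}(x)\|_{B} = \min\bigl(g(x),g^{*}(t^{m})\bigr)$. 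These are the scalar formulas of Lemma~\ref{lemma:weak} with $|f|$ replaced by $g$, and both are continuous nondecreasing functions of $g$ vanishing at $0$, so the identities $(f_{0})^{*}(s) = \bigl(g^{*}(s)-g^{*}(t^{m})\bigr)_{+}$ and $(f_{1})^{*}(s) = \min\bigl(g^{*}(s),g^{*}(t^{m})\bigr)$ persist, and the rest of the proof of Lemma~\ref{lemma:weak} is verbatim.

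Applying this $B$-valued form of Lemma~\ref{lemma:weak} to $\overline{T}$ with $(p_{0},q_{0}) = (1,1)$ and $(p_{1},q_{1}) = (p,p)$ --- so $m = 1$, and here one uses $\overline{T}\colon L^{p}_{B}\to L^{p,\infty}_{B}$, which follows from the strong bound --- gives, for $0<t<1$,
\begin{equation*}
\bigl(\|\overline{T}f(\cdot)\|_{B}\bigr)^{*}(t) \;\lesssim\; g^{**}(t) + t^{-1/p}\int_{t}^{1}s^{1/p-1}g^{*}(s)\,ds,
\end{equation*}
where the upper limit $1$ comes from $g^{*}$ being supported in $[0,1]$ on the probability space $X$. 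Integrating over $t\in(0,1)$, the first term contributes $\int_{0}^{1}g^{**}(t)\,dt = \|g\|_{\llogl}$, and by Tonelli the second contributes
\begin{equation*}
\int_{0}^{1}s^{1/p-1}g^{*}(s)\int_{0}^{s}t^{-1/p}\,dt\,ds = \frac{1}{1-1/p}\int_{0}^{1}g^{*}(s)\,ds = \frac{1}{1-1/p}\|g\|_{1} \le \frac{1}{1-1/p}\|g\|_{\llogl},
\end{equation*}
the last inequality by Theorem~\ref{thm:lloglp}. Hence $\|\overline{T}f\|_{1,B} = \int_{0}^{1}\bigl(\|\overline{T}f(\cdot)\|_{B}\bigr)^{*}(t)\,dt \lesssim \|g\|_{\llogl}$, which is the asserted estimate once both sides are rewritten in terms of the $f_{k}$.

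The only genuine work is the remark in the second paragraph: checking that the decreasing-rearrangement identities underlying Lemma~\ref{lemma:weak} survive the passage to $B$-valued functions. The scalar-multiple truncation above is exactly what reduces this to the scalar case, so I expect the write-up to be short; everything else is bookkeeping already done for the scalar $\llogl$ interpolation theorem.
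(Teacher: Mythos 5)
Your proposal is correct and takes essentially the same approach as the paper: set $B=\ell^r$, form $\overline T(f_1,f_2,\ldots)=(Tf_1,Tf_2,\ldots)$, interpret the decreasing rearrangement of a $B$-valued function as that of $x\mapsto\|f(x)\|_B$, and invoke the Banach-valued form of the preceding $\llogl$ interpolation theorem (i.e.\ Lemma~\ref{lemma:weak} plus the short integration in $t$). The paper simply asserts that the chapter's rearrangement machinery carries over verbatim to $B$-valued functions, whereas you helpfully spell out the one step that requires a genuine remark---the scalar-multiple truncation $f=f_0+f_1$---and verify it yields the same formulas for $(f_0)^*$ and $(f_1)^*$; your write-up is a more careful version of the same argument, not a different one.
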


\begin{proof} Recall Banach-valued functions $f \in \mathcal{M}(X,
B)$ as in Theorem~\ref{thm:biginterpolation}.  Although we did not
do so for stylistic purposes, this chapter could have been presented
in this more general setting.  For instance, for $f \in
\mathcal{M}(X,B)$, define $\mu_f(\lambda) = \rho\{x \in X :
\|f(x)\|_B > \lambda\}$ and $f^*(t) = \inf \{\lambda \geq 0 :
\mu_f(\lambda) \leq t\}$.  In this manner, we could redo this entire
chapter replacing $\mathbb{C}$ and $|\cdot|$ with $B$ and
$\|\cdot\|_B$, and everything would follow as before.

Specifically, the previous theorem holds; if $T$ is sublinear
operator mapping $L^{1}_B(X)$ to $L^{1,\infty}_B(X)$ and $L^p_B(X)$
to $L^{q,\infty}_B(X)$, then $T : \llogl_B(X) \rightarrow L^1_B(X)$.
But, simply by definition, $f^*(t) = (\|f\|_B)^*(t)$, where
$(\|f\|_B)^*$ is understood as the decreasing rearrangement of the
map $x \mapsto \|f(x)\|_B$. Thus, $\|f\|_{\llogl_B} = \big\| \|f\|_B
\big\|_{\llogl}$.

Let $B = \ell^r$.  For $f \in \mathcal{M}(X, B)$, let
$\overline{T}(f) = (Tf_1, Tf_2, \ldots)$, which is sublinear,
because $T$ is. By hypothesis, $\overline{T} : L^1_B(X) \rightarrow
L^{1,\infty}_B(X)$ and $L^p_B(X) \rightarrow L^p_B(X)$.  Thus,
$\overline{T} : \llogl_B(X) \rightarrow L^1_B(X)$, which is what we
wanted to prove.
\end{proof}

\section{The n-Star Operator and $\llogln$}

To extend the definition of $\llogl$, we first must extend the
definition of the 2-star operator.  We remain with the convention
that $(X,\rho)$ is a probability space.

\begin{defnn} For $f : (X,\rho) \rightarrow \mathbb{C}$, let
$f^{(*,1)}(t) = f^*(t)$ and for integers $n \geq 2$, set
$f^{(*,n)}(t) = \frac{1}{t} \int_0^t f^{(*,n-1)}(s) \, ds$.
\end{defnn}

\begin{prop} For any $f, f_k, g : (X, \rho) \rightarrow \mathbb{C}$ and
$\alpha \in \mathbb{C}$,

\begin{enumerate}
\item $f^{(*,n)}$ is nonnegative, decreasing, and identically 0 if and
only if $f = 0$ a.e.[$\rho$],

\item $f^{(*,n)} \leq f^{(*,n+1)}$,

\item $|f| \leq |g|$ a.e.[$\rho$] implies $f^{(*,n)} \leq g^{(*,n)}$ pointwise,

\item $(\alpha f)^{(*,n)} = |\alpha| f^{(*,n)}$,

\item $|f_k| \uparrow |f|$ a.e.[$\rho$] implies $f_k^{(*,n)} \uparrow
f^{(*,n)}$ pointwise,

\item $(f+g)^{(*,n)} \leq f^{(*,n)} + g^{(*,n)}$ ($n \geq 2$ only).
\end{enumerate}
\end{prop}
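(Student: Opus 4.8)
The plan is to exploit the inductive structure of the definition: $f^{(*,n)}$ is obtained from $f^{(*,n-1)}$ by applying the averaging operator $A\psi(t) = \frac{1}{t}\int_0^t \psi(s)\,ds$, acting on nonnegative measurable functions on $(0,1)$, and every one of the six assertions is preserved under $A$. Concretely, $A$ is linear and positivity-preserving — hence monotone, so $\psi_1 \le \psi_2$ implies $A\psi_1 \le A\psi_2$ — it commutes with increasing a.e.\ limits by the monotone convergence theorem, it satisfies $A(\alpha\psi) = \alpha A\psi$ for $\alpha \ge 0$, and when $\psi$ is additionally nonnegative and decreasing it sends $\psi$ to a nonnegative decreasing function with $\psi \le A\psi$ and $A\psi \equiv 0$ if and only if $\psi \equiv 0$. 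All of these facts about $A$ were already established, for the special case $n = 2$, in the proof of the proposition on the $2$-star operator, and the arguments there used nothing about $\psi$ beyond the listed hypotheses; I would simply quote them.

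First I would prove (1) by induction on $n$. The base case $n = 1$ is Proposition~\ref{prop:equi}(1). Assuming $f^{(*,n-1)}$ is nonnegative, decreasing, and identically $0$ iff $f = 0$ a.e., I apply $A$: since $f^{(*,n)} = A(f^{(*,n-1)})$ and $f^{(*,n-1)}$ is nonnegative and decreasing, $f^{(*,n)}$ is nonnegative and decreasing, and $f^{(*,n)} \equiv 0$ iff $f^{(*,n-1)} \equiv 0$ iff $f = 0$ a.e. With (1) in hand, (2) is immediate: $f^{(*,n)}$ is nonnegative and decreasing, so $f^{(*,n+1)} = A(f^{(*,n)}) \ge f^{(*,n)}$. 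Likewise (3), (4), and (5) each follow by a one-step induction whose base cases are Proposition~\ref{prop:equi}(2), (5), (6) respectively, and whose inductive steps are exactly the statements that $A$ is monotone, positively homogeneous, and continuous along increasing limits, applied to $f^{(*,n-1)}$ and $g^{(*,n-1)}$.

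The only assertion requiring a separate idea is (6). Here I would use that for $n \ge 2$ one has $f^{(*,n)} = A^{n-2}(f^{**})$, where $A^{n-2}$ is an honest linear operator preserving pointwise inequalities, and write
\begin{equation*}
(f+g)^{(*,n)} = A^{n-2}\big((f+g)^{**}\big) \le A^{n-2}\big(f^{**} + g^{**}\big) = A^{n-2}(f^{**}) + A^{n-2}(g^{**}) = f^{(*,n)} + g^{(*,n)},
\end{equation*}
where the inequality combines monotonicity of $A^{n-2}$ with subadditivity of the $2$-star operator, Theorem~\ref{thm:2starlinear}, and the middle equality is linearity of $A^{n-2}$. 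This is also where the restriction $n \ge 2$ enters, since $f^*$ only satisfies the weaker $(f+g)^*(t_1+t_2) \le f^*(t_1) + g^*(t_2)$ of Proposition~\ref{prop:equi}(4). The one place that is not pure bookkeeping is thus (6), but the real work there has already been done in Theorem~\ref{thm:2starlinear}; the subtlety is only to route the inequality through the linear operator $A^{n-2}$ rather than attempting to prove subadditivity from scratch at level $n$. No genuinely new estimate is needed.
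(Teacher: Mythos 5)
Your proposal is correct and takes essentially the same route as the paper: induction on $n$ with base cases from Proposition~\ref{prop:equi} (and Theorem~\ref{thm:2starlinear} for item (6)), the inductive step being exactly that the averaging operator preserves each property. Packaging the inductive step as properties of the operator $A$ (and writing $f^{(*,n)}=A^{n-2}(f^{**})$ for (6)) is a clean way to organize what the paper dispatches with ``all other properties are easily established by induction.''
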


\begin{proof} It is known that $f^{(*,1)} = f^*$ is decreasing.
Assume $f^{(*,n-1)}$ is decreasing.  Let $0 < t_1 < t_2$.  Then,
$f^{(*,n-1)}(s) \leq f^{(*,n-1)}(s t_1/t_2)$ for any $s > 0$.  Thus,

\begin{equation*}
\begin{split}
f^{(*,n)}(t_2) &= \frac{1}{t_2} \int_0^{t_2} f^{(*,n-1)}(s) \, ds
\leq \frac{1}{t_2} \int_0^{t_2} f^{(*,n-1)}(st_1/t_2) \, ds \\
&= \frac{1}{t_1} \int_0^{t_1} f^{(*,n-1)}(u) \, du = f^{(*,n)}(t_1).
\end{split}
\end{equation*}

\noindent By induction, $f^{(*,n)}$ is decreasing.  This gives

\begin{equation*}
f^{(*,n+1)}(t) = \frac{1}{t} \int_0^t f^{(*,n)}(s) \, ds \geq
f^{(*,n)}(t) \frac{1}{t} \int_0^t \, ds = f^{(*,n)}(t).
\end{equation*}

\noindent All other properties are easily established by induction
and that each is known to hold for $n = 1$ (or $n = 2$ in the case
of (6)).  \end{proof}

\begin{defnn} For functions $f : (X, \rho) \rightarrow
\mathbb{C}$ and integers $n \geq 0$, define $\|f\|_{\llogln}$ by

\begin{equation*}
\|f\|_{\llogln} = \int_0^1 f^{(*,n+1)}(t) \, dt.
\end{equation*}

\noindent Define the Zygumnd space $\llogln(X)$ as the set of all
functions $f$ with $\|f\|_{\llogln} < \infty$.  \end{defnn}

We note that $L(\log L)^0(X) = L^1(X)$, which is a useful notational
shortcut.  As before, it is clear that $\llogln(X)$ is a Banach
space, and $\|\cdot\|_{\llogln}$ is a norm with the additional
properties that $|f| \leq |g|$ a.e.[$\rho$] implies $\|f\|_{\llogln}
\leq \|g\|_{\llogln}$ and $|f_k| \uparrow |f|$ a.e.[$\rho$] implies
$\|f_k\|_{\llogln} \uparrow \|f\|_{\llogln}$. Further, this
definition is related to the intuitive value, as before.

\begin{thm} $f \in \llogln(X)$ if and only if

\begin{equation*}
\int_X |f(x)| (\log^+ |f(x)|)^n \, \rho(dx) < \infty.
\end{equation*}
\end{thm}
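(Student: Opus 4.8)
The plan is to reduce the equivalence, just as in the $\llogl$ case ($n=1$) already treated above, to an elementary comparison of two weighted integrals of the decreasing rearrangement $f^{*}$ over $[0,1]$. The case $n=0$ is immediate, since both conditions then reduce to $f\in L^{1}(X)$, so I would assume $n\geq 1$ throughout.

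First I would establish the closed form
\begin{equation*}
\|f\|_{\llogln}=\int_{0}^{1}f^{(*,n+1)}(t)\,dt=\frac{1}{n!}\int_{0}^{1}f^{*}(s)\,\big(\log(1/s)\big)^{n}\,ds .
\end{equation*}
Writing $Ag(t)=\frac{1}{t}\int_{0}^{t}g(s)\,ds$ for the Hardy averaging operator, one has $f^{(*,n+1)}=A^{n}f^{*}$, and an easy induction shows $A^{n}g(t)=\frac{1}{(n-1)!}\,\frac{1}{t}\int_{0}^{t}g(s)\big(\log(t/s)\big)^{n-1}\,ds$. Integrating over $t\in(0,1)$, applying Fubini, and substituting $u=\log(t/s)$ in the inner integral collapses the kernel and produces the stated factor $\frac{1}{n!}\big(\log(1/s)\big)^{n}$. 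This is routine, but it is the step where an off-by-one slip is most likely, so the induction warrants care.

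Next, applying Lemma~\ref{lemma:f*map} with $\Psi(x)=x(\log^{+}x)^{n}$ — continuous, increasing, with $\Psi(0)=0$ — and using that $f^{*}$ is supported in $[0,1]$ because $X$ is a probability space, I get $\int_{X}|f|(\log^{+}|f|)^{n}\,d\rho=\int_{0}^{1}f^{*}(t)\big(\log^{+}f^{*}(t)\big)^{n}\,dt$. Combined with the first step, the theorem becomes the equivalence: $\int_{0}^{1}f^{*}(s)\big(\log(1/s)\big)^{n}\,ds<\infty$ if and only if $\int_{0}^{1}f^{*}(s)\big(\log^{+}f^{*}(s)\big)^{n}\,ds<\infty$.

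For the backward implication I would split $(0,1)$ into $E=\{s:f^{*}(s)>s^{-1/2}\}$ and $F=(0,1)\setminus E$: on $E$ one has $\log(1/s)<2\log^{+}f^{*}(s)$, so the $E$-part is at most $2^{n}\int_{0}^{1}f^{*}(\log^{+}f^{*})^{n}$, while on $F$ one has $f^{*}(s)\leq s^{-1/2}$, so the $F$-part is at most $\int_{0}^{1}s^{-1/2}\big(\log(1/s)\big)^{n}\,ds=2^{n+1}n!<\infty$ (substitute $s=e^{-u}$). For the forward implication, note that $f^{*}\leq f^{(*,n+1)}$ forces $f\in L^{1}$; then, normalizing $g=f/\|f\|_{1}$ (the case $\|f\|_{1}=0$ being trivial), one has $g^{*}\leq g^{**}\leq 1/t$, hence $\big(\log^{+}g^{*}(t)\big)^{n}\leq\big(\log(1/t)\big)^{n}$ and $\int_{0}^{1}g^{*}(\log^{+}g^{*})^{n}<\infty$ by hypothesis; finally I would undo the normalization using $\log^{+}(ab)\leq\log^{+}a+\log^{+}b$ and $(a+b)^{n}\leq 2^{n-1}(a^{n}+b^{n})$ to bound $\int_{0}^{1}f^{*}(\log^{+}f^{*})^{n}$ by a constant multiple of $\|f\|_{1}\big[(\log^{+}\|f\|_{1})^{n}+\int_{0}^{1}g^{*}(\log^{+}g^{*})^{n}\big]$, which is finite since $\int_{0}^{1}g^{*}\,dt=\|g\|_{1}=1$. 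The only genuine obstacle is bookkeeping — tracking the $n$-dependent constants and the power-$n$ logarithm inequalities — since every ingredient mirrors the $n=1$ argument already carried out in the text.
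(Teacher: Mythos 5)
Your proposal is correct and follows essentially the same route as the paper: apply Lemma~\ref{lemma:f*map} with $\Psi(x)=x(\log^+ x)^n$, compute $\int_0^1 f^{(*,n+1)}\,dt = \frac{1}{n!}\int_0^1 f^*(s)(\log(1/s))^n\,ds$ by Fubini, then split $(0,1)$ into $\{f^*>s^{-1/2}\}$ and its complement for one direction, and normalize by $\|f\|_1$ with $g^*\leq 1/t$ for the other. The only stylistic difference is that you make the iterated-integral computation explicit via the closed form for $A^n$, whereas the paper leaves it as ``changing the order of integration several times''; the substance is identical.
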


\begin{proof} The $n = 0$ case is trivial, and the $n = 1$ is already
known.  So, fix $n \geq 2$.  As the map $x \mapsto x (\log^+ x)^n$
is continuous, increasing, and has value $0$ at $x = 0$, we have by
Lemma~\ref{lemma:f*map} that $\int_X |f| (\log^+|f|)^n \, d\rho$ is
finite if and only if $\int_0^1 f^*(t) (\log^+ f^*(t))^n \, dt$ is
finite.  On the other hand, changing the order of integration several
times shows

\begin{equation*}
\int_0^1 f^{(*,n+1)}(t) \, dt = \frac{1}{n!} \int_0^1 f^*(t)
\log(1/t)^{n} \, dt
\end{equation*}

Suppose $\int_0^1 f^*(t) (\log^+ f^*(t))^n \, dt$ is finite. Let $E = \{t
\in (0,1) : f^*(t) > t^{-1/2}\}$ and $F = (0,1) - E$.  Then,

\begin{equation*}
\begin{split}
\int_0^1 f^*(t) \log(1/t)^n \, dt & \leq \int_E f^*(t) \log(
f^*(t)^2)^n
\, dt + \int_F t^{-1/2} \log(1/t)^n \, dt \\
&\leq 2^n \int_0^1 f^*(t) \big(\log^+ f^*(t)\big)^n + \int_0^1
t^{-1/2} \log(1/t)^n \, dt \\
&= 2^n \int_0^1 f^*(t) \big(\log^+ f^*(t)\big)^n + 2^{n+1} n! <
\infty.
\end{split}
\end{equation*}

Now suppose $\int_0^1 f^{(*,n+1)}(t) \, dt$ is finite.  Then, we have
$\|f\|_1 = \int_0^1 f^*(t) \, dt \leq \int_0^1 f^{(*,n+1)}(t) \, dt
< \infty$. If $\|f\|_1 = 0$ there is nothing to prove, so assume
otherwise. Let $g = f/\|f\|_1$ so that $\|g\|_1 = 1$. Then, $g^*(t)
\leq g^{**}(t) \leq \|g\|_1/t = 1/t$. Also,

\begin{equation*}
\begin{split}
\int_0^1 g^*(t) \big( \log^+ g^*(t) \big)^n \, dt &\leq \int_0^1
g^*(t) \log^+ (1/t)^n \, dt = \int_0^1 g^*(t) \log (1/t)^n \, dt \\
&= \frac{1}{\|f\|_1} \int_0^1 f^*(t)
\log(1/t)^n \, dt < \infty \\
\end{split}
\end{equation*}

\noindent But,

\begin{equation*}
\begin{split}
\int_0^1 f^*(t) \big( \log^+ &f^*(t)\big)^n \, dt \\
&= \|f\|_1 \int_0^1 g^*(t) \big( \log^+( \|f\|_1 g^*(t)) \big)^n \, dt \\
&\lesssim \|f\|_1 \Big[ \int_0^1 g^*(t) \big( \log^+ \|f\|_1 \big)^n
\, dt + \int_0^1 g^*(t) \big( \log^+ g^*(t)\big)^n \, dt \Big] \\
&= \|f\|_1 \Big[ \big(\log^+ \|f\|_1\big)^n + \int_0^1 g^*(t) \big(
\log^+ g^*(t)\big)^n \, dt \Big] < \infty.
\end{split}
\end{equation*}

\noindent The transition from the second line to the third line
follows from the fact that $(a + b)^r \leq 2^{r-1} [a^r + b^r]$ for
any $a, b \geq 0$ and $r \in \mathbb{R}$, which is proven by
elementary calculus.
\end{proof}

\begin{thm}\label{thm:lloglnp} For any $1 < p \leq \infty$ and $n \geq 0$

\begin{equation*}
L^p(X) \subseteq L(\log L)^{n+1}(X) \subseteq \llogln(X) \subseteq
L^1(X),
\end{equation*}

\noindent with $\|f\|_1 \leq \|f\|_{\llogln} \leq \|f\|_{L(\log
L)^{n+1}} \lesssim \|f\|_p$.
\end{thm}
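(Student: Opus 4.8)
The plan is to prove the chain of norm inequalities $\|f\|_1 \leq \|f\|_{\llogln} \leq \|f\|_{L(\log L)^{n+1}} \lesssim \|f\|_p$ for all $f$; the three set inclusions then follow at once, since membership in each of $L^p(X)$, $L(\log L)^{n+1}(X)$, $\llogln(X)$, $L^1(X)$ is by definition the finiteness of the corresponding norm.

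The two left-hand inequalities are immediate from the monotonicity of the $n$-star operator. By property (2) of the preceding proposition, $f^{(*,j)} \leq f^{(*,j+1)}$ pointwise for every $j \geq 1$, so iterating gives $f^* = f^{(*,1)} \leq f^{(*,n+1)} \leq f^{(*,n+2)}$ on $(0,1)$. Integrating over $(0,1)$ and recalling that $\|f\|_1 = \|f^*\|_1 = \int_0^1 f^*(t)\,dt$ (Corollary~\ref{cor:f*p}, together with the fact that $f^*$ is supported in $[0,\rho(X)]=[0,1]$), we obtain $\|f\|_1 \leq \int_0^1 f^{(*,n+1)}(t)\,dt = \|f\|_{\llogln}$ and likewise $\|f\|_{\llogln} \leq \int_0^1 f^{(*,n+2)}(t)\,dt = \|f\|_{L(\log L)^{n+1}}$.

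For the right-hand inequality, treat $p = \infty$ first: since $f^*(t) \leq f^*(0) = \|f\|_\infty$ and $g \leq c$ on $(0,t)$ forces $\frac{1}{t}\int_0^t g(s)\,ds \leq c$, an easy induction shows $f^{(*,n+2)}(t) \leq \|f\|_\infty$ on $(0,1)$, whence $\|f\|_{L(\log L)^{n+1}} \leq \|f\|_\infty$. For $1 < p < \infty$, use that $(0,1)$ is a probability space and apply H\"older: $\|f\|_{L(\log L)^{n+1}} = \int_0^1 f^{(*,n+2)}(t)\,dt \leq \big(\int_0^1 f^{(*,n+2)}(t)^p\,dt\big)^{1/p}$. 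Now apply Hardy's inequality (Lemma~\ref{lemma:hardy}) with $\psi$ equal to the restriction of $f^{(*,j)}$ to $(0,1)$; since on $(0,1)$ one has $f^{(*,j+1)}(t) = \frac{1}{t}\int_0^t f^{(*,j)}(s)\,ds$, this reads $\|f^{(*,j+1)}\|_{L^p(0,1)} \lesssim \|f^{(*,j)}\|_{L^p(0,1)}$. Applying this successively for $j = 1, 2, \ldots, n+1$ and then invoking Corollary~\ref{cor:f*p},
\begin{equation*}
\Big( \int_0^1 f^{(*,n+2)}(t)^p \, dt \Big)^{1/p} \lesssim \Big( \int_0^1 f^*(t)^p \, dt \Big)^{1/p} = \|f^*\|_p = \|f\|_p,
\end{equation*}
where the implied constant is $C_p^{n+1}$, $C_p$ being the constant of Lemma~\ref{lemma:hardy}; this depends only on $p$ and $n$, as permitted.

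Nothing here is a deep obstacle; the only point requiring genuine care is the bookkeeping of the iteration. One must verify that at each stage the restriction of $f^{(*,j)}$ to $(0,1)$ has averaging function exactly $f^{(*,j+1)}$ on $(0,1)$ (true because the average $\frac{1}{t}\int_0^t$ only depends on values in $[0,t]\subseteq[0,1]$), so that Lemma~\ref{lemma:hardy}, stated for the interval $(0,1)$, legitimately applies at each of the $n+1$ steps; and one should confirm that the accumulated constant $C_p^{n+1}$, although it grows with $n$, remains a bona fide implied constant since it depends only on the fixed parameters $p$ and $n$ of the statement.
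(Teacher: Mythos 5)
Your proof is correct and takes essentially the same route as the paper's: monotonicity of the $n$-star operators gives the two left-hand inequalities, and H\"older on the probability space $(0,1)$ followed by iterated applications of Hardy's inequality (Lemma~\ref{lemma:hardy}) gives the right-hand one. You are slightly more thorough than the paper in two small respects: you treat $p=\infty$ explicitly, while the paper's proof opens with ``let $1 < p < \infty$'' and never returns to the endpoint; and you carry the Hardy iteration $n+1$ steps to bound $\|f\|_{L(\log L)^{n+1}} = \int_0^1 f^{(*,n+2)}$ directly, whereas the paper's written proof stops at $\|f\|_{\llogln}\lesssim\|f\|_p$ (one iteration short of what the theorem statement asserts, though this is harmless since replacing $n$ by $n+1$ closes it). Both are genuine, if minor, improvements in completeness.
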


\begin{proof} Fix $f : \mathbb{T} \rightarrow \mathbb{C}$ and $n
\geq 0$.  Note, $\|f\|_1 = \int_0^1 f^*(t) \, dt \leq \int_0^1
f^{(*,n+1)}(t) \, dt = \|f\|_{\llogln}$.  By the same token,
$\|f\|_{\llogln} = \int_0^1 f^{(*,n+1)}(t) \, dt \leq \int_0^1
f^{(*,n+2)}(t) \, dt = \|f\|_{L(\log L)^{n+1}}$.

Now let $1 < p < \infty$.  First, as $(0,1)$ is a probability space,
we have by H\"{o}lder that $\|f\|_{\llogln} \leq (\int_0^1
f^{(*,n+1)}(t)^p \, dt)^{1/p} = \|f^{(*,n+1)}\|_p$.  Applying
Hardy's inequality (Lemma~\ref{lemma:hardy}) with $\psi(t) =
f^{(*,m)}(t)$ gives $\|f^{(*,m+1)}\|_p \lesssim \|f^{(*,m)}\|_p$.
Iterating this we have $\|f\|_{\llogln} \leq \|f^{(*,n+1)}\|_p
\lesssim \|f^{(*,n)}\|_p \lesssim \ldots \lesssim \|f^{(*,1)}\|_p =
\|f\|_p$.
\end{proof}

An interpolation result can also be proven for $\llogln$.  First, we
need to find an estimate similar to the one before.

\begin{lemma}\label{lemma:weakn} Let $T$ be a sublinear operator
which maps $L^1(X) \rightarrow L^{1,\infty}(X)$ and $L^p(X)
\rightarrow L^{q,\infty}(X)$, for some $1 < p, q < \infty$.  Then,
for $n \geq 1$,

\begin{equation*}
(Tf)^{(*,n)}(t) \lesssim \bigg[ \frac{1}{t} \int_0^{t^m}
f^{(*,n)}(s) \, ds + t^{-1/q} \int_{t^m}^1 s^{1/p - 1} f^{(*,n)}(s)
\, ds \bigg], \quad 0 < t < 1,
\end{equation*}

\noindent where $m = (\frac{1}{q} - 1) (\frac{1}{p} - 1)^{-1}$.
\end{lemma}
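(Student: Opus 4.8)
The plan is to induct on $n$, using $n=1$ as the base case and peeling off one application of the averaging operator $g \mapsto \tfrac1t\int_0^t g(s)\,ds$ at each subsequent step. For the base case I would simply invoke Lemma~\ref{lemma:weak} with $p_0 = q_0 = 1$, $p_1 = p$, $q_1 = q$: the hypotheses hold since $1 \le p_0 < p_1 < \infty$, $1 \le q_0, q_1 < \infty$, and $q_0 = 1 \ne q = q_1$ because $q > 1$; moreover the exponent $(\tfrac1{q_0} - \tfrac1{q_1})(\tfrac1{p_0} - \tfrac1{p_1})^{-1}$ produced by that lemma equals our $m = (\tfrac1q - 1)(\tfrac1p - 1)^{-1}$, which is strictly positive and finite, so $t^m \in (0,1)$ whenever $t \in (0,1)$. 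Since $(X,\rho)$ is a probability space, $f^*$ is supported in $[0,1]$, so the integral $\int_{t^m}^\infty s^{1/p}f^*(s)\,\frac{ds}{s}$ appearing in Lemma~\ref{lemma:weak} is just $\int_{t^m}^1 s^{1/p-1}f^*(s)\,ds$, and the conclusion is precisely the claimed estimate for $n=1$ (recall $f^{(*,1)} = f^*$).

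For the inductive step, assume the estimate holds with $n-1$ in place of $n$, for every function, and fix $0 < t < 1$. By the definition of the $n$-star operator, $(Tf)^{(*,n)}(t) = \tfrac1t\int_0^t (Tf)^{(*,n-1)}(u)\,du$, and for each $u \in (0,t) \subseteq (0,1)$ the induction hypothesis bounds $(Tf)^{(*,n-1)}(u)$ by $\tfrac1u\int_0^{u^m} f^{(*,n-1)}(s)\,ds + u^{-1/q}\int_{u^m}^1 s^{1/p-1} f^{(*,n-1)}(s)\,ds$. So it suffices to estimate the $u$-average over $(0,t)$ of each of these two pieces. For the first piece, the substitution $v = u^m$ converts $\tfrac1t\int_0^t \tfrac1u\bigl(\int_0^{u^m} f^{(*,n-1)}(s)\,ds\bigr)\,du$ into $\tfrac1{mt}\int_0^{t^m} \tfrac1v\bigl(\int_0^v f^{(*,n-1)}(s)\,ds\bigr)\,dv = \tfrac1{mt}\int_0^{t^m} f^{(*,n)}(v)\,dv$, which is already dominated by the first term on the right-hand side of the target inequality.

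For the second piece I would apply Fubini's theorem on the region $\{(u,s) : 0 < u < t,\ u^m < s < 1\}$, splitting according to whether $s < t^m$ or $s \ge t^m$; this rewrites the average as $\tfrac1t\int_0^{t^m} s^{1/p-1} f^{(*,n-1)}(s)\bigl(\int_0^{s^{1/m}} u^{-1/q}\,du\bigr)\,ds$ plus $\tfrac1t\int_{t^m}^1 s^{1/p-1} f^{(*,n-1)}(s)\bigl(\int_0^t u^{-1/q}\,du\bigr)\,ds$. Evaluating the inner integrals (they converge because $q > 1$) and using the identity $\tfrac{1-1/q}{m} = 1 - \tfrac1p$ — which is exactly where the value of $m$ enters — the power of $s$ in the first summand becomes $s^{1/p-1+(1-1/q)/m} = s^0$, so that summand collapses to a constant multiple of $\tfrac1t\int_0^{t^m} f^{(*,n-1)}(s)\,ds$, while the second summand is a constant multiple of $t^{-1/q}\int_{t^m}^1 s^{1/p-1} f^{(*,n-1)}(s)\,ds$. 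Finally, since $f^{(*,n)}$ is decreasing and $f^{(*,n-1)} \le f^{(*,n)}$ pointwise, one has $\int_0^{t^m} f^{(*,n-1)} = t^m f^{(*,n)}(t^m) \le \int_0^{t^m} f^{(*,n)}$ and $\int_{t^m}^1 s^{1/p-1} f^{(*,n-1)}(s)\,ds \le \int_{t^m}^1 s^{1/p-1} f^{(*,n)}(s)\,ds$; substituting these in yields the estimate for $n$ and closes the induction.

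The only step that takes any care is the Fubini computation in the second piece, in particular the verification that the accumulated power of $s$ vanishes; but this is routine bookkeeping rather than a genuine obstacle, and the argument is otherwise a clean induction on top of Lemma~\ref{lemma:weak}.
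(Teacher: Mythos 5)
Your proposal is correct and follows the paper's proof essentially verbatim: base case by Lemma~\ref{lemma:weak} with $p_0=q_0=1$ (using that $f^*$ is supported in $[0,1]$), inductive step by averaging the $(n-1)$-level estimate, the change of variables $v=u^m$ for the first piece, the Fubini split at $s=t^m$ for the second piece together with the identity $(1-1/q)/m = 1-1/p$ to kill the power of $s$, and finally $f^{(*,n-1)} \le f^{(*,n)}$ to close the induction. Your intermediate equality $\int_0^{t^m} f^{(*,n-1)} = t^m f^{(*,n)}(t^m)$ is a slight detour (the pointwise inequality $f^{(*,n-1)}\le f^{(*,n)}$ already does the job directly), but it is valid and harmless.
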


\begin{proof} The $n = 1$ case is precisely Lemma~\ref{lemma:weak}
(on a probability space) with $p_0 = q_0 = 1$. So, assume it is true
for $n-1$. Then,

\begin{equation*}
\begin{split}
(Tf)^{(*,n)}&(t) = \frac{1}{t} \int_0^t T^{(*,n-1)}(s) \, ds \\
&\lesssim \frac{1}{t} \int_0^t \frac{1}{s} \int_0^{s^m}
f^{(*,n-1)}(u) \, du \, ds + \frac{1}{t} \int_0^t s^{-1/q}
\int_{s^m}^1 u^{1/p - 1} f^{(*,n-1)}(u) \, du \, ds \\
&=: I + II.
\end{split}
\end{equation*}

By the change of variables $r = s^m$,

\begin{equation*}
I = \frac{1}{m} \frac{1}{t} \int_0^{t^m} \frac{1}{r} \int_0^r
f^{(*,n-1)}(u) \, du \, dr = \frac{1}{m} \frac{1}{t} \int_0^{t^m}
f^{(*,n)}(r) \, dr.
\end{equation*}

\noindent On the other hand, changing the order of integration gives

\begin{equation*}
\begin{split}
II &= \frac{1}{t} \int_0^{t^m} u^{1/p-1} f^{(*,n-1)}(u)
\int_0^{u^{1/m}} s^{-1/q} \, ds \, du \\
&\qquad + \frac{1}{t} \int_{t^m}^1 u^{1/p-1} f^{(*,n-1)}(u) \int_0^t
s^{-1/q} \, ds \, du \\
&= \frac{1}{1 - 1/q} \,\, \frac{1}{t} \int_0^{t^m} f^{(*,n-1)}(u) \,
du + \frac{1}{1 - 1/q} \,\, t^{-1/q} \int_{t^m}^1 u^{1/p-1}
f^{(*,n-1)}(u) \, du \\
&\leq \frac{1}{1 - 1/q} \bigg[ \frac{1}{t} \int_0^{t^m} f^{(*,n)}(u)
\, du + t^{-1/q} \int_{t^m}^1 u^{1/p-1} f^{(*,n)}(u) \, du \bigg].
\end{split}
\end{equation*}
\end{proof}

\begin{thm} Let $T$ be a sublinear operator
which maps $L^1(X) \rightarrow L^{1,\infty}(X)$ and $L^p(X)
\rightarrow L^{q,\infty}(X)$, for some $1 < p, q < \infty$.  Then,
for all $n \in \mathbb{N}$, we have $T : L(\log L)^{n}(X)
\rightarrow L(\log L)^{n-1}(X)$.
\end{thm}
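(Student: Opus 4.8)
The plan is to follow the proof of the preceding theorem almost verbatim, with $f^{*}$ replaced throughout by the higher star functions $f^{(*,n)}$ and Lemma~\ref{lemma:weak} replaced by Lemma~\ref{lemma:weakn}. Fix $n \geq 1$ and set $m = (\frac{1}{q}-1)(\frac{1}{p}-1)^{-1}$, which is positive and finite since $1 < p,q < \infty$. By the definition of the norm, $\|Tf\|_{L(\log L)^{n-1}} = \int_0^1 (Tf)^{(*,n)}(t)\,dt$, so by Lemma~\ref{lemma:weakn} it suffices to bound each of
\[
\int_0^1 \frac{1}{t}\int_0^{t^m} f^{(*,n)}(s)\,ds\,dt
\qquad\text{and}\qquad
\int_0^1 t^{-1/q}\int_{t^m}^1 s^{1/p-1} f^{(*,n)}(s)\,ds\,dt
\]
by a constant multiple of $\|f\|_{L(\log L)^{n}} = \int_0^1 f^{(*,n+1)}(t)\,dt$.

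For the first integral I would make the substitution $u = t^m$, under which $\frac{dt}{t} = \frac{1}{m}\frac{du}{u}$, so the integral becomes $\frac{1}{m}\int_0^1 \frac{1}{u}\int_0^u f^{(*,n)}(s)\,ds\,du = \frac{1}{m}\int_0^1 f^{(*,n+1)}(u)\,du = \frac{1}{m}\|f\|_{L(\log L)^{n}}$, exactly as in the $n=1$ case. For the second integral I would apply Tonelli to swap the order of integration over the region $\{0 < t < 1,\ t^m < s < 1\} = \{0 < s < 1,\ 0 < t < s^{1/m}\}$, obtaining $\frac{1}{1-1/q}\int_0^1 s^{1/p-1}\,s^{\frac{1}{m}(1-1/q)}\,f^{(*,n)}(s)\,ds$; here the inner $t$-integral converges because $q>1$. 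The exponent identity $\frac{1}{m}(1-\tfrac{1}{q}) = \frac{1/p-1}{1/q-1}(1-\tfrac{1}{q}) = 1-\tfrac{1}{p}$ collapses the power of $s$ to $s^{0}$, leaving $\frac{1}{1-1/q}\int_0^1 f^{(*,n)}(s)\,ds = \frac{1}{1-1/q}\|f\|_{L(\log L)^{n-1}}$.

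Finally I would invoke the monotonicity $f^{(*,n)} \leq f^{(*,n+1)}$ proved earlier in this section to conclude $\|f\|_{L(\log L)^{n-1}} \leq \|f\|_{L(\log L)^{n}}$, so that both pieces are controlled by $\|f\|_{L(\log L)^{n}}$; adding them gives $\|Tf\|_{L(\log L)^{n-1}} \lesssim \big(\tfrac{1}{m} + \tfrac{1}{1-1/q}\big)\|f\|_{L(\log L)^{n}}$, which is the assertion. (For $n=1$ this is precisely the previous theorem, consistent with $L(\log L)^0 = L^1$.) I do not expect a genuine obstacle here: Lemma~\ref{lemma:weakn} carries all of the analytic weight, and the only point needing a moment's care is that the natural output of the second integral is $\|f\|_{L(\log L)^{n-1}}$ rather than $\|f\|_{L(\log L)^{n}}$, which is harmless since the $L(\log L)^{k}$ norms are nondecreasing in $k$.
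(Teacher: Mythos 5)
Your proof is correct and follows the paper's argument essentially verbatim: both proofs invoke Lemma~\ref{lemma:weakn}, make the substitution $u = t^m$ in the first integral to produce $\tfrac{1}{m}\int_0^1 f^{(*,n+1)}$, apply Fubini to the second integral to produce $\tfrac{1}{1-1/q}\int_0^1 f^{(*,n)}$, and then use the monotonicity $f^{(*,n)} \leq f^{(*,n+1)}$ to bound everything by $\|f\|_{\llogln}$. The only difference is presentational: you make explicit the step $\|f\|_{L(\log L)^{n-1}} \leq \|f\|_{L(\log L)^n}$ that the paper leaves implicit in its final ``$\lesssim \|f\|_{\llogln}$''.
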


\begin{proof} Set $m = (\frac{1}{q} - 1) (\frac{1}{p} - 1)^{-1}$,
which is positive and finite. Using Lemma~\ref{lemma:weakn} and the
same change of variables and Fubini arguments,

\begin{equation*}
\begin{split}
\|Tf\|_{L(\log L)^{n-1}} &= \int_0^1 (Tf)^{(*,n)}(t) \, dt \\
&\lesssim \int_0^1 \frac{1}{t} \int_0^{t^m} f^{(*,n)}(s) \, ds \, dt
+ \int_0^1 t^{-1/q} \int_{t^m}^1 s^{1/p-1} f^{(*,n)}(s) \, ds \, dt
\\
&= \frac{1}{m} \int_0^1 \frac{1}{u} \int_0^u f^{(*,n)}(s) \, ds \,
du + \int_0^1 s^{1/p-1} f^{(*,n)}(s) \int_0^{s^{1/m}} t^{-1/q} \, dt
\, ds \\
&= \frac{1}{m} \int_0^1 f^{(*,n+1)}(u) \, du + \frac{1}{1 - 1/q}
\int_0^1 f^{(*,n)}(s) \, ds \lesssim \|f\|_{\llogln}.
\end{split}
\end{equation*}
\end{proof}

\begin{cor}\label{cor:f-sllogln} Let $T$ be a sublinear operator.  If
for some $1 < p, r < \infty$

\begin{gather*}
\bigg\| \Big( \sum_{k=1}^\infty |Tf_k|^r \Big)^{1/r}
\bigg\|_{1,\infty} \lesssim \bigg\| \Big( \sum_{k=1}^\infty |f_k|^r
\Big)^{1/r} \bigg\|_1 \quad \text{and} \\
\bigg\| \Big( \sum_{k=1}^\infty |Tf_k|^r \Big)^{1/r} \bigg\|_{p}
\lesssim \bigg\| \Big( \sum_{k=1}^\infty |f_k|^r \Big)^{1/r}
\bigg\|_{p},
\end{gather*}

\noindent then for all $n \in \mathbb{N}$

\begin{gather*}
\bigg\| \Big( \sum_{k=1}^\infty |Tf_k|^r \Big)^{1/r} \bigg\|_{L(\log
L)^{n-1}} \lesssim \bigg\| \Big( \sum_{k=1}^\infty |f_k|^r
\Big)^{1/r} \bigg\|_{L(\log L)^{n}}.
\end{gather*}
\end{cor}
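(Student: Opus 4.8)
The plan is to mimic the proof of Corollary~\ref{cor:f-sllogl}, passing to the Banach-valued setting with $B = \ell^r$. As observed there, the entire development of this chapter---the decreasing rearrangement, the $n$-star operator $f^{(*,n)}$, the spaces $\llogln$, and in particular the preceding interpolation theorem together with Lemmas~\ref{lemma:weakn} and~\ref{lemma:hardy}---goes through verbatim when $\mathbb{C}$ and $|\cdot|$ are replaced by a Banach space $B$ and its norm $\|\cdot\|_B$, because $\mu_f$, $f^*$, and $f^{(*,n)}$ depend only on the nonnegative scalar function $x \mapsto \|f(x)\|_B$. In particular $\|f\|_{\llogln_B} = \big\| \|f\|_B \big\|_{\llogln}$, and the Banach-valued form of the theorem just proved asserts: if a sublinear operator $T$ maps $L^1_B(X) \to L^{1,\infty}_B(X)$ and $L^p_B(X) \to L^{q,\infty}_B(X)$ for some $1 < p, q < \infty$, then $T : L(\log L)^n_B(X) \to L(\log L)^{n-1}_B(X)$ for every $n \in \mathbb{N}$.

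Next I would take $B = \ell^r$, so that $\mathcal{M}(X, B)$ consists of sequences $f = (f_1, f_2, \ldots)$ of measurable functions with $\|f(x)\|_B = (\sum_k |f_k(x)|^r)^{1/r}$, and define $\overline{T}(f) = (Tf_1, Tf_2, \ldots)$. This $\overline{T}$ is sublinear because $T$ is (combined with Minkowski's inequality in $\ell^r$). The two hypotheses of the corollary say exactly that $\overline{T} : L^1_B(X) \to L^{1,\infty}_B(X)$ and $\overline{T} : L^p_B(X) \to L^p_B(X)$; the latter trivially gives $\overline{T} : L^p_B(X) \to L^{p,\infty}_B(X)$ by Chebyshev, so the theorem applies with $q = p$.

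Applying the Banach-valued interpolation theorem then yields $\overline{T} : L(\log L)^n_B(X) \to L(\log L)^{n-1}_B(X)$ for all $n \in \mathbb{N}$. Translating back through $\|g\|_{L(\log L)^j_B} = \big\| \|g\|_B \big\|_{L(\log L)^j}$ gives precisely
\begin{equation*}
\bigg\| \Big( \sum_{k=1}^\infty |Tf_k|^r \Big)^{1/r} \bigg\|_{L(\log L)^{n-1}} \lesssim \bigg\| \Big( \sum_{k=1}^\infty |f_k|^r \Big)^{1/r} \bigg\|_{L(\log L)^{n}},
\end{equation*}
which is the claim. The only point needing any care---and the main (rather mild) obstacle---is confirming that each ingredient of the preceding interpolation theorem is genuinely insensitive to the scalar-versus-Banach distinction; but since all of them (the properties of $f^{(*,n)}$, Hardy's inequality applied to the scalar function $f^{(*,m)}$, and the decomposition in Lemma~\ref{lemma:weakn}) are formulated entirely in terms of the nonnegative scalar function $\|f\|_B$ and its iterated rearrangements, this verification is routine and essentially notational.
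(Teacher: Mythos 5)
Your proof is correct and follows exactly the approach the paper uses for the analogous Corollary~\ref{cor:f-sllogl}: pass to $B = \ell^r$-valued functions, observe that $f^{(*,n)}$ and hence $\llogln_B$ depend only on the scalar function $\|f(\cdot)\|_B$, apply the Banach-valued form of the preceding $\llogln \to L(\log L)^{n-1}$ interpolation theorem to $\overline{T}(f_1,f_2,\ldots) = (Tf_1, Tf_2, \ldots)$, and translate back. The paper states Corollary~\ref{cor:f-sllogln} without proof precisely because it is this same argument, so your reconstruction is faithful.
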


\section{$\llogl$ and Connections to Hardy-Littlewood}

Let us consider the probability space $(\mathbb{T}, m)$ and
$\llogl(\mathbb{T})$.  The maximal operator $M$ maps $L^1
\rightarrow L^{1,\infty}$ and $L^p \rightarrow L^p$ for all $1 < p <
\infty$.  Therefore, by our interpolation results, $M :
\llogl(\mathbb{T}) \rightarrow L^1$.  However, much more can be
said.

\begin{thm}\label{thm:M**point} For any $0 < t < 1$, $f^{**}(t) \sim
(Mf)^*(t)$, where the underlying constants do not depend on $f$ or
$t$. \end{thm}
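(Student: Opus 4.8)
The plan is to prove the two inequalities $(Mf)^{*}(t)\lesssim f^{**}(t)$ and $f^{**}(t)\lesssim (Mf)^{*}(t)$ separately, working with $f\in L^{1}(\mathbb{T})$ (if $f\notin L^{1}$ then $Mf\equiv\infty$, since $Mf(x)\ge\int_{\mathbb{T}}|f|\,dm=\|f\|_{1}$ for every $x$ by taking $I=\mathbb{T}$). I will repeatedly use that same pointwise bound $Mf\ge\|f\|_{1}$, which forces $(Mf)^{*}(t)\ge\|f\|_{1}$ for all $t<1$, and the fact that weak-$L^{1}$ in the Lorentz sense gives $u\,w^{*}(u)\le\|w\|_{1,\infty}$ for any $w$.

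For the upper bound, fix $t$, set $\lambda=f^{*}(t)$, and split $f=g+h$ exactly as in Lemma~\ref{lemma:f**sum}, with $g=(|f|-\lambda)_{+}\,\mathrm{sgn}\,f$ and $h=\min(|f|,\lambda)\,\mathrm{sgn}\,f$. By Lemma~\ref{lemma:f*map} (applied to $\Psi(u)=(u-\lambda)_{+}$), $\|g\|_{1}=\int_{0}^{1}(f^{*}(s)-\lambda)_{+}\,ds\le\int_{0}^{t}f^{*}(s)\,ds=t\,f^{**}(t)$, where the integrand vanishes for $s\ge t$ because $f^{*}$ is decreasing; and $\|h\|_{\infty}\le\lambda\le f^{**}(t)$. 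Since $M$ is sublinear, $Mf\le Mg+Mh$, while $Mh\le\|h\|_{\infty}\le f^{**}(t)$ pointwise, so by Proposition~\ref{prop:equi} (parts (2) and (4)), $(Mf)^{*}(t)\le (Mg)^{*}(t/2)+f^{**}(t)$. Finally $M:L^{1}\to L^{1,\infty}$ (Theorem~\ref{thm:weakL1}) yields $(t/2)(Mg)^{*}(t/2)\le\|Mg\|_{1,\infty}\lesssim\|g\|_{1}\le t\,f^{**}(t)$, hence $(Mg)^{*}(t/2)\lesssim f^{**}(t)$ and $(Mf)^{*}(t)\lesssim f^{**}(t)$.

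For the lower bound, fix $t\in(0,1)$ and put $\beta=(Mf)^{*}(t)$, which is finite since $\beta\le\|Mf\|_{1,\infty}/t\lesssim\|f\|_{1}/t$. First reduce $\int_{0}^{t}f^{*}$ to a tail: writing $f^{*}(s)=\min(f^{*}(s),\beta)+(f^{*}(s)-\beta)_{+}$ and using that $f$ and $f^{*}$ are equimeasurable (Proposition~\ref{prop:equi}(7)),
\begin{equation*}
\int_{0}^{t}f^{*}(s)\,ds\le t\beta+\int_{0}^{1}(f^{*}(s)-\beta)_{+}\,ds=t\beta+\int_{\{|f|>\beta\}}(|f|-\beta)\,dm\le t\beta+\int_{\{|f|>\beta\}}|f|\,dm.
\end{equation*}
It remains to show $\int_{\{|f|>\beta\}}|f|\,dm\le 2\beta t$. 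Since $|f|\le M_{D}f$ a.e.\ (Corollary~\ref{cor:pointwise}), $\{|f|>\beta\}$ lies a.e.\ inside $\{M_{D}f>\beta\}$, which we decompose into its maximal dyadic intervals $I_{j}$ (the Calder\'on--Zygmund stopping-time construction used in Lemmas~\ref{lemma:cute} and~\ref{lemma:cz}). For each $I_{j}$ possessing a proper dyadic parent, maximality forces the parent average to be $\le\beta$, so $\int_{I_{j}}|f|\le 2\beta|I_{j}|$; for an $I_{j}$ of length $1/2$ (at most two of these) one instead uses $\int_{I_{j}}|f|\le\|f\|_{1}\le\beta=2\beta|I_{j}|$. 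Summing, $\int_{\{|f|>\beta\}}|f|\le\sum_{j}\int_{I_{j}}|f|\le 2\beta\sum_{j}|I_{j}|=2\beta\,|\{M_{D}f>\beta\}|\le 2\beta\,|\{Mf>\beta\}|=2\beta\,\mu_{Mf}(\beta)\le 2\beta t$, the last inequality by Proposition~\ref{prop:equi}(3) applied to $Mf$. Hence $f^{**}(t)=t^{-1}\int_{0}^{t}f^{*}\le 3\beta=3(Mf)^{*}(t)$.

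The upper bound is routine; the lower bound is where the one genuine idea lives. A naive estimate $\int_{0}^{t}f^{*}\le\|f\|_{1}$ loses a factor $t^{-1}$, so instead one must peel off the bounded piece $\min(f^{*},\beta)$ (contributing only $t\beta$) and control the genuinely large piece $\int_{\{|f|>\beta\}}|f|$ by a Calder\'on--Zygmund decomposition at the precise level $\beta=(Mf)^{*}(t)$: the decisive gain is that $\mu_{Mf}(\beta)\le t$ by the very definition of the rearrangement, and the pointwise inequality $Mf\ge\|f\|_{1}$ is exactly what lets the two exceptional half-length dyadic intervals be absorbed into the same bound $\int_{I_{j}}|f|\le 2\beta|I_{j}|$ without any loss.
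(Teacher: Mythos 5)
Your proof is correct. The upper bound $(Mf)^*(t)\lesssim f^{**}(t)$ matches the paper's argument in substance: both split $f=g+h$ at the level $f^*(t)$, use $(Mf)^*(t)\le(Mg)^*(t/2)+(Mh)^*(t/2)$, and then invoke $M:L^1\to L^{1,\infty}$ and $M:L^\infty\to L^\infty$. (The paper phrases the split via the infimum characterization of Lemma~\ref{lemma:f**sum}; you compute $\|g\|_1$ directly from $g^*=(f^*-\lambda)_+$, which is the same computation that appears inside Lemma~\ref{lemma:weak}.)

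For the lower bound $f^{**}(t)\lesssim(Mf)^*(t)$ you take a genuinely different route. The paper sets $\Omega=\overline{\{Mf>(Mf)^*(t)\}}$, covers the compact set $\Omega$ by open intervals $I_x$ that each straddle $\partial\Omega$ with $|I_x|\le 2|I_x\cap\Omega|$, extracts a finite subcover with overlap at most two, and uses a point of each $I_x$ in $\Omega^c$ to bound $|I_x|^{-1}\int_{I_x}|f|\le(Mf)^*(t)$; this yields $\|f\chi_\Omega\|_1\le 4t(Mf)^*(t)$, and the conclusion follows from Lemma~\ref{lemma:f**sum}. You instead avoid both the covering argument and Lemma~\ref{lemma:f**sum} entirely: you split $\int_0^t f^*$ into $\int_0^t\min(f^*,\beta)\le t\beta$ and $\int_0^1(f^*-\beta)_+ = \int_{\{|f|>\beta\}}(|f|-\beta)$ (Lemma~\ref{lemma:f*map} plus equimeasurability), then control $\int_{\{|f|>\beta\}}|f|$ by the Calder\'{o}n--Zygmund stopping-time decomposition of $\{M_D f>\beta\}$ already developed in Chapter~2, using $|f|\le M_D f$ a.e.\ (Corollary~\ref{cor:pointwise}) and $\mu_{Mf}(\beta)\le t$ (Proposition~\ref{prop:equi}(3)). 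The observation that $Mf\ge\|f\|_1$ pointwise neatly absorbs the half-length maximal intervals into the uniform bound $\int_{I_j}|f|\le2\beta|I_j|$; the paper's CZ lemmas handle that case the same way but only under the explicit hypothesis $\alpha>\|f\|_1$, which your choice of $\beta=(Mf)^*(t)$ satisfies automatically. Both arguments are correct; yours is arguably cleaner because it recycles the dyadic stopping time already in place and produces the slightly sharper constant $3$ versus the paper's $5$, at the cost of being tied to the dyadic structure, whereas the paper's Vitali-type covering argument is more elementary and does not rely on $M_D$.
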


\begin{proof} Fix $t$ and $f$.  We start by proving $(Mf)^*(t)
\lesssim f^{**}(t)$.  Let $\epsilon > 0$.  By Lemma~\ref{lemma:f**sum},
there are functions $g, h$ so that $f = g + h$ and $\|g\|_1 + t
\|h\|_\infty \leq tf^{**}(t) + \epsilon$.  On the other hand,

\begin{equation*}
\begin{split}
(Mf)^*(t) &\leq (Mg)^*(t/2) + (Mh)^*(t/2) = \frac{2}{t} \Big[
\frac{t}{2} (Mg)^*(t/2) \Big] + (Mh)^*(t/2) \\
&\leq \frac{2}{t} \|Mg\|_{1,\infty} + \|Mh\|_\infty \lesssim
\frac{2}{t} \|g\|_1 + \|h\|_\infty \\
&\leq \frac{2}{t} \big[ \|g\|_1 + t \|h\|_\infty \big] \leq 2
f^{**}(t) + 2\epsilon/t.
\end{split}
\end{equation*}

\noindent Letting $\epsilon \rightarrow 0$, we have the first
inequality.

For the second inequality, we may assume $(Mf)^*(t)$ is finite, or
there is nothing to prove.  Set $\Omega$ to be the closure of $\{Mf
> (Mf)^*(t)\}$. Note that $|\Omega| = \mu_{Mf}((Mf)^*(t)) \leq t$.
First, suppose $|\Omega| = 0$. Then, $|f| \leq Mf \leq (Mf)^*(t)$
a.e., which implies $f^*(s) \leq (Mf)^*(t)$ for all $s$. So,
$f^{**}(t) = t^{-1} \int_0^t f^*(s) \, ds \leq (Mf)^*(t)$.

Now, assume $|\Omega| > 0$.  As $|\Omega| \leq t < 1$, for each $x
\in \Omega$ we can choose an interval $I_x$ which contains $x$ in
its interior and $I_x \cap \Omega^c \not= \emptyset$, but also so
that most of $I_x$ is in $\Omega$.  In particular, $|I_x| \leq 2
|I_x \cap \Omega|$. Then, the interiors of $\{I_x : x \in \Omega\}$
cover $\Omega$. As $\Omega$ is compact, we can choose a finite
subcover $I_1, \ldots, I_n$.  Further, we can choose this subcover
to be minimal, in that any point is contained in at most two of the
$I_k$ (this property is inherited from $\mathbb{R}$). On the other
hand, as $I_j \cap \Omega^c \not= \emptyset$, there is a $y \in I_j
\cap \Omega^c$. This implies $|I_j|^{-1} \int_{I_j} |f| \, dm \leq
Mf(y) \leq (Mf)^*(t)$.

Define $g = f \chi_\Omega$ and $h = f \chi_{\Omega^c}$.  We have
immediately that $\|h\|_\infty = \|f \chi_{\Omega^c}\|_\infty \leq
(Mf)^*(t)$.  On the other hand,

\begin{equation*}
\begin{split}
\|g\|_1 &\leq \sum_{j=1}^n \int_{I_j} |f(x)| \, dx \leq
\sum_{j=1}^n (Mf)^*(t) |I_j| \\
&\leq 2 (Mf)^*(t) \sum_{j=1}^n |I_j \cap \Omega| \leq 4 |\Omega|
(Mf)^*(t) \leq 4t (Mf)^*(t),
\end{split}
\end{equation*}

\noindent where the next to last inequality is gained from $I_j$
being a minimal subcover.  As $f = g + h$, it follows from
Lemma~\ref{lemma:f**sum} that $t f^{**}(t) \leq \|g\|_1 + t
\|h\|_\infty \lesssim t (Mf)^*(t)$. This completes the proof.
\end{proof}

\begin{cor} For any $f : \mathbb{T} \rightarrow \mathbb{C}$, $f
\in \llogl(\mathbb{T})$ if and only if $Mf \in L^1(\mathbb{T})$. In
particular, $\|f\|_{\llogl} \sim \|Mf\|_1$.
\end{cor}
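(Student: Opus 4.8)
\section*{Proof proposal}

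The plan is to deduce this corollary directly from Theorem~\ref{thm:M**point}, which already does all the real work. First I would recall that, by the very definition of the Zygmund space, $\|f\|_{\llogl} = \int_0^1 f^{**}(t)\, dt$. Next, since $(\mathbb{T}, m)$ is a probability space, the distribution function of $Mf$ is bounded by $1$, so the decreasing rearrangement $(Mf)^*$ is supported in $[0,1]$; hence by Corollary~\ref{cor:f*p},
\begin{equation*}
\|Mf\|_1 = \|(Mf)^*\|_1 = \int_0^1 (Mf)^*(t)\, dt.
\end{equation*}

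Then the pointwise equivalence $f^{**}(t) \sim (Mf)^*(t)$ furnished by Theorem~\ref{thm:M**point}, with underlying constants independent of both $t$ and $f$, may simply be integrated in $t$ over $(0,1)$ to give
\begin{equation*}
\|f\|_{\llogl} = \int_0^1 f^{**}(t)\, dt \sim \int_0^1 (Mf)^*(t)\, dt = \|Mf\|_1.
\end{equation*}
In particular one of these two quantities is finite exactly when the other is, which is precisely the assertion that $f \in \llogl(\mathbb{T})$ if and only if $Mf \in L^1(\mathbb{T})$.

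There is essentially no obstacle in this argument: the entire content is carried by Theorem~\ref{thm:M**point}. The only point requiring any care is the observation that $(Mf)^*$ vanishes on $(1,\infty)$, which rests on the finiteness of $m(\mathbb{T})$ (the remark following the definition of the decreasing rearrangement), so that $\|Mf\|_1$ really is an integral over $(0,1)$ and the equivalence of integrands transfers to an equivalence of the two norms.
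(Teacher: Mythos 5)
Your proposal is correct and takes exactly the same approach as the paper: integrate the pointwise equivalence $f^{**}(t) \sim (Mf)^*(t)$ from Theorem~\ref{thm:M**point} over $(0,1)$, using Corollary~\ref{cor:f*p} to identify $\int_0^1 (Mf)^*(t)\,dt$ with $\|Mf\|_1$. The paper's proof is a one-liner and your version merely spells out the supporting observations more explicitly.
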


\begin{proof} Using the previous theorem, $\|f\|_{\llogl} = \int_0^1
f^{**}(t) \, dt \sim \int_0^1 (Mf)^*(t) \, dt = \|Mf\|_1$.
\end{proof}

We note that $\|M(\cdot)\|_1$ is itself a norm, with the additional
properties that $|f| \leq |g|$ a.e.~implies $\|Mf\|_1 \leq \|Mg\|_1$
and $|f_n| \uparrow |f|$ a.e.~implies $\|Mf_n\|_1 \uparrow
\|Mf\|_1$.  Therefore, on $\mathbb{T}$, we could have defined
$\|f\|_{\llogl} = \|Mf\|_1$ and $\llogl(\mathbb{T})$ the space of
functions which are mapped into $L^1$ by $M$.  There is a similar
result for $\llogln(\mathbb{T})$.

\begin{cor} $f \in L(\log L)^{n+1}(\mathbb{T})$ if and only if $Mf
\in \llogln(\mathbb{T})$, and, in particular, $\|f\|_{L(\log L)^{n+1}} \sim
\|Mf\|_{\llogln}$. \end{cor}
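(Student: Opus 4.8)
The plan is to reduce everything to the pointwise estimate $f^{**}(t) \sim (Mf)^*(t)$ already established in Theorem~\ref{thm:M**point} and then propagate that equivalence through the iterated averaging that defines the $n$-star operator. Recall $(Mf)^{(*,1)} = (Mf)^*$ and, by definition, $f^{(*,2)} = f^{**}$, so Theorem~\ref{thm:M**point} is exactly the statement $(Mf)^{(*,1)}(t) \sim f^{(*,2)}(t)$ for all $0 < t < 1$, with constants independent of $f$ and $t$.

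First I would prove, by induction on $k \geq 1$, that $(Mf)^{(*,k)}(t) \sim f^{(*,k+1)}(t)$ for all $0 < t < 1$, with the \emph{same} pair of universal constants $c, C$ at every stage. The base case $k = 1$ is Theorem~\ref{thm:M**point}. For the inductive step, suppose $c\, f^{(*,k)}(s) \leq (Mf)^{(*,k-1)}(s) \leq C\, f^{(*,k)}(s)$... more precisely, suppose the claim holds for $k$; then, applying the averaging operation $g \mapsto \frac{1}{t}\int_0^t g(s)\,ds$, which clearly preserves pointwise inequalities between nonnegative functions and is positively homogeneous, we get
\begin{equation*}
c\, f^{(*,k+1)}(t) = \frac{c}{t}\int_0^t f^{(*,k)}(s)\,ds \leq \frac{1}{t}\int_0^t (Mf)^{(*,k)}(s)\,ds = (Mf)^{(*,k+1)}(t) \leq C\, f^{(*,k+1)}(t),
\end{equation*}
for all $0 < t < 1$. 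This closes the induction, and note that the constants $c$ and $C$ do not degrade.

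Applying this with $k = n+1$ gives $(Mf)^{(*,n+1)}(t) \sim f^{(*,n+2)}(t)$ on $(0,1)$. Integrating over $t \in (0,1)$ yields
\begin{equation*}
\|Mf\|_{\llogln} = \int_0^1 (Mf)^{(*,n+1)}(t)\,dt \sim \int_0^1 f^{(*,n+2)}(t)\,dt = \|f\|_{L(\log L)^{n+1}},
\end{equation*}
which is the asserted norm equivalence; in particular $Mf \in \llogln(\mathbb{T})$ if and only if the right-hand side is finite, i.e. if and only if $f \in L(\log L)^{n+1}(\mathbb{T})$. (The case $n = 0$ is just the preceding corollary, and is consistent with this formula since $L(\log L)^0 = L^1$ and $(Mf)^{(*,1)} = (Mf)^*$.)

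The proof contains no real obstacle: essentially all the analytic content sits in Theorem~\ref{thm:M**point}, and the remaining work is the bookkeeping of the induction above, whose only subtlety is checking that the comparison constants are preserved under the averaging operator — which they are, since averaging is linear and order-preserving.
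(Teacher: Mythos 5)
Your approach is correct and is essentially the same as the paper's: both start from the pointwise equivalence $(Mf)^{(*,1)}\sim f^{(*,2)}$ of Theorem~\ref{thm:M**point}, propagate it by induction through the averaging operator to get $(Mf)^{(*,k)}\sim f^{(*,k+1)}$ with constants unchanged, and then integrate over $(0,1)$ to obtain the norm equivalence. The only slip is an off-by-one in the displayed inductive step: the inequality chain should read
\begin{equation*}
c\,f^{(*,k+2)}(t)=\frac{c}{t}\int_0^t f^{(*,k+1)}(s)\,ds\leq \frac{1}{t}\int_0^t (Mf)^{(*,k)}(s)\,ds=(Mf)^{(*,k+1)}(t)\leq C\,f^{(*,k+2)}(t),
\end{equation*}
whereas you wrote $f^{(*,k)}$ and $f^{(*,k+1)}$ where $f^{(*,k+1)}$ and $f^{(*,k+2)}$ belong; this is clearly a transcription error rather than a gap, since the conclusion you draw from the induction (applying it with $k=n+1$) is the correct one.
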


\begin{proof} We know $(Mf)^{(*,1)} \sim f^{(*,2)}$.  It follows from
induction that $(Mf)^{(*,n)} \sim f^{(*,n+1)}$ for all $n \geq 1$.
Thus, $\|f\|_{\llogln} = \int_0^1 f^{(*,n+1)}(t) \, dt \sim \int_0^1
(Mf)^{(*,n)}(t) \, dt = \|Mf\|_{L(\log L)^{n-1}}$.  \end{proof}

Finally, we return to the unanswered question of the end-point
estimates of the strong maximal function $M_S$.  The probability
space we focus on now is $(\mathbb{T}^d, m)$. As each of the
$j^{th}$ parameter maximal operators $M_j$ map $L^1$ to weak-$L^1$
and $L^p$ to $L^p$, we have by interpolation that $M_j : L(\log
L)^{n+1}(\mathbb{T}^d) \rightarrow \llogln(\mathbb{T}^d)$.  Thus,
for $n \geq d$,

\begin{equation*}
\begin{split}
\|M_S\|_{\llogln} &\leq \|M_1 \circ M_2 \circ \cdots \circ M_d
f\|_{\llogln} \\
&\lesssim \|M_2 \circ \cdots \circ M_d f\|_{L(\log L)^{n-1}} \\
&\lesssim \ldots \lesssim \|f\|_{L(\log L)^{n-d}}.
\end{split}
\end{equation*}

\noindent In particular, $M_S : L(\log L)^d(\mathbb{T}^d)
\rightarrow L^1(\mathbb{T}^d)$ and $M_S : L(\log
L)^{d-1}(\mathbb{T}^d) \rightarrow L^{1,\infty}(\mathbb{T}^d)$.

\chapter{Single-parameter Multipliers}

\section{Shifted Max and Square Operators}\label{sec:shifted}

For $n \in \mathbb{Z}$, define the $n$-shifted maximal operator as

\begin{equation*}
M^n f(x) = \sup_{x \in I} \frac{1}{|I|} \int_{I^n} |f(x)| \, dx,
\end{equation*}

\noindent where the supremum is taken over all intervals $I$
containing $x$, but the integral is over $I^n$.  We would like to
establish results for $M^n$ similar to those of $M$.  This is quite
simple.

Fix $f$ and $n$.  Let $x \in \mathbb{T}$ and $\epsilon > 0$. Choose
an interval $I$ containing $x$ so that $M^nf(x) \leq |I|^{-1} \int_{I^n}
|f(x)| \, dx + \epsilon$. There exists an interval $I'$ (possibly all
of $\mathbb{T}$) which contains both $I$ and $I^n$, and $|I'| \leq
(|n|+1) |I|$.  Thus,

\begin{equation*}
M^n f(x) - \epsilon \leq \frac{1}{|I|} \int_{I^n} |f(x)| \, dx \leq
(|n|+1) \frac{1}{|I'|} \int_{I'} |f(x)| \, dx \leq (|n|+1) Mf(x).
\end{equation*}

\noindent As $\epsilon$ is arbitrary, we have the pointwise estimate
$M^n f \leq (|n|+1) Mf$.  Therefore, we immediately obtain all the
$L^p$ estimates of $M$, along with the Fefferman-Stein inequalities,
for $M^n$ with an additional factor of $|n|+1$.

Now consider an adapted family $\varphi_I$.  By precisely the same
argument used in Proposition~\ref{prop:1},

\begin{equation*}
M'^n f := \sup_I \frac{1}{|I|} \langle \varphi_{I^n}, f \rangle
\chi_I \lesssim M^n f.
\end{equation*}

\noindent So, $M'^n f$ is also easily understood.

However, the shifted square function

\begin{equation*}
S^n f(x) = \bigg( \sum_I \frac{|\langle \phi_{I^n}, f
\rangle|^2}{|I|} \chi_I(x) \bigg)^{1/2}
\end{equation*}

\noindent does not permit a simple pointwise estimate.  To prove the
desired $L^p$ results, one has to go through the argument as
presented in Chapter~\ref{chap:square} with $S^n$ instead of $S$. We
refrain from doing this, as only a brief description seems
necessary.

It can be shown that $S^n : L^2 \rightarrow L^2$ exactly as before,
with no dependence on $n$. This is because in the proof of
Theorem~\ref{thm:SL2} (and the preceding lemmas), we sum over all
$I$ with the same lengths, and the shift will not be important.

Fix a dyadic interval $I$ and $a$ an $L^1$-function supported on $I$
with integral 0.  Define $I^* = (2|n| + 2)I$ if $2|n|+2 \leq 1/|I|$
and $I^* = \mathbb{T}$ otherwise.  Then, $|I^*| \leq (2|n|+2) |I|$.
If $J$ is a dyadic interval with $|J| < |I|$, we have that $J
\subset I^*$ or $J, I^*$ are disjoint.  If it is the later, then by
construction, $J^n$ and $2I$ are disjoint.  It now follows by
precisely the same argument as in the proof of
Lemma~\ref{lemma:atom} that $\|S^n a\|_{L^1(\mathbb{T} - I^*)}
\lesssim \|a\|_1$, where the underlying constant is independent of
$n$.  Applying the same decomposition as Theorem~\ref{thm:Sweak}, we
have $\|S^n f\|_{1,\infty} \lesssim (|n|+1)\|f\|_1$ for all $f \in
L^1$.

Define the shifted linearization

\begin{equation*}
T_\epsilon^n f(x) = \sum_I \epsilon_I \langle \phi_{I^n}^{1}, f
\rangle \phi_I^2(x).
\end{equation*}

\noindent By the same technique as before, $T_\epsilon^n : L^2
\rightarrow L^2$ with no dependence on $n$.  For the weak-$L^1$
result, simply replace $S^1$ with $S^{1,n}$ in the proof of
Theorem~\ref{thm:Tweak}.  The constant $C$ which is chosen at the
beginning will now depend on $n$, but as we saw, $C$ actually cancels
out by the end.  This gives $\|T_\epsilon^n f\|_{1,\infty} \lesssim
(|n|+1) \|f\|_1$.  The rest of the arguments follow as before giving
$\|S^n f\|_p \lesssim (|n|+1) \|f\|_p$ and $\|T_\epsilon^n f\|_p
\lesssim (|n|+1) \|f\|_p$.  The Fefferman-Stein inequalities also
hold for $S^n$, with the additional factor of $|n|+1$.

On a different note, let $\alpha \in [0, 1]$ and $I_\alpha = I +
\alpha|I|$.  This shifts the interval, much like $I^n$, but we use a
different notation to distinguish the roles $\alpha$ and $n$ will
play.  Define

\begin{equation*}
M_\alpha^n f(x) = \sup_{x \in I} \frac{1}{|I|} \int_{I_\alpha^n}
|f(y)| \, dy.
\end{equation*}

\noindent By the same argument as before, $M_\alpha^n f \leq (|n| +
\alpha + 1) Mf(x) \lesssim (|n|+1) Mf(x)$.  So, if we let
$M^{[n]}f(x) = \sup_\alpha M^n_\alpha f(x)$ for each $x$, then
$M^{[n]}$ satisfies all the estimates of $M$ ($L^p \rightarrow L^p$,
$L^1 \rightarrow L^{1,\infty}$, and Fefferman-Stein inequalities)
with an additional factor of $|n|+1$.

For an adapted family $\{\varphi_I\}$, let $\varphi_{I_\alpha}(x) =
\varphi_I(x - \alpha|I|)$ so that each $\varphi_{I_\alpha}$ is
uniformly adapted to $I_\alpha$. Like the argument before,
$M'^n_\alpha f(x) = \sup_I \frac{1}{|I|} \langle
\varphi_{I_\alpha^n}, f \rangle \chi_I(x) \lesssim M_\alpha^n f(x)$.
For a 0-mean family, let

\begin{equation*}
S_\alpha^n f(x) = \Big( \sum_I \frac{|\langle \phi_{I_\alpha^n}, f
\rangle|^2}{|I|} \chi_I(x) \Big)^{1/2}
\end{equation*}

\noindent and $S^{[n]} f(x) = \sup_\alpha S^n_\alpha f(x)$.  We are
interested in gaining estimates on $S^{[n]}$.  First, fix an
interval $I$.  Note, for any $x$, $\dist(x, I_\alpha) \geq
\dist(x,I) - \alpha|I|$ and

\begin{equation*}
\begin{split}
|\varphi_{I_\alpha}(x)| &\leq C_m \Big(1 + \frac{\dist(x,
I_\alpha)}{|I|} \Big)^{-m} \leq 2^m C_m \Big(2 + \frac{\dist(x,
I_\alpha)}{|I|} \Big)^{-m} \\
&\leq 2^m C_m \Big(2 - \alpha + \frac{\dist(x, I)}{|I|} \Big)^{-m}
\leq 2^m C_m \Big(1 + \frac{\dist(x, I)}{|I|} \Big)^{-m}.
\end{split}
\end{equation*}

\noindent That is, each $\varphi_{I_\alpha}$ is actually uniformly
adapted to $I$.  Fix $f$ and $n$.  For each dyadic interval $I$,
choose an $I_\#$, dependent on $f$, so that $|\langle \phi_{I_\#^n},
f \rangle| = \sup_\alpha |\langle \phi_{I_\alpha^n}, f \rangle|$.  Then,

\begin{equation*}
S^{[n]}f(x) \leq \Big( \sum_I \frac{|\langle \phi_{I^n_\#}, f
\rangle|^2}{|I|} \chi_I(x) \Big)^{1/2}.
\end{equation*}

\noindent As each $\varphi_{I_\#^n}$ is uniformly adapted to $I^n$,
we observe that $S^{[n]} f$ is bounded by a kind of $S^n f$, with a
new adapted family. Hence, $\|S^{[n]} f\|_{1,\infty} \lesssim (|n|+1) \|f\|_1$
and $\|S^{[n]} f\|_p \lesssim (|n|+1) \|f\|_p$ as before.

Finally, let

\begin{equation*}
T_\epsilon^{[n]} f(x) = \int_0^1 \sum_I \epsilon_I \langle
\phi^1_{I^n_\alpha}, f \rangle \phi^2_{I_\alpha}(x) \, d\alpha.
\end{equation*}

\noindent Let $1 < p < \infty$ and take $\|g\|_{p'} \leq 1$.  Then,
by the normal H\"{o}lder argument $|\langle T_\epsilon^{[n]} f, g
\rangle| \leq \|S^{[n]} f\|_{p} \|S^{[0]} g\|_{p'} \lesssim (|n|+1)
\|f\|_p$. As $g$ in the unit ball of $L^{p'}$ is arbitrary,
$\|T_\epsilon^{[n]} f\|_p \lesssim (|n|+1) \|f\|_p$.  To show that
$\|T_\epsilon^{[n]} f\|_{1,\infty} \lesssim (|n|+1) \|f\|_1$, one
needs to run the argument of Theorem~\ref{thm:Tweak} again, this
time with $S^1$ replaced by $S^{1,[n]}$ and $S^{2,k}$ replaced by
$S^{2,k,[0]}$.  As each of the square functions is the supremum over
$\alpha$, the integral over $\alpha$ will be irrelevant.

\section{Marcinkiewicz Multipliers}

\begin{defnn} Let $m : \mathbb{R} \rightarrow \mathbb{C}$ be smooth
away from $0$ and uniformly bounded.  We say $m$ is a Marcinkiewicz
multiplier if $|m^{(l)}(t)| \lesssim |t|^{-l}$ for $0 \leq l \leq
4$.
\end{defnn}

The restriction $l \leq 4$ is what we will need. It can often be
assumed to hold for many more derivatives.  Our definition here
differs slightly from the classical definition. Normally, $m$ is
taken only to be in $L^\infty$, not uniformly bounded. Typically,
the multiplier appears in some integral and the value of $m$ at 0 is
irrelevant. Here, however, it will applied in a sum and the value is
important.

Given a Marcinkiewicz multiplier $m$, define the Marcinkiewicz
multiplier operator for $f \in L^1(\mathbb{T})$ as

\begin{equation*}
\Lambda_m f(x) = \sum_{t \in \mathbb{Z}} m(t) \widehat{f}(t) e^{2\pi
itx}.
\end{equation*}

\noindent We will show this operator satisfies the same $L^p$
properties as its classical counterpart on $\mathbb{R}$.  First, we
show the following technical results.

\begin{lemma}\label{lemma:weaksum} Fix positive integers $k$ and
$K$.  For each $\vec{n} \in \mathbb{Z}^K$, write $\alpha(\vec{n}) =
\prod_{j=1}^K (|n_j|+1)$. Suppose we have $f_{\vec{n}} :
\mathbb{T}^d \rightarrow \mathbb{C}$ for each $\vec{n} \in
\mathbb{Z}^K$ and $\|f_{\vec{n}}\|_{p,\infty} \leq \alpha(\vec{n})$
for all $\vec{n}$ and some $p \geq 1/k$.  Set $r = k + 3$ and $F =
\sum_{\vec{n}} \alpha(\vec{n})^{-r} f_{\vec{n}}$.  Then,
$\|F\|_{p,\infty} \lesssim 1$.
\end{lemma}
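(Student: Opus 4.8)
The plan is to reduce this weak-type bound to a genuinely (sub)additive quantity by dualizing through the Kolmogorov characterization of Lemma~\ref{lemma:kol}, exploiting the fact that the normalizing exponent $r=k+3$ leaves a little room to spare. Write $g_{\vec n}=\alpha(\vec n)^{-r}f_{\vec n}$, so that $F=\sum_{\vec n\in\mathbb{Z}^K}g_{\vec n}$ and, by the scaling of $\|\cdot\|_{p,\infty}$ together with the hypothesis $\|f_{\vec n}\|_{p,\infty}\le\alpha(\vec n)$,
\begin{equation*}
\|g_{\vec n}\|_{p,\infty}\le\alpha(\vec n)^{1-r}=\alpha(\vec n)^{-(k+2)}.
\end{equation*}
The only arithmetic fact about the weights we shall use is that, since $\alpha(\vec n)=\prod_{j=1}^K(|n_j|+1)$,
\begin{equation*}
\sum_{\vec n\in\mathbb{Z}^K}\alpha(\vec n)^{-t}=\Big(\sum_{m\in\mathbb{Z}}(|m|+1)^{-t}\Big)^{K}<\infty\qquad\text{whenever }t>1.
\end{equation*}

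First I would fix an exponent $\rho$ with $\tfrac{1}{k+2}<\rho<\min(1,p)$; such a $\rho$ exists because $p\ge 1/k>\tfrac{1}{k+2}$ and $1>\tfrac{1}{k+2}$. By Lemma~\ref{lemma:kol} (applied with its ``$r$'' equal to $\rho$), $\|h\|_{p,\infty}\sim M_{p,\rho}(h)$ with constants depending only on $p$ and $\rho$, where $M_{p,\rho}(h)=\sup\{\|h\chi_E\|_\rho/\|\chi_E\|_s:|E|>0\}$ and $1/s=1/\rho-1/p$. The advantage of $M_{p,\rho}$ over $\|\cdot\|_{p,\infty}$ is that, because $\rho\le 1$, for each fixed $E$ the functional $h\mapsto\|h\chi_E\|_\rho^\rho$ is subadditive; dividing by $\|\chi_E\|_s^\rho$ and taking the supremum over all $E$ with $|E|>0$ gives $M_{p,\rho}(h_1+h_2)^\rho\le M_{p,\rho}(h_1)^\rho+M_{p,\rho}(h_2)^\rho$. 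This passes to countable sums: once we know $\sum_{\vec n}\|g_{\vec n}\|_\rho^\rho\le\sum_{\vec n}M_{p,\rho}(g_{\vec n})^\rho$ is finite (which it will be, by the next step), the partial sums of $\sum g_{\vec n}$ converge in $L^\rho(X)$, so $F$ is a well-defined measurable function, and Fatou's lemma inside each $\|\cdot\chi_E\|_\rho$ yields $M_{p,\rho}(F)^\rho\le\sum_{\vec n}M_{p,\rho}(g_{\vec n})^\rho$.

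Then the estimate is immediate:
\begin{equation*}
M_{p,\rho}(F)^\rho\le\sum_{\vec n\in\mathbb{Z}^K}M_{p,\rho}(g_{\vec n})^\rho\lesssim\sum_{\vec n\in\mathbb{Z}^K}\|g_{\vec n}\|_{p,\infty}^\rho\le\sum_{\vec n\in\mathbb{Z}^K}\alpha(\vec n)^{-(k+2)\rho}<\infty,
\end{equation*}
the last series converging because $(k+2)\rho>1$. Feeding this back through Lemma~\ref{lemma:kol} gives $\|F\|_{p,\infty}\lesssim M_{p,\rho}(F)\lesssim 1$, which is the claim.

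The main obstacle is conceptual rather than computational: $\|\cdot\|_{p,\infty}$ is only a quasi-norm with no control on infinite sums, so the naive inequality $\|F\|_{p,\infty}\le\sum\|g_{\vec n}\|_{p,\infty}$ is false and even the iterated quasi-triangle inequality accumulates an unbounded constant. Replacing $\|\cdot\|_{p,\infty}$ by the $\rho$-subadditive functional $M_{p,\rho}$ for some $\rho$ strictly below $\min(1,p)$ is exactly what legitimizes the summation, and the decay $\alpha(\vec n)^{-(k+2)}$ beyond the critical rate $\alpha(\vec n)^{-1}$ — which is where the choice $r=k+3$ and the constraint $p\ge 1/k$ are used — furnishes the $\varepsilon$ of room needed to select such a $\rho$ with $(k+2)\rho>1$. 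The only other point to check, the convergence of $\sum g_{\vec n}$ in $L^\rho$ and hence the well-definedness of $F$, is routine.
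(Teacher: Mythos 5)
Your proof is correct, and it takes a genuinely different route from the paper's. The paper argues directly from the definition of the weak quasi-norm: it fixes $\lambda>0$, sets $C=\sum_{\vec n}\alpha(\vec n)^{-3/2}$, and observes the pigeonhole containment $\{|F|>\lambda\}\subseteq\bigcup_{\vec n}\{|f_{\vec n}|>\lambda C^{-1}\alpha(\vec n)^{r-3/2}\}$; the union bound together with the hypothesis on each $\|f_{\vec n}\|_{p,\infty}$ then gives $|\{|F|>\lambda\}|\lesssim\lambda^{-p}\sum_{\vec n}\alpha(\vec n)^{(-r+5/2)p}$, and the exponent $(-r+5/2)p=(-k-1/2)p\le-1-1/(2k)<-1$ makes the series converge. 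You instead pass through Lemma~\ref{lemma:kol} to replace $\|\cdot\|_{p,\infty}$ by the functional $M_{p,\rho}$ for some $\rho\in(\tfrac{1}{k+2},\min(1,p))$, which is countably $\rho$-subadditive, reducing the problem to convergence of $\sum_{\vec n}\alpha(\vec n)^{-(k+2)\rho}$. The arithmetic driving both proofs is identical (one needs $r$ large enough that the relevant exponent drops strictly below $-1$); what differs is where the summability is exploited. The paper's argument is more elementary and self-contained; yours isolates the conceptual mechanism (weak-$L^p$ is $\rho$-normable for $\rho<\min(1,p)$) and reuses a lemma already proved in the paper, at the cost of an extra reduction. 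Your attention to the well-definedness of $F$ via $L^\rho$-convergence of the partial sums is a point the paper leaves implicit and is a legitimate improvement in rigor; one could sharpen it by noting that $\sum_{\vec n}\|g_{\vec n}\|_\rho^\rho<\infty$ forces $\sum_{\vec n}|g_{\vec n}(x)|^\rho<\infty$ a.e., hence $\sum_{\vec n}|g_{\vec n}(x)|<\infty$ a.e.\ since $\rho\le 1$, so the sum defining $F$ converges absolutely pointwise a.e., which is what the pointwise inequality $|F|\le\sum_{\vec n}|g_{\vec n}|$ underlying your monotone-convergence step really requires.
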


\begin{proof}  Let $\lambda > 0$. Fix $C = \sum_{\vec{n}}
\alpha(\vec{n})^{-3/2}$.  It is clear that

\begin{equation*}
\{|F| > \lambda\} \subseteq \bigcup_{\vec{n}} \{|f_{\vec{n}}|
> \lambda C^{-1} \alpha(\vec{n})^{r-3/2}\}.
\end{equation*}

\noindent So, $|\{|F| > \lambda\}| \leq \sum_{\vec{n}}
|\{|f_{\vec{n}}| > \lambda C^{-1} \alpha(\vec{n})^{r-3/2}\}| \leq
\frac{C^p}{\lambda^p} \sum_{\vec{n}} \|f_{\vec{n}} \|_{p,\infty}^p
\alpha(\vec{n})^{-rp + 3p/2} \leq \frac{C^p}{\lambda^p}
\sum_{\vec{n}} \alpha(\vec{n})^{-rp + 5p/2} \lesssim \lambda^{-p}$,
because $p(-r+5/2) = p(-k-1/2) < -1$. As $\lambda$ is
arbitrary, $\|F\|_{p,\infty} \lesssim 1$.
\end{proof}

\begin{lemma}\label{lemma:mk} Let $m$ be any Marcinkiewicz multiplier
and $\psi_k^1$ the functions guaranteed by Theorem~\ref{thm:tbumps}.
For each $k \in \mathbb{N}$, there is a smooth function $m_k$ so
that $m_k \widehat{\psi^1_k} = m \widehat{\psi_k^1}$ and

\begin{equation*}
m_k(t) = \sum_{n \in \mathbb{Z}} c_{k,n} e^{-2\pi in2^{-k}t},
\end{equation*}

\noindent where $|c_{k,n}| \lesssim (|n| + 1)^{-4}$ uniformly in
$k$.
\end{lemma}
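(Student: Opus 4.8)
The plan is to realize $m_k$ as the $2^k$-periodization of a smooth, compactly supported truncation of $m$, and then to read off the Fourier coefficients by repeated integration by parts, feeding in the Marcinkiewicz bounds on $m$.

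First I would fix, once and for all, a smooth function $\eta:\mathbb{R}\to\mathbb{C}$ with $\eta\equiv 1$ on $[-1/4,-1/16]\cup[1/16,1/4]$ and $\supp\eta\subseteq[-3/8,-1/32]\cup[1/32,3/8]$, and set $\eta_k(t)=\eta(2^{-k}t)$. By Theorem~\ref{thm:tbumps} we have $\supp(\widehat{\psi^1_k})\subseteq[-2^{k-2},-2^{k-4}]\cup[2^{k-4},2^{k-2}]$, so after scaling by $2^{-k}$ this lands inside $[-1/4,-1/16]\cup[1/16,1/4]$; hence $\eta_k\equiv 1$ on $\supp(\widehat{\psi^1_k})$, while $\supp\eta_k\subseteq\{2^{k-5}\le|t|\le 3\cdot2^{k-3}\}$, a set bounded away from $0$ and sitting strictly inside $(-2^{k-1},2^{k-1})$. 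I would then put $g_k=m\,\eta_k$: since $m$ is smooth away from $0$ and $\eta_k$ vanishes near $0$, $g_k$ is smooth on all of $\mathbb{R}$ and compactly supported, and I define $m_k(t)=\sum_{j\in\mathbb{Z}}g_k(t+j2^k)$. This sum is locally finite, so $m_k$ is smooth and $2^k$-periodic; and because $\supp g_k$ lies in the interior of the fundamental period $[-2^{k-1},2^{k-1}]$, one has $m_k=g_k=m$ on $\supp(\widehat{\psi^1_k})$, hence $m_k\widehat{\psi^1_k}=m\widehat{\psi^1_k}$ as required.

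Since $m_k$ is smooth it is the sum of its Fourier series, which written with respect to the period $2^k$ reads $m_k(t)=\sum_n c_{k,n}e^{-2\pi in2^{-k}t}$ with $c_{k,n}=2^{-k}\int_{-2^{k-1}}^{2^{k-1}}m_k(t)e^{2\pi in2^{-k}t}\,dt=2^{-k}\int_{\mathbb{R}}g_k(t)e^{2\pi in2^{-k}t}\,dt$. For $n\ne 0$, integrating by parts four times (all boundary terms vanish by compact support) gives $c_{k,n}=\frac{2^{3k}}{(2\pi n)^4}\int_{\mathbb{R}}g_k^{(4)}(t)e^{2\pi in2^{-k}t}\,dt$, so $|c_{k,n}|\le\frac{2^{3k}}{(2\pi n)^4}\|g_k^{(4)}\|_1$. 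The crux is the uniform-in-$k$ bound $\|g_k^{(4)}\|_1\lesssim 2^{-3k}$: by the Leibniz rule $g_k^{(4)}=\sum_{j=0}^4\binom{4}{j}m^{(j)}\eta_k^{(4-j)}$, and on $\supp\eta_k$ one has $|t|\sim 2^k$, so the Marcinkiewicz condition gives $|m^{(j)}(t)|\lesssim|t|^{-j}\lesssim 2^{-jk}$ for $0\le j\le 4$, while $|\eta_k^{(4-j)}(t)|=2^{-(4-j)k}|\eta^{(4-j)}(2^{-k}t)|\lesssim 2^{-(4-j)k}$; hence $|g_k^{(4)}|\lesssim 2^{-4k}\chi_{\supp\eta_k}$ and $|\supp\eta_k|\lesssim 2^k$. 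This yields $|c_{k,n}|\lesssim n^{-4}\lesssim(|n|+1)^{-4}$ for $n\ne 0$, uniformly in $k$. For $n=0$, $|c_{k,0}|\le 2^{-k}\|g_k\|_1\le 2^{-k}\|m\|_\infty|\supp\eta_k|\lesssim 1=(|0|+1)^{-4}$.

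The main obstacle is precisely this uniform control of $\|g_k^{(4)}\|_1$: everything hinges on the fact that the singularity of $m$ is at the origin, the cutoff $\eta_k$ is supported where $|t|\sim 2^k$, and the $2^k$-dilation structure of $\eta_k$ is matched exactly by the $|t|^{-l}\sim 2^{-lk}$ decay of the derivatives of $m$ there, so that every term of the Leibniz expansion has the same size $2^{-4k}$ on a set of measure $\sim 2^k$. The only other points needing a little care, both handled by the explicit choice of $\eta$, are that $\supp g_k$ stays strictly inside one period (so periodizing restores $m$ on $\supp(\widehat{\psi^1_k})$) and that $g_k$ is genuinely $C^\infty$ across $t=0$.
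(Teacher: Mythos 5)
Your proof is correct and follows essentially the same route as the paper: truncate $m$ with a dyadic smooth cutoff equal to $1$ on the support of $\widehat{\psi^1_k}$ and supported on an annulus $|t|\sim 2^k$, expand in the exponential basis of period $2^k$, and bound the coefficients by four integrations by parts together with the Leibniz rule and the Marcinkiewicz bounds $|m^{(j)}(t)|\lesssim 2^{-jk}$ on that annulus. The one small refinement is that you periodize the truncation $g_k$ before computing coefficients, so the claimed identity $m_k(t)=\sum_n c_{k,n}e^{-2\pi in2^{-k}t}$ holds for all $t\in\mathbb{R}$ (the paper works with the non-periodic truncation and its expansion on a single fundamental period, which suffices because the lemma is only ever evaluated on $\supp(\widehat{\psi^1_k})$).
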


\begin{proof} Let $\varphi : \mathbb{R} \rightarrow \mathbb{C}$ be
smooth, with $\supp(\varphi) \subseteq [-1/2, -1/32] \cup [1/32,
1/2]$ and $\varphi = 1$ on $[-1/4, -1/16] \cup [1/16, 1/4]$. Define
$m_k(t) = m(t) \varphi(2^{-k} t)$. Then, $m_k = m$ on $[-2^{k-2},
-2^{k-4}] \cup [2^{k-4}, 2^{k-2}]$, or equivalently $m_k
\widehat{\psi_k^1} = m \widehat{\psi_k^1}$.  Further, $m_k$ is
supported on $E_k := [-2^{k-1}, -2^{k-5}] \cup [2^{k-5}, 2^{k-1}]
\subset [-2^{k-1}, 2^{k-1}]$, an interval of length $2^k$.

Recall that $\{e^{-2\pi inx}\}_{n \in \mathbb{Z}}$ is an orthonormal
basis for the Hilbert space $L^2([0,1])$, or any interval of length
1 in $\mathbb{R}$. Thus, $\{2^{-k/2} e^{-2\pi in2^{-k}x}\}$ is an
orthonormal basis on any interval of length $2^k$, and

\begin{equation*}
m_k(t) = \sum_{n \in \mathbb{Z}} \bigg( \int_\mathbb{R} m_k(x)
\frac{e^{2\pi in2^{-k}x}}{2^{k/2}} \, dx \bigg) \frac{e^{-2\pi
in2^{-k}t}}{2^{k/2}} = \sum_{n \in \mathbb{Z}} c_{k,n} e^{-2\pi
in2^{-k}t},
\end{equation*}

\noindent where $c_{k,n} = 2^{-k} \int_\mathbb{R} m_k(x) e^{2\pi
in2^{-k} x} \, dx$.

First, if $n = 0$, then $c_{k,n} = 2^{-k} \int_\mathbb{R} m_k \, dm
= 2^{-k} \int_{E_k} m_k \, dm$, and we see $|c_{k,n}| \leq 2^{-k} |E_k|
\|m\|_\infty \|\varphi\|_\infty \leq \|m\|_\infty \|\varphi\|_\infty
\lesssim 1$.

Now assume $n \not= 0$.  Let $C = \max\{ \|\varphi^{(l)}\|_\infty :
0 \leq l \leq 4\}$.  On $E_k$, $|m^{(l)}(x)| \lesssim |x|^{-l} \leq
|2^{k-5}|^{-l} = 2^{-kl} 2^{5l}$ for $l \leq 4$. Thus,

\begin{equation*}
|m_k^{(4)}(x)| \lesssim \sum_{l=0}^4 |m^{(l)}(x)| |2^{-k(4-l)}
\varphi^{(4-l)}(2^{-k}x)| \leq \sum_{l=0}^4 2^{-kl} 2^{5l} 2^{-4k}
2^{kl} C \lesssim 2^{-4k}.
\end{equation*}

\noindent By several iterations of integration by parts,

\begin{equation*}
\begin{split}
\bigg| \int_\mathbb{R} m_k(x) e^{2\pi in2^{-k}x} \, dx \bigg| &=
\bigg| \int_{E_k} m_k(x) e^{2\pi in2^{-k}x} \, dx \bigg| \\
&= \bigg| \int_{E_k} m_k^{(4)}(x) \frac{e^{2\pi in2^{-k}x}}{(2\pi
in2^{-k})^4} \, dx \bigg| \\
&\lesssim \frac{2^{4k}}{|n|^4} |E_k| \|m_k^{(4)} \|_\infty \lesssim
\frac{2^k}{|n|^4} \lesssim \frac{2^k}{(|n| + 1)^4}.
\end{split}
\end{equation*}

\noindent Namely, $|c_{k,n}| \lesssim (|n|+1)^{-4}$.
\end{proof}

\begin{thm}\label{thm:marcin} For any Marcinkiewicz multiplier $m$,
$\Lambda_m : L^1(\mathbb{T}) \rightarrow L^{1,\infty}(\mathbb{T})$
and $\Lambda_m : L^p(\mathbb{T}) \rightarrow L^p(\mathbb{T})$ for $1
< p < \infty$.
\end{thm}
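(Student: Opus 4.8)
The plan is to split off the (trivial) contribution of the mean, decompose the rest into a rapidly weighted sum of shifted Littlewood--Paley convolution operators, and reassemble with Lemma~\ref{lemma:weaksum}.

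Write $\Lambda_m f = m(0)\widehat f(0) + \big(\Lambda_m f - m(0)\widehat f(0)\big)$. The first term is the rank-one operator $f \mapsto m(0)\int_{\mathbb T} f$, which maps $L^1(\mathbb T) \to L^\infty(\mathbb T) \subseteq L^{1,\infty}(\mathbb T)$ and each $L^p(\mathbb T) \to L^p(\mathbb T)$ with norm $\le \|m\|_\infty$, so it may be ignored. For the remaining part, apply Theorem~\ref{thm:tbumps} to write, for $n \neq 0$, $m(n)\widehat f(n) = \sum_{k\ge 1} m(n)\widehat{\psi^1_k}(n)\widehat{\psi^2_k}(-n)\widehat f(n)$, and Lemma~\ref{lemma:mk} to replace $m(n)\widehat{\psi^1_k}(n)$ by $m_k(n)\widehat{\psi^1_k}(n) = \sum_{\ell\in\mathbb Z} c_{k,\ell}\, e^{-2\pi i\ell 2^{-k}n}\,\widehat{\psi^1_k}(n)$ with $|c_{k,\ell}| \lesssim (|\ell|+1)^{-4}$ uniformly in $k$. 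Since $e^{-2\pi i\ell 2^{-k}n}$ is the Fourier symbol of translation by $\ell 2^{-k}$ and $\widehat{\psi^1_k}(n)\widehat{\psi^2_k}(-n) = \widehat{\psi^1_k * \eta^2_k}(n)$ with $\eta^2_k(x):=\psi^2_k(-x)$, one obtains (first for trigonometric polynomials $f$, where only finitely many $n$ occur, each with $O(1)$ nonzero values of $k$ and an absolutely summable $\ell$-sum, then by density)
\begin{equation*}
\Lambda_m f = m(0)\widehat f(0) + \sum_{\ell\in\mathbb Z} (|\ell|+1)^{-4}\, V_\ell f, \qquad V_\ell f := \sum_{k\ge 1} d_{k,\ell}\,\tau_{\ell 2^{-k}}\big(\Psi_k * f\big),
\end{equation*}
where $\tau_a$ is translation by $a$, $\Psi_k := \psi^1_k * \eta^2_k$, and $d_{k,\ell} := (|\ell|+1)^4 c_{k,\ell}$ is bounded uniformly in $k,\ell$.

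The key claims are that $V_\ell$ is bounded on $L^2(\mathbb T)$ uniformly in $\ell$, and that $\|V_\ell f\|_{1,\infty} \lesssim (|\ell|+1)\|f\|_1$ uniformly in $\ell$ and in the bounded sequences $(d_{k,\ell})_k$. Granting these, Marcinkiewicz interpolation gives $\|V_\ell f\|_p \lesssim (|\ell|+1)\|f\|_p$ for $1 < p \le 2$; since $V_\ell^*$ is a convolution operator built from $0$-mean bumps at scale $2^{-k}$ shifted by $-\ell$ units of that scale, hence of the same type, duality extends this to $2 \le p < \infty$. Then for $1 < p < \infty$, $\|\Lambda_m f\|_p \le \|m\|_\infty\|f\|_1 + C\sum_\ell (|\ell|+1)^{-3}\|f\|_p \lesssim \|f\|_p$, using $\|f\|_1 \le \|f\|_p$ on the probability space $\mathbb T$. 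For the weak-type bound, normalise $\|f\|_1 = 1$; then $\|V_\ell f\|_{1,\infty} \le C(|\ell|+1) =: \alpha(\ell)$, so Lemma~\ref{lemma:weaksum} applied with $K = 1$, $k = 1$ (hence $r = 4$) and $p = 1$ yields $\big\| \sum_\ell (|\ell|+1)^{-4} V_\ell f \big\|_{1,\infty} \lesssim 1$, and $\|\Lambda_m f\|_{1,\infty} \lesssim \|f\|_1$ follows by the quasi-triangle inequality.

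It remains to prove the two claims about $V_\ell$. The $L^2$ bound is immediate and uniform in $\ell$: $\widehat{V_\ell f}(n) = \sum_k d_{k,\ell}\, e^{-2\pi i\ell 2^{-k}n}\widehat{\psi^1_k}(n)\widehat{\psi^2_k}(-n)\widehat f(n)$, and for each fixed $n$ at most three terms are nonzero because $\supp(\widehat{\psi^1_k}) \subseteq [-2^{k-2},-2^{k-4}] \cup [2^{k-4},2^{k-2}]$; every factor is bounded, so $|\widehat{V_\ell f}(n)| \lesssim |\widehat f(n)|$ and Plancherel applies. The weak-$(1,1)$ bound is where the real work lies. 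One first records, by the distance manipulations already used in Lemma~\ref{lemma:adapted} and the proof of Theorem~\ref{thm:rbumps}, that $\Psi_k$ is a $0$-mean function with $|\Psi_k(x)| \lesssim 2^k\big(1 + 2^k\dist(x,[-2^{-k},2^{-k}])\big)^{-m}$ and $|\Psi_k'(x)| \lesssim 4^k\big(1 + 2^k\dist(x,[-2^{-k},2^{-k}])\big)^{-m}$ for all $m$; thus $V_\ell$ is convolution with the kernel $K_\ell = \sum_k d_{k,\ell}\,\tau_{\ell 2^{-k}}\Psi_k$, built from $0$-mean bumps at scale $2^{-k}$ shifted by $\ell$ units of that scale. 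One then runs the Calder\'on--Zygmund argument of Theorem~\ref{thm:Sweak}: for $\alpha > \|f\|_1$, decompose $f = g + b$, $b = \sum_k b_k$, by Lemma~\ref{lemma:cz2}; the good part is controlled by the uniform $L^2$ bound and Chebyshev, while for each atom $b_k$ (supported in the dyadic interval $I_k$, mean zero, $\|b_k\|_1 \lesssim \alpha|I_k|$) the computation of Lemma~\ref{lemma:atom} --- using the mean value theorem and the decay of $\Psi_k$ and $\Psi_k'$, now away from the fattened interval $(2|\ell|+2)I_k$ (or all of $\mathbb T$), exactly as in Section~\ref{sec:shifted} --- gives $\|V_\ell b_k\|_{L^1(\mathbb T \setminus (2|\ell|+2)I_k)} \lesssim \|b_k\|_1$. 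Summing, $|\{|V_\ell b| > \alpha/2\}| \lesssim (|\ell|+1)\|f\|_1/\alpha$, the factor $|\ell|+1$ entering only through $\big|\bigcup_k (2|\ell|+2)I_k\big|$. This last step --- pushing the atomic estimate through for the shifted kernel $K_\ell$ while keeping the dependence on $\ell$ linear rather than quadratic --- is the main obstacle.
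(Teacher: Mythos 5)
Your proposal is correct in outline, and it takes a genuinely different route from the paper's. You share the reduction to $m(0)=0$, the use of Theorem~\ref{thm:tbumps} and Lemma~\ref{lemma:mk}, and the reassembly via Lemma~\ref{lemma:weaksum}; but where the paper goes next is different. The paper dualizes against $\widetilde g$, applies Plancherel to produce an integral $\int_{\mathbb T}(f*\psi^1_{k,n})(g_0*\psi^2_k)$, and then uses the dilation-and-translation identity $\int_{\mathbb T} h = 2^{-k}\sum_{j=0}^{2^k-1}\int_0^1 h(2^{-k}(\alpha+j))\,d\alpha$ to \emph{discretize} this into inner products against adapted families $\varphi_{I^n_\alpha}$ indexed by dyadic intervals plus the averaging parameter $\alpha$; the resulting object is recognized as the shifted linearization $T^{[n]}_{c'}$ of Section~\ref{sec:shifted}, whose bounds rest on the square-function machinery of Chapter~3 and the stopping-time proof of Theorem~\ref{thm:Tweak}. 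You instead keep $\Lambda_m - m(0)\widehat f(0)$ as a convolution operator, writing it as $\sum_\ell(|\ell|+1)^{-4}V_\ell$ where $V_\ell$ is convolution with the shifted Littlewood--Paley kernel $K_\ell=\sum_k d_{k,\ell}\,\tau_{\ell 2^{-k}}\Psi_k$, prove a uniform $L^2$ bound by Plancherel and finite overlap of the supports of $\widehat{\psi^1_k}$, and run a classical Calder\'on--Zygmund argument (Lemma~\ref{lemma:cz2} plus a Lemma~\ref{lemma:atom}-type atomic estimate outside fattened intervals) to obtain $\|V_\ell\|_{L^1\to L^{1,\infty}}\lesssim |\ell|+1$, then interpolate and dualize. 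What your route buys is self-containment and classicality: you bypass the adapted-family discretization entirely, treating $\Lambda_m$ directly as a CZ singular integral. What the paper's route buys is reuse: the dilation/translation discretization and the $T^{a,[\vec n]}_\epsilon$ framework it produces are invoked essentially verbatim in the proofs of Theorem~\ref{thm:c-m} and Theorem~\ref{thm:bimult}, where the operators are genuinely multilinear and your kernel-based argument would be considerably harder to push through. A few small points worth flagging if you flesh this out: the bound $|\Psi_k(x)|\lesssim 2^k(1+2^k\dist(x,[-2^{-k},2^{-k}]))^{-m}$ is a convolution estimate (the paper never states it as a lemma, though it is standard); the precise constant in the fattened interval $(2|\ell|+2)I_j$ likely needs a slight enlargement (e.g.\ $(2|\ell|+4)I_j$) because of the extra $\pm 2^{-k}$ spread of $\Psi_k$ beyond the shift; and your index $k$ is doing double duty as both the atom index and the scale index in the final paragraph, which should be disambiguated.
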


\begin{proof} We start by noting that we can assume $m(0) = 0$.  Let
$m_0 = m$ away from 0 and $m_0(0) = 0$.  Then, $m_0$ is a
Marcinkiewicz multiplier and $\Lambda_m f(x) = m(0) \widehat{f}(0) +
\Lambda_{m_0} f(x)$. But, $|m(0) \widehat{f}(0)| = |m(0)
\int_\mathbb{T} f(x) \, dx| \lesssim \|f\|_1 \leq \|f\|_p$ for any
$p$, as $m$ is uniformly bounded. Thus, it suffices to prove the
result for $\Lambda_{m_0}$, or equivalently, assuming $m(0) = 0$.

Fix $f, g \in L^1(\mathbb{T})$.  Define the reflection of a function
by $\widetilde{f}(x) = f(-x)$.  Let $f_0 = \widetilde{\overline{f}}$
and $g_0 = \widetilde{\overline{g}}$. Then,

\begin{equation*}
\begin{split}
\langle \Lambda_m f, \widetilde{g} \rangle &= \int_\mathbb{T}
\Lambda_m f(x) g_0(x) \, dx = \int_\mathbb{T} \bigg( \sum_{t \in
\mathbb{Z}} m(t) \widehat{f}(t) e^{2\pi ixt} \bigg) g_0(x) \, dx \\
&= \sum_{t \in \mathbb{Z}} m(t) \widehat{f}(t) \int_\mathbb{T} g_0(x)
e^{2\pi ixt} \, dx = \sum_{t \in \mathbb{Z}} m(t) \widehat{f}(t)
\widehat{g_0}(-t).
\end{split}
\end{equation*}

\noindent Now apply Theorem~\ref{thm:tbumps} to write

\begin{equation*}
\begin{split}
\langle \Lambda_m f, \widetilde{g} \rangle &= \sum_{t \in
\mathbb{Z}} \sum_{k=1}^\infty m(t) \widehat{f}(t)
\widehat{\psi^1_k}(t)
\widehat{g_0}(-t) \widehat{\psi^2_k}(-t) \\
&= \sum_{k=1}^\infty \sum_{t \in \mathbb{Z}} m_{k}(t) \widehat{f}(t)
\widehat{\psi^1_k}(t) \widehat{g_0}(-t) \widehat{\psi^2_k}(-t),
\end{split}
\end{equation*}

\noindent where $m_{k}$ is as given in Lemma~\ref{lemma:mk}. Let
$\psi^1_{k,n}(x) = \psi^1_k(x - n 2^{-k})$. Then,

\begin{equation*}
\begin{split}
\langle \Lambda_m f, \widetilde{g} \rangle &= \sum_{n \in
\mathbb{Z}} \sum_{k=1}^\infty \sum_{t \in \mathbb{Z}} c_{k,n}
e^{-2\pi in2^{-k}t} \widehat{f}(t) \widehat{\psi^1_k}(t)
\widehat{g_0}(-t)
\widehat{\psi^2_k}(-t) \\
&= \sum_{n \in \mathbb{Z}} \sum_{k=1}^\infty \sum_{t \in \mathbb{Z}}
c_{k,n} \widehat{f}(t) \widehat{\psi^1_{k,n}}(t)
\widehat{g_0}(-t) \widehat{\psi^2_k}(-t) \\
&= \sum_{n \in \mathbb{Z}} \sum_{k=1}^\infty \sum_{t \in \mathbb{Z}}
c_{k,n} (f * \psi^1_{k,n}) \, \widehat{} \,\, (t) (g_0 * \psi^2_k)
\, \widehat{} \,\, (-t) \\
&= \sum_{n \in \mathbb{Z}} \sum_{k=1}^\infty c_{k,n} \int_\mathbb{T}
(f * \psi^1_{k,n})(x) (g_0 * \psi^2_k)(x) \, dx,
\end{split}
\end{equation*}

\noindent the last line being an application of Plancherel. Even
though $f, g_0$ are only assumed in $L^1$, $f * \psi^1_{k,n}$ and
$g_0 * \psi^2_k$ are smooth, thus in $L^2$.  Focusing on just the
integral portion,

\begin{equation*}
\begin{split}
\int_\mathbb{T} (f * \psi^1_{k,n})(x) &(g_0 * \psi^2_k)(x) \, dx \\
&= \int_0^1 (f * \psi^1_{k,n})(x) (g_0 * \psi^2_k)(x) \, dx \\
&= 2^{-k} \int_0^{2^k} (f * \psi^1_{k,n})(2^{-k} x) (g_0 *
\psi^2_k)(2^{-k} x) \, dx \\
&= 2^{-k} \sum_{j=0}^{2^k-1} \int_j^{j+1} (f * \psi^1_{k,n})(2^{-k}
x) (g_0 * \psi^2_k)(2^{-k} x) \, dx \\
&= 2^{-k} \sum_{j=0}^{2^k-1} \int_0^1 (f *
\psi^1_{k,n})(2^{-k}(\alpha + j)) (g_0
* \psi^2_k)(2^{-k}(\alpha + j)) \, d\alpha \\
&= 2^{-k} \sum_{j=0}^{2^k-1} \int_0^1 \langle \psi_{k, j,
n,\alpha}^1, \overline{f} \rangle \langle \psi_{k,j,\alpha}^2,
\overline{g_0} \rangle \, d\alpha,
\end{split}
\end{equation*}

\noindent where $\psi^1_{k,j,n,\alpha}(x) =
\psi^1_{k,n}(2^{-k}(\alpha+j) - x) = \psi^1_k(2^{-k}(\alpha+j+n) -
x)$ and $\psi^2_{k,j,\alpha}(x) = \psi^2_k(2^{-k}(\alpha+j) - x)$.

For a dyadic interval $I = [2^{-k} j, 2^{-k} (j+1)]$, let
$\varphi_{I_\alpha}^2 = 2^{-k} \widetilde{\psi^2_{k,j,\alpha}}$.
Similarly, let $\varphi_{I^n_\alpha}^1 = 2^{-k}
\widetilde{\psi^1_{k,j,n,\alpha}}$. It is easily checked that the
original conditions on $\psi^1, \psi^2$ guarantee that $\varphi_I^1,
\varphi_I^2$ are 0-mean adapted families.  Let $\phi_I^1 =
|I|^{-1/2} \varphi_I^1$ and $\phi_{I}^2 = |I|^{-1/2} \varphi_I^2$,
so that

\begin{equation*}
\begin{split}
\langle \Lambda_m f, \widetilde{g} \rangle &= \sum_{n \in
\mathbb{Z}} \sum_{k=1}^\infty c_{k,n} 2^{-k} \sum_{j=0}^{2^k-1}
\int_0^1 \langle \psi_{k, j, n,\alpha}^1, \overline{f} \rangle
\langle \psi_{k,
j,\alpha}^2, \overline{g_0} \rangle \, d\alpha \\
&= \sum_{n \in \mathbb{Z}} \int_0^1 \sum_{I} c_{I,n} \langle
\phi_{I^n_\alpha}^1, f_0 \rangle \langle \phi_{I_\alpha}^2, g
\rangle \, d\alpha,
\end{split}
\end{equation*}

\noindent where the inner sum is over all dyadic intervals and
$c_{I,n} = c_{k,n}$ when $|I| = 2^{-k}$.  Write $c'_{I,n} =
(|n|+1)^4 c_{I,n}$, which are uniformly bounded in $I$ and $n$ by
Lemma~\ref{lemma:mk}.  Hence,

\begin{equation*}
\begin{split}
\langle \Lambda_m f, \widetilde{g} \rangle &= \sum_{n \in
\mathbb{Z}} \frac{1}{(|n|+1)^4} \int_0^1 \sum_{I} c'_{I,n} \langle
\phi_{I^n_\alpha}^1, f_0 \rangle \langle \phi_{I_\alpha}^2, g
\rangle \, d\alpha \\
&= \sum_{n \in \mathbb{Z}} \frac{1}{(|n|+1)^4} \langle T_{c'}^{[n]}
f_0, g \rangle \\
&= \Big\langle \sum_{n \in \mathbb{Z}} \frac{1}{(|n|+1)^4}
T_{c'}^{[n]} f_0, g \Big\rangle
\end{split}
\end{equation*}

As $g \in L^1$ is arbitrary, it follows that $\widetilde{\Lambda_m
f} = \sum (|n|+1)^{-4} T_{c'}^{[n]} f_0$ a.e..  But, $\|T_{c'}^{[n]}
f_0\|_p \lesssim (|n|+1) \|f_0\|_p = (|n|+1) \|f\|_p$ and
$\|T_{c'}^{[n]} f_0\|_{1,\infty} \lesssim (|n|+1) \|f\|_1$.  So, we
have immediately that $\|\Lambda_m f\|_p \lesssim \|f\|_p$ for all
$1 < p < \infty$.  Further, by Lemma~\ref{lemma:weaksum} (with $K =
k = 1$), $\|\Lambda_m f\|_{1,\infty} \lesssim \|f\|_1$.
\end{proof}

\begin{cor} $\Lambda_m : \llogln \rightarrow L(\log L)^{n-1}$ for
any Marcinkiewicz multiplier $m$ and $n \in \mathbb{N}$.
\end{cor}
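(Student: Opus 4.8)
The plan is to obtain the corollary as an immediate application of Theorem~\ref{thm:marcin} together with the $\llogln$ interpolation theorem proved just above it. First I would check the two hypotheses of that interpolation theorem. Sublinearity is trivial: $\Lambda_m$ is linear, so $\|\Lambda_m(f+g)\|_1 \leq \|\Lambda_m f\|_1 + \|\Lambda_m g\|_1$ and $\|\Lambda_m(\alpha f)\|_1 = |\alpha|\,\|\Lambda_m f\|_1$ hold. I would also note that $\Lambda_m f$ is well-defined for every $f \in \llogln(\mathbb{T})$, since by Theorem~\ref{thm:lloglnp} we have $\llogln(\mathbb{T}) \subseteq L^1(\mathbb{T})$, and the defining sum $\sum_t m(t)\widehat{f}(t)e^{2\pi itx}$ only needs $f \in L^1$ (indeed $\Lambda_m f$ is literally the object whose $L^p$ and weak-$L^1$ bounds were proved in Theorem~\ref{thm:marcin}).

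Next I would record the endpoint estimates. Theorem~\ref{thm:marcin} gives $\Lambda_m : L^1(\mathbb{T}) \to L^{1,\infty}(\mathbb{T})$ and $\Lambda_m : L^2(\mathbb{T}) \to L^2(\mathbb{T})$. Since $\|h\|_{2,\infty} \leq \|h\|_2$ by Chebyshev's inequality, the second bound upgrades to $\Lambda_m : L^2(\mathbb{T}) \to L^{2,\infty}(\mathbb{T})$. Thus, on the probability space $X = \mathbb{T}$, the operator $T = \Lambda_m$ satisfies exactly the hypotheses of the $\llogln$ interpolation theorem with the choice $p = q = 2$ (both strictly between $1$ and $\infty$).

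Finally I would invoke that interpolation theorem: it asserts $T : L(\log L)^n(X) \to L(\log L)^{n-1}(X)$ for every $n \in \mathbb{N}$, which with $T = \Lambda_m$ and $X = \mathbb{T}$ is precisely the assertion $\Lambda_m : \llogln \to L(\log L)^{n-1}$. There is essentially no obstacle here; the substantive work (the weak-$(1,1)$ and $L^p$ bounds, and the general interpolation machinery) is already in place. The only points requiring a moment's attention are the two one-line verifications above: that the linearity of $\Lambda_m$ supplies the needed sublinearity, and that $\llogln$ is contained in $L^1$ so that $\Lambda_m$ is even defined on the domain in question.
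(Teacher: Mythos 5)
Your argument is correct and is precisely the intended justification for the corollary (which the paper leaves implicit): combine the weak-$(1,1)$ and $L^2$ bounds of Theorem~\ref{thm:marcin} with the $L(\log L)^n \to L(\log L)^{n-1}$ interpolation theorem for sublinear operators, using linearity to supply sublinearity. The one small slip is your parenthetical that the interpolation theorem is ``proved just above it''---it actually lives in Chapter~4---but that does not affect the logic.
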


\section{Single-parameter Paraproducts}\label{sec:para1}

Return to the linearization $T_\epsilon$ defined in
Section~\ref{sec:Te}.  This linear operator can be viewed as the
simplest in a family of multilinear operators, which we call
paraproducts.  For simplicity, we will focus only on the bilinear
case, but the other operators are handled in precisely the same
manner.

For $f, g : \mathbb{T} \rightarrow \mathbb{C}$, the single-parameter
bilinear paraproducts are defined

\begin{equation*}
T_\epsilon^a(f, g)(x) = \sum_{I} \epsilon_I \frac{1}{|I|^{1/2}}
\langle \phi_I^1, f \rangle \langle \phi_I^2, g \rangle \phi_I^3(x),
\end{equation*}

\noindent for $a = 1, 2, 3$, where $\varphi_I^1$, $\varphi_I^2$, and
$\varphi_I^3$ are three adapted families with the property that
$\int_\mathbb{T} \varphi_I^i \, dm = 0$ for $i \not= a$.  As before,
the sum is over all dyadic intervals $I$, and $(\epsilon_I)$ is a
uniformly bounded sequence.  By dividing out a constant, we can
assume $|\epsilon_I| \leq 1$.  The reason for the terminology
single-parameter will be become clearer in the next chapter.

The primary goal of this section is to prove standard $L^p$
estimates of these paraproducts, which we do now.

\begin{thm}\label{thm:1para} $T_\epsilon^a : L^{p_1} \times L^{p_2}
\rightarrow L^p$ for $1 < p_1, p_2 < \infty$ and $\frac{1}{p} =
\frac{1}{p_1} + \frac{1}{p_2}$.  If $p_1$ or $p_2$ or both are equal
to 1, this still holds with $L^p$ replaced by $L^{p,\infty}$. The
underlying constants do not depend on $a$ or the sequence
$\epsilon_I$. \end{thm}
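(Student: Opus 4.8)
The plan is to pass to the trilinear (dual) form of $T_\epsilon^a$ and reduce the whole estimate to the square‑function and maximal bounds already in hand. Fix adapted families $\varphi_I^1,\varphi_I^2,\varphi_I^3$ with $\int_{\mathbb T}\varphi_I^i\,dm=0$ for $i\neq a$, and, dividing by a constant, assume $|\epsilon_I|\leq1$. Writing $\phi_I^i=|I|^{-1/2}\varphi_I^i$ and pairing against a third function $h$, one has $\langle T_\epsilon^a(f,g),h\rangle=\sum_I\epsilon_I|I|^{-2}\langle\varphi_I^1,f\rangle\langle\varphi_I^2,g\rangle\langle\varphi_I^3,h\rangle$. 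Since $|I|^{-2}=|I|\cdot|I|^{-3}$ and $\int_{\mathbb T}\chi_I=|I|$, the modulus of this sum is at most $\int_{\mathbb T}\sum_I\chi_I(x)\,\frac{|\langle\varphi_I^1,f\rangle|}{|I|}\frac{|\langle\varphi_I^2,g\rangle|}{|I|}\frac{|\langle\varphi_I^3,h\rangle|}{|I|}\,dx$. For each fixed $x$ I would pull the factor attached to the family $\varphi^a$ (the one \emph{not} required to have mean zero) out of the inner sum as a supremum and apply Cauchy--Schwarz to the two remaining factors; since $\frac{|\langle\varphi_I^i,\cdot\rangle|}{|I|}=\frac{|\langle\phi_I^i,\cdot\rangle|}{|I|^{1/2}}$ and $\sup_I\frac{|\langle\varphi_I^a,\cdot\rangle|}{|I|}\chi_I=M'(\cdot)$ for the family $\varphi^a$, this gives the pointwise bound $\sum_{I\ni x}(\cdots)\leq M'u(x)\,Sv(x)\,Sw(x)$, where $\{u,v,w\}=\{f,g,h\}$ with $u$ the argument paired with $\varphi^a$ and $Sv,Sw$ the square functions of the two $0$‑mean families. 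By Proposition~\ref{prop:1'}, $M'\lesssim M$, so $|\langle T_\epsilon^a(f,g),h\rangle|\lesssim\int_{\mathbb T}Mu\cdot Sv\cdot Sw\,dx$. As usual one first runs this with a finite subcollection of dyadic intervals, keeping all sums finite, and passes to the limit at the end; for $f,g$ smooth the full series converges and the bounds below extend $T_\epsilon^a$ by density.

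When $p>1$ this already finishes the argument. Taking $h$ in the unit ball of $L^{p'}$, where $\tfrac1p=\tfrac1{p_1}+\tfrac1{p_2}$ and $\tfrac1{p'}=1-\tfrac1p$, one has $\tfrac1{p_1}+\tfrac1{p_2}+\tfrac1{p'}=1$ with all three exponents strictly between $1$ and $\infty$ (and $S$ is applied only to objects in such spaces, or $M$ to an object in $L^\infty$). H\"older together with $S:L^q\to L^q$ (Theorem~\ref{thm:SLp}) and $M:L^q\to L^q$ (Corollary~\ref{cor:Lp}) then gives $|\langle T_\epsilon^a(f,g),h\rangle|\lesssim\|f\|_{p_1}\|g\|_{p_2}$, and taking the supremum over $h$ yields $\|T_\epsilon^a(f,g)\|_p\lesssim\|f\|_{p_1}\|g\|_{p_2}$, uniformly in $a$ and $\epsilon$.

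The remaining cases --- $p=1$ and $\tfrac12<p<1$, the latter including every endpoint with $p_1=1$ or $p_2=1$ --- are out of reach of duality, since then $p'\leq1$ and $S$ is not bounded on $L^\infty$. Here I would run a stopping‑time argument modelled directly on the proof of Theorem~\ref{thm:Tweak}, but now trilinear. Normalize $\|f\|_{p_1}=\|g\|_{p_2}=1$, fix a set $E$ with $|E|>0$, and remove an exceptional set $\Omega$ on which $M(|f|^{p_1})$ or $M(|g|^{p_2})$ exceeds a large multiple of $|E|^{-1}$; by the weak‑$(1,1)$ bound for $M$ one gets $|\Omega|<|E|/2$, so $E':=E\setminus\Omega$ has $|E'|>|E|/2$. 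Using Theorem~\ref{thm:decomposition1} or~\ref{thm:decomposition2} to split the family paired with $\chi_{E'}$ into pieces supported in $2^kI$ (at the cost of a summable factor $2^{-10k}$), the terms with $2^kI\subseteq\Omega$ vanish, and on the surviving intervals the averages of $f$ and of $g$ are controlled by powers of $|E|^{-1}$. Organizing those intervals into trees according to the dyadic size of $S^1f$, of $S^2g$ (or of the relevant maximal function, depending on $a$), and of the square function of $\chi_{E'}$, and summing the per‑tree contributions using the $L^2$, $L^1\to L^{1,\infty}$ and $L^p$ bounds for $S$ and $M$ exactly as in Theorem~\ref{thm:Tweak}, one obtains $|\langle T_\epsilon^a(f,g),\chi_{E'}\rangle|\lesssim|E|^{1-1/p}$ with constants independent of the truncation, of $\epsilon$, and of $a$. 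Lemma~\ref{lemma:dual} then gives $\|T_\epsilon^a(f,g)\|_{p,\infty}\lesssim\|f\|_{p_1}\|g\|_{p_2}$. This proves the endpoint assertion outright; to upgrade to strong $L^p$ in the open range $\tfrac12<p<1$ (and at $p=1$) one interpolates one variable at a time via Theorem~\ref{thm:biginterpolation} --- equivalently, multilinear Marcinkiewicz interpolation --- between two such weak‑type bounds with the Lebesgue exponent $p$ lying on either side of the target.

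I expect the stopping‑time step to be the main obstacle: making the trilinear tree decomposition precise, checking that $\Omega$ (hence $E'$) can be chosen depending only on $\|f\|_{p_1}$, $\|g\|_{p_2}$ and $|E|$, and verifying that the geometric series over the triple of stopping levels converges with exactly the exponent $|E|^{1-1/p}$, all uniformly in the finite truncation and in $\epsilon$. Everything else is a routine combination of H\"older's inequality with results proved earlier in the text.
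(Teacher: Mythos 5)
Your proposal mirrors the paper's proof step for step: duality plus the pointwise bound $M'u\cdot Sv\cdot Sw$ and H\"older for $p>1$; Lemma~\ref{lemma:dual}, the decomposition of $\phi_I^3$ (Theorem~\ref{thm:decomposition2}), and a triple stopping-time argument modeled on Theorem~\ref{thm:Tweak} for $1/2\leq p\leq 1$; and interpolation to upgrade to strong type. The one cosmetic discrepancy is in the exceptional set: the paper uses the $k$-dependent nested family $\Omega_{-3k}$, $\widetilde{\Omega}_k$, $\widetilde{\widetilde{\Omega}}_k$ built from $Mf$ and $S^2g$ (not from $M(|f|^{p_1})$ and $M(|g|^{p_2})$ with a single threshold $\sim|E|^{-1}$), but you correctly defer that machinery to the template of Theorem~\ref{thm:Tweak}.
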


\begin{proof} We will assume that $a =
1$, so that $\int \phi_I^i \, dm = 0$ for $i = 2, 3$.  It will be
clear that the proofs for $a = 2, 3$ are essentially the same.

First, suppose $p > 1$.  Then, necessarily $p_1, p_2 > 1$ and $1 <
p' < \infty$.  Note, $1/p_1 + 1/p_2 + 1/p' = 1$.  Fix $h \in
L^{p'}(\mathbb{T})$ with $\|h\|_{p'} \leq 1$.  Then,

\begin{equation*}
\begin{split}
|\langle &T_\epsilon^1 (f,g), h \rangle| = \Big| \sum_I \epsilon_I
\frac{1}{|I|^{1/2}} \langle \phi^1_I, f \rangle \langle \phi^2_I, g
\rangle \langle \phi^3_I, h \rangle \Big| \\
&\leq \sum_I \frac{1}{|I|^{1/2}} |\langle \phi^1_I, f \rangle|
|\langle \phi^2_I, g \rangle| |\langle \phi^3_I, h \rangle| \\
&= \int_\mathbb{T} \sum_I \frac{|\langle \phi_I^1, f
\rangle|}{|I|^{1/2}} \frac{|\langle \phi_I^2, g \rangle|}{|I|^{1/2}}
\frac{|\langle \phi_I^3, h \rangle|}{|I|^{1/2}} \chi_I(x) \, dx \\
&\leq \int_\mathbb{T} \bigg( \sup_I \frac{|\langle \phi_I^1, f
\rangle|}{|I|^{1/2}} \chi_I(x) \bigg) \bigg( \sum_I \frac{|\langle
\phi_I^2, g \rangle|^2}{|I|} \chi_I(x) \bigg)^{1/2} \bigg( \sum_I
\frac{|\langle \phi_I^3, h \rangle|^2}{|I|} \chi_I(x) \bigg)^{1/2} \,
dx \\
&= \int_\mathbb{T} M'f(x) S^2g(x) S^3h(x) \, dx \\
&\leq \|M'f\|_{p_1} \|S^2g\|_{p_2} \|S^3h\|_{p'} \lesssim
\|f\|_{p_1} \|g\|_{p_2}.
\end{split}
\end{equation*}

\noindent As $h$ in the unit ball of $L^{p'}$ is arbitrary, we have
$\|T^1_\epsilon(f,g)\|_p \lesssim \|f\|_{p_1} \|g\|_{p_2}$.

Now suppose $1/2 \leq p \leq 1$.  We will show $T_\epsilon^1 :
L^{p_1} \times L^{p_2} \rightarrow L^{p,\infty}$ for all $1 \leq
p_1, p_2 < \infty$.  The fact that $L^{p,\infty}$ can be replaced by
$L^p$ where appropriate will follow immediately from interpolation
of these results. Fix $1 \leq p_1, p_2 < \infty$.

Let $\|f\|_{p_1} = \|g\|_{p_2} = 1$ and $|E| > 0$.  By
Lemma~\ref{lemma:dual}, we will be done if we can find $E' \subseteq
E$, $|E'| > |E|/2$ so that $|\langle T_\epsilon^1 (f, g), \chi_{E'}
\rangle| \lesssim 1 \leq |E|^{1-1/p}$.  Using
Theorem~\ref{thm:decomposition2}, decompose each $\phi_I^3$ into

\begin{equation*}
\phi_I^3 = \sum_{k=1}^\infty 2^{-10k} \phi_I^{3,k}
\end{equation*}

\noindent where $\phi_I^{3,k}$ is the normalization of a 0-mean
adapted family $\varphi_I^{3,k}$, which are uniformly  adapted to
$I$. Further, $\supp(\phi_I^{3,k}) \subseteq 2^kI$ for $k$ small
enough, while $\phi_I^{3,k}$ is identically 0 otherwise.  Now write

\begin{equation*}
\langle T_\epsilon^1 (f, g), \chi_{E'} \rangle = \sum_{k=1}^\infty
2^{-10k} \sum_{I} \epsilon_I \frac{1}{|I|^{1/2}} \langle \phi_I^1, f
\rangle \langle \phi_I^2, g \rangle \langle \phi_I^{3,k}, \chi_{E'}
\rangle.
\end{equation*}

\noindent Hence, it suffices to show $|\sum \epsilon_I |I|^{-1/2}
\langle \phi_I^1, f \rangle \langle \phi_I^2, g \rangle \langle
\phi_I^{3,k}, \chi_{E'} \rangle| \lesssim 2^{4k}$, so long as the
underlying constants are independent of $k$.

Let $S^2$ and $S^{3,k}$ be the square functions for $\phi_I^2$ and
$\phi_I^{3,k}$.  For each $k \in \mathbb{N}$, define

\begin{gather*}
\Omega_{-3k} = \{Mf > C2^{3k} \} \cup \{S^2g > C2^{3k} \}, \\
\widetilde{\Omega}_k = \{M(\chi_{\Omega_{-3k}}) > 1/100\}, \\
\widetilde{\widetilde{\Omega}}_k = \{M(\chi_{\widetilde{\Omega}_k})
> 2^{-k-1} \}.
\end{gather*}

\noindent and

\begin{gather*}
\Omega = \bigcup_{k \in \mathbb{N}}
\widetilde{\widetilde{\Omega}}_k.
\end{gather*}

\noindent Observe, $|\Omega|$ is less than or equal to

\begin{equation*}
100 \sum_{k=1}^\infty 2^{k+1} \|M\|^2_{L^1 \rightarrow L^{1,\infty}}
\Big[\frac{1}{C^{p_1}} 2^{-3p_1k} \|M\|_{L^{p_1} \rightarrow
L^{p_1,\infty}}^{p_1} + \frac{1}{C^{p_2}} 2^{-3p_2k}
\|S^2\|_{L^{p_2} \rightarrow L^{p_2,\infty}}^{p_2}\Big].
\end{equation*}

\noindent Therefore, we can choose $C$ independent of $f$ and $g$ so
that $|\Omega| < |E|/2$. Set $E' = E - \Omega = E \cap \Omega^c$.
Then, $E' \subseteq E$ and $|E'| > |E|/2$.

Fix $k \in \mathbb{N}$.  Set $Z_k = \{S^2 g = 0\} \cup \{S^{3, k}
(\chi_{E'}) = 0\}$. Let $\mathcal{D}$ be any finite collection of
dyadic intervals. We divide this collection into three
subcollections.  Set $\mathcal{D}_1 = \{I \in \mathcal{D} : I \cap
Z_k \not= \emptyset \}$. For the remaining intervals, let
$\mathcal{D}_2 = \{I \in \mathcal{D} - \mathcal{D}_1 : I \subseteq
\widetilde{\Omega}_k\}$ and $\mathcal{D}_3 = \{I \in \mathcal{D} -
\mathcal{D}_1 : I \cap \widetilde{\Omega}_k^c \not= \emptyset\}$.

If $I \in \mathcal{D}_1$, there is some $x \in I \cap Z_k$,
which implies $S^2g(x) = 0$ or $S^{3,k}(\chi_{E'})(x) = 0$. If it is
the first, $\langle \phi^2_J, g \rangle = 0$ for all dyadic $J$
containing $x$. In particular, $\langle \phi^2_I, g \rangle = 0$. If
it is the second, then $\langle \phi^{3,k}_I, \chi_{E'} \rangle =
0$. As this holds for all $I \in \mathcal{D}_1$, we have

\begin{equation*}
\sum_{I \in \mathcal{D}_1} \frac{1}{|I|^{1/2}} |\langle \phi_I^1, f
\rangle| |\langle \phi_I^2, g \rangle| |\langle \phi_I^{3,k},
\chi_{E'} \rangle| = 0.
\end{equation*}

Now suppose $I \in \mathcal{D}_2$, namely $I \subseteq
\widetilde{\Omega}_k$.  If $k$ is big enough so that $2^k > 1/|I|$,
then $\phi_I^{3,k}$ is identically 0 and $\langle \phi_I^{3,k},
\chi_{E'} \rangle = 0$. If $2^k \leq 1/|I|$, then $\phi_I^{3,k}$ is
supported in $2^k I$. Let $x \in 2^k I$, and observe

\begin{equation*}
M(\chi_{\widetilde{\Omega}_k})(x) \geq \frac{1}{|2^kI|} \int_{2^kI}
\chi_{\widetilde{\Omega}_k} \,\, dm \geq \frac{1}{2^k} \frac{1}{|I|}
\int_I \chi_{\widetilde{\Omega}_k} \,\, dm = 2^{-k} > 2^{-k-1}.
\end{equation*}

\noindent That is, $2^kI \subseteq \widetilde{\widetilde{\Omega}}_k
\subseteq \Omega$, a set disjoint from $E'$.  Thus, $\langle
\phi_I^{3,k}, \chi_{E'} \rangle = 0$.  As this holds for all $I
\in \mathcal{D}_2$, we have

\begin{equation*}
\sum_{I \in \mathcal{D}_2} \frac{1}{|I|^{1/2}} |\langle \phi_I^1, f
\rangle| |\langle \phi_I^2, g \rangle| |\langle \phi_I^{3,k},
\chi_{E'} \rangle| = 0.
\end{equation*}

Finally, we concentrate on $\mathcal{D}_3$.  Define $\Omega_{-3k+1}$
and $\Pi_{-3k+1}$ by

\begin{gather*}
\Omega_{-3k+1} = \{Mf > C2^{3k-1}\}, \\
\Pi_{-3k+1} = \{I \in \mathcal{D}_3 : |I \cap \Omega_{-3k+1}| >
|I|/100\}.
\end{gather*}

\noindent Inductively, define for all $n > -3k+1$,

\begin{gather*}
\Omega_{n} = \{M f > C2^{-n}\}, \\
\Pi_{n} = \{I \in \mathcal{D}_3 - \bigcup_{j=-3k+1}^{n-1} \Pi_{j} :
|I \cap \Omega_{n}|
> |I|/100\}.
\end{gather*}

\noindent As $\|f\|_{p_1} = 1$, and thus not equal to 0 a.e., $Mf > 0$
everywhere. So, it is clear that each $I \in \mathcal{D}_3$ will be
in one of these collections.

Set $\Omega_{-3k}' = \Omega_{-3k}$ for symmetry.  Define
$\Omega'_{-3k+1}$ and $\Pi'_{-3k+1}$ by

\begin{gather*}
\Omega'_{-3k+1} = \{S^2 g > C2^{3k-1}\}, \\
\Pi'_{-3k+1} = \{I \in \mathcal{D}_3 : |I \cap \Omega_{-3k+1}'| >
|I|/100\}.
\end{gather*}

\noindent Inductively, define for all $n > -3k+1$,

\begin{gather*}
\Omega'_{n} = \{S^2 g > C2^{-n}\}, \\
\Pi'_{n} = \{I \in \mathcal{D}_3 - \bigcup_{j=-3k+1}^{n-1} \Pi'_{j}
: |I \cap \Omega_{n}'|
> |I|/100\}.
\end{gather*}

\noindent As every $I \in \mathcal{D}_3$ is not in $\mathcal{D}_1$,
that is $S^2 g > 0$ on $I$, it is clear that each $I \in
\mathcal{D}_3$ will be in one of these collections.

Now, we can choose an integer $N$ big enough so that $\Omega_{-N}''
= \{S^{3,k} (\chi_{E'}) > 2^N\}$ has very small measure.  In
particular, we take $N$ big enough so that $|I \cap \Omega_{-N}''| <
|I|/100$ for all $I \in \mathcal{D}_3$, which is possible since
$\mathcal{D}_3$ is a finite collection. Define

\begin{gather*}
\Omega_{-N+1}'' = \{S^{3,k}(\chi_{E'}) > 2^{N-1}\}, \\
\Pi_{-N+1}'' = \{I \in \mathcal{D}_3 : |I \cap \Omega_{-N+1}''| >
|I|/100\},
\end{gather*}

\noindent and

\begin{gather*}
\Omega_{n}'' = \{S^{3,k}(\chi_{E'}) > 2^{-n}\}, \\
\Pi_{n}'' = \{I \in \mathcal{D}_3 - \bigcup_{j=-N+1}^{n-1} \Pi_j'' :
|I \cap \Omega_{n}''| > |I|/100\},
\end{gather*}

\noindent Again, all $I \in \mathcal{D}_3$ must be in one of these
collections.

Consider $I \in \mathcal{D}_3$, so that $I \cap
\widetilde{\Omega}_k^c \not= \emptyset$. Then, there is some $x \in
I \cap \widetilde{\Omega}_k^c$ which implies $|I \cap
\Omega_{-3k}|/|I| \leq M(\chi_{\Omega_{-3k}})(x) \leq 1/100$. Write
$\Pi_{n_1, n_2, n_3} = \Pi_{n_1} \cap \Pi'_{n_2} \cap \Pi''_{n_3}$.
So,

\begin{equation*}
\begin{split}
\sum_{I \in \mathcal{D}_3} \frac{1}{|I|^{1/2}} &|\langle \phi_I^1, f
\rangle| |\langle \phi_I^2, g \rangle | |\langle \phi_I^{3,k}, \chi_{E'} \rangle| \\
&= \sum_{n_1, n_2 > -3k, \, n_3 > -N} \bigg[ \sum_{I \in \Pi_{n_1,
n_2, n_3}} \frac{1}{|I|^{1/2}} |\langle \phi_I^1, f \rangle|
|\langle \phi_I^2, g \rangle|
|\langle \phi_I^{3,k}, \chi_{E'}\rangle| \bigg]\\
&= \sum_{n_1, n_2 > -3k, \, n_3 > -N} \bigg[ \sum_{I \in \Pi_{n_1,
n_2, n_3}} \frac{|\langle \phi_I^1, f \rangle|}{|I|^{1/2}}
\frac{|\langle \phi_I^2, g \rangle|}{|I|^{1/2}} \frac{|\langle
\phi_I^{3,k}, \chi_{E'} \rangle|}{|I|^{1/2}} |I| \bigg].
\end{split}
\end{equation*}

\noindent Suppose $I \in \Pi_{n_1, n_2, n_3}$.  If $n_1 > -3k +
1$, then $I \in \Pi_{n_1}$, which in particular says $I \notin
\Pi_{n_1 - 1}$. So, $|I \cap \Omega_{n_1 - 1}| \leq |I|/100$.  If
$n_1 = -3k + 1$, then we still have $|I \cap \Omega_{-3k}| \leq
|I|/100$, as $I \in \mathcal{D}_3$. Similarly, If $n_2 > -3k + 1$,
then $I \in \Pi_{n_2}'$, which in particular says $I \notin \Pi_{n_2
- 1}'$. So, $|I \cap \Omega'_{n_2 - 1}| \leq |I|/100$.  If $n_2 =
-3k + 1$, then $|I \cap \Omega'_{-3k}| = |I \cap \Omega_{-3k}| \leq
|I|/100$, as $I \in \mathcal{D}_3$.  Finally, if $n_3 > -N+1$, then
$I \notin \Pi_{n_3 - 1}''$ and $|I \cap \Omega''_{n_3 - 1}| \leq
|I|/100$.  If $n_3 = -N+1$, then $|I \cap \Omega''_{-N}| \leq
|I|/100$ by the choice of $N$.  So, $|I \cap \Omega_{n_1 - 1}^c \cap
\Omega_{n_2 - 1}'^c \cap \Omega_{n_3 - 1}''^c| \geq \frac{97}{100}
|I|$.  Let $\Omega_{n_1, n_2, n_3} = \bigcup\{ I : I \in \Pi_{n_1,
n_2, n_3}\}$. Then,

\begin{equation*}
|I \cap \Omega_{n_1 - 1}^c \cap \Omega'^c_{n_2-1} \cap \Omega_{n_3 -
1}''^c \cap \Omega_{n_1, n_2, n_3}| \geq \frac{97}{100} |I|
\end{equation*}

\noindent for all $I \in \Pi_{n_1, n_2, n_3}$.  Further,

\begin{equation*}
\begin{split}
&\sum_{I \in \Pi_{n_1, n_2, n_3}} \frac{|\langle \phi_I^1, f
\rangle|}{|I|^{1/2}} \frac{|\langle \phi_I^2, g \rangle}{|I|^{1/2}}
\frac{|\langle \phi_I^{3,k}, \chi_{E'} \rangle|}{|I|^{1/2}} |I| \\
&\lesssim \sum_{I \in \Pi_{n_1, n_2, n_3}} \frac{|\langle \phi_I^1,
f \rangle|}{|I|^{1/2}} \frac{|\langle \phi_I^2, g
\rangle|}{|I|^{1/2}} \frac{|\langle \phi_I^{3,k}, \chi_{E'}
\rangle|}{|I|^{1/2}} |I \cap \Omega_{n_1 - 1}^c
\cap \Omega_{n_2 - 1}'^c \cap \Omega_{n_3 - 1}''^c \cap \Omega_{n_1, n_2, n_3}| \\
&= \int_{\Omega_{n_1 - 1}^c \cap \Omega_{n_2 - 1}'^c \cap
\Omega_{n_3 - 1}''^c \cap \Omega_{n_1, n_2, n_3}} \,\, \sum_{I \in
\Pi_{n_1, n_2, n_3}} \frac{|\langle \phi_I^1, f \rangle|}{|I|^{1/2}}
\frac{|\langle \phi_I^2, g \rangle|}{|I|^{1/2}} \frac{|\langle
\phi_I^{3,k}, \chi_{E'} \rangle|}{|I|^{1/2}} \chi_I(x) \,
dx \\
&\lesssim \int_{\Omega_{n_1 - 1}^c \cap \Omega_{n_2 - 1}'^c \cap
\Omega_{n_3 - 1}''^c \cap \Omega_{n_1, n_2, n_3}} Mf(x) S^2g(x)
S^{3,k} (\chi_{E'})(x) \, dx \\
&\lesssim C^2 2^{-n_1} 2^{-n_2} 2^{-n_3} |\Omega_{n_1, n_2, n_3}|.
\end{split}
\end{equation*}

Note, $|\Omega_{n_1, n_2, n_3}| \leq |\bigcup \{I : I \in
\Pi_{n_1}\}| \leq |\{M(\chi_{\Omega_{n_1}}) > 1/100\}| \lesssim
|\Omega_{n_1}| = |\{Mf > C2^{-n_1}\}| \lesssim C^{-p_1} 2^{p_1
n_1}$. By the same argument, $|\Omega_{n_1, n_2, n_3}| \lesssim
|\Omega_{n_2}'| = |\{S^2 g > C 2^{-n_2}\}| \lesssim C^{-p_2} 2^{p_2
n_2}$, and $|\Omega_{n_1, n_2, n_3}| \lesssim |\Omega_{n_3}''| =
|\{S^{3,k}(\chi_{E'}) > 2^{-n_3}\}| \lesssim 2^{\alpha n_3}$ for any
$\alpha \geq 1$. Therefore, $|\Omega_{n_1, n_2, n_3}| \lesssim
C^{-p_1 - p_2} 2^{\theta_1 p_1 n_1} 2^{\theta_2 p_2 n_2} 2^{\theta_3
\alpha n_3}$ for any $\theta_1 + \theta_2 + \theta_3 = 1$, $0 \leq
\theta_1, \theta_2, \theta_3 \leq 1$. Hence,

\begin{equation*}
\begin{split}
\sum_{I \in \mathcal{D}_3} \frac{1}{|I|^{1/2}} &|\langle \phi_I^1, f
\rangle| |\langle \phi_I^2, g \rangle| |\langle \phi_I^{3,k},
\chi_{E'} \rangle| \\
&\lesssim \sum_{n_1, n_2 > -3k, \,\, n_3
> 0} 2^{(\theta_1 p_1 - 1)n_1} 2^{(\theta_2 p_2 - 1)n_2} 2^{(\theta_3 \alpha - 1) n_3} \quad + \\
&\,\,\, \sum_{n_1, n_2 > -3k, \,\, -N < n_3 \leq 0}
2^{(\theta_1 p_1 - 1)n_1} 2^{(\theta_2 p_2 - 1)n_2} 2^{(\theta_3 \alpha - 1) n_3} \\
&= A + B.
\end{split}
\end{equation*}

\noindent For the first term, take $\theta_1 = 1/(2p_1)$, $\theta_2
= 1/(2p_2)$, $\theta_3 = 1 - 1/(2p)$, and $\alpha = 1$. For the
second term, take $\theta_1 = 1/(3p_1)$, $\theta_2 = 1/(3p_2)$,
$\theta_3 = 1 - 1/(3p) > 0$, and $\alpha = 2/\theta_3$ to see

\begin{equation*}
\begin{split}
A &= \sum_{n_1, n_2 > -3k, \,\, n_3 > 0} 2^{-n_1/2} 2^{-n_2/2}
2^{-n_3/2p} \lesssim 2^{3k}, \\
B &= \sum_{n_1, n_2 > -3k, \,\, -N < n_3 \leq 0} 2^{-2n_1/3}
2^{-2n_2/3} 2^{n_3} \leq \sum_{n_1, n_2 > -3k, n_3 \leq 0}
2^{-2n_1/3} 2^{-2n_2/3} 2^{n_3} \lesssim 2^{4k}.
\end{split}
\end{equation*}

\noindent The estimate for $A$ is made in part because $p$ is
bounded away from 0 ($p \geq 1/2$).  Also, there is no dependence on
the number $N$, which depends on $\mathcal{D}$, or $C$, which
depends on $E$.

Combining the estimates for $\mathcal{D}_1$, $\mathcal{D}_2$, and
$\mathcal{D}_3$, we see

\begin{equation*}
\sum_{I \in \mathcal{D}} \frac{1}{|I|^{1/2}} |\langle \phi_I^1, f
\rangle| |\langle \phi_I^2, g \rangle| |\langle \phi_I^{3,k},
\chi_{E'} \rangle| \lesssim 2^{4k},
\end{equation*}

\noindent where the constant has no dependence on the collection
$\mathcal{D}$.  Hence, as $\mathcal{D}$ is arbitrary, we have

\begin{equation*}
\Big| \sum_I \epsilon_I \frac{1}{|I|^{1/2}} \langle \phi_I^1, f
\rangle \langle \phi_I^2, g \rangle \langle \phi_I^{3,k}, \chi_{E'}
\rangle \Big| \leq \sum_I \frac{1}{|I|^{1/2}} |\langle \phi_I^1, f
\rangle| |\langle \phi_I^2, g \rangle| |\langle \phi_I^{3,k},
\chi_{E'} \rangle| \lesssim 2^{4k},
\end{equation*}

\noindent which completes the proof.
\end{proof}

It should now be clear that proving the above for $a \not= 1$
follows by permuting the roles of $M$ and $S$.  In particular, $M$
will always be applied to the function in the $a^{th}$ slot and $S$
to the others.

For any $\vec{n} \in \mathbb{Z}^2$, we can define the shifted
paraproducts by

\begin{equation*}
T_\epsilon^{a, [\vec{n}]}(f, g)(x) = \int_0^1 \sum_{I} \epsilon_I
\frac{1}{|I|^{1/2}} \langle \phi_{I_\alpha^{n_1}}^{1}, f \rangle
\langle \phi_{I_\alpha^{n_2}}^2, g \rangle \phi_{I_\alpha}^3(x) \,
d\alpha,
\end{equation*}

\noindent where, as before, $\int_\mathbb{T} \varphi_I^i \, dm = 0$
for $i \not= a$.  Much like in Section~\ref{sec:shifted},
understanding these operators is just a matter of reworking the
proof.  Simply replace $M$ by $M^{[n_j]}$ and $S$ by $S^{[n_j]}$
where appropriate.  This leads to the previous estimates with an
additional factor of $(|n_1|+1)(|n_2| + 1)$.

\section{Coifmann-Meyer Operators}

We will employ the standard ``$\partial$" notation of partial derivatives.
That is, $\partial^k_j f$ is the $k^{th}$ partial derivative of $f$ in the
$j^{th}$ variable.  Further, if $\alpha = (\alpha_1, \ldots,
\alpha_n)$ is a vector of nonnegative integers and $f : \mathbb{R}^d
\rightarrow \mathbb{C}$, then

\begin{equation*}
\partial^\alpha f(\vec{x}) = \partial_1^{\alpha_1} \cdots \partial_d^{\alpha_d} f(x_1,
\ldots, x_n)
\end{equation*}

\noindent For such a vector $\alpha$, we write $|\alpha| = \alpha_1
+ \ldots + \alpha_d$.

\begin{defnn} Let $m : \mathbb{R}^d \rightarrow \mathbb{C}$ be smooth
away from $0$ and uniformly bounded.  We say $m$ is a Coifman-Meyer
multiplier if $|\partial^\alpha m(\vec{t})| \lesssim
\|\vec{t}\|^{-|\alpha|}$ for all vectors $\alpha$ with $|\alpha|
\leq d(d+3)$, where $\|\vec{t}\|$ is the standard Euclidean norm on
$\mathbb{R}^d$.
\end{defnn}

For a Coifman-Meyer multiplier $m$ on $\mathbb{R}^d$ and $L^1$ functions
$f_1, \ldots, f_d : \mathbb{T} \rightarrow \mathbb{C}$, we define
the multilinear multiplier operator $\Lambda_m(f_1, \ldots, f_d) :
\mathbb{T} \rightarrow \mathbb{C}$ as

\begin{equation*}
\Lambda_m(f_1, \ldots, f_d)(x) = \sum_{\vec{t} \in \mathbb{Z}^d}
m(\vec{t}) \widehat{f}_1(t_1) \cdots \widehat{f}_d(t_d) e^{2\pi
ix(t_1 + \ldots + t_d)}.
\end{equation*}

The principal goal we have for these operators is the
following $L^p$ result.

\begin{thmn} For any Coifman-Meyer multiplier $m$ on $\mathbb{R}^d$,
$\Lambda_m : L^{p_1} \times \ldots \times L^{p_d} \rightarrow L^p$
for $1 < p_j < \infty$ and $\frac{1}{p_1} + \ldots + \frac{1}{p_d} =
\frac{1}{p}$. If any or all of the $p_j$ are equal to 1, this still
holds with $L^p$ replaced by $L^{p,\infty}$.  \end{thmn}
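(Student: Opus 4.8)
The plan is to imitate the proof of the single--parameter Marcinkiewicz theorem (Theorem~\ref{thm:marcin}), replacing the one--variable bump partition of Theorem~\ref{thm:tbumps} by the two--variable partition of Theorem~\ref{thm:doubletbumps} and the linearization $T_\epsilon$ by the bilinear paraproducts of Section~\ref{sec:para1}. I will carry out the argument for $d=2$; the general case is identical once one has recorded the $d$--linear analogues of Theorem~\ref{thm:doubletbumps} and of the paraproduct estimate Theorem~\ref{thm:1para}, which are proved in exactly the same way (this is the sense in which ``the other operators are handled in precisely the same manner'' in Section~\ref{sec:para1}).

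First I would dispose of the frequency $\vec t=\vec 0$: since the partition of Theorem~\ref{thm:doubletbumps} vanishes at $(0,0)$, inserting it automatically removes that term, whose contribution is the constant $m(0,0)\widehat f(0)\widehat g(0)$, of modulus $\lesssim\|m\|_\infty\|f\|_1\|g\|_1\le\|f\|_{p_1}\|g\|_{p_2}$ because $(\mathbb T,m)$ is a probability space and $\|c\|_p=|c|$ for constants. Then, exactly as in Theorem~\ref{thm:marcin}, I would pair $\Lambda_m(f,g)$ against a test function using the reflection trick $\widetilde h(x)=h(-x)$, obtaining a trilinear Fourier form $\sum_{t_1+t_2+t_3=0}m(t_1,t_2)\widehat f(t_1)\widehat g(t_2)\widehat{h_0}(t_3)$, and insert
\[
1=\sum_{a=1}^{3}\sum_{k=1}^{\infty}\widehat{\psi^{a,1}_k}(t_1)\,\widehat{\psi^{a,2}_k}(t_2)\,\widehat{\psi^{a,3}_k}(-t_1-t_2),
\]
where in the $a$-th block $\widehat{\psi^{a,a}_k}$ is the low--frequency factor and the other two are annular with mean zero.

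Next, for each $(a,k)$ the active frequencies lie in a fixed box $E^a_k\subset\mathbb R^2$ of side $\lesssim 2^k$; I would choose smooth cutoffs equal to $1$ on $E^a_k$ and supported in a slightly larger box (as in Lemma~\ref{lemma:mk}), set $m^a_k$ equal to $m$ times this cutoff, and expand $m^a_k$ in the rescaled Fourier basis of the box,
\[
m^a_k(t_1,t_2)=\sum_{\vec n\in\mathbb Z^2}c^a_{k,\vec n}\,e^{-2\pi i2^{-k}(n_1t_1+n_2t_2)} .
\]
Integrating by parts $d+3$ times in each variable and using $|\partial^\alpha m^a_k|\lesssim 2^{-k|\alpha|}$ for $|\alpha|\le d(d+3)$ --- precisely the hypothesis in the definition of a Coifman--Meyer multiplier --- yields $|c^a_{k,\vec n}|\lesssim\prod_j(|n_j|+1)^{-(d+3)}$ uniformly in $k$. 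The factor $e^{-2\pi i2^{-k}n_jt_j}$ turns $\widehat{\psi^{a,j}_k}(t_j)$ into the Fourier coefficients of the translate $\psi^{a,j}_k(\cdot-n_j2^{-k})$; splitting each block $[0,1)$ into $2^k$ subintervals and integrating a translation parameter $\alpha\in[0,1)$ --- the manipulation already made in Theorem~\ref{thm:marcin} --- rescales the three bump families to $L^2$--normalized adapted families $\phi^{a,1}_I,\phi^{a,2}_I,\phi^{a,3}_I$ on the dyadic intervals of length $2^{-k}$, with $n_1,n_2$ becoming the interval shifts $I^{n_1}_\alpha,I^{n_2}_\alpha$ and the mean--zero conditions landing on the two slots $i\ne a$ (for $a=3$ the distinguished slot is the output slot created by the dualization). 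This gives an a.e.\ identity
\[
\Lambda_m(f,g)=\sum_{a=1}^{3}\sum_{\vec n\in\mathbb Z^2}c^a_{\vec n}\,T^{a,[\vec n]}_{c'}(f_0,g_0),\qquad |c^a_{\vec n}|\lesssim\textstyle\prod_j(|n_j|+1)^{-(d+3)},
\]
with $c'$ a uniformly bounded coefficient sequence, $f_0,g_0$ the reflections, and $T^{a,[\vec n]}_{c'}$ the shifted bilinear paraproduct. By Theorem~\ref{thm:1para} and the discussion of shifted paraproducts at the end of Section~\ref{sec:para1}, $T^{a,[\vec n]}_{c'}:L^{p_1}\times L^{p_2}\to L^p$ (or $L^{p,\infty}$ if some $p_j=1$) with norm $\lesssim(|n_1|+1)(|n_2|+1)$. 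For $p>1$ the series $\sum_{\vec n}\prod_j(|n_j|+1)^{-(d+2)}$ converges and the triangle inequality in $L^p$ gives $\|\Lambda_m(f,g)\|_p\lesssim\|f\|_{p_1}\|g\|_{p_2}$; for the endpoint $p\le 1$ I would normalize $\|f\|_{p_1}=\|g\|_{p_2}=1$ and apply Lemma~\ref{lemma:weaksum} with $K=d$ and $k=\lceil 1/p\rceil\le d$, so that $r=k+3$ and the decay $\prod_j(|n_j|+1)^{-(d+3)}=\alpha(\vec n)^{-r}$, together with $\|T^{a,[\vec n]}_{c'}(f_0,g_0)\|_{p,\infty}\lesssim\alpha(\vec n)$, is exactly the input that lemma requires.

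The hard part will be the reassembly step: one must check, block by block and region by region, that the three localized and shifted bump functions genuinely rescale to adapted families in the precise sense demanded by Theorem~\ref{thm:1para} --- that the mean--zero conditions fall on the correct two slots for each $a$, that the two translation indices are absorbed exactly into the interval shifts $I^{n_j}_\alpha$ without producing uncontrolled cross terms, and that the support and derivative bounds of the $\psi^{a,i}_k$ from Theorem~\ref{thm:doubletbumps} survive the truncation of $m$ near the edges of $E^a_k$. This is parallel to the bookkeeping in Theorem~\ref{thm:marcin}, but with three families and two shift parameters it is where essentially all the care is needed; everything else is a repackaging of results already established, and the fact that the $d(d+3)$ derivatives in the hypothesis match exactly the exponents needed for the convergence of the $\vec n$--sum and for Lemma~\ref{lemma:weaksum} is a reassuring sign that no waste occurs.
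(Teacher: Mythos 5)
Your proposal follows essentially the same route as the paper's proof of Theorem~\ref{thm:c-m}: kill the $\vec t=0$ term, dualize via the reflection trick, insert the two--variable partition from Theorem~\ref{thm:doubletbumps}, localize $m$ to the dyadic boxes $E^a_k$ and expand in the rescaled Fourier basis (Lemma~\ref{lemma:m2}) with coefficient decay $\prod_j(|n_j|+1)^{-(d+3)}$ from $d+3$ integrations by parts per variable, rescale/translate to the shifted paraproducts $T^{a,[\vec n]}_{c'}$, and finally sum over $\vec n$ using Theorem~\ref{thm:1para} for $p>1$ and Lemma~\ref{lemma:weaksum} (with a constant number of $a$-blocks) at the weak endpoints, interpolating for the intermediate $p<1$. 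The reassembly bookkeeping you flag as the hard part is exactly where the paper spends its effort, and your outline of which slots carry the mean--zero condition for each $a$ and how $n_1,n_2$ become interval shifts is correct; the only cosmetic difference is that you take $k=\lceil 1/p\rceil$ in Lemma~\ref{lemma:weaksum} where the paper fixes $k=d$, but both choices satisfy $p\ge 1/k$ and match the decay exponent $d+3$.
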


For simplicity, we will focus on the $d = 2$ case, but there is no
difference in the proof.  We start with the following.

\begin{claim}\label{claim:fgh} Let $f, g, h : \mathbb{T} \rightarrow
\mathbb{C}$ be smooth.  Then,

\begin{equation*}
\sum_{s, t \in \mathbb{Z}} \widehat{f}(s) \widehat{g}(t)
\widehat{h}(-s-t) = \int_\mathbb{T} f(x) g(x) h(x) \, dx.
\end{equation*}
\end{claim}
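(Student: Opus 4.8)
The plan is to reduce the double sum to a single application of the Plancherel formula recorded in Section~\ref{sec:analysisont}. First I would note that since $f$, $g$, $h$ are smooth, each of the three sequences $\widehat{f}(n)$, $\widehat{g}(n)$, $\widehat{h}(n)$ decays faster than any power of $|n|$; in particular $\sum_n |\widehat{f}(n)|$, $\sum_n|\widehat{g}(n)|$, $\sum_n|\widehat{h}(n)|$ are all finite, and the double series on the left is absolutely convergent, being dominated by $\big(\sum_s |\widehat{f}(s)|\big)\big(\sum_t |\widehat{g}(t)|\big) \sup_u |\widehat{h}(u)|$. Hence every rearrangement and interchange of summation used below is legitimate by Fubini's theorem for series; I would state this estimate once at the outset and invoke it wherever a rearrangement occurs.

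Next I would identify the Fourier coefficients of the pointwise product $fg$, which is again smooth. Using the inversion formula $f(x) = \sum_s \widehat{f}(s) e^{2\pi i sx}$ and $g(x) = \sum_t \widehat{g}(t) e^{2\pi i tx}$, multiplying these two absolutely convergent series and collecting the terms with $s+t = n$ gives $(fg)(x) = \sum_n \big(\sum_{s+t=n} \widehat{f}(s)\widehat{g}(t)\big) e^{2\pi i nx}$, so that $\widehat{fg}(n) = \sum_{s+t=n} \widehat{f}(s)\widehat{g}(t)$.

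Then I would apply Plancherel's theorem to the pair $fg, h \in L^2(\mathbb{T})$ (smooth functions on $\mathbb{T}$ are certainly square-integrable), in the form $\int_\mathbb{T} (fg)(x) h(x)\, dx = \sum_n \widehat{fg}(n)\,\widehat{h}(-n)$. Substituting the expression just obtained for $\widehat{fg}(n)$ and reindexing by $n = s+t$ yields $\int_\mathbb{T} f(x)g(x)h(x)\,dx = \sum_n \sum_{s+t=n} \widehat{f}(s)\widehat{g}(t)\widehat{h}(-n) = \sum_{s,t \in \mathbb{Z}} \widehat{f}(s)\widehat{g}(t)\widehat{h}(-s-t)$, which is the assertion.

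There is no genuine obstacle here — the only point requiring care is that each interchange of summation is backed by the absolute convergence coming from smoothness. An equally clean alternative, which I would use instead if the convolution identity for products has not been recorded earlier, is to avoid the product formula entirely: plug the three Fourier expansions directly into $\int_\mathbb{T} fgh$, integrate term by term (justified by the same absolute convergence), and use $\int_\mathbb{T} e^{2\pi i(s+t+u)x}\,dx = 1$ when $s+t+u = 0$ and $0$ otherwise to collapse the triple sum to the stated double sum.
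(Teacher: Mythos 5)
Your argument is correct, and it differs from the paper's in bookkeeping rather than in substance. Your primary route manufactures the auxiliary object $\widehat{fg}$: you compute $\widehat{fg}(n)=\sum_{s+t=n}\widehat f(s)\widehat g(t)$ from the Cauchy product of the two inversion series, and then apply the Plancherel identity $\int_\mathbb{T}(fg)h\,dx=\sum_n\widehat{fg}(n)\widehat h(-n)$ and reindex. The paper avoids introducing $\widehat{fg}$ entirely: it replaces $\widehat h(-s-t)$ by its defining integral $\int_\mathbb{T} h(x)e^{2\pi ix(s+t)}\,dx$, swaps the absolutely convergent double sum with the integral, and recognizes the two remaining sums as the inversion series for $f$ and $g$, so the integrand collapses to $f(x)g(x)h(x)$. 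These are the same rearrangements performed in a different order, resting on the same absolute-convergence-plus-Fubini justification you spell out; the only thing the paper buys by its ordering is that it never needs a product-to-convolution identity (which, as you note, was not recorded earlier). Your stated alternative --- expanding all three functions in inversion series and integrating $e^{2\pi i(s+t+u)x}$ against the orthogonality relation --- is closest in spirit to the paper's proof, which is essentially that computation with the roles of ``expand'' and ``write as an integral'' distributed as two-and-one among $f,g,h$.
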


\begin{proof} As $f$ is smooth, we have the inversion formula $f(x)
= \sum_s \widehat{f}(s) e^{2\pi ixs}$.  Similarly for $g$.  So,

\begin{equation*}
\begin{split}
\sum_{s, t \in \mathbb{Z}} \widehat{f}(s) \widehat{g}(t)
\widehat{h}(-s-t) &= \sum_{s, t \in \mathbb{Z}} \widehat{f}(s)
\widehat{g}(t) \Big( \int_\mathbb{T} h(x) e^{-2\pi ix(-s-t)} \, dx \Big) \\
&= \int_\mathbb{T} h(x) \Big( \sum_{s \in \mathbb{Z}} \widehat{f}(s)
e^{2\pi ixs} \Big) \Big( \sum_{t \in \mathbb{Z}} \widehat{g}(t)
e^{2\pi ixt} \Big) \, dx \\
&= \int_\mathbb{T} f(x) g(x) h(x) \, dx.
\end{split}
\end{equation*}
\end{proof}

\begin{lemma}\label{lemma:m2} Let $m : \mathbb{R}^2 \rightarrow
\mathbb{C}$ be any Coifmann-Meyer multiplier and $\psi_k^{a,1},
\psi_k^{a,2}$, $a = 1,2,3$, the functions guaranteed by
Theorem~\ref{thm:doubletbumps}. For each $k \in \mathbb{N}$ and $1
\leq a \leq 3$, there is a smooth function $m_{a,k}$ so that
$m_{a,k}(s,t) \widehat{\psi^{a,1}_k}(s) \widehat{\psi^{a,2}_k}(t) =
m(s,t) \widehat{\psi^{a,1}_k}(s) \widehat{\psi^{a,2}_k}(t)$ and

\begin{equation*}
m_{a,k}(s,t) = \sum_{\vec{n} \in \mathbb{Z}^2} c_{a,k,\vec{n}}
e^{-2\pi in_1 2^{-k}s} e^{-2\pi in_2 2^{-k}t},
\end{equation*}

\noindent where $|c_{a,k,\vec{n}}| \lesssim (|n_1| +
1)^{-5}(|n_2|+1)^{-5}$ uniformly in $a$ and $k$.
\end{lemma}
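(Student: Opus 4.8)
The plan is to imitate the one-dimensional argument of Lemma~\ref{lemma:mk}, working one coordinate at a time. First I would record the support of the product $\widehat{\psi_k^{a,1}}(s)\widehat{\psi_k^{a,2}}(t)$ coming from Theorem~\ref{thm:doubletbumps}: both factors are supported in $[-2^{k-2},2^{k-2}]$, and for each fixed $a$ at least one coordinate carries the ``away from $0$'' support $[-2^{k-2},-2^{k-10}]\cup[2^{k-10},2^{k-2}]$ (coordinate $2$ when $a=1$, coordinate $1$ when $a=2$, both when $a=3$). Hence on this support $\|(s,t)\|\ge 2^{k-10}$ while $|s|,|t|\le 2^{k-2}$.

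Next I would fix two smooth cutoffs $\varphi,\eta:\mathbb{R}\to\mathbb{C}$ with $\varphi\equiv 1$ on $[-1/4,1/4]$, $\supp(\varphi)\subseteq[-1/2,1/2]$, and $\eta\equiv 1$ on $[-2^{-2},-2^{-10}]\cup[2^{-10},2^{-2}]$, $\supp(\eta)\subseteq[-1/2,-2^{-11}]\cup[2^{-11},1/2]$, and define $m_{1,k}(s,t)=m(s,t)\varphi(2^{-k}s)\eta(2^{-k}t)$, $m_{2,k}(s,t)=m(s,t)\eta(2^{-k}s)\varphi(2^{-k}t)$, $m_{3,k}(s,t)=m(s,t)\eta(2^{-k}s)\eta(2^{-k}t)$. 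In each case the cutoff is identically $1$ on the support of $\widehat{\psi_k^{a,1}}(s)\widehat{\psi_k^{a,2}}(t)$, so $m_{a,k}\,\widehat{\psi_k^{a,1}}(s)\widehat{\psi_k^{a,2}}(t)=m\,\widehat{\psi_k^{a,1}}(s)\widehat{\psi_k^{a,2}}(t)$; and $m_{a,k}$ is supported in the square $[-2^{k-1},2^{k-1}]^2$ of side $2^k$, with $\|(s,t)\|\gtrsim 2^k$ on $\supp(m_{a,k})$ because in at least one variable the cutoff is an $\eta$.

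Since $\{2^{-k/2}e^{-2\pi i n2^{-k}x}\}_{n\in\mathbb{Z}}$ is an orthonormal basis of $L^2$ of any interval of length $2^k$, its tensor product is an orthonormal basis of $L^2([-2^{k-1},2^{k-1}]^2)$, and I would expand
\begin{equation*}
m_{a,k}(s,t)=\sum_{\vec n\in\mathbb{Z}^2}c_{a,k,\vec n}\,e^{-2\pi i n_1 2^{-k}s}e^{-2\pi i n_2 2^{-k}t},\qquad c_{a,k,\vec n}=4^{-k}\int_{\mathbb{R}^2}m_{a,k}(s,t)\,e^{2\pi i n_1 2^{-k}s}e^{2\pi i n_2 2^{-k}t}\,ds\,dt.
\end{equation*}
To bound $c_{a,k,\vec n}$ I would split into three cases. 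If $\vec n=(0,0)$, estimate directly by $|c_{a,k,0}|\le 4^{-k}\,|\supp(m_{a,k})|\,\|m\|_\infty\|\varphi\|_\infty\|\eta\|_\infty\lesssim 1$, using that $m$ is uniformly bounded. If exactly one $n_j\ne 0$, integrate by parts $5$ times in that variable; if both are nonzero, integrate by parts $5$ times in each. In each instance the required derivative bound on $m_{a,k}$ follows from Leibniz together with $|\partial^\alpha m|\lesssim\|\vec t\|^{-|\alpha|}$ for $|\alpha|\le d(d+3)=10$ (so the orders $(5,0)$, $(0,5)$, $(5,5)$ are all admissible), the estimate $\|(s,t)\|\gtrsim 2^k$ on the support, and the scaling $\partial^l[\varphi(2^{-k}\cdot)]=2^{-kl}\varphi^{(l)}(2^{-k}\cdot)$; this gives $|\partial_1^5 m_{a,k}|\lesssim 2^{-5k}$, $|\partial_2^5 m_{a,k}|\lesssim 2^{-5k}$, $|\partial_1^5\partial_2^5 m_{a,k}|\lesssim 2^{-10k}$, uniformly in $a,k$. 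Inserting these into the integration-by-parts formulas, with $|\supp(m_{a,k})|\lesssim 4^k$ and the factors $(2\pi i n_j 2^{-k})^{-5}$, yields $|c_{a,k,\vec n}|\lesssim |n_1|^{-5}$, $|n_2|^{-5}$, or $|n_1|^{-5}|n_2|^{-5}$ respectively, hence $|c_{a,k,\vec n}|\lesssim(|n_1|+1)^{-5}(|n_2|+1)^{-5}$ in all cases, which is the assertion.

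The main obstacle I anticipate is organizational rather than analytic: keeping the support bookkeeping straight across the three values of $a$ so that in each case at least one variable is truncated by an $\eta$ (hence the argument of $m$ stays at distance $\gtrsim 2^k$ from the origin), and checking that the total derivative count $2\cdot 5=10$ does not exceed $d(d+3)$. The Leibniz expansion and the integration-by-parts estimates with powers of $2^k$ are routine, exactly as in Lemma~\ref{lemma:mk}.
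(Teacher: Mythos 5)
Your proof follows essentially the same strategy as the paper's: truncate $m$ by a smooth cutoff adapted to the support of $\widehat{\psi_k^{a,1}}\widehat{\psi_k^{a,2}}$, expand in a Fourier series on a square of side $2^k$, and bound the coefficients via integration by parts up to order $(5,5)$, with separate cases for which of the $n_j$ vanish. The only cosmetic difference is that you build the cutoff as a tensor product of two one-variable bumps $\varphi,\eta$ (near-origin and annular, chosen per coordinate according to $a$), whereas the paper writes down a single two-variable cutoff $\varphi_a$; the estimates and the use of $|\alpha|\le d(d+3)=10$ are identical, so the argument goes through as you describe.
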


\begin{proof} Let $\varphi_1 : \mathbb{R}^2 \rightarrow \mathbb{C}$ be
a smooth function with

\begin{gather*}
\supp(\varphi_1) \subseteq \Big([-2^{-1}, -2^{-11}] \cup [2^{-11},
2^{-1}] \Big) \times [2^{-1}, 2^{-1}] \quad \text{ and } \\
\varphi_1 = 1 \, \text{ on } \, \Big( [-2^{-2}, -2^{-10}] \cup
[2^{-10}, 2^{-2}] \Big) \times [-2^{-2}, 2^{-2}].
\end{gather*}

\noindent Let $\varphi_3 = \varphi_1$ and $\varphi_2(x,y) =
\varphi_1(y,x)$.  Define $m_{a,k}(s,t) = m(s,t) \varphi_a(2^{-k} s,
2^{-k}t)$. Then, $m_{a,k}(s,t) \widehat{\psi_k^{a,1}}(s)
\widehat{\psi_k^{a,2}}(t) = m(s,t) \widehat{\psi_k^{a,1}}(s)
\widehat{\psi_k^{a,2}}(t)$ by construction. Further, if $E_{a,k}$ is
the support of $m_{a,k}$, then $E_{a,k} \subset [-2^{k-1},
2^{k-1}]^2$.

Recall that $\{2^{-k/2} e^{-2\pi in2^{-k}x}\}$ is an orthonormal
basis on any interval of length $2^k$, so

\begin{equation*}
\begin{split}
m_{a,k}(s,t) &= \sum_{\vec{n} \in \mathbb{Z}^2} \bigg(
\int_{\mathbb{R}^2} m_{a,k}(x,y) \frac{e^{2\pi
in_12^{-k}x}}{2^{k/2}} \frac{e^{2\pi in_22^{-k}y}}{2^{k/2}} \, dx \,
dy \bigg) \frac{e^{-2\pi in_12^{-k}s}}{2^{k/2}} \frac{e^{-2\pi in_2
2^{-k}
t}}{2^{k/2}} \\
&= \sum_{\vec{n} \in \mathbb{Z}^2} c_{a,k,\vec{n}} e^{-2\pi in_1
2^{-k}s} e^{-2\pi in_2 2^{-k}t},
\end{split}
\end{equation*}

\noindent where $c_{a,k,\vec{n}} = 2^{-2k} \int_{\mathbb{R}^2}
m_{a,k}(x,y) e^{2\pi in_12^{-k} x} e^{2\pi in_2 2^{-k} y} \, dx \,
dy$.

First, if $\vec{n} = (0,0)$, then $c_{a,k,\vec{n}} = 2^{-2k}
\int_{\mathbb{R}^2} m_{a,k} \, dm = 2^{-2k} \int_{E_k} m_{a,k} \,
dm$. So, $|c_{a,k,\vec{n}}| \leq 2^{-2k} |E_k| \|m\|_\infty
\|\varphi\|_\infty \leq \|m\|_\infty \|\varphi\|_\infty \lesssim 1$.

Assume $n_1 \not= 0$, $n_2 \not= 0$.  Let $C = \max\{ \|\partial^\alpha
\varphi_a\|_\infty : 0 \leq |\alpha|  \leq 10, a =1,2,3\}$. Note, for
$(x,y) \in E_{a,k}$, $|x| \geq 2^{k-11}$ if $a
= 1, 3$ and $|y| \geq 2^{k-11}$ if $a = 2$.  So, $\|(x,y)\| \geq
2^{k-11}$ on $E_{a,k}$ and $|\partial^{\alpha} m(x,y)| \lesssim
\|(x,y)\|^{-|\alpha|} \leq |2^{k-11}|^{-|\alpha|} = 2^{-k|\alpha|}
2^{11|\alpha|}$ for all $|\alpha| \leq 10$. Set $\beta = (5,5)$.
Write $\alpha \leq \beta$ if $\alpha_1 \leq \beta_1$ and $\alpha_2
\leq \beta_2$. Then,

\begin{equation*}
\begin{split}
|\partial^\beta m_{a,k}(x,y)| &\lesssim \sum_{\alpha \leq \beta} |\partial^\alpha
m(x,y)| |2^{-k(|\beta|-|\alpha|)} \partial^{\beta - \alpha} \varphi(2^{-k}x, 2^{-k}y)| \\
&\leq \sum_{\alpha \leq \beta} 2^{-k|\alpha|} 2^{11|\alpha|}
2^{-10k} 2^{k|\alpha|} C \lesssim 2^{-10k}.
\end{split}
\end{equation*}

\noindent By several iterations of integration by parts,

\begin{equation*}
\begin{split}
\bigg| \int_{\mathbb{R}^2} m_{a,k}(x) &e^{2\pi in_1 2^{-k}x} e^{2\pi
in_2 2^{-k}y} \, dx \, dy \bigg| \\
&= \bigg| \int_{E_{a,k}} m_{a,k}(x) e^{2\pi in_1 2^{-k}x} e^{2\pi
in_2 2^{-k}y} \, dx \, dy \bigg| \\
&= \bigg| \int_{E_{a,k}} \partial^\beta m_{a,k}(x) \frac{e^{2\pi in_1
2^{-k}x} e^{2\pi in_2 2^{-k}y}}{(2\pi in_1 2^{-k})^{5} (2\pi in_2
2^{-k})^{5}} \, dx \, dy \bigg| \\
&\lesssim \frac{2^{10k}}{|n_1|^5 |n_2|^5} |E_{a,k}| \|\partial^\beta
m_{a,k} \|_\infty \lesssim \frac{2^{2k}}{|n_1|^5 |n_2|^5} \lesssim
\frac{2^{2k}}{(|n_1| + 1)^5 (|n_2|+1)^5}.
\end{split}
\end{equation*}

\noindent Namely, $|c_{a,k,\vec{n}}| \lesssim (|n_1|+1)^{-5}
(|n_2|+1)^{-5}$.  If $n_1 = 0$, repeat the above argument with
$\beta = (0,5)$.  If $n_2 = 0$, use $\beta = (5,0)$.
\end{proof}

\begin{thm}\label{thm:c-m} For any Coifman-Meyer multiplier $m$ on
$\mathbb{R}^2$, $\Lambda_m : L^{p_1} \times L^{p_2} \rightarrow L^p$
for $1 < p_1, p_2 < \infty$ and $\frac{1}{p_1} + \frac{1}{p_2} =
\frac{1}{p}$.  If $p_1$ or $p_2$ or both are equal to 1, this still
holds with $L^p$ replaced by $L^{p,\infty}$.  \end{thm}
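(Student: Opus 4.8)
The plan is to follow the scheme of Theorem~\ref{thm:marcin}, reducing $\Lambda_m$ to a rapidly convergent sum of shifted bilinear paraproducts and then invoking Theorem~\ref{thm:1para} together with the shifted variants described at the end of Section~\ref{sec:para1}. Assume first that $f$ and $g$ are smooth, so that $\widehat f,\widehat g$ decay rapidly and every sum below converges absolutely; the general case follows by density. Exactly as in the proof of Theorem~\ref{thm:marcin} we may also assume $m(0,0)=0$, since altering $m$ at the origin changes $\Lambda_m(f,g)$ only by the constant $m(0,0)\widehat f(0)\widehat g(0)$, whose $L^{p,\infty}$-norm is $\lesssim\|f\|_1\|g\|_1\leq\|f\|_{p_1}\|g\|_{p_2}$.

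The first block of steps is the symbol decomposition. Dualizing against a smooth $h$ and applying Claim~\ref{claim:fgh}, write
\[
\langle\Lambda_m(f,g),\widetilde h\rangle=\sum_{s,t\in\mathbb{Z}}m(s,t)\,\widehat f(s)\,\widehat g(t)\,\widehat{h_0}(-s-t),\qquad h_0=\widetilde{\overline h},
\]
and insert the partition of unity of Theorem~\ref{thm:doubletbumps}, splitting the sum into blocks $a=1,2,3$ and a scale sum over $k$. On the support of $\widehat{\psi_k^{a,1}}(s)\widehat{\psi_k^{a,2}}(t)$, replace $m$ by the function $m_{a,k}$ of Lemma~\ref{lemma:m2} and expand $m_{a,k}(s,t)=\sum_{\vec n}c_{a,k,\vec n}e^{-2\pi in_12^{-k}s}e^{-2\pi in_22^{-k}t}$; each exponential translates a bump, $e^{-2\pi in2^{-k}s}\widehat{\psi_k^{a,i}}(s)=\widehat{\psi_{k,n}^{a,i}}(s)$ with $\psi_{k,n}^{a,i}(x)=\psi_k^{a,i}(x-n2^{-k})$. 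A second application of Claim~\ref{claim:fgh} converts the $(s,t)$-sum into $\int_\mathbb{T}(f*\psi_{k,n_1}^{a,1})(g*\psi_{k,n_2}^{a,2})(h_0*\psi_k^{a,3})$, and discretizing that integral over the $2^k$ dyadic subintervals of length $2^{-k}$, exactly as in Theorem~\ref{thm:marcin}, produces an $\alpha$-average over $[0,1]$ of inner products against the shifted, $L^2$-normalized families $\phi_{I_\alpha^{n_1}}^{1}$, $\phi_{I_\alpha^{n_2}}^{2}$, $\phi_{I_\alpha}^{3}$.

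The point to check with care is that the bump $\psi_k^{a,i}$ has vanishing integral exactly when $i\neq a$ (by Theorem~\ref{thm:doubletbumps} it is then supported away from $0$), so the adapted families built from these bumps are $0$-mean in precisely the slots required by the definition of $T_\epsilon^{a}$ in Section~\ref{sec:para1}; in the $a$-th block the maximal function lands on the $a$-th slot and square functions on the others, which is just the three cases of Theorem~\ref{thm:1para}. Hence each block assembles into a shifted paraproduct and
\[
\widetilde{\Lambda_m(f,g)}=\sum_{a=1}^3\sum_{\vec n\in\mathbb{Z}^2}\frac{1}{(|n_1|+1)^5(|n_2|+1)^5}\,T_{c'}^{a,[\vec n]}(f_0,g_0)\qquad\text{a.e.},
\]
where $c'_{I,\vec n}=(|n_1|+1)^5(|n_2|+1)^5\,c_{a,k,\vec n}$ is uniformly bounded by Lemma~\ref{lemma:m2}, and $f_0,g_0$ are obtained from $f,g$ by reflection, so $\|f_0\|_{p_1}=\|f\|_{p_1}$ and $\|g_0\|_{p_2}=\|g\|_{p_2}$. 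For $p>1$ the shifted paraproduct estimate gives $\|T_{c'}^{a,[\vec n]}(f_0,g_0)\|_p\lesssim(|n_1|+1)(|n_2|+1)\|f\|_{p_1}\|g\|_{p_2}$, and the leftover decay $(|n_1|+1)^{-4}(|n_2|+1)^{-4}$ makes the $\vec n$-sum summable, so $\|\Lambda_m(f,g)\|_p\lesssim\|f\|_{p_1}\|g\|_{p_2}$. For the remaining range $1/2\leq p\leq1$ (forced by $p_1,p_2\geq1$), normalize $\|f\|_{p_1}=\|g\|_{p_2}=1$, note $\|T_{c'}^{a,[\vec n]}(f_0,g_0)\|_{p,\infty}\lesssim\alpha(\vec n)$ with $\alpha(\vec n)=(|n_1|+1)(|n_2|+1)$, and apply Lemma~\ref{lemma:weaksum} with $K=2$ and $k=2$ (so $r=5$ and $p\geq1/2=1/k$) to get $\|\Lambda_m(f,g)\|_{p,\infty}\lesssim1$. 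That $L^{p,\infty}$ improves to $L^p$ when $p_1,p_2>1$ then follows by multilinear interpolation, exactly as indicated in Theorem~\ref{thm:1para}.

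I do not expect a real analytic obstacle here: the hard stopping-time estimate already sits inside Theorem~\ref{thm:1para} and its shifted version. The genuine work is the bookkeeping of the two middle paragraphs — confirming that the three blocks reproduce exactly $T^1,T^2,T^3$ with the correct placement of $M$ versus $S$, that the translated bumps $\psi_{k,n}^{a,i}$ are uniformly adapted with the right mean-zero properties, and that the coefficients $c'$ are uniformly bounded and decay fast enough in $\vec n$ to close both the strong and the weak estimate.
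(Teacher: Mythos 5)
Your proposal follows exactly the paper's route: dualize against $h$, apply Claim~\ref{claim:fgh}, split the symbol with Theorem~\ref{thm:doubletbumps}, Fourier-expand $m_{a,k}$ via Lemma~\ref{lemma:m2}, discretize to the shifted paraproducts $T_{c'}^{a,[\vec n]}$, and close the argument with the paraproduct bounds plus Lemma~\ref{lemma:weaksum} (with $K=k=2$). The bookkeeping you flag as the real work (which block has a non-zero-mean bump in which slot, the uniform boundedness of $c'$, and the $(|n_j|+1)^{-4}$ leftover decay) matches the paper; the only cosmetic difference is your initial smoothness/density reduction, whereas the paper instead just observes that the convolutions $f * \psi^{a,i}_{k,n}$ are automatically smooth even for $f,g\in L^1$.
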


\begin{proof} Fix $m$ and let $f, g : \mathbb{T}
\rightarrow \mathbb{C}$.  Then,

\begin{equation*}
\Lambda_m(f,g)(x) = \sum_{s, t \in \mathbb{Z}} m(s,t) \widehat{f}(s)
\widehat{g}(t) e^{2\pi ix(s+t)}.
\end{equation*}

\noindent As in the proof of Theorem~\ref{thm:marcin}, we can assume
$m(0,0) = 0$, as $|m(0,0) \widehat{f}(0) \widehat{g}(0)| \lesssim
\|f\|_1 \|g\|_1 \leq \|f\|_{p_1} \|g\|_{p_2}$.

Let $h \in L^1(\mathbb{T})$.  Write $f_0 = \widetilde{\overline{f}}$
and similarly for $g_0, h_0$.  Then,

\begin{equation*}
\begin{split}
\langle \Lambda_m(f,g), \widetilde{h} \rangle &= \int_\mathbb{T}
\Lambda_m(f,g)(x) h_0(x) \, dx \\
&= \int_\mathbb{T} \bigg( \sum_{s, t \in \mathbb{Z}} m(s,t)
\widehat{f}(s) \widehat{g}(t) e^{2\pi ix(s+t)}
\bigg) h_0(x) \, dx \\
&= \sum_{s, t \in \mathbb{Z}} m(s,t) \widehat{f}(s) \widehat{g}(t)
\int_\mathbb{T} h_0(x) e^{2\pi ix(s+t)} \, dx \\
&= \sum_{s, t \in \mathbb{Z}} m(s,t) \widehat{f}(s) \widehat{g}(t)
\widehat{h_0}(-s-t).
\end{split}
\end{equation*}

\noindent Now apply Theorem~\ref{thm:doubletbumps} to write

\begin{equation*}
\begin{split}
\langle \Lambda_m(f,g), \widetilde{h} \rangle &= \sum_{a=1}^3
\sum_{k=1}^\infty \sum_{s, t \in \mathbb{Z}} m(s,t) \widehat{f}(s)
\widehat{\psi_k^{a,1}}(s) \widehat{g}(t) \widehat{\psi_k^{a,2}}(t)
\widehat{h_0}(-s-t) \widehat{\psi_k^{a,3}}(-s-t) \\
&= \sum_{a=1}^3 \sum_{k=1}^\infty \sum_{s, t \in \mathbb{Z}}
m_{a,k}(s,t) \widehat{f}(s) \widehat{\psi_k^{a,1}}(s) \widehat{g}(t)
\widehat{\psi_k^{a,2}}(t) \widehat{h_0}(-s-t)
\widehat{\psi_k^{a,3}}(-s-t) \\
&=: S_1 + S_2 + S_3,
\end{split}
\end{equation*}

\noindent where $m_{a,k}$ is as given in Lemma~\ref{lemma:m2}.  Let
$\psi^{a,1}_{k,n_1}(x) = \psi^{a,1}_k(x - n_1 2^{-k})$ and
$\psi_{k,n_2}^{a,2}(x) = \psi^{a,2}_k(x - n_2 2^{-k})$. Then,

\begin{equation*}
\begin{split}
S_a &= \sum_{k=1}^\infty \sum_{s,t \in \mathbb{Z}} m_{a,k}(s,t)
\widehat{f}(s) \widehat{\psi_k^{a,1}}(s) \widehat{g}(t)
\widehat{\psi_k^{a,2}}(t) \widehat{h_0}(-s-t)
\widehat{\psi_k^{a,3}}(-s-t) \\
&= \sum_{\vec{n} \in \mathbb{Z}^2} \sum_{k=1}^\infty \sum_{s,t \in
\mathbb{Z}} c_{a,k,\vec{n}} \widehat{f}(s)
\widehat{\psi_{k,n_1}^{a,1}}(s) \widehat{g}(t)
\widehat{\psi_{k,n_2}^{a,2}}(t) \widehat{h_0}(-s-t)
\widehat{\psi_k^{a,3}}(-s-t) \\
&= \sum_{\vec{n} \in \mathbb{Z}^2} \sum_{k=1}^\infty \sum_{s,t \in
\mathbb{Z}} c_{a,k,\vec{n}} (f * \psi_{k,n_1}^{a,1}) \, \widehat{}
\,\, (s) (g * \psi_{k,n_2}^{a,2}) \, \widehat{} \,\, (t) (h_0 *
\psi_k^{a,3}) \, \widehat{} \,\, (-s-t) \\
&= \sum_{\vec{n} \in \mathbb{Z}^2} \sum_{k=1}^\infty c_{a,k,\vec{n}}
\int_\mathbb{T} (f * \psi_{k,n_1}^{a,1})(x) (g *
\psi_{k,n_2}^{a,2})(x) (h_0 * \psi_{k}^{a,3})(x) \, dx,
\end{split}
\end{equation*}

\noindent where the last line is the application of
Claim~\ref{claim:fgh}.  Even though $f, g, h_0$ are not necessarily
smooth, their convolutions with smooth functions will be.  Just
as in the proof of Theorem~\ref{thm:marcin}, we can dilate and
translate to write

\begin{equation*}
\begin{split}
\int_\mathbb{T} &(f * \psi^{a,1}_{k,n_1})(x) (g *
\psi^{a,2}_{k,n_2})(x) (h_0 * \psi^{a,3}_k)(x) \, dx \\
&= 2^{-k} \int_0^{2^k} (f * \psi^{a,1}_{k,n_1})(2^{-k}x) (g *
\psi^{a,2}_{k,n_2})(2^{-k}x) (h_0 * \psi^{a,3}_k)(2^{-k}x) \, dx \\
&= 2^{-k} \sum_{j=0}^{2^k-1} \int_0^1 \langle \psi_{k, j,
n_1,\alpha}^{a,1}, \overline{f} \rangle \langle
\psi_{k,j,n_2,\alpha}^{a,2}, \overline{g} \rangle \langle
\psi_{k,j,\alpha}^{a,3}, \overline{h_0} \rangle\, d\alpha,
\end{split}
\end{equation*}

\noindent where $\psi^{a,1}_{k,j,n_1,\alpha}(x) =
\psi^{a,1}_{k,n_1}(2^{-k}(\alpha+j) - x) =
\psi^{a,1}_k(2^{-k}(\alpha+j+n_1) - x)$, and similarly for the other
two functions.

For a dyadic interval $I = [2^{-k} j, 2^{-k} (j+1)]$, let
$\varphi_{I_\alpha^{n_1}}^{a,1} = 2^{-k}
\widetilde{\psi^{a,1}_{k,j,n_1,\alpha}}$,
$\varphi_{I_\alpha^{n_2}}^{a,2} = 2^{-k}
\widetilde{\psi^{a,2}_{k,j,n_2,\alpha}}$, and
$\varphi_{I_\alpha}^{a,3} = 2^{-k}
\widetilde{\psi^{a,3}_{k,j,\alpha}}$. It is easily checked that the
original conditions on $\psi^{a,i}$ guarantee that $\varphi_I^{a,i}$
are adapted families with mean 0 when $a \not= i$. Let $\phi_I^{a,i}
= |I|^{-1/2} \varphi_I^{a,i}$, so that

\begin{equation*}
\begin{split}
S_a &= \sum_{\vec{n} \in \mathbb{Z}^2} \sum_{k=1}^\infty
c_{a,k,\vec{n}} 2^{-k} \sum_{j=0}^{2^k-1} \int_0^1 \langle \psi_{k,
j, n_1,\alpha}^{a,1}, \overline{f} \rangle \langle
\psi_{k,j,n_2,\alpha}^{a,2}, \overline{g} \rangle \langle
\psi_{k,j,\alpha}^{a,3}, \overline{h_0} \rangle\, d\alpha \\
&= \sum_{\vec{n} \in \mathbb{Z}^2} \int_0^1 \sum_{I} c_{a,I,\vec{n}}
\frac{1}{|I|^{1/2}} \langle \phi_{I^{n_1}_\alpha}^{a,1}, f_0 \rangle
\langle \phi^{a,2}_{I^{n_2}_\alpha}, g_0 \rangle \langle
\phi_{I_\alpha}^{a,3}, h \rangle \, d\alpha,
\end{split}
\end{equation*}

\noindent where the inner sum is over all dyadic intervals and
$c_{a,I,\vec{n}} = c_{a,k,\vec{n}}$ when $|I| = 2^{-k}$.  Write
$c'_{a,I,\vec{n}} = (|n_1|+1)^5 (|n_2|+1)^5 c_{a,I,\vec{n}}$,
which are uniformly bounded in $I$ and $\vec{n}$ by
Lemma~\ref{lemma:m2}. Hence,

\begin{equation*}
\begin{split}
S_a &= \sum_{\vec{n} \in \mathbb{Z}^2} \frac{1}{(|n_1|+1)^5
(|n_2|+1)^5} \int_0^1 \sum_{I} c'_{a,I,\vec{n}} \frac{1}{|I|^{1/2}}
\langle \phi_{I^{n_1}_\alpha}^{a,1} f_0 \rangle \langle
\phi^{a,2}_{I^{n_2}_\alpha}, g_0 \rangle \langle
\phi_{I_\alpha}^{a,3}, h \rangle \, d\alpha \\
&= \sum_{\vec{n} \in \mathbb{Z}^2} \frac{1}{(|n_1|+1)^5 (|n_2|+1)^5}
\langle T_{c'}^{a, [\vec{n}]}(f_0, g_0), h \rangle \\
&= \Big\langle \sum_{\vec{n} \in \mathbb{Z}^2} \frac{1}{(|n_1|+1)^5
(|n_2|+1)^5} T_{c'}^{a, [\vec{n}]}(f_0, g_0), h \Big\rangle
\end{split}
\end{equation*}

As $h \in L^1$ is arbitrary, it follows that

\begin{equation*}
\widetilde{\Lambda_m(f,g)} = \sum_{\vec{n} \in \mathbb{Z}^2}
\frac{1}{(|n_1|+1)^5 (|n_2|+1)^5} \sum_{a=1}^3 T_{c'}^{a,
[\vec{n}]}(f_0, g_0)
\end{equation*}

\noindent almost everywhere.  We know $\|T_{c'}^{a, [\vec{n}]} (f_0,
g_0)\|_p \lesssim (|n_1|+1)(|n_2|+1) \|f\|_{p_1} \|g\|_{p_2}$ when
$p_1, p_2 > 1$, and $\|T_{c'}^{a, [\vec{n}]}(f_0, g_0)\|_{p,\infty}
\lesssim (|n_1|+1)(|n_2|+1) \|f\|_{p_1} \|g\|_{p_2}$ when $p_1$ or
$p_2$ or both are equal to 1.  So, $\|\Lambda_m(f,g)\|_p \lesssim
\|f\|_{p_1} \|g\|_{p_2}$ whenever $p \geq 1$, $p_1, p_2 > 1$ follows
immediately. By Lemma~\ref{lemma:weaksum} (with $k = 2$),
$\|\Lambda_m(f,g)\|_{p,\infty} \lesssim \|f\|_{p_1} \|g\|_{p_2}$ for
all $p_1, p_2 \geq 1$; the sum over $a$ does not cause any problems.
By interpolation of these cases, $\|\Lambda_m(f,g)\|_p \lesssim
\|f\|_{p_1} \|g\|_{p_2}$ whenever $p_1, p_2 > 1$ and $p < 1$.
\end{proof}

\chapter{Bi-parameter Multipliers}

\section{Hybrid Max-Square Functions}

When considering bi-parameter multipliers, the max and square
functions of previous chapters can no longer be applied.  However,
they can be properly extended to this setting~\cite{camil1, camil2}.

We say a set $R \subset \mathbb{T}^2$ is a dyadic rectangle if there
exist dyadic intervals $I$ and $J$ so that $R = I \times J$. Given
two adapted families $\varphi_I^1$ and $\varphi_I^2$, we will write
$\varphi_R(x,y) = \varphi_I^1(x) \varphi_J^2(y)$ for $R = I \times
J$.  We will informally write $\{\varphi_R\}$ to mean the collection
over all dyadic rectangles $R$.  For $\varphi_R = \varphi^1_I \oplus
\varphi_J^2$, set $\phi_R = |R|^{-1/2} \varphi_R = \phi_I^1 \oplus
\phi_J^2$.

For functions $f : \mathbb{T}^2 \rightarrow \mathbb{C}$, define

\begin{equation*}
MMf(x,y) = \sup_{R} \frac{1}{|R|^{1/2}} |\langle \phi_R, f \rangle|
\chi_R(x,y).
\end{equation*}

\noindent If $\{\varphi_R\}$ is a family such that $\int_\mathbb{T}
\varphi_J^2 \, dm = 0$ for all $J$, then define

\begin{equation*}
MSf(x,y) = \sup_I \frac{1}{|I|^{1/2}} \bigg( \sum_J \frac{|\langle
\phi_R, f \rangle|^2}{|J|} \chi_J(y) \bigg)^{1/2} \chi_I(x),
\end{equation*}

\noindent where of course $R = I \times J$.  This $MS$
operator is similar to taking a square function $S$ of $f$ in the
its second variable, then a maximal function $M'$ in its first
variable.  Analogously, if $\int_\mathbb{T} \varphi_I^1 \, dm = 0$
for all $I$, define

\begin{equation*}
SMf(x,y) = \Bigg( \sum_I \frac{\Big( \sup_J \frac{1}{|J|^{1/2}}
|\langle \phi_R, f \rangle| \chi_J(y) \Big)^2}{|I|} \chi_I(x)
\Bigg)^{1/2}.
\end{equation*}

\noindent Finally, if $\int_\mathbb{T} \varphi_I^1 \, dm =
\int_\mathbb{T} \varphi_J^2 \, dm = 0$, set

\begin{equation*}
SSf(x,y) = \bigg( \sum_R \frac{|\langle \phi_R, f \rangle|^2}{|R|}
\chi_R(x,y) \bigg)^{1/2}.
\end{equation*}

\noindent We note that the ``$M$" in $MS$, $SM$, and $MM$ really
corresponds to an $M'$.  However, this should not cause any
confusion.

From now on, we will be less rigid about the notation.  If we write
$\phi_R$, it will be understood to be a collection over all dyadic
rectangles, where each $\phi_R = \phi_I^1 \oplus \phi_J^2$. Further,
whenever we employ $MM$, $SM$, $MS$, or $SS$, it will be understood
that there are underlying adapted families and they have integral 0
in the appropriate variable.

\begin{thm}\label{thm:MS} Each of $MM$, $MS$, $SM$, and $SS$ maps
$L^p(\mathbb{T}^2) \rightarrow L^p(\mathbb{T}^2)$ for $1 < p <
\infty$ and $\llogl(\mathbb{T}^2) \rightarrow
L^{1,\infty}(\mathbb{T}^2)$. \end{thm}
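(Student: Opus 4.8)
The plan is to reduce each of the four hybrid operators to a vector-valued version of a single one-parameter operator ($M'$, equivalently $M$; the square function $S$; or the strong maximal function $M_S$) acting in one of the two variables, and then to invoke Theorem~\ref{thm:Lj} together with the one-dimensional boundedness and Fefferman--Stein results already in hand. The key algebraic fact used throughout is that, for a dyadic rectangle $R=I\times J$, one has $\langle\phi_R,f\rangle=\langle\phi_I^1,G_J\rangle$ where $G_J(x)=\langle\phi_J^2,f(x,\cdot)\rangle$ is the pairing of $f$ against $\phi_J^2$ in the second variable only (and symmetrically).

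First I would dispose of $MM$, which is best handled by a pointwise bound. Writing $\varphi_I^1=\sum_k 2^{-10k}\varphi_I^{1,k}$ and $\varphi_J^2=\sum_l 2^{-10l}\varphi_J^{2,l}$ by Theorem~\ref{thm:decomposition1}, so that $\varphi_I^{1,k}$ is supported in $2^kI$ with $\|\varphi_I^{1,k}\|_\infty\lesssim 1$ uniformly (and likewise in the second variable), the estimate $|\langle\varphi_I^{1,k}\oplus\varphi_J^{2,l},f\rangle|\lesssim\int_{2^kI\times 2^lJ}|f|$ gives, for $(x,y)\in R$, that $\frac{1}{|R|^{1/2}}|\langle\phi_R,f\rangle|\lesssim\sum_{k,l}2^{-10k-10l}2^{k+l}M_Sf(x,y)\lesssim M_Sf(x,y)$, hence $MMf\lesssim M_Sf$ on $\mathbb{T}^2$. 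The claims for $MM$ then follow from the boundedness of $M_S$ on $L^p(\mathbb{T}^2)$, $1<p<\infty$, proved in Section~\ref{sec:strong}, and from $M_S:\llogl(\mathbb{T}^2)\to L^{1,\infty}(\mathbb{T}^2)$ established at the end of Chapter~4. (The naive attempt to bound $MM$ by a composition of two one-parameter $M'$-operators fails, because the outer pairing is against a complex bump rather than a nonnegative kernel; this is why one passes through $M_S$.)

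For $MS$, $SM$, $SS$ I would unwind the definitions. With $F_J(x)=\langle\phi_J^2,f(x,\cdot)\rangle$, $G_I(y)=\langle\phi_I^1,f(\cdot,y)\rangle$, and $h_J(x,y)=|J|^{-1/2}F_J(x)\chi_J(y)$, $k_I(x,y)=|I|^{-1/2}G_I(y)\chi_I(x)$, one checks that $(\sum_J|h_J|^2)^{1/2}=S_2f$ and $(\sum_I|k_I|^2)^{1/2}=S_1f$ (the square function $S$ applied in the indicated variable), and, using $\sup_I(\sum_J\,\cdot\,)^{1/2}\le(\sum_J\sup_I\,\cdot\,)^{1/2}$, that $MSf\le(\sum_J|(M')_1h_J|^2)^{1/2}$, together with the exact identities $SMf=(\sum_I|(M')_2k_I|^2)^{1/2}$ and $SSf=(\sum_J|S_1h_J|^2)^{1/2}$. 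Now the $L^p$ bounds drop out: for $MS$, the $r=2$ Fefferman--Stein inequality for $M'$ (from Theorem~\ref{thm:fs4} and Proposition~\ref{prop:1'}), lifted to the first variable on $\mathbb{T}^2$ by Theorem~\ref{thm:Lj}, gives $\|MSf\|_p\lesssim\|(\sum_J|h_J|^2)^{1/2}\|_p=\|S_2f\|_p\lesssim\|f\|_p$, the last step by Theorem~\ref{thm:SLp} and Theorem~\ref{thm:Lj}; $SM$ is identical with the variables swapped, and $SS$ uses the $r=2$ Fefferman--Stein inequality for $S$ (Chapter~3) in place of the one for $M'$.

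For the $\llogl(\mathbb{T}^2)\to L^{1,\infty}(\mathbb{T}^2)$ endpoint, the same representations reduce matters, via the weak-type $(1,1)$ Fefferman--Stein inequalities (Theorem~\ref{thm:fs2} for $M$ hence $M'$, and the $r=2$ weak-type inequality for $S$), lifted by Theorem~\ref{thm:Lj}, to proving $\|S_jf\|_{L^1(\mathbb{T}^2)}\lesssim\|f\|_{\llogl(\mathbb{T}^2)}$ for $j=1,2$. That is precisely the statement $S_j:\llogl(\mathbb{T}^2)\to L^1(\mathbb{T}^2)$, which I would obtain by applying the $\llogl$-interpolation theorem of Chapter~4 to the sublinear operator $S_j$ on the probability space $\mathbb{T}^2$, using $S_j:L^1(\mathbb{T}^2)\to L^{1,\infty}(\mathbb{T}^2)$ (Theorem~\ref{thm:Sweak} plus Theorem~\ref{thm:Lj}) and $S_j:L^2(\mathbb{T}^2)\to L^2(\mathbb{T}^2)$ (Theorem~\ref{thm:SL2} plus Theorem~\ref{thm:Lj}). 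The main obstacle will be the bookkeeping in setting up the structural identities — verifying the exact forms relating $MS$, $SM$, $SS$ to one-variable vector-valued operators, justifying the $\sup$--$\sum$ interchange for $MS$, and making sure the Chapter~4 $\llogl$ machinery is legitimately applied on the product probability space $\mathbb{T}^2$ — rather than any genuinely new estimate.
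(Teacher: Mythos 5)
Your proposal matches the paper's proof essentially step for step: the pointwise bound $MMf\lesssim M_Sf\le M_1\circ M_2 f$ via the Theorem~\ref{thm:decomposition1} decomposition, the reduction of $SM$ and $SS$ to vector-valued one-parameter operators acting in the second variable and an appeal to the $r=2$ Fefferman--Stein inequalities lifted by Theorem~\ref{thm:Lj}, the $\sup$--$\sum$ interchange domination of $MS$ by an $SM$-type expression with variables swapped, and the $\llogl\to L^1$ bound for $S_1,S_2$ obtained by interpolating the $L^1\to L^{1,\infty}$ and $L^2\to L^2$ estimates via the Chapter~4 machinery. The only cosmetic difference is your explicit naming of the intermediate functions $h_J,k_I$, which the paper leaves implicit; the underlying identities and estimates are identical.
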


\begin{proof} Throughout this proof, we will write $\phi_R = \phi_I
\oplus \phi_J$, instead of $\phi_I^1$ and $\phi_J^2$. This
is simply for neatness.  The underlying adapted families can still
be distinct.  Recall the notation $L_j$ from
Section~\ref{sec:strong}. We apply this to $M$, $M'$, and $S$. In
particular, $M_1$, $M_2$, $M'_1$, $M_2'$, $S_1$, and $S_2$ each map
$L^p(\mathbb{T}^2) \rightarrow L^p(\mathbb{T}^2)$ for $1 < p <
\infty$, $L^1(\mathbb{T}^2) \rightarrow L^{1,\infty}(\mathbb{T}^2)$,
and $\llogl(\mathbb{T}^2) \rightarrow L^1(\mathbb{T}^2)$ by
interpolation. Further, each satisfies Fefferman-Stein inequalities
for $r = 2$.

Use Theorem~\ref{thm:decomposition1} to write

\begin{equation*}
\varphi_R = \varphi_I \oplus \varphi_J = \Big( \sum_{k_1=1}^\infty
2^{-10k_1} \varphi_I^{k_1} \Big) \oplus \Big( \sum_{k_2=1}^\infty
2^{-10k_2} \varphi_J^{k_2} \Big) =: \sum_{\vec{k} \in \mathbb{N}^2}
2^{-10|\vec{k}|} \varphi_R^{\vec{k}}
\end{equation*}

\noindent where each $\varphi_R^{\vec{k}}$ is the tensor product of
functions uniformally adapted to $I, J$ respectively.  We write
$|\vec{k}| = k_1 + k_2$.  If each $k_1, k_2$ is small enough,
$\supp(\varphi_R^{\vec{k}}) \subseteq 2^{\vec{k}} R := 2^{k_1} I
\times 2^{k_2} J$.  Otherwise, $\varphi_R^{\vec{k}}$ is identically
0.  Let $K(R)$ be the subset of $\mathbb{N}^2$ for which the first
case occurs.  As they are uniformally adapted,
$\|\varphi_R^{\vec{k}}\|_\infty \lesssim 1$ uniformly in $\vec{k}$ and
$R$.  Fix $R$ and suppose $(x,y) \in R$.  Then,

\begin{equation*}
\begin{split}
\frac{1}{|R|} |\langle \varphi_R, f \rangle| \chi_R(x,y) &\leq
\frac{1}{|R|} \sum_{\vec{k} \in \mathbb{N}^2} 2^{-10|\vec{k}|}
\int_{\mathbb{T}^2} |\varphi_R^{\vec{k}}| |f| \, dm \\
&= \frac{1}{|R|} \sum_{\vec{k} \in K(R)} 2^{-10|\vec{k}|}
\int_{2^{\vec{k}}R} |\varphi_R^{\vec{k}}| |f| \, dm \\
&\lesssim \sum_{\vec{k} \in K(R)} 2^{-9|\vec{k}|}
\frac{1}{|2^{\vec{k}}R|}
\int_{2^{\vec{k}} R} |f| \, dm \\
&\leq \sum_{\vec{k} \in \mathbb{N}^2} 2^{-9|\vec{k}|} M_Sf(x,y)
\lesssim M_Sf(x,y).
\end{split}
\end{equation*}

\noindent If $(x,y)$ is not in $R$, then this inequality holds
trivially.  As $R$ is arbitrary, $MMf \lesssim M_Sf \leq M_1 \circ
M_2f$.  Hence,

\begin{gather*}
\|MMf\|_p \lesssim \|M_1 \circ M_2 f\|_p \lesssim \|M_2 f\|_p
\lesssim \|f\|_p, \\
\|MMf\|_{1,\infty} \lesssim \|M_1 \circ M_2 f\|_{1,\infty} \lesssim
\|M_2 f\|_1 \lesssim \|f\|_{\llogl}.
\end{gather*}

We abuse notation slightly and write $\langle f, \phi_I \rangle$ to
mean $\int_\mathbb{T} \overline{\phi}_I(x) f(x,y) \, dx$, a function
of the variable $y$. Thus, $\langle \phi_R, f \rangle = \langle
\phi_J, \langle f, \phi_I \rangle \rangle$ makes sense. Also, we can
consider the two variable function $\langle f, \phi_I \rangle
\chi_I$.  In this manner,

\begin{equation*}
\begin{split}
SMf(x,y) &= \Bigg( \sum_I \frac{\Big( \sup_J \frac{1}{|J|^{1/2}}
|\langle \phi_R, f \rangle| \chi_J(y) \Big)^2}{|I|} \chi_I(x)
\Bigg)^{1/2} \\
&= \bigg( \sum_I \Big( \sup_J \frac{1}{|J|^{1/2}} \big|\big\langle
\phi_J, \frac{\langle f, \phi_I \rangle}{|I|^{1/2}} \chi_I(x)
\big\rangle\big| \chi_J(y) \Big)^2 \bigg)^{1/2} \\
&= \bigg( \sum_I M'_2\Big( \frac{\langle f, \phi_I
\rangle}{|I|^{1/2}} \chi_I \Big)(x,y)^2 \bigg)^{1/2}.
\end{split}
\end{equation*}

\noindent By the Fefferman-Stein inequalities on $M'$ (or $M'_2$),

\begin{equation*}
\begin{split}
\|SMf\|_p &=  \bigg\| \bigg( \sum_I M_2'\Big( \frac{\langle f,
\phi_I \rangle}{|I|^{1/2}} \chi_I \Big)^2 \bigg)^{1/2} \bigg\|_p
\\
&\lesssim \bigg\| \bigg( \sum_I \frac{|\langle f, \phi_I
\rangle|^2}{|I|} \chi_I \bigg)^{1/2} \bigg\|_p = \|S_1 f\|_p
\lesssim \|f\|_p,
\end{split}
\end{equation*}

\noindent and

\begin{equation*}
\begin{split}
\|SMf\|_{1,\infty} &=  \bigg\| \bigg( \sum_I M_2'\Big( \frac{\langle
f, \phi_I \rangle}{|I|^{1/2}} \chi_I \Big)^2 \bigg)^{1/2}
\bigg\|_{1,\infty}
\\
&\lesssim \bigg\| \bigg( \sum_I \frac{|\langle f, \phi_I
\rangle|^2}{|I|} \chi_I \bigg)^{1/2} \bigg\|_1 = \|S_1 f\|_1
\lesssim \|f\|_{\llogl}.
\end{split}
\end{equation*}

On the other hand,

\begin{equation*}
\begin{split}
MSf(x,y) &= \sup_I \frac{1}{|I|^{1/2}} \bigg( \sum_J \frac{|\langle
\phi_R, f \rangle|^2}{|J|} \chi_J(y) \bigg)^{1/2} \chi_I(x) \\
&\leq \Bigg( \sum_J \frac{\Big( \sup_I \frac{1}{|I|^{1/2}} |\langle
\phi_R, f \rangle| \chi_I(x) \Big)^2}{|J|} \chi_J(y) \Bigg)^{1/2}.
\end{split}
\end{equation*}

\noindent This is essentially $SM$ with the roles of $I$ and $J$
reversed.  The same arguments as above can now be applied.

Finally,

\begin{equation*}
\begin{split}
SSf(x,y) &= \bigg( \sum_R \frac{|\langle \phi_R, f \rangle|^2}{|R|}
\chi_R(x,y) \bigg)^{1/2} \\
&= \bigg[ \sum_I \sum_J \frac{1}{|J|} \big|\big\langle \phi_J,
\frac{\langle f, \phi_I \rangle}{|I|^{1/2}} \chi_I(x)
\big\rangle\big|^2 \chi_J(y) \bigg]^{1/2} \\
&= \bigg[ \sum_I S_2\Big( \frac{\langle f, \phi_I
\rangle}{|I|^{1/2}} \chi_I\Big)(x,y)^2 \bigg]^{1/2},
\end{split}
\end{equation*}

\noindent so that by the Fefferman-Stein inequalities on $S_2$,

\begin{equation*}
\begin{split}
\|SSf\|_p &= \bigg\| \bigg( \sum_I S_2\Big( \frac{\langle f, \phi_I
\rangle}{|I|^{1/2}} \chi_I \Big)^2 \bigg)^{1/2} \bigg\|_p
\\
&\lesssim \bigg\| \bigg( \sum_I \frac{|\langle f, \phi_I
\rangle|^2}{|I|} \chi_I \bigg)^{1/2} \bigg\|_p = \|S_1 f\|_p
\lesssim \|f\|_p,
\end{split}
\end{equation*}

\noindent and

\begin{equation*}
\begin{split}
\|SSf\|_{1,\infty} &= \bigg\| \bigg( \sum_I S_2\Big( \frac{\langle
f, \phi_I \rangle}{|I|^{1/2}} \chi_I \Big)^2 \bigg)^{1/2}
\bigg\|_{1,\infty}
\\
&\lesssim \bigg\| \bigg( \sum_I \frac{|\langle f, \phi_I
\rangle|^2}{|I|} \chi_I \bigg)^{1/2} \bigg\|_1 = \|S_1 f\|_1
\lesssim \|f\|_{\llogl}.
\end{split}
\end{equation*}
\end{proof}

Let $R = I \times J$ be a dyadic rectangle.  For $\vec{n} \in
\mathbb{Z}^2$ and $\vec{\alpha} \in [0,1]^2$, let
$R^{\vec{n}}_{\vec{\alpha}} = I^{n_1}_{\alpha_1} \times
J^{n_2}_{\alpha_2}$ and $\varphi_{R^{\vec{n}}_{\vec{\alpha}}} =
\varphi_{I^{n_1}_{\alpha_1}} \oplus \varphi_{J^{n_2}_{\alpha_2}}$.
In this way, we can define shifted versions of each of $MM$, $SM$,
$MS$, and $SS$.  For example,

\begin{equation*}
SS^{\vec{n}}_{\vec{\alpha}}f(x,y) = \bigg( \sum_R \frac{|\langle
\phi_{R_{\vec{\alpha}}^{\vec{n}}}, f \rangle|^2}{|R|} \chi_R(x,y)
\bigg)^{1/2},
\end{equation*}

\noindent and $SS^{[\vec{n}]}f(x,y) = \sup_{\vec{\alpha}}
SS^{\vec{n}}_{\vec{\alpha}}f(x,y)$.  We first note that
$SS^{\vec{n}}$ satisfies all the above properties with an additional
factor of $(|n_1|+1)(|n_2|+1)$.  This follows easily by replacing in
the previous proof $S_1, S_2$ by $S_1^{n_1}, S_2^{n_2}$.  Then, as
before, we observe that $SS^{[\vec{n}]}f$ is bounded by an
$SS^{\vec{n}}f$, with a particular adapted tensor product which
depends on $f$.  So, $SS^{[\vec{n}]}$ satisfies the above with the
additional factor of $(|n_1|+1)(|n_2|+1)$.  The same holds for
$SM^{[\vec{n}]}$, $MS^{[\vec{n}]}$, and $MM^{[\vec{n}]}$.

Although we will not explicitly need the following result, it is
interesting enough to mention here.

\begin{thm} Each of $MM$, $MS$, $SM$, and
$SS$ maps $L(\log L)^{n+2} \rightarrow L(\log L)^n$.
\end{thm}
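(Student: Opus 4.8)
The plan is to deduce the estimate by feeding the pointwise and structural bounds already extracted in the proof of Theorem~\ref{thm:MS} into the $L(\log L)$ interpolation apparatus of Chapter~4, in exactly the way the $L^p$ and weak-$(1,1)$ bounds were obtained there. Three facts will be combined. First, the interpolation theorem of Chapter~4 says that any sublinear operator on a probability space which maps $L^1\to L^{1,\infty}$ and $L^q\to L^{q,\infty}$ for some $1<q<\infty$ maps $L(\log L)^{m}\to L(\log L)^{m-1}$ for every $m\in\mathbb{N}$; by Theorem~\ref{thm:Lj} the single-parameter operators $M$, $M'$ and $S$ on $\mathbb{T}$ produce operators $M_j$, $M'_j$, $S_j$ on the probability space $\mathbb{T}^2$ satisfying the same $L^p$ and weak-$(1,1)$ bounds, so in particular $M_j,S_j:L(\log L)^{m}(\mathbb{T}^2)\to L(\log L)^{m-1}(\mathbb{T}^2)$ for all $m\in\mathbb{N}$ (with $L(\log L)^0=L^1$). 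Second, Corollary~\ref{cor:f-sllogln}, again valid on $\mathbb{T}^2$, upgrades the $r=2$ Fefferman--Stein pair for $M'_j$ and for $S_j$ (which hold by Theorems~\ref{thm:fs2} and~\ref{thm:fs4}, the Fefferman--Stein theorem for $S$ in Chapter~3, the pointwise bound $M'\lesssim M$, and Theorem~\ref{thm:Lj}) to
\[
\Big\|\big(\sum_k|T f_k|^2\big)^{1/2}\Big\|_{L(\log L)^{m}}\lesssim\Big\|\big(\sum_k|f_k|^2\big)^{1/2}\Big\|_{L(\log L)^{m+1}}
\]
for every $m\ge0$, where $T=M'_j$ or $T=S_j$. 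Third, the monotonicity and homogeneity of $\|\cdot\|_{L(\log L)^m}$, so that pointwise domination passes to the norm.

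For $MM$ I would invoke the pointwise bound $MMf\lesssim M_1\circ M_2 f$ from the proof of Theorem~\ref{thm:MS}: then $M_2:L(\log L)^{n+2}\to L(\log L)^{n+1}$ followed by $M_1:L(\log L)^{n+1}\to L(\log L)^{n}$ gives
\[
\|MMf\|_{L(\log L)^{n}}\lesssim\|M_1(M_2 f)\|_{L(\log L)^{n}}\lesssim\|M_2 f\|_{L(\log L)^{n+1}}\lesssim\|f\|_{L(\log L)^{n+2}}.
\]
For $SM$ I would start from the identity $SMf=\big(\sum_I M'_2(|I|^{-1/2}\langle f,\phi_I\rangle\chi_I)^2\big)^{1/2}$ obtained in the proof of Theorem~\ref{thm:MS}, apply the second ingredient to $T=M'_2$ with the sequence indexed by the dyadic intervals $I$, and recognize $\big(\sum_I|I|^{-1}|\langle f,\phi_I\rangle|^2\chi_I\big)^{1/2}=S_1 f$, to get $\|SMf\|_{L(\log L)^{n}}\lesssim\|S_1 f\|_{L(\log L)^{n+1}}$; the first ingredient applied to $S_1$ on $\mathbb{T}^2$, namely $S_1:L(\log L)^{n+2}\to L(\log L)^{n+1}$, then finishes it. The operator $MS$ is $SM$ with the two coordinates interchanged, via the bound $MSf\le\big(\sum_J|J|^{-1}(\sup_I|I|^{-1/2}|\langle\phi_R,f\rangle|\chi_I)^2\chi_J\big)^{1/2}$ from the proof of Theorem~\ref{thm:MS}, so the same argument applies. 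For $SS$ one repeats the $SM$ argument verbatim with $M'_2$ replaced by $S_2$, starting from $SSf=\big(\sum_I S_2(|I|^{-1/2}\langle f,\phi_I\rangle\chi_I)^2\big)^{1/2}$.

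Since every required inequality is already available, there is no genuine analytic difficulty here; the one point demanding care is the bookkeeping of logarithmic exponents. Each of $MM$, $MS$, $SM$, $SS$ is a two-stage object, an $M$- or $S$-type operation in each of the two variables, and each stage, whether realized through the single-parameter $L(\log L)$ interpolation theorem or through the Fefferman--Stein upgrade of Corollary~\ref{cor:f-sllogln}, consumes exactly one power of $\log L$; the two stages therefore produce precisely the two-power loss $L(\log L)^{n+2}\to L(\log L)^n$ and no more. It is also worth recording at the outset that $\mathbb{T}^2$ is a probability space, so the $L(\log L)^m$ theory of Chapter~4 does apply there, and that the $0$-mean hypotheses on the relevant adapted families are exactly those built into the definitions of $MS$, $SM$ and $SS$.
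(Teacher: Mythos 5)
Your proposal reproduces the paper's argument exactly: for $MM$ you use the pointwise domination by $M_1\circ M_2$ and apply the single-parameter $L(\log L)^m \to L(\log L)^{m-1}$ interpolation theorem twice; for $SM$ and $SS$ you use the identities $SMf=\big(\sum_I M'_2(\cdots)^2\big)^{1/2}$ and $SSf=\big(\sum_I S_2(\cdots)^2\big)^{1/2}$ from Theorem~\ref{thm:MS}, apply Corollary~\ref{cor:f-sllogln} to the inner operator, and finish with the interpolation theorem applied to $S_1$; and $MS$ is handled by pointwise domination by an $SM$-type quantity. This is precisely the paper's proof, with the bookkeeping spelled out a bit more carefully.
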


\begin{proof} This is simply a matter of repeating the arguments of
the previous proof and applying the interpolation results of
Corollaries~\ref{cor:f-sllogl} and~\ref{cor:f-sllogln}.  We have
immediately that $\|MMf\|_{L(\log L)^n} \lesssim \|M_1 \circ M_2
f\|_{L(\log L)^n} \lesssim \|M_2 f\|_{L(\log L)^{n+1}} \lesssim
\|f\|_{L(\log L)^{n+2}}$.  Further,

\begin{equation*}
\begin{split}
\|SMf\|_{\llogln} &=  \bigg\| \bigg( \sum_I M_2'\Big( \frac{\langle
f, \phi_I \rangle}{|I|^{1/2}} \chi_I \Big)^2
\bigg)^{1/2} \bigg\|_{\llogln} \\
&\lesssim \bigg\| \bigg( \sum_I \frac{|\langle f, \phi_I
\rangle|^2}{|I|} \chi_I \bigg)^{1/2} \bigg\|_{L(\log L)^{n+1}} \\
&= \|S_1 f\|_{L(\log L)^{n+1}} \lesssim \|f\|_{L(\log L)^{n+2}},
\end{split}
\end{equation*}

\noindent and

\begin{equation*}
\begin{split}
\|SSf\|_{\llogln} &=  \bigg\| \bigg( \sum_I S_2 \Big( \frac{\langle
f, \phi_I \rangle}{|I|^{1/2}} \chi_I \Big)^2
\bigg)^{1/2} \bigg\|_{\llogln} \\
&\lesssim \bigg\| \bigg( \sum_I \frac{|\langle f, \phi_I
\rangle|^2}{|I|} \chi_I \bigg)^{1/2} \bigg\|_{L(\log L)^{n+1}} \\
&= \|S_1 f\|_{L(\log L)^{n+1}} \lesssim \|f\|_{L(\log L)^{n+2}}.
\end{split}
\end{equation*}

\noindent Finally, $MS$ is pointwise smaller than an $SM$ type
operator, and therefore satisfies the same bounds.
\end{proof}

\section{Bi-parameter Paraproducts}

In Section~\ref{sec:para1}, we defined single-parameter
paraproducts. In order to study bi-parameter multiplier operators,
we will need to define and investigate the appropriate bi-parameter
paraproducts. For simplicity, as before, we will focus only on the
bilinear case.

For $f, g : \mathbb{T}^2 \rightarrow \mathbb{C}$, the bi-parameter
bilinear paraproducts are defined

\begin{equation*}
T_\epsilon^{a,b}(f, g)(x,y) = \sum_{R} \epsilon_R
\frac{1}{|R|^{1/2}} \langle \phi_R^1, f \rangle \langle \phi_R^2, g
\rangle \phi_R^3(x,y),
\end{equation*}

\noindent for $a, b = 1, 2, 3$, where $\varphi_R^1$, $\varphi_R^2$,
and $\varphi_R^3$ are each the tensor product of two adapted
families, as in the previous section. The sum is over all dyadic
rectangles $R$, and $(\epsilon_R)$ is a uniformly bounded sequence.
By dividing out a constant, we can assume $|\epsilon_R| \leq 1$.
Further, if $\phi^i_R = \phi^i_I \oplus \phi^i_J$, then
$\int_\mathbb{T} \phi^i_I \, dx = 0$ for $i \not= a$ and
$\int_\mathbb{T} \phi^i_J \, dx = 0$ for $i \not= b$.

\begin{thm}\label{thm:2para} $T_\epsilon^{a,b} : L^{p_1} \times L^{p_2}
\rightarrow L^p$ for $1 < p_1, p_2 < \infty$ and $\frac{1}{p} =
\frac{1}{p_1} + \frac{1}{p_2}$.  If $p_1$ or $p_2$ or both are equal
to 1, this still holds with $L^p$ replaced by $L^{p,\infty}$ and
$L^{p_j}$ replaced by $\llogl$. The underlying constants do not
depend on $a$, $b$, or the sequence $\epsilon_R$. \end{thm}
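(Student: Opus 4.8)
The plan is to mirror the argument of Theorem~\ref{thm:1para}, with the hybrid max--square operators $MM$, $MS$, $SM$, $SS$ of the preceding section playing the role of $M'$ and $S$. The correspondence between functions and operators is forced by the mean-zero structure: writing $\phi_R^i = \phi_I^i \oplus \phi_J^i$, the family $\phi^i$ has vanishing integral in the first variable precisely when $i \neq a$ and in the second variable precisely when $i \neq b$. Hence, after dualizing against a third function, the function occupying slot $a$ must be estimated by an operator that is ``$M'$ in the first variable'', the one in slot $b$ by an operator that is ``$M'$ in the second variable'', and every remaining integral becomes a square function. Explicitly, when $a \neq b$ the three functions are assigned one each of $MS$, $SM$, $SS$, and when $a = b$ the slot-$a$ function is assigned $MM$ while the other two are assigned $SS$; I will write $O_f, O_g, O_h$ for the resulting assignments.

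First I would treat $p > 1$, which forces $p_1, p_2 > 1$ and $1 < p' < \infty$. Dualizing against $h$,
\begin{equation*}
|\langle T_\epsilon^{a,b}(f,g), h\rangle| \leq \int_{\mathbb{T}^2} \sum_R \frac{|\langle \phi_R^1, f\rangle|}{|R|^{1/2}}\, \frac{|\langle \phi_R^2, g\rangle|}{|R|^{1/2}}\, \frac{|\langle \phi_R^3, h\rangle|}{|R|^{1/2}}\, \chi_R(x,y)\, dx\, dy.
\end{equation*}
If $a = b$ I would extract the slot-$a$ factor as a supremum over $R$ (producing $MM$ of that function) and apply Cauchy--Schwarz to the two remaining sums (producing $SS$ of each); if $a \neq b$ I would apply H\"older twice, first in the dyadic index of the second variable and then in that of the first, to bound the integrand pointwise by $(O_f f)(x,y)(O_g g)(x,y)(O_h h)(x,y)$. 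In all cases the right-hand side is at most $\int_{\mathbb{T}^2} (O_f f)(O_g g)(O_h h)\, dx\, dy$, and one more application of H\"older together with the $L^p \to L^p$ bounds of Theorem~\ref{thm:MS} yields $\lesssim \|f\|_{p_1}\|g\|_{p_2}\|h\|_{p'}$; taking the supremum over $h$ in the unit ball of $L^{p'}$ gives $\|T_\epsilon^{a,b}(f,g)\|_p \lesssim \|f\|_{p_1}\|g\|_{p_2}$.

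For $1 \leq p_1, p_2 < \infty$ (so $1/2 \leq p \leq 1$) I would prove the weak-type statement, with $\llogl$ in place of $L^{p_j}$ in any slot where $p_j = 1$. After normalizing the inputs and fixing $|E| > 0$, the goal --- via Lemma~\ref{lemma:dual} --- is to produce $E' \subseteq E$, $|E'| > |E|/2$, with $|\langle T_\epsilon^{a,b}(f,g), \chi_{E'}\rangle| \lesssim 1$. Following Theorem~\ref{thm:1para}, I would decompose $\phi_R^3 = \sum_{\vec{k}} 2^{-10|\vec{k}|}\phi_R^{3,\vec{k}}$ by applying Theorem~\ref{thm:decomposition1} to each tensor factor, so that $\phi_R^{3,\vec{k}}$ is supported in $2^{k_1}I \times 2^{k_2}J$ or vanishes, and reduce to a uniform bound $\big|\sum_R \epsilon_R |R|^{-1/2}\langle\phi_R^1,f\rangle\langle\phi_R^2,g\rangle\langle\phi_R^{3,\vec{k}},\chi_{E'}\rangle\big| \lesssim 2^{C|\vec{k}|}$. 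The exceptional set $\Omega$ is assembled from level sets (at heights $\sim 2^{3|\vec{k}|}$) of $O_f f$ and $O_g g$, enlarged first by a factor that is bounded up to logarithms --- using the strong maximal function $M_S$ and its $\llogl \to L^{1,\infty}$ bound --- and then by a factor $\sim 2^{|\vec{k}|}$ carried out one coordinate at a time via the genuinely weak type $(1,1)$ operators $M_1$ and $M_2$, so that $|\Omega|$ is summable in $\vec{k}$ and can be made smaller than $|E|/2$. With $E' = E \setminus \Omega$, I would fix a finite collection $\mathcal{D}$ of dyadic rectangles and split it into those meeting a ``zero set'' (the term vanishes because one inner product is zero), those contained in the first enlargement (the term vanishes because $2^{k_1}I \times 2^{k_2}J \subseteq \Omega$ is disjoint from $E'$), and the rest, on which a three-fold stopping-time decomposition indexed by the level sets of $O_f f$, $O_g g$, and $O_h(\chi_{E'})$ --- parallel to the $\Pi_{n_1,n_2,n_3}$ construction --- produces a convergent geometric series summing to $\lesssim 2^{C|\vec{k}|}$. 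The strong $L^p$ bounds for $p_1, p_2 > 1$, $p < 1$ then follow by interpolation of these weak-type estimates.

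The principal obstacle is this two-parameter stopping-time argument. Dyadic rectangles enjoy no analogue of the trichotomy for dyadic intervals, so the organization of $\mathcal{D}_1, \mathcal{D}_2, \mathcal{D}_3$ and of the exceptional sets must be modeled on the geometric method of~\cite{camil1, camil2} rather than on a straightforward Calder\'{o}n--Zygmund stopping time. The delicate feature --- and the reason the hypothesis involves $\llogl$ and not $L^1$ --- is that the hybrid operators are bounded $\llogl \to L^{1,\infty}$ but not $L^1 \to L^{1,\infty}$; consequently every enlargement of an indicator set and every measure estimate in the stopping time must be routed through the $\llogl$ quasi-norm or through the one-parameter operators $M_1, M_2$, which introduces logarithmic losses that the strict geometric decay of the final series absorbs.
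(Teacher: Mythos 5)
Your overall architecture matches the paper's proof exactly: the mean-zero structure of the tensor factors forces $MM$, $MS$, $SM$, or $SS$ onto each slot, the $p>1$ case is dispatched by the two-stage Hölder argument followed by Theorem~\ref{thm:MS}, and the weak-type case runs through Lemma~\ref{lemma:dual}, the decomposition $\phi_R^3 = \sum_{\vec{k}} 2^{-10|\vec{k}|}\phi_R^{3,\vec{k}}$, and the exceptional-set/stopping-time machinery with $\mathcal{D}_1,\mathcal{D}_2,\mathcal{D}_3$ and $\Pi_{n_1,n_2,n_3}$, in parallel with Theorem~\ref{thm:1para}.

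One point of your discussion is off, though, and worth correcting: you anticipate that the two enlargements of the exceptional set must be routed either through the $\llogl \to L^{1,\infty}$ bound for $M_S$ applied to an indicator (which would cost a factor $1 + \log(1/|\Omega|)$) or through $M_1, M_2$ applied one coordinate at a time, and that the resulting logarithmic losses have to be absorbed by the geometric decay. In fact no such losses occur. The paper defines $\widetilde{\Omega}_{\vec{k}} = \{M_S(\chi_{\Omega_{-3|\vec{k}|}}) > 1/100\}$ and $\widetilde{\widetilde{\Omega}}_{\vec{k}} = \{M_S(\chi_{\widetilde{\Omega}_{\vec{k}}}) > 2^{-|\vec{k}|-1}\}$ and estimates both by the $L^2 \to L^2$ bound of the strong maximal operator: since $\|\chi_A\|_2^2 = |A|$, Chebyshev gives $|\{M_S(\chi_A) > \lambda\}| \leq \lambda^{-2}\|M_S\|_{L^2\to L^2}^2|A|$ with no logarithm whatsoever. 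Only the \emph{first} level set $\Omega_{-3|\vec{k}|}$ is estimated through the $\llogl \to L^{1,\infty}$ bounds of $MS$ and $SM$, applied to the normalized inputs $f, g$, and this is precisely where the $\llogl$ hypothesis enters; the subsequent enlargements incur a clean power of $2^{|\vec{k}|}$ and nothing more. Your diagnosis of \emph{why} $\llogl$ appears is correct, but the enlargement step is simpler and sharper than you expect.
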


\begin{proof} We will assume $a = 1$ and $b = 2$, as the other cases
will follow similarly.

First, suppose $p > 1$.  Then, necessarily $p_1, p_2 > 1$ and $1 <
p' < \infty$.  Note, $1/p_1 + 1/p_2 + 1/p' = 1$.  Fix $h \in
L^{p'}(\mathbb{T})$ with $\|h\|_{p'} \leq 1$.  Then,

\begin{equation*}
\begin{split}
|\langle T_\epsilon^{1,2} (f,g), h \rangle| &= \Big| \sum_R
\epsilon_R \frac{1}{|R|^{1/2}} \langle \phi^1_R, f \rangle \langle
\phi^2_R, g \rangle \langle \phi^3_R, h \rangle \Big| \\
&\leq \sum_R \frac{1}{|R|^{1/2}} |\langle \phi^1_R, f \rangle|
|\langle \phi^2_R, g \rangle| |\langle \phi^3_R, h \rangle| \\
&= \int_{\mathbb{T}^2} \sum_R \frac{|\langle \phi_R^1, f
\rangle|}{|R|^{1/2}} \frac{|\langle \phi_R^2, g \rangle|}{|R|^{1/2}}
\frac{|\langle \phi_R^3, h \rangle|}{|R|^{1/2}} \chi_R(x,y) \, dx \,
dy.
\end{split}
\end{equation*}

\noindent Concentrating on the integrand,

\begin{gather*}
\sum_{R} \frac{|\langle \phi^1_{R}, f \rangle|}{|R|^{1/2}}
\frac{|\langle \phi^2_R, g \rangle|}{|R|^{1/2}} \frac{|\langle
\phi^3_{R}, h \rangle|}{|R|^{1/2}} \chi_R(x,y) = \\
\sum_I \sum_{J} \frac{|\langle \phi^1_R, f \rangle|}{|R|^{1/2}}
\frac{|\langle \phi^2_R, g \rangle|}{|R|^{1/2}} \frac{|\langle
\phi^3_{R}, h \rangle|}{|R|^{1/2}} \chi_R(x,y) \leq \\
\sum_I \Bigg[ \bigg( \frac{1}{|I|^{1/2}} \chi_I(x) \sup_{J}
\frac{|\langle \phi^2_R, g\rangle|}{|J|^{1/2}} \chi_J(y) \bigg)
\times \bigg( \sum_{J} \frac{|\langle \phi^1_{R}, f
\rangle|}{|R|^{1/2}} \frac{|\langle \phi^3_{R}, h
\rangle|}{|R|^{1/2}} \chi_R(x,y) \bigg) \Bigg].
\end{gather*}

\noindent Applying H\"{o}lder's inequality, the last term is bounded by

\begin{gather*}
SM(g)(x,y) \bigg(\sum_I \Big( \sum_{J} \frac{|\langle \phi^1_{R}, f
\rangle|}{|R|^{1/2}} \frac{|\langle \phi^3_{R}, h
\rangle|}{|R|^{1/2}} \chi_R(x,y) \Big)^2 \bigg)^{1/2}.
\end{gather*}

\noindent Applying H\"{o}lder to the inner sum,

\begin{gather*}
\bigg(\sum_I \Big( \sum_{J} \frac{|\langle \phi^1_{R}, f
\rangle|}{|R|^{1/2}} \frac{|\langle \phi^3_{R}, h
\rangle|}{|R|^{1/2}} \chi_R(x,y) \Big)^2
\bigg)^{1/2} \leq \\
\bigg(\sum_I \Big(\sum_{J} \frac{|\langle \phi^1_{R}, f
\rangle|^2}{|R|} \chi_R(x,y) \Big) \Big( \sum_{J} \frac{|\langle
\phi^3_{R}, h \rangle|^2}{|R|} \chi_R(x,y) \Big) \bigg)^{1/2}
\leq \\
\bigg(\sup_I \frac{1}{|I|} \chi_I(x) \sum_{J} \frac{|\langle
\phi^1_{R}, f \rangle|^2}{|J|} \chi_J(y) \bigg)^{1/2} \bigg(\sum_I
\sum_{J} \frac{|\langle \phi^3_R, h \rangle|^2}{|R|} \chi_R(x,y)
\bigg)^{1/2} = \\
MS(f)(x,y) SS(h)(x,y).
\end{gather*}

\noindent Hence,

\begin{equation*}
\begin{split}
|\langle T_\epsilon^{1,2} (f,g), h \rangle| &\leq
\int_{\mathbb{T}^2} MSf(x,y) SMg(x,y) SSh(x,y) \, dx \, dy \\
&\leq \|MSf\|_{p_1} \|SMg\|_{p_2} \|SSh\|_{p'} \lesssim \|f\|_{p_1}
\|g\|_{p_2}.
\end{split}
\end{equation*}

\noindent As $h$ in the unit ball of $L^{p'}$ is arbitrary, we have
$\|T^{1,2}_\epsilon(f,g)\|_p \lesssim \|f\|_{p_1} \|g\|_{p_2}$.

Now suppose $1/2 \leq p \leq 1$.  As in the proof of
Theorem~\ref{thm:1para}, by interpolation it suffices to show
$T_\epsilon^{1,2} : L^{p_1} \times L^{p_2} \rightarrow L^{p,\infty}$
for all $1 < p_1, p_2 < \infty$.  We concentrate on the special case
$T_\epsilon^{1,2} : \llogl \times \llogl \rightarrow L^{1/2,\infty}$, but all
others follow in the same way.

Let $\|f\|_{\llogl} = \|g\|_{\llogl} = 1$ and $E \subseteq
\mathbb{T}^2$ with $|E| > 0$. Lemma~\ref{lemma:dual} is valid on
$\mathbb{T}^d$ for any dimension $d$.  So, we will be done if we can
find $E' \subseteq E$, $|E'| > |E|/2$ so that $|\langle
T_\epsilon^{1,2} (f, g), \chi_{E'} \rangle| \lesssim 1 \leq
|E|^{-1}$.

For $\vec{k} \in \mathbb{N}^2$ and $R = I \times J$ a dyadic
interval, denote $2^{\vec{k}} R = 2^{k_1} I \times 2^{k_2}J$, and
$|\vec{k}| = k_1 + k_2$.  Use Theorem~\ref{thm:decomposition2} to
write

\begin{equation*}
\phi_R^3 = \sum_{\vec{k} \in \mathbb{N}^2} 2^{-10|\vec{k}|}
\phi_R^{3,\vec{k}}
\end{equation*}

\noindent where each $\phi_R^{3,\vec{k}}$ is the normalization of
the tensor product of two 0-mean adapted families which are
uniformally adapted to $I$, $J$ respectively. Further,
$\supp(\phi_R^{3,\vec{k}}) \subseteq 2^{\vec{k}}R$ for $\vec{k}$ small
enough, while $\phi_I^{3,\vec{k}}$ is identically 0 otherwise. Now

\begin{equation*}
\langle T_\epsilon^{1,2} (f, g), \chi_{E'} \rangle = \sum_{\vec{k}
\in \mathbb{N}^2} 2^{-10|\vec{k}|} \sum_{R} \epsilon_R
\frac{1}{|R|^{1/2}} \langle \phi_R^1, f \rangle \langle \phi_R^2, g
\rangle \langle \phi_R^{3,\vec{k}}, \chi_{E'} \rangle.
\end{equation*}

\noindent Hence, it suffices to show $|\sum \epsilon_R |R|^{-1/2}
\langle \phi_R^1, f \rangle \langle \phi_R^2, g \rangle \langle
\phi_R^{3,\vec{k}}, \chi_{E'} \rangle| \lesssim 2^{4|\vec{k}|}$, so
long as the underlying constants are independent of $\vec{k}$.

Let $SS^{\vec{k}}$ be the double square operator with
$\phi_R^{\vec{k}}$.  For each $\vec{k} \in \mathbb{N}^2$, define

\begin{gather*}
\Omega_{-3|\vec{k}|} = \{MSf > C2^{3|\vec{k}|} \} \cup \{SMg > C2^{3|\vec{k}|} \}, \\
\widetilde{\Omega}_{\vec{k}} = \{M_S(\chi_{\Omega_{-3|\vec{k}|}}) > 1/100\}, \\
\widetilde{\widetilde{\Omega}}_{\vec{k}} =
\{M_S(\chi_{\widetilde{\Omega}_{\vec{k}}})
> 2^{-|\vec{k}|-1} \}.
\end{gather*}

\noindent and

\begin{gather*}
\Omega = \bigcup_{\vec{k} \in \mathbb{N}^2}
\widetilde{\widetilde{\Omega}}_{\vec{k}}.
\end{gather*}

\noindent Observe,

\begin{equation*}
|\Omega| \leq \sum_{\vec{k} \in \mathbb{N}^2} 2^{-3|\vec{k}|}
2^{2|\vec{k}|+1} \frac{100^2}{C} \|M_S\|^4_{L^2 \rightarrow
L^2} \Big[\|MS\|_{\llogl \rightarrow L^{1,\infty}} +
\|SM\|_{\llogl \rightarrow L^{1,\infty}}\Big].
\end{equation*}

\noindent Therefore, we can choose $C$ independent of $f$ and $g$ so
that $|\Omega| < |E|/2$. Set $E' = E - \Omega = E \cap \Omega^c$.
Then, $E' \subseteq E$ and $|E'| > |E|/2$.

Fix $\vec{k} \in \mathbb{N}^2$.  Set $Z_{\vec{k}} = \{MSf = 0\} \cup
\{SMg = 0\} \cup \{SS^{\vec{k}}\chi_{E'} = 0\}$. Let $\mathcal{D}$
be any finite collection of dyadic rectangles. We divide this
collection into three subcollections.  Set $\mathcal{D}_1 = \{R \in
\mathcal{D} : R \cap Z_{\vec{k}} \not= \emptyset \}$. For the
remaining rectangles, let $\mathcal{D}_2 = \{R \in \mathcal{D} -
\mathcal{D}_1 : R\subseteq \widetilde{\Omega}_{\vec{k}}\}$ and
$\mathcal{D}_3 = \{R \in \mathcal{D} - \mathcal{D}_1 : R \cap
\widetilde{\Omega}_{\vec{k}}^c \not= \emptyset\}$.

If $R \in \mathcal{D}_1$, then there is some $(x,y) \in R \cap
Z_{\vec{k}}$. Namely, $MSf(x,y) = 0$, $SMg(x,y) = 0$, or
$SS^{\vec{k}}(\chi_{E'})(x,y) = 0$. If it is the first, $\langle
\phi^1_R, f \rangle = 0$. If it is the second, then $\langle
\phi_R^2, g \rangle = 0$, and if it is the third, $\langle
\phi^{3,\vec{k}}_R, \chi_{E'} \rangle = 0$. As this holds for all $R
\in \mathcal{D}_1$, we have

\begin{equation*}
\sum_{R \in \mathcal{D}_1} \frac{1}{|R|^{1/2}} |\langle \phi_R^1, f
\rangle| |\langle \phi_R^2, g \rangle| |\langle \phi_R^{3,\vec{k}},
\chi_{E'} \rangle| = 0.
\end{equation*}

Now suppose $R \in \mathcal{D}_2$, namely $R \subseteq
\widetilde{\Omega}_{\vec{k}}$.  For some $\vec{k}$,
$\phi_R^{3,\vec{k}}$ is identically 0 and $\langle
\phi_R^{3,\vec{k}}, \chi_{E'} \rangle = 0$. For all others,
$\phi_I^{3,\vec{k}}$ is supported in $2^{\vec{k}} R$. Let $(x,y) \in
2^{\vec{k}} R$, and observe

\begin{equation*}
M_S(\chi_{\widetilde{\Omega}_{\vec{k}}})(x,y) \geq
\frac{1}{|2^{\vec{k}}R|} \int_{2^{\vec{k}}R}
\chi_{\widetilde{\Omega}_{\vec{k}}} \,\, dm \geq
\frac{1}{2^{|\vec{k}|}} \frac{1}{|R|} \int_R
\chi_{\widetilde{\Omega}_{\vec{k}}} \,\, dm = 2^{-|\vec{k}|} >
2^{-|\vec{k}|-1}.
\end{equation*}

\noindent That is, $2^{\vec{k}}R \subseteq
\widetilde{\widetilde{\Omega}}_{\vec{k}} \subseteq \Omega$, a set
disjoint from $E'$.  Thus, $\langle \phi_R^{3,\vec{k}}, \chi_{E'}
\rangle = 0$. As this holds for all $R \in \mathcal{D}_2$, we have

\begin{equation*}
\sum_{R \in \mathcal{D}_2} \frac{1}{|R|^{1/2}} |\langle \phi_R^1, f
\rangle| |\langle \phi_R^2, g \rangle| |\langle \phi_R^{3,\vec{k}},
\chi_{E'} \rangle| = 0.
\end{equation*}

Finally, we concentrate on $\mathcal{D}_3$.  Define
$\Omega_{-3|\vec{k}|+1}$ and $\Pi_{-3|\vec{k}|+1}$ by

\begin{gather*}
\Omega_{-3|\vec{k}|+1} = \{MSf > C2^{3|\vec{k}|-1}\}, \\
\Pi_{-3|\vec{k}|+1} = \{I \in \mathcal{D}_3 : |I \cap
\Omega_{-3|\vec{k}|+1}|
> |R|/100\}.
\end{gather*}

\noindent Inductively, define for all $n > -3|\vec{k}|+1$,

\begin{gather*}
\Omega_{n} = \{MSf > C2^{-n}\}, \\
\Pi_{n} = \{R \in \mathcal{D}_3 - \bigcup_{j=-3|\vec{k}|+1}^{n-1}
\Pi_{j} : |R \cap \Omega_{n}|
> |R|/100\}.
\end{gather*}

\noindent As every $R \in \mathcal{D}_3$ is not in $\mathcal{D}_1$,
that is $MS f > 0$ on $R$, it is clear that each $R \in
\mathcal{D}_3$ will be in one of these collections.

Set $\Omega_{-3|\vec{k}|}' = \Omega_{-3|\vec{k}|}$ for symmetry.
Define $\Omega'_{-3|\vec{k}|+1}$ and $\Pi'_{-3|\vec{k}|+1}$ by

\begin{gather*}
\Omega'_{-3|\vec{k}|+1} = \{SM g > C2^{3|\vec{k}|-1}\}, \\
\Pi'_{-3|\vec{k}|+1} = \{R \in \mathcal{D}_3 : |R \cap
\Omega_{-3|\vec{k}|+1}'|
> |R|/100\}.
\end{gather*}

\noindent Inductively, define for all $n > -3|\vec{k}|+1$,

\begin{gather*}
\Omega'_{n} = \{SM g > C2^{-n}\}, \\
\Pi'_{n} = \{R \in \mathcal{D}_3 - \bigcup_{j=-3|\vec{k}|+1}^{n-1}
\Pi'_{j} : |R \cap \Omega'_{n}| > |R|/100\}.
\end{gather*}

\noindent As every $R \in \mathcal{D}_3$ is not in $\mathcal{D}_1$,
that is $SM g > 0$ on $R$, it is clear that each $R \in
\mathcal{D}_3$ will be in one of these collections.

Now, we can choose an integer $N$ big enough so that $\Omega_{-N}''
= \{SS^{\vec{k}} (\chi_{E'}) > 2^N\}$ has very small measure.  In
particular, we take $N$ big enough so that $|R \cap \Omega_{-N}''| <
|R|/100$ for all $R \in \mathcal{D}_3$, which is possible since
$\mathcal{D}_3$ is a finite collection. Define

\begin{gather*}
\Omega_{-N+1}'' = \{SS^{\vec{k}}(\chi_{E'}) > 2^{N-1}\}, \\
\Pi_{-N+1}'' = \{R \in \mathcal{D}_3 : |R \cap \Omega_{-N+1}''| >
|R|/100\},
\end{gather*}

\noindent and

\begin{gather*}
\Omega_{n}'' = \{SS^{\vec{k}}(\chi_{E'}) > 2^{-n}\}, \\
\Pi_{n}'' = \{R \in \mathcal{D}_3 - \bigcup_{j=-N+1}^{n-1} \Pi_j'' :
|R \cap \Omega_{n}''| > |R|/100\},
\end{gather*}

\noindent Again, all $R \in \mathcal{D}_3$ must be in one of these
collections.

Consider $R \in \mathcal{D}_3$, so that $R \cap
\widetilde{\Omega}_{\vec{k}}^c \not= \emptyset$. Then, there is some
$(x,y) \in R \cap \widetilde{\Omega}_{\vec{k}}^c$ which implies $|R
\cap \Omega_{-3|\vec{k}|}|/|R| \leq
M_S(\chi_{\Omega_{-3|\vec{k}|}})(x,y) \leq 1/100$. Write $\Pi_{n_1,
n_2, n_3} = \Pi_{n_1} \cap \Pi'_{n_2} \cap \Pi''_{n_3}$. So,

\begin{equation*}
\begin{split}
\sum_{R \in \mathcal{D}_3} \frac{1}{|R|^{1/2}} &|\langle \phi_R^1, f
\rangle| |\langle \phi_R^2, g \rangle | |\langle \phi_R^{3,\vec{k}}, \chi_{E'} \rangle| \\
&= \sum_{n_1, n_2 > -3|\vec{k}|, \, n_3 > -N} \bigg[ \sum_{R \in
\Pi_{n_1, n_2, n_3}} \frac{1}{|R|^{1/2}} |\langle \phi_R^1, f
\rangle| |\langle \phi_R^2, g \rangle|
|\langle \phi_R^{3,\vec{k}}, \chi_{E'}\rangle| \bigg]\\
&= \sum_{n_1, n_2 > -3|\vec{k}|, \, n_3 > -N} \bigg[ \sum_{R \in
\Pi_{n_1, n_2, n_3}} \frac{|\langle \phi_R^1, f \rangle|}{|R|^{1/2}}
\frac{|\langle \phi_R^2, g \rangle|}{|R|^{1/2}} \frac{|\langle
\phi_R^{3,\vec{k}}, \chi_{E'} \rangle|}{|R|^{1/2}} |R| \bigg].
\end{split}
\end{equation*}

\noindent Suppose $R \in \Pi_{n_1, n_2, n_3}$.  If $n_1 >
-3|\vec{k}| + 1$, then $R \in \Pi_{n_1}$, which in particular says
$R \notin \Pi_{n_1 - 1}$. So, $|R \cap \Omega_{n_1 - 1}| \leq
|R|/100$. If $n_1 = -3|\vec{k}| + 1$, then we still have $|R \cap
\Omega_{-3|\vec{k}|}| \leq |R|/100$, as $R \in \mathcal{D}_3$.
Similarly, If $n_2 > -3k + 1$, then $R \in \Pi_{n_2}'$, which in
particular says $R \notin \Pi_{n_2 - 1}'$. So, $|R \cap \Omega'_{n_2
- 1}| \leq |R|/100$.  If $n_2 = -3|\vec{k}| + 1$, then we still have
$|R \cap \Omega'_{-3|\vec{k}|}| = |R \cap \Omega_{-3|\vec{k}|}| \leq
|R|/100$, as $R \in \mathcal{D}_3$.  Finally, if $n_3 > -N+1$, then
$R \notin \Pi_{n_3 - 1}''$ and $|R \cap \Omega''_{n_3 - 1}| \leq
|R|/100$.  If $n_3 = -N+1$, then $|R \cap \Omega''_{-N}| \leq
|R|/100$ by the choice of $N$.  So, $|R \cap \Omega_{n_1 - 1}^c \cap
\Omega_{n_2 - 1}'^c \cap \Omega_{n_3 - 1}''^c| \geq \frac{97}{100}
|R|$.  Let $\Omega_{n_1, n_2, n_3} = \bigcup\{ R : R \in \Pi_{n_1,
n_2, n_3}\}$. Then,

\begin{equation*}
|R \cap \Omega_{n_1 - 1}^c \cap \Omega'^c_{n_2-1} \cap \Omega_{n_3 -
1}''^c \cap \Omega_{n_1, n_2, n_3}| \geq \frac{97}{100} |R|
\end{equation*}

\noindent for all $R \in \Pi_{n_1, n_2, n_3}$.  Further,

\begin{equation*}
\begin{split}
&\sum_{R \in \Pi_{n_1, n_2, n_3}} \frac{|\langle \phi_R^1, f
\rangle|}{|R|^{1/2}} \frac{|\langle \phi_R^2, g \rangle}{|R|^{1/2}}
\frac{|\langle \phi_R^{3,\vec{k}}, \chi_{E'} \rangle|}{|R|^{1/2}} |R| \\
&\lesssim \sum_{R \in \Pi_{n_1, n_2, n_3}} \frac{|\langle \phi_R^1,
f \rangle|}{|R|^{1/2}} \frac{|\langle \phi_R^2, g
\rangle|}{|R|^{1/2}} \frac{|\langle \phi_R^{3,\vec{k}}, \chi_{E'}
\rangle|}{|R|^{1/2}} |R \cap \Omega_{n_1 - 1}^c
\cap \Omega_{n_2 - 1}'^c \cap \Omega_{n_3 - 1}''^c \cap \Omega_{n_1, n_2, n_3}| \\
&= \int_{\Omega_{n_1 - 1}^c \cap \Omega_{n_2 - 1}'^c \cap
\Omega_{n_3 - 1}''^c \cap \Omega_{n_1, n_2, n_3}} \,\, \sum_{I \in
\Pi_{n_1, n_2, n_3}} \frac{|\langle \phi_R^1, f \rangle|}{|R|^{1/2}}
\frac{|\langle \phi_R^2, g \rangle|}{|R|^{1/2}} \frac{|\langle
\phi_R^{3,\vec{k}}, \chi_{E'} \rangle|}{|R|^{1/2}} \chi_R \,
dm \\
&\leq \int_{\Omega_{n_1 - 1}^c \cap \Omega_{n_2 - 1}'^c \cap
\Omega_{n_3 - 1}''^c \cap \Omega_{n_1, n_2, n_3}} MSf(x,y) SMg(x,y)
SS^{\vec{k}}(\chi_{E'})(x,y) \, dx \, dy \\
&\lesssim C^2 2^{-n_1} 2^{-n_2} 2^{-n_3} |\Omega_{n_1, n_2, n_3}|.
\end{split}
\end{equation*}

Note, $|\Omega_{n_1, n_2, n_3}| \leq |\bigcup \{R : R \in
\Pi_{n_1}\}| \leq |\{M_S(\chi_{\Omega_{n_1}}) > 1/100\}| \lesssim
|\Omega_{n_1}| = |\{MSf > C2^{-n_1}\}| \lesssim C^{-1} 2^{n_1}$.  By the
same argument, $|\Omega_{n_1, n_2, n_3}| \lesssim |\Omega_{n_2}'| =
|\{SM g > C 2^{-n_2}\}| \lesssim C^{-1} 2^{n_2}$, and $|\Omega_{n_1, n_2,
n_3}| \lesssim |\Omega_{n_3}''| = |\{SS^{\vec{k}}(\chi_{E'}) >
2^{-n_3}\}| \lesssim 2^{\alpha n_3}$ for any $\alpha \geq 1$. Thus,
$|\Omega_{n_1, n_2, n_3}| \lesssim C^{-2} 2^{\theta_1 n_1}
2^{\theta_2 n_2} 2^{\theta_3 \alpha n_3}$ for any $\theta_1 +
\theta_2 + \theta_3 = 1$, $0 \leq \theta_1, \theta_2, \theta_3 \leq
1$. Hence,

\begin{equation*}
\begin{split}
\sum_{R \in \mathcal{D}_3} \frac{1}{|R|^{1/2}} &|\langle \phi_R^1, f
\rangle| |\langle \phi_R^2, g \rangle| |\langle \phi_R^{3,\vec{k}},
\chi_{E'} \rangle| \\
&\lesssim \sum_{n_1, n_2 > -3|\vec{k}|, \,\, n_3
> 0} 2^{(\theta_1 - 1)n_1} 2^{(\theta_2 - 1)n_2} 2^{(\theta_3 \alpha - 1) n_3} \quad + \\
&\,\,\, \sum_{n_1, n_2 > -3|\vec{k}|, \,\, -N < n_3 \leq 0}
2^{(\theta_1 - 1)n_1} 2^{(\theta_2 - 1)n_2} 2^{(\theta_3 \alpha - 1) n_3} \\
&= A + B.
\end{split}
\end{equation*}

\noindent For the first term, take $\theta_1 = 1/2$, $\theta_2 =
1/2$, $\theta_3 = 0$, and $\alpha = 1$. For the second term, take
$\theta_1 = 1/3$, $\theta_2 = 1/3$, $\theta_3 = 1/3$, and $\alpha =
6$ to see

\begin{equation*}
\begin{split}
A &= \sum_{n_1, n_2 > -3|\vec{k}|, \,\, n_3 > 0} 2^{-n_1/2} 2^{-n_2/2}
2^{-n_3} \lesssim 2^{3|\vec{k}|}, \\
B &= \sum_{n_1, n_2 > -3|\vec{k}|, \,\, -N < n_3 \leq 0} 2^{-2n_1/3}
2^{-2n_2/3} 2^{n_3} \\
&\leq \sum_{n_1, n_2 > -3|\vec{k}|, n_3 \leq 0} 2^{-2n_1/3}
2^{-2n_2/3} 2^{n_3} \lesssim 2^{4|\vec{k}|}.
\end{split}
\end{equation*}

\noindent Note, there is no dependence on the number $N$, which
depends on $\mathcal{D}$, or $C$, which depends on $E$.

Combining the estimates for $\mathcal{D}_1$, $\mathcal{D}_2$, and
$\mathcal{D}_3$, we see

\begin{equation*}
\sum_{R \in \mathcal{D}} \frac{1}{|R|^{1/2}} |\langle \phi_R^1, f
\rangle| |\langle \phi_R^2, g \rangle| |\langle \phi_R^{3,\vec{k}},
\chi_{E'} \rangle| \lesssim 2^{4|\vec{k}|},
\end{equation*}

\noindent where the constant has no dependence on the collection
$\mathcal{D}$.  Hence, as $\mathcal{D}$ is arbitrary, we have

\begin{equation*}
\begin{split}
\Big| \sum_R \epsilon_R \frac{1}{|R|^{1/2}} \langle \phi_R^1, f
\rangle &\langle \phi_R^2, g \rangle \langle \phi_R^{3,\vec{k}},
\chi_{E'} \rangle \Big| \\
&\leq \sum_R \frac{1}{|R|^{1/2}} |\langle \phi_R^1, f \rangle|
|\langle \phi_R^2, g \rangle| |\langle \phi_R^{3,\vec{k}}, \chi_{E'}
\rangle| \lesssim 2^{4|\vec{k}|},
\end{split}
\end{equation*}

\noindent which completes the proof.
\end{proof}

It should now be clear that proving the above for $(a, b) \not= (1,
2)$ follows by permuting the roles of $MM$, $MS$, $SM$, and $SS$.
For instance, if $(a,b) = (1,1)$, then we consider $MMf$, $SSg$, and
$SS^{\vec{k}}\chi_{E'}$.

For any $\vec{n} \in \mathbb{Z}^4$, where $\vec{n}_1 = (n_1, n_2)$
and $\vec{n}_2 = (n_3, n_4)$, we can define the shifted paraproducts
by

\begin{equation*}
T_\epsilon^{a, b, [\vec{n}]}(f, g)(\vec{x}) = \int_{[0,1]^2}
\sum_{R} \epsilon_R \frac{1}{|R|^{1/2}} \langle
\phi_{R_{\vec{\alpha}}^{\vec{n}_1}}^1, f \rangle \langle
\phi_{R_{\vec{\alpha}}^{\vec{n}_2}}^2, g \rangle
\phi_{R_{\vec{\alpha}}}^3(\vec{x}) \, d\vec{\alpha},
\end{equation*}

\noindent Like the previous cases, simply rework the proof. For
instance, if $(a,b) = (1,2)$, replace $MSf$ by $MS^{[\vec{n}_1]}f$,
$SMg$ by $SM^{[\vec{n}_2]}g$, and $SS^{\vec{k}}(\chi_{E'})$ by
$SS^{\vec{k}, [0]}(\chi_{E'})$. This leads to the previous estimates
with an additional factor of $\prod_{j=1}^4 (|n_j|+1)$.

\section{Multiplier Operators}

We now wish to extend Coifman-Meyer operators to a broader
bi-parameter setting.  In particular, we investigate a new, wider
class of multipliers $m$, which act as if they are the product of
two Coifman-Meyer multipliers.

Given a vector $\vec{t} = (t_1, \ldots, t_{2d}) \in
\mathbb{R}^{2d}$, denote $\rho_1(\vec{t}) = (t_1, t_3, \ldots,
t_{2d-1})$ and $\rho_2(\vec{t}) = (t_2, t_4, \ldots, t_{2d})$, which
are both vectors in $\mathbb{R}^d$.  For multi-indices of
nonnegative integers $\alpha$, we can also employ this notation.  In
particular, $|\rho_1(\alpha)| = \alpha_1 + \alpha_3 + \ldots +
\alpha_{2d-1}$, and similarly for $\rho_2(\alpha)$.  Conversely, for
$1 \leq j \leq d$, let $\vec{t}_j = (t_{2j-1}, t_{2j}) \in
\mathbb{R}^2$, so that $\vec{t} = (\vec{t}_1, \ldots, \vec{t}_d)$.

\begin{defnn} Let $m : \mathbb{R}^{2d} \rightarrow \mathbb{C}$ be smooth
away the origin and uniformly bounded. We say $m$ is a bi-parameter
multiplier if $|\partial^\alpha m(\vec{t})| \lesssim
\|\rho_1(\vec{t})\|^{-|\rho_1(\alpha)|}
\|\rho_2(\vec{t})\|^{-|\rho_2(\alpha)|}$ for all vectors $\alpha$
with $|\alpha| \leq 2d(d+3)$, where $\|\cdot\|$ is the Euclidean
norm on $\mathbb{R}^d$.
\end{defnn}

Given such a multiplier $m$ on $\mathbb{R}^{2d}$ and $L^1$ functions $f_1,
\ldots, f_d : \mathbb{T}^2 \rightarrow \mathbb{C}$, we define the
associated multiplier operator $\Lambda_m^{(2)}(f_1, \ldots, f_d) :
\mathbb{T}^2 \rightarrow \mathbb{C}$ as

\begin{equation*}
\Lambda_m^{(2)}(f_1, \ldots, f_d)(\vec{x}) = \sum_{\vec{t} \in
\mathbb{Z}^{2d}} m(\vec{t}) \widehat{f}_1(\vec{t}_1) \cdots
\widehat{f}_d(\vec{t}_d) e^{2\pi i \vec{x} \cdot (\vec{t}_1 + \ldots
+ \vec{t}_d)}.
\end{equation*}

\noindent Consider the following theorem.

\begin{thmn} For any bi-parameter multiplier $m$ on $\mathbb{R}^{2d}$,
$\Lambda^{(2)}_m : L^{p_1} \times \ldots \times L^{p_d} \rightarrow
L^p$ for $1 < p_j < \infty$ and $\frac{1}{p_1} + \ldots +
\frac{1}{p_d} = \frac{1}{p}$. If any or all of the $p_j$ are equal
to 1, this still holds with $L^p$ replaced by $L^{p,\infty}$ and
$L^{p_j}$ replaced by $\llogl$.  In particular, $\Lambda_m^{(2)} :
\llogl \times \ldots \times \llogl \rightarrow L^{1/d, \infty}$.
\end{thmn}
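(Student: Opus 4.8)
The plan is to run the proof of Theorem~\ref{thm:c-m} one level up, replacing the single-parameter Littlewood--Paley objects, the single-parameter Fourier-coefficient lemma, and the single-parameter shifted paraproducts by their bi-parameter analogues. We treat the bilinear case (two functions $f,g:\mathbb{T}^2\to\mathbb{C}$, $m$ on $\mathbb{R}^4$); the general $d$-linear case is identical with more indices. As in Theorems~\ref{thm:marcin} and~\ref{thm:c-m}, first subtract off the value of $m$ at the origin, using $|m(\vec 0)\widehat f(\vec 0)\widehat g(\vec 0)|\lesssim\|f\|_1\|g\|_1$ together with Theorem~\ref{thm:lloglp} so this term is harmless on every relevant space. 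Then dualize: test $\langle\Lambda_m^{(2)}(f,g),\widetilde h\rangle$ against $h\in L^1(\mathbb{T}^2)$ and, writing $f_0=\widetilde{\overline f}$, etc., reduce to a sum over $\vec s,\vec t\in\mathbb{Z}^2$ of $m(\cdot)\widehat f(\vec s)\widehat g(\vec t)\widehat{h_0}(-\vec s-\vec t)$, expressed via Claim~\ref{claim:fgh} as integrals of products of convolutions.

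The first substantive step is to apply a tensor product of the decomposition of Theorem~\ref{thm:doubletbumps}: once in the ``$\rho_1$'' frequency slots $(s_1,t_1,-s_1-t_1)$ and once in the ``$\rho_2$'' slots $(s_2,t_2,-s_2-t_2)$. This writes the symbol as a double sum over $a,b\in\{1,2,3\}$ and $\vec k=(k_1,k_2)\in\mathbb{N}^2$, with $m$ multiplied by $\widehat{\psi^{a,i}_{k_1}}(\cdot)\,\widehat{\psi^{b,j}_{k_2}}(\cdot)$ across the six frequency variables, so that on the support of each piece the frequency vector lies in a product box of side $\sim 2^{k_1}$ in the $\rho_1$-directions and $\sim 2^{k_2}$ in the $\rho_2$-directions. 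The second, and main, step is the bi-parameter analogue of Lemma~\ref{lemma:m2}: on each such box, cut $m$ off by a smooth $\varphi_{a,b}(2^{-k_1}\rho_1,2^{-k_2}\rho_2)$ and expand the result into a four-fold Fourier series $\sum_{\vec n\in\mathbb{Z}^4}c_{a,b,\vec k,\vec n}\,e^{-2\pi i(\cdots)}$; differentiating the mixed bound $|\partial^\alpha m(\vec t)|\lesssim\|\rho_1(\vec t)\|^{-|\rho_1(\alpha)|}\|\rho_2(\vec t)\|^{-|\rho_2(\alpha)|}$ enough times in each slot and integrating by parts slot by slot, keeping the product structure of the box intact, should give $|c_{a,b,\vec k,\vec n}|\lesssim\prod_{j=1}^4(|n_j|+1)^{-5}$ uniformly in $a,b,\vec k$.

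The third step is bookkeeping identical to the end of the proof of Theorem~\ref{thm:c-m}: each exponential factor is a translate by a multiple of $2^{-k_1}$ or $2^{-k_2}$ of one of the bump functions, so after dilating and translating (introducing the averaging over $\vec\alpha\in[0,1]^2$) each of the $3\times3\times\mathbb{N}^2$ pieces becomes, up to the summable factor $2^{-10|\vec k|}$ and the coefficient $c_{a,b,\vec k,\vec n}$, a shifted bi-parameter paraproduct $T_{c'}^{a,b,[\vec n]}(f_0,g_0)$ paired with $h$, where $c'_{a,b,\vec k,\vec n}=\prod_j(|n_j|+1)^5 c_{a,b,\vec k,\vec n}$ is a bounded sequence. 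Plugging in the bounds for these operators from Section~6.2 (Theorem~\ref{thm:2para} and its shifted version, with the loss $\prod_{j=1}^4(|n_j|+1)$), the series in $\vec n$ converges absolutely since $5-1=4>1$ per variable, yielding the $L^p$ bound for $p>1$ at once; the weak-type bound $\llogl\times\llogl\to L^{1/2,\infty}$ follows from Lemma~\ref{lemma:weaksum} (with $k=2$, $K=4$), and the range $1/2<p<1$ follows by interpolation exactly as in Theorem~\ref{thm:c-m}. For general $d$ one uses $2d$-fold Fourier expansions and $d$-linear shifted bi-parameter paraproducts, obtaining $\llogl\times\cdots\times\llogl\to L^{1/d,\infty}$. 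The hard part will be the second step: securing, in the genuinely mixed setting, Fourier coefficients that decay simultaneously in all $2d$ shift parameters, which is exactly where the product-type derivative hypothesis in the definition of a bi-parameter multiplier must be exploited rather than a single Euclidean estimate.
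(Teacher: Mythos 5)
Your plan follows the paper's strategy in its broad strokes: dualize, decompose in frequency using a tensor product of Theorem~\ref{thm:doubletbumps}, expand each truncated symbol in a Fourier series with fast-decaying coefficients (a bi-parameter version of Lemma~\ref{lemma:mk}/Lemma~\ref{lemma:m2}), translate and dilate to recognize shifted bi-parameter paraproducts $T_{c'}^{a,b,[\vec n]}$, apply Theorem~\ref{thm:2para} and its shifted variant, and close the weak-type estimates via Lemma~\ref{lemma:weaksum}. All of that matches the paper's proof of Theorem~\ref{thm:bimult}, including the exponent bookkeeping (losing $\prod(|n_j|+1)$ from the shifts against a gain of $\prod(|n_j|+1)^{-5}$, with $k=2$, $K=4$ in Lemma~\ref{lemma:weaksum}).

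There is, however, a genuine gap at the very start. You subtract only the value of $m$ at the single point $\vec 0\in\mathbb{Z}^4$, as in the one-parameter case. But the tensor product of two copies of the decomposition of Theorem~\ref{thm:doubletbumps} produces the cutoff
\[
\chi_{\mathbb{Z}^2-(0,0)}(s_1,t_1)\,\chi_{\mathbb{Z}^2-(0,0)}(s_2,t_2)
= \chi_E(\vec s,\vec t),
\]
where $E$ excludes the entire coordinate \emph{planes} $\{(s_1,t_1)=(0,0)\}$ and $\{(s_2,t_2)=(0,0)\}$, not just the origin. So after subtracting $m(\vec 0)$, the remaining symbol still has unaccounted-for mass on those planes (e.g.\ at $(0,s_2,0,t_2)$ with $(s_2,t_2)\ne 0$), and the tensored bump decomposition simply never touches those frequencies. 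The paper handles this by a further reduction before the bump decomposition: it restricts $m$ to each plane to get genuine single-parameter Coifman--Meyer multipliers $m_1(s_1,t_1)=m(s_1,0,t_1,0)$ and $m_2(s_2,t_2)=m(0,s_2,0,t_2)$, applies them to the partially averaged functions $F_j,G_j$ (the Fourier restrictions of $f,g$ to the corresponding hyperplane), bounds $\Lambda_{m_1}(F_1,G_1)$ and $\Lambda_{m_2}(F_2,G_2)$ directly via Theorem~\ref{thm:c-m} together with generalized Minkowski ($\|F_j\|_{p}\le\|f\|_p$, and $\|F_j\|_1\le\|f\|_{\llogl}$), and only then assumes $m$ vanishes on both planes so that the tensor decomposition is exact. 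Without this intermediate step the argument as written does not reconstruct $\Lambda_m^{(2)}(f,g)$; the rest of your outline is fine once the reduction is inserted.
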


As before, we will focus on the $d = 2$ case for simplicity, but
this makes no substantiative difference in the proof.  We note that
in this case, the bi-parameter multiplier condition can be stated

\begin{equation*}
|\partial^{(\alpha, \beta)} m(\vec{s}, \vec{t})| \lesssim \|(s_1,
t_1)\|^{-\alpha_1-\beta_1} \|(s_2, t_2)\|^{-\alpha_2-\beta_2}
\end{equation*}

\noindent for all two-dimensional indices $\alpha, \beta$ with
$|\alpha|, |\beta| \leq 10$.

\begin{remark}\label{remark:tripletbumps} Let $\psi_k^{a,i}$ be the
functions in Theorem~\ref{thm:doubletbumps}.  For $1 \leq a, b \leq
3$ and $k, k' \in \mathbb{N}$, define $\psi^{a,b,i}_{k,k'}(\vec{s})
= \psi^{a,i}_k(s_1) \psi^{b,i}_{k'}(s_2)$.  Let $E_j = \{ \vec{x}
\in \mathbb{Z}^4 : \rho_j(\vec{x}) \not= (0,0)\}$ and $E = E_1 \cap
E_2$. Then,

\begin{equation*}
\begin{split}
\chi_E(\vec{s}, \vec{t}) &= \chi_{\mathbb{N}^2 - (0,0)}(s_1, t_1)
\chi_{\mathbb{N}^2 - (0,0)}(s_2, t_2) \\
&= \bigg( \sum_{a=1}^3 \sum_{k=1}^\infty \widehat{\psi^{a,1}_k}(s_1)
\widehat{\psi^{a,2}_k}(t_1) \widehat{\psi^{a,3}_k}(-s_1-t_1) \bigg)
\times \\
&\qquad \bigg( \sum_{b=1}^3 \sum_{k'=1}^\infty
\widehat{\psi^{b,1}_{k'}}(s_2) \widehat{\psi^{b,2}_{k'}}(t_2)
\widehat{\psi^{b,3}_{k'}}(-s_2-t_2) \bigg) \\
&= \sum_{a,b = 1}^3 \sum_{k, k' = 1}^\infty
\widehat{\psi^{a,b,1}_{k,k'}}(\vec{s})
\widehat{\psi^{a,b,2}_{k,k'}}(\vec{t})
\widehat{\psi^{a,b,3}_{k,k'}}(-\vec{s}-\vec{t}).
\end{split}
\end{equation*}
\end{remark}

\begin{lemma}\label{lemma:m3} Let $m : \mathbb{R}^4 \rightarrow
\mathbb{C}$ be a bi-parameter multiplier and $\psi_{k,k'}^{a,b,1},
\psi_{k,k'}^{a,b,2}$ the functions in
Remark~\ref{remark:tripletbumps}. For every $k,k' \in \mathbb{N}$
and $1 \leq a, b \leq 3$, there is a smooth function $m_{a,b,k,k'}$
satisfying $m_{a,b,k,k'}(\vec{s},\vec{t})
\widehat{\psi^{a,b,1}_{k,k'}}(\vec{s})
\widehat{\psi^{a,b,2}_{k,k'}}(\vec{t}) = m(\vec{s},\vec{t})
\widehat{\psi^{a,b,1}_{k,k'}}(\vec{s})
\widehat{\psi^{a,b,2}_{k,k'}}(\vec{t})$ and

\begin{equation*}
m_{a,b,k,k'}(\vec{s},\vec{t}) = \sum_{\vec{n} \in \mathbb{Z}^4}
c_{a,b,k,k',\vec{n}} e^{-2\pi i 2^{-k} \rho_1(\vec{n}) \cdot
(s_1,t_1)} e^{-2\pi i2^{-k'} \rho_2(\vec{n}) \cdot (s_2, t_2)},
\end{equation*}

\noindent where $|c_{a,b,k,k',\vec{n}}| \lesssim \prod_{j=1}^4
(|n_j| + 1)^{-5}$ uniformly in $a, b, k, k'$.
\end{lemma}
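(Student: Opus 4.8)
The plan is to follow the template already established in Lemma~\ref{lemma:mk} and Lemma~\ref{lemma:m2}, but now in a genuinely bi-parameter (four real variables, two ``groups'' of two) setting. First I would fix a smooth cutoff that is adapted to the support of $\widehat{\psi^{a,b,1}_{k,k'}}\otimes\widehat{\psi^{a,b,2}_{k,k'}}$: take a product $\varphi_{a,b}(\vec{s},\vec{t}) = \varphi^{(1)}_a(s_1,t_1)\,\varphi^{(2)}_b(s_2,t_2)$, where $\varphi^{(1)}_a,\varphi^{(2)}_b$ are the two-dimensional bumps used in Lemma~\ref{lemma:m2} (with the three cases $a,b\in\{1,2,3\}$ corresponding to which coordinate is ``large''). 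Set $m_{a,b,k,k'}(\vec{s},\vec{t}) = m(\vec{s},\vec{t})\,\varphi_{a,b}(2^{-k}(s_1,t_1),2^{-k'}(s_2,t_2))$. By the support properties in Theorem~\ref{thm:doubletbumps} (inherited through Remark~\ref{remark:tripletbumps}) this agrees with $m$ on the support of the relevant product of bump transforms, which gives the first claimed identity. The key structural fact is that the support $E_{a,b,k,k'}$ of $m_{a,b,k,k'}$ is contained in $[-2^{k-1},2^{k-1}]^2\times[-2^{k'-1},2^{k'-1}]^2$, a box of volume $2^{2k}2^{2k'}$, on which $\|(s_1,t_1)\|\gtrsim 2^{k}$ whenever the first group is in the ``large'' regime and $\|(s_2,t_2)\|\gtrsim 2^{k'}$ whenever the second group is, for all the relevant $a,b$.

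Next I would expand $m_{a,b,k,k'}$ in the Fourier basis $\{2^{-k}e^{-2\pi i 2^{-k}(n_1,n_2)\cdot(s_1,t_1)}\}$ on the first box and $\{2^{-k'}e^{-2\pi i 2^{-k'}(n_3,n_4)\cdot(s_2,t_2)}\}$ on the second, exactly as in Lemma~\ref{lemma:m2}, obtaining
\begin{equation*}
c_{a,b,k,k',\vec{n}} = 2^{-2k}2^{-2k'}\int_{\mathbb{R}^4} m_{a,b,k,k'}(\vec{s},\vec{t})\, e^{2\pi i 2^{-k}\rho_1(\vec{n})\cdot(s_1,t_1)} e^{2\pi i 2^{-k'}\rho_2(\vec{n})\cdot(s_2,t_2)}\,d\vec{s}\,d\vec{t}.
\end{equation*}
The bound $|c_{a,b,k,k',\vec{n}}|\lesssim\prod_{j=1}^4(|n_j|+1)^{-5}$ then comes from integration by parts: differentiating five times in each of the four variables. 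Using the bi-parameter multiplier hypothesis $|\partial^{(\alpha,\beta)}m|\lesssim\|(s_1,t_1)\|^{-\alpha_1-\beta_1}\|(s_2,t_2)\|^{-\alpha_2-\beta_2}$ together with the Leibniz rule applied to $m\cdot\varphi_{a,b}(2^{-k}\cdot,2^{-k'}\cdot)$, and the lower bounds $\|(s_1,t_1)\|\gtrsim 2^k$, $\|(s_2,t_2)\|\gtrsim 2^{k'}$ on $E_{a,b,k,k'}$, one checks that the mixed derivative $\partial_{s_1}^{5}\partial_{t_1}^{5}\partial_{s_2}^{5}\partial_{t_2}^{5} m_{a,b,k,k'}$ is $\lesssim 2^{-10k}2^{-10k'}$ (each derivative in a group costs a factor $2^{-k}$ or $2^{-k'}$ whether it lands on $m$ or on the cutoff — the crucial point is that $\varphi_{a,b}$ factors across the two groups so derivatives don't mix them). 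Integrating this over the box of volume $2^{2k}2^{2k'}$ and dividing by $(2\pi i 2^{-k} n_1)^{5}(2\pi i 2^{-k}n_2)^5(2\pi i 2^{-k'}n_3)^5(2\pi i 2^{-k'}n_4)^5$ and by $2^{-2k-2k'}$ gives the bound $\lesssim |n_1|^{-5}|n_2|^{-5}|n_3|^{-5}|n_4|^{-5}$ when all $n_j\ne 0$; for coordinates where some $n_j=0$, replace the five-fold integration by parts in that variable by a trivial bound (as in Lemma~\ref{lemma:m2}, using $\beta$ with a zero entry), and the $(|n_j|+1)^{-5}$ normalization absorbs it. Uniformity in $a,b,k,k'$ follows because the constants only depend on finitely many derivatives of the fixed bumps $\varphi^{(1)}_a,\varphi^{(2)}_b$ and on the implied constants in the multiplier condition, and the $2^k,2^{k'}$ scalings cancel exactly.

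The main obstacle, such as it is, is bookkeeping: one must be careful that for \emph{each} of the nine pairs $(a,b)$ the relevant group is genuinely in a region where $\|\rho_j(\vec{t})\|\gtrsim 2^{\cdot}$, so that the multiplier estimate can be invoked with the right power; this is where the distinction between the ``$a=i$'' and ``$a\ne i$'' support conditions in Theorem~\ref{thm:doubletbumps} is used, and it is exactly the two-dimensional fact already verified inside the proof of Lemma~\ref{lemma:m2}, applied once per group. Because the cutoff and the trigonometric basis both factor as a product over the two groups, the four-dimensional integration-by-parts argument is just two independent copies of the two-dimensional one glued together, so no new analytic idea is required — the proof is essentially a product of two instances of Lemma~\ref{lemma:m2} with the exponent raised from $4$ to $5$ (which is what the enlarged derivative budget $|\alpha|\le 2d(d+3)=20$ for $d=2$ affords). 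I would write it compactly, referring back to Lemma~\ref{lemma:m2} for the one-group estimates and spelling out only the product structure and the final multiplication of bounds.
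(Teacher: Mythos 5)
Your proposal is correct and is essentially the paper's proof: both multiply $m$ by a rescaled smooth cutoff adapted to the supports of $\widehat{\psi^{a,b,1}_{k,k'}}\otimes\widehat{\psi^{a,b,2}_{k,k'}}$, expand on the $2^k\times 2^{k'}$ box in the appropriate exponential basis, and bound the coefficients by integrating by parts five times in each variable using the bi-parameter derivative estimate. The only cosmetic difference is that you take the cutoff to be a literal tensor product over the two groups while the paper uses a single four-variable cutoff with the same support and identity region; in either case the relevant scaling comes from the chain rule (a derivative in a group-1 variable always produces a factor $2^{-k}$, and in a group-2 variable a factor $2^{-k'}$, whether it lands on $m$ or on the cutoff), so the tensor factorization is convenient but not actually load-bearing.
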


\begin{proof} For simplicity, assume $a = b = 1$.  Let $\varphi :
\mathbb{R}^4 \rightarrow \mathbb{C}$ be a smooth function with

\begin{gather*}
\supp(\varphi) \subseteq [-2^{-1}, 2^{-1}]^2 \times \Big([-2^{-1},
-2^{-11}] \cup [2^{-11}, 2^{-1}] \Big)^2 \quad \text{ and } \\
\varphi = 1 \, \text{ on } \, [-2^{-2}, 2^{-2}]^2 \times \Big(
[-2^{-2}, -2^{-10}] \cup [2^{-10}, 2^{-2}] \Big)^2.
\end{gather*}

\noindent Define $m_{a,b,k,k'}(\vec{s},\vec{t}) = m(\vec{s},\vec{t})
\varphi(2^{-k} s_1, 2^{-k'} s_2, 2^{-k}t_1, 2^{-k'}t_2)$. Then by
construction, $m_{a,b,k,k'}(\vec{s},\vec{t})
\widehat{\psi_{k,k'}^{a,b,1}}(\vec{s})
\widehat{\psi_{k,k'}^{a,b,2}}(\vec{t}) = m(\vec{s},\vec{t})
\widehat{\psi_{k,k'}^{a,b,1}}(\vec{s})
\widehat{\psi_{k,k'}^{a,b,2}}(\vec{t})$. Further, if $E_{a,b,k,k'}$
is the support of $m_{a,b,k,k'}$, then $|E_{a,b,k,k'}| \leq 2^{2k}
2^{2k'}$.

Recall that $\{2^{-k/2} e^{-2\pi in2^{-k}x}\}$ is an orthonormal
basis on any interval of length $2^k$, so that $\{2^{-k} e^{-2\pi
i2^{-k} \vec{n} \cdot \vec{x}}\}$ is an orthonormal basis on any
square of side length $2^{k}$.  Thus,

\begin{equation*}
\begin{split}
m_{a,b,k,k'}(\vec{s},\vec{t}) = \sum_{\vec{n} \in \mathbb{Z}^4}
c_{a,b,k,k',\vec{n}} \, e^{-2\pi i 2^{-k} \rho_1(\vec{n}) \cdot
(s_1,t_1)} \, e^{-2\pi i 2^{-k'} \rho_2(\vec{n}) \cdot (s_2, t_2)},
\end{split}
\end{equation*}

\noindent where $c_{a,b,k,k',\vec{n}}$ is

\begin{equation*}
2^{-2k} 2^{-2k'} \bigg( \int_{\mathbb{R}^4}
m_{a,b,k,k'}(\vec{x},\vec{y}) \, e^{2\pi i 2^{-k} \rho_1(\vec{n})
\cdot (x_1,y_1)} \, e^{2\pi i 2^{-k'} \rho_2(\vec{n}) \cdot (x_2,
y_2)} \, d\vec{x} \, d\vec{y} \bigg).
\end{equation*}

We may assume that if $\vec{n} = (n_1, n_2, n_3, n_4)$, each of
$n_j$ is nonzero, as these cases are handled similarly.  Let $C =
\max\{ \|\partial^\alpha \varphi\|_\infty : 0 \leq |\alpha| \leq 20\}$.
Note, if $(\vec{x}, \vec{y}) \in E_{a,b,k,k'}$, then $\|(x_1, y_1)\|
\geq 2^{k-11}$ and $\|(x_2, y_2)\| \geq 2^{k'-11}$. So,
$|\partial^{(\alpha, \beta)} m(\vec{x},\vec{y})| \lesssim
\|(x_1,y_1)\|^{-\alpha_1-\beta_1} \|(x_2,y_2)\|^{-\alpha_2-\beta_2}
\lesssim  2^{-k(\alpha_1 + \beta_1)} 2^{-k'(\alpha_2 + \beta_2)}$
for all $|\alpha|, |\beta| \leq 10$.  Set $\gamma = (5,5,5,5)$, and
observe

\begin{equation*}
\begin{split}
|\partial^\gamma m_{a,b,k,k'}&(\vec{x},\vec{y})| \\
&\lesssim \sum_{(\alpha,
\beta) \leq \gamma} |\partial^{(\alpha, \beta)} m_{a,b,k,k'}(\vec{x},
\vec{y})| |\partial^{\gamma - (\alpha, \beta)} \varphi(2^{-k}x_1,
2^{-k'}x_2, 2^{-k}y_1, 2^{-k'}y_2)| \\
&\leq \sum_{(\alpha, \beta) \leq \gamma} C |\partial^{\alpha, \beta}
m(\vec{x}, \vec{y})|
2^{-k(10 - \alpha_1 - \beta_1)} 2^{-k'(10 -\alpha_2-\beta_2)} \\
&\lesssim  2^{-10k} 2^{-10k'}.
\end{split}
\end{equation*}

\noindent By several iterations of integration by parts,

\begin{equation*}
\begin{split}
&\bigg| \int_{\mathbb{R}^4} m_{a,b,k,k'}(\vec{x},\vec{y}) \, e^{2\pi
i 2^{-k} \rho_1(\vec{n}) \cdot (x_1,y_1)} \, e^{2\pi i 2^{-k'}
\rho_2(\vec{n}) \cdot (x_2, y_2)} \, d\vec{x} \, d\vec{y} \bigg| \\
&= \bigg| \int_{E_{a,b,k,k'}} \partial^{\gamma}
m_{a,b,k,k'}(\vec{x},\vec{y}) \, \frac{e^{2\pi i 2^{-k}
\rho_1(\vec{n}) \cdot (x_1,y_1)} \, e^{2\pi i 2^{-k'}
\rho_2(\vec{n}) \cdot (x_2, y_2)}
 (2\pi i)^{-20}}{(n_1 2^{-k})^{5} (n_2 2^{-k'})^{5} (n_3 2^{-k})^{5} (n_4
2^{-k'})^{5}}  \, d\vec{x} \, d\vec{y} \bigg| \\
&\lesssim \frac{2^{10k} 2^{10k'}}{|n_1|^5 |n_2|^5 |n_3|^5 |n_4|^5}
|E_{a,b,k,k'}| \|\partial^\gamma m_{a,b,k,k'}\|_\infty \lesssim 2^{2k}
2^{2k'} \prod_{j=1}^4 (|n_j| + 1)^{-5}.
\end{split}
\end{equation*}

\noindent Namely, $|c_{a,b,k,k',\vec{n}}| \lesssim \prod
(|n_j|+1)^{-5}$.  To handle the cases when $n_j = 0$ for some $j$,
adjust the above argument with $\gamma = (0,5,5,5)$ or $\gamma =
(5,0,5,5)$, and so on.  For $(a,b) \not= (1,1)$, we simply need to
choose $\varphi$ differently.
\end{proof}

\begin{thm}\label{thm:bimult} For any bi-parameter multiplier $m$ on
$\mathbb{R}^4$, $\Lambda_m^{(2)} : L^{p_1} \times L^{p_2}
\rightarrow L^p$ for $1 < p_1, p_2 < \infty$ and $\frac{1}{p_1} +
\frac{1}{p_2} = \frac{1}{p}$.  If $p_1$ or $p_2$ or both are equal
to 1, this still holds with $L^p$ replaced by $L^{p,\infty}$ and
$L^{p_j}$ replaced by $\llogl$.
\end{thm}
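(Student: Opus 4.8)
The plan is to follow the proof of Theorem~\ref{thm:c-m} closely, reducing $\Lambda_m^{(2)}$ to the shifted bi-parameter paraproducts of Theorem~\ref{thm:2para}, but with one new preliminary step: a splitting of the frequency lattice that isolates the ``fully non-degenerate'' frequencies, on which the product structure of $m$ can be used, from the degenerate ones, on which the operator collapses to a single-parameter object. As in Theorem~\ref{thm:c-m} it is enough to treat $d = 2$. Write a frequency vector as $(\vec{s},\vec{t}) \in \mathbb{Z}^2 \times \mathbb{Z}^2$, $\vec{s} = (s_1,s_2)$, $\vec{t} = (t_1,t_2)$, so that $\rho_1$ selects the pair $(s_1,t_1)$ and $\rho_2$ the pair $(s_2,t_2)$, and the hypothesis reads $|\partial^{(\alpha,\beta)}m(\vec{s},\vec{t})| \lesssim \|(s_1,t_1)\|^{-\alpha_1-\beta_1}\|(s_2,t_2)\|^{-\alpha_2-\beta_2}$ for $|\alpha|,|\beta| \le 10$. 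Let $E_1 = \{(s_1,t_1) \ne (0,0)\}$, $E_2 = \{(s_2,t_2) \ne (0,0)\}$, and $E = E_1 \cap E_2$ as in Remark~\ref{remark:tripletbumps}. By inclusion-exclusion $\chi_{\mathbb{Z}^4} = \chi_E + \chi_{E_1^c} + \chi_{E_2^c} - \chi_{\{0\}}$, so $\Lambda_m^{(2)}(f,g)$ is a sum of four operators, each obtained by restricting the frequency sum to one indicator's support. The $\{0\}$ term is $m(0)\widehat{f}(0)\widehat{g}(0)$, whose modulus is at most $\|m\|_\infty\|f\|_1\|g\|_1$, hence bounded by $\|f\|_{p_1}\|g\|_{p_2}$ in the strong range and, since $\|\cdot\|_1 \le \|\cdot\|_{\llogl}$ by Theorem~\ref{thm:lloglp}, by $\|f\|_{\llogl}\|g\|_{\llogl}$ in the endpoint range.

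For the $E_1^c$ term one has $s_1 = t_1 = 0$, so $\widehat{f}(\vec{s}) = \widehat{F}(s_2)$ with $F(x_2) = \int_{\mathbb{T}} f(x_1,x_2)\,dx_1$, likewise $\widehat{g}(\vec{t}) = \widehat{G}(t_2)$, and $e^{2\pi i\vec{x}\cdot(\vec{s}+\vec{t})} = e^{2\pi i x_2(s_2+t_2)}$; this term is therefore $\Lambda_{\widetilde{m}}(F,G)(x_2)$, a function on $\mathbb{T}^2$ constant in $x_1$, where $\widetilde{m}(s_2,t_2) = m(0,s_2,0,t_2)$. Because the $\rho_1$-derivative order is zero, the bi-parameter condition forces $\widetilde{m}$ to be a Coifman-Meyer multiplier on $\mathbb{R}^2$, so Theorem~\ref{thm:c-m} gives $\|\Lambda_{\widetilde{m}}(F,G)\|_{L^p(\mathbb{T})} \lesssim \|F\|_{p_1}\|G\|_{p_2}$ with the weak-$L^p$ replacement when an exponent equals $1$. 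Minkowski's integral inequality together with Jensen (using that $(\mathbb{T},m)$ is a probability space) gives $\|F\|_{L^{p_1}(\mathbb{T})} \le \|f\|_{L^{p_1}(\mathbb{T}^2)}$ and $\|F\|_{L^1(\mathbb{T})} \le \|f\|_{L^1(\mathbb{T}^2)} \le \|f\|_{\llogl}$; since a function constant in $x_1$ has the same (weak-)$L^p$ norm on $\mathbb{T}^2$ as on $\mathbb{T}$, this term is under control. The $E_2^c$ term is symmetric.

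The main term is the one with frequencies restricted to $E$, call it $\Lambda_E(f,g)$, and here I would repeat the argument of Theorem~\ref{thm:c-m} verbatim with dyadic rectangles replacing dyadic intervals. Pair against an arbitrary $h \in L^1(\mathbb{T}^2)$; pass to the reflected-conjugated data $f_0,g_0,h_0$; insert the partition of unity from Remark~\ref{remark:tripletbumps} and sum over $a,b,k,k'$; on each piece replace $m$ by $m_{a,b,k,k'}$ via Lemma~\ref{lemma:m3} and expand it into its Fourier series over $\vec{n} \in \mathbb{Z}^4$; absorb the exponentials as translations of the bump tensors $\psi^{a,b,i}_{k,k'}$; apply Plancherel and the two-dimensional analogue of Claim~\ref{claim:fgh}; and finally dilate and translate as in Theorem~\ref{thm:c-m} to recognise the sums as inner products of $f_0,g_0,h$ against adapted tensor-product families over dyadic rectangles. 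With $c'_{a,b,I,\vec{n}} = \prod_{j=1}^{4}(|n_j|+1)^5\,c_{a,b,k,k',\vec{n}}$, uniformly bounded by Lemma~\ref{lemma:m3}, this produces, almost everywhere,
\begin{equation*}
\widetilde{\Lambda_E(f,g)} = \sum_{\vec{n}\in\mathbb{Z}^4} \frac{1}{\prod_{j=1}^{4}(|n_j|+1)^5} \sum_{a,b=1}^{3} T_{c'}^{a,b,[\vec{n}]}(f_0,g_0),
\end{equation*}
where the mean-zero conditions on the families in the appropriate variable follow, exactly as in the single-parameter case, from the vanishing and support properties of the $\psi^{a,i}_k$ in Theorem~\ref{thm:doubletbumps}.

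It then remains to invoke the bounds for the shifted bi-parameter paraproducts from Theorem~\ref{thm:2para} and the discussion after it, namely $\|T_{c'}^{a,b,[\vec{n}]}(f_0,g_0)\|_p \lesssim \prod_{j=1}^4(|n_j|+1)\,\|f\|_{p_1}\|g\|_{p_2}$ when $p_1,p_2>1$, and the corresponding weak-$L^p$ bound with $L^{p_j}$ replaced by $\llogl$ when an exponent equals $1$. In the $L^p$ range one simply sums the convergent series $\sum_{\vec{n}}\prod_j(|n_j|+1)^{-4}$; in the endpoint range ($p \le 1$) one applies Lemma~\ref{lemma:weaksum} with $K = 4$, $k = 2$, $r = k+3 = 5$, and $\alpha(\vec{n}) = \prod_{j=1}^4(|n_j|+1)$, to the functions $f_{\vec{n}} = \sum_{a,b}T_{c'}^{a,b,[\vec{n}]}(f_0,g_0)$, which satisfy $\|f_{\vec{n}}\|_{p,\infty} \lesssim \alpha(\vec{n})$. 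Reflection and conjugation being isometries on every space in sight, summing the four terms gives the theorem. I expect the main obstacle to be not any single estimate but the bookkeeping of the $E$-piece expansion: one must track two separate scales $k,k'$, two families of one-dimensional bumps, the $\rho_1/\rho_2$ split of the shift vector $\vec{n}$, and check that each dilated-translated tensor product is genuinely one of the shifted bi-parameter paraproducts, with its mean-zero conditions in the correct variables. The one conceptually new point, absent from the single-parameter proof where subtracting $m(0)$ sufficed, is the frequency splitting isolating $E_1^c$ and $E_2^c$, on which $m$ may still be singular and which therefore must be routed through the lower-dimensional Coifman-Meyer theorem.
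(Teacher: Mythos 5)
Your proposal is correct and follows essentially the same route as the paper: the same reduction of the degenerate frequencies to the one-parameter Coifman--Meyer theorem, the same insertion of the tensor-product partition of unity from Remark~\ref{remark:tripletbumps}, the same Fourier expansion of $m_{a,b,k,k'}$ via Lemma~\ref{lemma:m3} leading to shifted bi-parameter paraproducts, and the same summation via Lemma~\ref{lemma:weaksum}. The only (cosmetic) divergence is organizational: you split $\chi_{\mathbb{Z}^4}$ by inclusion--exclusion and subtract a $\chi_{\{0\}}$ term at the end, whereas the paper first replaces $m$ by one vanishing at the origin so that the three pieces $m_0, m_1, m_2$ can simply be added without a correction.
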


\begin{proof} Fix $m$ and let $f, g : \mathbb{T}^2
\rightarrow \mathbb{C}$.  Then,

\begin{equation*}
\Lambda_m^{(2)}(f,g)(\vec{x}) = \sum_{\vec{s}, \vec{t} \in
\mathbb{Z}^2} m(\vec{s},\vec{t}) \widehat{f}(\vec{s})
\widehat{g}(\vec{t}) e^{2\pi i\vec{x} \cdot (\vec{s}+\vec{t})}.
\end{equation*}

\noindent As in the proofs of Theorem~\ref{thm:marcin} and
Theorem~\ref{thm:c-m}, we can assume $m(\vec{0}, \vec{0}) = 0$.

Let $m_1(s_1, t_1) = m(s_1,0,t_1,0)$.  Then, $m_1 : \mathbb{R}^2
\rightarrow \mathbb{C}$ is a Coifman-Meyer multiplier.  Let
$F_1(x_1) = \int_\mathbb{T} f(x_1, x_2) \, dx_2$ and $G_1(x_1) =
\int_\mathbb{T} g(x_1, x_2) \, dx_2$, so that $\widehat{f}(s_1,0) =
\widehat{F}_1(s_1)$ and $\widehat{g}(t_1, 0) = \widehat{G}_1(t_1)$.
Let

\begin{equation*}
\Lambda_{m_1}(F_1, G_1)(x) = \sum_{s_1, t_1 \in \mathbb{Z}} m_1(s_1,
t_1) \widehat{F}_1(s_1) \widehat{G_1}(t_1) e^{2\pi ix(s_1 + t_1)},
\end{equation*}

\noindent a standard Coifman-Meyer operator.  Now define
$m_2(s_2, t_2) = m(0,s_2,0,t_2)$ and $F_2, G_2$ as expected.  Let
$\Lambda_{m_2}(F_2, G_2)(y)$ be the appropriate Coifman-Meyer
operator.  Finally, let $m_0$ be a bi-parameter multiplier which
agrees with $m$ on integers away from the planes $\{(s_1,t_1) = 0\}$
and $\{(s_2,t_2) = 0\}$, but is 0 on these planes.  Then,

\begin{equation*}
\Lambda_m^{(2)}(f,g)(x,y) = \Lambda_{m_0}^{(2)}(f,g)(x,y) +
\Lambda_{m_1}(F_1,G_1)(x) + \Lambda_{m_2}(F_2,G_2)(y).
\end{equation*}

By Theorem~\ref{thm:c-m}, if $p_1, p_2 > 1$, then
$\|\Lambda_{m_1}(F_1, G_1)\|_{L^p(\mathbb{T})} \lesssim
\|F_1\|_{L^{p_1}(\mathbb{T})} \|G_1\|_{L^{p_2}(\mathbb{T})}$.  By
generalized Minkowski, $\|F_1\|_{L^{p_1}(\mathbb{T})} \leq
\|f\|_{p_1}$ and $\|G_1\|_{L^{p_2}(\mathbb{T})} \leq \|g\|_{p_2}$.
Therefore, $\|\Lambda_{m_1}(F_1, G_1)\|_{L^p(\mathbb{T})} \lesssim
\|f\|_{p_1} \|g\|_{p_2}$. If $p_1 = 1$, then $\|F_1\|_1 \leq \|f\|_1
\leq \|f\|_{\llogl}$. Similarly for $p_2 = 1$.  Thus, the term
$\Lambda_{m_1}(F_1, G_1)$, and by symmetry $\Lambda_{m_2}(F_2,
G_2)$, satisfies all the estimates we want.  Hence, it suffices to
consider the operator $\Lambda_{m_0}$.  Equivalently, we can assume
$m$ is 0 on the planes $\{(s_1, t_1) = 0\}$ and $\{(s_2, t_2) =
0\}$.

Let $h \in L^1(\mathbb{T}^2)$.  Let $f_0 = \widetilde{\overline{f}}$
and similarly for $g_0, h_0$.  Then,

\begin{equation*}
\begin{split}
\langle \Lambda_m^{(2)}(f,g), \widetilde{h} \rangle &=
\int_{\mathbb{T}^2} \Lambda^{(2)}_m(f,g)(x) h_0(x) \, dx \\
&= \int_{\mathbb{T}^2} \bigg( \sum_{\vec{s}, \vec{t} \in
\mathbb{Z}^2}^\infty m(\vec{s},\vec{t}) \widehat{f}(\vec{s})
\widehat{g}(\vec{t}) e^{2\pi i\vec{x}(\vec{s}+\vec{t})}
\bigg) h_0(\vec{x}) \, d\vec{x} \\
&= \sum_{\vec{s}, \vec{t} \in \mathbb{Z}^2} m(\vec{s},\vec{t})
\widehat{f}(\vec{s}) \widehat{g}(\vec{t})
\int_{\mathbb{T}^2} h_0(\vec{x}) e^{2\pi i\vec{x}(\vec{s}+\vec{t})}
\, d\vec{x} \\
&= \sum_{\vec{s}, \vec{t} \in \mathbb{Z}^2} m(\vec{s},\vec{t})
\widehat{f}(\vec{s}) \widehat{g}(\vec{t})
\widehat{h_0}(-\vec{s}-\vec{t}).
\end{split}
\end{equation*}

\noindent Now employ Remark~\ref{remark:tripletbumps} to write

\begin{equation*}
\begin{split}
\langle &\Lambda_m^{(2)}(f,g), \widetilde{h} \rangle \\
&= \sum_{a,b=1}^3 \sum_{k,k'=1}^\infty \sum_{\vec{s}, \vec{t} \in
\mathbb{Z}^2} m(\vec{s},\vec{t}) \widehat{f}(\vec{s})
\widehat{\psi_{k,k'}^{a,b,1}}(\vec{s}) \widehat{g}(\vec{t})
\widehat{\psi_{k,k'}^{a,b,2}}(\vec{t})
\widehat{h_0}(-\vec{s}-\vec{t}) \widehat{\psi_{k,k'}^{a,b,3}}(-\vec{s}-\vec{t}) \\
&= \sum_{a,b=1}^3 \sum_{k,k'=1}^\infty \sum_{\vec{s}, \vec{t} \in
\mathbb{Z}^2} m_{a,b,k,k'}(\vec{s},\vec{t}) \widehat{f}(\vec{s})
\widehat{\psi_{k, k'}^{a,b,1}}(\vec{s}) \widehat{g}(\vec{t})
\widehat{\psi_{k, k'}^{a,b,2}}(\vec{t})
\widehat{h_0}(-\vec{s}-\vec{t})
\widehat{\psi_{k, k'}^{a,b,3}}(-\vec{s}-\vec{t}) \\
&=: \sum_{a,b=1}^3 S_{a,b},
\end{split}
\end{equation*}

\noindent where $m_{a,b,k,k'}$ is as given in Lemma~\ref{lemma:m3}.
Let $\psi^{a,b,1}_{k,k',\vec{n}_1}(\vec{x}) =
\psi^{a,b,1}_{k,k'}(\vec{x} - (2^{-k} n_1, 2^{-k'} n_2))$ and
$\psi_{k,k',\vec{n}_2}^{a,b,2}(\vec{x}) =
\psi^{a,b,2}_{k,k'}(\vec{x} - (2^{-k} n_3, 2^{-k'} n_4)))$. Then,

\begin{equation*}
\begin{split}
S_{a,b} &= \sum_{\vec{n} \in \mathbb{Z}^4} \sum_{k,k'=1}^\infty
\sum_{\vec{s},\vec{t} \in \mathbb{Z}^2} c_{a,b,k,k',\vec{n}} (f *
\psi_{k,k',\vec{n}_1}^{a,b,1}) \, \widehat{} \, (\vec{s}) (g *
\psi_{k,k',\vec{n}_2}^{a,b,2}) \, \widehat{} \, (\vec{t}) (h_0 *
\psi_{k,k'}^{a,b,3}) \, \widehat{}
\, (-\vec{s}-\vec{t}) \\
&= \sum_{\vec{n} \in \mathbb{Z}^4} \sum_{k, k' =1}^\infty
c_{a,b,k,k',\vec{n}} \int_{\mathbb{T}^2} (f *
\psi_{k,k',\vec{n}_1}^{a,b,1})(\vec{x}) (g *
\psi_{k,k,\vec{n}_2}^{a,b,2})(\vec{x}) (h_0 *
\psi_{k,k'}^{a,b,3})(\vec{x}) \, d\vec{x}.
\end{split}
\end{equation*}

\noindent The last line is gained from showing Claim~\ref{claim:fgh}
is valid on $\mathbb{T}^d$ for any $d$.  Just as in the previous
proofs, we can dilate and translate to write

\begin{equation*}
\begin{split}
\int_{\mathbb{T}^2} &(f * \psi_{k,k',\vec{n}_1}^{a,b,1})(\vec{x}) (g
* \psi_{k,k',\vec{n}_2}^{a,b,2})(\vec{x}) (h_0 *
\psi_{k,k'}^{a,b,3})(\vec{x}) \, d\vec{x} \\
&= 2^{-k} 2^{-k'} \sum_{j=0}^{2^k-1} \sum_{j'=0}^{2^{k'}-1}
\int_{[0,1]^2} \langle \psi_{k, k', j, j',
\vec{n}_1,\vec{\alpha}}^{a,b,1}, \overline{f} \rangle \langle
\psi_{k,k',j,j',\vec{n}_2,\vec{\alpha}}^{a,b,2}, \overline{g}
\rangle \langle \psi_{k,k',j,j',\vec{\alpha}}^{a,b,3},
\overline{h_0} \rangle\, d\vec{\alpha},
\end{split}
\end{equation*}

\noindent where
$\psi^{a,b,1}_{k,k',j,j',\vec{n}_1,\vec{\alpha}}(\vec{x}) =
\psi^{a,b,1}_{k,k'}(2^{-k}(\alpha_1+j+n_1) - x_1,
2^{-k'}(\alpha_2+j'+n_2) - x_2)$, and similarly for the other two
functions.

For a dyadic rectangle $R = [2^{-k} j, 2^{-k} (j+1)] \times [2^{-k'}
j', 2^{-k'}(j'+1)]$, let
$\varphi_{R_{\vec{\alpha}}^{\vec{n}_1}}^{a,b,1}$ be the reflection
of $2^{-k} 2^{-k'} \psi^{a,b,1}_{k,k',j,j',\vec{n}_1,\vec{\alpha}}$,
and similarly for $\varphi_{R_{\vec{\alpha}}^{\vec{n}_2}}^{a,b,2}$
and $\varphi_{R_{\vec{\alpha}}}^{a,b,3}$.  It is easily checked that
the construction of $\psi_{k,k'}^{a,b,i}$ guarantees that
$\varphi_R^{a,b,i}$ are the tensor products of adapted families with
$\int_\mathbb{T} \varphi_I^{a,b,i} \, dx = 0$ when $a \not= i$ and
$\int_\mathbb{T} \varphi_J^{a,b,i} \, dx = 0$ when $b \not= i$. Let
$\phi_R^{a,b,i} = |R|^{-1/2} \varphi_R^{a,b,i}$, so that

\begin{equation*}
\begin{split}
S_{a,b} = \sum_{\vec{n} \in \mathbb{Z}^4} \int_{[0,1]^2} \sum_{R}
c_{a,b,R,\vec{n}} \frac{1}{|R|^{1/2}} \langle
\phi_{R^{\vec{n}_1}_{\vec{\alpha}}}^{a,b,1}, f_0 \rangle \langle
\phi_{R^{\vec{n}_2}_{\vec{\alpha}}}^{a,b,2}, g_0 \rangle \langle
\phi_{R_{\vec{\alpha}}}^{a,b,3}, h \rangle \, d\alpha,
\end{split}
\end{equation*}

\noindent where the inner sum is over all dyadic rectangles and
$c_{a,b,R,\vec{n}} = c_{a,b,k,k',\vec{n}}$ when $R = I \times J$ with
$|I| = 2^{-k}, |J| = 2^{-k'}$. Write $c'_{a,b,R,\vec{n}} =
\prod_{j=1}^4 (|n_j|+1)^{5} c_{a,b,R,\vec{n}}$, which are uniformly
bounded in $R$ and $\vec{n}$ by Lemma~\ref{lemma:m3}. Hence,

\begin{equation*}
\begin{split}
S_{a,b} &= \sum_{\vec{n} \in \mathbb{Z}^4} \prod_{j=1}^4
\frac{1}{(|n_j|+1)^5} \int_{[0,1]} \sum_{R} c'_{a,b,R,\vec{n}}
\frac{1}{|R|^{1/2}} \langle
\phi_{R^{\vec{n_1}}_{\vec{\alpha}}}^{a,b,1} f_0 \rangle \langle
\phi_{R^{\vec{n_2}}_{\vec{\alpha}}}^{a,b,2}, g_0 \rangle \langle
\phi_{R_{\vec{\alpha}}}^{a,b,2}, h \rangle \, d\alpha \\
&= \Big\langle \sum_{\vec{n} \in \mathbb{Z}^4} \prod_{j=1}^4
\frac{1}{(|n_j|+1)^5} T_{c'}^{a, b, [\vec{n}]}(f_0, g_0), h
\Big\rangle
\end{split}
\end{equation*}

As $h \in L^1$ is arbitrary, it follows that

\begin{equation*}
\widetilde{\Lambda_m^{(2)}(f,g)} = \sum_{\vec{n} \in \mathbb{Z}^4}
\prod_{j=1}^4 \frac{1}{(|n_j|+1)^5} \sum_{a,b=1}^3 T_{c'}^{a, b,
[\vec{n}]}(f_0, g_0)
\end{equation*}

\noindent almost everywhere.  We know $\|T_{c'}^{a, b, [\vec{n}]}
(f_0, g_0)\|_p \lesssim \prod_j (|n_j|+1) \|f\|_{p_1} \|g\|_{p_2}$
whenever $p_1, p_2 > 1$.  So, $\|\Lambda^{(2)}_m(f,g)\|_p \lesssim
\|f\|_{p_1} \|g\|_{p_2}$ whenever $p \geq 1$ follows immediately. By
Lemma~\ref{lemma:weaksum} (with $k = 2$),
$\|\Lambda^{(2)}_m(f,g)\|_{p,\infty} \lesssim \|f\|_{p_1}
\|g\|_{p_2}$ for all $p_1, p_2 > 1$.  By interpolation of these
cases, $\|\Lambda^{(2)}_m(f,g)\|_p \lesssim \|f\|_{p_1} \|g\|_{p_2}$
whenever $p_1, p_2 > 1$ and $p < 1$.

On the other hand, $\|T_{c'}^{a, b, [\vec{n}]}(f_0,
g_0)\|_{p,\infty} \lesssim \prod_j (|n_j|+1) \|f\|_{\llogl}
\|g\|_{p_2}$ whenever $p_1 = 1$.  By applying
Lemma~\ref{lemma:weaksum} again,
$\|\Lambda_m^{(2)}(f,g)\|_{p,\infty} \lesssim \|f\|_{\llogl}
\|g\|_{p_2}$.  The cases $p_2 = 1$ and $p_1 = p_2 = 1$ follow in the
same way.
\end{proof}

\chapter{Multi-parameter Multipliers}

Finally, we would like to consider multipliers, and their
corresponding operators, which are multi-parameter.  That is, $m$
acts as if the product of $s$ Coifman-Meyer multipliers.

For a vector $\vec{t} \in \mathbb{R}^{sd}$, let $\rho_j(\vec{t}) =
(t_j, t_{j+s}, \ldots, t_{j+s(d-1)}) \in \mathbb{R}^d$ for $1 \leq j
\leq s$. Conversely, for $1 \leq j \leq d$, let $\vec{t}_j =
(t_{s(j-1) +1}, \ldots, t_{js}) \in \mathbb{R}^s$ so that $\vec{t} =
(\vec{t}_1, \ldots, \vec{t}_d)$.

Let $m : \mathbb{R}^{sd} \rightarrow \mathbb{C}$ be smooth away from
the origin and uniformly bounded.  We say $m$ is an $s$-parameter
multiplier if

\begin{equation*}
|\partial^\alpha m(\vec{t})| \lesssim \prod_{j=1}^s
\|\rho_j(\vec{t})\|^{-|\rho_j(\alpha)|}
\end{equation*}

\noindent for all indices $|\alpha| \leq sd(d+3)$, where $\|\cdot\|$
is the Euclidean norm on $\mathbb{R}^d$.

Given such a multiplier $m$ on $\mathbb{R}^{sd}$ and $L^1$ functions $f_1,
\ldots, f_d : \mathbb{T}^s \rightarrow \mathbb{C}$, we define the
associated multiplier operator $\Lambda_m^{(s)}(f_1, \ldots, f_d) :
\mathbb{T}^s \rightarrow \mathbb{C}$ as

\begin{equation*}
\Lambda_m^{(s)}(f_1, \ldots, f_d)(\vec{x}) = \sum_{\vec{t} \in
\mathbb{Z}^{sd}} m(\vec{t}) \widehat{f}_1(\vec{t}_1) \cdots
\widehat{f}_d(\vec{t}_d) e^{2\pi i \vec{x} \cdot (\vec{t}_1 + \ldots
+ \vec{t}_d)}.
\end{equation*}

\noindent The $L^p$ estimates of previous chapters still hold with
minor modifications.

\begin{thm}\label{thm:last} For any $s$-parameter multiplier $m$
on $\mathbb{R}^{sd}$, $\Lambda^{(s)}_m : L^{p_1} \times \ldots
\times L^{p_d} \rightarrow L^p$ for $1 < p_j < \infty$ and
$\frac{1}{p_1} + \ldots + \frac{1}{p_d} = \frac{1}{p}$. If any or
all of the $p_j$ are equal to 1, this still holds with $L^p$
replaced by $L^{p,\infty}$ and $L^{p_j}$ replaced by $L(\log
L)^{s-1}$.  In particular, $\Lambda_m^{(s)} : L(\log L)^{s-1} \times
\ldots \times L(\log L)^{s-1}\rightarrow L^{1/d, \infty}$.
\end{thm}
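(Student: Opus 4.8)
The plan is to run the argument of Theorem~\ref{thm:bimult} with every bi-parameter object replaced by its $s$-parameter analogue; as there, it suffices to treat $d = 2$, the case of larger $d$ requiring only that one differentiate the localized symbols more often, so that the Fourier coefficients below decay like $(|n_j|+1)^{-(d+3)}$, which is what Lemma~\ref{lemma:weaksum} will need. First I would reduce to a symbol vanishing on every proper coordinate subspace. For $A \subsetneq \{1,\ldots,s\}$, restricting $m$ to $\{\vec t : (s_j,t_j) = (0,0) \text{ for } j \notin A\}$ and collapsing the corresponding circle factors of each $f_i$ (replacing $f_i$ by the partial average, exactly as $f \mapsto F_1, F_2$ in Theorem~\ref{thm:bimult}) produces an $|A|$-parameter multiplier in fewer variables; by induction on $s$, with base case Theorem~\ref{thm:c-m}, the associated operator already obeys the claimed bounds, and by the generalized Minkowski inequality together with Theorem~\ref{thm:lloglnp} the averaging costs nothing in the $L^{p_i}$ or $L(\log L)^{s-1}$ norms. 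Splitting $m$ into a piece supported away from all such subspaces plus finitely many lower-parameter pieces, we may henceforth assume $m$ is supported on $\mathcal{E} = \bigcap_{j=1}^s \{(s_j,t_j) \neq (0,0)\}$.

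Next come the bump decomposition and Fourier expansion. Setting $\psi^{\vec a,i}_{\vec k}(\vec x) = \prod_{j=1}^s \psi^{a_j,i}_{k_j}(x_j)$ for $\vec a \in \{1,2,3\}^s$, $\vec k \in \mathbb{N}^s$, $i = 1,2,3$, with $\psi^{a,i}_k$ as in Theorem~\ref{thm:doubletbumps}, the $s$-fold product of the identity of Remark~\ref{remark:tripletbumps} gives
\[
\sum_{\vec a \in \{1,2,3\}^s} \sum_{\vec k \in \mathbb{N}^s} \widehat{\psi^{\vec a,1}_{\vec k}}(\vec s)\, \widehat{\psi^{\vec a,2}_{\vec k}}(\vec t)\, \widehat{\psi^{\vec a,3}_{\vec k}}(-\vec s - \vec t) = \chi_{\mathcal{E}}(\vec s, \vec t),
\]
which suffices to decompose $\Lambda_m^{(s)}$ since $m$ kills the complement of $\mathcal{E}$. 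I would then prove the $s$-parameter version of Lemma~\ref{lemma:m3}: each localized symbol $m_{\vec a,\vec k}$ agreeing with $m$ on the relevant support expands in a $2s$-fold Fourier series with coefficients $c_{\vec a,\vec k,\vec n}$, $\vec n \in \mathbb{Z}^{2s}$, obeying $|c_{\vec a,\vec k,\vec n}| \lesssim \prod_{j=1}^{2s}(|n_j|+1)^{-5}$; here the $s$-parameter hypothesis enters through $\|(s_j,t_j)\| \gtrsim 2^{k_j}$ on the support of $m_{\vec a,\vec k}$ together with integration by parts five times in each of the $2s$ variables, which is permitted exactly because $10s = sd(d+3)$ when $d = 2$. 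Dilating and translating as in Theorem~\ref{thm:bimult} rewrites $\langle \Lambda_m^{(s)}(f,g), \widetilde h\rangle$ as $\sum_{\vec n}\prod_{j=1}^{2s}(|n_j|+1)^{-5}\sum_{\vec a}\langle T^{\vec a,[\vec n]}_{c'}(f_0,g_0), h\rangle$, where $f_0 = \widetilde{\overline f}$, $g_0 = \widetilde{\overline g}$ and $T^{\vec a,[\vec n]}_{c'}$ is the shifted $s$-parameter bilinear paraproduct over dyadic rectangles $R = I_1 \times \cdots \times I_s \subset \mathbb{T}^s$, averaged over $\vec\alpha \in [0,1]^s$, with mean zero in those parameters where the role differs from the slot; since $h \in L^1$ is arbitrary this identifies $\widetilde{\Lambda_m^{(s)}(f,g)}$ with that double series almost everywhere.

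It then remains to prove $\|T^{\vec a,[\vec n]}(f,g)\|_p \lesssim \prod_{j=1}^{2s}(|n_j|+1)\,\|f\|_{p_1}\|g\|_{p_2}$ for $1 < p_1,p_2 < \infty$, together with its endpoint form with $L^p$ replaced by $L^{p,\infty}$ and $L^{p_j}$ by $L(\log L)^{s-1}$. This needs the finitely many $s$-parameter hybrid max--square operators: strings of $M'$ and $S$ applied one variable at a time, in the sense of Section~\ref{sec:strong}. Exactly as in Theorem~\ref{thm:MS}, each is dominated by the $s$-fold composition of one-variable operators; iterating the one-variable $L^p$ bounds of Corollary~\ref{cor:Lp} and Theorem~\ref{thm:SLp} and the one-variable Fefferman--Stein inequalities (Theorems~\ref{thm:fs2}, \ref{thm:fs4} and their $S$-analogues) through Theorem~\ref{thm:Lj} gives $L^p(\mathbb{T}^s) \to L^p(\mathbb{T}^s)$ for $1 < p < \infty$, while peeling off the logarithms one layer at a time by Corollaries~\ref{cor:f-sllogl} and \ref{cor:f-sllogln} gives $L(\log L)^{s-1}(\mathbb{T}^s) \to L^{1,\infty}(\mathbb{T}^s)$. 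With these in hand the paraproduct estimate proceeds as in Theorem~\ref{thm:2para}: decompose $\phi_R^3 = \sum_{\vec k} 2^{-10|\vec k|}\phi_R^{3,\vec k}$, build the exceptional set $\Omega$ from the strong maximal function $M_S$ on $\mathbb{T}^s$, split the dyadic rectangles into $\mathcal D_1, \mathcal D_2, \mathcal D_3$, and run the three-fold stopping-time sum over the sizes of the $\vec a$-appropriate hybrid operators applied to $f$, $g$ and $\chi_{E'}$, the weak-type exponents of $M_S$ on $\mathbb{T}^s$ taking the place of those of $M$.

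Finally, Lemma~\ref{lemma:weaksum} applied with $K = 2s$ and $k = d = 2$ (so that $r = k+3 = 5$ matches the coefficient decay) turns the double series of the second paragraph into the $L^{p,\infty}$ bounds for all $p_1,p_2 \geq 1$; the strong bounds for $p_1,p_2 > 1$ come directly from the non-endpoint paraproduct estimate and a convergent geometric sum, and the remaining range $p < 1$ follows by interpolation, just as in Theorem~\ref{thm:bimult}. The main obstacle is the $s$-parameter hybrid max--square theory: one must verify that the iteration of Theorem~\ref{thm:MS} genuinely delivers uniform $L^p$ and $L(\log L)^{s-1} \to L^{1,\infty}$ bounds for all orderings of $M'$ and $S$, and that the stopping-time bookkeeping of Theorem~\ref{thm:2para} --- whose covering geometry was one-dimensional --- survives with the rectangular structure of $\mathbb{T}^s$ and several hybrid operators in play; this is precisely the point at which the non-rigorous, survey character of the chapter does its work.
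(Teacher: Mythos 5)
Your sketch matches the paper's own (explicitly non-rigorous) treatment step for step: the inductive reduction to a symbol supported away from the coordinate subspaces $\{\rho_j(\vec t)=0\}$ (the paper performs the same reduction in Theorem~\ref{thm:bimult} and cites it for the induction), the $s$-fold tensor product of the Theorem~\ref{thm:doubletbumps} bumps, the Fourier-series localization of $m$ with $\prod(|n_j|+1)^{-(d+3)}$ decay exactly matching the $sd(d+3)$ smoothness hypothesis, the reduction to shifted $s$-parameter paraproducts over dyadic boxes $Q\subset\mathbb{T}^s$, the hybrid max--square operators and the $SS\cdots SMM\cdots M$ reduction with iterated Fefferman--Stein and Corollaries~\ref{cor:f-sllogl}, \ref{cor:f-sllogln}, and the final resummation via Lemma~\ref{lemma:weaksum}. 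Both you and the paper treat step 5 (the hybrid operator bounds and the stopping-time argument in $\mathbb{T}^s$) as the genuine content to be verified rather than fully proved, so I would only flag two small imprecisions in your commentary: the hybrid operators are not literally dominated by compositions of one-variable operators (that holds only for pure $M$ strings; the $S$ parts go through the vector-valued Fefferman--Stein bounds), and the stopping-time argument of Theorem~\ref{thm:2para} is already genuinely two-parameter rather than one-dimensional in its covering geometry --- neither affects the validity of the sketch.
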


In view of all the results, we now have a good view of the
heuristics.  Away from $p_j = 1$, each of these operators act the
same.  However, it is these endpoint cases which are the most
interesting.  Each time we go up a parameter, we ``gain a $\log$" at
the endpoint.

It will not be our goal in this chapter to explicitly prove this
result.  Indeed, it should be clear that the method of proof
employed on increasing complex multiplier operators throughout this
text can be used.  Instead, we give a brief survey of how the
argument would go.

By induction, we can assume this theorem is known for
$(s-1)$-parameter multipliers.  Like in the proof of Theorem~\ref{thm:bimult},
this allows us to assume $m$ is 0 on the planes $\{\rho_j(\vec{t}) =
0\}$.  Then, we can introduce bump functions which are the $s$-fold
tensor products of the functions in Theorem~\ref{thm:doubletbumps}
(as the functions in Remark~\ref{remark:tripletbumps} are the 2-fold
tensor products).  By the same dilation and translations, our
problem boils to understanding the appropriate paraproducts.

We say $Q \subset \mathbb{T}^{s}$ is a dyadic rectangle if $Q =
I_1 \times \ldots \times I_s$ for dyadic intervals $I_j$.  Define
$\varphi_Q : \mathbb{T}^s \rightarrow \mathbb{C}$ to be the
$s$-fold tensor product of adapted families.  The appropriate
(bilinear) paraproducts in this setting are

\begin{equation*}
T_\epsilon^{a_1, \ldots, a_s}(f, g)(\vec{x}) = \sum_Q \epsilon_Q
\frac{1}{|Q|^{1/2}} \langle \phi^1_Q, f \rangle \langle \phi^2_Q, g
\rangle \phi^3_Q(\vec{x})
\end{equation*}

\noindent where the sum is over all dyadic rectangles $Q$ and
$(\epsilon_Q)$ is a uniformly bounded sequence.  Each $a_j$ ranges
over $1, 2, 3$.  If $\phi^i_Q = \phi^i_{I_1} \oplus \ldots \oplus
\phi^i_{I_s}$, then $\int_\mathbb{T} \phi^i_{I_j} \, dx = 0$ whenever
$i \not= a_j$.

To complete the proof on $s$-parameter multiplier operators, it
suffices to show the associated paraproducts satisfy the same
bounds.  The stopping time argument presented in
Theorems~\ref{thm:Tweak}, \ref{thm:1para}, and~\ref{thm:2para} works
equally well in all dimensions, given the correct $s$-fold hybrid
operators.  For example, when $s=3$, we consider $SSS$, $SSM$,
$MSM$, etc.  Therefore, we will understand the paraproducts if we
can show each $s$-fold hybrid operator maps $L^p \rightarrow L^p$
for $1 < p < \infty$ and $L(\log L)^{s-1} \rightarrow L^{1,\infty}$.

For illustrative purposes, we show this for three specific operators
when $s = 3$. For $f : \mathbb{T}^3 \rightarrow \mathbb{C}$ define

\begin{gather*}
SSSf(x,y,z) = \bigg( \sum_Q \frac{|\langle \phi_Q, f
\rangle|^2}{|Q|} \chi_Q(x,y,z) \bigg)^{1/2}, \\
SSMf(x,y,z) = \bigg( \sum_{I_1} \sum_{I_2} \frac{\Big( \sup_{I_3}
\frac{1}{|I_3|^{1/2}} |\langle \phi_Q, f \rangle| \chi_{I_3}(z)
\Big)^2}{|I_1| |I_2|} \chi_{I_1}(x) \chi_{I_2}(y) \bigg)^{1/2},
\end{gather*}

\noindent and

\begin{equation*}
SMMf(x,y,z) = \bigg( \sum_{I_1} \frac{\Big( \sup_{I_2} \sup_{I_3}
\frac{1}{|I_2|^{1/2}} \frac{1}{|I_3|^{1/2}} |\langle \phi_Q, f
\rangle| \chi_{I_2}(y) \chi_{I_3}(z) \Big)^2}{|I_1|} \chi_{I_1}(x)
\bigg)^{1/2}.
\end{equation*}

Start with $SSSf$.  Using the same notational conveniences as
before,

\begin{equation*}
\begin{split}
SSSf &= \bigg( \sum_{I_1} \sum_{I_2} \sum_{I_3} \frac{1}{|I_3|}
\Big| \Big\langle \phi_{I_3}, \frac{\langle f, \phi_{I_1} \oplus
\phi_{I_2} \rangle}{|I_1|^{1/2} |I_2|^{1/2}} \chi_{I_1} \chi_{I_2}
\Big\rangle \Big|^2 \chi_{I_3}
\bigg)^{1/2} \\
&= \bigg( \sum_{I_1} \sum_{I_2} S_3 \Big(\frac{\langle f, \phi_{I_1}
\oplus \phi_{I_2} \rangle}{|I_1|^{1/2} |I_2|^{1/2}} \chi_{I_1}
\chi_{I_2} \Big)^2 \bigg)^{1/2}.
\end{split}
\end{equation*}

\noindent So,

\begin{equation*}
\begin{split}
\|SSSf\|_p &= \bigg\| \Big( \sum_{I_1} \sum_{I_2} S_3
\Big(\frac{\langle f, \phi_{I_1} \oplus \phi_{I_2}
\rangle}{|I_1|^{1/2} |I_2|^{1/2}}
\chi_{I_1} \chi_{I_2} \Big)^2 \Big)^{1/2} \bigg\|_p \\
&\lesssim \bigg\| \Big( \sum_{I_1} \sum_{I_2} \frac{|\langle f,
\phi_{I_1} \oplus \phi_{I_2} \rangle|^2}{|I_1|
|I_2|} \chi_{I_1} \chi_{I_2} \Big)^{1/2} \bigg\|_p \\
&= \bigg\| \Big( \sum_{I_1} S_2\Big( \frac{\langle f, \phi_{I_1}
\rangle}{|I_1|^{1/2}} \chi_{I_1} \Big)^2 \Big)^{1/2} \bigg\|_p
\lesssim \bigg\| \Big( \sum_{I_1} \frac{|\langle f, \phi_{I_1}
\rangle|^2}{|I_1|}
\chi_{I_1} \Big)^{1/2} \bigg\|_p \\
&= \|S_1f\|_p \lesssim \|f\|_p,
\end{split}
\end{equation*}

\noindent and

\begin{equation*}
\begin{split}
\|SSSf\|_{1,\infty} &= \bigg\| \Big( \sum_{I_1} \sum_{I_2} S_3
\Big(\frac{\langle f, \phi_{I_1} \oplus \phi_{I_2}
\rangle}{|I_1|^{1/2} |I_2|^{1/2}}
\chi_{I_1} \chi_{I_2} \Big)^2 \Big)^{1/2} \bigg\|_{1,\infty} \\
&\lesssim \bigg\| \Big( \sum_{I_1} S_2\Big( \frac{\langle f,
\phi_{I_1} \rangle}{|I_1|^{1/2}} \chi_{I_1} \Big)^2 \Big)^{1/2}
\bigg\|_1 \lesssim \|S_1f\|_{\llogl} \lesssim \|f\|_{L(\log L)^2}.
\end{split}
\end{equation*}

\noindent Using the same kind of argument

\begin{equation*}
\begin{split}
\|SSMf\|_p &= \bigg\| \Big( \sum_{I_1} \sum_{I_2} M_3'
\Big(\frac{\langle f, \phi_{I_1} \oplus \phi_{I_2}
\rangle}{|I_1|^{1/2} |I_2|^{1/2}}
\chi_{I_1} \chi_{I_2} \Big)^2 \Big)^{1/2} \bigg\|_p \\
&\lesssim \bigg\| \Big( \sum_{I_1} \sum_{I_2} \frac{|\langle f,
\phi_{I_1} \oplus \phi_{I_2} \rangle|^2}{|I_1|
|I_2|} \chi_{I_1} \chi_{I_2} \Big)^{1/2} \bigg\|_p \\
&= \bigg\| \Big( \sum_{I_1} S_2\Big( \frac{\langle f, \phi_{I_1}
\rangle}{|I_1|^{1/2}} \chi_{I_1} \Big)^2 \Big)^{1/2} \bigg\|_p
\lesssim \bigg\| \Big( \sum_{I_1} \frac{|\langle f, \phi_{I_1}
\rangle|^2}{|I_1|}
\chi_{I_1} \Big)^{1/2} \bigg\|_p \\
&= \|S_1f\|_p \lesssim \|f\|_p,
\end{split}
\end{equation*}

\noindent and

\begin{equation*}
\begin{split}
\|SSSf\|_{1,\infty} &= \bigg\| \Big( \sum_{I_1} \sum_{I_2} M_3'
\Big(\frac{\langle f, \phi_{I_1} \oplus \phi_{I_2}
\rangle}{|I_1|^{1/2} |I_2|^{1/2}}
\chi_{I_1} \chi_{I_2} \Big)^2 \Big)^{1/2} \bigg\|_{1,\infty} \\
&\lesssim \bigg\| \Big( \sum_{I_1} S_2\Big( \frac{\langle f,
\phi_{I_1} \rangle}{|I_1|^{1/2}} \chi_{I_1} \Big)^2 \Big)^{1/2}
\bigg\|_1 \lesssim \|S_1f\|_{\llogl} \lesssim \|f\|_{L(\log L)^2}.
\end{split}
\end{equation*}

\noindent By the same method used in Theorem~\ref{thm:MS} for $MM$,

\begin{equation*}
\sup_{I_2} \sup_{I_3} \frac{1}{|I_2|^{1/2}} \frac{1}{|I_3|^{1/2}}
|\langle \phi_Q, f \rangle| \chi_{I_1} \chi_{I_2} \chi_{I_3}
\lesssim M_2 \circ M_3(\langle \phi_{I_1}, f \rangle \chi_{I_1}).
\end{equation*}

\noindent Thus,

\begin{equation*}
\begin{split}
\|SMMf\|_p &\lesssim \bigg\| \Big( \sum_{I_1} M_2 \circ M_3 \Big(
\frac{\langle f, \phi_{I_1} \rangle}{|I_1|^{1/2}}
\chi_{I_1} \Big)^2 \Big)^{1/2} \bigg\|_p \\
&\lesssim \bigg\| \Big( \sum_{I_1} M_3 \Big( \frac{\langle f,
\phi_{I_1} \rangle}{|I_1|^{1/2}}
\chi_{I_1} \Big)^2 \Big)^{1/2} \bigg\|_p \\
&\lesssim \bigg\| \Big( \sum_{I_1} \frac{|\langle f, \phi_{I_1}
\rangle|^2}{|I_1|} \chi_{I_1} \Big)^{1/2} \bigg\|_p = \|S_1 f\|_p
\lesssim \|f\|_p,
\end{split}
\end{equation*}

\noindent and

\begin{equation*}
\begin{split}
\|SMMf\|_{1,\infty} &\lesssim \bigg\| \Big( \sum_{I_1} M_2 \circ M_3
\Big( \frac{\langle f, \phi_{I_1} \rangle}{|I_1|^{1/2}}
\chi_{I_1} \Big)^2 \Big)^{1/2} \bigg\|_{1,\infty} \\
&\lesssim \bigg\| \Big( \sum_{I_1} M_3 \Big( \frac{\langle f,
\phi_{I_1} \rangle}{|I_1|^{1/2}}
\chi_{I_1} \Big)^2 \Big)^{1/2} \bigg\|_1 \\
&\lesssim \bigg\| \Big( \sum_{I_1} \frac{|\langle f, \phi_{I_1}
\rangle|^2}{|I_1|} \chi_{I_1} \Big)^{1/2} \bigg\|_{\llogl} = \|S_1
f\|_{\llogl} \lesssim \|f\|_{L(\log L)^2}.
\end{split}
\end{equation*}

We also have $MMMf \lesssim M_Sf \leq M_1 \circ M_2 \circ M_3f$, for
which the desired estimates clearly hold.  Finally, we note as
before that $SMS$ and $MSS$ are pointwise smaller than a kind of
$SSM$, while $MMS$ and $MSM$ are smaller than a kind of $SMM$.

The recipe for arbitrary $s$-fold hybrid operators should now be
clear.  It suffices to consider only the ones of the form
$SS...SMM...M$.  In this case, the $M...MM$ part is pointwise
smaller than $M_j \circ M_{j+1} \circ \cdots \circ M_s$.  Repeated
iterations of Fefferman-Stein eliminate these $M_j$, while the
remaining $SS...S$ part can be dealt with as usual.

In conclusion, Theorem~\ref{thm:last} can be proven by the same
methods presented in earlier chapters, with only minor adjustments
here and there.

\begin{singlespacing}

\end{singlespacing}


\begin{thebibliography}{10}
\addcontentsline{toc}{chapter}{Bibliography}

\bibitem{inter}
C.~Bennett and R.~Sharpley.
\newblock {\em Interpolation of Operators}.
\newblock Academic Press, New York, 1988.

\bibitem{billingsley}
P.~Billingsley.
\newblock {\em Probabilty and Measure}.
\newblock Wiley-Interscience, New York, 1995.

\bibitem{cz}
A.~P. Calder\'{o}n and A.~Zygmund.
\newblock On the existence of certain singular integrals.
\newblock {\em Acta Mathematica}, 88:85--139, 1952.

\bibitem{christ}
M.~Christ and J.-L. Journ\'{e}.
\newblock Polynomial growth estimates for multilinear singular integral
  operators.
\newblock {\em Acta Mathematica}, 159:51--80, 1987.

\bibitem{meyer}
R.~Coifman and Y.~Meyer.
\newblock {\em Onelettes et Op\'{e}rateurs, III: Op\'{e}rateurs
  multilin\'{e}aires}.
\newblock Hermann, Paris, 1991.

\bibitem{fefferman}
C.~Fefferman.
\newblock Estimates for the double {Hilbert} transforms.
\newblock {\em Studia Mathematica}, 44:1--15, 1972.

\bibitem{feffermanstein}
C.~Fefferman and E.~M. Stein.
\newblock Some maximal inequalities.
\newblock {\em American Journal of Mathematics}, 93:107--15, 1971.

\bibitem{spanish}
J.~Garcia-Cuerva and J.~L.~Rubio de~Francia.
\newblock {\em Weighted Norm Inequalities and Related Topics}.
\newblock North-Holland, New York, 1985.

\bibitem{grafakos}
L.~Grafakos and R.~Torres.
\newblock Multilinear {Calder\'{o}n-Zygmund} theory.
\newblock {\em Advances in Mathematics}, 165:124--64, 2002.

\bibitem{hardylittlewood1}
G.~H. Hardy and J.~E. Littlewood.
\newblock A maximal theorem with function-theoretic appliations.
\newblock {\em Acta Mathematica}, 54:81--116, 1930.

\bibitem{hardyinequality}
G.~H. Hardy, J.~E. Littlewood, and G.~Polya.
\newblock {\em Inequalities}.
\newblock Cambridge University Press, Cambridge, 1932.

\bibitem{jessen}
B.~Jessen, J.~Marcinkiewicz, and A.~Zygmund.
\newblock Note on the differentiability of multiple integrals.
\newblock {\em Fundamenta Mathematicae}, 25:217--34, 1935.

\bibitem{journe}
J.-L. Journ\'{e}.
\newblock {Calder\'{o}n-Zygmund} operators on product spaces.
\newblock {\em Revista Matematica Iberoamericana}, 1:55--91, 1985.

\bibitem{kato}
T.~Kato and G.~Ponce.
\newblock Commutator estimates and the {Euler} and {Navier-Stokes} equations.
\newblock {\em Communications on Pure and Applied Mathematics}, 41:891--907,
  1988.

\bibitem{katznelson}
Y.~Katznelson.
\newblock {\em An Introduction to Harmonic Analysis}.
\newblock Dover Publications, New York, 1968.

\bibitem{kenig}
C.~Kenig and E.~M. Stein.
\newblock Multilinear estimates and fractional integration.
\newblock {\em Mathematical Research Letters}, 6:1--15, 1999.

\bibitem{me1}
S.~Lenhart, V.~Protopopescu, and J.~Workman.
\newblock Minimizing transient times in a couple solid-state laser model.
\newblock {\em Mathematical Methods in the Applied Sciences}, 29:373--86, 2006.

\bibitem{me2}
S.~Lenhart and J.~Workman.
\newblock An introduction to optimal control applied to immunology problems.
\newblock {\em Proceedings of Symposia in Applied Mathematics}, 64:85--99,
  2006.

\bibitem{me3}
S.~Lenhart and J.~Workman.
\newblock {\em Optimal Control Applied to Biological Models}.
\newblock CRC Press, New York, 2007.

\bibitem{lieb}
E.~H. Lieb and M.~Loss.
\newblock {\em Analysis}.
\newblock American Mathematical Society, Providence, RI, 2001.

\bibitem{littlewoodpaley1}
J.~E. Littlewood and R.~E. Paley.
\newblock Theorems on {Fourier} series and power series {(I)}.
\newblock {\em Journal of the London Mathematical Society}, 6:230--3, 1931.

\bibitem{littlewoodpaley2}
J.~E. Littlewood and R.~E. Paley.
\newblock Theorems on {Fourier} series and power series {(II)}.
\newblock {\em Journal of the London Mathematical Society}, 42:52--89, 1936.

\bibitem{littlewoodpaley3}
J.~E. Littlewood and R.~E. Paley.
\newblock Theorems on {Fourier} series and power series {(III)}.
\newblock {\em Journal of the London Mathematical Society}, 43:105--26, 1937.

\bibitem{marcin}
J.~Marcinkiewicz.
\newblock Sur les multiplicateurs des s\'{e}ries de {Fourier}.
\newblock {\em Studia Mathematica}, 8:78--91, 1939.

\bibitem{marcin2}
J.~Marcinkiewicz.
\newblock Sur l'interpolation d'op\'{e}rations.
\newblock {\em Comptes Rendus Mathematique Acad\'{e}mie des Sciences},
  208:172--3, 1939.

\bibitem{camil1}
C.~Muscalu, J.~Pipher, T.~Tao, and C.~Thiele.
\newblock Bi-parameter paraproducts.
\newblock {\em Acta Mathematica}, 193:269--96, 2004.

\bibitem{camil2}
C.~Muscalu, J.~Pipher, T.~Tao, and C.~Thiele.
\newblock Multi-parameter paraproducts.
\newblock {\em Revista Matematica Iberoamericana}, 22:963--76, 2006.

\bibitem{rudin}
W.~Rudin.
\newblock {\em Real and Complex Analysis}.
\newblock McGraw-Hill, New York, 1987.

\bibitem{steininter}
E.~M. Stein.
\newblock Interpolation of linear operators.
\newblock {\em Transactions of the American Mathematical Society}, 83:482--92,
  1956.

\bibitem{littlewoodpaley4}
E.~M. Stein.
\newblock On the functions of {Littlewood-Paley}, {Lusin}, and {Marcinkiewicz}.
\newblock {\em Transactions of the American Mathematical Society}, 88:430--66,
  1958.

\bibitem{steinllogl}
E.~M. Stein.
\newblock Note on the class $\llogl$.
\newblock {\em Studia Mathematica}, 32:305--10, 1969.

\bibitem{stein}
E.~M. Stein.
\newblock {\em Harmonic Analysis}.
\newblock Princeton University Press, Princeton, NJ, 1993.

\bibitem{stein3}
E.~M. Stein and R.~Shakarchi.
\newblock {\em Fourier Analysis: An Introduction}.
\newblock Princeton University Press, Princeton, NJ, 2003.

\bibitem{stein2}
E.~M. Stein and G.~Weiss.
\newblock {\em Introduction to Fourier Analysis on Euclidean Spaces}.
\newblock Princeton University Press, Princeton, NJ, 1971.

\bibitem{wiener}
N.~Wiener.
\newblock The ergodic theorem.
\newblock {\em Duke Mathematical Journal}, 5:1--18, 1939.
\end{thebibliography}
\end{document}